\setlist{leftmargin=5mm}
\numberwithin{equation}{section}
\newcommand{\Cov}{{\rm Cov}}
\newcommand{\Var}{{\rm Var}}
\newcommand{\wtilde}{\widetilde}
\renewcommand{\H}{\mathcal{H}}
\newcommand{\wbar}{\overline}
\newcommand{\F}{\mathcal{F}}
\newcommand{\noise}{\xi}
\newcommand{\eps}{\epsilon}
\newcommand{\HS}{{\rm HS}}
\newcommand{\obj}{\mathcal{J}}
\newcommand{\grad}{\nabla}
\newcommand{\energy}{\mathcal{E}}
\newcommand{\error}{\mathcal{E}}
\newcommand{\err}{\mathcal{E}}
\newcommand{\normP}[1]{\norm{#1}_{\mathcal{L}_2(\P)}}
\newcommand{\normbigP}[1]{\normbig{#1}_{\mathcal{L}_2(\P)}}
\newcommand{\normPn}[1]{\norm{#1}_{\mathcal{L}_2(\P_n)}}
\newcommand{\normPninfty}[1]{\norm{#1}_{\mathcal{L}_\infty(\P_n)}}
\newcommand{\normbigPn}[1]{\normbig{#1}_{\mathcal{L}_2(\P_n)}}
\newcommand{\KRR}{{\rm KRR}}
\newcommand{\event}{\Lambda}
	\def\MR#1{}
\begin{document}
\doparttoc 
\faketableofcontents 


\title[Taming Nonconvexity in Kernel Feature Selection]{Taming Nonconvexity in Kernel Feature Selection---Favorable Properties 
of the Laplace Kernel}

\author[F. Ruan]{Feng Ruan}

\address[F. Ruan]{
Department of Electrical Engineering and Computer Science, the University of California, Berkeley, CA 94720, USA.
}
\email{fengruan@berkeley.edu}

\author[K. Liu]{Keli Liu}

\address[K. Liu]{
	Voleon Group, Berkeley, CA 94704, USA. 
}
\email{keli.liu25@gmail.com}

\author[M. I. Jordan]{Michael I. Jordan}

\address[M. Jordan]{
Department of Electrical Engineering and Computer Science, the University of California, Berkeley, CA 94720, USA.
}
\email{jordan@cs.berkeley.edu}


\begin{abstract}
Kernel-based feature selection is an important tool in nonparametric statistics. 
Despite many practical applications of kernel-based feature selection, there is 
little statistical theory available to support the method.  A core challenge is
the objective function of the optimization problems used to define kernel-based
feature selection are nonconvex.  The literature has only studied the statistical 
properties of the \emph{global optima}, which is a mismatch, given that the gradient-based
algorithms available for nonconvex optimization are only able to guarantee convergence
to local minima.  Studying the full landscape associated with kernel-based methods,
we show that feature selection objectives using the Laplace kernel (and other 
$\ell_1$ kernels) come with statistical guarantees that other kernels, including
the ubiquitous Gaussian kernel (or other $\ell_2$ kernels) do not possess.
Based on a sharp characterization of the gradient of the objective function, 
we show that $\ell_1$ kernels eliminate unfavorable stationary points that appear 
when using an $\ell_2$ kernel.  Armed with this insight, we establish statistical 
guarantees for $\ell_1$ kernel-based feature selection which do not require reaching 
the global minima. In particular, we establish model-selection consistency of 
$\ell_1$-kernel-based feature selection in recovering main effects and hierarchical 
interactions in the nonparametric setting with $n \sim \log p$ samples.

\end{abstract}

\maketitle

\section{Introduction}
\indent\indent
Statistical learning problems are often characterized by data sets in which both the
number of data points, $n$, and the number of dimensions, $p$, are large.  Such scaling
is increasingly common in applied problem domains, and it is often accompanied by a focus 
on prediction and flexible nonparametric models in such domains.  Examples of such problem
domains include text classification, object recognition, and genetic 
screening~\cite{LiChWaMoTrRoTaLi17, CaiLuWaYa18, DasRa20}.  Even in such domains, however, 
there is a tension between prediction and interpretation~\cite{Arras17, Rudin19}, and increasingly a
call for ``white-box'' nonparametric modeling, where effective prediction and interpretability 
are both required~\cite{GuidottiMoRuTuGiPe18, MurdochSiKuAbYu19, Miller19}.

\newcommand{\HSIC}{{\rm HSIC}}
\newcommand{\T}{\mathcal{T}}
One general approach to addressing this challenge involves the use of \emph{kernel-based
feature selection}.  Kernel-based methods are nonparametric and yet have mathematical 
structure that can be exploited for interpretability.  In particular algorithms, 
kernel-based feature selection methods have the advantage of being able to find 
reduced-dimensional representations of regression functions, while capturing nonlinear
relationships between the features and response.  Moreover, kernel-based feature selection
methods are expressed as objective functions in an optimization framework, and blend 
appealingly with the modern focus on gradient-based optimization methods for fitting
models.  Two main objectives have become dominant in the literature on kernel-based 
feature selection:
\begin{enumerate}
	\item \emph{Hilbert-Schmidt Independence Criterion} (HSIC). 
This is a nonparametric dependence measure based on the Hilbert-Schmidt norm of 
a covariance operator~\cite{GrettonBoSmSc05}.  This dependence measure can be used
for feature selection in the following way~\cite{SongSmGrBoBe07, SongSmGrBeBo12}.
Let $(X, Y)$ denote the data where $X \in \R^p$ is the feature vector and $Y \in \R$ 
is the response. Let $k(x, x')$ be a positive definite kernel. For any vector 
$x \in \R^p$ and any subset $\T \subseteq \{1, 2, \ldots, p\}$, denote $x^\T \in \R^p$ 
with components $x_i^\T = x_i$ if $i \in \T$ and let $x_i^\T = 0$ if $i \not\in \T$. 
Let  $(X', Y')$ denote an independent copy of $(X, Y)$.  The HSIC-based approach 
to feature selection finds a subset of features by optimizing 
\begin{equation*}
\begin{split}
	\maximize_{\T: \T \subseteq \{1, 2, \ldots, p\}}~~ &\HSIC(\T) \\
	\text{where}~~~~&\HSIC(\T) = \E[Y Y' k(X^\T, (X')^\T)].
\end{split}
\end{equation*}	
Subsequent work studied continuous relaxations of this 
objective~\cite{MasaeliFuDy10, YamadaJiSiXi14}.  Most of the focus in this literature
is, however, computational, and there are currently no general statistical guarantees 
available for the HSIC-based approach. 
\item \emph{Kernel ridge regression} (KRR). In this framework the features are multiplied 
by a set of weights (either discrete or continuous), and the following objective is 
formed~\cite{WestonMuChPoPoVa00, GrandvaletCa02, CaoShSuYaCh07, Allen13, ChenStWaJo17}:
\begin{equation*}
\begin{split}
	\minimize_{\T: \T \subseteq \{1, 2, \ldots, p\}} ~~&\KRR(\T) \\
	\text{where}~~~~&\KRR(\T) = \half \E[(Y-f(X^\T))^2] + \frac{\lambda}{2} \norm{f}_\H^2, 
\end{split}
\end{equation*}
where $\norm{\cdot}_{\H}$ denotes the norm of $\H$, a reproducing kernel Hilbert space. 
This objective is optimized jointly over the weights and the regression function.
\cite{ChenStWaJo17} prove that the \emph{global} optima of the KRR objective are 
feature-selection consistent.  No statistical guarantees are available for continuous 
relaxations of the discrete objective. 
\end{enumerate} 
Both of the discrete and continuous HSIC and KRR objectives are nonconvex. The 
difficulty of analyzing such nonconvex objectives has led to a lack of understanding 
of the statistical properties of the resulting feature-selection algorithms.  Indeed,
for HSIC, the most recent work has been disappointing---it has been shown via counterexamples 
that the \emph{global} optima of the HSIC objective (discrete or continuous) can fail
to select important features and the overall procedure is therefore inconsistent~\cite{LiuRu20}.  
The picture is slightly more favorable for KRR, in that the \emph{global} optima of the 
\emph{discrete} objective is selection consistent; however, this is the lone guarantee 
available in the literature \cite{ChenStWaJo17}.  No other guarantees exist regarding 
the \emph{local optima} or \emph{stationary points} for any \emph{continuous} relaxation
of the KRR objective---yet these relaxations are the most critical to algorithmic 
success in practice. 



Our work studies the landscape of the \emph{continuous} KRR objective, most notably
we study \emph{all} of the stationary points (not simply the global optima).  Despite 
the nonconvexity of the objective, we show that, with a carefully designed kernel, 
such \emph{stationary point} have provably benign statistical guarantees. Formally, 
assuming without loss of generality that $\E[Y] = 0$ (an assumption that we make
throughout the paper),\footnote{In general case where $\E[Y] \neq 0$, we need to add an intercept term in the KRR objective.} 
in this paper we consider minimizing the following form of KRR-based objective: \vskip 0.05in
\begin{equation}
\label{eqn:population-objective}
\begin{split}
	\minimize_{\beta: \beta \ge 0, \norm{\beta}_1 \le M}~~ &\obj_\gamma(\beta) \defeq \obj(\beta) + \gamma \norm{\beta}_1 \\
	~~\text{where}~~&\obj(\beta) = \min_{f\in \H}~ \half \E\left[(Y - f(\beta^{1/q} \odot X))^2\right] + \frac{\lambda}{2} \norm{f}_\H^2,
\end{split}
\end{equation}
and where $\lambda, \gamma, M \ge 0$ are regularization parameters. 
Above we use the notation shorthand
$(\beta^{1/q}\odot X) = (\beta^{1/q}X_1, \beta_2^{1/q} X_2, \ldots, \beta_p^{1/q}X_p)$
for a vector $\beta \in \R_+^p$.
We take the reproducing kernel Hilbert space (RKHS) $\H$ to be 
of $\ell_q$ type, where $q = 1, 2$, meaning that the kernel $k$ 
associated with the RKHS $\H$ in the objective takes the form 
$k(x, x') = h(\norm{x-x'}_{q}^q)$, where the notation $\norm{z}_q$ 
refers to the Euclidean $\ell_q$ norm of a vector $z$.  Examples of 
the $\ell_q$ type RKHS include the Gaussian RKHS, where 
$k(x, x') = \exp(-\norm{x-x'}_2^2)$, and the Laplace RKHS, where 
$k(x, x') = \exp(-\norm{x-x'}_1)$.  One of our major findings is 
the choice of the $\ell_1$ kernel (e.g., the Laplace kernel) rather 
than an $\ell_2$ kernel (e.g., the Gaussian) yields significant
improvements to the landscape of the (nonconvex) objective function (both 
the population case and the finite-sample case).  This is suggested
by the following example, which shows how the choice of an $\ell_1$ 
kernel eliminates bad stationary points that would otherwise appear 
for an $\ell_2$ kernel.

\vspace{0.25cm}
\begin{exampleintro}
\label{ex:intro-one}
Consider an additive model where the response $Y$ is the sum of individual 
independent main effects, $f_i^*(X_i)$; i.e., $Y = \sum_{i=1}^p f^*_i(X_i)$, 
where $X_1 \perp X_2 \perp \ldots \perp X_p$. Consider the KRR objective function 
$\mathcal{J} (\beta)$ (see equation~\eqref{eqn:population-objective}). 
We have the following description of the population landscape of the 
KRR objective $\mathcal{J}(\beta)$: 
\begin{itemize}
\item For $q= 1, 2$, the global minimum of $\mathcal{J}(\beta)$ satisfies $\beta_j > 0$ for all $1\le j\le p$.
\item For $q = 1$, any \emph{stationary point} $\beta$ of $\mathcal{J}(\beta)$ satisfies~$\beta_j > 0$ for all $1\le j\le p$.
\item For $q = 2$, $\beta = 0$ is a \emph{stationary point} if $\Cov(f_j^*(X_j), X_j) = 0$ for all $1\le j\le p$. \nolinebreak
\item For $q = 2$, there is a stationary point $\beta$ with $\beta_j = 0$ if $\Cov(f_j^*(X_j), X_j^r) = 0$ for $r = 1, 2$.
\end{itemize}
Under the additive model considered in the example, all the features are important. 
Thus we would like our feature selection algorithm to converge to some 
$\beta$ such that $\beta_j > 0$ for all $j \in [p]$.  Our example shows,
however, that although the global minimum for both $\ell_1$ and $\ell_2$ 
kernels satisfy this desideratum, a gradient-descent algorithm may become 
trapped at a bad stationary point (where $\beta_j = 0$ for some $j$) if one 
uses an $\ell_2$ kernel. This does not occur if one uses an $\ell_1$ kernel. 
\end{exampleintro}


The previous example demonstrates the clear advantage of the $\ell_1$ kernel over the $\ell_2$ kernel in the context of an
additive model.  This same advantage in fact holds under more general models.    
We sketch why this is the case---why the $\ell_1$ type RKHS leads to a better 
objective landscape than the $\ell_2$ type RKHS---with formal details to follow 
in subsequent sections.  The key to our result is a sharp characterization of 
the gradient of the KRR objective $\grad_\beta \obj(\beta)$ in the context of
any joint distribution for $(X, Y)$.  
Let $f_\beta$ be the minimum of the KRR in equation~\eqref{eqn:population-objective}, 
and let $r_\beta$ denote the residual, $r_\beta(x, y) = y - f_\beta(x)$. 
Equations~\eqref{eqn:result-of-l-1} and~\eqref{eqn:result-of-l-2} characterize the leading terms 
of the gradient. Letting $\bar{\mu}$ denote a measure implicitly determined solely by the 
kernel \nolinebreak $k$, we have the following characterization of the gradient $\grad_\beta \obj(\beta)$
(below $o_\lambda(1)$ denotes a quantity that tends to $0$
as $\lambda$ tends to $0$):
\begin{itemize}
\item For $q = 1$, the gradient of the objective $\grad \obj(\beta)$ takes 
the form 
	\begin{equation}
	\label{eqn:result-of-l-1}
		\partial_{\beta_l} \obj(\beta) = -\frac{1}{\lambda} 
			\left(\iint \left|\Cov\left(r_\beta(\beta\odot X, Y)e^{i\langle \beta \odot X, \omega \rangle}, e^{i \zeta_l X_l}\right)\right|^2 
				\cdot \frac{d\zeta_l}{\pi\zeta_l^2}\cdot \bar{\mu}(d\omega) + o_\lambda(1)\right).
	\end{equation}
\item For $q = 2$, the gradient of the objective $\grad \obj(\beta)$ takes 
the form 
	\begin{equation}
	\label{eqn:result-of-l-2}
		\partial_{\beta_l} \obj(\beta) = -\frac{1}{\lambda} 
			\left(\int \left|\Cov\left(r_\beta(\beta^{1/2}\odot X, Y)e^{i\langle \beta^{1/2} \odot X, \omega \rangle}, X_l\right)\right|^2 \cdot 
				\bar{\mu}(d\omega) + o_\lambda(1)\right).
	\end{equation}
\end{itemize}
Compare the leading terms of the gradient $\partial_{\beta_l} \obj(\beta)$ in 
equations~\eqref{eqn:result-of-l-1} and~\eqref{eqn:result-of-l-2}.
In the case of $q = 1$, the gradient is a weighted
average of the square of the covariance between a (modified) residual and the exponential function $e^{i \zeta_l X_l}$. Because 
$\{e^{i \zeta_l X_l}\}_{\zeta_l \in \R}$ forms a basis, the gradient with 
respect to $X_l$ captures all functions of $X_l$ that remain in the residual.  
This is in stark contrast to the case of $q = 2$ where the gradient is only able 
to capture signal that is \emph{linear} in $X_l$.  This shows the necessity of 
using an $\ell_1$ kernel in order to capture \emph{nonlinear} signals. Underlying 
the derivations of equations~\eqref{eqn:result-of-l-1} and~\eqref{eqn:result-of-l-2} 
is the development of novel Fourier analytic techniques to \emph{analytically} 
characterize the connections among the solutions of \emph{a family of} kernel ridge regression 
problems indexed by the parameter $\beta$. 

The organization of the paper is as follows. Section~\ref{sec:preliminary-main-text} sets out notation and preliminary details. Section~\ref{sec:distinction-l-1-l-2} 
formalizes the characterization of the gradient for $\ell_1$ and $\ell_2$ kernels 
alluded to above. Using our characterization of the gradient, we show how to 
provide statistical guarantees for kernel feature selection \emph{without} requiring the algorithm to find a global 
minimum. Section~\ref{sec:population-guarantees} gives our first set of results, 
showing that, in the population, the KRR-based objective has the following two 
desirable properties:
\begin{itemize}
\item Any \emph{stationary point} reached by the algorithm excludes noise variables. This applies to both $\ell_1$ and $\ell_2$ kernels. 
\item The algorithm is able to recover main effects and hierarchical interactions as long as the regularization parameters 
	$\lambda, \gamma$ are sufficiently small compared to the signal size. This result applies only to the $\ell_1$ kernel. Our result 
	provides a precise mathematical characterization of signals for which recovery is feasible. 
\end{itemize}

Section~\ref{sec:finite-sample-guarnatees} contains our second set of results 
which translate the population guarantees of Section~\ref{sec:population-guarantees} 
into finite-sample guarantees. We show that with a careful choice of the 
regularization parameters $\lambda, \gamma, M \ge 0$, any \emph{stationary point} of the finite sample KRR objective can achieve (with high probability) precisely the same statistical guarantees as the population version whenever the sample size satisfies $n \gg \log p$. 
The key mathematical result that allows this translation is a high-probability 
concentration statement which shows that the empirical gradient is uniformly close to the 
population gradient when $n \gg \log p$. The derivation of the concentration result 
is non-trivial; it leverages the following ideas: (i) a functional-analytic 
characterization of \emph{a family of} kernel ridge regression problems; (ii) 
Maurey's empirical method to bound the metric entropy; and (iii) large-deviation 
results for the supremum of sub-exponential \nolinebreak processes. The result
is that we are able to provide finite-sample statistical guarantees for the kernel 
feature selection algorithm without requiring the algorithm to reach a global minimum.

\subsection{Notation.}
\label{sec:notation}
\begin{table}[!ht]
\begin{center}
\begin{tabular}{r c p{10cm} }
\toprule
$\R$      & $\triangleq$ &the set of real numbers\\
$\R_+$ & $\triangleq$ & the set of nonnegative reals \\
$\C$     & $\triangleq$ &the set of  complex numbers\\
$[p]$ & $\triangleq$ &the set $\{1, 2, \ldots, p\}$ \\
$2^{[p]}$ & $\triangleq$ & the set of all subsets of $[p]$ \\
$\wbar{z}$ & $\triangleq$ & the conjugate of a complex number $z \in \C$ \\
$v_S$ & $\triangleq$ & restriction of a vector $v$ to the index set $S$\\
$\supp(\mu)$ & $\triangleq$ & the support for a measure $\mu$ \\
$\norm{v}_q$ & $\triangleq$ & the $\ell_q$ norm of the vector $v$: $\norm{v}_q = (\sum_i |v_i|^q)^{1/q}$\\  
$\norm{v}_{q, \beta}$ & $\triangleq$ & the weighted $\ell_q$ norm of the vector $v$: \\
	&&$\norm{v}_{q, \beta} = (\sum_i \beta_i |v_i|^q)^{1/q}$\\  
$\mathcal{C}^\infty(\R_+)$  & $\triangleq$ & the set of functions $f$ infinitely differentiable 
				 on $(0, \infty)$ whose derivatives are right continuous at $0$ \\
$f^{(l)}(x)$ & $\triangleq$ & the $l$-th derivative of a function $f$ at $x$ \\
$f^{(l)}(0)$ & $\triangleq$ & $\lim_{x\to 0^+} f^{(l)}(x)$ for any $f \in \mathcal{C}^\infty(\R_+)$ \\
$\mathcal{L}_r(\R^p)$ & $\triangleq$ & the set of all functions such that $\int |f(x)|^r dx < \infty$ \\
$\norm{f}_\infty$ & $\triangleq$ & $\sup_{x \in \R^p} |f(x)|$ the supremum of $|f|$\\
$\norm{Z}_{\mathcal{L}_r(\P)}$ & $\triangleq$ & $(\E[|Z|^r])^{1/r}$ the $L_r$ norm of a random variable $Z$\\
$\mathcal{F}(f)(\omega)$ & $\triangleq$ & Fourier transform of the function $f: \R^p \to \R$: \\
&&	$\mathcal{F}(f)(\omega) = \frac{1}{(2\pi)^{p/2}}\int e^{-i \langle x, \omega\rangle} f(x) dx$. \\
\bottomrule
\end{tabular}
\caption{Notation}
\label{table:notation}
\end{center}
\end{table}

The notation $\P, \E$ are reserved for the population distribution of the data $(X, Y)$, and 
$\what{\P}, \what{\E}$ are reserved for the empirical distribution. 
The notation $\H$ stands for the $\ell_q$-type RKHS associated with the kernel function $k(x,x') = h(\norm{x-x'}_q^q)$.
The notation $\mu, Q, q_t, \psi_t, p$ are reserved to denote the measure and functions as they appeared in 
equations~\eqref{eqn:l_1_kernel}--\eqref{eqn:notation-of-p-2}. The notation $\mathbf{1}$ stands for the all $1$ vector
in $\R^p$. 

The function $f_\beta \in \H$ denotes the minimum of the KRR at population level, i.e.,
\begin{equation}
\label{eqn:solution-KRR-beta-population}
	f_\beta =\argmin_{f\in \H}~ \half \E\left[(Y - f(\beta^{1/q} \odot X))^2\right] + \frac{\lambda}{2} \norm{f}_\H^2,
\end{equation}
and the function $r_\beta(x; y) = y- f_\beta(x)$ denotes the residual at the population level. 

The function $\what{f}_\beta \in \H$ denotes the minimum of the KRR in finite samples, i.e., 
\begin{equation}
\label{eqn:solution-KRR-beta-empirical}
	\what{f}_\beta =\argmin_{f\in \H}~ \half \what{\E}\left[(Y - f(\beta^{1/q} \odot X))^2\right] + \frac{\lambda}{2} \norm{f}_\H^2,
\end{equation}
and the function $\what{r}_\beta(x; y) = y- \what{f}_\beta(x)$ denotes the empirical residual function.

\section{Reproducing Kernel Hilbert Space of $\ell_q$ Type}
\label{sec:preliminary-main-text}
\newcommand{\Lap}{{\rm Lap}}

This section reviews the basic properties of ``$\ell_q$-type'' RKHS $\H$ whose associated reproducing 
kernel $k(x, x')$ takes the form $k(x, x')= h (\norm{x - x'}_q^q)$, where 
$h\in \mathcal{C}^\infty[0, \infty)$ ($\mathcal{C}^\infty[0, \infty)$ denotes the set of functions 
that are infinitely differentiable on $[0, \infty)$, see Table~\ref{table:notation} in Section~\ref{sec:notation}).  
Throughout the paper we focus on the cases $q=1$ and $q = 2$.

\subsection{Characterization of the $\ell_q$-type Positive Definite Kernels.}
Proposition~\ref{proposition:k-pos-definite-representation} identifies all the functions $h$ such that the 
mapping $k: \R^p \times \R^p \mapsto \R$ where $k(x, x')= h (\norm{x - x'}_q^q)$ is a positive 
definite kernel on $\R^p \times \R^p$ for all integer $p \in \N$.

\begin{proposition}
\label{proposition:k-pos-definite-representation}
Let $h \in \mathcal{C}^\infty[0, \infty)$ and $q \in \{1, 2\}$. The following statements are equivalent: 
\begin{itemize}
\item The mapping $k: \R^p \times \R^p \mapsto \R$ where 
	$k(x, x') = h(\norm{x-x'}_q^q)$ is positive definite for all positive integer $p \in \N$.
\item The following representation on $h$ holds: for some nonnegative finite measure $\mu$ on 
	$[0, \infty)$ with $\mu((0, \infty)) > 0$, 
	\begin{equation}
		\label{eqn:l_1_kernel-prop}
		h(x) = \int_0^\infty e^{-t x} \mu(dt).
	\end{equation}
\end{itemize}
\end{proposition}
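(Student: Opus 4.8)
The plan is to prove the two implications separately; the short one is \eqref{eqn:l_1_kernel-prop}$\;\Rightarrow\;$positive definiteness, and the substance is in the converse. Throughout I read ``positive definite'' in the strict sense (the Gram matrix at distinct points is nonsingular), which is what the statement requires: a constant $h>0$ produces a positive \emph{semi}definite kernel that cannot be written as $\int_0^\infty e^{-tx}\,\mu(dt)$ with $\mu((0,\infty))>0$. For the easy direction, suppose $h(x)=\int_0^\infty e^{-tx}\,\mu(dt)$ with $\mu$ nonnegative, finite, and $\mu((0,\infty))>0$. Substituting, $k(x,x')=\int_0^\infty G_t(x,x')\,\mu(dt)$ with $G_t(x,x')\defeq e^{-t\norm{x-x'}_q^q}$, a nonnegative mixture, so it suffices that each $G_t$, $t>0$, be strictly positive definite on $\R^p\times\R^p$ (the atom of $\mu$ at $0$ adds only a constant, positive semidefinite kernel, and the mass on $(0,\infty)$ then makes the mixture strictly positive definite). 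Now $x\mapsto e^{-t\norm{x}_q^q}$ factorizes as $\prod_{i}e^{-t|x_i|^q}$, so its Fourier transform on $\R^p$ is the product of the one-dimensional transforms: for $q=1$ this is $\propto\prod_i(t^2+\omega_i^2)^{-1}$ and for $q=2$ it is $\propto\prod_i e^{-\omega_i^2/(4t)}$, both strictly positive on $\R^p$, so Bochner's theorem gives strict positive definiteness.

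For the converse, assume $k(x,x')=h(\norm{x-x'}_q^q)$ is positive definite on $\R^p\times\R^p$ for every $p\in\N$. The target is that $h$ be completely monotone, i.e.\ $(-1)^nh^{(n)}(x)\ge0$ for all $x\ge0$, $n\ge0$: the Hausdorff--Bernstein--Widder theorem then gives $h(x)=\int_{[0,\infty)}e^{-tx}\,\mu(dt)$ with $\mu$ nonnegative, necessarily finite since $\mu([0,\infty))=h(0)<\infty$ and with $\mu((0,\infty))>0$ since otherwise $h$ is constant and $k$ is not strictly positive definite. For $q=2$ this is precisely Schoenberg's classical theorem on radial functions positive definite in all Euclidean dimensions, applied after writing $k(x,x')=\phi(\norm{x-x'}_2)$ with $\phi(r)=h(r^2)$, and I would invoke it directly. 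A uniform argument that also covers $q=1$ proceeds by feeding the positive-definiteness quadratic form special configurations in $\R^p$ with $p\to\infty$. Evaluating it on the $2^n$ vertices of the scaled Hamming cube $\{0,s\}^n\subset\R^n$---whose pairwise $\ell_q^q$-distances equal $s^q$ times the Hamming distance, hence are \emph{linear} in it---against the sign vector $\epsilon\mapsto(-1)^{|\epsilon|}$ (with $|\epsilon|$ the number of ones) collapses, by a Vandermonde count of the vertex pairs at each Hamming distance, to $\sum_{m=0}^n(-1)^m\binom nm h(s^q m)\ge0$; writing $c=s^q$, this is the discrete complete-monotonicity inequality $(-1)^n\Delta_c^nh(0)\ge0$, valid for all $c>0$ and $n\ge0$. (Collinear chains $x^{(j)}=(js,0,\dots,0)$ likewise force every Toeplitz matrix $[h(s^q|i-j|)]_{i,j}$ to be positive semidefinite.) Dividing by $c^n$ and letting $c\to0^+$ yields $(-1)^nh^{(n)}(0)\ge0$; upgrading these ``base point $0$'' inequalities to all $x>0$ and then passing to a limit over refining grids to produce the Laplace representation finishes the proof.

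I expect the genuine obstacle to be that final step. The clean Hamming-cube computation only delivers complete monotonicity \emph{at the origin}; obtaining $(-1)^n\Delta_c^nh(x)\ge0$ for every $x>0$ requires configurations whose pairwise $\ell_q^q$-distances realize the shifted arguments $x+cm$, and the naive attempt of appending coordinates to the cube leaves an extra, contaminating $h(0)$ on the diagonal, so one needs either a cleverer configuration or a global rather than pointwise argument---e.g.\ working with the consistent family of nonnegative Fourier-transform measures of $h(\norm{\cdot}_q^q)$ on $\R^p$ as $p\to\infty$ and extracting a scale-mixture representation, which is exactly the machinery Schoenberg's theorem packages in the Euclidean case. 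I would therefore either reproduce that machinery in the $\ell_1$ setting or, more economically, cite Schoenberg's theorem for $q=2$ together with its known $\ell_1$ analog for $q=1$.
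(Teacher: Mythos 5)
Your sufficiency argument is fine and slightly different from the paper's (you compute the Fourier transform of $e^{-t\|\cdot\|_q^q}$ and invoke Bochner; the paper simply cites that the Laplace and Gaussian kernels are positive definite and that a nonnegative mixture of positive definite kernels remains so). Your remark that the strict notion of positive definiteness is what forces $\mu((0,\infty))>0$ is also correct and is actually glossed over in the paper's necessity argument.

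The genuine gap is exactly where you flag it. Your Hamming-cube computation only yields $(-1)^n\Delta_c^n h(0)\ge 0$, i.e.\ complete monotonicity of $h$ at the single base point $0$, and for a general smooth $h$ (even one with all Taylor coefficients of the right sign at $0$) this does not propagate to $(-1)^n\Delta_c^n h(x)\ge 0$ for every $x>0$; the HBW theorem needs the latter. The paper closes the gap with a different device that you did not find: instead of evaluating a single large quadratic form, it proves by induction on $n$ that the \emph{kernel} $(x,x')\mapsto(-1)^n\Delta_s^n[h](\|x-x'\|_q^q)$ is itself positive definite on every $\R^d$. The induction step takes arbitrary $x_1,\dots,x_N\in\R^d$ and weights $\alpha\in\R^N$, lifts each $x_j$ to $\R^{d+1}$, and forms the $2N$ points $y_j=x_j$ and $y_{N+j}=x_j+s^{1/q}e_{d+1}$ with paired signed weights $\beta_j=\alpha_j$, $\beta_{N+j}=-\alpha_j$. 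Because $\|y_j-y_{N+k}\|_q^q=\|x_j-x_k\|_q^q+s$ exactly, the positive-definite quadratic form in the lifted configuration collapses to $-2\sum_{j,k}\alpha_j\alpha_k\,\Delta_s^1[h](\|x_j-x_k\|_q^q)\ge 0$ with no contaminating $h(0)$ term—the pairing is what you were missing when you tried to append a coordinate to the Hamming cube. Once $(x,x')\mapsto(-1)^n\Delta_s^n[h](\|x-x'\|_q^q)$ is known to be a positive definite kernel, the base-case device (take $N$ points at mutual $\ell_q^q$-distance $r$, use the all-ones weight vector, let $N\to\infty$) applied to this new kernel gives $(-1)^n\Delta_s^n h(r)\ge 0$ for every $r\ge 0$, which is the full hypothesis of Hausdorff--Bernstein--Widder. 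In short: rather than realize the shifted arguments $x+cm$ directly as distances, prove the finite-difference kernel is positive definite and then reuse the base-case argument at the desired base point.
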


\begin{proof}
A proof of Proposition~\ref{proposition:k-pos-definite-representation} when $q = 2$ is known as Schoenberg's 
Theorem~\cite[Theorem 7.13]{Wendland04}. For convenience of the reader, we give a proof in 
Appendix~\ref{sec:proof-k-pos-definite-representation}. 
\end{proof}

Throughout the rest of the paper, we assume that $h$ admits the representation as described 
in equation~\eqref{eqn:l_1_kernel-prop}. In particular, the $\ell_q$ kernel $k(x, x')$ admits the following 
representation: 
\begin{equation}
\label{eqn:l_1_kernel}
k(x, x') =  \int_0^\infty e^{-t \norm{x-x'}_q^q} \mu(dt). 
\end{equation}
Equation~\eqref{eqn:l_1_kernel} essentially ssays that the $\ell_q$-type positive definite kernel 
$k(x, x')= h (\norm{x - x'}_q^q)$ can be regarded as a 
weighted average of the kernel $k_{t}(x, x') = \exp(- t \norm{x-x'}_q^q)$ over different scales $t \ge 0$. 
 We will make use of this integral representation throughout the paper.
 


\subsection{Characterization of the $\ell_q$-type RKHS space.}
Proposition~\ref{proposition:norm-of-H} provides an \emph{analytic} characterization of the 
space and inner product of the $\ell_q$-type RKHS $\H$. The derivation is straightforward using 
the existing theory on RKHS ~\cite{Wendland04, BerlinetTh11, Paulsen16}. Denote the Fourier
transform of a function $t \mapsto f(t)$ to be $\omega \mapsto \mathcal{F}(f)(\omega)$
(see Table~\ref{table:notation} in Section~\ref{sec:notation} for a formal definition of the Fourier 
transform $\mathcal{F}(f)(\omega)$).

\begin{proposition}
\label{proposition:norm-of-H}
Let $\H$ be the $\ell_q$-type RKHS associated with the kernel $k(x, x')$ in equation~\eqref{eqn:l_1_kernel}. 
Assume that $h$ is integrable on $\R_+$. Then $\H$ has the below characterization
\begin{equation}
\label{eqn:characterization-of-the-space}
\H = \left\{f: \text{$f$ is continuous, square integrable on $\R^p$ and}~\int \frac{|\mathcal{F}(f)(\omega)|^2}{Q(\omega)} d\omega < \infty \right\}.
\end{equation}
Above, $Q(\omega) = \int_0^\infty q_t(\omega) \mu(dt)$ where $\omega \mapsto q_t(\omega)$ is defined as follows: 
\begin{equation}
	\label{eqn:notation-of-q-t-p-t}
		q_t(\omega) = \prod_{i=1}^p \psi_t(\omega_i)~~\text{where}~~\psi_t(\omega) = \frac{1}{t} \cdot \psi\left(\frac{\omega}{t}\right),
\end{equation}
with the function $\psi$ defined below, whose definition depends on the choice of $q = 1$ or $q = 2$.
\begin{itemize}
	\item For $q=1$ (i.e., $\H$ is a $\ell_1$-type RKHS), the function $\psi$ is the \emph{Cauchy} \nolinebreak density:
		\begin{equation}
		\label{eqn:notation-of-p-1}
			\psi(\omega) = \frac{1}{\pi(1+\omega^2)}.
		\end{equation}
	\item For $q=2$ (i.e., $\H$ is a $\ell_2$-type RKHS), the function $\psi$ is the \emph{Gaussian} \nolinebreak density:
		\begin{equation}
		\label{eqn:notation-of-p-2}
			\psi(\omega) = \frac{1}{2\sqrt{\pi}} e^{-\omega^2/4}.
		\end{equation}
\end{itemize}
Additionally, we can analytically characterize the inner product $\langle f, g\rangle_\H$ as follows: 
\begin{equation}
	\label{eqn:norm-of-H}
		\langle f, g\rangle_{\H} = \frac{1}{(2\pi)^p} 
			\cdot \int_{\R^p} \frac{\mathcal{F}(f)(\omega)\wbar{\mathcal{F}(g)(\omega)}}{Q(\omega)} d\omega.
\end{equation}
\end{proposition}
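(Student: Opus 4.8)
The plan is to read Proposition~\ref{proposition:norm-of-H} as the specialization, to the kernel in \eqref{eqn:l_1_kernel}, of the classical Fourier description of the RKHS of a real, continuous, integrable, translation--invariant positive definite kernel $k(x,x')=\Phi(x-x')$ on $\R^p$: the RKHS is exactly the set of continuous $\mathcal{L}_2(\R^p)$ functions $f$ with $\int |\F(f)(\omega)|^2/\F(\Phi)(\omega)\,d\omega<\infty$, and $\langle f,g\rangle_\H=(2\pi)^{-p/2}\int \F(f)(\omega)\wbar{\F(g)(\omega)}/\F(\Phi)(\omega)\,d\omega$, the power of $2\pi$ being dictated by the normalization of $\F$ in Table~\ref{table:notation}; see \cite{Wendland04, Paulsen16}. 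Rather than cite this as a black box I would also record the two--line self-contained argument behind it: let $\mathcal{G}$ be the candidate space of \eqref{eqn:characterization-of-the-space} with the inner product \eqref{eqn:norm-of-H}; $\mathcal{G}$ is a Hilbert space, and since $\F(\Phi(\cdot-x))(\omega)=e^{-i\langle x,\omega\rangle}\F(\Phi)(\omega)$ one has $k(\cdot,x)=\Phi(\cdot-x)\in\mathcal{G}$ with $\langle f,k(\cdot,x)\rangle_{\mathcal{G}}=(2\pi)^{-p/2}\int \F(f)(\omega)e^{i\langle x,\omega\rangle}\,d\omega=f(x)$ by Fourier inversion (using that $\F(\Phi)$ is real and positive, which holds by Proposition~\ref{proposition:k-pos-definite-representation} and Bochner's theorem); hence $\mathcal{G}$ has reproducing kernel $k$, so $\mathcal{G}=\H$. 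With this reduction, the whole statement comes down to the identity $\F(\Phi)=(2\pi)^{p/2}Q$ together with strict positivity of $Q$.

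To establish $\F(\Phi)=(2\pi)^{p/2}Q$ I would compute the Fourier transform directly from the representation \eqref{eqn:l_1_kernel}. The integrand $z\mapsto e^{-t\norm{z}_q^q}$ is nonnegative, and under the integrability hypothesis $z\mapsto h(\norm{z}_q^q)$ lies in $\mathcal{L}_1(\R^p)$, so Fubini gives $\F(\Phi)(\omega)=\int_0^\infty \F_z(e^{-t\norm{z}_q^q})(\omega)\,\mu(dt)$. The scale--$t$ transforms factorize over coordinates and are explicit: for $q=1$, $e^{-t\norm{z}_1}=\prod_i e^{-t|z_i|}$ and the one--dimensional transform of $z\mapsto e^{-t|z|}$ equals $\sqrt{2\pi}\,\psi_t$ with $\psi$ the Cauchy density \eqref{eqn:notation-of-p-1}; for $q=2$, $e^{-t\norm{z}_2^2}=\prod_i e^{-tz_i^2}$ and the transform of $z\mapsto e^{-tz^2}$ equals $\sqrt{2\pi}\,\psi_{\sqrt{t}}$ with $\psi$ the Gaussian density \eqref{eqn:notation-of-p-2}. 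Hence $\F_z(e^{-t\norm{z}_q^q})(\omega)=(2\pi)^{p/2}\prod_{i=1}^p\psi_{\tau(t)}(\omega_i)=(2\pi)^{p/2}q_{\tau(t)}(\omega)$, where $\tau(t)=t$ for $q=1$ and $\tau(t)=\sqrt{t}$ for $q=2$; replacing $\mu$ by its pushforward under $\tau$ (harmless---this only renames the mixing measure) gives $\F(\Phi)(\omega)=(2\pi)^{p/2}\int_0^\infty q_t(\omega)\,\mu(dt)=(2\pi)^{p/2}Q(\omega)$, with $q_t,\psi_t$ as in \eqref{eqn:notation-of-q-t-p-t}.

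Two easy facts then finish the argument. First, $\psi_t>0$ on all of $\R$ and $\mu((0,\infty))>0$, so $Q(\omega)>0$ for every $\omega\in\R^p$ and the quotient $|\F(f)(\omega)|^2/Q(\omega)$ is everywhere well defined; moreover $\int_{\R^p}Q(\omega)\,d\omega=\int_0^\infty\big(\int_{\R^p}q_t(\omega)\,d\omega\big)\mu(dt)=\mu([0,\infty))<\infty$, so $Q\in\mathcal{L}_1(\R^p)$, consistent with $\Phi$ being continuous and integrable. Second, substituting $\F(\Phi)=(2\pi)^{p/2}Q$ into the general formulas turns the condition $\int|\F(f)|^2/\F(\Phi)<\infty$ into \eqref{eqn:characterization-of-the-space} and turns $(2\pi)^{-p/2}\int \F(f)\wbar{\F(g)}/\F(\Phi)$ into $(2\pi)^{-p}\int \F(f)\wbar{\F(g)}/Q$, which is exactly \eqref{eqn:norm-of-H}.

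The step I expect to cause the most trouble is the claim used twice above (for Fubini and for the general RKHS theorem): that the integrability assumed on $h$ really forces $\Phi=h(\norm{\cdot}_q^q)$ to be continuous and integrable on $\R^p$. Rewriting $\int_{\R^p}h(\norm{z}_q^q)\,dz$ in $\ell_q$--shell coordinates turns it into a one--dimensional integral of $h$ against a polynomial weight $s^{p-1}$ (after a further change of variables when $q=2$), and since $h$ is completely monotone but need not decay polynomially, the bare hypothesis may need to be upgraded---e.g. directly to $\Phi\in\mathcal{L}_1(\R^p)$, equivalently a moment bound on the mixing measure $\mu$ near $t=0$---for the characterization to apply when $p\ge 2$. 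Everything else is bookkeeping with the $(2\pi)$--powers fixed by the Fourier normalization and with the $q=2$ reparametrization $t\mapsto\sqrt{t}$ of the mixing measure.
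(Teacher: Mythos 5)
Your proof takes essentially the same route as the paper's: identify the candidate space $\mathcal{G}$ (denoted $\H'$ in the paper), compute the Fourier transform of $k(\cdot,x)$, and verify the reproducing property via Fourier inversion, invoking Moore--Aronszajn uniqueness to conclude $\mathcal{G}=\H$. Both you and the paper reduce everything to the identity $\mathcal{F}(\Phi)=(2\pi)^{p/2}Q$.

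Where you are actually \emph{more} careful than the paper is worth noting. First, you correctly observe that the $q=2$ case needs a reparametrization $t\mapsto\sqrt{t}$ of the mixing measure: with the paper's definition $\psi_t(\omega)=\frac{1}{t}\psi(\omega/t)$, the inverse Fourier transform of $\psi_t$ is $z\mapsto e^{-t^2z^2}$, not $z\mapsto e^{-tz^2}$, so the paper's claimed identity $e^{-t\norm{z}_2^2}=\int e^{i\langle\omega,z\rangle}q_t(\omega)\,d\omega$ fails for $t\neq 1$. The paper's appendix proof silently assumes this identity and the paper's Example only checks it at $\mu=\delta_1$, where the issue is invisible. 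Your fix (push $\mu$ forward under $\tau(t)=\sqrt{t}$, or equivalently use $q_{\sqrt{t}}$ in the definition of $Q$) is the right one, and the paper really should have stated it. Second, your concern about $\Phi=h(\norm{\cdot}_q^q)\in\mathcal{L}_1(\R^p)$ is also genuine: the paper's ``$h$ integrable on $\R_+$'' is $\int t^{-1}\mu(dt)<\infty$, whereas $\Phi\in\mathcal{L}_1(\R^p)$ needs $\int t^{-p/q}\mu(dt)<\infty$, a strictly stronger condition when $p\ge 2$ (the paper's appendix proof asserts ``$k(\cdot,x)\in\mathcal{L}_1(\R^p)$ since $h\in\mathcal{L}_1(\R^p)$ by assumption,'' which conflates the two). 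In practice the paper's Assumption~\ref{assumption:mu-compact} (compact $\supp(\mu)$, bounded away from $0$ when $q=2$) supplies the needed moment bound for $q=2$, but for $q=1$ with $\mu$ charging a neighborhood of $0$ the hypothesis of the Proposition as stated would not suffice for $p\ge 2$. Both of these are normalization/hypothesis issues in the paper rather than gaps in your argument.
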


\begin{proof}
The proof is standard~\cite{Wendland04, BerlinetTh11, Paulsen16}. The only thing to note is that the 
Fourier transform of the Laplace $\exp(-|t|)$ is the Cauchy density $\frac{1}{\pi (1+\omega^2)}$, and the Fourier 
transform of the Gaussian $\exp(-t^2)$ is Gaussian, $\frac{1}{2\sqrt{\pi}}\exp(- \omega^2/4)$. 
For completeness, we provide a proof in Appendix~\ref{sec:proof-norm-of-H}. 
\end{proof}


\subsection{Examples of the $\ell_q$-type RKHS $\H$.}
\label{sec:example-of-H}

We give concrete examples to illustrate Proposition~\ref{proposition:k-pos-definite-representation} and Corollary~\ref{corrolary:norm-of-H}. \vskip 0.1in

\begin{example}
Consider the Laplace RKHS $\H$ whose associated kernel is the Laplace function
$k(x, x') = e^{-\norm{x-x}_1}$. The corresponding measure $\mu$ is the atom at $1$.
The norm of the Laplace RKHS $\H$ is $\norm{f}_{\H}^2 = 2^{-p} 
\cdot \int |\mathcal{F}(f)(\omega)|^2 \prod_i (1+\omega_i^2) d\omega$. 
\end{example}

\begin{example}
Consider the Gaussian RKHS $\H$ whose associated kernel is the Gaussian kernel
$k(x, x') = e^{-\norm{x-x}_2^2}$. The measure $\mu$ is the atom at $1$.
The norm of the Gaussian RKHS $\H$ is $\norm{f}_{\H}^2 = \pi^{-p/2} \cdot\int |\mathcal{F}(f)(\omega)|^2 e^{\norm{\omega}_2^2/4} d\omega$. 
\end{example}

\subsection{Regularity on $\mu$.}
Throughout the paper, we assume the following regularity conditions on the measure $\mu$ to avoid unnecessary technicalities.  
\begin{assumption}
\label{assumption:mu-compact}
Assume that $\mu$ satisfies (i) $\int_0^\infty \frac{1}{t} \mu(dt) < \infty$, (ii) the support of the measure $\mu$, 
$\supp(\mu)$, is compact when $q=1,2$,
and (iii) $0 \not\in \supp(\mu)$ when $q=2$.
\end{assumption}

\begin{remark}
Assumption (i) is equivalent to the condition that $h$ is integrable on $\R_+$. This assumption is sufficient and necessary to give the 
Fourier-analytic characterization of the RKHS (Proposition~\ref{proposition:norm-of-H}). Assumption (ii) is equivalent to the 
condition that $h(x)$ satisfies an exponential lower bound; i.e., $h(x) \ge c\exp(-Cx)$ for some $c, C > 0$. Assumption (iii)
requires that $h(x)$ satisfies the upper bound $h(x) \le c\exp(-Cx)$ for some $c, C > 0$. 
\end{remark}

Our overarching goal is to document the superiority of the $\ell_1$ kernel over 
the $\ell_2$ kernel. Note that Assumption~\ref{assumption:mu-compact} places very mild 
conditions on the $\ell_1$ kernel and covers a wide range of the $\ell_1$ kernels 
commonly discussed in the literature. 
Examples include the Laplace kernel $k(x, x') = \exp(-\norm{x-x'}_1)$ and the inverse $\ell_1$ kernel: 
$k(x, x') = 1/(\normsmall{x-x'}_1^2 + 1)^{\alpha}$ where $\alpha > 0$. Assumption~\ref{assumption:mu-compact}
places slightly more stringent conditions on the $\ell_2$ kernel. Yet the condition still holds for a broad family of $\ell_2$
kernels which includes the Gaussian kernel $k(x, x') = \exp(-\normsmall{x-x'}_2^2)$ and finite mixtures of Gaussian kernels.

\section{$\ell_1$ versus $\ell_2$ Kernel: Why It Matters} 
\label{sec:distinction-l-1-l-2}

In this section, we show that one can improve the landscape of the population objective, 
$\obj(\beta)$, by choosing an $\ell_1$ rather than an $\ell_2$ kernel. In particular, 
Section~\ref{sec:landscape-toy} gives a concrete example showing
that using an $\ell_1$ kernel can eliminate bad stationary points and local minima that would 
otherwise appear when using an $\ell_2$ kernel. To understand this phenomenon we develop a novel
characterization of the gradient $\grad \obj(\beta)$ in Section~\ref{sec:analysis-of-gradient}. The 
results in Section~\ref{sec:analysis-of-gradient} are important; they bring to us deep insights regarding 
the precise statistical information that is contained in the gradient. As a demonstration, in 
Section~\ref{sec:proof-landscape-toy} we illustrate 
how to use these insights to obtain a quick proof of the landscape result in Section~\ref{sec:landscape-toy}.

\subsection{The Landscape for $\obj(\beta)$ under the Additive Model.}
\label{sec:landscape-toy}
We illustrate our claim that the $\ell_1$ kernel leads to a more benign landscape than the $\ell_2$ kernel 
using a concrete example. Consider an additive model with the following characteristics:
\begin{itemize}
\item Noiseless additive signal: $Y = \sum_{i}  f_i^*(X_i)$ for functions $f_i^*: \R \mapsto \R$. 
Without loss of generality we take $\E[ f_i^*(X_i)] = 0$ (since $\E[Y] = 0$ by assumption).
\item Independent covariates: $X_1 \perp X_2 \perp \ldots \perp X_p$. 
\end{itemize}
Under this model, Proposition~\ref{proposition:landscape-analysis-main-effect} shows that the landscape of the population objective 
$\obj(\beta)$ exhibits qualitatively different behavior when using an $\ell_1$ versus 
an $\ell_2$ kernel.

\begin{proposition}
\label{proposition:landscape-analysis-main-effect}
Given Assumption~\ref{assumption:mu-compact}, assume $\E[Y^2] < \infty$ and 
$\max_{i \in [p]} \E[X_i^4] < \infty$. 
Consider an additive model where $f_i^*(X_i) \neq 0$ for all $i \in [p]$. Write 
$\mathcal{B}_M = \{\beta \in \R_+^p: \norm{\beta}_1 \le M\}$ be the constraint set where  $M < \infty$.
\begin{enumerate}
\item Assume that $\supp(\P_X)$ is compact where $\P_X$ is the distribution of X. 
	For both $q = 1, 2$, there exists $\lambda^* > 0$ such that whenever the 
ridge penalty satisfies $\lambda \le \lambda^*$,
	\begin{equation*}
		\text{the global minimum $\beta$ of $\obj(\beta)$ in $\mathcal{B}_M$ satisfies}~\beta_i > 0~~\text{for all $i \in [p]$}.
	\end{equation*}
\item When $q = 1$, there exists $\lambda^* > 0$ such that whenever the ridge penalty 
	$\lambda \le \lambda^*$, 
	\begin{equation*}
		\text{any stationary point $\beta$ of $\obj(\beta)$ in $\mathcal{B}_M$\footnotemark satisfies}~\beta_i > 0~~\text{for all $i \in [p]$}.
	\end{equation*}
\item When $q = 2$ and if $\Cov(f_i^*(X_i), X_i) = 0$ for all $i \in [p]$, then for 
	all values of $\lambda \ge 0$, 
	\begin{equation*}
		\text{zero is a stationary point of $\obj(\beta)$ in $\mathcal{B}_M$\cref{note1}}.
	\end{equation*}
\item When $q= 2$ and if $\Cov(f_l^*(X_l), X_l) = \Cov(f_l^*(X_l), X_l^2) = 0$ for some $l \in [p]$, then for all values of $\lambda \ge 0$, 
	\begin{equation*}
		\text{there exists a stationary point of $\obj(\beta)$ in $\mathcal{B}_M$\cref{note1} which satisfies $\beta_l = 0$.}
	\end{equation*}
\end{enumerate}
\footnotetext{\label{note1} Consider a minimization problem $\minimize_{\beta \in \mathcal{C}} F(\beta)$ where $F$ is differentiable 
and $\mathcal{C}$ is a convex set. We say that $\beta \in \mathcal{C}$ is a stationary point of $F(\beta)$ in $\mathcal{C}$
if it satisfies $\langle\grad F(\beta), \beta'-\beta\rangle \ge 0$ for any $\beta' \in \mathcal{C}$~\cite{Bertsekas97}.}
\end{proposition}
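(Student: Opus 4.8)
The plan is to establish the four claims from the gradient identities of Section~\ref{sec:analysis-of-gradient}, which I will use in the exact operator form $\partial_{\beta_l}\obj(\beta)=-\tfrac{1}{2\lambda}\langle r_\beta,(\partial_{\beta_l}\Sigma_\beta)r_\beta\rangle_{\mathcal{L}_2(\P)}$ underlying \eqref{eqn:result-of-l-1}--\eqref{eqn:result-of-l-2}, where $\Sigma_\beta$ is the kernel integral operator and $\partial_{\beta_l}\Sigma_\beta$ has kernel $-\lvert x_l-x_l'\rvert\,\kappa_\beta(x,x')$ when $q=1$ and $-(x_l-x_l')^2\,\kappa_\beta(x,x')$ when $q=2$, with $\kappa_\beta(x,x')=\int_0^\infty s\,e^{-s\sum_i\beta_i\lvert x_i-x_i'\rvert^q}\,\mu(ds)$ a positive-definite kernel (finite since $\supp(\mu)$ is compact). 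The structural fact I lean on is that, under the additive model with independent covariates, $\beta_l=0$ forces the residual to split as $r_\beta=f_l^*(X_l)+R_\beta(X_{-l})$ with $\E[f_l^*(X_l)]=0$, so covariances of $r_\beta$-functionals against functions of $X_l$ factor. For part (1): for every $\beta$, discarding the ridge term and projecting $Y$ onto functions of $X_{\supp(\beta)}$ gives $\obj(\beta)\ge\tfrac12\sum_{j\notin\supp(\beta)}\Var(f_j^*(X_j))\ge c_0:=\tfrac12\min_i\Var(f_i^*(X_i))>0$ whenever $\supp(\beta)\ne[p]$; at the fixed interior point $\beta^\circ=\tfrac{M}{2p}\mathbf{1}$, compactness of $\supp(\P_X)$ and universality of the $\ell_q$ kernel make $\H$ dense in $\mathcal{L}_2(\P)$, so $\obj(\beta^\circ)\to 0$ as $\lambda\downarrow 0$, and picking $\lambda^*$ with $\obj(\beta^\circ)<c_0$ forces any global minimizer to have full support.

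For parts (3)--(4) take $q=2$ and $\beta_l=0$. Substituting $r_\beta=f_l^*(X_l)+R_\beta(X_{-l})$ into $2\lambda\,\partial_{\beta_l}\obj(\beta)=\E[r_\beta(X)r_\beta(X')(X_l-X_l')^2\kappa_\beta(X,X')]$, expanding $(X_l-X_l')^2=X_l^2-2X_lX_l'+X_l'^2$, and using $X_l\perp X_{-l}$ with $\E f_l^*=0$, the right side reduces to a linear combination of $\Cov(f_l^*(X_l),X_l)^2$, of cross terms each carrying a factor $\Cov(f_l^*(X_l),X_l)$, and of $\Cov(f_l^*(X_l),X_l^2)$, plus the ``residual term'' $2\Var(X_l)\,\langle R_\beta,\Sigma_{\kappa_\beta}R_\beta\rangle\ge 0$. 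At $\beta=0$ one has $R_0=\sum_{j\ne l}f_j^*(X_j)$ with $\E R_0=0$ and $\kappa_0$ constant, so the residual and cross terms drop and $\partial_{\beta_l}\obj(0)=-c(\lambda)\,\Cov(f_l^*(X_l),X_l)^2$ with $c(\lambda)>0$; hence if $\Cov(f_i^*(X_i),X_i)=0$ for all $i$ then $\grad\obj(0)=0$ and $0$ is stationary, proving (3). For (4), let $\beta^\sharp$ be any stationary point of $\obj$ on the face $\{\beta\in\mathcal{B}_M:\beta_l=0\}$ (say, its minimizer there); the hypotheses $\Cov(f_l^*(X_l),X_l)=\Cov(f_l^*(X_l),X_l^2)=0$ kill every term but the residual one, so $\partial_{\beta_l}\obj(\beta^\sharp)=\tfrac{\Var(X_l)}{\lambda}\langle R_{\beta^\sharp},\Sigma_{\kappa_{\beta^\sharp}}R_{\beta^\sharp}\rangle\ge 0$. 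Since any $\beta'\in\mathcal{B}_M$ gives $\beta'-\beta^\sharp=(\beta''-\beta^\sharp)+\beta_l'e_l$ with $\beta'':=\beta'-\beta_l'e_l$ on the face and $\beta_l'\ge 0$, stationarity of $\beta^\sharp$ along the face together with $\partial_{\beta_l}\obj(\beta^\sharp)\ge 0$ yields $\langle\grad\obj(\beta^\sharp),\beta'-\beta^\sharp\rangle\ge 0$, i.e.\ $\beta^\sharp$ is stationary in $\mathcal{B}_M$.

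For part (2) take $q=1$. By \eqref{eqn:result-of-l-1} and the same factorization, at any $\beta\in\mathcal{B}_M$ with $\beta_l=0$ the integrand there equals $\lvert\Cov(f_l^*(X_l),e^{i\zeta_l X_l})\rvert^2\,\lvert\E[e^{i\langle\beta\odot X,\omega\rangle}]\rvert^2$, so $-\lambda\,\partial_{\beta_l}\obj(\beta)=\iint\lvert\Cov(f_l^*(X_l),e^{i\zeta_l X_l})\rvert^2\,\lvert\E[e^{i\langle\beta\odot X,\omega\rangle}]\rvert^2\,\tfrac{d\zeta_l}{\pi\zeta_l^2}\,\bar\mu(d\omega)$. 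Now $\zeta_l\mapsto\Cov(f_l^*(X_l),e^{i\zeta_l X_l})$ is (a conjugate of) the Fourier transform of the nonzero finite signed measure $f_l^*(x)\,\P_{X_l}(dx)$, hence not identically zero; $\lvert\E[e^{i\langle\beta\odot X,\omega\rangle}]\rvert\ge\tfrac12$ uniformly over $\beta\in\mathcal{B}_M$ and $\|\omega\|_\infty\le(2M\max_i\E\lvert X_i\rvert)^{-1}$; and $\bar\mu$ charges every neighborhood of $0$ for an $\ell_1$ kernel. Together these give a uniform bound $-\lambda\,\partial_{\beta_l}\obj(\beta)\ge c_1>0$. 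If $\|\beta\|_1<M$, perturbing along $e_l$ contradicts stationarity. If $\|\beta\|_1=M$, choose $k^\star$ with $\beta_{k^\star}=\max_k\beta_k\ge M/p>0$; feasibility of the direction $e_l-e_{k^\star}$ forces $\partial_{\beta_{k^\star}}\obj(\beta)\le\partial_{\beta_l}\obj(\beta)$, hence $-\lambda\,\partial_{\beta_{k^\star}}\obj(\beta)\ge c_1$. But as $\lambda\downarrow 0$ the residual $r_\beta$ converges in $\mathcal{L}_2(\P)$ to the projection residual of $Y$ onto functions of $X_{\supp(\beta)}$, which by additivity and independence is the mean-zero function $\sum_{j\notin\supp(\beta)}f_j^*(X_j)$ --- not involving $X_{k^\star}$ --- and inserting this into \eqref{eqn:result-of-l-1} sends $-\lambda\,\partial_{\beta_{k^\star}}\obj(\beta)\to 0$, a contradiction for small $\lambda$.

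The step I expect to be the main obstacle is making this last argument uniform, so that a single $\lambda^*$ suffices: the limiting residual depends discontinuously on $\supp(\beta)$, so one must reduce to the finitely many support patterns and, within each, prove a quantitative rate --- uniform over $\beta$ whose dominant weight $\beta_{k^\star}$ stays bounded away from $0$ --- at which the KRR family $\{f_\beta\}_\beta$ captures a strongly active coordinate as $\lambda\downarrow 0$. The remaining ingredients (continuity of $\beta\mapsto\obj(\beta)$, the exact operator form of $\grad\obj$, $\mathcal{L}_2(\P)$-universality of the $\ell_q$ kernels on the relevant domain, and the support/finiteness of $\bar\mu$) are recorded earlier or standard.
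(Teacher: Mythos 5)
Your treatment of parts (1), (3), (4) follows the paper's route. For (1): the support-gap lower bound $\obj(\beta) \ge \tfrac12\min_j\Var(f_j^*(X_j))$ whenever $\supp(\beta) \ne [p]$, plus universality of the $\ell_q$ kernel on the compact $\supp(\P_X)$ to drive $\obj$ to zero at a full-support point. For (3) and (4): the decomposition $r_\beta = f_l^*(X_l) + R_\beta(X_{-l})$ at $\beta_l = 0$, the expansion $(X_l - X_l')^2 = X_l^2 - 2X_lX_l' + (X_l')^2$, and independence together with $\E f_l^* = 0$ to isolate the $\Cov(f_l^*,X_l)$, $\Cov(f_l^*,X_l^2)$, and residual pieces, then taking the minimizer of $\obj$ over the face $\{\beta_l = 0\}$ in (4) and noting $\partial_{\beta_l}\obj \ge 0$ there; this matches the paper's Part (i), (iii), (iv). For part (2), the factorization $\Cov({R}_{\beta,\omega}, e^{i\zeta_l X_l}) = \E[e^{i\langle\omega,\beta\odot X\rangle}]\,\E[f_l^* e^{i\zeta_l X_l}]$ at $\beta_l = 0$ is the paper's exact step, and your small-$\lVert\omega\rVert_\infty$ uniform lower bound on $\lvert\E[e^{i\langle\omega,\beta\odot X\rangle}]\rvert$ is a workable substitute for the paper's Jensen bound $\E[h'(\lVert X-X'\rVert_{1,\beta})] \le h'(2MM_X) < 0$; both yield a $\beta$-uniform positive constant.

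Two things in part (2) need fixing, one minor and one genuine. First, \eqref{eqn:result-of-l-1} is only an equality up to the $o_\lambda(1)$ remainder; you write it as exact. The paper controls the discrepancy via Lemma~\ref{lemma:approximate-gradient-bound}, which gives $\lvert\wtilde{\partial_{\beta_l}\obj}(\beta) - \partial_{\beta_l}\obj(\beta)\rvert \le C(1 + \lambda^{-1/2})$ using $\lambda\lVert f_\beta\rVert_{\H}^2 \le \E[Y^2]$; after multiplying by $\lambda$ your bound should read $-\lambda\partial_{\beta_l}\obj(\beta) \ge c_1 - C\lambda^{1/2}(1+\lambda^{1/2})$, still positive for $\lambda$ small, but the remainder must be carried. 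Second, and substantively: you are right that when $\lVert\beta\rVert_1 = M$ the inequality $\partial_{\beta_l}\obj(\beta) < 0$ at $\beta_l = 0$ does not by itself preclude stationarity in $\mathcal{B}_M$ -- with the $\ell_1$ constraint active the only increasing feasible directions in coordinate $l$ are of the form $e_l - e_{k^\star}$, and the stationarity condition becomes $\partial_{\beta_l}\obj \ge \partial_{\beta_{k^\star}}\obj$, so one must also control $\partial_{\beta_{k^\star}}\obj$ for a dominant active coordinate. Your proposed comparison is the right move, but, as you flag, the KRR-residual convergence to $\sum_{j\notin\supp(\beta)}f_j^*(X_j)$ must be made quantitative and uniform over $\beta$ (with fixed support and $\beta_{k^\star}$ bounded below), and that is an unresolved gap in your proposal. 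For context: the paper's own proof of Part (ii) in Appendix~\ref{sec:proof-proposition-landscape-analysis-main-effect} derives $\partial_{\beta_l}\obj(\beta) < 0$ for all $\beta$ with $\beta_l = 0$ and then concludes directly that such $\beta$ cannot be stationary, an inference that as written only covers the interior case $\lVert\beta\rVert_1 < M$; so you have correctly identified a boundary subtlety, you have just not closed it.
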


Under our additive model, we would like to select all 
signal variables, i.e., the algorithm should converge to some $\beta$ where $\beta_i > 0$ for all $i \in [p]$.
Proposition~\ref{proposition:landscape-analysis-main-effect} indicates that the kernel feature selection 
algorithm can achieve this goal if we choose $q = 1$ but not if we choose \nolinebreak $q = 2$.

\begin{itemize}
\item When $q = 2$, Proposition~\ref{proposition:landscape-analysis-main-effect} shows 
	that for sufficiently nonlinear signals (i.e. $\Cov(f_i^*(X_i), X_i) = 0$), $\beta = 0$ is a stationary point. More worryingly, when one adds $\ell_1$ regularization, zero becomes a 
	strict local minimum of $\obj_\gamma(\beta) = \obj(\beta) + \gamma\norm{\beta}_1$, trapping gradient descent in a basin of attraction.
	Note that when no signal exists, zero is also a local
	minimum of $\obj_\gamma(\beta)$. So the landscape of $\obj_\gamma(\beta)$ in a neighborhood around zero is identical whether signal is or isn't present.
	This is bad news for the numerical algorithms.
\item When $q = 1$, Proposition~\ref{proposition:landscape-analysis-main-effect} shows that we will select all signal variables ($\beta_i > 0$ for all $X_i$), as long as we converge to a stationary point of $\obj(\beta)$. First-order algorithms such as gradient descent can select 
the right variables despite the nonconvexity of the objective.
\end{itemize}

Although the additive model is contrived, the picture it paints of the landscape of $\obj(\beta)$ under $q=1$
versus $q = 2$ generalizes to other models; see Section~\ref{sec:population-guarantees} for more examples. In particular, choosing $q=2$ can lead to bad local minima/stationary points that would be absent under $q=1$.

\subsection{Analysis of the Gradient $\grad \obj(\beta)$.}
\label{sec:analysis-of-gradient}

This section studies the gradient $\grad \obj(\beta)$ in full generality, where no distributional assumptions
are made about the distribution of $(X, Y)$. Section~\ref{sec:derivation-of-the-gradient} derives a simple 
representation of the gradient $\grad \obj(\beta)$ that serves 
as the foundation for the theoretical study. Section~\ref{sec:a-new-representation-of-grad} expands the 
gradient into the frequency domain using Fourier-analytic tools, which provides insight 
regarding the precise statistical information contained in the gradient $\grad \obj(\beta)$. 
The findings are summarized at the end of the section. 

\subsubsection{Derivation of $\grad \obj(\beta)$.}
\label{sec:derivation-of-the-gradient}
A simple representation of the gradient $\grad \obj(\beta)$ is crucial for understanding the landscape of the objective 
$\obj(\beta)$. Proposition~\ref{proposition:compute-grad-obj} supplies this. As far as we are aware, this representation of the gradient
$\grad \obj(\beta)$ (see equation~\eqref{eqn:gradient-first-expression}) is new in the literature. 

\begin{proposition}
\label{proposition:compute-grad-obj}
Given Assumption~\ref{assumption:mu-compact}, assume that $\supp(X)$ is compact and 
$\E[Y^2] < \infty$.  \vskip 0.075in
\begin{itemize}
\item  The gradient $\grad \obj(\beta)$ exists for all $\beta \ge 0$.\footnote{The notation 
$\partial_{\beta_l} \obj(\beta)$ is interpreted as the right derivative if $\beta_l = 0$.}
\vskip 0.05in 
\item The gradient $\grad \obj(\beta)$ has the following representation. Let  
$(X', Y')$ denote an independent copy of $(X, Y)$. We have for each coordinate $l \in [p]$, 
\begin{equation}
\label{eqn:gradient-first-expression}
(\grad \obj(\beta))_l = -\frac{1}{\lambda} \cdot \E\left[r_\beta(\beta^{1/q}\odot X; Y)r_\beta(\beta^{1/q}\odot X'; Y') 
	h'(\normsmall{X-X'}_{q, \beta}^q)|X_l- X_l'|^q\right].
\end{equation}
\end{itemize}
\end{proposition}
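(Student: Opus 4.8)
The plan is to proceed via an envelope-theorem argument, exploiting the fact that $\obj(\beta)$ is defined as the value function of an inner minimization over $f \in \H$. The first step is to fix $\beta \ge 0$ and rewrite the inner problem in a form where the dependence on $\beta$ is isolated. Since $f_\beta$ is the minimizer of the strongly convex functional $f \mapsto \half \E[(Y - f(\beta^{1/q}\odot X))^2] + \frac{\lambda}{2}\norm{f}_\H^2$, and this functional is jointly smooth in $(\beta, f)$ in an appropriate sense, Danskin's theorem (or a direct first-order perturbation argument) gives that $\partial_{\beta_l}\obj(\beta)$ equals the partial derivative of the objective with respect to $\beta_l$ evaluated at the optimal $f = f_\beta$, i.e.\ we only need to differentiate through the explicit $\beta_l$-dependence sitting inside the argument $\beta^{1/q}\odot X$ of $f_\beta$. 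To make this rigorous I would first establish that $\beta \mapsto f_\beta$ is continuous (indeed Lipschitz) in $\H$-norm on compact $\beta$-sets — this follows from strong convexity of the KRR objective and the compactness of $\supp(X)$ together with $\E[Y^2]<\infty$ — so that the envelope manipulation is licensed and the right-derivative interpretation at $\beta_l = 0$ is justified.

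The second step is the key computation: express $f_\beta$ via the representer theorem / the normal equations for KRR. In population form, the stationarity condition for $f_\beta$ is $\lambda f_\beta = \E[r_\beta(\beta^{1/q}\odot X; Y)\, k(\beta^{1/q}\odot X, \cdot)]$, so that $f_\beta(z) = \frac{1}{\lambda}\E[r_\beta(\beta^{1/q}\odot X;Y)\, k(\beta^{1/q}\odot X, z)]$. Plugging this back and using the reproducing property, the inner value $\obj(\beta)$ can be written in terms of $\E\E'[r_\beta r_\beta' k(\beta^{1/q}\odot X, \beta^{1/q}\odot X')]$ plus terms not involving the kernel cross-argument. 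Differentiating $k(\beta^{1/q}\odot X, \beta^{1/q}\odot X') = h(\norm{\beta^{1/q}\odot X - \beta^{1/q}\odot X'}_q^q) = h(\sum_i \beta_i |X_i - X_i'|^q)$ with respect to $\beta_l$ produces exactly $h'(\norm{X-X'}_{q,\beta}^q)\,|X_l - X_l'|^q$, and the constant $-1/\lambda$ and the product $r_\beta r_\beta'$ emerge from carefully tracking which terms survive the envelope cancellation. One has to be slightly careful that the chain rule in $\beta_l^{1/q}$ and the chain rule directly in $\beta_l$ give consistent answers; writing everything in terms of the combined variable $\beta_i |X_i - X_i'|^q$ from the outset avoids the apparent singularity of $\beta_l^{1/q}$ at $\beta_l = 0$.

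The third step is to justify differentiation under the expectation and the finiteness of all quantities involved. Here I would invoke Assumption~\ref{assumption:mu-compact}: the integral representation $h(x) = \int_0^\infty e^{-tx}\mu(dt)$ with $\supp(\mu)$ compact gives uniform bounds on $h$ and all its derivatives on $[0,\infty)$ (in particular $|h'(x)| \le \int t\,\mu(dt) < \infty$), and compactness of $\supp(X)$ bounds $|X_l - X_l'|^q$; combined with Cauchy--Schwarz in the residuals and the a priori bound $\norm{r_\beta}_{\mathcal L_2(\P)} \le \norm{Y}_{\mathcal L_2(\P)}$ (the residual at the optimum has norm no larger than at $f = 0$), the dominated convergence theorem applies to legitimize the interchange.

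The main obstacle I anticipate is the rigorous justification of the envelope step together with the differentiability of $\beta \mapsto f_\beta$ as a $\H$-valued map — in particular handling the boundary case $\beta_l = 0$ where $\beta_l^{1/q}$ is non-smooth for $q = 2$. The cleanest route around this is to never differentiate in $\beta_l^{1/q}$ but instead reparametrize the inner problem so that $\beta$ enters only through the bilinear form $\sum_i \beta_i |X_i - X_i'|^q$ in the kernel Gram operator; then all $\beta$-dependence is genuinely smooth (affine inside $h'$), and the right-derivative at $\beta_l = 0$ is obtained simply by continuity of $h'$ at points of $[0,\infty)$, which holds since $h \in \mathcal C^\infty[0,\infty)$.
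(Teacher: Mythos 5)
Your high-level plan matches the paper's: an envelope-theorem argument after reparametrizing away the $\beta^{1/q}$ singularity. However, the concrete step you propose — substitute the representer form, write $\obj(\beta)$ in terms of $\E\E'[r_\beta r_\beta' \, k(\beta^{1/q}\odot X, \beta^{1/q}\odot X')]$, then ``differentiate the kernel only'' — is not a valid application of the envelope theorem, and if carried out literally it produces the wrong sign. Once you substitute $f_\beta$, the objective is $\obj(\beta) = \half\E[r_\beta^2] + \tfrac{1}{2\lambda}\E\E'[r_\beta r_\beta' k_\beta]$, and the envelope theorem licenses you to freeze only the \emph{minimization variable} $f$, not the residuals $r_\beta$ (which encode both explicit and implicit $\beta$-dependence simultaneously). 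Differentiating only the explicit $k_\beta$-factor in the second term while ignoring the compensating explicit $\beta$-dependence inside $\half\E[r_\beta^2]$ yields $+\tfrac{1}{2\lambda}\E\E'[r_\beta r_\beta' h'(\cdot)|X_l-X_l'|^q]$, whereas the correct gradient is negative (both by the statement and by the fact that $h' < 0$ while the objective cannot increase under additional features). The cancellation you allude to does not just ``emerge''; it requires a representation where the $\beta$-dependence in $\wtilde\energy(\beta,f) = \half\E[(Y - f(X))^2] + \tfrac{\lambda}{2}\norm{f}^2_{\H_\beta}$ is genuinely isolated in the penalty term.

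This is exactly where the paper does real work that your proposal is missing. After reparametrizing to $\H_\beta$ (so that $\beta$ appears only through $\norm{\cdot}_{\H_\beta}$), one needs an explicit formula for $\partial_{\beta_l}\norm{g}_{\H_\beta}^2$ for a \emph{fixed} function $g$. There is no expression for this purely in terms of the kernel Gram operator applied to the data; the paper uses the Fourier-analytic characterization $\norm{g}_{\H_\beta}^2 = (2\pi)^{-p}\int |\F(g)(\omega)|^2 / Q_\beta(\omega)\, d\omega$, which makes the envelope derivative a tractable integral of $|\F(\wtilde{f}_\beta)|^2 \cdot \partial_{\beta_l}(1/Q_\beta)$. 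Converting that into the data-dependent form \eqref{eqn:gradient-first-expression} then requires two further ingredients that your proposal does not anticipate: (i) a mollifier argument (Lemma~\ref{lemma:KKT-condition}) to push the complex exponential $e^{i\langle\omega,\cdot\rangle}$ into the normal equations despite $e^{i\langle\omega,\cdot\rangle}\notin\H_\beta$, which gives $\F(\wtilde{f}_\beta)(\omega) \propto \E[\wtilde{r}_\beta e^{i\langle\omega,X\rangle}]Q_\beta(\omega)$; and (ii) a nontrivial dominated-convergence argument (Lemmas~\ref{lemma:technical}, \ref{lemma:uniform-integrability-of-Q-beta}) to differentiate $\int e^{i\langle\omega,x-x'\rangle}Q_\beta(\omega)\,d\omega$ under the integral sign, which is where Assumption~\ref{assumption:mu-compact} enters in a sharper way than the uniform bound on $h'$ you cite. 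Your observations about $\norm{r_\beta}_{\mathcal L_2(\P)}\le\norm{Y}_{\mathcal L_2(\P)}$, the continuity of $\beta\mapsto f_\beta$, and the one-sided limit at $\beta_l = 0$ are all used in the paper, but they are the easy parts; the Fourier machinery is the missing core.
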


\begin{remark}
The very simple gradient representation~\eqref{eqn:gradient-first-expression} 
supplies the basis for all the rest of the analysis in the paper. However, 
a rigorous derivation of the representation~\eqref{eqn:gradient-first-expression} 
is indeed challenging for the following two reasons. (i) It is perhaps challenging to see 
why intuitively this representation~\eqref{eqn:gradient-first-expression}
should hold---note especially that both the objective $\obj(\beta)$ on the LHS and the residual term $r_\beta$ 
on the RHS of equation~\eqref{eqn:gradient-first-expression} are defined 
in an implicit manner (recall $r_\beta(x, y) = y- f_\beta(x)$ where
$f_\beta$ is defined implicitly as the solution of the kernel ridge 
regression~\eqref{eqn:solution-KRR-beta-population}). (ii) A rigorous derivation 
of equation~\eqref{eqn:gradient-first-expression} requires establishing 
improved smoothness properties of the solution $f_\beta$ (see the mid of the 
heuristic proof below), 
which requires additional analytic techniques from harmonic analysis~\cite{Grafakos08}. 
\end{remark}

\subsubsection{A Fourier-analytic View of $\grad \obj(\beta)$.}
\label{sec:Fourier-analysis-of-Gradient}
This section presents a novel Fourier-analytic technique for analyzing the gradient $\grad \obj(\beta)$.
The analysis brings new insights---examples include recovery of the formula of the gradient 
\eqref{eqn:result-of-l-1} and~\eqref{eqn:result-of-l-2} in the introduction, and the landscape result 
described in Proposition~\ref{proposition:landscape-analysis-main-effect}---allowing us to see 
why choosing $q=1$ vs. $q=2$ leads to qualitatively different results in a transparent manner. 

At a high level, our analysis on the gradient $\grad \obj(\beta)$ is based on the following three steps: 
\begin{itemize}
\item Represent $\grad \obj(\beta)$ in frequency domain using Fourier expansion of kernel functions.
\item Construct a surrogate gradient $\wtilde{\grad \obj(\beta)} \approx \grad \obj(\beta)$ that is amenable to Fourier analysis.
\item Gain insights into the true gradient $\grad \obj(\beta)$ by analyzing the surrogate gradient $\wtilde{\grad \obj(\beta)}$.
\end{itemize}
Each of the three steps is discussed in a separate paragraph below. 

\vspace{0.2in}

\paragraph{\emph{A Frequency-domain Representation of $\grad \obj(\beta)$.}}
\label{sec:a-new-representation-of-grad}

Lemma~\ref{lemma:new-representation-of-grad-obj} expands $\grad \obj(\beta)$ in the 
frequency domain $\omega$. The idea is to expand the negative kernel $(x, x') \mapsto 
h^\prime(\norm{x-x'}_q^q)$ that appears on the RHS of equation~\eqref{eqn:gradient-first-expression}. 
The proof is deferred to Appendix~\ref{sec:proof-lemma-new-representation-of-grad-obj}.

\begin{lemma}
\label{lemma:new-representation-of-grad-obj}
Assume Assumption~\ref{assumption:mu-compact}, $\max_{l \in [p]}\E[X_l^{4}] < \infty$ and $\E[Y^2] < \infty$.  
Then
\begin{equation}
\label{eqn:new-representation-of-grad-obj}
\partial_{\beta_l} \obj(\beta) =
	 \frac{1}{\lambda} \cdot  \int \E[R_{\beta, \omega}(\beta^{1/q} \odot X; Y) \wbar{R_{\beta, \omega}(\beta^{1/q} \odot X'; Y')} |X_l - X_l'|^q] 
	 	\cdot \wtilde{Q}(\omega) d\omega,
\end{equation}
where 
\begin{equation}
\label{eqn:tilde-Q-R-beta-omega-definition}
	\wtilde{Q}(\omega) \defeq \int_0^\infty tq_t(\omega)\mu(dt)~~\text{and}~~
	R_{\beta, \omega}(\beta^{1/q} \odot X; Y) \defeq 
		e^{i \langle \omega, \beta^{1/q} \odot X\rangle}  r_\beta(\beta^{1/q} \odot X; Y).
\end{equation}
\end{lemma}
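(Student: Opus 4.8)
The plan is to begin from the closed-form expression for the gradient supplied by Proposition~\ref{proposition:compute-grad-obj}, substitute the integral representation~\eqref{eqn:l_1_kernel} of $h$, expand the resulting Gaussian/Laplace building-block kernel in the frequency domain via the Fourier pairs underlying Proposition~\ref{proposition:norm-of-H}, and finally interchange the order of integration to land on~\eqref{eqn:new-representation-of-grad-obj}.

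\emph{Scale-mixture form of the gradient.} First I would invoke Proposition~\ref{proposition:compute-grad-obj}, which gives, for every $\beta\ge 0$ and every $l\in[p]$,
\[
\partial_{\beta_l}\obj(\beta)=-\frac1\lambda\,\E\!\left[r_\beta(\beta^{1/q}\odot X;Y)\,r_\beta(\beta^{1/q}\odot X';Y')\,h'(\normsmall{X-X'}_{q,\beta}^q)\,|X_l-X_l'|^q\right],
\]
with $\partial_{\beta_l}$ read as a right derivative when $\beta_l=0$. Differentiating~\eqref{eqn:l_1_kernel-prop} under the integral sign (legitimate since $\supp(\mu)$ is compact by Assumption~\ref{assumption:mu-compact}, so $t\mapsto t\,e^{-tx}$ is dominated locally uniformly in $x\ge 0$) gives $-h'(x)=\int_0^\infty t\,e^{-tx}\,\mu(dt)$. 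Setting $z\defeq\beta^{1/q}\odot X$ and $z'\defeq\beta^{1/q}\odot X'$, so that $\normsmall{X-X'}_{q,\beta}^q=\sum_i\beta_i|X_i-X_i'|^q=\normsmall{z-z'}_q^q$, I would rewrite the gradient as an average over the scale $t$:
\[
\partial_{\beta_l}\obj(\beta)=\frac1\lambda\,\E\!\left[r_\beta(z;Y)\,r_\beta(z';Y')\,|X_l-X_l'|^q\int_0^\infty t\,e^{-t\normsmall{z-z'}_q^q}\,\mu(dt)\right].
\]

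\emph{Fourier expansion and interchange.} Next I would expand $e^{-t\normsmall{z-z'}_q^q}=\prod_{i=1}^p e^{-t|z_i-z_i'|^q}$ coordinate by coordinate in the frequency domain: by the Laplace--Cauchy pair (for $q=1$) and the Gauss--Gauss pair (for $q=2$)---exactly the transforms recorded in the proof of Proposition~\ref{proposition:norm-of-H}---the scaled density $\psi_t$ from~\eqref{eqn:notation-of-q-t-p-t} is the one-dimensional inverse Fourier transform of $u\mapsto e^{-t|u|^q}$, so that $e^{-t\normsmall{z-z'}_q^q}=\int_{\R^p}q_t(\omega)\,e^{i\langle\omega,z-z'\rangle}\,d\omega$. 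Inserting this into the previous display yields an iterated integral over $(X,Y,X',Y')$, over $t$ against $\mu$, and over $\omega$; I would then apply Fubini--Tonelli to pull the $\mu(dt)$- and $d\omega$-integrals outside the expectation, using that $\int_{\R^p}q_t(\omega)\,d\omega=1$, that $\int_0^\infty t\,\mu(dt)<\infty$ (compact support of $\mu$), and that $\E[\,|X_l-X_l'|^q\,|r_\beta(z;Y)\,r_\beta(z';Y')|\,]<\infty$ by Cauchy--Schwarz, the latter combining $\max_l\E[X_l^4]<\infty$ with $\E[r_\beta(z;Y)^2]\le\E[Y^2]<\infty$ (the minimizing property of $f_\beta$ in~\eqref{eqn:solution-KRR-beta-population}). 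Finally I would factor $e^{i\langle\omega,z-z'\rangle}=e^{i\langle\omega,z\rangle}\,\wbar{e^{i\langle\omega,z'\rangle}}$ and absorb the exponentials into the residuals---$r_\beta(z;Y)r_\beta(z';Y')$ being real, this produces $R_{\beta,\omega}(z;Y)\,\wbar{R_{\beta,\omega}(z';Y')}$ with $R_{\beta,\omega}$ as in~\eqref{eqn:tilde-Q-R-beta-omega-definition}---and collect the leftover $t$-dependence into $\int_0^\infty t\,q_t(\omega)\,\mu(dt)=\wtilde{Q}(\omega)$, which is precisely~\eqref{eqn:new-representation-of-grad-obj}.

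The hard part will be the careful justification of the interchange of integration---in particular verifying that the $\omega$-integral is absolutely, not merely conditionally, convergent---together with the boundary case $\beta_l=0$, where $\partial_{\beta_l}\obj$ is only a one-sided derivative and the differentiation-under-the-integral step must be revisited. Both are ultimately controlled by the compact support of $\mu$ and the fourth-moment bound on $X_l$, so once the elementary Fourier identity for $e^{-t|u|^q}$ is in hand, the remaining work is essentially bookkeeping.
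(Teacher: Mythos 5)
Your proposal is correct and follows essentially the same route as the paper: start from Proposition~\ref{proposition:compute-grad-obj}, replace $-h'(\norm{x-x'}_{q,\beta}^q)$ by its Bochner representation $\int e^{i\langle\omega,\beta^{1/q}\odot(x-x')\rangle}\wtilde{Q}(\omega)\,d\omega$, and swap expectation with the $\omega$-integral. The paper invokes this Bochner representation in one line, whereas you derive it in two explicit steps (differentiate the scale mixture~\eqref{eqn:l_1_kernel-prop} under the integral, then Fourier-expand each $e^{-t\norm{\cdot}_q^q}$ via the Laplace--Cauchy / Gauss--Gauss pairs); this is the same identity, just unpacked, and your added Fubini/boundary-case justifications are sound extras rather than deviations.
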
\noindent\noindent
The Fourier expansion in equation~\eqref{eqn:new-representation-of-grad-obj} suggests to understand the gradient 
$\partial_{\beta_l} \obj(\beta)$ by studying the term
$\E[R_{\beta, \omega}(\beta^{1/q} \odot X; Y) \wbar{R_{\beta, \omega}(\beta^{1/q} \odot X'; Y')} |X_l - X_l'|^q]$ inside the integral.

\vspace{0.2in}

\paragraph{\emph{Define the surrogate gradient $\wtilde{\grad \obj(\beta)} \approx \grad \obj(\beta)$.}}
In order to understand the term 
$\E[R_{\beta, \omega}(\beta^{1/q} \odot X; Y) \wbar{R_{\beta, \omega}(\beta^{1/q} \odot X'; Y')} 
|X_l - X_l'|^q]$, we perform an additional Fourier expansion. In particular, in the case $q = 1$, we use the 
Fourier expansion of the conditionally negative definite kernel $(x, x') \to |x-x'|$ that holds for 
any function $p$ satisfying $\int p(x)dx = 0$: 
\begin{equation}
\label{eqn:absolute-cond-psd}
	\iint p(x) p(x')|x-x'| dxdx' = -\int \Big|\int p(x) e^{i\omega x}dx\Big|^2 \cdot \frac{d\omega}{\pi \omega^2}.
\end{equation}
Unfortunately, we can't directly apply formula~\eqref{eqn:absolute-cond-psd} to our analysis
since $R_{\beta, \omega}(\beta^{1/q} \odot X; Y)$ has mean close to zero but not equal to \emph{exactly} 
zero.
%

To overcome this technical issue---allowing further use of Fourier expansion---we construct a surrogate 
gradient, where in 
equation~\eqref{eqn:new-representation-of-grad-obj} we replace 
$R_{\beta, \omega}(\beta^{1/q} \odot X; Y)$ by its mean-corrected counterpart 
$\wbar{R}_{\beta, \omega}(\beta^{1/q} \odot X; Y)= R_{\beta, \omega} (\beta^{1/q} \odot X; Y)- \E[R_{\beta, \omega}(\beta^{1/q} \odot X; Y)]$:
\begin{equation}
\label{eqn:def-wtilde-partial-beta-j}
	\wtilde{\partial_{\beta_l} \obj(\beta)} \defeq
		 -\frac{1}{\lambda} \cdot  \int \E[\wbar{R}_{\beta, \omega}(\beta^{1/q} \odot X; Y) \wbar{\wbar{R}_{\beta, \omega}(\beta^{1/q} \odot X'; Y')} 
		 	|X_l - X_l'|^q] \wtilde{Q}(\omega) d\omega,
\end{equation}

Lemma~\ref{lemma:approximate-gradient-bound} bounds the difference between the gradient 
$\partial_{\beta_l} \obj(\beta)$ and the surrogate $\wtilde{\partial_{\beta_l} \obj(\beta)}$.
\begin{lemma}
\label{lemma:approximate-gradient-bound}
Assume $\E[X_l^{4}] < \infty$, $\E[Y^2] < \infty$ and $\supp(\mu) \subseteq [0, M_\mu]$ for $M_\mu < \infty$. 
There exists $C > 0$ depending only on $\E[X_l^4]$, $\E[Y^2]$, and $M_\mu$ such that 
for any $\beta \ge 0$ and $l \in [p]$: 
\begin{equation}
\label{eqn:deviation-between-wtilde-partial-true-partial}
\left|\wtilde{\partial_{\beta_l} \obj(\beta)} - \partial_{\beta_l} \obj(\beta)\right| \le C\cdot 
	(1+\frac{1}{\sqrt{\lambda}}).
\end{equation}
\end{lemma}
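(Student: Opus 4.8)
The plan is to estimate the difference directly from the two Fourier representations~\eqref{eqn:new-representation-of-grad-obj} and~\eqref{eqn:def-wtilde-partial-beta-j}. Write $m(\omega) := \E[R_{\beta,\omega}(\beta^{1/q}\odot X;Y)]$, so that $\wbar R_{\beta,\omega} = R_{\beta,\omega}-m(\omega)$. Substituting this into~\eqref{eqn:def-wtilde-partial-beta-j}, the terms quadratic in $R_{\beta,\omega}$ match those appearing in~\eqref{eqn:new-representation-of-grad-obj}, so their difference is an integral of cross-terms, each carrying at least one factor of $m(\omega)$; a short computation yields
\[
\bigl|\wtilde{\partial_{\beta_l}\obj(\beta)} - \partial_{\beta_l}\obj(\beta)\bigr|
\;\le\; \frac1\lambda\int \wtilde Q(\omega)\Bigl(2\,|m(\omega)|\,|a_l(\omega)| + |m(\omega)|^2\,\E\bigl[|X_l-X_l'|^q\bigr]\Bigr)\,d\omega,
\]
where $a_l(\omega) := \E[R_{\beta,\omega}(\beta^{1/q}\odot X;Y)\,|X_l-X_l'|^q]$. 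Everything then reduces to two weighted $L^2$-bounds in the measure $\wtilde Q(\omega)\,d\omega$: (i) $\int \wtilde Q(\omega)|a_l(\omega)|^2\,d\omega = O(1)$, and, crucially, (ii) $\int \wtilde Q(\omega)|m(\omega)|^2\,d\omega = O(\lambda)$. Combining (i) and (ii) via Cauchy--Schwarz on the first term, and using (ii) directly on the second, gives $|\wtilde{\partial_{\beta_l}\obj(\beta)}-\partial_{\beta_l}\obj(\beta)| \le \frac1\lambda\,O(\sqrt\lambda)\,O(1) + \frac1\lambda\,O(\lambda) = O(1/\sqrt\lambda)+O(1)$, which is the claim.

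Bound (i) is elementary: since $|R_{\beta,\omega}|=|r_\beta|$, Cauchy--Schwarz gives $|a_l(\omega)| \le \E[|r_\beta|\,|X_l-X_l'|^q] \le \E[r_\beta^2]^{1/2}\E[|X_l-X_l'|^{2q}]^{1/2}$, a finite constant \emph{independent of $\omega$}, bounded in terms of $\E[Y^2]$ (via $\E[r_\beta^2]\le\E[Y^2]$, using $f=0$ in~\eqref{eqn:solution-KRR-beta-population}) and $\E[X_l^4]$ (which controls $\E[|X_l-X_l'|^{2q}]$ for $q\in\{1,2\}$). Since $\int \wtilde Q(\omega)\,d\omega = -h'(0) = \int_0^\infty t\,\mu(dt) \le M_\mu\,\mu([0,\infty)) < \infty$ by Assumption~\ref{assumption:mu-compact}(ii), bound (i) follows.

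Bound (ii) is the heart of the argument and the step I expect to be the main obstacle: the naive estimate $\int \wtilde Q|m|^2 = O(1)$ is too weak (it would produce $O(1/\lambda)$ in the end, one order short), so the extra factor of $\lambda$ must be squeezed out of the stationarity of $f_\beta$. Running the Fourier manipulation behind Lemma~\ref{lemma:new-representation-of-grad-obj} with the factor $|X_l-X_l'|^q$ removed gives the identity $\int \wtilde Q(\omega)|m(\omega)|^2\,d\omega = -\E[r_\beta(\beta^{1/q}\odot X;Y)\,r_\beta(\beta^{1/q}\odot X';Y')\,h'(\norm{X-X'}_{q,\beta}^q)]$. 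Now $\wtilde k(z,z'):=-h'(\norm{z-z'}_q^q) = \int_0^\infty e^{-t\norm{z-z'}_q^q}\,\wtilde\mu(dt)$ with $\wtilde\mu(dt)=t\,\mu(dt)$ is, by Proposition~\ref{proposition:k-pos-definite-representation}, again a positive-definite $\ell_q$-kernel (note $\wtilde\mu$ inherits Assumption~\ref{assumption:mu-compact} from $\mu$); by Proposition~\ref{proposition:norm-of-H} its RKHS $\wtilde\H$ carries exactly the weight $\wtilde Q$ of~\eqref{eqn:tilde-Q-R-beta-omega-definition}, and since $\wtilde\mu\le M_\mu\,\mu$ (because $t\le M_\mu$ on $\supp\mu$) one has $\wtilde Q \le M_\mu Q$ and hence the continuous embedding $\wtilde\H\hookrightarrow\H$ with $\norm{\,\cdot\,}_\H^2 \le M_\mu\norm{\,\cdot\,}_{\wtilde\H}^2$. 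Therefore $\int \wtilde Q(\omega)|m(\omega)|^2\,d\omega = \norm{v}_{\wtilde\H}^2$ for the Bochner integral $v := \E[r_\beta(\beta^{1/q}\odot X;Y)\,\wtilde k(\beta^{1/q}\odot X,\cdot)]\in\wtilde\H$ (well defined since $\E[|r_\beta|\,\wtilde k(0,0)^{1/2}]<\infty$). Invoking the first-order optimality condition for the population KRR in~\eqref{eqn:solution-KRR-beta-population}, namely $\E[r_\beta(\beta^{1/q}\odot X;Y)\,g(\beta^{1/q}\odot X)] = \lambda\langle f_\beta,g\rangle_\H$ for all $g\in\H$, with $g=v\in\wtilde\H\subseteq\H$, and using the reproducing property in $\wtilde\H$ on the left side, we get $\norm{v}_{\wtilde\H}^2 = \lambda\langle f_\beta,v\rangle_\H \le \lambda\norm{f_\beta}_\H\norm{v}_\H \le \lambda\sqrt{M_\mu}\,\norm{f_\beta}_\H\,\norm{v}_{\wtilde\H}$, i.e. $\norm{v}_{\wtilde\H}\le\lambda\sqrt{M_\mu}\,\norm{f_\beta}_\H$. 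Finally $\norm{f_\beta}_\H^2\le\E[Y^2]/\lambda$ (plug $f=0$ into~\eqref{eqn:solution-KRR-beta-population}), so $\int \wtilde Q|m|^2 = \norm{v}_{\wtilde\H}^2 \le M_\mu\,\E[Y^2]\,\lambda$, which is (ii). The constant $C$ produced is then a function only of $\E[X_l^4]$, $\E[Y^2]$, and $M_\mu$ (together with the finite total mass of $\mu$). What remains is bookkeeping: justifying the Fubini/Bochner interchanges (all finite because $\supp\mu$ is compact and $\E[Y^2],\E[X_l^4]<\infty$) and checking that $\wtilde k$ indeed satisfies the hypotheses of Propositions~\ref{proposition:k-pos-definite-representation}--\ref{proposition:norm-of-H}.
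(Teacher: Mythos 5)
Your proof is correct and arrives at the same estimate with the same constants, but it handles the crucial step --- squeezing a factor of $\lambda$ out of $\int \wtilde Q(\omega)\lvert m(\omega)\rvert^2\,d\omega$ --- by a genuinely different route. The paper's proof first establishes the pointwise a.e.\ identity $m(\omega) = \frac{\lambda}{(2\pi)^{p/2}}\,\mathcal{F}(f_\beta)(\omega)/Q(\omega)$ (this is Lemma~\ref{lemma:KKT-condition}, which requires a mollifier argument to substitute $e^{i\langle\omega,\cdot\rangle}\notin\H_\beta$ into the variational condition), then feeds it into $\error_{1,l}$ and $\error_{2,l}$ using $\wtilde Q/Q\le M_\mu$, Cauchy--Schwarz, and $\|f_\beta\|_\H^2\le \E[Y^2]/\lambda$. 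You instead stay at the abstract RKHS level: you recognize $-h'(\|\cdot\|_q^q)$ as the kernel of an RKHS $\wtilde\H$ with weight exactly $\wtilde Q$, identify $\int\wtilde Q\lvert m\rvert^2 = \|v\|_{\wtilde\H}^2$ for the Bochner integral $v=\E[r_\beta\,\wtilde k(\beta^{1/q}\odot X,\cdot)]$, and then invoke the variational optimality condition (Proposition~\ref{proposition:KKT-condition}) at $g=v$ together with the norm comparison $\|\cdot\|_\H\le\sqrt{M_\mu}\,\|\cdot\|_{\wtilde\H}$ (from $\wtilde Q\le M_\mu Q$). This bypasses Lemma~\ref{lemma:KKT-condition} entirely --- the first-order condition is applied to a bona fide element of $\H$, so no mollification is needed. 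The trade-off: the paper's route amortizes the cost of Lemma~\ref{lemma:KKT-condition} (which it needs elsewhere, e.g.\ in Lemma~\ref{lemma:envelope-theorem}), while yours is more self-contained and arguably conceptually cleaner in isolation. One minor imprecision: in your cross-term bound, the quantity that actually appears after expanding $R=\wbar R+m$ is $\E[\wbar R_{\beta,\omega}(\beta^{1/q}\odot X';Y')\,|X_l-X_l'|^q]$ (centered), not your $a_l(\omega)=\E[R_{\beta,\omega}\,|X_l-X_l'|^q]$ (uncentered); the two differ by $m(\omega)\,\E[|X_l-X_l'|^q]$, which is absorbed into the $\lvert m\rvert^2$ term, so the final bound is unaffected, but the bookkeeping should be stated carefully.
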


\begin{remark}
We are interested in the case where the ridge penalty is small: $\lambda \ll 1$. In this regime, 
$\wtilde{\partial_{\beta_l} \obj(\beta)} \approx  \partial_{\beta_l} \obj(\beta)$ (the error bound in 
equation~\eqref{eqn:deviation-between-wtilde-partial-true-partial} is of the order
$1/\sqrt{\lambda}$, while the gradient is of the order $1/\lambda$): mean correction 
has a negligible effect.
The mean we remove, $\E[R_{\beta, \omega}(\beta^{1/q} \odot X; Y)]$, is the covariance 
between $r_\beta(\beta^{1/q}\odot X; Y)$ and the complex exponential basis
$e^{i \langle \beta^{1/q} \odot \omega, X\rangle}$. Since 
$r_\beta(\beta^{1/q}\odot X; Y)$ is the residual from a nonparametric ridge regression, 
it should be approximately uncorrelated with any basis (when $\lambda$ is small). 
\end{remark}

By construction, the surrogate gradient $\wtilde{\grad \obj(\beta)}$ admits a further 
Fourier-type expansion: 
\begin{itemize}
\item In the case where $q = 1$, the surrogate gradient $\wtilde{\partial_{\beta_l} \obj(\beta)}$ has the expansion: 
\begin{equation}
\label{eqn:wtilde-partial-beta-j-rep}
	\wtilde{\partial_{\beta_l} \obj(\beta)} =
		- \frac{1}{\lambda} \cdot  \int \left(\int \Cov^2 \Big({R}_{\beta, \omega}(\beta \odot X; Y), e^{i \zeta_l X_l }\Big)
				\cdot \frac{d\zeta_l}{\pi\zeta_l^2} \right)  \cdot \wtilde{Q}(\omega) d\omega.
\end{equation}
\item In the case where $q = 2$, the surrogate gradient $\wtilde{\partial_{\beta_l} \obj(\beta)}$ has the expansion: 
\begin{equation}
\label{eqn:wtilde-partial-beta-j-rep-2}
	\wtilde{\partial_{\beta_l} \obj(\beta)} =
		- \frac{1}{\lambda} \cdot  \int \Cov^2 \left({R}_{\beta, \omega}(\beta^{1/2} \odot X; Y), X_l\right)
				 \cdot	\wtilde{Q}(\omega) d\omega.
\end{equation}
\end{itemize}
Note that equation~\eqref{eqn:wtilde-partial-beta-j-rep} follows by applying the Fourier expansion of the conditional 
negative definite kernel $(x, x') \mapsto |x-x'|$ (equation~\eqref{eqn:absolute-cond-psd}) to the surrogate 
gradient (equation~\eqref{eqn:def-wtilde-partial-beta-j}). 
Equation~\eqref{eqn:wtilde-partial-beta-j-rep-2} follows from elementary algebraic manipulations. 


\paragraph{\emph{Statistical insights on the true gradient $\grad \obj(\beta)$.}}
Using Lemma~\ref{lemma:approximate-gradient-bound}
and formula~\eqref{eqn:wtilde-partial-beta-j-rep} and~\eqref{eqn:wtilde-partial-beta-j-rep-2}, we 
immediately recover the formula of the true gradient $\grad \obj(\beta)$ stated in equation~\eqref{eqn:result-of-l-1}
and~\eqref{eqn:result-of-l-2} in the introduction.

\begin{proposition}
\label{prop:statistical-insights}
Assume Assumption~\ref{assumption:mu-compact}, $\max_{l \in [p]}\E[X_l^{4}] < \infty$ and $\E[Y^2] < \infty$. 
Let $M_\mu < \infty$ be such that $\supp(\mu) \subseteq [0, M_\mu]$.
\begin{itemize}
\item In the case where $q = 1$, the gradient of the objective $\grad \obj(\beta)$ takes the form:
	\begin{equation*}
		\partial_{\beta_l} \obj(\beta) = -\frac{1}{\lambda} 
			\left(\iint \Cov^2 \left({R}_{\beta, \omega}(\beta \odot X; Y), e^{i \zeta_l X_l }\right)
				\cdot \frac{d\zeta_l}{\pi\zeta_l^2} \cdot \wtilde{Q}(\omega) d\omega + O(\sqrt{\lambda})\right).
	\end{equation*}
\item In the case where $q = 2$, the gradient of the objective $\grad \obj(\beta)$ takes the form: 
	\begin{equation*}
		\partial_{\beta_l} \obj(\beta) = -\frac{1}{\lambda} 
			\left(\int \Cov^2\left({R}_{\beta, \omega}(\beta^{1/2} \odot X; Y), X_l\right) \cdot 
				\wtilde{Q}(\omega) d\omega + O(\sqrt{\lambda})\right).
	\end{equation*}
\end{itemize}
In this case the notation $O(\sqrt{\lambda})$ refers to a remainder term whose 
absolute value is upper bounded by $C\sqrt{\lambda}$, where $C > 0$ is a constant 
depending only on $\E[X_l^{4}], \E[Y^2]$ and $M_\mu$. 
\end{proposition}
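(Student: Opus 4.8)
\medskip
\noindent\textbf{Proof plan.} The statement is essentially a corollary of the two ingredients already assembled in this subsection, so the plan is to combine them and carefully account for the remainder. The starting point is the exact decomposition
\[
\partial_{\beta_l}\obj(\beta) \;=\; \wtilde{\partial_{\beta_l}\obj(\beta)} \;+\; \Delta_l, \qquad \Delta_l \;\defeq\; \partial_{\beta_l}\obj(\beta) - \wtilde{\partial_{\beta_l}\obj(\beta)}.
\]
For the first term I would simply invoke the closed-form expansions of the surrogate gradient already derived: equation~\eqref{eqn:wtilde-partial-beta-j-rep} when $q=1$ and equation~\eqref{eqn:wtilde-partial-beta-j-rep-2} when $q=2$. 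In each case $\wtilde{\partial_{\beta_l}\obj(\beta)}$ equals exactly $-\tfrac{1}{\lambda}$ times the ``$\Cov^2$'' integral appearing in the proposition, so this term reproduces the main term with no error incurred.

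If one wants the argument to be self-contained, one re-derives~\eqref{eqn:wtilde-partial-beta-j-rep} by substituting the mean-corrected integrand $\wbar{R}_{\beta,\omega}$ into the frequency-domain representation~\eqref{eqn:new-representation-of-grad-obj} and applying the conditionally negative definite Fourier identity~\eqref{eqn:absolute-cond-psd} in the variable $X_l-X_l'$; the hypothesis $\int p = 0$ there is exactly what the mean correction $\E[\wbar{R}_{\beta,\omega}]=0$ supplies, and the interchange of the integrations over $\omega$, over the Fourier variable $\zeta_l$, and over $(X,Y)$ and its independent copy is licensed by $\max_l \E[X_l^4]<\infty$, $\E[Y^2]<\infty$, and $\supp(\mu)\subseteq[0,M_\mu]$, which keep $\wtilde{Q}$ integrable against the relevant factors. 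Equation~\eqref{eqn:wtilde-partial-beta-j-rep-2} is the analogous identity obtained by expanding $(X_l-X_l')^2$ and using independence of the two copies, and is pure algebra.

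For the remainder I would invoke Lemma~\ref{lemma:approximate-gradient-bound}, which yields a constant $C_0>0$ depending only on $\E[X_l^4]$, $\E[Y^2]$, $M_\mu$ with $|\Delta_l|\le C_0(1+1/\sqrt{\lambda})$. Factoring out $-\tfrac1\lambda$, write $\Delta_l = -\tfrac1\lambda\cdot(-\lambda\Delta_l)$, where $|\lambda\Delta_l|\le C_0(\lambda+\sqrt{\lambda})$; in the regime of interest $\lambda\le 1$ (outside which the estimate carries no content) this is at most $2C_0\sqrt{\lambda}$. Hence $-\lambda\Delta_l$ is precisely a remainder of absolute value $\le 2C_0\sqrt{\lambda}$, i.e.\ the $O(\sqrt{\lambda})$ term of the proposition with a constant depending only on the stated quantities. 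Combining the two pieces gives
\[
\partial_{\beta_l}\obj(\beta) \;=\; -\frac1\lambda\Bigl( \text{(main term)} \;+\; O(\sqrt{\lambda}) \Bigr)
\]
for both $q=1$ and $q=2$, which is the claim.

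The genuine difficulty has already been absorbed into the two results invoked---Lemma~\ref{lemma:approximate-gradient-bound} (the $1/\sqrt{\lambda}$ control of the mean-correction error) and the passage from~\eqref{eqn:new-representation-of-grad-obj} to~\eqref{eqn:wtilde-partial-beta-j-rep}, which requires verifying the vanishing-mean condition and the Fubini-type interchanges. Granting those, this proposition is a short bookkeeping step; the only arithmetic point is the elementary inequality $1+1/\sqrt{\lambda}\le 2/\sqrt{\lambda}$ for $\lambda\le 1$, and I do not expect any new obstacle here.
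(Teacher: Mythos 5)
Your proof is correct and follows the same route the paper itself takes (combining the surrogate-gradient expansions~\eqref{eqn:wtilde-partial-beta-j-rep} and~\eqref{eqn:wtilde-partial-beta-j-rep-2} with the error bound of Lemma~\ref{lemma:approximate-gradient-bound}); your observation that converting $C(1+1/\sqrt{\lambda})$ into the advertised $C\sqrt{\lambda}$ after multiplying by $\lambda$ implicitly requires $\lambda$ to be bounded (e.g.\ $\lambda\le 1$) is a fair and correct reading of the paper's $O(\sqrt{\lambda})$ convention, which is understood in the $\lambda\to 0$ regime.
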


\begin{remark}
As discussed in the introduction, a comparison of the leading term in the gradient $\grad \obj(\beta)$ shows that the $\ell_1$
kernel can capture all types of \emph{nonlinear} signal in $X_l$, while the $\ell_2$ kernel can only capture a \emph{linear} signal.
\end{remark}

\subsection{A Proof Sketch of Proposition~\ref{proposition:landscape-analysis-main-effect}.}
\label{sec:proof-landscape-toy}
Based on the gradient characterization in Proposition~\ref{prop:statistical-insights}, we present a quick 
and informal proof of Part $(ii)$ and $(iii)$ of Proposition~\ref{proposition:landscape-analysis-main-effect}, 
which shows that the choice of $q$ \emph{impacts} the landscape of the objective $\obj(\beta)$ 
(i.e., the distribution of the stationary points). The sketch should clarify 
the basic intuition. For a rigorous treatment as well as the proof of the other 
two landscape results, $(i)$ and $(iv)$, see 
Section~\ref{sec:proof-proposition-landscape-analysis-main-effect}.
 
\begin{itemize}
\item Consider the case where $q = 1$. 
	We show that $\wtilde{\grad \obj(\beta)} < 0$ at any $\beta$ with $\beta_l = 0$.
	Suppose on the contrary that $\wtilde{\grad \obj(\beta)} = 0$ at $\beta_l = 0$. This implies that for all $\zeta_l$, $\omega$,
	\begin{equation*}
		0 =  \Cov \left({R}_{\beta, \omega}(\beta \odot X; Y), e^{i \zeta_l X_l }\right)
			= \Cov\left(Ye^{i \langle \omega, \beta\odot X\rangle}, e^{i\zeta_l X_l}\right).
	\end{equation*}
	In particular, $\Cov(Y, e^{i\zeta_l X_l}) = 0$ for all $\zeta_l$. This creates a contradiction since $e^{i\zeta_l X_l}$ 
	forms a basis and $\E[Y| X_l] = f_l^*(X_l)$ where $f_l^*(X_l) \neq 0$.
	Hence, $\wtilde{\grad \obj(\beta)} < 0$ if $\beta_l = 0$. 
 	Since $\wtilde{\grad \obj(\beta)}$ is the leading term of the true gradient,
	$\grad \obj(\beta)$, by Proposition~\ref{prop:statistical-insights},  this suggests that $\grad \obj(\beta) < 0$ 
	at all $\beta$ with $\beta_l = 0$ for small enough $\lambda$.
\item Consider the case where $q = 2$. Assume that $\Cov(f_l^*(X_l), X_l) = 0$. Then $\Cov(Y, X_l) = 0$ for all 
	variables $X_l$. At $\beta = 0$, we have 
	$
		\Cov\left({R}_{\beta, \omega}(\beta^{1/2} \odot X; Y), X_l\right) = \Cov(Y, X_l) = 0$.
	This shows that $\wtilde{\grad \obj(\beta)} = 0$ at $\beta = 0$. Note then $\wtilde{\partial_{\beta_l} \obj}(0) = \partial_{\beta_l}\obj(0)$ 
	holds since $\E[R_{0, \omega}(0; Y)] = \E[Y] = 0$. This shows that zero is a stationary point of $\obj(\beta)$. 
\end{itemize}

\newcommand{\regobj}{\mathcal{J}_\gamma}
\newcommand{\proj}{\Pi}
\newcommand{\B}{\mathcal{B}}
\section{Population-Level Guarantees}
\label{sec:population-guarantees}

This section describes the statistical properties of the kernel feature selection algorithm 
(see Alg.~\ref{alg:kernel-feature-selection}) at the \emph{population} level. 
None of our results require finding the global 
minimum of the kernel feature selection objective. We only require the algorithm to find a stationary point of the objective (easily achievable 
by using projected gradient descent with a sufficiently small stepsize). The fact that our theoretical results apply to any stationary point and not simply the global minimum separates our work 
from existing work on kernel feature selection. 

Let $\beta$ denote the \emph{stationary point}
found by projected gradient descent in Alg.~\ref{alg:kernel-feature-selection}). We want to know when
$\beta$ has the following two properties: 
\begin{itemize}
\item No False Positives: $\beta_{S^c} = 0$, i.e., the algorithm excludes all the noise variables $X_{S^c}$.
\item Fully Recovery: $\beta_S > 0$, i.e., the algorithm detects all the signal variables $X_S$. 
\end{itemize}

\begin{algorithm}[!tph]
\begin{algorithmic}[1]
\Require{Initializer $\beta^{(0)}$, stepsize $\alpha$, feature matrix $\mathbf{X} \in \mathbb{R}^{n\times p}$ and response $y \in \R^n$}
\State{Run projected gradient descent (with stepsize $\alpha$, and initialization $\beta^{(0)}$) to solve 
\begin{equation*}
\begin{split}
	&\minimize_{\beta \in \mathcal{B}_M}  \obj_{\gamma}(\beta), \\
		~~\text{where}~~&
			 \mathcal{B}_{M} = \left\{\beta\in \R_+^p;  \norm{\beta}_1 \le M\right\}.
\end{split}
\end{equation*}
Denote the projected gradient descent iterates as $\{\beta^{(k)}\}_{k \in \N}$ where 
\begin{equation}
\label{eqn:pgd-kernel}
	\beta^{(k+1)} = \proj_{\B_M}(\beta^{(k)} - \alpha \grad \regobj(\beta^{(k)})).
\end{equation}
}
\State{Return $\hat{S} = \supp(\beta)$ where $\beta$ is any accumulation point of the iterates $\{\beta^{(k)}\}_{k \in \N}$. }
\end{algorithmic}
\caption{Kernel Feature Selection Algorithm}
\label{alg:kernel-feature-selection}
\end{algorithm}

\paragraph{Roadmap}
The rest of Section~\ref{sec:population-guarantees} is organized as follows. 
\begin{itemize}
\item Section~\ref{sec:population-setup} sets up the problem, supplying the definitions 
of the signal variables $X_S$ and the noise variables $X_{S^c}$. 
\item Section~\ref{sec:no-false-positive-population} shows that the algorithm excludes all
noise variables, i.e, $\beta_{S^c} = 0$. The ability to exclude noise variables does not rely on 
the type of kernel we use---both $\ell_1$ and $\ell_2$ kernels achieve this goal.  
\item Section~\ref{sec:power-guarantee-population} shows that the algorithm recovers all main effect signals and hierarchical interaction signals.
The recovery result requires the use of an $\ell_1$ kernel. As we have discussed in 
Section~\ref{sec:distinction-l-1-l-2}, using an $\ell_2$ kernel leads to an objective landscape with bad stationary points.
\end{itemize}

\subsection{Problem setup}
\label{sec:population-setup}
We assume the following relationship for $(X, Y)$: 
\begin{equation}
\label{eqn:model-assumption}
	Y = f^*(X_S) + \noise \qquad \E[\noise | X] = 0.
\end{equation}
We define the regression function $f^*$ to be any function satisfying $f^*(X_S) = \E[Y | X]$.
Equation~\eqref{eqn:model-assumption} says that the signal, $\E[Y | X]$, depends only on a small set of variables $X_S$. 
Since the components of $X$ can be dependent, there may be multiple ways to write equation~\eqref{eqn:model-assumption} using different sets $S$. To pin down a unique signal set $S$, we employ the following definition:
\begin{definition}[Signal Set $S$]
\label{definition:signal-set}
The signal set $S$ is defined as the unique minimal subset $S \subseteq [p]$ such that the following two 
conditions holds: 
\begin{itemize}
\item $\E[Y | X] = \E[Y | X_S]$, i.e., the signal $X_S$ has the full predictive power of $Y$ given $X$. 
\item $X_S \perp X_{S^c}$, i.e., the noise variables are completely independent of the signal variables. 
\end{itemize}
Appendix~\ref{sec:definition-of-the-signal-set} shows that Definition~\ref{definition:signal-set} is proper and is satisfied by a unique set $S$. 

\end{definition}

\begin{remark}
There are two lines of research in the theoretical literature that provide justification
for our assumption of independence between the signal $X_S$ and the noise $X_{S^c}$:
for two reasons:
\begin{itemize}
\item There is a standard treatment in the literature which assumes that the distribution 
of $X$ is known exactly~\cite{CommingesDa12, CandesFaJaLv16}. This assumption implies the
condition $X_S \perp X_{S^c}$, which can be seen as follows. Using the distribution of $X$, 
we can reweight the data so that effectively the distribution of $X$ is uniform on 
$[0, 1]^d$ (see~\cite{CommingesDa12}). In that case, all variables are independent, 
hence $X_S \perp X_{S^c}$.	 
\item The requirement $X_S \perp X_{S^c}$ is useful for obtaining a result on false discoveries (Section~\ref{sec:no-false-positive-population}). Without this assumption, we can still obtain the recovery result for main effects and hierarchical interactions presented in Section~\ref{sec:power-guarantee-population}. Consider an example where $Y = g(X_1) + \noise$ and $X_2$ is highly correlated with $X_1$. Ideally, we'd select only $X_1$ but there may be stationary points of the kernel selection objective for which $\beta_2 > 0$. Since, we have no control over which stationary point gradient descent converges to, we can only guarantee that $\E[Y| X_{\hat S}] = \E[Y | X]$ but not that $\hat S$ is in any way minimal.
	
\end{itemize}
\end{remark}

\subsection{No-false-positive guarantee: $\beta_{S^c} = 0$}
\label{sec:no-false-positive-population}
Theorem~\ref{thm:no-false-positive} shows that, if initialized at $\beta^{(0)} = 0$, the kernel feature selection algorithm (Alg.~\ref{alg:kernel-feature-selection}) does not 
select any noise variables. To establish 
Theorem~\ref{thm:no-false-positive}, we need a mild regularity condition on the moments of $X$ and $Y$. The proof of 
Theorem~\ref{thm:no-false-positive} is simple and is given in Section~\ref{sec:thm-proof-no-false-positive} in the main text. 

\begin{assumption}
\label{assumption:X-Y-bound}
There exist $M_X, M_Y< \infty$ so that $\max_{l \in [p]} \E[X_l^4] \le M_X^4$ and $\E[Y^2] \le M_Y^2$.
\end{assumption}

\begin{theorem}
\label{thm:no-false-positive}
Given Assumptions~\ref{assumption:mu-compact} and~\ref{assumption:X-Y-bound}, consider 
the projected gradient descent algorithm in 
equation~\eqref{eqn:pgd-kernel}. Assume that the algorithm is initialized at $\beta^{(0)} = 0$. 
Then any accumulation point $\beta^*$ of the iterates $\{\beta^{(k)}\}_{k \in \N}$ must satisfy $\beta^*_{S^c} = 0$.
\end{theorem}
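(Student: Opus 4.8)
The plan is to show that the projected gradient iterates never activate a noise coordinate, i.e.\ $\beta^{(k)}_{S^c} = 0$ for every $k \in \N$; since $\{\beta : \beta_{S^c} = 0\}$ is closed, this forces $\beta^*_{S^c} = 0$ for any accumulation point $\beta^*$. The argument rests on two independent ingredients: a monotonicity property of the projection $\proj_{\mathcal{B}_M}$ on zero coordinates, and a one-sided bound $\partial_{\beta_l}\obj(\beta) \ge 0$ along noise directions $l \in S^c$ at points $\beta$ with $\beta_{S^c} = 0$ (where, at $\beta_l = 0$, $\partial_{\beta_l}$ is read as a right derivative, as in Proposition~\ref{proposition:compute-grad-obj}).

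For the first ingredient I would record the elementary fact: if $v \in \R^p$ and $v_l \le 0$, then $\proj_{\mathcal{B}_M}(v)_l = 0$, where $\mathcal{B}_M = \{\beta \in \R_+^p : \norm{\beta}_1 \le M\}$. Indeed, if the minimizer $\bar\beta = \proj_{\mathcal{B}_M}(v)$ had $\bar\beta_l > 0 \ge v_l$, then zeroing out its $l$-th coordinate keeps the vector in $\mathcal{B}_M$ (it stays nonnegative and its $\ell_1$ norm does not increase) while strictly decreasing $\norm{\cdot - v}_2^2$, since $(\bar\beta_l - v_l)^2 > v_l^2$; this contradicts optimality. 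Granting the gradient bound, the conclusion is then a one-line induction: $\beta^{(0)} = 0$ satisfies $\beta^{(0)}_{S^c} = 0$, and if $\beta^{(k)}_{S^c} = 0$ then for $l \in S^c$ the $l$-th coordinate of $\beta^{(k)} - \alpha\,\grad\obj_\gamma(\beta^{(k)})$ equals $-\alpha\,\partial_{\beta_l}\obj_\gamma(\beta^{(k)}) \le 0$, so the projection fact gives $\beta^{(k+1)}_l = 0$. Here I use $\partial_{\beta_l}\obj_\gamma(\beta) = \partial_{\beta_l}\obj(\beta) + \gamma$ with $\gamma \ge 0$, so it suffices to prove $\partial_{\beta_l}\obj(\beta) \ge 0$.

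For the gradient bound I would invoke the frequency-domain representation in Lemma~\ref{lemma:new-representation-of-grad-obj}, whose hypotheses are exactly Assumptions~\ref{assumption:mu-compact} and~\ref{assumption:X-Y-bound}. Fix $\beta$ with $\beta_{S^c} = 0$ and $l \in S^c$. Because $\beta_{S^c} = 0$, the vector $\beta^{1/q}\odot X$ depends on $X$ only through $X_S$, so with $\noise = Y - f^*(X_S)$ the residual splits as $r_\beta(\beta^{1/q}\odot X; Y) = u(X_S) + \noise$, where $u(X_S) = f^*(X_S) - f_\beta(\beta^{1/q}\odot X)$ is a function of $X_S$ alone; consequently $R_{\beta,\omega}(\beta^{1/q}\odot X; Y) = v_\omega(X_S) + w_\omega(X_S)\,\noise$ with $v_\omega, w_\omega$ functions of $X_S$ and $|w_\omega| \equiv 1$. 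Expanding $\E[R_{\beta,\omega}(\beta^{1/q}\odot X; Y)\,\overline{R_{\beta,\omega}(\beta^{1/q}\odot X'; Y')}\,|X_l - X_l'|^q]$ into four terms and conditioning on $(X, X')$, the three terms carrying a factor $\noise$ or $\noise'$ vanish, because $\E[\noise \mid X] = \E[\noise' \mid X'] = 0$ and the two copies are independent. The surviving term is $\E[v_\omega(X_S)\,\overline{v_\omega(X_S')}\,|X_l - X_l'|^q]$; since $X_S \perp X_{S^c}$ and the copies are independent, the tuple $(X_S, X_S', X_l, X_l')$ has mutually independent blocks $(X_S, X_S')$ and $(X_l, X_l')$, so this factors as $|\E[v_\omega(X_S)]|^2 \cdot \E[|X_l - X_l'|^q]$. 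Both factors are nonnegative, the second being finite by the fourth-moment assumption, and $\widetilde{Q}(\omega) \ge 0$ as an average of products of probability densities; hence Lemma~\ref{lemma:new-representation-of-grad-obj} yields $\partial_{\beta_l}\obj(\beta) = \tfrac{1}{\lambda}\,\E[|X_l - X_l'|^q] \int |\E[v_\omega(X_S)]|^2\,\widetilde{Q}(\omega)\,d\omega \ge 0$, which closes the argument.

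I expect the only real obstacle to be the bookkeeping in this last step: one must argue carefully that once $\beta_{S^c} = 0$ the residual $r_\beta$ — defined implicitly through the kernel ridge regression \eqref{eqn:solution-KRR-beta-population} — genuinely depends on $X$ only through $X_S$, so that the decomposition $r_\beta = u(X_S) + \noise$ is legitimate and $X_S \perp X_{S^c}$ can be used to pull $\E[|X_l - X_l'|^q]$ out of the expectation, with $\E[\noise \mid X] = 0$ disposing of the cross terms. The projection fact and the induction are routine once the gradient sign bound is established.
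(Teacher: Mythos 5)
Your proof is correct and follows essentially the same approach as the paper's. The paper's proof of Lemma~\ref{lemma:grad-noise-variable-beta-noise-0} starts from Proposition~\ref{proposition:compute-grad-obj}, conditions on $(X,X')$ to replace $r_\beta$ by $\E[r_\beta\mid X]$ (which kills the noise, as your term-by-term cancellation via $\E[\noise\mid X]=0$ does), factors out $\E[|X_l-X_l'|^q]$ using $X_S\perp X_{S^c}$, and finally applies the same Fourier expansion of $h'$ that underlies Lemma~\ref{lemma:new-representation-of-grad-obj} to recognize the remaining factor as nonnegative; your proof applies the Fourier representation up front and then performs the same cancellation and factorization, so the chain of ideas — induction plus projection monotonicity, $\E[\noise\mid X]=0$, the independence $X_S\perp X_{S^c}$, and Fourier nonnegativity — is identical, just reorganized.
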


\begin{remark}
The reason why Theorem~\ref{thm:no-false-positive} holds is that the gradient of the objective 
with respect to any noise variable $\beta_l$, where $l \not\in S$, is positive at any 
$\beta$ where $\beta_{S^c} = 0$
(see Lemma~\ref{lemma:grad-noise-variable-beta-noise-0}). Thus the coordinate of any noise variable 
can't increase due to gradient-descent dynamics. In particular, all the iterates of the gradient dynamics exclude 
the noise variables, i.e., $\beta_l^{(k)} = 0$ for all $k \in \N$ and $l \not\in S$.
\end{remark}




\newcommand{\effect}{\mathcal{E}}
\newcommand{\G}{\mathcal{G}}
\subsection{Power guarantees: $\beta_S > 0$}
\label{sec:power-guarantee-population}
In this subsection, we focus on the ability of kernel feature selection to recover 
signal variables. 
The recovery guarantees in this section apply to $\ell_1$ kernels but not $\ell_2$ kernels. 
As discussed in Section \ref{sec:distinction-l-1-l-2}, the objective landscape under an $\ell_2$ kernel has bad stationary points unless the signals are linear.

Aside from the type of kernel we choose, the power of the algorithm also depends on the type of
signals we are trying to recover. Below, we analyze the power of the kernel feature selection algorithm under 
a classical functional ANOVA model~\cite{FriedmanHaTi01}, which we review in Section~\ref{sec:functional-ANOVA}. 
We provide recovery guarantees for two stylized types of signals---main effect signals
(Section~\ref{sec:main-effect}) and hierarchical interaction signals (Section~\ref{sec:hierarchical-effect}).
For each of these signal types, we give the precise mathematical condition under which the 
population algorithm achieves full recovery. The mathematical condition is stated in the form of an \emph{effective signal size} (appropriately defined) exceeding a \emph{threshold}.

\subsubsection{Functional ANOVA model}
\label{sec:functional-ANOVA}
The remainder of Section~\ref{sec:power-guarantee-population} assumes the following 
functional ANOVA model~\cite{FisherMa23, Stein87, FriedmanHaTi01}: 
\begin{itemize}
\item The signal admits the functional ANOVA decomposition: 
	\begin{equation*}
		f^*(X_S) = \sum_{A \subseteq S} f^*_A(X_A),
	\end{equation*}
	where the function $f_A^*: \R^{|A|} \to \R$ satisfies the mean-zero 
	condition $\E[f^*_A(X_A)] = 0$ and the orthogonality condition, 
	$\E\left[f^*_{A}(X_{A})| X_{A'}\right] = 0$, holds for any set $A'$ 
	that does not contain $A$.
\item Independent covariates: $X_{l_1} \perp X_{l_2} \perp \ldots \perp X_{l_{|S|}}$, where $S = \{l_1, l_2, \ldots, l_{|S|}\}$. 
\end{itemize}

The functional ANOVA model is simple and interpretable. The term $f^*_A(X_A)$ captures the interaction between the variables in the set $A$.

\vspace{0.3cm}
\begin{remark}
The assumption of independence between variables in the signal set is not strictly necessary. We use this assumption in the main text because it gives the cleanest result and its proof is the most insightful for understanding the algorithm. In Appendix~\ref{sec:proof-under-dependent-covariates}, we discuss the recovery of signal variables without this independence assumption, and provide a general result on the recovery of main effects under dependent covariates. 
\end{remark}

%
%

\subsubsection{Recovery of main effect signal}
\label{sec:main-effect}
A variable $X_l$ has a \emph{main effect signal}
under the functional ANOVA model if and only if $f_l^*(X_l) \neq 0$. This section shows that the kernel 
feature selection algorithm (Alg.~\ref{alg:kernel-feature-selection}) can recover main effect signals at the population level. 

Before diving into the main result, Theorem~\ref{thm:add-main-mix-effect}, we start with a simple 
example (Example~\ref{example:additive-main-effect})---the additive main effect model that we introduced in Section~\ref{sec:distinction-l-1-l-2}. The proof of recovery in the additive model is conceptually much simpler than 
that of the general result (Theorem~\ref{thm:add-main-mix-effect}) and provides useful intuition. 

\vspace{0.5cm}
\begin{example}[Additive Main Effect Model]
\label{example:additive-main-effect}
Consider the following additive model: 
\begin{itemize}
\item $f^*(X_S) = \sum_{l \in S} f_l^*(X_l)$, where the functions $f_l^*: \R \to \R$ satisfy
	$\E[f_l^*(X_l)] = 0$.
\item Independent covariates: $X_{l_1} \perp X_{l_2} \perp \ldots \perp X_{l_{|S|}}$, 
	where $S = \{l_1, l_2, \ldots, l_{|S|}\}$. 
\end{itemize}
For a variable $X_l$ with a main effect, we define the \emph{effective size} of the main effect as
\begin{equation*}
	\effect_l = \left|\E[f_l^*(X_l) f_l^*(X_l') |X_l - X_l'|]\right| = \int \left|\E[|f_l^*(X_l) e^{i\omega X_l}|]\right|^2 \cdot \frac{d\omega}{\pi\omega^2} > 0.
\end{equation*}
Theorem~\ref{thm:add-main-effect} shows that Alg.~\ref{alg:kernel-feature-selection} recovers $X_l$ at the population level as long as the effective signal $\effect_l$ exceeds a threshold. The proof of Theorem~\ref{thm:add-main-effect}
is simple and given in Section~\ref{sec:proof-thm-add-main-effect} of the main text.

\renewcommand\thetheorem{\arabic{theorem}'}
\begin{theorem} [Additive Model]
\label{thm:add-main-effect}
Assume Assumptions~\ref{assumption:mu-compact} and~\ref{assumption:X-Y-bound} . 
There exists a constant $\wbar{C} > 0$ depending only on $M, M_X, M_Y, M_\mu$ such that 
the following holds. Suppose the \emph{effective signal size} of a
variable $X_l$ exceeds a threshold:
\begin{equation}
\label{eqn:add-main-effect}
	\effect_l \ge \wbar{C} \cdot (\lambda^{1/2}(1+\lambda^{1/2}) + \lambda \gamma).
\end{equation}
Consider Algorithm~\ref{alg:kernel-feature-selection} with initialization $\beta^{(0)} = 0$ and stepsize $\alpha \le \frac{\lambda^2}{\wbar{C} p}$. 
Then $l \in \hat{S}$ where \nolinebreak $\hat{S}$ is the set returned by Algorithm~\ref{alg:kernel-feature-selection}. 
\end{theorem}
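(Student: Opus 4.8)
The plan is to reduce the recovery claim to a single sign statement about the gradient: \textbf{under the signal condition \eqref{eqn:add-main-effect}, $\partial_{\beta_l}\obj_\gamma(\beta) < 0$ at every $\beta \in \mathcal{B}_M$ with $\beta_l = 0$}, and then to read off $l \in \hat S$ from the projected‑gradient dynamics. Granting the sign statement, any accumulation point $\beta^*$ of the iterates \eqref{eqn:pgd-kernel} is a stationary point of $\obj_\gamma$ on $\mathcal{B}_M$ (the stepsize $\alpha \le \lambda^2/(\wbar{C}p)$ makes \eqref{eqn:pgd-kernel} a genuine descent method, since on $\mathcal{B}_M$ the map $\obj_\gamma$ is $C^1$ with gradient of magnitude $O(1/\lambda)$ and Lipschitz constant $O(p/\lambda^2)$). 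If such a $\beta^*$ had $\beta^*_l = 0$ and $\|\beta^*\|_1 < M$, the feasible perturbation $\beta' = \beta^* + \eps e_l$ would violate stationarity, $\langle \grad\obj_\gamma(\beta^*), \beta'-\beta^*\rangle = \eps\,\partial_{\beta_l}\obj_\gamma(\beta^*) < 0$; the remaining case $\|\beta^*\|_1 = M$ is handled using the initialization $\beta^{(0)} = \mathbf{0}$, discussed below. Hence $\beta^*_l > 0$, i.e. $l \in \hat S$.

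The heart of the proof is the sign statement, and it is exactly where the Fourier‑analytic description of the gradient from Section~\ref{sec:distinction-l-1-l-2} is used. Fix $\beta \in \mathcal{B}_M$ with $\beta_l = 0$. In the leading term of Proposition~\ref{prop:statistical-insights} (equivalently the surrogate‑gradient identity \eqref{eqn:wtilde-partial-beta-j-rep}) I evaluate $\Cov\big(R_{\beta,\omega}(\beta\odot X;Y), e^{i\zeta_l X_l}\big)$: because $\beta_l = 0$, both the ridge solution $f_\beta$ and the modulation $e^{i\langle\omega,\beta\odot X\rangle}$ are functions of $X_{-l}$ only, and the additive structure $Y = f_l^*(X_l) + \sum_{k\neq l} f_k^*(X_k)$ together with $X_l \perp X_{-l}$ and $\E[f_l^*(X_l)] = 0$ collapses the covariance to a product,
\begin{equation*}
	\Cov\big(R_{\beta,\omega}(\beta\odot X;Y),\, e^{i\zeta_l X_l}\big) = \E\big[e^{i\langle\omega,\beta\odot X\rangle}\big]\cdot \E\big[f_l^*(X_l)\,e^{i\zeta_l X_l}\big].
\end{equation*}
Substituting and separating the $\zeta_l$‑ and $\omega$‑integrals gives leading term $\effect_l\cdot C_\beta$, where $\effect_l = \int|\E[f_l^*(X_l)e^{i\zeta X_l}]|^2\,\tfrac{d\zeta}{\pi\zeta^2}$ is precisely the effective signal size of \eqref{eqn:add-main-effect} and $C_\beta = \int |\E[e^{i\langle\omega,\beta\odot X\rangle}]|^2\,\wtilde{Q}(\omega)\,d\omega$.

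The missing ingredient is then a uniform lower bound $C_\beta \ge c > 0$ over $\mathcal{B}_M$. Restricting the $\omega$‑integral to a small cube $\{\|\omega\|_\infty \le \rho\}$ and using $\cos t \ge 1 - t^2/2$ coordinatewise, $|\E[e^{i\langle\omega,\beta\odot X\rangle}]|^2 \ge \prod_k \big(\E\cos(\omega_k\beta_k X_k)\big)^2 \ge \big(1 - \tfrac12\rho^2 M_X^2\sum_k \beta_k^2\big)^2 \ge \big(1 - \tfrac12\rho^2 M_X^2 M^2\big)^2$, using $\sum_k\beta_k^2 \le \|\beta\|_1^2 \le M^2$ and $\E[X_k^2]\le M_X^2$; choosing $\rho^2 = (M_X M)^{-2}$ makes this at least $\tfrac14$, so $C_\beta \ge \tfrac14\int_{\|\omega\|_\infty\le\rho}\wtilde{Q}(\omega)\,d\omega =: c > 0$ (positive since $\wtilde{Q}$ is strictly positive under Assumption~\ref{assumption:mu-compact}). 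Finally, Lemma~\ref{lemma:approximate-gradient-bound} (equivalently the error term in Proposition~\ref{prop:statistical-insights}) transfers the estimate from the surrogate to the true gradient with additive error $O(\lambda^{1/2}(1+\lambda^{1/2}))$, so
\begin{equation*}
	\partial_{\beta_l}\obj_\gamma(\beta) = -\tfrac1\lambda\big(\effect_l C_\beta + O(\lambda^{1/2}(1+\lambda^{1/2})) - \lambda\gamma\big) \le -\tfrac1\lambda\big(c\,\effect_l - C_1\lambda^{1/2}(1+\lambda^{1/2}) - \lambda\gamma\big),
\end{equation*}
which is strictly negative once $\effect_l \ge \wbar{C}\,(\lambda^{1/2}(1+\lambda^{1/2}) + \lambda\gamma)$ with $\wbar{C} := (C_1+1)/c$; since $c$ depends only on $M, M_X, M_\mu$ and $C_1$ only on $M_X, M_Y, M_\mu$, this is the constant claimed in the statement.

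It remains to upgrade the sign statement to the dynamical conclusion in the boundary case $\|\beta^*\|_1 = M$. The plan is to track the trajectory directly from $\beta^{(0)} = \mathbf{0}$: since the stepsize is tiny relative to the $O(1/\lambda)$ gradient magnitude, the $\ell_1$ budget cannot bind in a single step, so the iterates leave the face $\{\beta_l = 0\}$ early (the descent direction has strictly negative $l$‑component there, uniformly bounded by $-c\,\effect_l/(2\lambda)$), and thereafter one argues that $\beta^{(k)}_l$ cannot be driven back to zero in the limit — the only force shrinking a coordinate is the soft‑thresholding induced by the projection $\Pi_{\mathcal{B}_M}$, and the start at the origin prevents a coordinate carrying a nonvanishing descent direction from being permanently suppressed. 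I expect this last step — quantitatively controlling the projected‑gradient iterates near the $\ell_1$‑budget boundary, and thereby excluding the boundary stationary points with $\beta_l = 0$ that do exist in general (e.g.\ for a linear objective on the simplex) — to be the main obstacle; all other ingredients reduce to the gradient computation above and standard first‑order analysis.
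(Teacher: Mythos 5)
Your proposal captures the paper's main argument. The core calculation --- evaluating $\Cov\big(R_{\beta,\omega}(\beta\odot X;Y),e^{i\zeta_l X_l}\big)$ at any $\beta$ with $\beta_l=0$, using $X_l\perp X_{-l}$ and $\E[f_l^*(X_l)]=0$ to factor it into $\E[e^{i\langle\omega,\beta\odot X\rangle}]\cdot\E[f_l^*(X_l)e^{i\zeta_l X_l}]$, and then pulling the surrogate-gradient estimate back to the true gradient via Lemma~\ref{lemma:approximate-gradient-bound} --- is exactly what the paper does, with two cosmetic differences. First, the paper works with the $\wbar{R}$-form~\eqref{eqn:def-wtilde-partial-beta-j} and a tower-property computation of $\E[\wbar{R}_{\beta,\omega}|X_l]$ rather than the covariance form~\eqref{eqn:wtilde-partial-beta-j-rep}; these are equivalent. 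Second, for the lower bound on your $C_\beta=\int|\E[e^{i\langle\omega,\beta\odot X\rangle}]|^2\wtilde{Q}(\omega)\,d\omega$, the paper recognizes $C_\beta=-\E\big[h'\big(\|X-X'\|_{1,\beta}\big)\big]$ via equation~\eqref{eqn:h-prime-integral-formula} and then applies Jensen's inequality with $h'$ concave and nonpositive to get $C_\beta\ge |h'(2MM_X)|$; your restriction of the $\omega$-integral to a small cube produces a constant of the same character by a more hands-on route. Both are fine, though the Jensen route is more in keeping with the rest of the paper's style (it recurs in the proofs of Lemma~\ref{lemma:really-technical} and Lemma~\ref{lemma:upper-bound-tilde-J-hier}).

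You also note, correctly and with the right counterexample, that the inference ``$\partial_{\beta_l}\obj_\gamma(\beta)<0$ at every $\beta$ with $\beta_l=0$'' $\Longrightarrow$ ``no accumulation point has $\beta^*_l=0$'' is not automatic on the face $\{\|\beta\|_1=M\}$ of $\mathcal{B}_M$: a variational stationary point on the budget boundary can have $\beta^*_l=0$ and $\partial_{\beta_l}\obj_\gamma(\beta^*)<0$ simultaneously, as your linear-on-the-simplex example shows. Here you are more careful than the paper, which moves directly from the strict negativity of $\partial_{\beta_l}\obj_\gamma$ to ``Thus such $\beta$ can't be stationary,'' silently ignoring the boundary case. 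The contrast with the paper's own proof of Proposition~\ref{proposition:landscape-analysis-main-effect}(iv) is instructive: there, precisely such a boundary stationary point (with $\beta_l=0$) is constructed by restricted minimization, using the decomposition $\langle\grad\obj(\beta^{l,*}),\beta'-\beta^{l,*}\rangle = \partial_{\beta_l}\obj(\beta^{l,*})\,\beta'_l + \langle\grad_{[p]\setminus l}\obj(\beta^{l,*}),(\beta'-\beta^{l,*})_{[p]\setminus l}\rangle$ and noting both terms are nonnegative --- the first because $\partial_{\beta_l}\obj\ge 0$ there. With $q=1$ the sign of $\partial_{\beta_l}\obj$ flips, and if $\|\beta^{l,*}\|_1<M$ that negativity alone rules out stationarity (perturb in $+e_l$ only); but if $\|\beta^{l,*}\|_1=M$ one has to swap mass between coordinates, and the argument requires more. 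Your plan to close this by a trajectory analysis from $\beta^{(0)}=0$ is plausible but nontrivial, as you acknowledge; the alternative, arguably what the paper implicitly assumes, is that $M$ is taken large enough that the $\ell_1$ budget is slack at the relevant stationary points, but neither the paper nor your proposal establishes this. So: modulo the boundary face, your proof is correct and matches the paper's; on the boundary face your proof is (honestly) incomplete --- and so, in fact, is the paper's.
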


\begin{remark}
Theorem~\ref{thm:add-main-effect} shows that the algorithm can recover main effects 
when the regularizers $\lambda$ and $\lambda\gamma$ are sufficiently small compared 
to the \emph{effective signal size} 
(and in particular, when $\lambda = 0$). The main technique used in the proof is 
the characterization of the gradient $\grad \obj(\beta)$ in 
Section~\ref{sec:distinction-l-1-l-2}. 
\end{remark}
\end{example}

\renewcommand\thetheorem{\arabic{theorem}}
\setcounter{theorem}{1}

We now state a more general result on recovery of
main effect signals (Theorem~\ref{thm:add-main-mix-effect}). Parallel to the statement of Theorem~\ref{thm:add-main-effect},
we first define the \emph{effective signal size} of a main effect signal $X_l$ under the more general setup of the 
functional ANOVA model. 
\begin{definition}
\label{definition:l-signal-main-mix}
Define the \emph{effective signal size} of the main effect of $X_l$ as
\begin{equation*}
\effect_l = \inf_{(T_1, \ldots, T_{|S|}) \in \mathcal{G}_l}~~ \prod_{k =1}^{|S|} \min\{\effect_l(X_{T_k}), 1\}.
\end{equation*}
Here, the set $\mathcal{G}_l $ and the quantity $\effect_l(X_T)$ for any set $T$ are defined by 
\begin{itemize}
\item $\effect_l(X_T) \defeq \E\left[F_l(X_T) F_l(X_T') h\left(\norm{X_{T} - X_{T}'}_1\right)\right]$, where 
	$F_l(X_T) = \sum_{l \in A: A \subseteq T} f_A(X_A)$.
\item $\mathcal{G}_l = \left\{(T_1, \ldots, T_{|S|}): T_1 = \{l\}, T_{|S|} = S, T_k \subsetneq T_{k+1},~\text{for all $1\le k < |S|$}\right\}$.
\end{itemize}
\end{definition}
The effective signal size so defined is strictly positive for any main effect. This is formalized 
in Proposition~\ref{prop:main-effect-positive} whose proof is given in Appendix~\ref{sec:proof-proposition-main-effect-positive}.

\begin{proposition}
\label{prop:main-effect-positive}
The effective signal size $\effect_l > 0$ holds for any variable $X_l$ where $f_l^*(X_l) \neq 0$. 
\end{proposition}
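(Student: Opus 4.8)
The plan is to reduce the claim $\effect_l>0$ to a strict positive‑definiteness statement for a single kernel bilinear form and then to establish it via the Fourier representation of the $\ell_1$ kernel. Since $f_l^*(X_l)\neq 0$, the index $l$ belongs to the signal set $S$, so at least one chain $\{l\}=T_1\subsetneq T_2\subsetneq\cdots\subsetneq T_{|S|}=S$ exists; moreover $\mathcal{G}_l$ is finite (there are only finitely many such chains in $2^S$), so $\effect_l=\min_{(T_1,\ldots,T_{|S|})\in\mathcal{G}_l}\prod_{k=1}^{|S|}\min\{\effect_l(X_{T_k}),1\}$. A product over a chain is positive exactly when every factor is, and every set $T$ with $l\in T\subseteq S$ appears in some chain, so it suffices to prove $\effect_l(X_T)>0$ for every such $T$.

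Fix $T$ with $l\in T$, put $m=|T|$, and let $\nu$ be the finite signed measure on $\R^{m}$ given by $\nu(dx)=F_l(x)\,\P_{X_T}(dx)$; it is finite because $F_l(X_T)\in\mathcal{L}_2(\P)\subseteq\mathcal{L}_1(\P)$, which follows from the orthogonal ANOVA decomposition together with $\E[Y^2]<\infty$. Using $h(u)=\int_0^\infty e^{-tu}\,\mu(dt)$ (Proposition~\ref{proposition:k-pos-definite-representation}), Fubini (legitimate since $0\le e^{-tu}\le 1$, $\mu$ is finite, and $\E[|F_l(X_T)F_l(X_T')|]<\infty$), and the identity $e^{-t|v|}=\int_\R e^{iv\zeta}\,\frac{t}{\pi(t^2+\zeta^2)}\,d\zeta$ for $t>0$, I would rewrite
\begin{equation*}
\effect_l(X_T)=\int_{(0,\infty)}\Bigl(\int_{\R^{m}}|\widehat\nu(\omega)|^2\,g_t(\omega)\,d\omega\Bigr)\,\mu(dt),\qquad g_t(\omega)=\prod_{i=1}^{m}\frac{t}{\pi(t^2+\omega_i^2)},
\end{equation*}
with $\widehat\nu(\omega)=\int e^{-i\langle\omega,x\rangle}\,\nu(dx)$ and with any atom of $\mu$ at $0$ contributing $\bigl(\E[F_l(X_T)]\bigr)^2=0$. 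Since $g_t(\omega)>0$ for all $\omega$ and $\mu((0,\infty))>0$, this gives $\effect_l(X_T)\ge 0$, with $\effect_l(X_T)>0$ unless $\widehat\nu\equiv 0$; by uniqueness of the Fourier transform of a finite measure, $\widehat\nu\equiv 0$ forces $\nu=0$, i.e.\ $F_l(X_T)=0$ $\P$‑almost surely. This is where the explicit form of the $\ell_1$ kernel is used: the Fourier‑representing density $g_t$ of $x\mapsto e^{-t\norm{x}_1}$ is a product of Cauchy densities and is strictly positive everywhere.

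It remains to exclude $F_l(X_T)=0$. The mean‑zero condition $\E[f_A^*(X_A)]=0$ and the orthogonality condition $\E[f_A^*(X_A)\mid X_{A'}]=0$ for $A'\not\supseteq A$ of the functional ANOVA model imply that the components $\{f_A^*(X_A)\}_{A\subseteq S}$ are pairwise orthogonal in $\mathcal{L}_2(\P)$, hence for any $T$ containing $l$,
\begin{equation*}
\normP{F_l(X_T)}^2=\sum_{A:\,l\in A\subseteq T}\normP{f_A^*(X_A)}^2\ \ge\ \normP{f_l^*(X_l)}^2\ >\ 0,
\end{equation*}
using $\{l\}\subseteq T$ and $f_l^*(X_l)\neq 0$. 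Thus $F_l(X_T)\neq 0$, so $\effect_l(X_T)>0$ for every $T$ with $l\in T\subseteq S$, and combining with the first paragraph yields $\effect_l>0$.

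The step I expect to be the main obstacle is the Fourier computation of the second paragraph: besides making the interchange of integrals fully rigorous, one must argue that the nonnegative quantity displayed there can vanish \emph{only} when $\widehat\nu\equiv 0$ — equivalently, that the $\ell_1$ kernel is integrally strictly positive definite. The remaining ingredients (finiteness of $\mathcal{G}_l$, the Pythagorean identity for ANOVA components, and the bookkeeping with the product and the infimum) are routine.
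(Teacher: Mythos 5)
Your proof is correct and takes essentially the same route as the paper's: expand $\effect_l(X_T)$ via the Cauchy Fourier representation of the $\ell_1$ kernel to exhibit it as a nonnegative integral of $|\E[F_l(X_T)e^{i\langle\omega_T,X_T\rangle}]|^2$, conclude positivity unless $F_l(X_T)=0$ a.s., and then rule that out using the ANOVA orthogonality. The only difference is that you spell out several details the paper leaves implicit (Fubini justification, the harmless $\mu(\{0\})$ contribution, and the Pythagorean identity $\normP{F_l(X_T)}^2=\sum_{l\in A\subseteq T}\normP{f_A^*(X_A)}^2$).
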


Theorem~\ref{thm:add-main-mix-effect} shows that we can recover the variable $X_l$ if the \emph{effective signal size} 
$\effect_l$ exceeds a threshold (see equation~\eqref{eqn:add-main-mix-effect}). The proof of Theorem~\ref{thm:add-main-mix-effect}
is given in Appendix~\ref{sec:proof-of-theorem-add-main-mix-effect}. 
\begin{theorem}[Functional ANOVA]
\label{thm:add-main-mix-effect}
Given Assumptions~\ref{assumption:mu-compact} and~\ref{assumption:X-Y-bound}, assume 
that the functional ANOVA model holds. 
There exists a constant $\wbar{C} > 0$ depending only on $|S|, M, M_X, M_Y, M_\mu$ such that the following holds. 
Suppose the \emph{effective signal size} of the main effect of $X_l$ exceeds a threshold: 
\begin{equation}
\label{eqn:add-main-mix-effect}
	\effect_l \ge \wbar{C} \cdot (\lambda^{1/2}(1+\lambda^{1/2}) + \lambda \gamma). 
\end{equation}
Consider Algorithm~\ref{alg:kernel-feature-selection} with initialization $\beta^{(0)} = 0$ and stepsize $\alpha \le \frac{\lambda^2}{\wbar{C} p}$. 
Then $l \in \hat{S}$ where \nolinebreak $\hat{S}$ is the set returned by Algorithm~\ref{alg:kernel-feature-selection}. 
\end{theorem}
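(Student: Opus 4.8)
\emph{Strategy.} The plan is to combine the no-false-positive guarantee with a quantitative lower bound on $|\partial_{\beta_l}\obj_\gamma(\beta)|$ derived from the Fourier-analytic gradient formula of Proposition~\ref{prop:statistical-insights}. Since Algorithm~\ref{alg:kernel-feature-selection} starts at $\beta^{(0)}=0$, Theorem~\ref{thm:no-false-positive} gives $\beta^{(k)}_{S^c}=0$ for all iterates, so it suffices to analyze projected gradient descent on the coordinates of $S$, whose (fixed) cardinality enters $\wbar C$. The core step is a \emph{gradient-negativity} estimate: there is $c>0$ depending only on $|S|,M,M_X,M_Y,M_\mu$ such that for every feasible $\beta$ with $\beta_{S^c}=0$ and $\beta_l$ small enough,
\begin{equation*}
\partial_{\beta_l}\obj_\gamma(\beta)\;\le\;-\frac{1}{\lambda}\Big(c\,\effect_l-C\big(\lambda^{1/2}(1+\lambda^{1/2})+\lambda\gamma\big)\Big),
\end{equation*}
which is at most $-c\,\effect_l/(2\lambda)<0$ once $\effect_l$ clears the stated threshold with $\wbar C$ large.

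\emph{Proving the estimate.} Starting from the $q=1$ case of Proposition~\ref{prop:statistical-insights}, I would simplify the inner covariance at a $\beta$ with $\beta_l=0$ and $T\defeq\supp(\beta)\subseteq S\setminus\{l\}$. Because $e^{i\langle\omega,\beta\odot X\rangle}$ and $f_\beta$ are functions of $X_T$ only and the covariates are independent, the part of the residual not depending on $X_l$ disappears from $\Cov(R_{\beta,\omega}(\beta\odot X;Y),e^{i\zeta_l X_l})$, and the mean-zero/orthogonality conditions of the functional ANOVA decomposition annihilate the remaining full expectations; what is left, up to an $O(\lambda^{1/2})$ residual-approximation error, involves only $F_l(X_S)=\sum_{l\in A\subseteq S}f^*_A(X_A)$. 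Expanding the $\zeta_l$-integral via the conditionally-negative-definite identity~\eqref{eqn:absolute-cond-psd} and the $\omega$-integral via $\int e^{i\langle\omega,v\rangle}\wtilde Q(\omega)\,d\omega=-h'(\norm{v}_1)$ (each $\psi_t$ being a Cauchy density with Fourier transform $e^{-t|\cdot|}$) converts the leading term of $-\lambda\,\partial_{\beta_l}\obj(\beta)$ into the nonnegative quadratic expression
\begin{equation*}
\int \E\!\left[\Phi_\omega(X_{S\setminus\{l\}})\,\wbar{\Phi_\omega(X_{S\setminus\{l\}}')}\,\big(-h'(\norm{X-X'}_{1,\beta})\big)\right]\frac{d\omega}{\pi\omega^2},\qquad \Phi_\omega(X_{S\setminus\{l\}})\defeq\E\big[F_l(X_S)e^{i\omega X_l}\mid X_{S\setminus\{l\}}\big].
\end{equation*}
Lemma~\ref{lemma:approximate-gradient-bound} bounds the surrogate/true discrepancy by $O(\lambda^{-1/2}(1+\lambda^{1/2}))$, and the right derivative of $\gamma\norm{\beta}_1$ contributes the $\gamma$; this yields the estimate, modulo the key lower bound on the quadratic expression.

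\emph{Main obstacle.} The hard part is to show this quadratic expression is at least $c\,\effect_l$ \emph{uniformly over all admissible supports $T\subseteq S\setminus\{l\}$ and feasible $\beta$ supported on $T$}. The kernel $-h'(\norm{X-X'}_{1,\beta})$ depends on $X$ only through $X_T$, so $\Phi_\omega$ is effectively replaced by $\E[\Phi_\omega\mid X_T]$; unlike the additive model of Theorem~\ref{thm:add-main-effect}, the interaction terms $f^*_A$ with $l\in A$, $|A|\ge 2$ have $X_l$-transforms that need not vanish for $\omega\ne 0$, so this conditioning does not collapse $\Phi_\omega$ to the pure main-effect transform $\E[f^*_l(X_l)e^{i\omega X_l}]$. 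The resolution is a telescoping argument along a chain $\{l\}=T_1\subsetneq\cdots\subsetneq T_{|S|}=S$: the quadratic expression for $X_{T_k}$ is compared with that for $X_{T_{k+1}}$, losing at most a factor $\min\{\effect_l(X_{T_k}),1\}$ per step --- which is exactly why $\effect_l=\inf_{(T_1,\dots,T_{|S|})\in\mathcal{G}_l}\prod_{k}\min\{\effect_l(X_{T_k}),1\}$ in Definition~\ref{definition:l-signal-main-mix} --- with the kernel kept bounded below via $-h'(x)\ge c_0 e^{-M_\mu x}$ and the comparability of $h$ and $-h'$ on the fourth-moment-controlled range of $\norm{X-X'}_{1,\beta}$ (this is where $M_X,M_\mu$ enter). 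Strict positivity of $\effect_l$ is then Proposition~\ref{prop:main-effect-positive}.

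\emph{Conclusion.} With gradient-negativity in hand, $l\in\hat S$ follows from the dynamics of projected gradient descent, paralleling the proof of Theorem~\ref{thm:add-main-effect}: $\obj_\gamma$ has Lipschitz gradient on $\mathcal{B}_M$ with constant $O(p/\lambda^2)$ (hence $\alpha\le\lambda^2/(\wbar C p)$), so every accumulation point $\beta^*$ is stationary and has $\beta^*_{S^c}=0$. If $\beta^*_l=0$, then $\partial_{\beta_l}\obj_\gamma(\beta^*)\le-c\,\effect_l/(2\lambda)<0$, and testing stationarity, $\langle\grad\obj_\gamma(\beta^*),\beta'-\beta^*\rangle\ge0$, against $\beta'=\beta^*+\epsilon e_l$ forces $\partial_{\beta_l}\obj_\gamma(\beta^*)\ge0$, a contradiction; the case where the $\ell_1$ constraint is active at $\beta^*$ is handled through the stepsize condition, which ensures that whenever $\beta^{(k)}_l$ is small the upward push $-\alpha\,\partial_{\beta_l}\obj_\gamma(\beta^{(k)})$ exceeds the projection shrinkage, precluding an accumulation point with $\beta^*_l=0$. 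Thus $\beta^*_l>0$, i.e., $l\in\hat S$.
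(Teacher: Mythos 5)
Your high-level strategy --- reduce to stationarity of accumulation points via the Lipschitz gradient bound (Proposition~\ref{prop:grad-obj-lipschitz-beta}, Lemma~\ref{lemma:gradient-ascent-increases-objective}), apply Theorem~\ref{thm:no-false-positive} to restrict to $\beta_{S^c}=0$, and then establish strict gradient-negativity at any such $\beta$ with $\beta_l=0$ --- matches the paper's proof structure exactly, and your rewriting of the surrogate gradient as the quadratic form
\begin{equation*}
\int \E\!\left[\Phi_\omega(X_{S\setminus\{l\}})\,\wbar{\Phi_\omega(X'_{S\setminus\{l\}})}\,\bigl(-h'(\norm{X-X'}_{1,\beta})\bigr)\right]\frac{d\omega}{\pi\omega^2},\qquad
\Phi_\omega=\E\bigl[F_l(X_S)e^{i\omega X_l}\mid X_{S\setminus\{l\}}\bigr],
\end{equation*}
is a correct re-expression of $U(\beta)$ from Lemma~\ref{lemma:eqn-crucial-identity-technical} (by conditioning on $X_{S\setminus\{l\}},X'_{S\setminus\{l\}}$ and integrating out $\omega_{S\setminus\{l\}}$ against $\wtilde Q$ to obtain the $-h'$ kernel).

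Where your sketch is under-specified, and where it diverges from the paper, is the central estimate, which you call a ``telescoping argument along a chain.'' The paper's actual mechanism is Lemma~\ref{lemma:upper-bound-tilde-J}: for \emph{each} subset $T$ with $l\in T\subseteq S$ --- note, not restricted to $T=\supp(\beta)$ as in your framing --- it proves the one-shot bound
\begin{equation*}
U(\beta)\;\le\;-\frac{\tilde c}{\lambda}\Bigl(\effect_l(X_T)-C\!\!\sum_{l'\in S\setminus T}\!\!\beta_{l'}\Bigr)_+\prod_{\bar l\in T\setminus\{l\}}\beta_{\bar l}.
\end{equation*}
The factor $\prod_{\bar l\in T\setminus\{l\}}\beta_{\bar l}$ is extracted not by telescoping but by the change of variables $\omega_i\mapsto\beta_i\omega_i$ for $i\in T\setminus\{l\}$ together with the Cauchy scaling inequality $\psi_\beta(\omega)\ge\beta\,\psi(\omega)$ for $\beta\le1$; the error from $X_{S\setminus T}$ is handled by decomposing $R_{l,T}=R_{l,T,1}+R_{l,T,2}$ and bounding $R_{l,T,2}$ via the estimate $|e^{it}-1|\le2\min\{|t|,1\}$ and the closure of Cauchy laws under linear combinations (Lemma~\ref{lemma:really-technical}). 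Only at the very end is a $\max_T$ taken, with the combinatorial assertion $\max_{T\ni l}U_{T;C}(\beta)\ge c\,\effect_l$ --- the step that actually connects to the inf-over-chains definition of $\effect_l$. Your intuition that $\effect_l$'s chain infimum reflects surviving every possible selection ordering is exactly the paper's stated intuition, but your step-by-step comparison of $X_{T_k}$ to $X_{T_{k+1}}$ does not indicate how the $\prod\beta_{\bar l}$ factor or the $\sum_{l'\notin T}\beta_{l'}$ error term would arise, and I do not see how to make it precise as written; the change-of-variables / error-decomposition argument is the piece you would still need to supply to turn your sketch into a proof.
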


\begin{remark}
Theorem~\ref{thm:add-main-mix-effect} generalizes Theorem~\ref{thm:add-main-effect} by proving main effect recovery in the more general functional ANOVA setup (allowing variables to also interact). 
Compared with the proof of  Theorem~\ref{thm:add-main-effect}, the proof of Theorem~\ref{thm:add-main-mix-effect} introduces one new argument
(see Lemma~\ref{lemma:upper-bound-tilde-J} and the accompanying remark) which captures the following phenomenon. Suppose the set
$T\backslash l$ has been selected (i.e., $\beta_{T \backslash l}$ is large) but $l$ has not been selected ($\beta_l = 0$). The size of the gradient with respect to $\beta_l$  will now depend on the signal size of $X_l$ in the context of the group of variables $T$---this signal size is measured quantitatively by the term $\effect_l(X_T)$ defined above. If $\effect_l(X_T)$ is sufficiently large, then $\beta_l$ will become non-zero once $T\backslash l$ has been selected. The definition of $\effect_l$ minimizes over all possible orderings in which the variables in $S$ might be selected and guarantees that $\beta_l$ will become non-zero no matter which variables might be selected before it in the ordering.
\end{remark}

\subsubsection{Hierarchical interaction signal}
\label{sec:hierarchical-effect}

In this section, we show how a natural variant of Algorithm \ref{alg:kernel-feature-selection} is 
able to find variables with \emph{zero} marginal effects as long as those variables participate in a hierarchical interaction. To formally define the hierarchy of a signal, we use the functional ANOVA model discussed 
in Section~\ref{sec:functional-ANOVA}. Suppose the ANOVA decomposition has the following form: 
\begin{equation}
\label{eqn:hierarchical-ANOVA}
	f^*(X_S) = \sum_{k=1}^K \sum_{l=1}^{|S_k|}  f_{S_{k, l}}^*(X_{S_{k, l}}),
\end{equation}
where we have
\begin{itemize}
\item $K$ disjoint hierarchical components: $S = \cup_{k=1}^K S_k$ and $S_i \cup S_j = \emptyset$ for $i \neq j$.
\item Hierarchical signal within each component:
	$\emptyset \subsetneq S_{k, 1} \subsetneq S_{k, 2} \subsetneq \ldots \subsetneq S_{k, |S_k|} = S_k$ where $|S_{k, l}| = l$ for $l \in [|S_k|]$, and $f_{S_{k, l}}^*(X_{S_{k, l}}) \neq 0$ for any $k,l$.
\end{itemize}

The ANOVA decomposition in equation~\eqref{eqn:hierarchical-ANOVA} defines $K$ hierarchical signals in the following sense. 
All the variables in $\cup_{k \in K} S_{k, 1}$ are main effects (level $1$ signals). 
Variables in $\cup_{k \in K} (S_{k, 2}\backslash S_{k, 1})$ have level $2$ signals---i.e. 
level 2 variables---have a conditional main effect given the level $1$ variables. 
We then recursively define the level $l$ 
variables as those in $\cup_{k \in K} (S_{k, l} \backslash S_{k, l-1})$. 
As a concrete example, suppose the signal takes the form 
\begin{equation*}
	f^*(X_{12345}) = (f_1^*(X_1) + f_{12}^*(X_{12})) + (f_3^*(X_3) + f_{34}^*(X_{34}) + f_{345}^*(X_{345})). 
\end{equation*}
In this case, we have two hierarchical components $\{1, 2\}$ and $\{3, 4, 5\}$, and within each component, 
the signals exhibit a hierarchy: the level $1$ signals are $\{1, 3\}$, the level $2$ 
signals are $\{2, 4\}$ and the level $3$ signal is $\{5\}$.

\paragraph{Notation}
For notational simplicity, we adopt the following index on the features: 
$X_{k, l} \defeq X_{S_{k, l}} \backslash X_{S_{k, l-1}}$. Hence, $X_{S_{k, 1}} = X_{k, 1}$, 
$X_{S_{k, 2}} = X_{k, 1} \cup X_{k, 2}$, $X_{S_{k, 3}} = X_{k, 1} \cup X_{k, 2} \cup X_{k, 3}$ etc.
We use $N_k = |S_k|$ to denote the size of the $k$th component. 

\vspace{0.4cm}
Now we define the \emph{effective signal size} for a signal variable $X_{k, l}$ for $k \le K$ and $l \le N_k$. 

\begin{definition}
\label{definition:hier-effect}
Define the \emph{effective signal size} of $X_{k, l}$ in the hierarchical model by
\begin{equation*}
	\effect_{k, l} = \min_{1 \le m \le l}  \Bigg\{\prod_{m \le j \le N_k} \min\left\{\effect_{k, m}(X_{S_{k, j}}), 1\right\}\Bigg\},
\end{equation*}
where we define 
$\effect_{k, m}(X_{S_{k, j}}) \defeq \E\left[F_{k, m}(X_{S_{k, j}}) F_{k, m}(X_{S_{k, j}}') h\Big(\normbig{X_{S_{k, j}} - X_{S_{k, j}}'}_1\Big)\right]$ 
for $m \le j$, where $F_{k, m}(X_{S_{k, j}}) = \sum_{m \le w\le j} f_{S_{k, w}}^*(X_{S_{k, w}})$.
\end{definition}

The effective signal size for $X_{k, l}$, the level $l$ variable in component $k$, is positive as long as all the lower level variables in component $k$ have non-zero effective signal size. More precisely, we require $f_{S_{k, j}}^*(X_{S_{k, j}}) \neq 0$ for all $1\le j\le l$.
The result is formally stated in Proposition~\ref{prop:hier-positive} with proof in Appendix~\ref{sec:proof-proposition-hier-positive}. 
\begin{proposition}
\label{prop:hier-positive}
The effective signal size $\effect_{k, l} > 0$ as long as $f_{S_{k, j}}^*(X_{S_{k, j}}) \neq 0$ 
for all $1\le j \le l$. 
\end{proposition}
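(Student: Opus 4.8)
The plan is to reduce the claim to the strict positivity of each building block $\effect_{k,m}(X_{S_{k,j}})$ and then establish that strict positivity from the Schoenberg representation of $h$ together with the fact that $\ell_1$-type kernels are characteristic. Since
\[
  \effect_{k,l}=\min_{1\le m\le l}\ \prod_{m\le j\le N_k}\min\bigl\{\effect_{k,m}(X_{S_{k,j}}),\,1\bigr\}
\]
is a \emph{finite} minimum of \emph{finite} products of numbers lying in $[0,1]$ (once we know each $\effect_{k,m}(X_{S_{k,j}})\ge 0$), it suffices to prove $\effect_{k,m}(X_{S_{k,j}})>0$ for every $1\le m\le l$ and every $m\le j\le N_k$. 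The hypothesis of the proposition enters only through $f^*_{S_{k,m}}\neq 0$ for $m\le l$.

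First I would record nonnegativity. By Proposition~\ref{proposition:k-pos-definite-representation}, $h(x)=\int_0^\infty e^{-tx}\mu(dt)$ for a finite nonnegative measure $\mu$ with $\mu((0,\infty))>0$; letting $(X',Y')$ be an independent copy of $(X,Y)$, Fubini (valid since $F_{k,m}(X_{S_{k,j}})\in\mathcal{L}_2(\P)$, being a sum of ANOVA components of $f^*\in\mathcal{L}_2(\P)$, and $0\le e^{-t\norm{\cdot}_1}\le 1$) gives
\[
  \effect_{k,m}(X_{S_{k,j}})=\int_0^\infty\E\!\left[F_{k,m}(X_{S_{k,j}})\,F_{k,m}(X_{S_{k,j}}')\,e^{-t\norm{X_{S_{k,j}}-X_{S_{k,j}}'}_1}\right]\mu(dt),
\]
and since each $(x,x')\mapsto e^{-t\norm{x-x'}_1}$ is positive definite (for $t=0$, the constant kernel) we get $\effect_{k,m}(X_{S_{k,j}})\ge 0$. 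For strictness I would fix $t>0$ in $\supp(\mu)$ (which exists as $\mu((0,\infty))>0$) and use the Fourier representation of the Laplace kernel from Proposition~\ref{proposition:norm-of-H} (cf.\ equation~\eqref{eqn:notation-of-q-t-p-t}), $e^{-t\norm{z}_1}=\int q_t(\omega)e^{i\langle\omega,z\rangle}\,d\omega$ with $q_t(\omega)=\prod_i\psi_t(\omega_i)>0$ everywhere; together with $X_{S_{k,j}}\perp X_{S_{k,j}}'$ this yields
\[
  \E\!\left[F_{k,m}(X_{S_{k,j}})\,F_{k,m}(X_{S_{k,j}}')\,e^{-t\norm{X_{S_{k,j}}-X_{S_{k,j}}'}_1}\right]=\int\bigl|\varphi(\omega)\bigr|^2 q_t(\omega)\,d\omega,\qquad \varphi(\omega)\defeq\E\!\left[F_{k,m}(X_{S_{k,j}})\,e^{i\langle\omega,X_{S_{k,j}}\rangle}\right].
\]
If this vanishes then $\varphi=0$ Lebesgue-a.e., hence $\varphi\equiv 0$ by continuity of $\varphi$ ($F_{k,m}\in\mathcal{L}_1(\P)$), and uniqueness of the Fourier transform of the finite signed measure $F_{k,m}(x)\,\P_{X_{S_{k,j}}}(dx)$ forces $F_{k,m}(X_{S_{k,j}})=0$ a.s. So $\effect_{k,m}(X_{S_{k,j}})>0$ as soon as $F_{k,m}(X_{S_{k,j}})\not\equiv 0$ in $\mathcal{L}_2(\P)$.

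It then remains to verify $F_{k,m}(X_{S_{k,j}})=\sum_{w=m}^{j}f^*_{S_{k,w}}(X_{S_{k,w}})\not\equiv 0$ when $m\le l$. Here I would invoke the functional ANOVA orthogonality of Section~\ref{sec:functional-ANOVA}: for $m\le w<w'\le j$ we have $S_{k,w}\subsetneq S_{k,w'}$, so $S_{k,w'}\not\subseteq S_{k,w}$ and $\E[f^*_{S_{k,w'}}(X_{S_{k,w'}})\mid X_{S_{k,w}}]=0$; since $f^*_{S_{k,w}}(X_{S_{k,w}})$ is $\sigma(X_{S_{k,w}})$-measurable, the cross term $\E[f^*_{S_{k,w}}f^*_{S_{k,w'}}]$ vanishes. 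Hence $\norm{F_{k,m}(X_{S_{k,j}})}_{\mathcal{L}_2(\P)}^2=\sum_{w=m}^{j}\norm{f^*_{S_{k,w}}(X_{S_{k,w}})}_{\mathcal{L}_2(\P)}^2\ge\norm{f^*_{S_{k,m}}(X_{S_{k,m}})}_{\mathcal{L}_2(\P)}^2>0$, positive precisely because $m\le l$ and $f^*_{S_{k,j}}\neq 0$ for all $1\le j\le l$. Assembling the three steps gives $\effect_{k,m}(X_{S_{k,j}})>0$ for all $1\le m\le l$, $m\le j\le N_k$, and hence $\effect_{k,l}>0$.

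The hard part will be the strict-positivity step: upgrading nonnegativity of the quadratic form $\E[g(X)g(X')h(\norm{X-X'}_1)]$ to strict positivity, i.e.\ the characteristic (integrally strictly positive definite) property of $\ell_1$-type kernels with $\mu((0,\infty))>0$. The reduction in the first paragraph and the orthogonality bookkeeping are routine; the delicate points are making the Fourier/Fubini manipulations rigorous when $X_{S_{k,j}}$ need not have compact support (for which Assumption~\ref{assumption:X-Y-bound} and finiteness of $\mu$ suffice) and invoking ``continuous and a.e.\ zero $\Rightarrow$ identically zero'' plus Fourier uniqueness to conclude $F_{k,m}(X_{S_{k,j}})\equiv 0$. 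This is essentially the argument behind Proposition~\ref{prop:main-effect-positive}, now with the chain $S_{k,1}\subsetneq\cdots\subsetneq S_{k,N_k}$ playing the role of the orderings in $\mathcal{G}_l$.
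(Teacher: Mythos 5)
Your proposal is correct and follows essentially the same route as the paper: expand the kernel in Fourier form to write $\effect_{k,m}(X_{S_{k,j}})$ as $\iint |\varphi(\omega)|^2 \prod_i \psi_t(\omega_i)\,d\omega\,\mu(dt)$, deduce strict positivity whenever $F_{k,m}(X_{S_{k,j}})\not\equiv 0$, and then use the functional-ANOVA orthogonality to get $\|F_{k,m}(X_{S_{k,j}})\|_{\mathcal L_2}^2 \ge \|f^*_{S_{k,m}}\|_{\mathcal L_2}^2>0$ when $m\le l$.

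One minor point in your favor: the paper's proof only explicitly verifies $F_{k,m}(X_{S_{k,j}})\neq 0$ for $1\le m\le j\le l$, whereas the product in Definition~\ref{definition:hier-effect} runs over $m\le j\le N_k$. Your orthogonality bookkeeping (the lower bound by the single term $\|f^*_{S_{k,m}}\|^2$, which is present in $F_{k,m}(X_{S_{k,j}})$ for every $j\ge m$) closes that small gap cleanly, since it makes no use of $f^*_{S_{k,j}}\neq 0$ for $j>l$.
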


\begin{algorithm}[!tph]
\begin{algorithmic}[1]
\Require{Initializers $\{\beta^{(0; T)}\}_{T \in 2^{[p]}}$, stepsize $\alpha$, feature matrix $\mathbf{X} \in \mathbb{R}^{n\times p}$ and response $y \in \R^n$}
\While{$\hat S$ not converged}
\State{Run projected gradient descent (with stepsize $\alpha$ and initialization $\beta^{(0; \hat{S})}$) to solve 
\begin{equation*}
\begin{split}
	&\minimize_{\beta \in \mathcal{B}_{M, \hat{S}}}  \obj_{\gamma}(\beta), \\
		~~\text{where}~~&
			\mathcal{B}_{M, \hat{S}} = \left\{\beta\in \R_+^p;  \norm{\beta_{\hat{S}^c}}_1 \le M~\text{and}~ \beta_{\hat{S}} = \tau \mathbf{1}_{\hat{S}}\right\}.
\end{split}
\end{equation*}
Denote the projected gradient descent iterates as $\{\beta^{(k)}\}_{k \in \N}$ where 
\begin{equation}
	\beta^{(k+1)} = \proj_{\B_{M, \hat{S}}}(\beta^{(k)} - \alpha \grad \regobj(\beta^{(k)})).
\end{equation}
}
\State{Update $\hat{S} = \supp(\beta) \cup \hat{S}$ where $\beta$ is any accumulation point of the iterates $\{\beta^{(k)}\}_{k \in \N}$. }
\EndWhile
\end{algorithmic}
\caption{Kernel Feature Selection Algorithm (Variant)}
\label{alg:kernel-feature-selection-hier}
\end{algorithm}

Theorem~\ref{thm:hier-effect} shows that Alg.~\ref{alg:kernel-feature-selection-hier}, a simple variant of Alg.~\ref{alg:kernel-feature-selection}, can
recover all hierarchical interactions at the population level. The idea is to run multiple rounds of Alg.~\ref{alg:kernel-feature-selection} while keeping the already discovered variables active in subsequent rounds. In the first round, we can discover all main effect signals (Theorem \ref{thm:add-main-mix-effect}); in the second round, we can discover all level 2 signals, and so on. Theorem~\ref{thm:hier-effect} formalizes this result. The proof is in Appendix~\ref{sec:proof-thm-hier-effect}. 

\begin{theorem}[Hierarchical Interaction]
\label{thm:hier-effect}
Make Assumptions~\ref{assumption:mu-compact} and~\ref{assumption:X-Y-bound} . Assume the hierarchical interaction model. 
There exists a constant $\wbar{C} > 0$ that depends only on $\tau, |S|, M, M_X, M_Y, M_\mu$ such that the following holds. 
Suppose the \emph{effective signal size} of a signal variable $X_{k, l}$ exceeds a certain threshold: 
\begin{equation}
\label{eqn:hier-effect}
	\effect_{k, l} \ge C \cdot (\lambda^{1/2}(1+\lambda^{1/2}) + \lambda \gamma).
\end{equation}
Consider Algorithm~\ref{alg:kernel-feature-selection-hier} with the initializers $\{\beta^{(0; T)}\}_{T \in 2^{[p]}}$ where $\beta^{(0; T)}$
is defined by $\beta^{(0; T)}_T = \tau \mathbf{1}_T$ and $\beta^{(0; T)}_{T^c} = 0$
and with the stepsize $\alpha \le \frac{\lambda^2}{\wbar{C} p}$. 
Then the algorithm selects the variable $X_{k, l}$.
\end{theorem}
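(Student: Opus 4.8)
\emph{Proof plan.} The plan is to reduce Theorem~\ref{thm:hier-effect} to an induction over the rounds of the outer while-loop of Algorithm~\ref{alg:kernel-feature-selection-hier}, combining the no-false-positive mechanism of Theorem~\ref{thm:no-false-positive} with a \emph{conditional} version of the main-effect recovery result of Theorem~\ref{thm:add-main-mix-effect}. Write $\hat S^{(m)}$ for the selected set after the $m$-th pass through the loop, with $\hat S^{(0)} = \emptyset$. I would prove by induction on $m$ the two statements $\hat S^{(m)} \subseteq S$ and $\hat S^{(m)} \supseteq \bigcup_{k=1}^K S_{k,\min(m,N_k)}$. The inclusion $\hat S^{(m)} \subseteq S$ follows in each round exactly as in Theorem~\ref{thm:no-false-positive}: at the start of round $m$ we have $\beta_{S^c} = 0$, Lemma~\ref{lemma:grad-noise-variable-beta-noise-0} shows the gradient in any noise coordinate $l \notin S$ is strictly positive on $\{\beta : \beta_{S^c} = 0\}$, and since the round pins only coordinates in $\hat S^{(m-1)} \subseteq S$, the projected-gradient iterates of~\eqref{eqn:pgd-kernel} keep $\beta^{(k)}_{S^c} = 0$ throughout, hence so does every accumulation point. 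For the second statement, the base case $m=1$ is precisely Theorem~\ref{thm:add-main-mix-effect} applied with $\hat S^{(0)}=\emptyset$ (round $1$ is then Algorithm~\ref{alg:kernel-feature-selection}); this uses that $\effect_{k,l}$ is non-increasing in $l$, since the outer minimum in Definition~\ref{definition:hier-effect} ranges over a larger index set as $l$ grows, so the hypothesis $\effect_{k,l} \ge \bar C(\lambda^{1/2}(1+\lambda^{1/2}) + \lambda\gamma)$ forces $\effect_{k,m} \ge \bar C(\lambda^{1/2}(1+\lambda^{1/2})+\lambda\gamma)$ for every $m \le l$, and moreover $\effect_{k,m} \le \effect_{k,m}(X_{S_{k,j}})$ for every $j$ with $m \le j \le N_k$, so each individual context signal $\effect_{k,m}(X_{S_{k,j}})$ clears the threshold as well.

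For the inductive step, fix $m$ and a component $k$ with $m \le N_k$, and analyze round $m$, in which the coordinates of $\hat S^{(m-1)} \supseteq S_{k,m-1}$ are pinned at $\tau$, the block $\beta_{S^c}$ stays at $0$ by the previous paragraph, and the remaining coordinates move in $\{\beta \ge 0 : \|\beta_{(\hat S^{(m-1)})^c}\|_1 \le M\}$. The heart of the proof is a uniform gradient lower bound: for every feasible $\beta$ in this round and every level $j$ of component $k$ with $j \ge m$ and $\beta_{X_{k,j}} = 0$,
\begin{equation*}
	\partial_{\beta_{X_{k,j}}} \regobj(\beta) \;\le\; -\frac{1}{\lambda}\,\effect_{k,m}(X_{S_{k,j}}) + \frac{C}{\sqrt{\lambda}} + \gamma ,
\end{equation*}
which is strictly negative once $\effect_{k,m}(X_{S_{k,j}}) \ge \bar C(\lambda^{1/2}(1+\lambda^{1/2}) + \lambda\gamma)$. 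To obtain this I would start from the $q=1$ gradient formula of Proposition~\ref{prop:statistical-insights}, whose dominant term is $-\lambda^{-1}\iint \Cov^2\!\big(e^{i\langle \omega,\beta\odot X\rangle} r_\beta(\beta\odot X;Y),\, e^{i\zeta X_{k,j}}\big)\,\frac{d\zeta}{\pi\zeta^2}\,\wtilde Q(\omega)\,d\omega$, and then isolate the surviving part of the covariance. Since $X_{k,j} \notin \supp(\beta)$, the kernel-ridge solution $f_\beta$ fits only a function of $X_{\supp(\beta)}$ up to the $O(\sqrt\lambda)$ residual-approximation error controlled in Proposition~\ref{prop:statistical-insights}; and since every ANOVA term $f^*_{S_{k,w}}$ with $w \ge m$ involves a variable not fitted by $f_\beta$ (for $w \ge j$ it involves $X_{k,j}$ itself, and for $m \le w < j$ it is orthogonal to the pinned block $X_{S_{k,m-1}}$), the orthogonality and cross-component independence of the functional ANOVA model leave the entire component-$k$ block $F_{k,m}(X_{S_k}) = \sum_{w \ge m} f^*_{S_{k,w}}$ inside the residual. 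Conditioning this block down to $X_{S_{k,j}}$ --- which the complex-exponential probe $e^{i\langle\omega,\beta\odot X\rangle}$ together with $e^{i\zeta X_{k,j}}$ makes accessible once levels $m,\dots,j-1$ carry positive weight --- produces $F_{k,m}(X_{S_{k,j}}) = \sum_{m \le w \le j} f^*_{S_{k,w}}$, and repeating the Fourier manipulations of the proof of Theorem~\ref{thm:add-main-mix-effect} turns $\Cov^2$ integrated against $d\zeta/(\pi\zeta^2)$ and $\wtilde Q(\omega)\,d\omega$ into exactly $\effect_{k,m}(X_{S_{k,j}})$ of Definition~\ref{definition:hier-effect}. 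Granted this bound, the projected-gradient dynamics argument of Theorem~\ref{thm:add-main-mix-effect} (with the stepsize bound $\alpha \le \lambda^2/(\bar C p)$, and the check that the $\ell_1$ constraint never binds) forces every accumulation point of the round-$m$ iterates to have $\beta_{X_{k,j}} > 0$; running this sequentially through $j = m, m+1, \dots, N_k$ (the within-round cascade that the product $\prod_{m \le j \le N_k} \min\{\effect_{k,m}(X_{S_{k,j}}),1\}$ in Definition~\ref{definition:hier-effect} is built to bootstrap through) and over all components gives $\hat S^{(m)} \supseteq \bigcup_k S_{k,\min(m,N_k)}$, completing the induction. Since $\hat S$ is non-decreasing and bounded by $S$, the while-loop terminates after at most $|S|$ rounds, so that the tracked variable $X_{k,l}$, which belongs to $S_{k,l}$, is selected; this is the claimed conclusion. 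The outer $\min_{1\le m\le l}$ in $\effect_{k,l}$ absorbs the fact that we do not know in which round component $k$'s signal first advances past level $m-1$, while Proposition~\ref{prop:hier-positive} guarantees the threshold condition is meaningful (all the $\effect_{k,m}(X_{S_{k,j}})$ in play are strictly positive).

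The main obstacle is establishing the gradient lower bound of the inductive step \emph{uniformly} over all feasible $\beta$ sharing the prescribed zero pattern --- in particular over configurations in which several variables of component $k$ above level $m-1$ already carry positive (possibly tiny) weights while $X_{k,j}$ is still zero, and in which the probe $e^{i\langle\omega,\beta\odot X\rangle}$ depends on those weights. This is the hierarchical analogue of Lemma~\ref{lemma:upper-bound-tilde-J}, and it is precisely here that the exact shape of Definition~\ref{definition:hier-effect} --- the summation in $F_{k,m}$ starting at the entry level $m$ rather than at $j$, the product over all levels up to $N_k$, and the outer minimum --- is forced. Two further technical points need care: propagating the constant through the (at most) $|S|$-fold product, which degrades it geometrically and is why $\bar C$ depends on $|S|$ and on $\tau$; and verifying that the $O(\sqrt\lambda)$ errors --- both the residual-approximation error of Proposition~\ref{prop:statistical-insights} and the error from the finite ridge penalty --- are negligible against $\effect_{k,m}(X_{S_{k,j}})/\lambda$ under the stated threshold, so the sign of $\partial_{\beta_{X_{k,j}}}\regobj(\beta)$ is controlled throughout each round. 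These verifications are what I expect to occupy the bulk of the rigorous argument in Appendix~\ref{sec:proof-thm-hier-effect}.
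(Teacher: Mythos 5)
Your high-level architecture matches the paper's proof in Appendix~\ref{sec:proof-thm-hier-effect}: induction on the outer round index $m$, re-using the no-false-positive mechanism (Lemma~\ref{lemma:grad-noise-variable-beta-noise-0}) to keep $\beta_{S^c}=0$ inside every round, and then a gradient negativity argument at a stationary point to show the level-$m$ variable of each component is activated in round $m$. The paper carries this out through Lemma~\ref{lemma:key-to-thm-hier-effect} (the analogue of Lemma~\ref{lemma:key-to-thm-add-main-mix-effect}), which in turn rests on Lemma~\ref{lemma:eqn-crucial-identity-technical} and Lemma~\ref{lemma:upper-bound-tilde-J-hier}.

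However, your displayed inductive-step inequality
\begin{equation*}
\partial_{\beta_{X_{k,j}}} \regobj(\beta) \;\le\; -\frac{1}{\lambda}\,\effect_{k,m}(X_{S_{k,j}}) + \frac{C}{\sqrt{\lambda}} + \gamma
\end{equation*}
is too strong and does not hold uniformly over the feasible $\beta$ you range over. The correct bound — the paper's Lemma~\ref{lemma:key-to-thm-hier-effect} — is
\begin{equation*}
\partial_{\beta_{k,m}} \obj_\gamma(\beta) \;\le\; \frac{1}{\lambda}\bigl(-c\min\{\tau^m,1\}\,\effect_{k,l} + C\lambda^{1/2}(1+\lambda^{1/2}) + \lambda\gamma\bigr),
\end{equation*}
and the replacement of $\effect_{k,m}(X_{S_{k,j}})$ by the much smaller $\effect_{k,l}$ is not a cosmetic constant absorption. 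The obstacle you flag in your final paragraph (the higher-level coordinates $\beta_{k,w}$ with $w>m$ can be nonzero but tiny, and the probe $e^{i\langle\omega,\beta\odot X\rangle}$ reads the signal through them with those tiny weights) is precisely what \emph{forces} the weaker bound: Lemma~\ref{lemma:upper-bound-tilde-J-hier} gives $V(\beta)\le -\tilde c\,\tau^{m-1}\lambda^{-1}\,V_{T;C}(\beta)$ for each $T$ with $[m]\subseteq T\subseteq[N_k]$, where $V_{T;C}(\beta)$ carries the dilution factor $\prod_{w\in T\backslash[m]}\beta_{k,w}$ \emph{and} a leakage penalty $-C\sum_{w\in[N_k]\backslash T}\beta_{k,w}$; one must take the maximum over $T$ to trade dilution against leakage, and only then does $\max_T V_{T;C}(\beta)$ bottom out at $c\cdot\prod_{m\le j\le N_k}\min\{\effect_{k,m}(X_{S_{k,j}}),1\}$, which is $\effect_{k,l}$ after the outer $\min_{m'\le l}$. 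Your displayed inequality simply ignores those two terms, so for instance a $\beta$ with $\beta_{k,m}=0$, $\beta_{k,m+1}$ moderate, $\beta_{k,m+2}$ near zero defeats it. So while you correctly identify the gap verbally, you have not closed it: the displayed key estimate you propose to prove is false as stated, and replacing it with the $\effect_{k,l}$-form (and establishing the family of subset-indexed bounds that deliver it) is exactly the nontrivial content of the appendix.

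Two smaller points. First, the paper does not need the ``within-round cascade'' through $j=m,\dots,N_k$: each round $m$ is only required to add the level-$m$ variable, and levels $j>m$ are handled by later rounds; the product structure in $\effect_{k,l}$ is not there because multiple levels are recovered in one round, but to quantify the worst case over the free weights $\beta_{k,w}$, $w>m$, when bounding a single partial derivative $\partial_{\beta_{k,m}}$. Second, the base case $m=0$ ($S_{k,0}=\emptyset\subseteq\hat S_0$) is vacuous, so there is no need (and no clean reduction) to Theorem~\ref{thm:add-main-mix-effect}, whose Definition~\ref{definition:l-signal-main-mix} signal size $\effect_l$ is a different object from the hierarchical $\effect_{k,1}$ of Definition~\ref{definition:hier-effect}.
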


\subsection{Proof of Theorem~\ref{thm:no-false-positive}}
\label{sec:thm-proof-no-false-positive}
The key to the proof is Lemma~\ref{lemma:grad-noise-variable-beta-noise-0} which holds for both $\ell_1$ and $\ell_2$ kernels.
\begin{lemma}
\label{lemma:grad-noise-variable-beta-noise-0}
We have the following for all $\beta$ such that $\beta_{S^c} = 0$:
\begin{equation}
	\partial_{\beta_l} \obj_\gamma(\beta) \ge \gamma \ge 0 \qquad \forall l \in S^c.
\end{equation}
\end{lemma}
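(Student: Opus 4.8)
Since $l\in S^c$ and the hypothesis forces $\beta_l=0$, the (right) derivative of $\norm{\beta}_1$ in the coordinate $\beta_l$ equals $1$, so $\partial_{\beta_l}\obj_\gamma(\beta)=\partial_{\beta_l}\obj(\beta)+\gamma$. Hence it suffices to prove $\partial_{\beta_l}\obj(\beta)\ge 0$. I would start from the closed form of the gradient in Proposition~\ref{proposition:compute-grad-obj},
\[
(\grad\obj(\beta))_l=-\frac1\lambda\,\E\!\left[r_\beta(\beta^{1/q}\odot X;Y)\,r_\beta(\beta^{1/q}\odot X';Y')\,h'(\normsmall{X-X'}_{q,\beta}^q)\,|X_l-X_l'|^q\right],
\]
and use the integral representation $-h'(x)=\int_0^\infty t\,e^{-tx}\mu(dt)\ge 0$ coming from Proposition~\ref{proposition:k-pos-definite-representation} to reduce the claim to showing, for each $t\in\supp(\mu)$,
\[
\E\!\left[r_\beta(\beta^{1/q}\odot X;Y)\,r_\beta(\beta^{1/q}\odot X';Y')\,e^{-t\normsmall{X-X'}_{q,\beta}^q}\,|X_l-X_l'|^q\right]\ \ge\ 0.
\]

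Next I would exploit two structural consequences of $\beta_{S^c}=0$. First, $\normsmall{X-X'}_{q,\beta}^q=\sum_{i\in S}\beta_i|X_i-X_i'|^q$ depends only on $(X_S,X_S')$, and $\kappa_t(X_S,X_S'):=e^{-t\normsmall{X-X'}_{q,\beta}^q}$ is a positive definite kernel on the $X_S$-space, being the kernel $k_t$ of Proposition~\ref{proposition:k-pos-definite-representation} precomposed with $x_S\mapsto\beta_S^{1/q}\odot x_S$. Second, the map $x\mapsto\beta^{1/q}\odot x$ zeroes out the $S^c$ coordinates, so $f_\beta(\beta^{1/q}\odot X)$ is a function of $X_S$ alone; hence $r_\beta(\beta^{1/q}\odot X;Y)=Y-g(X_S)$ for some $g$, which I decompose as $r_\beta=\xi+\phi(X_S)$ with $\xi:=Y-f^*(X_S)$ (so $\E[\xi\mid X]=0$ by the model~\eqref{eqn:model-assumption}) and $\phi(X_S):=f^*(X_S)-g(X_S)$.

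Expanding $r_\beta r_\beta'$ produces four terms, and the three that contain at least one factor of $\xi$ vanish: conditioning on $\sigma(X,X')$ renders $\kappa_t$, $|X_l-X_l'|^q$, $\phi(X_S)$ and $\phi(X_S')$ measurable, while $\E[\xi\mid X,X']=\E[\xi\mid X]=0$ and $\xi\perp\xi'$ given $(X,X')$ (because $(X,Y)\perp(X',Y')$), so each such term is $0$. The surviving term is $\E[\phi(X_S)\phi(X_S')\kappa_t(X_S,X_S')\,|X_l-X_l'|^q]$; here $X_S\perp X_{S^c}$ together with the independence of primed and unprimed copies lets me factor it as $\E[\phi(X_S)\kappa_t(X_S,X_S')\phi(X_S')]\cdot\E[|X_l-X_l'|^q]$. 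The second factor is nonnegative, and the first is nonnegative because $\kappa_t$ is a positive definite kernel evaluated at the i.i.d.\ pair $(X_S,X_S')$. This gives $\partial_{\beta_l}\obj(\beta)\ge 0$ and therefore $\partial_{\beta_l}\obj_\gamma(\beta)\ge\gamma\ge 0$.

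\textbf{Main obstacle.} The delicate point is setting up the two structural reductions cleanly --- verifying that $\kappa_t$ is genuinely positive definite on the restricted coordinates, and that the residual splits as $r_\beta=\xi+\phi(X_S)$ --- and then being careful in the conditioning step so that the $\xi$-cross-terms die using only $\E[\xi\mid X]=0$ and $X_S\perp X_{S^c}$, \emph{without} needing the (unavailable) full independence of $X_l$ and $Y$. The remaining ingredients --- the ``positive definite kernel $\Rightarrow$ nonnegative quadratic form'' identity and finiteness of all expectations (from Assumption~\ref{assumption:X-Y-bound} and boundedness of $f_\beta\in\H$) --- are routine.
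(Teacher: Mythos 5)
Your proposal is correct and follows essentially the same route as the paper: reduce to $\partial_{\beta_l}\obj(\beta)\ge 0$ via the gradient formula of Proposition~\ref{proposition:compute-grad-obj}, use $\E[r_\beta\mid X]=\E[Y\mid X]-f_\beta(\beta^{1/q}\odot X)$ together with $\beta_{S^c}=0$ to see that the ``effective residual'' depends only on $X_S$, factor out $\E[|X_l-X_l'|^q]$ using $X_S\perp X_{S^c}$, and conclude by positive-definiteness of the kernel. The only cosmetic differences are that you make the decomposition $r_\beta=\xi+\phi(X_S)$ explicit (the paper works directly with conditional expectations) and use the $t$-integral representation of $-h'$ where the paper uses its Fourier ($\omega$-integral) form; both establish the same negative-definiteness of $(x,x')\mapsto h'(\norm{x-x'}_{q,\beta}^q)$.
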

\noindent\noindent
With Lemma~\ref{lemma:grad-noise-variable-beta-noise-0} in hand, we can prove $\beta^{(k)}_{S^c} = 0$ for all $k \in \N$. 
The proof is via induction.  
\begin{itemize}
\item The base case: $\beta^{(0)}_{S^c} = 0$ (this is the only part where we use the assumption $\beta^{(0)} = 0$). 
\item Suppose $\beta^{(k)}_{S^c} = 0$. Fix a noise variable $l \in S^c$. Note then $\partial_{\beta_l} \regobj(\beta^{(k)})\ge 0$ 
	by Lemma~\ref{lemma:grad-noise-variable-beta-noise-0}. This shows the bound
	\begin{equation*}
		\beta^{(k+\half)}_l  \defeq \beta^{(k)}_l - \alpha \cdot \partial_{\beta_l} \regobj(\beta^{(k)}) \le 0.
	\end{equation*}
	According to Lemma~\ref{lemma:projection-onto-ell-one-ball}, after projection, $\beta_l^{(k+1)} = (\proj_{\B_M}(\beta^{(k+\half)}))_l = 0$.
	Since the choice of $l \in S^c$ is arbitrary, this proves $\beta^{(k+1)}_{S^c} = 0$ and completes the induction step.
\end{itemize}

\paragraph{Proof of Lemma~\ref{lemma:grad-noise-variable-beta-noise-0}}
By definition, $\partial_{\beta_l} \obj_\gamma(\beta) = \partial_{\beta_l} \obj(\beta) + \gamma$. 
Hence, it suffices to show that for all $\beta$ such that $\beta_{S^c} = 0$, the following holds:
\begin{equation}
\label{eqn:no-false-positive-key}
	\partial_{\beta_l} \obj(\beta) \ge 0.
\end{equation}
The key to the proof is to use the representation of $\grad \obj(\beta)$ in Proposition~\ref{proposition:compute-grad-obj}. Let $(X', Y')$ be 
an independent copy of $(X, Y)$. By Proposition~\ref{proposition:compute-grad-obj}, we have for all $\beta \ge 0$ and all $l \in [p]$: 
\begin{equation*}
\begin{split}
(\grad \obj(\beta))_l = -\frac{1}{\lambda} \cdot \E\left[\E[r_\beta(\beta^{1/q}\odot X; Y)|X] \cdot \E[r_\beta(\beta^{1/q}\odot X'; Y')|X'] 
	\cdot h'(\norm{X-X'}_{q, \beta}^q) \cdot |X_l- X_l'|^q\right].
\end{split}
\end{equation*}
Now, assume that $\beta$ satisfies $\beta_{S^c} = 0$. Fix $l \in S^c$. Notice the following facts:
\begin{enumerate}
\item The random variable $\E[r_\beta(\beta^{1/q}\odot X; Y)|X] \cdot \E[r_\beta(\beta^{1/q}\odot X'; Y')|X'] \cdot h'(\norm{X-X'}_{q, \beta}^q)$ depends only on 
	the random variables $(X_S, X_S')$. This is because $\E[Y| X] = f^*(X_S)$ and $\beta_{S^c} = 0$. 
\item The random variable $|X_l - X_l'|$ depends only on $(X_{S^c}, X_{S^c}')$ since $l \in S^c$.
\end{enumerate}
Because the signal variables $(X_S, X_S')$ are independent of the noise variables $(X_{S^c}, X_{S^c}')$ by assumption, we obtain
\begin{equation*}
	(\grad \obj(\beta))_l = -\frac{1}{\lambda} \cdot \E\left[r_\beta(\beta^{1/q}\odot X; Y)r_\beta(\beta^{1/q}\odot X'; Y') h'(\norm{X-X'}_{q, \beta}^q) \right] 
		\cdot \E[|X_l- X_l'|^q].
\end{equation*}
Now, we show that the right-hand side is non-negative. It suffices to show that 
\begin{equation}
\label{eqn:no-false-positive-neg-def}
\E\left[r_\beta(\beta^{1/q}\odot X; Y)r_\beta(\beta^{1/q}\odot X'; Y') h'(\norm{X-X'}_{q, \beta}^q) \right]\le 0.
\end{equation}
One way to show this is to notice that $(x, x') \mapsto h'(x-x')$ is a negative definite kernel since $-h'$ is strictly completely monotone. 
An alternative argument uses Fourier analysis. Note that $h'(x) = -\int te^{-tx}\mu(dt)$. We obtain the identity: 
\begin{equation}
\label{eqn:h-prime-integral-formula}
\begin{split}
	h'(\norm{x-x'}_{q, \beta}^q) 
	= - \int te^{-t \norm{x-x'}_{q, \beta}^q}\mu(dt) 
	= -\int e^{i \langle \omega,  \beta^{1/q} \odot (x-x')\rangle} \wtilde{Q}(\omega) d\omega,
\end{split}
\end{equation}
where $\wtilde{Q}: \R^p \to \R_+$ is defined in equation~\eqref{eqn:tilde-Q-R-beta-omega-definition}.
Substitute this into equation~\eqref{eqn:no-false-positive-neg-def}. We obtain 
\begin{equation}
\label{eqn:identity-involve-h'-Q}
\begin{split}
	&\E\left[r_\beta(\beta^{1/q}\odot X; Y)r_\beta(\beta^{1/q}\odot X'; Y') h'(\norm{X-X'}_{q, \beta}^q) \right] \\
		&= -\E\left[r_\beta(\beta^{1/q}\odot X; Y)r_\beta(\beta^{1/q}\odot X'; Y')
			\int e^{i \langle \omega,  \beta^{1/q} \odot (X-X')\rangle} \wtilde{Q}(\omega) d\omega\right] \\
		&= - \int \left|\E[r_\beta(\beta^{1/q}\odot X; Y) e^{i \langle \omega, \beta^{1/q} \odot X\rangle}]\right|^2 \wtilde{Q}(\omega) d\omega \le 0.
\end{split}
\end{equation}
This proves equation~\eqref{eqn:no-false-positive-neg-def} as desired. The proof of Lemma~\ref{lemma:grad-noise-variable-beta-noise-0} 
is thus complete.

\subsection{Proof of Theorem~\ref{thm:add-main-effect}}
\label{sec:proof-thm-add-main-effect}
According to Proposition~\ref{prop:grad-obj-lipschitz-beta}, we know that the gradient $\grad \obj_{\gamma}(\beta)$ is Lipschitz in the following 
sense: for some constants $C > 0$ depending only on $h'(0), M_X, M_Y$, the following holds for any $\beta, \beta' \in \R_+^p$: 
\begin{equation}
\label{eqn:Lipschitz-of-gradients}
	\norm{\grad \obj_{\gamma}(\beta) - \grad \obj_{\gamma}(\beta')}_2 \le C\cdot \frac{p}{\lambda^2} \norm{\beta-\beta'}_2.
\end{equation}
Consequently, a standard property of the projected gradient descent algorithm (Lemma~\ref{lemma:gradient-ascent-increases-objective})
implies that any accumulation point $\beta^*$ of the gradient descent iterates must be stationary 
when the stepsize satisfies
$\alpha \le \frac{\lambda^2}{C p}$ for the same constant $C$ that appears in equation~\eqref{eqn:Lipschitz-of-gradients}. Below we assume 
the stepsize satisfies this constraint, and show that any stationary point $\beta^*$ that is \emph{reachable} by the algorithm (i.e., is an accumulation point
of the iterates) must have $\beta_l^* > 0$. 

To see this, we proceed as follows. By Theorem~\ref{thm:no-false-positive}, any stationary point $\beta^*$ reachable by the 
algorithm must exclude noise variables, i.e., $\beta^*_{S^c} = 0$. Hence, it suffices to show that any stationary point $\beta^*$
with $\beta^*_{S^c} = 0$ must satisfy $\beta_l^* > 0$. 
Considering the contrapositive, it suffices to show that any $\beta$ with $\beta_l = 0$ and $\beta_{S^c} = 0$ can't be a stationary point. 

To prove the contrapositive, we show 
that the gradient with respect to the noise variable $\beta_l$ at any such $\beta$ (i.e., $\beta_l = 0$ and $\beta_{S^c} = 0$) is 
always strictly negative:
\begin{equation}
\label{eqn:desired-goal-on-obj-main-add}
	\partial_{\beta_l} \regobj(\beta) = \partial_{\beta_l} \obj(\beta) + \gamma < 0. 
\end{equation}
Thus such $\beta$ can't be stationary. The rest of the proof establishes 
equation~\eqref{eqn:desired-goal-on-obj-main-add}. Our core technique is 
to use a Fourier-analytic argument to analyze the gradient $\partial_{\beta_l} \obj(\beta)$ under 
Assumptions~\ref{assumption:mu-compact} and~\ref{assumption:X-Y-bound}  discussed in 
Section~\ref{sec:distinction-l-1-l-2}. Since this argument is used repeatedly, 
we detail its structure in the following paragraph.
\paragraph{General Recipe}
The general recipe to bound the true gradient $\partial_{\beta_l} \obj(\beta)$ is as follows. 
\begin{itemize}
\item First, bound the surrogate gradient $\wtilde{\partial_{\beta_l} \obj}(\beta)$ using either its definition in 
	equation~\eqref{eqn:def-wtilde-partial-beta-j} or the integral representation in equation~\eqref{eqn:wtilde-partial-beta-j-rep}.
\item Next, transform the bound on the surrogate $\wtilde{\partial_{\beta_l} \obj}(\beta)$ into a bound for the true gradient 
	$\partial_{\beta_l} \obj(\beta)$. To do this, use Lemma~\ref{lemma:approximate-gradient-bound} which bounds the deviation 
	between the surrogate and true gradient.
\end{itemize}

\paragraph{Proof of Theorem~\ref{thm:add-main-effect}}
Recall that our goal is to show that equation~\eqref{eqn:desired-goal-on-obj-main-add} holds at any $\beta$ with 
$\beta_l = 0$ and $\beta_{S^c} = 0$. We apply our general recipe to achieve this goal.

First, we bound the surrogate gradient $\wtilde{\partial_{\beta_l} \obj}(\beta)$. By equation~\eqref{eqn:def-wtilde-partial-beta-j}, we have
\begin{equation}
\label{eqn:approx-grad-tower}
\begin{split}
	&\wtilde{\partial_{\beta_l} \obj}(\beta)  \\
	&= \frac{1}{\lambda} \cdot  \int \E\left[\E[\wbar{R}_{\beta, \omega}(\beta \odot X; Y) | X_l] \cdot \E[\wbar{\wbar{R}_{\beta, \omega}(\beta \odot X'; Y')} | X_l'] 
		 	\cdot |X_l - X_l'|\right] \wtilde{Q}(\omega)  d\omega. 
\end{split}
\end{equation}
Now we evaluate $\E[\wbar{R}_{\beta, \omega}(\beta \odot X; Y) | X_l]]$. By definition, we have 
\begin{equation*}
\begin{split}
	R_{\beta, \omega}(\beta \odot X; Y) &= e^{i \langle \omega, \beta \odot X\rangle} \left(Y - f_\beta(\beta\odot X)\right) \\
		&= e^{i \langle \omega, \beta \odot X\rangle} \Big(\xi + f_l^*(X_l) + \sum_{j \in S\backslash l} f_j^*(X_j)  - f_\beta(\beta\odot X)\Big).
\end{split}
\end{equation*}
At $\beta$ where $\beta_l = 0$ and $\beta_{S^c} = 0$, the random variables $e^{i \langle \omega, \beta\odot X\rangle}$ and 
$f_\beta(\beta\odot X)$ depend only on the random variables $X_{S \backslash l}$, and are thus independent of $X_l$
by assumption. As a result, we obtain the following expression for 
$\E[\wbar{R}_{\beta, \omega}(\beta \odot X; Y) | X_l]$: 
\begin{equation*}
\begin{split}
\E[\wbar{R}_{\beta, \omega}(\beta \odot X; Y) | X_l]
		&= \E[R_{\beta, \omega}(\beta \odot X; Y) | X_l]  - \E[R_{\beta, \omega}(\beta \odot X; Y)] \\
		&= \E[e^{i \langle \omega, \beta \odot X\rangle}] \cdot f_l^*(X_l).
\end{split}
\end{equation*}
Substitute this back into equation~\eqref{eqn:approx-grad-tower}. We obtain the identity 
\begin{equation}
\label{eqn:tilde-grad-main-add-effect}
\begin{split}
		   \wtilde{\partial_{\beta_l} \obj}(\beta) 
		= &\frac{1}{\lambda} \cdot  \int \E\left[\E[\wbar{R}_{\beta, \omega}(\beta \odot X; Y) | X_l] \E[\wbar{\wbar{R}_{\beta, \omega}(\beta \odot X'; Y')} | X_l'] 
		 	|X_l - X_l'|\right] \wtilde{Q}(\omega)  d\omega \\
		= &\frac{1}{\lambda} \cdot \int \E\left[ f_l^*(X_l)  f_l^*(X_l') 
		 	|X_l - X_l'|\right]  \cdot \left|\E[e^{i \langle \omega, \beta \odot X\rangle}]\right|^2 \cdot \wtilde{Q}(\omega)  d\omega \\
		= &\frac{1}{\lambda} \cdot \effect_l(X_l) \cdot \E[h'(\norm{X-X'}_{1, \beta})],
\end{split}
\end{equation}
where we use the integral formula in equation~\eqref{eqn:h-prime-integral-formula} to derive the last identity.

Note that $\norm{\beta}_1 \le M$. Hence, $\E[\norm{X-X'}_{1, \beta}] \le 2M M_X$ by assumption. Consequently, Jensen's inequality implies
that $\E[h'(\norm{X-X'}_{1, \beta})] \le h^\prime(\E[\norm{X-X'}_{1, \beta}]) \le h^\prime(2MM_X) \le 0$ since $h$ is completely monotone
(so we have $h^\prime \le 0$ and $h^{\prime}$ is concave). Substituting the bound 
into equation~\eqref{eqn:tilde-grad-main-add-effect}, we obtain the final bound 
on the surrogate gradient:
\begin{equation}
\label{eqn:tilde-grad-main-add-effect}
	\wtilde{\partial_{\beta_l} \obj}(\beta) = \frac{1}{\lambda} \cdot \effect_l(X_l) \cdot \E[h'(\norm{X-X'}_{1, \beta})]
		\le \frac{1}{\lambda} \cdot \effect_l(X_l) \cdot h'(2 M M_X).
\end{equation}

Now we turn the bound for the surrogate $\wtilde{\partial_{\beta_l} \obj}(\beta)$ in equation~\eqref{eqn:tilde-grad-main-add-effect}
into a bound for the true gradient $\partial_{\beta_l} \obj(\beta)$. By the triangle inequality and Lemma~\ref{lemma:approximate-gradient-bound}, 
\begin{equation*}
	\partial_{\beta_l} \obj(\beta) \le \frac{1}{\lambda} \cdot \left(\effect_l(X_l) \cdot h'(2M M_X) + C\lambda^{1/2}(1+\lambda^{1/2})\right),
\end{equation*}
for some constant $C > 0$ depending only on $M_X, M_Y, M_\mu$. Consequently, we have established the following inequality 
that holds for all $\beta$ such that $\beta_l = 0$ and $\beta_{S^c} = 0$: 
\begin{equation}
\label{eqn:grad-main-add-effect}
	\partial_{\beta_l} \obj_\gamma(\beta) \le \frac{1}{\lambda} \cdot \left(\effect_l(X_l) \cdot h'(2M M_X) + C\lambda^{1/2}(1+\lambda^{1/2}) + \lambda \gamma\right).
\end{equation}

With this bound at hand, we see that the desired equation~\eqref{eqn:desired-goal-on-obj-main-add} holds for all such $\beta$ as long as 
the condition on the effective signal size, $\effect_l(X_l)$, in equation~\eqref{eqn:add-main-effect} holds for a sufficiently large constant $\wbar{C} > 0$. This completes the proof of 
Theorem~\ref{thm:add-main-effect}.

\newcommand{\dev}{\mathcal{E}}
\newcommand{\erobj}{\obj_{\gamma, n}}
\newcommand{\robj}{\regobj}

\section{Finite-Sample Guarantees}
\label{sec:finite-sample-guarnatees}

In this section, we provide finite-sample guarantees for the kernel feature 
selection algorithm.  First, we present the empirical kernel feature selection objective and 
the corresponding algorithm
(Section~\ref{sec:finite-sample-algorithms}). Next, we establish that the empirical gradients 
concentrate around their population counterparts (Section~\ref{sec:concentration-of-gradients}).
With the appropriate concentration results in hand, the finite-sample properties of the kernel feature selection algorithm 
follow as a consequence of our population results in Section~\ref{sec:population-guarantees}.
In particular, the kernel feature selection algorithm has the power
to exclude noise variables and include signal variables with high probability (Section~\ref{sec:finite-samples-all-results}). 
Finally, Section~\ref{sec:proof-of-concentration-results} describes the techniques used to prove the concentration results.


\subsection{Objective and Algorithm}
\label{sec:finite-sample-algorithms}
We introduce the empirical kernel feature selection objective: 
\begin{equation}
\begin{split}
	\minimize_{\beta: \beta \ge 0, \norm{\beta}_1 \le M}~~\mathcal{J}_{n, \gamma}(\beta) &= \mathcal{J}_{n}(\beta) + \gamma \norm{\beta}_1 \\
	\text{where}~~~~\mathcal{J}_{n}(\beta) &= \min_f \half \what{\E}[(Y - f(\beta^{1/q} \odot X)^2] + \frac{\lambda}{2} \norm{f}_{\H}^2. 
\end{split}
\end{equation}
The empirical objective replaces the population expectation $\E$ in the population objective 
(see equation~\eqref{eqn:population-objective}) with the empirical average $\what{\E}$. We extend the population level algorithm to finite samples by simply replacing the population objective with the empirical objective. 
See Algorithm~\ref{alg:kernel-feature-selection-empirical} and~\ref{alg:kernel-feature-selection-hier-empirical} below for details.

\setcounter{algorithm}{0}
\renewcommand\thealgorithm{\arabic{algorithm}'}

\begin{algorithm}[!ht]
\begin{algorithmic}[1]
\Require{Initializer $\beta^{(0)}$, stepsize $\alpha$, feature matrix $\mathbf{X} \in \mathbb{R}^{n\times p}$ and response $y \in \R^n$}
\State{Run projected gradient descent (with stepsize $\alpha$, and initialization $\beta^{(0)}$) to solve 
\begin{equation*}
\begin{split}
	&\minimize_{\beta \in \mathcal{B}_M}  \obj_{n, \gamma}(\beta), \\
		~~\text{where}~~&
			 \mathcal{B}_{M} = \left\{\beta\in \R_+^p;  \norm{\beta}_1 \le M\right\}.
\end{split}
\end{equation*}
Denote the projected gradient descent iterates as $\{\beta^{(k)}\}_{k \in \N}$ where 
\begin{equation}
\label{eqn:pgd-kernel-empirical}
	\beta^{(k+1)} = \proj_{\B_M}(\beta^{(k)} - \alpha \grad \obj_{n, \gamma}(\beta^{(k)})).
\end{equation}
}
\State{Return $\hat{S} = \supp(\beta)$ where $\beta$ is any accumulation point of the iterates $\{\beta^{(k)}\}_{k \in \N}$. }
\end{algorithmic}
\caption{Empirical Kernel Feature Selection Algorithm}
\label{alg:kernel-feature-selection-empirical}
\end{algorithm}

\begin{algorithm}[!ht]
\begin{algorithmic}[1]
\Require{Initializers $\{\beta^{(0; T)}\}_{T \in 2^{[p]}}$, stepsize $\alpha$, feature matrix $\mathbf{X} \in \mathbb{R}^{n\times p}$ and response $y \in \R^n$}
\While{$\hat S$ not converged}
\State{Run projected gradient descent (with stepsize $\alpha$ and initialization $\beta^{(0; \hat{S})}$) to solve 
\begin{equation*}
\begin{split}
	&\minimize_{\beta \in \mathcal{B}_{M, \hat{S}}}  \obj_{n, \gamma}(\beta), \\
		~~\text{where}~~&
			\mathcal{B}_{M, \hat{S}} = \left\{\beta\in \R_+^p;  \norm{\beta_{\hat{S}^c}}_1 \le M~\text{and}~ \beta_{\hat{S}} = \tau \mathbf{1}_{\hat{S}}\right\}.
\end{split}
\end{equation*}
Denote the projected gradient descent iterates as $\{\beta^{(k)}\}_{k \in \N}$ where 
\begin{equation}
\label{eqn:pgd-kernel-variant-empirical}
	\beta^{(k+1)} = \proj_{\B_{M, \hat{S}}}(\beta^{(k)} - \alpha \grad \obj_{n, \gamma}(\beta^{(k)})).
\end{equation}
}
\State{Update $\hat{S} = \supp(\beta) \cup \hat{S}$ where $\beta$ is any accumulation point of the iterates $\{\beta^{(k)}\}_{k \in \N}$. }
\EndWhile
\end{algorithmic}
\caption{Empirical Kernel Feature Selection Algorithm (Variant)}
\label{alg:kernel-feature-selection-hier-empirical}
\end{algorithm}

\subsection{Concentration of the gradients}
\label{sec:concentration-of-gradients}

In this section, we study the maximum deviation of the empirical gradients to the population gradients 
over the feasible set $\mathcal{B}_M$. Mathematically, we consider the error term: 
\begin{equation}
\label{eqn:def-dev-M}
\dev_n = \sup_{\beta \in \mathcal{B}_M} \norm{\grad \erobj(\beta) - \grad \robj(\beta)}_\infty
	=  \sup_{\beta \in \mathcal{B}_M} \norm{\grad \obj_n(\beta) - \grad \obj(\beta)}_\infty
\end{equation}
Theorem~\ref{thm:concentration-of-gradients} gives a high-probability upper bound on this deviation $\dev_n$. To obtain this result, 
we require an additional assumption that the distributions of $X$ and $Y$ are light-tailed. 

\setcounter{assumption}{1}
\renewcommand\theassumption{\arabic{assumption}'}
\begin{assumption}
\label{assumption:sub-gaussian-tail-Y}
The random variable $X$ is almost surely bounded: $\P(|X_l| \le \sigma_X) = 1$ for $l \in [p]$.
In addition, the random variable $Y$ is $\sigma_Y$-subgaussian, i.e., $\E[e^{tY}] \le e^{\half \sigma_Y^2 t^2}$ for $t \in \R$. 
\end{assumption}

\begin{remark}
The assumption that the coordinate of $X$ is bounded can be replaced with a subgaussian assumption on 
the coordinates of $X$. The stronger boundedness assumption is assumed mainly for technical convenience.
\end{remark}

Below is the main concentration result. A high level description of the proof strategy is given in 
Section~\ref{sec:proof-of-concentration-results}. The full proofs of Theorem~\ref{thm:concentration-of-gradients} 
can be found in Appendix~\ref{sec:proof-of-concentration-results-details}. 

\begin{theorem}
\label{thm:concentration-of-gradients}
Let $t > 0$. Assume that $\lambda \ge C \sqrt[4]{\log n\log p/n}$. 
The following bound holds with probability at least $1-e^{-cn}-e^{-t}$: 
\begin{equation*}
	\dev_n \le 
		\frac{C\log^2(n)}{\min\{\lambda, 1\}^{7/2}} \cdot \left(\sqrt[4]{\frac{\log p}{n}}  + \sqrt{\frac{t}{n}}\right),
\end{equation*}
where the constants $c, C > 0$ depend only on the parameters 
$M$,  $\sigma_X$, $\sigma_Y$, $\mu$.
\end{theorem}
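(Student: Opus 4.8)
The plan is to bound $\dev_n$ by decomposing the per-coordinate gradient difference, uniformly over $\beta\in\mathcal{B}_M$, into two sources of error, and then to control each by a chaining argument against sub-exponential tails, finishing with a union bound over $l\in[p]$. Starting from the closed form of Proposition~\ref{proposition:compute-grad-obj} (and its obvious empirical analogue), set
$\Phi_{\beta,l}(x,y,x',y') = r_\beta(\beta^{1/q}\odot x;y)\, r_\beta(\beta^{1/q}\odot x';y')\, h'(\norm{x-x'}_{q,\beta}^q)\,|x_l-x_l'|^q$,
and let $\what\Phi_{\beta,l}$ be the same object with the \emph{empirical} residual $\what r_\beta$ in place of $r_\beta$. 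Then
$\partial_{\beta_l}\obj_n(\beta) - \partial_{\beta_l}\obj(\beta) = A_l(\beta) + B_l(\beta)$, where $A_l(\beta) = \tfrac1\lambda(\E\otimes\E - \what\E\otimes\what\E)[\Phi_{\beta,l}]$ is the pure empirical-process error with the population residual frozen, and $B_l(\beta) = \tfrac1\lambda\,\what\E\otimes\what\E[\Phi_{\beta,l} - \what\Phi_{\beta,l}]$ is the residual-estimation error. The $1/\lambda$ prefactor, together with the conditioning of the KRR normal equations that will enter below, is what produces the inverse powers of $\min\{\lambda,1\}$ in the final bound.

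For the empirical-process term $A_l(\beta)$, the object is a degree-two $V$-statistic. I would apply the Hoeffding/H\'ajek decomposition $\Phi_{\beta,l} = \E\Phi_{\beta,l} + (\text{linear part}) + (\text{degenerate part})$: the linear part is a sum of independent, mean-zero, \emph{sub-exponential} variables (products such as $Y$ times bounded factors), controlled by a Bernstein inequality after a covering argument in $\beta$; the degenerate part is controlled by decoupling together with a Hanson--Wright-type bound for quadratic chaos in sub-exponential variables. Uniformity in $\beta$ is the crux, and here the Fourier representation of Lemma~\ref{lemma:new-representation-of-grad-obj} pays off: the $\beta$-dependence of $\Phi_{\beta,l}$ is routed through smooth exponential factors $e^{i\langle\omega,\beta^{1/q}\odot x\rangle}$ (with $\omega$ in a bounded range by compactness of $\supp(\mu)$ and frequencies bounded by $\norm{\beta}_1\le M$), which are Lipschitz in $\beta$ with a controllable constant. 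One then bounds the metric entropy of the classes $\{\Phi_{\beta,l}\}$ and $\{f_\beta\}$ by Maurey's empirical method — each $f_\beta$ lies in a ball of an RKHS that is a weighted average over $t\in\supp(\mu)$ of scaled Gaussian/Cauchy kernels, so its covering numbers carry only a $\log$ factor — and combines the entropy bound with a sub-exponential maximal inequality; the $\log^2 n$ factor is the price of chaining at the sub-exponential scale, and the final union over $l\in[p]$ yields the $\log p$ under the root.

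For the residual-estimation term $B_l(\beta)$, the first step is the uniform KRR-stability estimate that $\sup_{\beta\in\mathcal{B}_M}\norm{\what f_\beta - f_\beta}_{L_2(\P_n)}$ and $\sup_{\beta\in\mathcal{B}_M}\norm{\what f_\beta - f_\beta}_{\H}$ are small with high probability. Both $f_\beta$ and $\what f_\beta$ minimize $\tfrac{\lambda}{2}\norm{\cdot}_\H^2$-strongly-convex functionals differing only in whether the data-fit term uses $\E$ or $\what\E$, so their difference is governed by the supremum over $\beta$ of an empirical process over a fixed RKHS ball — again a covering/chaining computation, with the $\lambda$-strong convexity producing inverse powers of $\lambda$. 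The delicate point is that the gradient formula needs $\what r_\beta - r_\beta$ in a \emph{sup-norm} sense, since it is multiplied by unbounded factors like $|X_l - X_l'|$ and $Y$, whereas the empirical process only delivers an $L_2$-type rate of order $n^{-1/2}$; I expect to bridge this by an interpolation inequality of the form $\norm{\what f_\beta - f_\beta}_\infty^2 \lesssim \norm{\what f_\beta - f_\beta}_{\H}\,\norm{\what f_\beta - f_\beta}_{L_2}$, and it is precisely this square root of an $n^{-1/2}$ rate that I believe generates the fourth-root $\sqrt[4]{\log p/n}$ in the statement. Having controlled $\what f_\beta - f_\beta$, one substitutes into $\Phi_{\beta,l} - \what\Phi_{\beta,l}$ and bounds the cross terms using the moment and boundedness hypotheses (Assumption~\ref{assumption:sub-gaussian-tail-Y}, $\norm{\beta}_1\le M$, and $|h'|\le\int t\,\mu(dt)$).

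The main obstacle is the uniform-in-$\beta$ treatment of the \emph{implicitly defined} family $\{f_\beta\}_{\beta\in\mathcal{B}_M}$ of KRR solutions: one must (i) establish quantitative Lipschitz-in-$\beta$ and smoothness bounds for $f_\beta$ and $\what f_\beta$, which is exactly where the harmonic-analytic machinery and the Fourier characterization of Proposition~\ref{proposition:norm-of-H} are indispensable, since the underlying RKHS and the feature scaling both vary with $\beta$; (ii) bound the metric entropy of the induced classes by Maurey's method; and (iii) merge these with a sub-exponential maximal inequality while carefully tracking the polynomial-in-$1/\lambda$ constants. The assumption $\lambda\ge C\sqrt[4]{\log n\log p/n}$ is used en route to guarantee that the empirical regularized covariance operator is well-conditioned relative to its population counterpart, uniformly in $\beta$, so that the stability estimates of step (i) hold on the relevant high-probability event; once all pieces are assembled and intersected with the Bernstein and chaos tail events, the stated bound and probability follow.
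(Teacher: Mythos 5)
Your two-part decomposition is a reasonable alternative to the paper's, and most of the machinery you list (Maurey entropy, sub-exponential chaining, Hanson--Wright, decoupling the $V$-statistic, $\lambda$-strong convexity driving the $1/\lambda$ powers) matches what the paper actually deploys. The paper routes the proof through an auxiliary residual $\wbar r_\beta$ fit on an \emph{independent} copy of the data, rather than freezing the population residual $r_\beta$ as you do; conditioning on that fresh data lets them run the empirical-process argument against a residual that is ``deterministic but random,'' while your $A_l$ term achieves the same decoupling by using the genuinely deterministic $r_\beta$. Both decompositions can be made to work, so this difference is largely one of bookkeeping.

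However, there is a genuine gap in your treatment of the residual-estimation term $B_l$, and it is exactly where you hedge. You claim the gradient formula needs $\what r_\beta - r_\beta$ in sup norm because it is multiplied by ``unbounded factors like $|X_l - X_l'|$ and $Y$,'' and you propose to cross this via an interpolation inequality $\norm{g}_\infty^2 \lesssim \norm{g}_\H\norm{g}_{L_2}$, attributing the $\sqrt[4]{\log p/n}$ rate to that square root. This is not what happens. Under Assumption~\ref{assumption:sub-gaussian-tail-Y}, $X$ is \emph{almost surely bounded}, so $|X_l-X_l'|^q$ and $h'$ are bounded almost surely, and $Y$ is handled by a truncation at scale $\sqrt{\log n}$ (Lemma~\ref{lemma:truncation-small-effect}). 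The paper then bounds the residual-error contribution by H\"older's inequality with an $L_2(\P_n)$ norm of the residual difference against $L_\infty(\P_n)$ norms of the bounded kernel factors (equation~\eqref{eqn:upper-bound-on-err-i}); no $L_\infty$ control of $\what f_\beta - f_\beta$ enters, and the interpolation inequality you invoke is not a standard RKHS fact (it would need its own proof for a Laplace/Gaussian RKHS).

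Consequently your explanation of the fourth-root rate is wrong. The slow $n^{-1/4}$ arises not from interpolating between norms but from the concentration of the empirical covariance operator $\what\Sigma_\beta$ and covariance function $\what h_\beta$ uniformly over $\beta\in\mathcal{B}_M$ (Lemma~\ref{lemma:high-prob-bound-h-Sigma-beta}). After symmetrization, $\norm{\what h_\beta(\eps)}_\H^2$ and $\norm{\what\Sigma_\beta(\eps)}_{\HS}^2$ are Rademacher quadratic chaoses; Hanson--Wright makes the increments sub-exponential (not sub-gaussian) in a data-dependent semi-norm $\norm{\cdot}_X$, Maurey's argument bounds the entropy at $\log N \lesssim \log n\log p/\eps^2$, and sub-exponential chaining gives the \emph{squared} quantities at rate $\sqrt{\log n\log p/n}$. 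Taking square roots delivers $n^{-1/4}$. These operator/function deviations then propagate to $\normPn{\what r_\beta - r_\beta}$ through the resolvent identity $f_\beta = (\Sigma_\beta+\lambda I)^{-1}h_\beta$ (Lemma~\ref{lemma:deterministic-bound-high-prob-bound-f-r-beta}) at cost $1/\lambda$, and then into the gradient by H\"older at another cost $1/\lambda$. Relatedly, the $\log^2 n$ factor comes from the combination of the $Y$-truncation level $\sqrt{\log n}$ (squared, since the residual appears twice) and the $\log n$ in Maurey's entropy bound, not purely from the ``price of chaining at the sub-exponential scale.'' Your remaining sketch of the $A_l$ term — symmetrize, decouple the near-diagonal, chain against the Lipschitz-in-$\beta$ Fourier factors — is sound, but note that the Lipschitz constant is taken with respect to the data-adapted semi-norm $\norm{\beta-\beta'}_X = \max_{ij}|\langle\beta-\beta',\,|X^{(i)}-X^{(j)}|^q\rangle|$, not the Euclidean norm; Maurey's bound works precisely because this semi-norm is defined by polynomially many test directions, which you should make explicit.
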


\begin{remark}
Theorem~\ref{thm:concentration-of-gradients} shows that the empirical and population gradients 
are uniformly close to each other as long as the sample size satisfies $n \ge C \log^{8}n \log(p)$.
\end{remark}

\subsection{Statistical guarantees in finite samples}
\label{sec:finite-samples-all-results}

This section presents finite-sample guarantees for the kernel feature selection algorithm. These 
extend the population guarantees given in 
Section~\ref{sec:population-guarantees}. 

\subsubsection{No-false-positive guarantees}

Corollary~\ref{corollary:no-false-positive-finite-sample} is the finite-sample analogue of our population result 
on false positive control (Theorem~\ref{thm:no-false-positive}). Compared to Theorem~\ref{thm:no-false-positive},
Corollary~\ref{corollary:no-false-positive-finite-sample} shows that in finite samples, we need an $\ell_1$ penalty 
to promote sparsity. The size of the penalty must dominate the size of the deviation between the 
empirical and the population gradients shown in Theorem~\ref{thm:concentration-of-gradients}.

The proof of Corollary~\ref{corollary:no-false-positive-finite-sample} is given in Appendix~\ref{sec:proof-corollary-no-false-positive-finite-sample}. 

\begin{corollary}
\label{corollary:no-false-positive-finite-sample}
Given Assumptions~\ref{assumption:mu-compact} and~\ref{assumption:sub-gaussian-tail-Y}, 
let $c, C > 0$ be the constants in Theorem~\ref{thm:concentration-of-gradients}. 
Consider the projected gradient descent algorithm in 
equation~\eqref{eqn:pgd-kernel-empirical} (or in equation~\eqref{eqn:pgd-kernel-variant-empirical}). 
Assume that the algorithm is initialized at $\beta^{(0)}$ such that $\beta_{S^c}^{(0)} = 0$. 
For any $t > 0$, assume 
\begin{equation}
\label{eqn:lower-bound-on-ell-1}
	\lambda \ge C \sqrt[4]{\log n\log p/n}~~\text{and}~~
	\gamma \ge\frac{C\log^2(n)}{\min\{\lambda, 1\}^{7/2}} \cdot \left(\sqrt[4]{\frac{\log p}{n}}  + \sqrt{\frac{t}{n}}\right).
\end{equation}
Then any accumulation point $\beta^*$ of the projected gradient descent iterates, $\{\beta^{(k)}\}_{k \in \N}$, 
satisfies $\beta^*_{S^c} = 0$, with probability at least $1-e^{-cn}-e^{-t}$.
\end{corollary}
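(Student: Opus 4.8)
The plan is to reduce the finite-sample statement to the population argument behind Theorem~\ref{thm:no-false-positive}, replacing the exact nonnegativity of the population gradient at noise coordinates by an \emph{approximate} nonnegativity that survives once the $\ell_1$ penalty is added. All the randomness is confined to a single ``good event'' $\Lambda$ on which the uniform deviation bound of Theorem~\ref{thm:concentration-of-gradients} holds. The first hypothesis in~\eqref{eqn:lower-bound-on-ell-1}, namely $\lambda \ge C\sqrt[4]{\log n\log p/n}$, is exactly the condition required by that theorem, so $\Lambda$ occurs with probability at least $1-e^{-cn}-e^{-t}$; and the second hypothesis in~\eqref{eqn:lower-bound-on-ell-1} is precisely the statement that, on $\Lambda$, the penalty dominates the deviation, i.e.\ $\dev_n \le \gamma$. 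The remainder of the argument is deterministic on $\Lambda$.

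First I would record the key pointwise estimate: on $\Lambda$, for every $\beta \in \mathcal{B}_M$ with $\beta_{S^c}=0$ and every $l \in S^c$,
\[
	\partial_{\beta_l}\obj_{n,\gamma}(\beta) \;=\; \partial_{\beta_l}\obj_n(\beta) + \gamma \;\ge\; \partial_{\beta_l}\obj(\beta) - \dev_n + \gamma \;\ge\; 0,
\]
where the middle inequality is the coordinatewise consequence of the definition of $\dev_n$ in~\eqref{eqn:def-dev-M}, and the final inequality combines the population bound $\partial_{\beta_l}\obj(\beta)\ge 0$ (which is exactly Lemma~\ref{lemma:grad-noise-variable-beta-noise-0}, whose hypothesis $\beta_{S^c}=0$ is met here) with $\dev_n \le \gamma$. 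This display is the finite-sample surrogate for the sign property used in the population proof.

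Next I would run the same induction as in the proof of Theorem~\ref{thm:no-false-positive}, now applied to the iterates of~\eqref{eqn:pgd-kernel-empirical}. The base case $\beta^{(0)}_{S^c}=0$ is the stated initialization hypothesis (this relaxes the population requirement $\beta^{(0)}=0$). For the inductive step, suppose $\beta^{(k)}_{S^c}=0$; fixing $l \in S^c$, the pre-projection value satisfies $\beta^{(k+1/2)}_l = \beta^{(k)}_l - \alpha\,\partial_{\beta_l}\obj_{n,\gamma}(\beta^{(k)}) = -\alpha\,\partial_{\beta_l}\obj_{n,\gamma}(\beta^{(k)}) \le 0$ by the displayed estimate, so Lemma~\ref{lemma:projection-onto-ell-one-ball} forces $\beta^{(k+1)}_l = 0$. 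Hence $\beta^{(k)}_{S^c}=0$ for all $k$, and since $\{\beta : \beta_{S^c}=0\}$ is closed, every accumulation point $\beta^*$ of the iterates satisfies $\beta^*_{S^c}=0$. For the variant Algorithm~\ref{alg:kernel-feature-selection-hier-empirical} I would additionally carry along the invariant $\hat S \cap S^c = \emptyset$ through the outer loop: it holds at the start (the stated initializer has $\beta^{(0)}_{S^c}=0$, so its support avoids $S^c$), each inner round leaves the noise coordinates free inside $\mathcal{B}_{M,\hat S}$, and the same per-round induction keeps those coordinates at zero; therefore the update $\hat S \leftarrow \hat S \cup \supp(\beta)$ never introduces a coordinate of $S^c$.

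The argument is essentially routine given Theorem~\ref{thm:concentration-of-gradients}. The one point that deserves care is that the deviation bound must hold \emph{uniformly} over $\beta \in \mathcal{B}_M$ --- this is exactly why $\dev_n$ is defined as a supremum in~\eqref{eqn:def-dev-M} --- so that the single event $\Lambda$ simultaneously controls the gradient at every iterate visited by projected gradient descent, regardless of the data-dependent trajectory. The only other nuance, and the main obstacle in writing up the variant case, is the bookkeeping that $\hat S$ never intersects $S^c$; but this follows immediately by applying the same induction round by round.
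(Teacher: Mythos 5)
Your proof is correct and follows essentially the same route as the paper: condition on the event of Theorem~\ref{thm:concentration-of-gradients}, use the second hypothesis in~\eqref{eqn:lower-bound-on-ell-1} to absorb $\dev_n$ into $\gamma$, deduce the finite-sample sign property $\partial_{\beta_l}\obj_{n,\gamma}(\beta)\ge 0$ for $l\in S^c$ whenever $\beta_{S^c}=0$ (this is exactly the paper's Lemma~\ref{lemma:grad-noise-variable-beta-noise-0-finite-sample}), and then rerun the induction from Theorem~\ref{thm:no-false-positive} with Lemma~\ref{lemma:projection-onto-ell-one-ball}. The paper treats only Algorithm~\ref{alg:kernel-feature-selection-empirical} explicitly and leaves the variant implicit; your extra round-by-round bookkeeping for Algorithm~\ref{alg:kernel-feature-selection-hier-empirical} --- the invariant $\hat S\cap S^c=\emptyset$ propagated through the outer loop --- is a correct and worthwhile addition, and is exactly what one needs to apply the same per-round induction inside $\mathcal{B}_{M,\hat S}$.
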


\subsubsection{Power guarantees} 
\label{sec:power-finite-samples}

This section presents finite-sample power guarantees for the kernel feature selection algorithm. As discussed in 
Section~\ref{sec:power-guarantee-population}, the power of the algorithm depends on both the kernel that we choose and the type of signals that we consider. 

Following Section~\ref{sec:power-guarantee-population}, we assume that the algorithm uses the $\ell_1$
kernel. Additionally, we assume the functional ANOVA model discussed in Section~\ref{sec:functional-ANOVA}. 
Next, we provide results for two types of signals: main effects and hierarchical interactions. 

\paragraph{Main Effect Signal}

Corollary~\ref{corollary:add-main-mix-effect} is a finite-sample analogue of our population guarantee on the 
recovery of the main effect signals (Theorem~\ref{thm:add-main-mix-effect}). Recall the notation $\effect_l$
that denotes the effective size of a main effect signal in Definition~\ref{definition:l-signal-main-mix}.

The proof of Corollary~\ref{corollary:add-main-mix-effect} is given in Appendix~\ref{sec:proof-of-corollary-add-main-mix-effect}. 

\begin{corollary}[Functional ANOVA]
\label{corollary:add-main-mix-effect}
Make Assumptions~\ref{assumption:mu-compact} and~\ref{assumption:sub-gaussian-tail-Y}. 
Let $c, C > 0$ be the constants in Theorem~\ref{thm:concentration-of-gradients}. Let $t > 0$. 
Assume that $\lambda, \gamma$ satisfy equation~\eqref{eqn:lower-bound-on-ell-1}. 

There exists $\wbar{C} > 0$ depending only on $|S|, M, M_X, M_Y, M_\mu$ such that the following holds. 
Consider Algorithm~\ref{alg:kernel-feature-selection-empirical} with initialization $\beta^{(0)} = 0$ and stepsize $\alpha \le \frac{\lambda^2}{\wbar{C} p}$. 
Suppose the effective signal size $\effect_l$ for the signal variable $X_l$ satisfies
\begin{equation}
\label{eqn:effect-signal-add-main-mix-finite-sample}
	\effect_l \ge \wbar{C} \cdot (\lambda^{1/2}(1+\lambda^{1/2}) + \lambda \gamma). 
\end{equation}
Then, with probability at least $1-e^{-cn}-e^{-t}$, $l \in \what{S}$ where $\what{S}$ is the set returned by Algorithm~\ref{alg:kernel-feature-selection-empirical}.
\end{corollary}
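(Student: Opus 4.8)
The plan is to obtain Corollary~\ref{corollary:add-main-mix-effect} as the finite-sample shadow of the population recovery result Theorem~\ref{thm:add-main-mix-effect}: the population argument shows that any reachable stationary point has a \emph{strictly negative} coordinate gradient in direction $l$ whenever $\beta_l=0$ and $\beta_{S^c}=0$, and the only genuinely new work is to check that this strict negativity persists once the population gradient is replaced by the empirical one, the discrepancy being controlled by the uniform concentration bound of Theorem~\ref{thm:concentration-of-gradients}. All of the work takes place on the event $\event$ carrying the conclusion of that theorem; by Theorem~\ref{thm:concentration-of-gradients} and the first inequality in~\eqref{eqn:lower-bound-on-ell-1} we have $\P(\event)\ge 1-e^{-cn}-e^{-t}$, while the second inequality in~\eqref{eqn:lower-bound-on-ell-1} gives the crucial uniform estimate
\begin{equation*}
	\dev_n=\sup_{\beta\in\mathcal{B}_M}\norm{\grad\obj_n(\beta)-\grad\obj(\beta)}_\infty\le\gamma.
\end{equation*}
Since Corollary~\ref{corollary:no-false-positive-finite-sample} is proved on the event of Theorem~\ref{thm:concentration-of-gradients} under exactly the same hypotheses~\eqref{eqn:lower-bound-on-ell-1}, on the same event $\event$ every accumulation point $\beta^*$ of the iterates produced by Algorithm~\ref{alg:kernel-feature-selection-empirical} satisfies $\beta^*_{S^c}=0$.

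First I would record that accumulation points are stationary points of the \emph{empirical} objective: applying Proposition~\ref{prop:grad-obj-lipschitz-beta} to the empirical distribution (whose support is bounded under Assumption~\ref{assumption:sub-gaussian-tail-Y}), the map $\beta\mapsto\grad\obj_{n,\gamma}(\beta)$ is $Cp/\lambda^2$-Lipschitz on $\mathcal{B}_M$ with $C$ depending only on $\sigma_X,\sigma_Y,\mu$, so with the prescribed stepsize $\alpha\le\lambda^2/(\wbar C p)$ the sufficient-decrease property of projected gradient descent (Lemma~\ref{lemma:gradient-ascent-increases-objective}) forces every accumulation point $\beta^*$ to obey $\langle\grad\obj_{n,\gamma}(\beta^*),\beta'-\beta^*\rangle\ge 0$ for all $\beta'\in\mathcal{B}_M$. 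Combining with the previous paragraph, to conclude $l\in\what S$ it suffices to rule out a stationary $\beta^*$ with $\beta^*_l=0$ and $\beta^*_{S^c}=0$; equivalently, to show that the right partial derivative $\partial_{\beta_l}\obj_{n,\gamma}(\beta)$ is strictly negative at \emph{every} $\beta\in\mathcal{B}_M$ with $\beta_l=0$ and $\beta_{S^c}=0$, since a negative coordinate derivative immediately violates the stationarity inequality (move a little mass into coordinate $l$).

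For that strict negativity I would split $\partial_{\beta_l}\obj_{n,\gamma}(\beta)=\partial_{\beta_l}\obj_n(\beta)+\gamma$ and use $|\partial_{\beta_l}\obj_n(\beta)-\partial_{\beta_l}\obj(\beta)|\le\dev_n\le\gamma$ on $\event$, so that $\partial_{\beta_l}\obj_{n,\gamma}(\beta)\le\partial_{\beta_l}\obj(\beta)+2\gamma$. The population term $\partial_{\beta_l}\obj(\beta)$ is precisely what the proof of Theorem~\ref{thm:add-main-mix-effect} controls: running the ``General Recipe'' of Section~\ref{sec:proof-thm-add-main-effect} (pass to the surrogate gradient via~\eqref{eqn:wtilde-partial-beta-j-rep}, evaluate the conditional expectations, return to the true gradient through Lemma~\ref{lemma:approximate-gradient-bound}), together with the interaction-aware estimate of that proof (Lemma~\ref{lemma:upper-bound-tilde-J}) that brings in the context signal sizes $\effect_l(X_T)$, yields at every feasible $\beta$ with $\beta_l=0$, $\beta_{S^c}=0$ a bound of the shape
\begin{equation*}
	\lambda\,\partial_{\beta_l}\obj(\beta)\le -c_0\,\effect_l+C_1\bigl(\lambda^{1/2}(1+\lambda^{1/2})+\lambda\gamma\bigr),
\end{equation*}
with $c_0,C_1>0$ depending only on $|S|,M,M_X,M_Y,M_\mu$, the negative leading term $-c_0\effect_l$ being the incarnation of $h'(\cdot)<0$ already visible in the additive computation~\eqref{eqn:tilde-grad-main-add-effect} and $\effect_l$ the quantity of Definition~\ref{definition:l-signal-main-mix}. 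Adding the two copies of $\gamma$ gives $\lambda\,\partial_{\beta_l}\obj_{n,\gamma}(\beta)\le -c_0\effect_l+C_1(\lambda^{1/2}(1+\lambda^{1/2})+\lambda\gamma)+2\lambda\gamma$, which is strictly negative as soon as $\effect_l\ge\wbar C(\lambda^{1/2}(1+\lambda^{1/2})+\lambda\gamma)$ with $\wbar C$ taken at least as large as $(C_1+2)/c_0$ (and at least as large as the Lipschitz constant feeding the stepsize condition) — that is, exactly under hypothesis~\eqref{eqn:effect-signal-add-main-mix-finite-sample}. Hence no bad stationary point exists on $\event$, so $\beta^*_l>0$ and $l\in\what S$ with probability at least $1-e^{-cn}-e^{-t}$.

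The main obstacle is not the concentration step itself (entirely encapsulated in Theorem~\ref{thm:concentration-of-gradients} plus the bookkeeping $\dev_n\le\gamma$, so that the statistical error is absorbed into the threshold without degrading its order in $\lambda,\gamma$), but rather ensuring that the population gradient estimate of the third paragraph holds \emph{uniformly} over all feasible $\beta$ with $\beta_l=0$, $\beta_{S^c}=0$, rather than at one point: $\partial_{\beta_l}\obj(\beta)$ depends, through $\effect_l(X_T)$ with $T=\supp(\beta)$, on which other signal variables are currently active, so one must lower-bound these context signal sizes across every chain of possible activations — which is exactly why the effective signal size in Definition~\ref{definition:l-signal-main-mix} is an infimum over $\mathcal{G}_l$. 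Since Theorem~\ref{thm:add-main-mix-effect} already supplies this uniform control, what remains is routine: check that the hypotheses~\eqref{eqn:lower-bound-on-ell-1} on $(\lambda,\gamma)$ are consistent with the stated regime (they force $n\gg\log^8 n\,\log p$) and track constants so that $\wbar C$ depends only on $|S|,M,M_X,M_Y,M_\mu$ as claimed.
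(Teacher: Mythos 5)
Your proof is correct and follows the paper's own plan: work on the event of Theorem~\ref{thm:concentration-of-gradients} where $\dev_n\le\gamma$, invoke Corollary~\ref{corollary:no-false-positive-finite-sample} to force $\beta^*_{S^c}=0$ at any accumulation point, then feed the population bound of Lemma~\ref{lemma:key-to-thm-add-main-mix-effect} through the deviation control to get $\partial_{\beta_l}\obj_{n,\gamma}(\beta)\le\frac{1}{\lambda}(-c\,\effect_l+C\lambda^{1/2}(1+\lambda^{1/2})+2\lambda\gamma)<0$ for all feasible $\beta$ with $\beta_l=0$, $\beta_{S^c}=0$ — this is exactly the paper's Lemma~\ref{lemma:core-corollary-add-main-mix-effect}. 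You spell out two points the paper leaves implicit (the Lipschitz constant for the \emph{empirical} gradient, and that both ingredients live on the single concentration event so the probabilities do not compound), both of which are fine.
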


\paragraph{Hierarchical Interaction Signal}
Corollary~\ref{corollary:hier-effect} is a finite-sample analogue of our population guarantee on the recovery of 
hierarchical interaction signals (Theorem~\ref{thm:hier-effect}). We adopt the same notation 
as in Section~\ref{sec:hierarchical-effect}: recall that $X_{k, l}$ denotes the 
level $l$ signal in the $k$-th hierarchical component, and $\effect_{k, l}$ denotes 
the effective signal size of $X_{k, l}$ (see Definition~\ref{definition:hier-effect}).

The proof of Corollary~\ref{corollary:hier-effect} is given in Appendix~\ref{sec:proof-of-corollary-hier-effect}. 

\begin{corollary}[Hierarchical Interaction]
\label{corollary:hier-effect}
Make Assumptions~\ref{assumption:mu-compact} and~\ref{assumption:sub-gaussian-tail-Y}. 
Let $c, C > 0$ be the constants in Theorem~\ref{thm:concentration-of-gradients}. Let $t > 0$. 
Assume that $\lambda, \gamma$ satisfy equation~\eqref{eqn:lower-bound-on-ell-1}. 

There exists $\wbar{C} > 0$ depending only on $|S|, M, M_X, M_Y, M_\mu$ such that the following holds. 
Consider Algorithm~\ref{alg:kernel-feature-selection-hier-empirical} with the initializers $\{\beta^{(0; T)}\}_{T \in 2^{[p]}}$
where $\beta^{(0; T)}_T = \tau \mathbf{1}_T$ and  $\beta^{(0; T)}_{T^c} = 0$ and with the stepsize 
$\alpha \le \frac{\lambda^2}{\wbar{C} p}$. 
Suppose the effective signal size of the variable $X_{k, l}$ satisfies
\begin{equation}
\label{eqn:effect-signal-hier-finite-sample}
	\effect_{k, l} \ge \wbar{C} \cdot (\lambda^{1/2}(1+\lambda^{1/2}) + \lambda \gamma). 
\end{equation}
Then, with probability at least $1-e^{-cn}-e^{-t}$, Algorithm~\ref{alg:kernel-feature-selection-hier-empirical} selects the variable $X_{k, l}$.
\end{corollary}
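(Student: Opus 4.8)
The plan is to reduce the finite-sample claim to the population result Theorem~\ref{thm:hier-effect} by conditioning on the event on which the empirical gradient is uniformly close to its population counterpart. Let $\Lambda$ denote the event $\dev_n \le \frac{C\log^2(n)}{\min\{\lambda,1\}^{7/2}}\bigl(\sqrt[4]{\log p/n}+\sqrt{t/n}\bigr)$, where $\dev_n = \sup_{\beta\in\mathcal{B}_M}\norm{\grad\obj_n(\beta)-\grad\obj(\beta)}_\infty$ as in~\eqref{eqn:def-dev-M}. By Theorem~\ref{thm:concentration-of-gradients}, the hypothesis $\lambda\ge C\sqrt[4]{\log n\log p/n}$ forces $\P(\Lambda)\ge 1-e^{-cn}-e^{-t}$, so it suffices to argue deterministically on $\Lambda$. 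The key structural point is that the supremum defining $\dev_n$ ranges over the full ball $\mathcal{B}_M$, hence controls the empirical-versus-population gradient deviation simultaneously over every constraint set $\mathcal{B}_{M,\hat S}$ that Algorithm~\ref{alg:kernel-feature-selection-hier-empirical} can produce, no matter how the data-dependent sets $\hat S$ are chosen along the way; since the number of rounds of the algorithm is deterministically at most $|S|$, the single event $\Lambda$ controls all of them and no union bound over rounds is needed.

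On $\Lambda$ I would first extract the two consequences of the lower bound~\eqref{eqn:lower-bound-on-ell-1} on $\gamma$, which was chosen precisely so that $\gamma\ge\dev_n$. The first is the no-false-positive property: for any $\beta$ with $\beta_{S^c}=0$ and any $l\in S^c$, Lemma~\ref{lemma:grad-noise-variable-beta-noise-0} gives $\partial_{\beta_l}\obj_{n,\gamma}(\beta)\ge\partial_{\beta_l}\obj_\gamma(\beta)-\dev_n\ge\gamma-\dev_n\ge 0$, and the same inductive argument as in Theorem~\ref{thm:no-false-positive}, namely Corollary~\ref{corollary:no-false-positive-finite-sample}, shows that no round of the algorithm ever activates a coordinate in $S^c$; thus $\hat S\subseteq S$ at every stage. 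The second is the power half: in the population proof the gradient of $\obj_\gamma$ in a not-yet-selected signal direction is bounded above by $\frac{1}{\lambda}\bigl(\effect\cdot h'(2MM_X)+C\lambda^{1/2}(1+\lambda^{1/2})+\lambda\gamma\bigr)$ for the appropriate effective-signal quantity $\effect$ (compare~\eqref{eqn:grad-main-add-effect} and the analogous estimates underlying Theorem~\ref{thm:hier-effect}, including the higher-level bound Lemma~\ref{lemma:upper-bound-tilde-J}); adding the uniform error $\dev_n$ and using $\lambda\dev_n\le\lambda\gamma$ yields the \emph{same} bound for $\partial_{\beta_l}\obj_{n,\gamma}$, at the cost of enlarging the constant $C$. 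Hence every strict-negativity statement in the population argument carries over, at the cost of enlarging the constant $\wbar C$ in the threshold~\eqref{eqn:effect-signal-hier-finite-sample}.

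With these two facts in hand the proof of Theorem~\ref{thm:hier-effect} transcribes verbatim. Each inner projected-gradient loop converges to a stationary point of $\obj_{n,\gamma}$ on the current constraint set because the empirical gradient is Lipschitz with the same $O(p/\lambda^2)$ constant as in Proposition~\ref{prop:grad-obj-lipschitz-beta} (the $\dev_n$ correction being again absorbed), which, together with the stepsize choice $\alpha\le\lambda^2/(\wbar C p)$ and the descent lemma Lemma~\ref{lemma:gradient-ascent-increases-objective}, forces accumulation points to be stationary. The hierarchy induction is unchanged: the first round recovers every main effect by the finite-sample main-effect guarantee Corollary~\ref{corollary:add-main-mix-effect}; if on entering a round the current $\hat S$ contains $S_{k,l-1}$ for some component $k$, then, since $\beta_{\hat S}$ is pinned at $\tau\mathbf{1}_{\hat S}$, the bound of the previous paragraph makes $\partial_{\beta_{l'}}\obj_{n,\gamma}(\beta)<0$ at every feasible $\beta$ with $\beta_{l'}=0$, where $l'$ indexes the level-$l$ variable $X_{k,l}$, provided~\eqref{eqn:effect-signal-hier-finite-sample} holds; consequently $X_{k,l}$ enters $\hat S$. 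Iterating over levels and components shows that the while-loop does not terminate before all of $S$ is discovered, which gives the stated conclusion on the event $\Lambda$, hence with probability at least $1-e^{-cn}-e^{-t}$.

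I expect the main obstacle to be the bookkeeping that renders the adaptivity harmless: one must verify that the uniform error $\dev_n$---which on $\Lambda$ satisfies $\dev_n\le\gamma$ by the choice of $\gamma$---contaminates each relevant gradient estimate by at most an additive amount that, after the $1/\lambda$ prefactor is taken into account, is dominated by the $\lambda\gamma$ term already present in the threshold~\eqref{eqn:effect-signal-hier-finite-sample}; and that the finite-sample analogues of the auxiliary estimates behind Theorem~\ref{thm:hier-effect}---in particular the bound controlling the gradient of a higher-level variable once its lower-level companions are active (the analogue of Lemma~\ref{lemma:upper-bound-tilde-J}) and the Lipschitz bound of Proposition~\ref{prop:grad-obj-lipschitz-beta}---degrade only through this same additive error. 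Granting this, the remainder is a direct transcription of the population arguments of Section~\ref{sec:population-guarantees}.
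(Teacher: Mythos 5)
Your proposal follows essentially the same route as the paper: condition on the concentration event of Theorem~\ref{thm:concentration-of-gradients} (on which $\dev_n\le\gamma$ by the choice of $\gamma$ in~\eqref{eqn:lower-bound-on-ell-1}), note that this single event controls the empirical-versus-population gradient deviation simultaneously over all data-dependent constraint sets the algorithm visits, then transfer the population gradient bound of Lemma~\ref{lemma:key-to-thm-hier-effect} to the empirical objective with an additive $\dev_n\le\gamma$ error (which is exactly what the paper packages as Lemma~\ref{lemma:core-corollary-hier-effect}, with $\lambda\gamma$ becoming $2\lambda\gamma$), invoke Corollary~\ref{corollary:no-false-positive-finite-sample} for the no-false-positive side, and transcribe the induction on the level $m$ from Theorem~\ref{thm:hier-effect}. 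Two small points worth tidying up: (i) the appeal to Corollary~\ref{corollary:add-main-mix-effect} for the first round is not quite what's needed, because that corollary's hypothesis is stated in terms of the functional-ANOVA effective size $\effect_l$ of Definition~\ref{definition:l-signal-main-mix}, whereas the hypothesis here controls the hierarchical quantity $\effect_{k,l}$ of Definition~\ref{definition:hier-effect}; the paper handles $m=1$ by applying Lemma~\ref{lemma:core-corollary-hier-effect} directly (with $S_{k,0}=\emptyset$), which is the cleaner route, and your own ``bound of the previous paragraph'' already gives this; and (ii) the concluding claim that ``the while-loop does not terminate before all of $S$ is discovered'' overreaches what the hypothesis~\eqref{eqn:effect-signal-hier-finite-sample} for a single $(k,l)$ buys — what the induction shows is that $S_{k,1},\dots,S_{k,l}$ (hence $X_{k,l}$) are all selected, using that $\effect_{k,m}\ge\effect_{k,l}$ for $m\le l$ by Definition~\ref{definition:hier-effect}, which is exactly the stated conclusion.
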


\subsection{Proof techniques for the concentration (Theorem~\ref{thm:concentration-of-gradients})}
\label{sec:proof-of-concentration-results}

The proof of Theorem~\ref{thm:concentration-of-gradients} is non-trivial. 
It leverages diverse results from high-dimensional convex geometry, 
high-dimensional probability theory, and functional analysis. We
begin by highlighting the main technical idea that drives the proof. 



By Proposition~\ref{proposition:compute-grad-obj}, the empirical and population gradients admit the representations: 
\begin{align}
(\grad \obj(\beta))_l = -\frac{1}{\lambda} \cdot \E\left[r_\beta(\beta\odot X; Y)r_\beta(\beta\odot X'; Y') 
	h'(\langle \beta, |X- X'| \rangle)|X_l- X_l'|\right]
	\label{eqn:population-gradient}\\
(\grad \obj_n(\beta))_l = -\frac{1}{\lambda} \cdot \what{\E}
	\left[\what{r}_\beta(\beta\odot X; Y) \what{r}_\beta(\beta\odot X'; Y') h'(\langle \beta, |X- X'| \rangle)|X_l- X_l'|\right],
	\label{eqn:empirical-gradient}
\end{align}
where the notation $\what{r}_\beta$ denotes the empirical residual function, 
$\what{r}_\beta(x, y) = y - \what{f}_\beta(x)$.

The core of the proof is to show that $\what{r}_\beta \approx r_\beta$, 
or equivalently, $\what{f}_\beta \approx f_\beta$.  The underlying tool
comes from functional analysis~\cite{Baker73, CuckerSm02, FukumizuBaJo04, FukumizuBaJo09}. 

\begin{definition}
\label{definition:cross-covariance-operator}
Define the empirical and population covariance operator $\Sigma_\beta: \H \mapsto \H$ by
\begin{equation*}
\begin{split}
\Sigma_\beta f &= \E\left[k(\beta^{1/q}\odot X, \cdot)f(\beta^{1/q} \odot X)\right] \\
\what{\Sigma}_\beta f& = \what{\E}\left[k(\beta^{1/q}\odot X, \cdot)f(\beta^{1/q} \odot X)\right].
\end{split}
\end{equation*}
Define the empirical and population covariance function by 
\begin{equation*}
	h_\beta = \E[k(\beta^{1/q}\odot X, \cdot)Y]~~\text{and}~~\what{h}_\beta = \what{\E}[k(\beta^{1/q}\odot X, \cdot)Y].
\end{equation*}
\end{definition}

Definition~\ref{definition:cross-covariance-operator} is useful since it gives 
a representation of the solution 
$f_\beta$ and $\what{f}_\beta$ from the perspective of solving an infinite-dimensional linear equation (Proposition~\ref{prop:kernel-beta-fix}): 
\begin{equation}
\label{eqn:f_beta-from-Sigma_beta-h_beta}
f_\beta = (\Sigma_\beta + \lambda I)^{-1} h_\beta
	~~\text{and}~~
\what{f}_\beta = (\what{\Sigma}_\beta + \lambda I)^{-1} \what{h}_\beta.
\end{equation}
As a result, to show that $\what{f}_\beta \approx f_\beta$, it suffices to show that $\what{\Sigma}_\beta \approx \Sigma_\beta$ 
and $\what{h}_\beta \approx h_\beta$ are close (in certain sense). 
This idea appears earlier in the literature~\cite{FukumizuBaJo09}, 
where the authors establish the uniform convergence of the empirical 
operators and functions $\what{h}_\beta, \what{\Sigma}_\beta$ to the population versions 
$h_\beta, \Sigma_\beta$ as $n \to \infty$ (with $p$ fixed). Our additional 
contribution involves carefully extending these results into a high-dimensional 
setting, for which we need to establish a high-probability concentration result 
with explicit rates. To obtain the new concentration result, we make use of 
techniques from high-dimensional convex geometry and high-dimensional probability 
theory~\cite{Ramon14, Vershynin18}, specifically Maurey's covering argument for 
metric entropy, the Hanson-Wright inequality for quadratic forms of subgaussian 
variables, and large-deviation results on the supremum of sub-exponential processes. 

In addition to showing that $\what{r}_\beta \approx r_\beta$, we also need to 
address the concern that arises from the statistical dependencies in the definition 
of the empirical gradients in equation~\eqref{eqn:empirical-gradient}. 
By equation~\eqref{eqn:empirical-gradient}, we construct the empirical estimate 
of the gradient $\grad \obj_n(\beta)$ from the same data that is used to construct 
the estimator $\what{r}_\beta$. Decoupling the statistical dependencies that arise 
from this re-use of the data requires additional delicate work 
(Section~\ref{sec:tackle-statistical-dependency}).

\section{Experiments}
We conduct numerical experiments to validate the performance of the kernel feature selection algorithm. 
First, we show empirically that there exist clear advantages in choosing $\ell_1$ kernels over $\ell_2$
kernels when trying to detect \emph{nonlinear} signals (Section~\ref{sec:l_1-vs-l_2}). This corroborates the theory 
developed in Section~\ref{sec:distinction-l-1-l-2}. Second, we demonstrate  the power of the algorithm 
(using an $\ell_1$ kernel) to recover the main effects and 
hierarchical interactions (cf.\ Section~\ref{sec:recovery-signal}).

\subsection{The $\ell_1$ versus $\ell_2$ kernel}
\label{sec:l_1-vs-l_2}

We show that choosing an $\ell_1$ kernel is crucial for the detection of \emph{nonlinear} signals. We generate the data $(X, Y)$ according to 
\begin{equation*}
	Y = X_1 + (X_2^2 - 1) + \normal(0, \sigma^2)~~\text{where}~~X \sim \normal(0, I_p),
\end{equation*} 
where $X_1, X_2$ are the signal variables. We see that 
\begin{itemize}
\item The variable $X_1$ is a \emph{linear} signal in the sense that $\Cov(Y, X_1) \neq 0$. 
\item The variable $X_2$ is, by contrast, a \emph{nonlinear} signal where $\Cov(Y, X_2) = 0$.
\end{itemize} 
We compute the recovery probability and false positive rate of the kernel feature selection algorithm for the Laplace and Gaussian kernels; see Figure~\ref{fig:l1-vs-l2}. We summarize our findings as follows.
\begin{itemize}
\item Both $\ell_1$ and $\ell_2$ type kernel are equally effective in the detection of the \emph{linear} signals. 
\item The $\ell_1$ kernel is more effective than $\ell_2$ kernel in the detection of the \emph{nonlinear} signals. 
\end{itemize}

\begin{figure}[!tph]
\begin{centering}
\includegraphics[width=.45\linewidth]{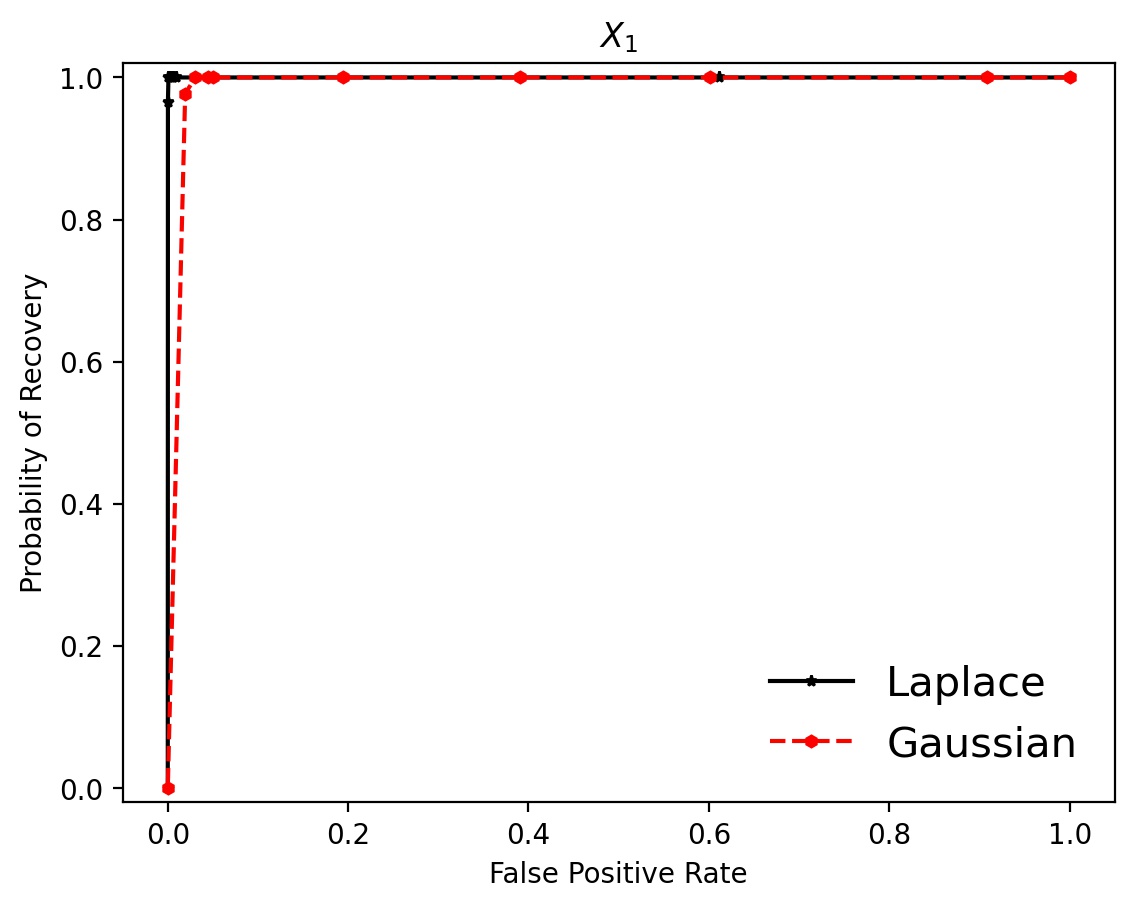}\includegraphics[width=.45\linewidth]{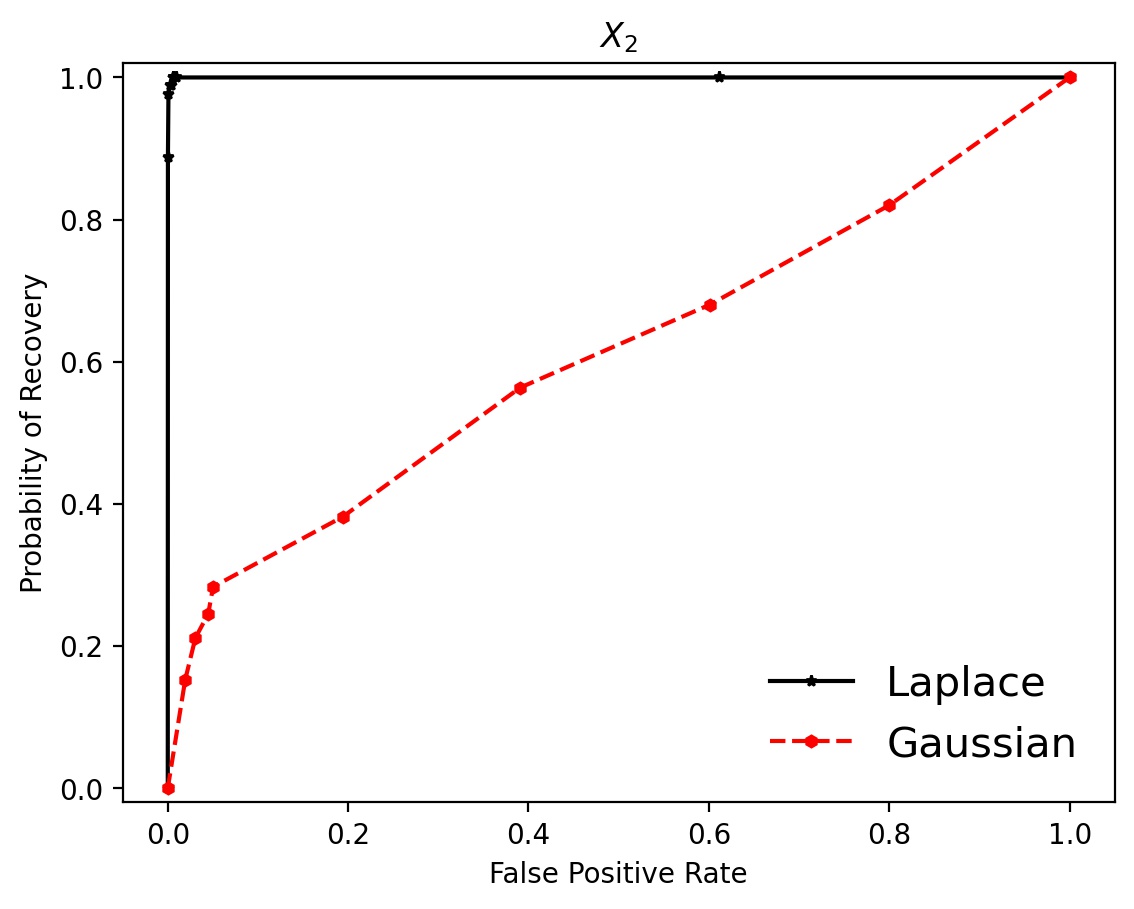}
\par\end{centering}
\caption{\small Probability of recovering a true variable against the false positive rate
in the main effect model $Y = X_1 + (X_2^2-1) +\normal(0, \sigma^2)$.
Here $\sigma^{2}=4$, $n = p = 1000$ and $\lambda = 0.01$.  To generate the ROC curve, $\gamma$ is varied over a grid of values: $(0, 0.002, 0.005, 0.01, 0.02, 0.05, 0.20, 0.6, 2.0)$.
}
\label{fig:l1-vs-l2}
\end{figure}

\subsection{Recovery of signals}
\label{sec:recovery-signal}

We investigate the power of the algorithm in recovering main effects and hierarchical interactions. We generate the data according to
\begin{equation*}
	Y = X_1 + X_1 X_2 + X_1 X_2 X_3 + \normal(0, \sigma^2)~~\text{where}~~X \sim \normal(0, I_p).
\end{equation*}
The variable $X_1$ is a main effect signal. The variables $X_2, X_3$ are level $2$ and $3$ signals respectively. 
For this experiment, we use an $\ell_1$ kernel. 

Figure~\ref{fig:hier} shows that the algorithm is able to detect high-order 
hierarchical interactions, though its power to detect interactions decreases as 
the level of the interaction increases. 

\begin{figure}[!tph]
\begin{centering}
\includegraphics[width=.6\linewidth]{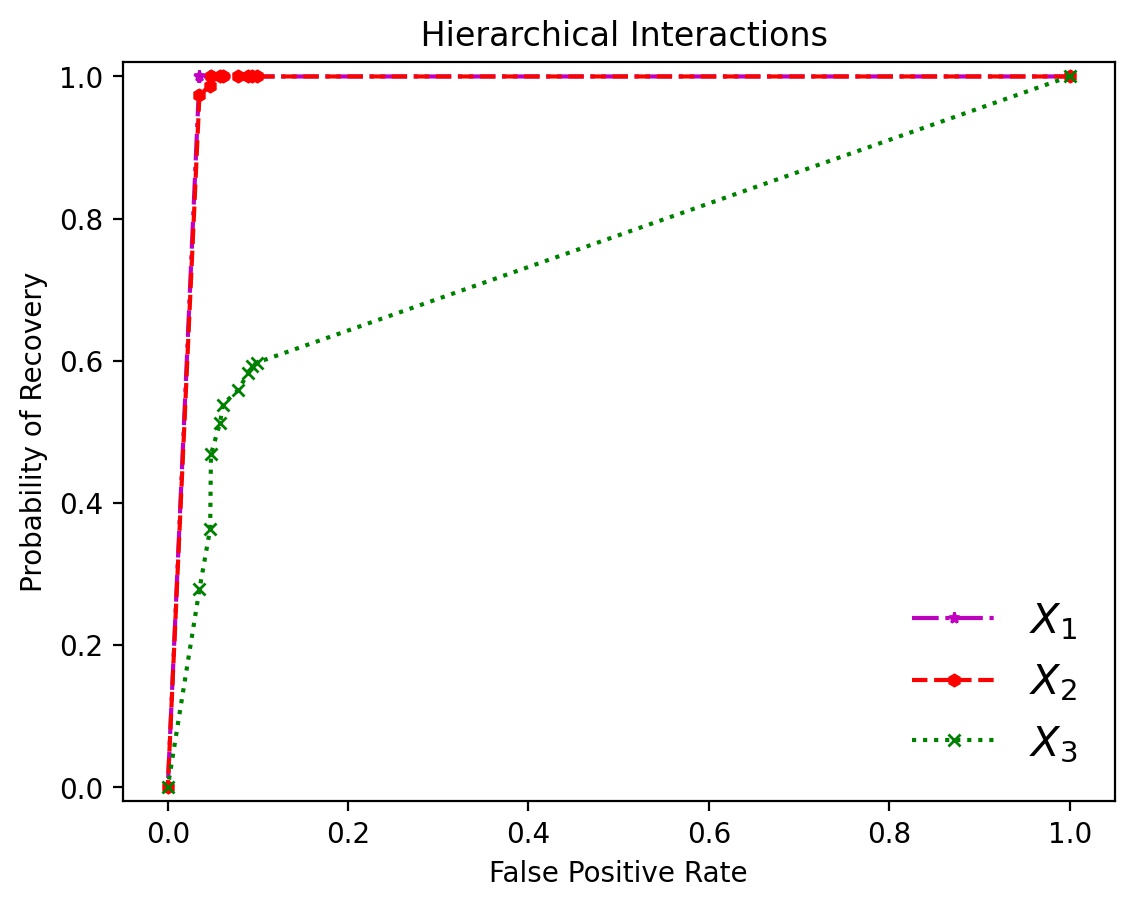}
\par\end{centering}
\caption{\small Probability of recovery of true variables against the false positive rate
 in the hierarchical interaction model $Y = X_1 +  X_1 X_2 + X_1 X_2 X_3 + \normal(0, \sigma^2)$. 
Here $\sigma^{2}=1$, $n = p = 1000$ and $\lambda = 0.01$. To generate the ROC curve, $\gamma$ is varied over a grid of values: $(0, 0.002, 0.005, 0.01, 0.02, 0.05, 0.2, 0.5, 1.0)$. 
}
\label{fig:hier}
\end{figure}

\section{Discussion}
While kernel feature selection is a standard methodology for variable selection 
in nonparametric statistics---one which has been deployed in numerous applied
problems---there has been little statistical theory to support the methodology.
A core challenge is that the methodology is based on a nonconvex optimization
problem.  Progress has been made in studying the statistical properties of the 
global minima of the objective function, but there is a mismatch between such 
analyses and practice, given that the gradient-based methods available for
high-dimensional nonconvex optimization are only able to find local minima.

We have accordingly studied the landscape associated with kernel feature selection,
focusing on its local minima.  We have shown that the design of the kernel is 
crucial if methods that find local minima are to succeed in the task of feature
selection.  In particular, we have shown that the choice of $\ell_1$ kernel 
eliminates bad stationary points that may trap gradient descent.  We have
established this result via the development of novel techniques that may have
applications to a range of other kernel-based algorithms.

\newpage

\appendix

\section{A Roadmap to the Appendix}

\begin{itemize}
\item Section~\ref{sec:preliminary-appendix} sets up the basics of kernel ridge regressions. 
\item Section~\ref{sec:analysis-of-the-gradient} computes the gradient $\grad \obj(\beta)$. 
\item Section~\ref{sec:Lipschitzness-boundedness-of-gradient} shows the gradient $\grad \obj(\beta)$ is 
	Lipschitz and uniformly bonded. 
\item Section~\ref{sec:landscape-result} establishes the landscape result in Section~\ref{sec:distinction-l-1-l-2} of the main text. 
\item Section~\ref{sec:proof-main-theorems} establishes the population-level guarantees of the 
	kernel feature selection. 
\item Section~\ref{sec:proof-of-concentration-results-details} proves concentration results in the main text. 
\item Section~\ref{sec:proof-main-corollaries} establishes the finite-sample guarantees of the 
	kernel feature selection. 
\item Section~\ref{sec:proof-under-dependent-covariates} shows that---in a broader situation than considered 
	in the main text---kernel feature selection is able to recover the \emph{Markov blanket}.  
\item Section~\ref{sec:proof-of-section-of-preliminary-results} proves the results in
	Section~\ref{sec:preliminary-main-text} of the main text.
\item Section~\ref{sec:basics} gives basics on RKHS, functional analysis, concentration and optimization. 
\end{itemize}

\section{Preliminaries}
\label{sec:preliminary-appendix}
\label{sec:KRR-beta}
This section establishes the foundational properties of the family of kernel ridge regressions
considered in the main text (with parameter $\beta$). 
 \begin{equation}
\label{eqn:KRR-beta}
\begin{split}
\KRR(\beta):~
\minimize_{f \in \H}~~~&\energy(\beta, f) \\
~~\text{where}~~~~&\energy(\beta, f) = \half \E[(Y - f(\beta^{1/q} \odot X))^2] + \frac{\lambda}{2} \norm{f}_{\H}^2.
\end{split}
\end{equation}
Let $f_\beta$ denote the minimizer and $\obj(\beta)$ denote the minimum value of $\KRR(\beta)$. 
\begin{itemize}
\item We characterize $f_\beta$ and $\obj(\beta)$ using tools from functional analysis~\cite{FukumizuBaJo04,FukumizuBaJo09}. 
\item We prove bounds on the solution $f_\beta$ and on the residual $r_\beta$.  
\item We prove continuity of the mapping $\beta \mapsto f_\beta$, $\beta \mapsto r_\beta$ and $\beta \mapsto \obj(\beta)$. 
\end{itemize}

\subsection{Characterization of $f_\beta$ and $\obj(\beta)$}
Recall the following definitions (Definition~\ref{definition:cross-covariance-operator}). 
\begin{itemize}
\item For each $\beta \ge 0$, the cross covariance operator $\Sigma_\beta: \H \mapsto \H$ is the mapping 
	\begin{equation*}
		(\Sigma_\beta f)(\cdot) = \E\left[k(\beta^{1/q}\odot X, \cdot)f(\beta^{1/q} \odot X)\right].
	\end{equation*} 
\item For each $\beta \ge 0$, the covariance function $h_\beta \in \H$ is $h_\beta(\cdot) = \E[k(\beta^{1/q}\odot X, \cdot)Y]$. 
\end{itemize}
Proposition~\ref{prop:kernel-beta-fix} characterizes the minimum $f_\beta$ and the minimum 
value $\obj_\beta$ of $\KRR(\beta)$. 
\begin{proposition}
\label{prop:kernel-beta-fix}
The minimum solution $f_\beta$ of the problem $\KRR(\beta)$ can be represented by 
\begin{equation}
\label{eqn:f-beta-representation}
f_\beta = (\Sigma_\beta + \lambda I)^{-1} h_\beta. 
\end{equation}
The minimum value $\obj(\beta)$ of the problem $\KRR(\beta)$ can be written as 
\begin{equation}
\label{eqn:minimum-beta-representation}
\obj(\beta) = \half \E[Y^2] - \langle h_\beta, (\Sigma_\beta + \lambda I)^{-1} h_\beta \rangle_{\H}.
\end{equation}
\end{proposition}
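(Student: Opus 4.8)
The plan is to reduce the minimization of $\energy(\beta,f)$ over $\H$ to a linear equation in $\H$ by computing the Fr\'echet derivative and using the reproducing property. First I would rewrite the data-fit term using the reproducing kernel: for $f \in \H$, one has $f(\beta^{1/q}\odot X) = \langle f, k(\beta^{1/q}\odot X,\cdot)\rangle_\H$, so that
\begin{equation*}
\energy(\beta,f) = \tfrac12\E[Y^2] - \E[Y\langle f, k(\beta^{1/q}\odot X,\cdot)\rangle_\H] + \tfrac12\E[\langle f, k(\beta^{1/q}\odot X,\cdot)\rangle_\H^2] + \tfrac{\lambda}{2}\norm{f}_\H^2.
\end{equation*}
Using the definitions of $h_\beta$ and $\Sigma_\beta$ from Definition~\ref{definition:cross-covariance-operator} (and Bochner-integrability, which follows from $\E[Y^2]<\infty$, the boundedness of the kernel, and compactness of $\supp(X)$), this becomes $\energy(\beta,f) = \tfrac12\E[Y^2] - \langle f, h_\beta\rangle_\H + \tfrac12\langle f,\Sigma_\beta f\rangle_\H + \tfrac{\lambda}{2}\norm{f}_\H^2$. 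The functional is strictly convex and coercive in $f$ when $\lambda>0$ (since $\Sigma_\beta$ is positive semidefinite and self-adjoint), so the minimizer exists and is unique.

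Next I would set the gradient to zero. The Fr\'echet derivative of $f\mapsto \energy(\beta,f)$ at $f_\beta$ in direction $g$ is $\langle -h_\beta + \Sigma_\beta f_\beta + \lambda f_\beta,\, g\rangle_\H$, so the first-order stationarity condition reads $(\Sigma_\beta + \lambda I)f_\beta = h_\beta$. Since $\Sigma_\beta$ is a bounded positive self-adjoint operator, $\Sigma_\beta + \lambda I$ is boundedly invertible for $\lambda>0$, which yields the representation $f_\beta = (\Sigma_\beta+\lambda I)^{-1}h_\beta$, establishing equation~\eqref{eqn:f-beta-representation}. For the minimum value, I would substitute $f_\beta$ back into the quadratic form: using $\Sigma_\beta f_\beta + \lambda f_\beta = h_\beta$, the cross term and quadratic terms collapse, giving $\energy(\beta,f_\beta) = \tfrac12\E[Y^2] - \tfrac12\langle h_\beta, f_\beta\rangle_\H = \tfrac12\E[Y^2] - \tfrac12\langle h_\beta,(\Sigma_\beta+\lambda I)^{-1}h_\beta\rangle_\H$, which matches equation~\eqref{eqn:minimum-beta-representation} up to checking the constant; a careful expansion of $-\langle f_\beta, h_\beta\rangle + \tfrac12\langle f_\beta, \Sigma_\beta f_\beta\rangle + \tfrac{\lambda}{2}\norm{f_\beta}^2$ with the substitution $\Sigma_\beta f_\beta = h_\beta - \lambda f_\beta$ confirms that the two half-factors combine correctly, so I would verify that this bookkeeping produces exactly the stated coefficient.

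The main obstacle I anticipate is not the algebra but the measure-theoretic justification of the operator identities: one must verify that the Bochner integrals defining $h_\beta$ and $\Sigma_\beta$ converge in $\H$ (this uses $\norm{k(\beta^{1/q}\odot X,\cdot)}_\H^2 = k(\beta^{1/q}\odot X,\beta^{1/q}\odot X) = h(0) < \infty$ together with $\E[Y^2]<\infty$), that $\Sigma_\beta$ is indeed a bounded self-adjoint positive operator on $\H$, and that the interchange of expectation and inner product used to pass from $\E[Y\langle f, k(\beta^{1/q}\odot X,\cdot)\rangle]$ to $\langle f, h_\beta\rangle$ is legitimate. All of these are standard in the cross-covariance operator literature~\cite{FukumizuBaJo04,FukumizuBaJo09,Baker73}, so I would cite those references for the functional-analytic groundwork and focus the written proof on the variational computation and the back-substitution yielding~\eqref{eqn:minimum-beta-representation}.
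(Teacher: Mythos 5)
Your proof takes essentially the same route as the paper: rewrite $\energy(\beta,f)$ as a quadratic form in $f$ via the reproducing property, observe strict convexity, and read off the normal equation $(\Sigma_\beta+\lambda I)f_\beta = h_\beta$. One extra thing worth flagging explicitly: your back-substitution
\[
\energy(\beta,f_\beta) = \tfrac12\E[Y^2] - \langle f_\beta,h_\beta\rangle_\H + \tfrac12\langle f_\beta,h_\beta\rangle_\H = \tfrac12\E[Y^2] - \tfrac12\langle h_\beta,(\Sigma_\beta+\lambda I)^{-1}h_\beta\rangle_\H
\]
is correct, and it shows that equation~\eqref{eqn:minimum-beta-representation} as printed is missing a factor of $\tfrac12$ on the second term; your instinct to ``verify the bookkeeping'' caught a genuine typo in the paper's display. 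The rest of your proposal (Bochner integrability, self-adjointness and positivity of $\Sigma_\beta$, citations to the cross-covariance literature) is the standard groundwork and is appropriately deferred.
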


\begin{proof}
By the reproducing property of the kernel $(x, x') \mapsto k(x, x')$, we have for any $f \in \H$
$
\langle \Sigma_\beta f, f\rangle_{\H} = \E[f(\beta^{1/q} \odot X)^2]$, and
$\langle h_\beta, f\rangle_{\H} = \E[f(\beta^{1/q} \odot X) Y]$. Hence
we can re-write the objective into 
$			\energy(\beta, f) = \half \langle (\Sigma_\beta+ \lambda I) f, f\rangle_{\H}- \langle h_\beta, f\rangle_{\H} + \half \E[Y^2]$. 
Note that $f \mapsto \energy(\beta, f)$ is strongly convex w.r.t $\norm{\cdot}_\H$ topology as 
$\Sigma_{\beta}$ is non-negative on $\H$. Hence the minimum $f_\beta$ is unique, and 
satisfies $(\Sigma_\beta+ \lambda I) f = h_\beta$, i.e., 
equation~\eqref{eqn:f-beta-representation}. Formula~\eqref{eqn:minimum-beta-representation} now follows. 

\end{proof}

\begin{proposition}[Variational Representation]
\label{proposition:KKT-condition}
The following holds for any $g \in \H$: 
\begin{equation}
\label{eqn:KKT-cond}
	\E[r_\beta(\beta^{1/q} \odot X; Y) g(\beta^{1/q} \odot X)] =  \lambda \langle f_\beta, g\rangle_{\H}. 
\end{equation}
\end{proposition}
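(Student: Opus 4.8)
The plan is to recognize equation~\eqref{eqn:KKT-cond} as the first-order stationarity condition for the strongly convex problem $\KRR(\beta)$, and to derive it directly from the operator identity already established in Proposition~\ref{prop:kernel-beta-fix}.

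First I would record the two reproducing-property identities that already appear (implicitly) in the proof of Proposition~\ref{prop:kernel-beta-fix}: for every $f, g \in \H$,
\[
\langle \Sigma_\beta f, g\rangle_{\H} = \E\left[f(\beta^{1/q}\odot X)\, g(\beta^{1/q}\odot X)\right], \qquad \langle h_\beta, g\rangle_{\H} = \E\left[g(\beta^{1/q}\odot X)\, Y\right].
\]
Both follow from the reproducing identity $f(\beta^{1/q}\odot X) = \langle f, k(\beta^{1/q}\odot X,\cdot)\rangle_{\H}$ together with the definitions of $\Sigma_\beta$ and $h_\beta$ and an interchange of expectation with the $\H$-inner product. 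That interchange is justified because the kernel is bounded ($h(0) = \mu([0,\infty)) < \infty$, so $\norm{k(\beta^{1/q}\odot X,\cdot)}_{\H}^2 = k(\beta^{1/q}\odot X, \beta^{1/q}\odot X) = h(0)$ is constant) and $\E[Y^2] < \infty$, so $x \mapsto k(\beta^{1/q}\odot x,\cdot)$ and $x \mapsto k(\beta^{1/q}\odot x,\cdot)\, y$ define Bochner-integrable $\H$-valued random elements and Fubini applies.

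Next I would invoke Proposition~\ref{prop:kernel-beta-fix}, which gives the identity $(\Sigma_\beta + \lambda I) f_\beta = h_\beta$ in $\H$. Taking the $\H$-inner product of both sides with an arbitrary $g \in \H$ yields
\[
\langle \Sigma_\beta f_\beta, g\rangle_{\H} + \lambda \langle f_\beta, g\rangle_{\H} = \langle h_\beta, g\rangle_{\H}.
\]
Substituting the two identities from the previous step, this reads
\[
\E\left[f_\beta(\beta^{1/q}\odot X)\, g(\beta^{1/q}\odot X)\right] + \lambda \langle f_\beta, g\rangle_{\H} = \E\left[g(\beta^{1/q}\odot X)\, Y\right].
\]
Rearranging and using $r_\beta(\beta^{1/q}\odot X; Y) = Y - f_\beta(\beta^{1/q}\odot X)$ gives exactly equation~\eqref{eqn:KKT-cond}.

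There is no genuine obstacle here beyond bookkeeping; the only point requiring a modicum of care is the Bochner-integrability justification for passing the expectation through the inner product, which — as noted — is immediate from the boundedness of $k$ and $\E[Y^2] < \infty$. As an alternative route one can argue purely variationally: the functional $f \mapsto \energy(\beta, f)$ is strongly convex and G\^ateaux differentiable with
\[
\frac{d}{dt}\Big|_{t=0}\energy(\beta, f_\beta + tg) = -\E\left[r_\beta(\beta^{1/q}\odot X; Y)\, g(\beta^{1/q}\odot X)\right] + \lambda \langle f_\beta, g\rangle_{\H},
\]
and minimality of $f_\beta$ forces this derivative to vanish for every $g \in \H$; this variant consumes the same integrability input (to differentiate under the expectation) but avoids reference to the operator form.
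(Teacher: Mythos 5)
Your proof is correct, and both of your routes are sound. The paper's own proof is a one-line appeal to the first variation of $f \mapsto \energy(\beta, f)$ — exactly your ``alternative'' at the end. Your primary route, pairing $(\Sigma_\beta + \lambda I)f_\beta = h_\beta$ with $g$ and unwinding via the reproducing property, is logically valid (Proposition~\ref{prop:kernel-beta-fix} is established first) but is in substance the same argument run in reverse: that operator identity was itself obtained from the first-order condition of the quadratic form $\half\langle(\Sigma_\beta+\lambda I)f,f\rangle_\H - \langle h_\beta, f\rangle_\H + \tfrac{1}{2}\E[Y^2]$. So the two routes are not really different decompositions — they differ only in which of the two equivalent statements you declare as the starting point. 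One small thing you add that the paper omits is the explicit Bochner-integrability justification for exchanging the expectation with the $\H$-inner product; that is a genuine (if routine) gap the paper glosses over, and your argument via $k(\beta^{1/q}\odot X,\beta^{1/q}\odot X) = h(0) < \infty$ and $\E[Y^2]<\infty$ closes it cleanly.
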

\begin{proof}
The result follows by taking a first variation of the functional $f \mapsto \energy(\beta, f)$. 
\end{proof}

\subsection{Bounds on $f_\beta$ and $r_\beta$} We bound the second moments of $f_\beta$ and $r_\beta$. 

\begin{proposition}
\label{prop:second-moment-bound}
The solution $f_\beta$ and the residual $r_\beta$ satisfy 
\begin{equation*}
	\normbigP{f_\beta(\beta^{1/q}\odot X)} \le  \normP{Y},~~~
		\normbigP{r_\beta(\beta^{1/q}\odot X; Y)} \le \normP{Y}.
\end{equation*}
\end{proposition}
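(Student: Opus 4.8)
The plan is to prove the two bounds directly from the variational characterization in Proposition~\ref{proposition:KKT-condition}, together with the energy comparison $\energy(\beta, f_\beta) \le \energy(\beta, 0)$. The key observation is that $f_\beta$ minimizes a strongly convex functional, so plugging in the trivial competitor $f = 0$ already controls everything we need.

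**First I would** bound the residual. Since $f_\beta$ minimizes $f \mapsto \energy(\beta, f) = \half \E[(Y - f(\beta^{1/q}\odot X))^2] + \frac{\lambda}{2}\norm{f}_\H^2$, we have $\energy(\beta, f_\beta) \le \energy(\beta, 0) = \half \E[Y^2]$. Dropping the nonnegative term $\frac{\lambda}{2}\norm{f_\beta}_\H^2$ gives
\begin{equation*}
	\half \E[r_\beta(\beta^{1/q}\odot X; Y)^2] \le \half \E[(Y - f_\beta(\beta^{1/q}\odot X))^2] + \frac{\lambda}{2}\norm{f_\beta}_\H^2 \le \half \E[Y^2],
\end{equation*}
which is exactly $\normbigP{r_\beta(\beta^{1/q}\odot X; Y)} \le \normP{Y}$.

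**Next I would** bound $f_\beta$ itself. Apply Proposition~\ref{proposition:KKT-condition} with the choice $g = f_\beta$: this yields
\begin{equation*}
	\E[r_\beta(\beta^{1/q}\odot X; Y) f_\beta(\beta^{1/q}\odot X)] = \lambda \norm{f_\beta}_\H^2 \ge 0.
\end{equation*}
Now write $Y = r_\beta(\beta^{1/q}\odot X; Y) + f_\beta(\beta^{1/q}\odot X)$ and expand $\E[Y f_\beta(\beta^{1/q}\odot X)]$:
\begin{equation*}
	\E[Y f_\beta(\beta^{1/q}\odot X)] = \E[r_\beta(\beta^{1/q}\odot X; Y) f_\beta(\beta^{1/q}\odot X)] + \E[f_\beta(\beta^{1/q}\odot X)^2] \ge \E[f_\beta(\beta^{1/q}\odot X)^2].
\end{equation*}
On the other hand, by Cauchy--Schwarz, $\E[Y f_\beta(\beta^{1/q}\odot X)] \le \normP{Y}\cdot \normbigP{f_\beta(\beta^{1/q}\odot X)}$. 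Combining the two displays gives $\normbigP{f_\beta(\beta^{1/q}\odot X)}^2 \le \normP{Y}\cdot\normbigP{f_\beta(\beta^{1/q}\odot X)}$, and dividing through (the bound is trivial if $f_\beta = 0$ in $\mathcal{L}_2(\P)$) yields $\normbigP{f_\beta(\beta^{1/q}\odot X)} \le \normP{Y}$.

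**I do not anticipate a serious obstacle here**; the only minor point of care is the degenerate case $\normbigP{f_\beta(\beta^{1/q}\odot X)} = 0$, where the second inequality holds vacuously, and ensuring that the reproducing-property identities $\langle \Sigma_\beta f, f\rangle_\H = \E[f(\beta^{1/q}\odot X)^2]$ and $\langle h_\beta, f\rangle_\H = \E[f(\beta^{1/q}\odot X)Y]$ (already recorded in the proof of Proposition~\ref{prop:kernel-beta-fix}) justify that $f_\beta(\beta^{1/q}\odot X)$ and $r_\beta(\beta^{1/q}\odot X;Y)$ are genuinely in $\mathcal{L}_2(\P)$, which follows since $\E[Y^2] < \infty$ is assumed throughout.
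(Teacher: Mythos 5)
Your proof is correct. For the $f_\beta$ bound your argument is identical to the paper's: apply the variational identity (Proposition~\ref{proposition:KKT-condition}) with $g = f_\beta$ to get $\E[r_\beta(\beta^{1/q}\odot X;Y) f_\beta(\beta^{1/q}\odot X)] = \lambda\norm{f_\beta}_\H^2 \ge 0$, then use $Y = r_\beta + f_\beta$ and Cauchy--Schwarz. For the residual bound you diverge slightly: the paper says it is "deduced similarly," meaning one writes $\E[r_\beta^2] = \E[r_\beta Y] - \E[r_\beta f_\beta] \le \E[r_\beta Y] \le \normP{Y}\normP{r_\beta}$ using the same $\E[r_\beta f_\beta]\ge 0$ fact, whereas you instead compare $\energy(\beta,f_\beta) \le \energy(\beta,0)$ directly and drop the RKHS penalty. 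Your energy-comparison route is arguably cleaner for the residual, since it avoids invoking the KKT identity a second time, though it gives only the residual bound and not the $f_\beta$ bound (for which you correctly fall back to the KKT argument). Both approaches are elementary and both are correct; the handling of the degenerate case $\normbigP{f_\beta(\beta^{1/q}\odot X)} = 0$ is noted appropriately.
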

\begin{proof}
By Proposition~\ref{proposition:KKT-condition}, we get
$\E[r_\beta(\beta^{1/q} \odot X; Y) f_\beta(\beta^{1/q} \odot X)] = \lambda \norm{f_\beta}^2 \ge 0$.
Note then $r_\beta(\beta^{1/q} \odot x; y) = y - f_\beta(\beta^{1/q} \odot x)$ by definition. Hence, we obtain
\begin{equation*}
	\normbigP{f_\beta(\beta^{1/q}\odot X)}^2 \le \E[Y f_\beta(\beta^{1/q}\odot X)] \le \normbigP{Y}
		 \normbigP{f_\beta(\beta^{1/q}\odot X)}.
\end{equation*}
This proves the first bound. The second bound can be deduced similarly. 
\end{proof}

\subsection{Continuity of the Mappings: $\beta \mapsto f_\beta$, $\beta \mapsto r_\beta$ and $\beta \mapsto \obj(\beta)$.}

\begin{proposition}
\label{prop:continuity-f-beta}
We have the following results.
\begin{enumerate}[(a)]
\item The mapping $\beta \mapsto f_\beta$ is continuous w.r.t the norm topology in $\H$, i.e., 
	\begin{equation*}
		\lim_{\beta' \to \beta} \norm{f_{\beta'} -f_\beta}_{\H} = 0
	\end{equation*}
\item The mapping $\beta \mapsto f_\beta$ and $\beta \mapsto r_\beta$ is continuous w.r.t the sup norm $\norm{\cdot}_\infty$:
	\begin{equation*}
		\lim_{\beta' \to \beta} \norm{f_{\beta'} -f_\beta}_{\infty} = \lim_{\beta' \to \beta} \norm{r_{\beta'} - r_\beta}_{\infty} = 0
	\end{equation*}
\item The mapping $\beta \mapsto \obj(\beta)$ is continuous. 
\end{enumerate}
\end{proposition}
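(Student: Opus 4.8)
The plan is to route all three claims through the resolvent formula $f_\beta = (\Sigma_\beta + \lambda I)^{-1} h_\beta$ of Proposition~\ref{prop:kernel-beta-fix} (here $\lambda>0$), reducing everything to the continuity of the two ``data'' maps $\beta \mapsto h_\beta \in \H$ and $\beta \mapsto \Sigma_\beta$. The first step is to show $\beta \mapsto h_\beta$ is continuous for the $\H$-norm and $\beta \mapsto \Sigma_\beta$ is continuous for the Hilbert--Schmidt norm. Using the reproducing property one gets $\langle h_{\beta_1}, h_{\beta_2}\rangle_\H = \E[YY' k(\beta_1^{1/q}\odot X, \beta_2^{1/q}\odot X')]$ and, writing $\Sigma_\beta$ as the Bochner integral $\E[k(\beta^{1/q}\odot X,\cdot)\otimes k(\beta^{1/q}\odot X,\cdot)]$, also $\langle \Sigma_{\beta_1}, \Sigma_{\beta_2}\rangle_{\HS} = \E[k(\beta_1^{1/q}\odot X, \beta_2^{1/q}\odot X')^2]$, where $(X',Y')$ is an independent copy of $(X,Y)$; in particular $\norm{\Sigma_\beta}_{\HS}^2 \le h(0)^2 < \infty$, so $\Sigma_\beta$ is genuinely Hilbert--Schmidt. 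Expanding the squared norms $\norm{h_{\beta'}-h_\beta}_\H^2$ and $\norm{\Sigma_{\beta'}-\Sigma_\beta}_{\HS}^2$ via these identities, in each case the integrand converges pointwise to $0$ as $\beta'\to\beta$ --- because $(\beta,x)\mapsto\beta^{1/q}\odot x$ is continuous on $\R_+^p\times\R^p$ (the map $t\mapsto t^{1/q}$ being continuous on $[0,\infty)$, boundary included) and $k(x,x')=h(\norm{x-x'}_q^q)$ is jointly continuous --- and is dominated by a constant multiple of $|YY'|$, respectively by the constant $4h(0)^2$; since $k$ is bounded by $h(0)<\infty$ and $\E[Y^2]<\infty$, dominated convergence gives $\norm{h_{\beta'}-h_\beta}_\H\to0$ and $\norm{\Sigma_{\beta'}-\Sigma_\beta}_{\HS}\to0$, hence $\norm{\Sigma_{\beta'}-\Sigma_\beta}_{{\rm op}}\to0$.

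For part (a), I would combine Step~1 with the resolvent identity
\[
	(\Sigma_{\beta'}+\lambda I)^{-1} - (\Sigma_\beta+\lambda I)^{-1} = (\Sigma_{\beta'}+\lambda I)^{-1}(\Sigma_\beta - \Sigma_{\beta'})(\Sigma_\beta+\lambda I)^{-1}
\]
together with the bound $\norm{(\Sigma_\beta+\lambda I)^{-1}}_{{\rm op}}\le 1/\lambda$. Writing $f_{\beta'}-f_\beta = (\Sigma_{\beta'}+\lambda I)^{-1}(h_{\beta'}-h_\beta) + \big[(\Sigma_{\beta'}+\lambda I)^{-1}-(\Sigma_\beta+\lambda I)^{-1}\big]h_\beta$ yields $\norm{f_{\beta'}-f_\beta}_\H \le \frac1\lambda\norm{h_{\beta'}-h_\beta}_\H + \frac1{\lambda^2}\norm{\Sigma_{\beta'}-\Sigma_\beta}_{{\rm op}}\norm{h_\beta}_\H$, and since $\norm{h_\beta}_\H^2 = \E[YY' k(\beta^{1/q}\odot X,\beta^{1/q}\odot X')] \le h(0)\E[Y^2]$ is bounded uniformly in $\beta$, Step~1 finishes part (a). (Equivalently: $\beta\mapsto(\Sigma_\beta,h_\beta)$ is continuous into $\HS(\H)\times\H$ and $(A,h)\mapsto(A+\lambda I)^{-1}h$ is continuous on $\{A\succeq0\}\times\H$.)

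For part (b), I would use the elementary RKHS inequality $|g(x)| = |\langle g, k(x,\cdot)\rangle_\H| \le \sqrt{k(x,x)}\,\norm{g}_\H = \sqrt{h(0)}\,\norm{g}_\H$, valid for all $g\in\H$ and all $x$; applied to $g = f_{\beta'}-f_\beta$ it gives $\norm{f_{\beta'}-f_\beta}_\infty \le \sqrt{h(0)}\,\norm{f_{\beta'}-f_\beta}_\H\to0$ by part (a), and since $r_{\beta'}(x;y)-r_\beta(x;y) = f_\beta(x)-f_{\beta'}(x)$ carries no dependence on $y$, the same estimate bounds $\sup_{x,y}|r_{\beta'}(x;y)-r_\beta(x;y)|$. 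For part (c), I would use $\obj(\beta) = \half\E[Y^2] - \langle h_\beta, f_\beta\rangle_\H$ from Proposition~\ref{prop:kernel-beta-fix} and split $\langle h_{\beta'},f_{\beta'}\rangle_\H - \langle h_\beta,f_\beta\rangle_\H = \langle h_{\beta'}-h_\beta, f_{\beta'}\rangle_\H + \langle h_\beta, f_{\beta'}-f_\beta\rangle_\H$; the two terms are bounded by $\norm{h_{\beta'}-h_\beta}_\H\sup_{\beta'}\norm{f_{\beta'}}_\H$ and $\norm{h_\beta}_\H\norm{f_{\beta'}-f_\beta}_\H$, which tend to $0$ by Step~1, part (a), and the uniform bound $\norm{f_\beta}_\H\le\frac1\lambda\sqrt{h(0)\E[Y^2]}$.

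The only step that is not purely mechanical is Step~1, and the crux there is choosing the right topology for $\Sigma_\beta$: continuity has to hold in a norm strong enough that the resolvent map stays continuous, and the Hilbert--Schmidt norm is both sufficient and convenient because $\norm{\Sigma_{\beta_1}-\Sigma_{\beta_2}}_{\HS}$ has the clean closed form above, whereas bounding $\norm{\Sigma_{\beta'}-\Sigma_\beta}_{{\rm op}}$ head-on would force one to control $\E[f(\beta'^{1/q}\odot X)g(\beta'^{1/q}\odot X) - f(\beta^{1/q}\odot X)g(\beta^{1/q}\odot X)]$ uniformly over the unit ball of $\H$. A minor care point is continuity of $\beta\mapsto\beta^{1/q}\odot x$ at the boundary $\{\beta_l=0\}$, which holds since $t\mapsto t^{1/q}$ is continuous on $[0,\infty)$; pleasantly, no compactness of $\supp(X)$ is needed, since $k$ is globally bounded and globally continuous.
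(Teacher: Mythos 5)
Your proof is correct and follows essentially the same route as the paper: reduce everything to the continuity of $\beta\mapsto h_\beta$ in $\H$-norm and $\beta\mapsto\Sigma_\beta$ in Hilbert--Schmidt norm, establish those by expanding the squared norms via the reproducing property and applying dominated convergence (using that $k$ is bounded and jointly continuous), and then deduce (a) from the resolvent formula, (b) from $\norm{g}_\infty\le\sqrt{|h(0)|}\,\norm{g}_\H$, and (c) from $\obj(\beta)=\tfrac12\E[Y^2]-\langle h_\beta,f_\beta\rangle_\H$. Your write-up is merely more explicit where the paper is terse, e.g.\ spelling out the resolvent identity and the quantitative bound $\norm{f_{\beta'}-f_\beta}_\H\le\lambda^{-1}\norm{h_{\beta'}-h_\beta}_\H+\lambda^{-2}\norm{\Sigma_{\beta'}-\Sigma_\beta}_{\rm op}\norm{h_\beta}_\H$.
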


\begin{proof}
The key to the proof is to show that $\beta \mapsto h_\beta$ and $\beta \mapsto \Sigma_\beta$ are continuous 
\begin{equation}
\label{eqn:continuity-of-h-Sigma}
	\lim_{\beta' \to \beta} \norm{h_{\beta'} - h_\beta}_{\H} = 0~~\text{and}~~
	\lim_{\beta' \to \beta} \opnorm{\Sigma_{\beta'} - \Sigma_\beta} = 0.
\end{equation}
Deferring its proof to the end, we
first show why Proposition~\ref{prop:continuity-f-beta} follows from 
equation~\eqref{eqn:continuity-of-h-Sigma}.
\begin{enumerate}[(a)]
\item By Proposition~\ref{prop:kernel-beta-fix}, 
	$f_\beta = (\Sigma_\beta + \lambda I)^{-1} h_\beta$. Hence, 
	the mapping $\beta \mapsto f_\beta$ is contiuous w.r.t the norm topology in $\H$
	since $\beta \mapsto h_\beta$ and 
	$\beta \mapsto \Sigma_\beta$ are continuous by equation~\eqref{eqn:continuity-of-h-Sigma}.
\item Note that $\norm{g}_\infty \le |h(0)|^{1/2}\norm{g}_{\H}$ for any $g \in \H$. Indeed, by 
	reproducing property of $\H$: 
	\begin{equation*}
		\norm{g}_\infty = \sup_x |g(x)| = \sup_x|\langle k(x, \cdot), g \rangle_{\H}| 
			\le \sup_x k(x, x)^{1/2} \norm{g}_{\H} \le |h(0)|^{1/2} \norm{g}_{\H}.
	\end{equation*}
	Hence $\lim_{\beta' \to \beta} \norm{f_{\beta'} -f_\beta}_{\infty} = 0$ as a consequence of 
	part (a). Note $r_\beta - r_{\beta'} = f_{\beta'} - f_\beta$. 
\item By Proposition~\ref{prop:kernel-beta-fix}, $\obj(\beta) = \half \E[Y^2] - \langle h_\beta, 
	(\Sigma_\beta + \lambda I)^{-1} h_\beta \rangle_{\H}$. The continuity of $\beta \mapsto \obj(\beta)$ 
	follows easily as a consequence of the fact that both 
	$\beta \mapsto h_\beta$ and $\beta \mapsto \Sigma_\beta$ are continuous. 
\end{enumerate}
It remains to prove equation~\eqref{eqn:continuity-of-h-Sigma}. The key is to notice
the  identities below (similar ones appear in the literature~\cite{FukumizuBaJo04, GrettonBoSmSc05}): 
letting $(X', Y')$ be independent copies of $(X, Y)$, 
{\small
\begin{align*}
		\norm{h_{\beta'} - h_{\beta}}_{\H}^2  
		&= \E[(k(\beta^{1/q} \odot X, \beta^{1/q} \odot X')  + k(\beta'^{1/q} \odot X, \beta'^{1/q} \odot X') - 2k(\beta^{1/q} \odot X, \beta'^{1/q} \odot X')) YY'] \\
		\norm{\Sigma_{\beta'} - \Sigma_{\beta}}_{\HS}^2
		&= \E[k(\beta^{1/q} \odot X, \beta^{1/q} \odot X')^2 + k(\beta'^{1/q} \odot X, \beta'^{1/q} \odot X')^2 - 2 k(\beta^{1/q} \odot X, \beta'^{1/q} \odot X')^2]	
\end{align*}
}
Note also $\opnorm{\Sigma} \le \norm{\Sigma}_{\HS}$ for any operator $\Sigma$.
As a result, equation~\eqref{eqn:continuity-of-h-Sigma} follows from the above identities and the 
fact that $(x, x') \mapsto k(x, x')$ is  continuous and  uniformly bounded. 
\end{proof}

\begin{proposition}
\label{prop:continuity-f-beta-beta-X}
The mapping $\beta \mapsto r_\beta(\beta^{1/q} \odot \cdot; \cdot)$ is continuous w.r.t the norm $\normP{\cdot}$: 
\begin{equation*}
	\lim_{\beta' \mapsto \beta} \normbigP{r_\beta(\beta^{1/q} \odot X; Y) - r_{\beta'}(\beta'^{1/q} \odot X; Y)} = 0.
\end{equation*}
\end{proposition}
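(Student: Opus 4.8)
The plan is to reduce the statement to two facts already at our disposal: the supremum-norm continuity of $\beta \mapsto f_\beta$ established in Proposition~\ref{prop:continuity-f-beta}(b), and the fact that each individual $f_\beta$, being an element of the RKHS $\H$ associated with a bounded and continuous kernel, is itself a bounded continuous function on $\R^p$. First I would observe that since $r_\beta(x; y) = y - f_\beta(x)$, the response term cancels in the difference, so
\[
r_\beta(\beta^{1/q}\odot X; Y) - r_{\beta'}(\beta'^{1/q}\odot X; Y) = f_{\beta'}(\beta'^{1/q}\odot X) - f_\beta(\beta^{1/q}\odot X),
\]
and it therefore suffices to show $\normbigP{f_{\beta'}(\beta'^{1/q}\odot X) - f_\beta(\beta^{1/q}\odot X)} \to 0$ as $\beta' \to \beta$ in $\R_+^p$.

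Next I would split this difference using the intermediate point $f_\beta(\beta'^{1/q}\odot X)$ and the triangle inequality:
\[
\normbigP{f_{\beta'}(\beta'^{1/q}\odot X) - f_\beta(\beta^{1/q}\odot X)} \le \normbigP{f_{\beta'}(\beta'^{1/q}\odot X) - f_\beta(\beta'^{1/q}\odot X)} + \normbigP{f_\beta(\beta'^{1/q}\odot X) - f_\beta(\beta^{1/q}\odot X)}.
\]
The first term is dominated pointwise, for every realization of $X$, by $\norm{f_{\beta'} - f_\beta}_\infty$, hence also in the $\mathcal{L}_2(\P)$ norm, and this tends to $0$ by Proposition~\ref{prop:continuity-f-beta}(b).

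For the second term I would argue by dominated convergence along an arbitrary sequence $\beta^{(n)} \to \beta$ in $\R_+^p$. Since $t \mapsto t^{1/q}$ is continuous on $\R_+$ for $q \in \{1, 2\}$, we have $(\beta^{(n)})^{1/q}\odot X \to \beta^{1/q}\odot X$ almost surely; since $f_\beta$ is continuous on $\R^p$, this yields $f_\beta((\beta^{(n)})^{1/q}\odot X) \to f_\beta(\beta^{1/q}\odot X)$ almost surely; and since $f_\beta$ is bounded, with $\norm{f_\beta}_\infty \le |h(0)|^{1/2}\norm{f_\beta}_\H < \infty$ as in the proof of Proposition~\ref{prop:continuity-f-beta}(b), the integrand $|f_\beta((\beta^{(n)})^{1/q}\odot X) - f_\beta(\beta^{1/q}\odot X)|^2$ is uniformly bounded by the constant $4\norm{f_\beta}_\infty^2$. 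The dominated convergence theorem then gives $\normbigP{f_\beta((\beta^{(n)})^{1/q}\odot X) - f_\beta(\beta^{1/q}\odot X)} \to 0$; as the sequence was arbitrary, the limit along $\beta' \to \beta$ follows, completing the argument.

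The only delicate point is the second term: one must upgrade the almost-sure convergence of $f_\beta((\beta')^{1/q}\odot X)$ to $\mathcal{L}_2(\P)$ convergence, which is exactly where the uniform boundedness of the single fixed function $f_\beta$ is used. Everything else is the triangle inequality together with the previously established supremum-norm continuity of $\beta \mapsto f_\beta$.
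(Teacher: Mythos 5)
Your proof is correct, and it takes a genuinely different (and in fact slightly more general) route than the paper's. Both proofs implicitly split the difference $f_{\beta'}(\beta'^{1/q}\odot x)-f_\beta(\beta^{1/q}\odot x)$ across the intermediate point $f_\beta(\beta'^{1/q}\odot x)$; the two pieces are treated the same in the first half but differently in the second. The paper establishes the stronger claim that $\beta \mapsto f_\beta(\beta^{1/q}\odot\cdot)$ is continuous in $\norm{\cdot}_\infty$, by writing $f_\beta(\beta^{1/q}\odot x) = \langle f_\beta, k(\beta^{1/q}\odot x,\cdot)\rangle_\H$, invoking continuity of $\beta\mapsto f_\beta$ in $\norm{\cdot}_\H$, and asserting that $\beta\mapsto k(\beta^{1/q}\odot x,\cdot)$ is $\H$-continuous \emph{uniformly in $x$}. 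That last step, which amounts to $\sup_x\big(2h(0) - 2h(\norm{\beta'^{1/q}\odot x - \beta^{1/q}\odot x}_q^q)\big)\to 0$, needs $x$ to range over a bounded set; it silently leans on the compact-support assumption used elsewhere in the paper. Your approach bounds the first piece by $\norm{f_{\beta'}-f_\beta}_\infty$ exactly as the paper does, but handles the second piece by dominated convergence, using only that the single fixed function $f_\beta$ is continuous and bounded (both standard for elements of an RKHS with bounded continuous kernel). This settles the $\mathcal{L}_2(\P)$ claim---which is all the proposition asserts and all that is used downstream---without any uniform-in-$x$ estimate and hence without compactness of $\supp(\P_X)$. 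The price is that you do not recover the paper's stronger $\norm{\cdot}_\infty$ continuity of $\beta\mapsto f_\beta(\beta^{1/q}\odot\cdot)$, but that stronger conclusion is not needed here.
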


\begin{proof}
It suffices to prove that $\beta \mapsto f_\beta(\beta^{1/q} \odot \cdot)$ is continuous w.r.t $\norm{\cdot}_\infty$ topology.
This is true since (i) $f_{\beta}(\beta^{1/q} \odot x) = \langle f_\beta, k(\beta^{1/q} \odot x, \cdot)\rangle_{\H}$,
$(ii)$ $\beta \mapsto f_\beta$ is continuous w.r.t $\norm{\cdot}_\H$, 
and $(iii)$ $\beta \mapsto k(\beta^{1/q} \odot x, \cdot)$ is uniformly (uniform w.r.t $x$) continuous w.r.t $\norm{\cdot}_\H$ topology. 
\end{proof}

\section{Computation of the Gradient $\grad \obj(\beta)$: Proof of Proposition~\ref{proposition:compute-grad-obj}}
\label{sec:analysis-of-the-gradient}
This section substantiates the proof of Proposition~\ref{proposition:compute-grad-obj} in the main text.

\subsection{Notation.}
Recall the objective function 
\begin{equation*}
	\energy(\beta, f) =  \half ~\E[(Y - f(\beta^{1/q} \odot X))^2] + \frac{\lambda}{2} \norm{f}_{\H}^2.
\end{equation*}
To facilitate the proof, we perform a change of variable. Introduce the auxiliary objective 
\begin{equation}
\label{eqn:represent-auxiliary-energy-analytic}
\begin{split}
	\wtilde{\energy}(\beta, f) &=  \half~ \E[(Y - f(X))^2] + \frac{\lambda}{2} \norm{f}_{\H_\beta}^2, \\
	&=  \half \E[(Y - f(X))^2] + \frac{\lambda}{2 (2\pi)^p} \int |\F(f)(\omega)|^2 \cdot \frac{1}{Q_\beta(\omega)} d\omega
\end{split}
\end{equation}
where $\H_\beta$ is the RKHS with kernel $(x, x') \mapsto k(\beta^{1/q} \odot x; \beta^{1/q} \odot x')$
(Section~\ref{sec:H-beta-spaces} gives details on the construction of $\H_\beta$). A simple consequence 
of Proposition~\ref{proposition: Hilbert-beta} yields for $\beta > 0$
\begin{equation}
\label{eqn:auxiliary-energy-variation}
	\obj(\beta) = \min_{f \in \H} \energy(\beta, f) 
		= \min_{f \in \H_\beta} \wtilde{\energy}(\beta, f).
\end{equation}
Let $\wtilde{f}_\beta \in \H_\beta$ denote the minimum of 
$f \mapsto \wtilde{\energy}(\beta, f)$ so that $J(\beta) =  \wtilde{\energy}(\beta, \wtilde{f}_\beta)$. 

\subsection{Main Proof.}
\indent\indent
Lemma~\ref{lemma:envelope-theorem} gives an initial analytic representation of 
$\grad \obj(\beta)$.
\begin{lemma}
\label{lemma:envelope-theorem}
For any $\beta > 0$, $\grad \obj(\beta)$ exists, and satisfies
\begin{equation}
\label{eqn:formula-gradient-first-step}
	\grad  \obj(\beta) =  \frac{\lambda}{(2\pi)^p} \cdot\int |\F(\wtilde{f}_\beta)(\omega)|^2 \cdot \grad_\beta \left(\frac{1}{Q_\beta(\omega)}\right) d\omega.
\end{equation}
\end{lemma}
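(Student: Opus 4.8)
The plan is to deduce Lemma~\ref{lemma:envelope-theorem} from an envelope (Danskin-type) argument applied to the auxiliary variational identity $\obj(\beta) = \min_{f \in \H_\beta} \wtilde\energy(\beta, f)$ of \eqref{eqn:auxiliary-energy-variation}. The reason for passing to $\wtilde\energy$ is that, for $\beta > 0$, all the spaces $\H_\beta$ coincide as sets of functions (only the norm varies with $\beta$), so the minimization runs over a \emph{fixed} domain, and --- holding $f$ fixed --- the entire $\beta$-dependence of $\wtilde\energy(\beta, f)$ is carried by the single factor $1/Q_\beta(\omega)$ in the regularizer of \eqref{eqn:represent-auxiliary-energy-analytic}; in particular the data-fit term $\frac12\E[(Y-f(X))^2]$ is $\beta$-free. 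Since $\wtilde f_\beta$ is the \emph{unique} minimizer, the first variation of $f \mapsto \wtilde\energy(\beta, f)$ vanishes at $\wtilde f_\beta$, so differentiating $\obj(\beta) = \wtilde\energy(\beta, \wtilde f_\beta)$ in $\beta$ should annihilate the contribution of the ``moving minimizer'' and leave only the explicit partial derivative $\grad_\beta \wtilde\energy(\beta, f)\big|_{f = \wtilde f_\beta}$, which equals a constant multiple of $\int |\F(\wtilde f_\beta)(\omega)|^2 \,\grad_\beta(1/Q_\beta(\omega))\, d\omega$. Tracking the normalization of $\norm{\cdot}_{\H_\beta}$ then gives precisely \eqref{eqn:formula-gradient-first-step}.

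To make the envelope step rigorous \emph{without} differentiating $\beta \mapsto \wtilde f_\beta$, I would run the usual two-sided squeeze. Fix a coordinate $l$ and $\beta > 0$, and set $\beta' = \beta + \epsilon e_l$ for small $\epsilon$ ($e_l$ the $l$-th unit vector). Feasibility of $\wtilde f_\beta$ in the $\beta'$-problem yields $\obj(\beta') - \obj(\beta) \le \wtilde\energy(\beta', \wtilde f_\beta) - \wtilde\energy(\beta, \wtilde f_\beta)$, while optimality of $\wtilde f_\beta$ in the $\beta$-problem yields $\obj(\beta') - \obj(\beta) \ge \wtilde\energy(\beta', \wtilde f_{\beta'}) - \wtilde\energy(\beta, \wtilde f_{\beta'})$; applying the mean value theorem in $\beta$ to each side and dividing by $\epsilon$ sandwiches the difference quotient between $\partial_{\beta_l}\wtilde\energy(\bar\beta, \wtilde f_{\beta'})$ and $\partial_{\beta_l}\wtilde\energy(\tilde\beta, \wtilde f_\beta)$ for points $\bar\beta, \tilde\beta$ on the segment $[\beta, \beta']$. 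Letting $\epsilon \to 0$ and invoking (a) continuity of $\beta \mapsto \wtilde f_\beta$ in a suitable topology --- available from Proposition~\ref{prop:continuity-f-beta} together with the rescaling $\wtilde f_\beta(x) = f_\beta(\beta^{-1/q}\odot x)$ --- and (b) joint continuity of $(\beta, f) \mapsto \partial_{\beta_l}\wtilde\energy(\beta, f)$ then forces both the existence of $\partial_{\beta_l}\obj(\beta)$ and the identity $\partial_{\beta_l}\obj(\beta) = \partial_{\beta_l}\wtilde\energy(\beta, \wtilde f_\beta)$; since $\partial_{\beta_l}\wtilde\energy(\beta, f) = (\mathrm{const})\int |\F(f)(\omega)|^2 \partial_{\beta_l}(1/Q_\beta(\omega))\, d\omega$, this is \eqref{eqn:formula-gradient-first-step}. (An alternative is to show $\beta \mapsto \wtilde f_\beta$ is $C^1$ by the implicit function theorem applied to $f_\beta = (\Sigma_\beta + \lambda I)^{-1} h_\beta$ (Proposition~\ref{prop:kernel-beta-fix}) plus the rescaling, then chain-rule directly, the $\partial_f$-term vanishing by optimality; either way the same analytic facts are needed.)

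The main obstacle is supplying those two analytic facts --- differentiation under the integral sign defining $\partial_{\beta_l}\wtilde\energy(\beta, f)$, and the continuity in (b) --- because both require an \emph{improved Fourier-decay estimate} for the KRR solution. Concretely, since $Q_\beta$ is built from the dilated Cauchy (resp.\ Gaussian) densities of \eqref{eqn:notation-of-q-t-p-t}, the logarithmic derivative $\partial_{\beta_l} Q_\beta(\omega)/Q_\beta(\omega)$ grows like $1 + \omega_l^2$ (resp.\ stays bounded) uniformly for $t \in \supp(\mu)$ under Assumption~\ref{assumption:mu-compact}, so $|\F(f)(\omega)|^2 |\partial_{\beta_l}(1/Q_\beta(\omega))| \lesssim |\F(f)(\omega)|^2 (1 + \omega_l^2)/Q_\beta(\omega)$; a generic $g \in \H_\beta$ only satisfies $\int |\F(g)|^2/Q_\beta\, d\omega < \infty$, so one must establish the \emph{strictly stronger} bound $\int |\F(\wtilde f_\beta)(\omega)|^2 (1 + \omega_l^2)/Q_\beta(\omega)\, d\omega < \infty$, with control uniform for $\beta$ near the base point. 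This extra smoothness of $\wtilde f_\beta$ is exactly the harmonic-analytic input flagged in the remark following Proposition~\ref{proposition:compute-grad-obj}; I would extract it from $f_\beta = (\Sigma_\beta + \lambda I)^{-1} h_\beta$ by analyzing the regularizing effect of $(\Sigma_\beta + \lambda I)^{-1}$ and the decay of $\F(h_\beta)$ on the Fourier side (using that $X$ has compact support and $\E[Y^2] < \infty$), after which dominated convergence delivers both facts and the leading constant is pinned down by $\norm{f}_{\H_\beta}^2 = (2\pi)^{-p}\int |\F(f)(\omega)|^2/Q_\beta(\omega)\, d\omega$.
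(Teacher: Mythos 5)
Your proposal matches the paper's own proof in all of its essential moves: pass to the auxiliary objective $\wtilde{\energy}(\beta,f)$ so that the $\beta$-dependence sits entirely in the regularizer weight $1/Q_\beta(\omega)$, run a two-sided Danskin squeeze using feasibility and optimality of $\wtilde f_\beta$ plus the mean-value theorem, and reduce everything to an improved Fourier-decay estimate on $\wtilde f_\beta$ strong enough to justify differentiation under the integral and the continuity needed to pass to the limit. The paper packages the analytic facts into Lemma~\ref{lemma:technical-only}, whose heart is precisely the uniform integrability claim you flag, and it obtains the decay estimate via Lemma~\ref{lemma:KKT-condition}: plugging (mollified) complex exponentials into the first-order optimality condition of $\wtilde{\energy}(\beta,\cdot)$ yields $\F(\wtilde f_\beta)(\omega) = \tfrac{(2\pi)^{p/2}}{\lambda}\,\E[\wtilde r_\beta(X;Y)e^{i\langle\omega,X\rangle}]\,Q_\beta(\omega)$ a.e., so $|\F(\wtilde f_\beta)|/Q_\beta$ is uniformly bounded by $\normP{Y}/\lambda$ and the required weighted integral is finite by Lemma~\ref{lemma:uniform-integrability-of-Q-beta}. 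Your suggested route through $f_\beta = (\Sigma_\beta + \lambda I)^{-1}h_\beta$ is a restatement of that same optimality condition, so this is not a different approach, just a less explicit rendering of the key identity.

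One small inaccuracy in your setup: the claim that ``for $\beta>0$ all the spaces $\H_\beta$ coincide as sets'' is true for $q=1$ (the Cauchy weights are comparable up to constants under coordinate rescaling) but is false for $q=2$, where $Q_\beta$ has Gaussian tails whose decay rates change with $\beta$, so the sets $\H_\beta$ genuinely move. Fortunately your argument never actually uses set-equality; what it needs --- and what the paper supplies --- is the local uniform integrability of $|\F(\wtilde f_{\beta'})|^2\,\grad_\beta(1/Q_{\beta''})$ over $\beta',\beta''$ near $\beta$, which in particular guarantees $\wtilde\energy(\beta',\wtilde f_\beta)<\infty$ for $\beta'$ near $\beta$. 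So drop the set-coincidence claim and keep the integrability estimate as the load-bearing input, and your proof is the paper's proof.
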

As explained in the main text, 
in order for one to apply the ``envelope theorem'' to the variational formula $\obj(\beta) =  
\min_{f \in \H_\beta} \wtilde{\energy}(\beta, f)$. 
one needs to establish that the solution $\wtilde{f}_\beta$ is a sufficiently \emph{smooth} function (e.g., it 
is sufficiently smooth so that the RHS of equation~\eqref{eqn:formula-gradient-first-step} exists). To 
achieve this goal, Lemma~\ref{lemma:KKT-condition} is crucial. Write $\wtilde{r}_\beta(x; y) = y - \wtilde{f}_\beta(x)$.


\begin{lemma}
\label{lemma:KKT-condition}
Let $\beta > 0$. The identity below holds for almost all $\omega$ (w.r.t Lebesgue measure)
\begin{equation}
\label{eqn:KKT-feature-bar}
	\frac{1}{(2\pi)^{p/2}} \cdot \F(\wtilde{f}_\beta)(\omega) 
		= \frac{1}{\lambda} \cdot \E\left[\wtilde{r}_\beta(X; Y) e^{i \langle \omega, X \rangle}\right] \cdot Q_\beta(\omega).
\end{equation}
Equivalently, the following identity holds for almost all $\omega$ (w.r.t Lebesgue measure)
\begin{equation}
\label{eqn:KKT-feature-bar-o}
	\frac{1}{(2\pi)^{p/2}} \cdot \F(f_\beta)(\omega) = \frac{1}{\lambda} \cdot 
		\E\left[r_\beta(\beta^{1/q}\odot X; Y) e^{i \langle \omega, \beta^{1/q} \odot X \rangle}\right] \cdot Q(\omega).
\end{equation}
\end{lemma}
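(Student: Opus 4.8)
The plan is to read off \eqref{eqn:KKT-feature-bar} from the first-order optimality condition for $\wtilde{f}_\beta$, by testing it against reproducing kernels and passing to the frequency domain; the ``equivalently'' identity \eqref{eqn:KKT-feature-bar-o} then follows by the change of variables $x\mapsto\beta^{1/q}\odot x$. Concretely, since $f\mapsto\wtilde{\energy}(\beta,f)$ is strongly convex on $\H_\beta$ (its quadratic part dominates $\frac{\lambda}{2}\norm{f}_{\H_\beta}^2$), the minimizer $\wtilde{f}_\beta$ is unique and satisfies the stationarity relation $\E[\wtilde{r}_\beta(X;Y)g(X)]=\lambda\langle\wtilde{f}_\beta,g\rangle_{\H_\beta}$ for every $g\in\H_\beta$ --- the $\H_\beta$-analogue of Proposition~\ref{proposition:KKT-condition}, to which it is in fact equivalent under the very change of variables above. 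I would then take $g=k_\beta(z,\cdot)$, where $k_\beta(x,x')\defeq k(\beta^{1/q}\odot x,\beta^{1/q}\odot x')=\kappa_\beta(x-x')$ is the (translation-invariant) reproducing kernel of $\H_\beta$, with $\kappa_\beta(w)=h(\sum_i\beta_i|w_i|^q)$. By the reproducing property $\langle\wtilde{f}_\beta,k_\beta(z,\cdot)\rangle_{\H_\beta}=\wtilde{f}_\beta(z)$ this produces the pointwise identity
\[
\wtilde{f}_\beta(z)=\frac{1}{\lambda}\,\E[\wtilde{r}_\beta(X;Y)\,\kappa_\beta(z-X)],\qquad z\in\R^p,
\]
in which both sides are continuous and $\wtilde{r}_\beta$ has finite first moment (Proposition~\ref{prop:second-moment-bound} together with $\norm{g}_\infty\le|h(0)|^{1/2}\norm{g}_{\H_\beta}$).

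Next I would Fourier transform this identity in the variable $z$. Since $\wtilde{f}_\beta\in\H_\beta$, Proposition~\ref{proposition: Hilbert-beta} gives $\int|\F(\wtilde{f}_\beta)(\omega)|^2/Q_\beta(\omega)\,d\omega<\infty$, and because $\int Q_\beta(\omega)\,d\omega=\mu([0,\infty))<\infty$ a Cauchy--Schwarz argument yields $\F(\wtilde{f}_\beta)\in L^1\cap L^2$, so $\F(\wtilde{f}_\beta)$ is a genuine function to which Fourier inversion applies. Exchanging $\F_z$ and $\E_X$ by Fubini and evaluating the translation $\F_z[\kappa_\beta(\cdot-X)](\omega)=e^{-i\langle\omega,X\rangle}\F(\kappa_\beta)(\omega)$, I obtain $\F(\wtilde{f}_\beta)(\omega)=\tfrac{1}{\lambda}\F(\kappa_\beta)(\omega)\E[\wtilde{r}_\beta(X;Y)e^{-i\langle\omega,X\rangle}]$. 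The one remaining point is $\F(\kappa_\beta)(\omega)=(2\pi)^{p/2}Q_\beta(\omega)$: imposing the reproducing property on the kernel of $\H_\beta$ forces this, and it may also be checked directly from $\F(e^{-t\norm{\cdot}_q^q})(\omega)=(2\pi)^{p/2}q_t(\omega)$ --- with $q_t$ the product of Cauchy ($q=1$) or Gaussian ($q=2$) densities of Proposition~\ref{proposition:norm-of-H} --- followed by integration against $\mu$. This gives \eqref{eqn:KKT-feature-bar}; the sign of the exponential there is a matter of the Fourier convention of Table~\ref{table:notation} and, $\wtilde{r}_\beta$ being real, is immaterial for the $|\cdot|^2$-type quantities in which the identity is subsequently used.

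For the equivalent form \eqref{eqn:KKT-feature-bar-o} I would invoke \eqref{eqn:auxiliary-energy-variation}: the change of variables $x\mapsto\beta^{1/q}\odot x$ underlying it identifies the two minimizers via $\wtilde{f}_\beta(x)=f_\beta(\beta^{1/q}\odot x)$, hence $\wtilde{r}_\beta(x;y)=r_\beta(\beta^{1/q}\odot x;y)$. Substituting these into \eqref{eqn:KKT-feature-bar} and carrying out the same change of variables inside the Fourier integral --- which rescales $\F(\wtilde{f}_\beta)(\omega)=(\prod_i\beta_i^{-1/q})\F(f_\beta)(\beta^{-1/q}\odot\omega)$ and, likewise, $Q_\beta(\omega)=(\prod_i\beta_i^{-1/q})Q(\beta^{-1/q}\odot\omega)$ --- and then relabeling via $\omega\mapsto\beta^{1/q}\odot\omega$ produces \eqref{eqn:KKT-feature-bar-o}. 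Alternatively, the two preceding steps can be rerun verbatim starting directly from Proposition~\ref{proposition:KKT-condition} and testing with $g=k(u,\cdot)\in\H$.

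The delicate point, and the one I expect to cost the most work, is making the Fourier manipulations of the second step rigorous when $\kappa_\beta\notin L^1(\R^p)$ --- then the Fubini exchange, and even the reading of $\F(\kappa_\beta)$ as an honest function, are not immediate. This happens precisely for $\ell_1$ kernels whose spectral measure $\mu$ charges neighbourhoods of the scale $t=0$: Assumption~\ref{assumption:mu-compact} guarantees only $\int_0^\infty t^{-1}\mu(dt)<\infty$, whereas $\kappa_\beta\in L^1$ is equivalent to $\int_0^\infty t^{-p}\mu(dt)<\infty$. I would handle this by truncating, $\mu_\eps\defeq\mu|_{[\eps,M_\mu]}$: the associated $\kappa_\beta^{(\eps)}$ lies in $L^1\cap L^2$, so all the steps above are classical and \eqref{eqn:KKT-feature-bar} holds with $\mu_\eps$ in place of $\mu$. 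Then, as $\eps\downarrow 0$, $\kappa^{(\eps)}\uparrow\kappa$ uniformly, whence $\wtilde{f}_\beta^{(\eps)}\to\wtilde{f}_\beta$ in $\H_\beta$ and uniformly by the continuity arguments of Section~\ref{sec:preliminary-appendix} (applied to convergence in $\mu$ rather than in $\beta$), while $Q_\beta^{(\eps)}\uparrow Q_\beta$ pointwise and in $L^1$; passing to the limit in $L^1$ on both sides of the $\mu_\eps$-identity yields \eqref{eqn:KKT-feature-bar} for almost every $\omega$. A cleaner route avoids the pointwise statement entirely and establishes \eqref{eqn:KKT-feature-bar} as an identity of tempered distributions --- pairing both sides with an arbitrary Schwartz function and invoking the variational relation with $g$ the convolution of $\kappa_\beta$ against that test function --- which removes the integrability concern altogether.
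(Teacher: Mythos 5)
Your proposal is correct in outline but takes a genuinely different route from the paper's. The paper never tests against the kernel: instead it works directly at the level of a fixed frequency $\omega$, inserting the \emph{mollified exponential} $\wtilde{g}_{\omega,\eps}(x)=e^{i\omega^Tx}\prod_i\frac{\sin(\eps x_i)}{\eps x_i}$ into the variational identity --- this lies in $\H_\beta$ because its Fourier transform is compactly supported --- and then sends $\eps\to0^+$, controlling the left side by dominated convergence and the right side by Lebesgue's almost-everywhere differentiation theorem applied to $\F(\wtilde{f}_\beta)/Q_\beta$. Your approach instead tests with $k_\beta(z,\cdot)$ to obtain the representer identity $\wtilde{f}_\beta=\frac{1}{\lambda}\kappa_\beta*\nu$ (with $\nu(dx)=\E[\wtilde{r}_\beta(x;Y)\mid X=x]\P_X(dx)$) and then Fourier-transforms. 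The two routes trade different technical burdens: the paper pays with the mollification limit and Lebesgue differentiation (which is exactly why the result is only almost-everywhere), while you pay with the integrability of $\kappa_\beta$ needed to justify the Fubini/convolution step --- an issue you correctly flag and which the paper's Assumption~\ref{assumption:mu-compact}(i) does \emph{not} rule out for $\ell_1$ kernels when $p>1$, since $\kappa_\beta\in L^1(\R^p)$ requires $\int t^{-p}\mu(dt)<\infty$ rather than $\int t^{-1}\mu(dt)<\infty$.

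Your truncation fix is, as stated, not quite airtight: the object $\wtilde{f}_\beta^{(\eps)}$ is the minimizer in the \emph{strictly smaller} RKHS $\H_\beta^{(\eps)}\subsetneq\H_\beta$ carrying a \emph{stronger} norm, so the convergence $\wtilde{f}_\beta^{(\eps)}\to\wtilde{f}_\beta$ is not a direct reuse of the continuity-in-$\beta$ argument of Section~\ref{sec:preliminary-appendix} (there the ambient space is fixed and only $\Sigma_\beta,h_\beta$ move; here the space itself changes, and the operators $\Sigma_\beta^{(\eps)}$ do not act on $\H_\beta$). This could be repaired, e.g.\ by rephrasing everything over $L^2(\P_X)$, but it would take more than an appeal to existing lemmas. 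The tempered-distribution route you sketch in the final sentence is the right fix and in fact closes the gap with no extra machinery: pairing both sides against $\F(\phi)$ for $\phi\in\mathcal{S}(\R^p)$, Fubini needs only that $\kappa_\beta$ is bounded, $\E|\wtilde{r}_\beta|<\infty$, and $\F(\phi)\in L^1$, and the only Fourier transform that must be a genuine $L^1$ function is $\F(\kappa_\beta)=(2\pi)^{p/2}Q_\beta$ --- which \emph{is} integrable precisely because of Assumption~\ref{assumption:mu-compact}(i). So the proposal is sound provided the distributional argument, rather than the truncation, is used as the actual proof.
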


\begin{remark}
The main idea to prove Lemma~\ref{lemma:KKT-condition} is to use the characterization of 
$\wtilde{f}_\beta$: 
\begin{equation*}
	\E[\wtilde{r}_\beta(X; Y)\wtilde{g}(X)] =  \lambda \langle \wtilde{f}_\beta, \wtilde{g}\rangle_{\H_\beta}
		~~\text{for all $g \in \H_\beta$}
\end{equation*}
which can be derived by taking the first order variation of the objective $f \mapsto \wtilde{\energy}(\beta, f)$.
We then wish to substitute $\wtilde{g}(x) = e^{i\omega^T x}$ and obtain Lemma~\ref{lemma:KKT-condition}.
The challenge that remains is that the complex basis function $x \mapsto e^{i\omega^T x}$ does not belong to $\H_\beta$.
As a result, we apply the mollifier trick---common in harmonic analysis---to overcome this technical issue. 
\end{remark}

%
%
%
%

Back to the proof of Proposition~\ref{proposition:compute-grad-obj}.
By Lemma~\ref{lemma:envelope-theorem} and Lemma~\ref{lemma:KKT-condition}, we obtain for $\beta > 0$
\begin{equation}
\label{eqn:grad-eval-beta->0}
\begin{split}
\grad \obj(\beta) &=  \frac{\lambda}{(2\pi)^p} \int |\F(\wtilde{f}_\beta)(\omega)|^2 \cdot \grad_\beta \left(\frac{1}{Q_\beta(\omega)}\right) d\omega. \\
&= - \frac{1}{\lambda} \cdot 
	\int \left|\E[\wtilde{r}_\beta(X; Y) e^{i \omega^T X}]\right|^2  \cdot \grad_\beta Q_{\beta}(\omega) d\omega \\
&=  - \frac{1}{\lambda} \cdot 
		\E\left[\wtilde{r}_\beta(X; Y) \wtilde{r}_\beta(X'; Y')\int  e^{i \langle\omega, X-X' \rangle} \cdot \nabla_\beta Q_{\beta}(\omega) d\omega\right]  
\end{split}
\end{equation}
The next lemma evaluates the integral inside the expectation. 
\begin{lemma}
\label{lemma:technical}
For all $\beta > 0$, we have the identity that holds for $l \in [p]$: 
\begin{equation*}
h^\prime(\norm{X-X'}_{q, \beta}^q) \cdot |X_l - X_l'|^q = 
	 \int e^{i \langle\omega, X-X' \rangle} \cdot (\partial_{\beta_l}  Q_{\beta}(\omega)) d\omega.
\end{equation*}
\end{lemma}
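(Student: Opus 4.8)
The plan is to obtain the identity by differentiating, in $\beta_l$, the Fourier representation of the translation‑invariant kernel attached to $\H_\beta$. First I would record that for every $z\in\R^p$
\begin{equation*}
h\big(\norm{z}_{q,\beta}^q\big)=\int_{\R^p} e^{i\langle\omega,z\rangle}\,Q_\beta(\omega)\,d\omega .
\end{equation*}
This is the $\H_\beta$–analogue of Proposition~\ref{proposition:norm-of-H} (Proposition~\ref{proposition: Hilbert-beta}, Section~\ref{sec:H-beta-spaces}); to make the later estimates transparent I would also unwind it through $h(w)=\int_0^\infty e^{-tw}\mu(dt)$: for each $t>0$,
\begin{equation*}
e^{-t\norm{z}_{q,\beta}^q}=\prod_{i=1}^p e^{-t\beta_i|z_i|^q}=\int_{\R^p}e^{i\langle\omega,z\rangle}\,q_t^\beta(\omega)\,d\omega,\qquad q_t^\beta(\omega)=\prod_{i=1}^p\psi_{t\beta_i}(\omega_i),
\end{equation*}
where $\psi_s$ is the scale‑$s$ Cauchy density when $q=1$ and the correspondingly scaled Gaussian density when $q=2$, i.e.\ the densities $\psi$ of Proposition~\ref{proposition:norm-of-H}. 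Since $\mu(\{0\})=0$ by Assumption~\ref{assumption:mu-compact}(i), Tonelli then gives $Q_\beta(\omega)=\int_0^\infty q_t^\beta(\omega)\,\mu(dt)$ and the displayed identity.

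Next I would differentiate this identity in $\beta_l$ for fixed $\beta>0$ and fixed $z\in\R^p$. On the left, the chain rule (using $h\in\mathcal C^\infty[0,\infty)$) gives $\partial_{\beta_l}h(\norm{z}_{q,\beta}^q)=h'(\norm{z}_{q,\beta}^q)\,|z_l|^q$. On the right, once the interchange of $\partial_{\beta_l}$ with $\int d\omega$ is justified, one gets $\int e^{i\langle\omega,z\rangle}\partial_{\beta_l}Q_\beta(\omega)\,d\omega$. Equating the two and then specializing $z=X-X'$ — legitimate because the identity holds for \emph{every} $z\in\R^p$ — produces exactly the claim, since $|z_l|^q=|X_l-X_l'|^q$.

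The real work is the differentiation under the integral sign, and this is where Assumption~\ref{assumption:mu-compact} and the restriction $\beta>0$ are used. Differentiating $q_t^\beta$ yields
$\partial_{\beta_l}q_t^\beta(\omega)=\big(\prod_{i\neq l}\psi_{t\beta_i}(\omega_i)\big)\cdot t\,\dot\psi_{t\beta_l}(\omega_l)$, where $\dot\psi_s$ is the derivative of the scale‑parameter map $s\mapsto\psi_s$. The key estimate is that, for $q=1$,
\begin{equation*}
\int_{\R^p}\big|\partial_{\beta_l}q_t^\beta(\omega)\big|\,d\omega=\Big(\prod_{i\neq l}\int_\R\psi_{t\beta_i}(\omega_i)\,d\omega_i\Big)\cdot t\int_\R\big|\dot\psi_{t\beta_l}(\omega_l)\big|\,d\omega_l=\frac{2}{\pi\beta_l},
\end{equation*}
using that each $\psi_{t\beta_i}$ integrates to $1$ and the direct computation $\int_\R|\dot\psi_s|\,d\omega=\tfrac{2}{\pi s}$; an analogous $O(1/\beta_l)$ bound holds for $q=2$. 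This bound is uniform in $t$ and locally uniform in $\beta$ on compact subsets of $(0,\infty)^p$, so combined with $\mu$ being finite with compact support (Assumption~\ref{assumption:mu-compact}(i)–(ii)) it gives $\partial_{\beta_l}q_t^\beta\in L^1(\mu(dt)\otimes d\omega)$. Fubini–Tonelli then yields $\partial_{\beta_l}Q_\beta(\omega)=\int_0^\infty\partial_{\beta_l}q_t^\beta(\omega)\,\mu(dt)$ with $\partial_{\beta_l}Q_\beta\in L^1(d\omega)$, and since $|e^{i\langle\omega,z\rangle}|=1$ the standard differentiation‑under‑the‑integral criterion justifies $\partial_{\beta_l}\int e^{i\langle\omega,z\rangle}Q_\beta\,d\omega=\int e^{i\langle\omega,z\rangle}\partial_{\beta_l}Q_\beta\,d\omega$. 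The hypothesis $\beta>0$ is essential: the bound $2/(\pi\beta_l)$ degenerates as $\beta_l\downarrow 0$, which is exactly why this lemma and \eqref{eqn:grad-eval-beta->0} are asserted only for $\beta>0$; for $q=2$ one additionally invokes $0\notin\supp(\mu)$ (Assumption~\ref{assumption:mu-compact}(iii)), which forces $h$, and hence $Q_\beta$ and $\partial_{\beta_l}Q_\beta$, to decay exponentially and so supplies the $\omega$‑integrability used above. The main obstacle is thus not the algebra but securing these uniform integrability bounds on $\partial_{\beta_l}q_t^\beta$ and transporting them cleanly through the two nested integrals (over $t$ and over $\omega$).
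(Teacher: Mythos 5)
Your overall strategy matches the paper's: both proofs start from the Fourier identity $h(\norm{z}_{q,\beta}^q)=\int e^{i\langle\omega,z\rangle}Q_\beta(\omega)\,d\omega$, differentiate in $\beta_l$, and justify the exchange of $\partial_{\beta_l}$ with $\int d\omega$ by a dominated-convergence argument using Assumption~\ref{assumption:mu-compact} and $\beta>0$. Your per-$t$ decomposition via $q_t^\beta(\omega)=\prod_i\psi_{t\beta_i}(\omega_i)$ and the exact computation $\int|\partial_{\beta_l}q_t^\beta(\omega)|\,d\omega=\tfrac{2}{\pi\beta_l}$ (uniform in $t$) for $q=1$ is a nice, explicit variant of the paper's estimate; the paper instead records the \emph{pointwise} bounds $|\partial_{\beta_l}Q_\beta(\omega)|\le\tfrac{1}{\beta_l}Q_\beta(\omega)$ for $q=1$ and $|\partial_{\beta_l}Q_\beta(\omega)|\lesssim Q_\beta(\omega)(1+\omega_l^2)$ for $q=2$, then checks integrability of $\sup_{\beta\in B}(\cdot)$.

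The one genuine issue is that your justification for the interchange is not quite closed. Having $\partial_{\beta_l}Q_\beta\in L^1(d\omega)$ with $L^1$-norm locally bounded in $\beta$ is \emph{not} the hypothesis of the standard differentiation-under-the-integral-sign criterion: that criterion requires a single $g\in L^1(d\omega)$ dominating $|\partial_{\beta_l}Q_{\beta'}(\omega)|$ pointwise in $\omega$, uniformly over $\beta'$ in a neighborhood, so that the difference quotients $\tfrac{1}{h}(Q_{\beta+he_l}-Q_\beta)$ are dominated via the mean value theorem. Controlling $\norm{\partial_{\beta_l}q_t^\beta}_{L^1(d\omega)}$ alone does not produce such a $g$. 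Fortunately the gap is minor and your own calculation essentially contains the fix: for $q=1$, $|\partial_{\beta_l}q_t^\beta(\omega)|=\bigl(\prod_{i\ne l}\psi_{t\beta_i}(\omega_i)\bigr)\,t\,|\dot\psi_{t\beta_l}(\omega_l)|\le\tfrac{1}{\beta_l}\,q_t^\beta(\omega)$ (since $t|\dot\psi_{t\beta_l}(\omega_l)|\le\tfrac{1}{\beta_l}\psi_{t\beta_l}(\omega_l)$ from $|\omega^2-s^2|\le\omega^2+s^2$), and then $\sup_{\beta\in B}q_t^\beta(\omega)$ is integrable over $\omega$ and $t$ for compact $B\subset(0,\infty)^p$; a parallel pointwise bound (with an extra $(1+\omega_l^2)$ and using $0\notin\supp(\mu)$) works for $q=2$. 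With that pointwise domination in place, your argument is complete and coincides in substance with the paper's.
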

By Lemma~\ref{lemma:technical} and equation~\eqref{eqn:grad-eval-beta->0}, we obtain for all $\beta > 0$ and $l \in [p]$
\begin{equation}
\label{eqn:grad-form}
\begin{split}
	(\grad \obj(\beta))_l &= -\frac{1}{\lambda} \cdot \E\left[\wtilde{r}_\beta(X; Y) \wtilde{r}_\beta(X'; Y')h^\prime(\norm{X-X'}_{q, \beta}^q) |X_l - X_l'|^q \right]  \\
		 & = -\frac{1}{\lambda} \cdot \E\left[r_\beta(\beta^{1/q}\odot X; Y)r_\beta(\beta^{1/q}\odot X'; Y') h^\prime(\norm{X-X'}_{q, \beta}^q) |X_l - X_l'|^q\right].
\end{split}
\end{equation}

To extend the result from positive $\beta > 0$ to 
non-negative $\beta \ge 0$, we use Lemma~\ref{lemma:derivative-limit-math-analysis}.
\begin{lemma}
\label{lemma:derivative-limit-math-analysis}
Let $F: \R_+ \mapsto \R$ be continuous. Suppose $F$ is differentiable on $x > 0$, and $\lim_{x \to 0^+} F'(x)$ exists. Then, 
$F'_+(0)$ exists and $F'_+(0) = \lim_{x \to 0^+} F'(x)$.
\end{lemma}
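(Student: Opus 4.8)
The plan is to read off the statement directly from the Mean Value Theorem. Set $L \defeq \lim_{x\to 0^+} F'(x)$, which exists by hypothesis; the goal is to show that the one-sided difference quotient $\frac{F(h)-F(0)}{h}$ tends to $L$ as $h \to 0^+$, which is precisely the assertion that $F'_+(0)$ exists and equals $L$.

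First I would fix $h > 0$ and apply the Mean Value Theorem to $F$ on the closed interval $[0,h]$. This is legitimate: $F$ is continuous on $[0,h]$ because $F$ is assumed continuous on all of $\R_+$ (in particular at the endpoint $0$), and $F$ is differentiable on the open interval $(0,h)\subseteq(0,\infty)$ by hypothesis. Hence there exists a point $\xi_h \in (0,h)$ with
\begin{equation*}
	\frac{F(h)-F(0)}{h} = F'(\xi_h).
\end{equation*}
Next I would let $h \to 0^+$: since $0 < \xi_h < h$, we have $\xi_h \to 0^+$, and because $F'(x) \to L$ as $x \to 0^+$ this forces $F'(\xi_h) \to L$. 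Combining the two displays gives $\lim_{h\to 0^+} \frac{F(h)-F(0)}{h} = L$, i.e.\ $F'_+(0)$ exists and equals $\lim_{x\to 0^+} F'(x)$, as claimed.

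There is essentially no obstacle in this argument; the only point deserving a moment's attention is the asymmetry in the hypotheses — continuity is required up to and including $0$ (needed to invoke the Mean Value Theorem with $0$ as an endpoint), whereas differentiability is only needed on the open half-line $(0,\infty)$ — and the statement has been set up so that this is exactly what is available. This is why the lemma is the right tool to pass from the gradient formula on $\{\beta>0\}$ to the claimed right-derivative formula at coordinates where $\beta_l=0$: continuity of $\beta\mapsto\obj(\beta)$ (Proposition~\ref{prop:continuity-f-beta}) supplies the endpoint continuity, and the explicit expression~\eqref{eqn:grad-form} supplies the limit of the derivative.
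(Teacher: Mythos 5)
Your proof is correct, and it is the standard Mean Value Theorem argument for this elementary fact about one-sided derivatives. The paper does not actually supply a proof of this lemma (it is stated and used as a known real-analysis fact), so there is no alternative route to compare against; your MVT argument, with the careful observation that continuity is needed at the endpoint $0$ while differentiability is only needed on the open interval, is exactly the canonical proof one would expect.
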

Recall that $\beta \mapsto \obj(\beta)$ is continuous for $\beta \ge 0$ (Proposition~\ref{prop:continuity-f-beta}). 
Also, the mapping 
\begin{equation*}
\beta \mapsto -\frac{1}{\lambda} \cdot \E\left[r_\beta(\beta^{1/q}\odot X; Y)r_\beta(\beta^{1/q}\odot X'; Y') h'(\norm{X-X'}_{q, \beta}^q)|X_l- X_l'|^q\right]
\end{equation*}
is continuous for $\beta \ge 0$ (Proposition~\ref{prop:second-moment-bound} and~\ref{prop:continuity-f-beta-beta-X}). 
Hence, equation~\eqref{eqn:grad-form} holds for all $\beta \ge 0$. 

\newcommand{\convv}{*}
\newcommand{\one}{\mathbf{1}}
\newcommand{\loc}{{\rm loc}}

\subsection{Proof of Lemma~\ref{lemma:KKT-condition}.}
It suffices to prove equation~\eqref{eqn:KKT-feature-bar}. Note that 
equation~\eqref{eqn:KKT-feature-bar-o} follows by a change 
of variable (by substituting $\beta^{1/q} \odot \omega$ into $\omega$ in equation~\eqref{eqn:KKT-feature-bar}).

Below we prove equation~\eqref{eqn:KKT-feature-bar}.
By taking the first order variation of $f \mapsto \wtilde{\energy}(\beta, f)$, 
we obtain the following characterization of $\wtilde{f}_\beta$: for all $\wtilde{g} \in \H_\beta$
\begin{equation}
\label{eqn:KKT-cond-bar}
	\E[\wtilde{r}_\beta(X; Y)\wtilde{g}(X)] =  \lambda \langle \wtilde{f}_\beta, \wtilde{g}\rangle_{\H_\beta}. 
\end{equation}
As a result, using Corollary~\ref{corrolary:norm-of-H} to expand the RHS, this proves that for all 
functions $\wtilde{g} \in \H_\beta$: 
\begin{equation}
\label{eqn:KKT-cond-bar-analytic}
	\E[\wtilde{r}_\beta(X; Y)\wtilde{g}(X)] =  
		\frac{\lambda}{(2\pi)^{p}} \int \frac{\F(\wtilde{f}_\beta)(\omega') \F(\wtilde{g})(\omega')}{Q_\beta(\omega')} d\omega'.
\end{equation}

To motivate the rest of the proof, we first give a quick \emph{heuristic} derivation of Lemma~\ref{lemma:KKT-condition}.
Let $\wtilde{g}_{\omega}(x) = e^{i \omega^T x}$.
The idea is to substitute $\wtilde{g} = \wtilde{g}_{\omega}$ into equation~\eqref{eqn:KKT-cond-bar-analytic}. 
To compute the RHS, $\F(\wtilde{g}_{\omega})(\cdot)= (2\pi)^{p/2} \delta_{\omega}(\cdot)$ where 
$\delta_\omega$ is the $\delta$-function centered at $\omega$. This gives  
\begin{equation*}
	\E\left[\wtilde{r}_\beta(X; Y) e^{i\omega^T x}\right] 
		= \frac{\lambda}{(2\pi)^p}\int \frac{\F(\wtilde{f}_\beta)(\omega') \F(\wtilde{g}_\omega)(\omega')}{Q_\beta(\omega')} d\omega'
		= \frac{\lambda}{(2\pi)^{p/2}} \cdot \frac{\F(\wtilde{f}_\beta)(\omega)}{Q_\beta(\omega)}. 
\end{equation*}
Of course, the above derivation is not rigorous. The function $\wtilde{g}_{\omega}(x) = e^{i \omega^T x}$ 
lacks regularity and does not belong to $\H_\beta$. To obtain a rigorous treatment, we need to smooth 
$\wtilde{g}_{\omega}$ and borrow the regularity $\E[Y^2] < \infty$ to overcome this technical issue.

Below is the rigorous derivation. Define, for any $\eps > 0$, the function $\wtilde{g}_{\omega, \eps}$ by
\begin{equation*}
	\wtilde{g}_{\omega, \eps}(x) = \wtilde{g}_{\omega}(x) \cdot k_{\eps}(x)~~\text{where $k_\eps(x) = k(\eps x)$ for $k(x) = \prod_{i=1}^p (\sin(x_i)/x_i)$}.
\end{equation*}
Note that (i) $\F(k)$ is compactly supported (ii) $k$ is uniformly bounded: 
$\sup_x |k(x)| < \infty$. 
Since $\F(k)$ is compactly supported, $\wtilde{g}_{\omega, \eps} \in \H_\beta$ for any $\eps > 0$ by 
Corollary~\ref{corrolary:norm-of-H}. Additionally, 
\begin{equation*}
	\F(\wtilde{g}_{\omega, \eps})(\cdot) = \F(k_{\eps})(\cdot-\omega)~~\text{where $\F(k_{\eps})(\omega') = 
		(2\pi)^{p/2} \cdot \prod_{i=1}^p (\one_{w_i' \in [-\eps, \eps]}/(2\eps)) $}.
\end{equation*}
Substitute $\wtilde{g}_{\omega, \eps}$ into $\wtilde{g}$ in 
equation~\eqref{eqn:KKT-cond-bar-analytic}. This yields the identity that holds for all $\eps > 0$
\begin{equation}
\label{eqn:before-taking-eps-to-0}
\begin{split}
	\E[\wtilde{r}_\beta(X; Y)e^{i\omega^T X} k_\eps(X)] 
		&= \frac{\lambda}{(2\pi)^{p}} \cdot \left(\bigg(\frac{\F(\wtilde{f}_\beta)}{Q_\beta}\bigg) \convv \F(k_{\eps})\right)(\omega).
\end{split}
\end{equation}
Now we take $\eps \to 0^+$ on both sides. We shall show that will yield Lemma~\ref{lemma:KKT-condition}.
\begin{enumerate}
\item Take the limit $\eps \to 0^+$ on the LHS of equation~\eqref{eqn:before-taking-eps-to-0}:
	\begin{equation*}
		\lim_{\eps \to 0^+} \E\left[\wtilde{r}_\beta(X; Y)e^{i\omega^T X} k_\eps(X)\right]  = \E\left[\wtilde{r}_\beta(X; Y)e^{i\omega^T X}\right].
	\end{equation*}
	This follows from the dominated convergence theorem: (i) $\lim_{\eps \to 0^+} k_\eps(x) = 1$ 
	(ii) $\E[|[\wtilde{r}_\beta(X; Y)|] \le (\E[Y^2])^{1/2} < \infty$ by Proposition~\ref{prop:second-moment-bound}
	and (iii) $\sup_{x}|k_\eps(x)| = \sup_x |k(x)| < \infty$.
\item Take the limit $\eps \to 0^+$ on the RHS of equation~\eqref{eqn:before-taking-eps-to-0}:
	\begin{equation}
	\label{eqn:local-integrable-converge}
		\lim_{\eps \to 0^+}  \left(\bigg(\frac{\F(\wtilde{f}_\beta)}{Q_\beta}\bigg) \convv \F(k_\eps)\right)(\omega) \to 
			(2\pi)^{p/2} \cdot \frac{\F(\wtilde{f}_\beta)}{Q_\beta}(\omega)~~\text{a.e.-$\omega$}.
	\end{equation}
	This follows by applying Lebesgue's almost-everywhere differentiable theorem to the locally 
	integrable function $ \omega \mapsto \frac{\F(\wtilde{f}_\beta)}{Q_\beta}$. To see why it is
	locally integrable, we note that: (i) the function $\omega \mapsto Q_\beta(\omega)$ is positive and continuous
	(ii) $\F(f_\beta)$ is integrable since 
	\begin{equation*}
	\begin{split}
		\left(\int  |\F(\wtilde{f}_\beta(\omega))| d\omega \right)^2
			&\le \int \frac{|\F(\wtilde{f}_\beta(\omega))|^2}{Q_\beta(\omega)}  d\omega \cdot 
			 \int Q_\beta(\omega) d\omega 
			= (2\pi)^{p} |h(0)| \normbig{\wtilde{f}_\beta}_{\H}^2 < \infty. 
	\end{split}
	\end{equation*}
\end{enumerate}

\subsection{Proof of Lemma~\ref{lemma:technical}.}
Our starting point is the following identity: for any $\beta > 0$
\begin{equation*}
h(\norm{x-x'}_{q, \beta}^q) =  \int e^{i \langle \omega, \beta^{1/q} \odot (x-x')\rangle} Q(\omega) d\omega
	=   \int e^{i \langle \omega, x-x'\rangle} Q_\beta(\omega) d\omega.
\end{equation*}
Take partial derivative $\partial_{\beta_l}$ on both sides. We wish to apply Lebesgue's dominated convergence
theorem to exchange the integral and the derivative operations. This requires a careful check of regularity conditions. 
We divide our discussions based on the value of $q$. 

\begin{itemize}
\item Case $q = 1$. In this case,one can show that the following bound holds for all $\omega$: 
\begin{equation*}
	|\partial_{\beta_l} Q_\beta(\omega)| \le \frac{1}{\beta_l} |Q_\beta(\omega)|.
\end{equation*}
Note that $\sup_{\beta\in B} Q_{\beta}(\omega)(1+\omega_l^2)$ is integrable for compact $B$
which does not contain $0$. 
\item Case $q = 2$. In this case, we assume $\supp(\mu)$ is away from $0$: say for some 
$m_\mu > 0$, we have $\supp(\mu) \subseteq [m_\mu, \infty)$. One can then show the 
following bound which holds for all $\omega$: 
\begin{equation*}
	|\partial_{\beta_l} Q_\beta(\omega)| \le \frac{1}{(1 \wedge \beta_l)^2 \cdot (1 \wedge m_\mu)} |Q_\beta(\omega)| (1+\omega_l^2).
\end{equation*}
Note that $\sup_{\beta\in B} Q_{\beta}(\omega)(1+\omega_l^2)$ is integrable for compact $B$
which does not contain $0$. 
\end{itemize}
Let $\beta > 0$ and $B$ be any compact set which does not contain $0$. We conclude for $q= 1, 2$
\begin{equation*}
	\int \sup_{\beta\in B} |\partial_{\beta_l} Q_\beta(\omega)| d\omega < \infty. 
\end{equation*}
As a result,  the dominated convergence theorem implies the desired identity: 
\begin{equation*}
\begin{split}
	h'(\langle \beta, |x- x'|\rangle) |x_l - x_l'|^q 
	&= \partial_{\beta_l}
		 \left(\int e^{i \langle \omega, x-x'\rangle} Q_\beta(\omega) d\omega\right) 
	=  \int e^{i \langle \omega, x-x'\rangle} \cdot (\partial_{\beta_l} Q_\beta(\omega)) d\omega.
\end{split}
\end{equation*}

\subsection{Proof of Lemma~\ref{lemma:envelope-theorem}.}
\label{sec:proof-lemma-envelope-theorem}
For any $f$, let $\grad_\beta \wtilde{\energy}(\beta, f)$ denote the gradient of 
$\wtilde{\energy}(\beta, f)$ with respect to $\beta$ at $f$ (if it exists). We prove 
the following result (proof in Section~\ref{sec:proof-technical-only}). 
\begin{lemma}
\label{lemma:technical-only}
Fix $\beta > 0$. Then we have the following statements. 
\begin{enumerate}
\item Existence: $\grad_{\beta} \wtilde{\energy}(\beta{''}, f_{\beta'})$ exists for all $\beta', \beta''$ in a neighborhood of $\beta$.
\item Continuity: $\grad_{\beta} \wtilde{\energy}(\beta{''}, f_{\beta'}) \to \grad_{\beta} \wtilde{\energy}(\beta, f_\beta)$ 
	as $\beta', \beta'' \to \beta$.
\item Analytical expression: for all $\beta', \beta''$ close to $\beta$, we have 
	\begin{equation}
		\label{eqn:technical-gradient-only}
			\grad_{\beta} \wtilde{\energy}(\beta', f_{\beta''}) = 
		 		\frac{\lambda}{(2\pi)^p} \int \Big|\F(\wtilde{f_{\beta''}})(\omega)\Big|^2 \cdot 
					\grad_\beta \left(\frac{1}{Q_{\beta'}(\omega)}\right) d\omega.
	\end{equation}
\end{enumerate}
\end{lemma}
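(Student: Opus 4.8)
The plan is to prove Lemma~\ref{lemma:technical-only} by a differentiation-under-the-integral-sign argument: the only real work is to produce a good dominating function, after which parts (1)--(3) all follow from the Leibniz rule and dominated convergence. The key structural observation is that, for the auxiliary functional $\wtilde{\energy}$, the dependence on the first argument is confined entirely to the regularizer $\frac{\lambda}{2(2\pi)^p}\int|\F(f)(\omega)|^2/Q_\beta(\omega)\,d\omega$ (the data term $\half\E[(Y-f(X))^2]$ has no $\beta$ in it). Hence, once the interchange of differentiation and integration is justified, $\grad_\beta\wtilde{\energy}(\beta', f)$ is obtained by differentiating $1/Q_{\beta'}(\omega)$ under the integral sign, which is exactly equation~\eqref{eqn:technical-gradient-only}; and parts (1) and (2) amount to the statement that this interchange, and the subsequent limit $\beta',\beta''\to\beta$, are sufficiently uniform.

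Concretely, I would fix $\beta>0$, choose a compact neighborhood $K\subset(0,\infty)^p$ of $\beta$, and assemble a single dominating function $G\in\mathcal{L}_1(\R^p)$ with $|\F(\wtilde{f}_{\beta''})(\omega)|^2\,\bigl|\grad_\beta(1/Q_{\beta'}(\omega))\bigr|\le G(\omega)$ for a.e.\ $\omega$ and \emph{all} $\beta',\beta''\in K$. Three ingredients enter $G$. First, the pointwise bounds on $\grad_\beta Q_\beta$ already established in the proof of Lemma~\ref{lemma:technical} give, uniformly over $\beta'\in K$, $\bigl|\grad_\beta(1/Q_{\beta'}(\omega))\bigr|=|\grad_\beta Q_{\beta'}(\omega)|/Q_{\beta'}(\omega)^2\lesssim_K w_q(\omega)/Q_{\beta'}(\omega)$, where $w_q(\omega)=1+\norm{\omega}_2^2$ when $q=2$ and $w_q\equiv1$ when $q=1$. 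Second, the KKT identity of Lemma~\ref{lemma:KKT-condition}, $\F(\wtilde{f}_{\beta''})(\omega)=\tfrac{(2\pi)^{p/2}}{\lambda}\,\E[\wtilde{r}_{\beta''}(X;Y)e^{i\langle\omega,X\rangle}]\,Q_{\beta''}(\omega)$, shows the KRR solution is far smoother than a generic RKHS element: $|\F(\wtilde{f}_{\beta''})(\omega)|\le\tfrac{(2\pi)^{p/2}}{\lambda}\,\normP{Y}\,Q_{\beta''}(\omega)$ by the second-moment bound of Proposition~\ref{prop:second-moment-bound}. Third, Assumption~\ref{assumption:mu-compact} ($\supp(\mu)$ compact, and bounded away from $0$ when $q=2$) makes the profiles uniformly comparable on $K$, i.e.\ $Q_{\beta''}(\omega)\le C_K\,Q_{\beta'}(\omega)$ for all $\beta',\beta''\in K$. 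Multiplying the three estimates, $|\F(\wtilde{f}_{\beta''})(\omega)|^2\,\bigl|\grad_\beta(1/Q_{\beta'}(\omega))\bigr|\lesssim_K Q_{\beta''}(\omega)^2\,w_q(\omega)/Q_{\beta'}(\omega)\lesssim_K Q_\beta(\omega)\,w_q(\omega)=:G(\omega)$, and in either case $G\in\mathcal{L}_1(\R^p)$---for $q=1$ because $Q_\beta$ is itself integrable, and for $q=2$ because the Gaussian-type decay of $Q_\beta$ (this is exactly where $0\notin\supp(\mu)$ enters) supplies finite polynomial moments. With $G$ in hand, the Leibniz rule yields part (3) and dominated convergence yields part (1). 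For part (2) I would additionally use the continuity $\beta''\mapsto\wtilde{f}_{\beta''}$ from Proposition~\ref{prop:continuity-f-beta}, together with the KKT bound above, to obtain pointwise-a.e.\ convergence $\F(\wtilde{f}_{\beta''})(\omega)\to\F(\wtilde{f}_\beta)(\omega)$ dominated by a fixed function in $\mathcal{L}_1(\R^p)$, and then pass to the limit under the integral once more.

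The step I expect to be the main obstacle is the $q=2$ case of the domination. There, in contrast to $q=1$, $\grad_\beta(1/Q_\beta(\omega))$ genuinely carries the polynomial factor $1+\norm{\omega}_2^2$, so knowing merely that $\wtilde{f}_{\beta''}\in\H_{\beta''}$ (which only controls $\int|\F(\wtilde{f}_{\beta''})|^2/Q_{\beta''}$) does not suffice; one must exploit the \emph{extra} decay $|\F(\wtilde{f}_{\beta''})(\omega)|\lesssim Q_{\beta''}(\omega)$ coming from the KKT identity, and one must know that $Q_\beta$ decays at least like a Gaussian so that $Q_\beta(\omega)(1+\norm{\omega}_2^2)$ is integrable---both being precisely the roles played by Assumption~\ref{assumption:mu-compact}. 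The remaining delicate point, minor by comparison, is to keep all comparability constants uniform over the neighborhood $K$ (rather than merely for each fixed $\beta$), which again rests on the compactness of $\supp(\mu)$.
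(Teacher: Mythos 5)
Your overall architecture---reduce to a differentiation-under-the-integral/dominated-convergence argument, and use the KKT identity $\F(\wtilde{f}_{\beta''})(\omega)=\tfrac{(2\pi)^{p/2}}{\lambda}\E[\wtilde{r}_{\beta''}(X;Y)e^{i\langle\omega,X\rangle}]Q_{\beta''}(\omega)$ together with Proposition~\ref{prop:second-moment-bound} to gain the crucial extra factor $Q_{\beta''}(\omega)$ of decay---is exactly the paper's approach, and you correctly anticipate that $q=2$ is where the difficulty lies. However, ``ingredient 3'' is wrong, and this breaks the proposed dominating function for $q=2$.

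The claim that $Q_{\beta''}(\omega)\le C_K Q_{\beta'}(\omega)$ uniformly over $\beta',\beta''\in K$ is \emph{false} for $q=2$. Take $p=1$, $\mu=\delta_1$, so $Q_\beta(\omega)=\frac{1}{2\sqrt{\pi\beta}}e^{-\omega^2/(4\beta)}$; then $Q_{\beta''}(\omega)/Q_{\beta'}(\omega)=\sqrt{\beta'/\beta''}\exp\!\big(\tfrac{\omega^2}{4}\cdot\tfrac{\beta''-\beta'}{\beta'\beta''}\big)$, which diverges as $|\omega|\to\infty$ whenever $\beta''>\beta'$. The Gaussian tails are simply too sensitive to the bandwidth for a constant-factor comparison to hold on any neighborhood of $\beta$. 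Consequently your dominating function $G(\omega)=Q_\beta(\omega)w_q(\omega)$ also fails: in the same example with $\beta''>\beta>\beta'$, the integrand scales like $Q_{\beta''}^2/Q_{\beta'}\sim\exp\!\big(-\omega^2\big(\tfrac{1}{2\beta''}-\tfrac{1}{4\beta'}\big)\big)$, which still decays (provided $\beta''<2\beta'$) but strictly \emph{more slowly} than $Q_\beta(\omega)\sim e^{-\omega^2/(4\beta)}$, so $Q_{\beta''}^2 w_q/Q_{\beta'}\not\lesssim Q_\beta w_q$. Your proof goes through for $q=1$ (where the mixture of Cauchy densities does yield a uniform ratio bound $(1.01/0.99)^p$ on the neighborhood $B=\prod_j[0.99\beta_j,1.01\beta_j]$), but not for $q=2$.

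What is actually true---and what the paper proves as Lemma~\ref{lemma:uniform-integrability-of-Q-beta}---is the weaker but sufficient statement that $\omega\mapsto\sup_{\beta',\beta''\in B}\big\{\norm{\grad_\beta(1/Q_{\beta''}(\omega))}_2\,Q_{\beta'}^2(\omega)\big\}$ is itself in $\mathcal{L}_1(\R^p)$; one then takes this supremum, not $Q_\beta w_q$, as the dominating function. Proving this for $q=2$ does not reduce to a pointwise comparison against $Q_\beta$. The paper instead partitions $\supp(\mu)\subseteq[m_\mu,M_\mu]$ into finitely many bands $[n_\mu^{(k)},n_\mu^{(k+1)}]$ with $n_\mu^{(k+1)}/n_\mu^{(k)}<1.01$, writes $Q_\beta=\sum_k Q_\beta^{(k)}$, uses the elementary inequality $\frac{Q_{\beta'}^2}{Q_{\beta''}}\le\sum_k\frac{(Q_{\beta'}^{(k)})^2}{Q_{\beta''}^{(k)}}$, and shows that on each narrow band the ratio is uniformly dominated by a single Gaussian $C^{(k)}e^{-c^{(k)}\norm{\omega}^2}$ (whose decay rate $c^{(k)}$ is slower than that of $Q_\beta$ itself, but still positive provided the neighborhood and the bands are narrow). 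This band-decomposition of $\mu$ is the genuinely new step that your proposal is missing; without it, the dominating function for $q=2$ is not established, and the differentiation-under-the-integral is not justified.
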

We are now ready to prove Lemma~\ref{lemma:envelope-theorem}. We first prove that 
\begin{equation}
\label{eqn:grad-lower-one} 
	\obj(\beta')  \ge \obj(\beta) + \langle \grad_\beta \energy( \beta, \wtilde{f_\beta}), \beta'-\beta \rangle + o(\ltwo{\beta'-\beta}).
\end{equation}
Indeed, note that $\obj(\beta') = \energy(\beta', \wtilde{f_{\beta'}})$ and $\energy(\beta, \wtilde{f_{\beta'}}) \ge \obj(\beta)$. 
Using Lemma~\ref{lemma:technical-only}, for $\beta'$ close to $\beta$, Taylor's intermediate theorem yields 
for some $\beta'' \in [\beta, \beta']$: 
\begin{equation*}
	\obj(\beta') = \energy(\beta, \wtilde{f_{\beta'}}) + 
		\langle \grad_\beta \energy(\beta'', \wtilde{f_{\beta'}}), \beta'-\beta \rangle
		\ge \obj(\beta) + \langle \grad_\beta \energy(\beta'', \wtilde{f_{\beta'}}), \beta'-\beta \rangle.
\end{equation*}
Equation~\eqref{eqn:grad-lower-one} now follows since $\grad_\beta \energy(\beta'', \wtilde{f_{\beta'}}) 
\to \grad_{\beta} \wtilde{\energy}(\beta, f_\beta)$ by Lemma~\ref{lemma:technical-only}. With 
the same reasoning, one can analogously derive 
\begin{equation}
\label{eqn:grad-lower-two}
	\obj(\beta')  \le \obj(\beta) + \langle \grad_\beta \energy( \beta, \wtilde{f_\beta}), \beta'-\beta \rangle + o(\ltwo{\beta'-\beta}).	
\end{equation}
Equations~\eqref{eqn:grad-lower-one} and~\eqref{eqn:grad-lower-two} together yield the desired claim 
of Lemma~\ref{lemma:envelope-theorem}.

%
%

\subsection{Proof of Lemma~\ref{lemma:technical-only}.}
\label{sec:proof-technical-only}
Recall the objective function 
\begin{equation*}
	 \wtilde{\energy}(\beta'', \wtilde{f_\beta'})
		= \half \E[(Y - \wtilde{f_\beta'}(X))^2] + \frac{\lambda}{2(2\pi)^p} 
			\int \Big|\F(\wtilde{f}_{\beta'})(\omega)\Big|^2 \cdot \frac{1}{Q_{\beta''}(\omega)} d\omega,
\end{equation*}
We wish to take the derivative w.r.t $\beta$. Let $B = \prod_{j=1}^p [c_1 \beta_j, c_2 \beta_j]$
where $c_1=0.99$, $c_2=1.01$. The key to the proof is to prove the technical result: 
\begin{equation}
\label{eqn:super-technical-one}
	\int \sup_{\beta' \in B, \beta'' \in B}\Big|\F(\wtilde{f_{\beta'}})(\omega)\Big|^2 \cdot 
		 \norm{\grad_\beta \left(\frac{1}{Q_{\beta''}(\omega)}\right)}_2 d\omega < \infty.
\end{equation}
We defer the proof equation~\eqref{eqn:super-technical-one} to the end. Note then, 
given equation~\eqref{eqn:super-technical-one}, Lebesgue's dominated 
convergence theorem implies that $\grad_{\beta} \wtilde{\energy}(\beta', f_{\beta''})$ exists
and satisfies~\eqref{eqn:technical-gradient-only}. To prove the remaining claim,
$\grad \wtilde{\energy}(\beta'', \wtilde{f_{\beta'}}) \to \grad \wtilde{\energy}(\beta, \wtilde{f_{\beta}})$
as $\beta'', \beta' \to \beta$, it suffices to show that 
\begin{equation}
\label{eqn:super-technical-two}
	\limsup_{\beta'', \beta' \to \beta}
		\int  \norm{\Big|\F(\wtilde{f_{\beta'}})(\omega)\Big|^2 \cdot \grad_\beta \left(\frac{1}{Q_{\beta''}(\omega)}\right) - 
			\Big|\F(\wtilde{f_{\beta}})(\omega)\Big|^2 \cdot \grad_\beta \left(\frac{1}{Q_{\beta}(\omega)}\right)}_2 d\omega = 0.
\end{equation}
Below we prove the deferred equations~\eqref{eqn:super-technical-one} and~\eqref{eqn:super-technical-two}.

To prove equation~\eqref{eqn:super-technical-one}, we introduce the function $\omega \mapsto g(\omega)$ 
\begin{equation*}
	g(\omega) = \sup_{\beta'', \beta' \in B} 
		\bigg\{\normbigg{\grad_\beta \Big(\frac{1}{Q_{\beta''}(\omega)}\Big)}_2 \cdot Q_{\beta'}^2(\omega)\bigg\}.
\end{equation*}
Lemma~\ref{lemma:uniform-integrability-of-Q-beta} shows that $g$ is integrable. Now that 
\begin{equation*}
\begin{split}
\int \sup_{\beta' \in B, \beta'' \in B}&\Big|\F(\wtilde{f_{\beta'}})(\omega)\Big|^2 \cdot 
		 \normbig{\grad_\beta \Big(\frac{1}{Q_{\beta''}(\omega)}\Big)}_2 d\omega
	\le \int \sup_{\beta'' \in B}|\F(\wtilde{f}_{\beta'})(\omega)|^2 \cdot \frac{g(\omega)}{Q_{\beta'}^2(\omega)} d\omega \\
	&\stackrel{(i)}{=}\frac{(2\pi)^p}{\lambda^2} \cdot 
	\int \sup_{\beta'' \in B} \left|\E[\wtilde{r}_{\beta'}(X; Y) e^{i \omega^T X}]\right|^2  \cdot g(\omega) d\omega
\stackrel{(ii)}{\le}  \frac{(2\pi)^p}{\lambda^2} \cdot 
		\E\left[|Y|^2\right]  \cdot \int |g(\omega)| d\omega < \infty. 
\end{split}
\end{equation*}
where (i) is due to Lemma~\ref{lemma:KKT-condition} and (ii) is due to Proposition~\ref{prop:second-moment-bound}.
This proves equation~\eqref{eqn:super-technical-one}.

To prove equation~\eqref{eqn:super-technical-two}, we introduce the functions 
\begin{equation*}	
	h_{\beta', \beta''}(\omega) = \grad_\beta \left(\frac{1}{Q_{\beta''}(\omega)}\right)
		\cdot Q_{\beta'}^2(\omega)~~\text{and}~~z_\beta(\omega) = \grad Q_\beta(\omega).
\end{equation*}
Note that the following bound holds for all $\beta, \beta', \beta''$: 
\begin{equation*}
\begin{split}
&\int  \norm{\Big|\F(\wtilde{f_{\beta'}})(\omega)\Big|^2 \cdot \grad_\beta \left(\frac{1}{Q_{\beta''}(\omega)}\right) - 
			\Big|\F(\wtilde{f_{\beta}})(\omega)\Big|^2 \cdot \grad_\beta \left(\frac{1}{Q_{\beta}(\omega)}\right)}_2 d\omega \\
&=\int \normbigg{|\F(\wtilde{f}_{\beta'})(\omega)|^2 \cdot \frac{h_{\beta', \beta''}(\omega)}{Q_{\beta'}^2(\omega)} - 
	|\F(\wtilde{f}_\beta)(\omega)|^2 \cdot \frac{z_\beta(\omega)}{Q_{\beta}^2(\omega)}}_2 d\omega \\
&\stackrel{(i)}{=} \frac{(2\pi)^p}{\lambda^2} \cdot 
	\int \normbigg{\left|\E[\wtilde{r}_{\beta'}(X; Y) e^{i \omega^T X}]\right|^2 h_{\beta', \beta''}(\omega) - 
		\left|\E[\wtilde{r}_\beta(X; Y) e^{i \omega^T X}]\right|^2 z_\beta(\omega)}_2 d\omega \\
&\stackrel{(ii)}{\le} \frac{(2\pi)^p}{\lambda^2} \cdot 
	\left(\int \norm{h_{\beta', \beta''}(\omega)-z_\beta(\omega)}_2 d\omega \cdot \E[Y^2] + 
	\left|\E[(\wtilde{r_{\beta'}}- \wtilde{r_\beta})(X; Y)]\right| \cdot \E[Y^2]^{1/2} \cdot \int \norm{z_\beta(\omega)}_2 d\omega
	\right).
\end{split}
\end{equation*}
where $(i)$ is due to Lemma~\ref{lemma:KKT-condition} and (ii) is due to Cauchy-Schwartz. 
Note $\limsup_{\beta' \to \beta}|\E[(\wtilde{r_{\beta'}}- \wtilde{r_\beta})(X; Y)]| = 0$
by Proposition~\ref{prop:continuity-f-beta-beta-X}, and $\limsup_{\beta', \beta'' \to \beta}
\int \norm{h_{\beta', \beta''}(\omega)-z_\beta(\omega)}_2 d\omega = 0$ by 
Lemma~\ref{lemma:uniform-integrability-of-Q-beta} and 
Lebesgue's dominated convergence theorem. This proves equation~\eqref{eqn:super-technical-two}.

\subsection{An Integrability Result on $Q_\beta$.}
\begin{lemma}
\label{lemma:uniform-integrability-of-Q-beta}
Let $\beta > 0$. For $B = \prod_{j=1}^p [c_1 \beta_j, c_2\beta_j]$ where $c_1=0.99$, $c_2=1.01$, we have
\begin{equation*}
	\int \sup_{\beta'', \beta' \in B} \bigg\{\normbigg{\grad_\beta \Big(\frac{1}{Q_{\beta''}(\omega)}\Big)}_2 \cdot Q_{\beta'}^2(\omega)\bigg\} d\omega < \infty. 
\end{equation*}
\end{lemma}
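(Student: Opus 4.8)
The plan is to reduce the multivariate claim to a product of one-dimensional integrability estimates, exploiting the product structure $q_t(\omega) = \prod_{i=1}^p \psi_t(\omega_i)$ from equation~\eqref{eqn:notation-of-q-t-p-t} and the fact that $Q_\beta(\omega) = \int_0^\infty q_{t}(\omega\,;\,\beta)\,\mu(dt)$, where the scaling by $\beta$ acts coordinatewise. First I would make the dependence on $\beta$ explicit: after the change of variables absorbing $\beta^{1/q}$, one has $Q_\beta(\omega) = \int_0^\infty \prod_i \tfrac{1}{t}\psi\!\left(\tfrac{\omega_i}{t\beta_i^{?}}\right)\mu(dt)$ (the exact exponent on $\beta_i$ being $1$ for $q=1$ and $1/2$ for $q=2$, but this is immaterial: on the compact box $B$ away from $0$, each $\beta_i$ is pinched in $[c_1\beta_i, c_2\beta_i]$ with $\beta>0$ fixed, so all the $\beta$-factors are bounded above and below by positive constants). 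Consequently there is no loss in treating the $\beta$-dependence as a uniformly bounded perturbation and it suffices to bound
\begin{equation*}
	\int \Big(\sup_{\beta'' \in B}\normbig{\grad_\beta Q_{\beta''}(\omega)^{-1}}_2\Big)\cdot \Big(\sup_{\beta'\in B} Q_{\beta'}^2(\omega)\Big)\, d\omega.
\end{equation*}

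Next I would establish the two pointwise bounds on $\partial_{\beta_l} Q_\beta(\omega)$ that already appear in the proof of Lemma~\ref{lemma:technical}, namely $|\partial_{\beta_l}Q_\beta(\omega)| \le C_B |Q_\beta(\omega)|(1+\omega_l^2)$ for $q=1$ (in fact without the $(1+\omega_l^2)$ factor, only $\tfrac{1}{\beta_l}|Q_\beta|$) and $|\partial_{\beta_l}Q_\beta(\omega)| \le C_B |Q_\beta(\omega)|(1+\omega_l^2)$ for $q=2$, with $C_B$ depending only on the fixed box $B$ and on $m_\mu$ (using Assumption~\ref{assumption:mu-compact}(iii) when $q=2$). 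Since $\grad_\beta Q_{\beta''}^{-1} = -Q_{\beta''}^{-2}\grad_\beta Q_{\beta''}$, these give $\normbig{\grad_\beta Q_{\beta''}^{-1}}_2 \le C_B Q_{\beta''}^{-1}\big(\sum_l (1+\omega_l^2)^2\big)^{1/2}$. Multiplying by $\sup_{\beta'\in B}Q_{\beta'}^2$ and using that, on $B$, the quotient $Q_{\beta'}(\omega)/Q_{\beta''}(\omega)$ is bounded above uniformly in $\omega$ (again because the scalings differ by a bounded multiplicative factor and $\psi$ — Cauchy or Gaussian — is, up to constants, monotone-comparable under bounded dilation of its argument), the whole integrand is dominated by $C_B' \cdot \sup_{\beta'\in B} Q_{\beta'}(\omega) \cdot \big(\sum_l (1+\omega_l^2)^2\big)^{1/2}$.

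Finally I would verify that $\omega \mapsto \sup_{\beta'\in B} Q_{\beta'}(\omega)\cdot(1+\|\omega\|^2)^{2}$ is integrable. Here the $q=1$ and $q=2$ cases diverge in detail but not in spirit. For $q=2$, $\psi$ is Gaussian and $\supp(\mu)\subseteq[m_\mu,M_\mu]$ is a compact subset of $(0,\infty)$, so $Q_{\beta'}(\omega)$ decays like a Gaussian in each coordinate uniformly over $B$, and any polynomial weight is integrable. For $q=1$, $\psi$ is Cauchy, so $\psi_t(\omega) \asymp t/\omega^2$ for large $\omega$ and $Q_{\beta'}(\omega) \asymp \prod_i \omega_i^{-2}$ only when all coordinates are large — this is \emph{not} integrable against a polynomial weight on its own. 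The resolution is that the polynomial factor $(1+\omega_l^2)^2$ (degree $4$) in any single coordinate $l$ is overwhelmed by the decay of the \emph{other} $p-1$ Cauchy factors together with the $\psi_t(\omega_l)\asymp t\omega_l^{-2}$ factor; more precisely one splits the weight $\big(\sum_l(1+\omega_l^2)^2\big)^{1/2} \le \sum_l (1+\omega_l^2)$ and bounds each term $\int (1+\omega_l^2) \sup_{\beta'} Q_{\beta'}(\omega)\,d\omega$ by Fubini, integrating out the $p-1$ "good" coordinates first (each giving a finite constant since $\int \psi_t < \infty$ uniformly on $B$) and then handling the $l$-th coordinate, where $(1+\omega_l^2)\psi_t(\omega_l) \asymp (1+\omega_l^2)\cdot t\omega_l^{-2}$ is \emph{still not} integrable in $\omega_l$ alone — so one must instead keep \emph{one} extra clean coordinate factor to absorb it. Concretely: since $p$ is arbitrary this needs $p\ge 2$, and for $p=1$ one uses Assumption~\ref{assumption:mu-compact}(ii) (compact support of $\mu$, i.e. exponential lower bound on $h$) together with the fact that the weight arises only from $\partial_{\beta_l}$ of a \emph{single} coordinate. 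I expect the main obstacle to be exactly this bookkeeping in the $q=1$ case — making the polynomial weight integrable against a product of Cauchy densities by carefully allocating decay across coordinates and invoking the right part of Assumption~\ref{assumption:mu-compact}; the rest is routine domination and Fubini.
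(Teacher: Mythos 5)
Your proposal has one self-inflicted complication (for $q=1$) and one genuine gap (for $q=2$).

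For $q=1$ you note correctly at the outset that the derivative bound carries no $\omega$-weight --- $|\partial_{\beta_l}Q_{\beta''}(\omega)| \le \frac{1}{\beta''_l}Q_{\beta''}(\omega)$ --- yet you proceed with the $(1+\omega_l^2)$ weight anyway, and in your last paragraph discover (correctly) that $\int Q_\beta(\omega)(1+\omega_l^2)\,d\omega$ diverges against a product of Cauchy factors; the ``allocation of decay across coordinates'' cannot help, since the other $p-1$ factors do not affect the divergent $\omega_l$-integral, and the $p\ge2$ vs.\ $p=1$ split is a red herring. The fix is simply to drop the weight. With the weight-free derivative bound, the integrand in question is $\sup_{\beta'',\beta'\in B}\{|\partial_{\beta_l}Q_{\beta''}|\cdot Q_{\beta'}^2/Q_{\beta''}^2\} \le \frac{1}{c_1\beta_l}\sup_{\beta',\beta''}\frac{Q_{\beta'}^2}{Q_{\beta''}} \le \frac{1}{c_1\beta_l}(c_2/c_1)^{2p}Q_\beta(\omega)$, where the ratio bound $Q_{\beta'}/Q_{\beta''}\le(c_2/c_1)^p$ uniformly in $\omega$ holds for the Cauchy case because $\frac{\psi_{at}(\omega)}{\psi_{bt}(\omega)}=\frac{a}{b}\cdot\frac{(bt)^2+\omega^2}{(at)^2+\omega^2}\le\max(a/b,b/a)$; then $\int Q_\beta<\infty$ finishes. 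This is the paper's argument, and it is short once you use the right bound.

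For $q=2$ the assertion that ``$\psi$ --- Cauchy or Gaussian --- is, up to constants, monotone-comparable under bounded dilation of its argument,'' hence $Q_{\beta'}/Q_{\beta''}$ is uniformly bounded on $B$, is false for the Gaussian: the ratio of two Gaussian densities with slightly different widths grows like $\exp(c\omega^2)$, since a multiplicative perturbation of the scale multiplies the exponent by a constant rather than adding to it. This is exactly the heavy-tail vs.\ light-tail distinction, and it means your dominating function $\sup_{\beta'}Q_{\beta'}(\omega)(\sum_l(1+\omega_l^2)^2)^{1/2}$ does not actually majorize the integrand. What must be controlled is $\sup_{\beta',\beta''\in B}\frac{Q_{\beta'}^2(\omega)}{Q_{\beta''}(\omega)}(1+\omega_l^2)$, and the square in the numerator is essential: for a single Gaussian with scale parameters confined to a factor-of-$\kappa$ window, the exponent of $Q_{\beta'}^2/Q_{\beta''}$ is still negative precisely when $\kappa<2$. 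But the mixture over $t\in\supp(\mu)=[m_\mu,M_\mu]$ stretches the effective window to $(c_2/c_1)(M_\mu/m_\mu)$, which need not be below $2$. The paper closes this by decomposing $\supp(\mu)$ into finitely many subintervals $[n_\mu^{(k)},n_\mu^{(k+1)}]$ with ratio $<1.01$, invoking the Cauchy--Schwarz superadditivity $\frac{(\sum_k a_k)^2}{\sum_k b_k}\le\sum_k\frac{a_k^2}{b_k}$ to split $Q_{\beta'}^2/Q_{\beta''}$ into $K$ narrow-window pieces, and bounding each piece by $C^{(k)}e^{-c^{(k)}\|\omega\|^2}$ uniformly on $B$. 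Without some such device your $q=2$ argument does not go through.
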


\begin{proof}
It suffices to show that for any $l \in [p]$: 
\begin{equation*}
	\int \sup_{\beta'', \beta' \in B} \bigg\{|\partial_{\beta_l} Q_{\beta''}(\omega)| \cdot \frac{Q_{\beta'}^2(\omega)}{Q_{\beta''}^2(\omega)}\bigg\} d\omega < \infty. 
\end{equation*}
\begin{itemize}
\item Case $q = 1$. In this case, 
	$ 
		Q_\beta(\omega) = \int_0^\infty \prod_{i \in [p]} \frac{1}{\pi} \frac{\beta_i t}{\beta_i^2 t^2 + \omega_i^2} \mu(dt).
	$ 
	 Note the following bound
	\begin{equation*}
		\sup_{\beta', \beta''\in B} \left|\frac{Q_{\beta'}(\omega)}{Q_{\beta''}(\omega)} \right| \le \left(\frac{1.01}{0.99}\right)^p
		~~\text{and}~~
		|\partial_{\beta_l} Q_{\beta''}(\omega)| \le \frac{1}{\beta''_l} Q_{\beta''}(\omega)
	\end{equation*}
	As a result, we obtain that 
	\begin{equation*}
		\int \sup_{\beta'', \beta' \in B} \bigg\{|\partial_{\beta_l} Q_{\beta''}(\omega)| \cdot \frac{Q_{\beta'}^2(\omega)}{Q_{\beta''}^2(\omega)}\bigg\} d\omega
		\le \left(\frac{1.01}{0.99}\right)^{p+1} \int \frac{1}{\beta_l} \cdot Q_\beta(\omega) d\omega < \infty.
	\end{equation*}
\item Case $q = 2$. In this case, 	
	$Q_\beta(\omega) = \int 
			\prod_{i \in [p]}\frac{1}{2\sqrt{\pi \beta_i t}} e^{-\frac{\omega_i^2}{4\beta_i t}} \mu(dt). 
	$
	Assume W.L.O.G. $\supp(\mu) \subseteq [m_\mu, M_\mu]$ where 
	$0 < m_\mu < M_\mu < \infty$.  
	Note the following elementary bound
	\begin{equation*}
		|\partial_{\beta_l} Q_{\beta''}(\omega)| \le \frac{1}{(1\wedge \beta''_l)^2 \cdot (1\wedge m_\mu)} |Q_{\beta''}(\omega)|(1+\omega_l^2).
	\end{equation*}
	As a result, it suffices to show that 
	\begin{equation}
	\label{eqn:uniform-integrability-last}
		\int_0^\infty \sup_{\beta'', \beta' \in B} \left\{\frac{Q^2_{\beta'}(\omega)}{Q_{\beta''}(\omega)} \right\} \cdot (1+\omega_l^2) d\omega < \infty. 
	\end{equation}
	Let $K$ be the integer such that $M_\mu/m_\mu < 1.01^K$. Decompose 
	$[m_\mu, M_\mu] = \bigcup_{k=0}^{K-1} [n_\mu^{(k)}, n_{\mu}^{(k+1)}]$ where 
	$n_\mu^{(0)} = m_\mu$, $n_\mu^{(K)} = M_\mu$ and $n_\mu^{(k+1)}/n_\mu^{(k)} < 1.01$. Introduce the notation
	\begin{equation*}
		Q_\beta^{(k)}(\omega) = \int_{n_\mu^{(k)}}^{n_\mu^{(k+1)}} \prod_{i \in [p]} \frac{1}{2\sqrt{\pi \beta_i t}} e^{-\frac{\omega_i^2}{4\beta_i t}}  \mu(dt).
	\end{equation*}
	Then $Q_\beta(\omega) = \sum_k Q_\beta^{(k)}(\omega)$. Hence we have the basic inequality 
	\begin{equation}
	\label{eqn:basic-decomposition}
		\frac{Q^2_{\beta'}(\omega)}{Q_{\beta''}(\omega)} \le \sum_{k=1}^K 
			\frac{(Q^{(k)}_{\beta'}(\omega))^2}{Q^{(k)}_{\beta''}(\omega)}
	\end{equation}
	Because $n_\mu^{(k+1)}/n_\mu^{(k)} < 1.01$, hence
	for some constants $c^{(k)}, C^{(k)} > 0$, we have for all $\omega$
	\begin{equation*}
		\sup_{\beta', \beta'' \in B} \frac{(Q^{(k)}_{\beta'}(\omega))^2}{Q_{\beta''}^{(k)}(\omega)} \le 
			C^{(k)} e^{-c^{(k)}\norm{\omega}^2}.
	\end{equation*} 
	This exponential tail bound in conjunction with inequality~\eqref{eqn:basic-decomposition} yields 
	equation~\eqref{eqn:uniform-integrability-last}. 
\end{itemize}
\end{proof}

\subsection{Reproducing Kernel Hilbert Spaces $\{H_\beta\}_{\beta \ge 0}$.}
\label{sec:H-beta-spaces}
\indent\indent
Let $k(x, x')$ be the kernel associated with $\H$, 
Write $k_\beta(x, x') = k(\beta^{1/q} \odot x, \beta^{1/q} \odot x')$. Then $k_\beta$
is positive definite. Moore Aronszajn Theorem (Theorem~\ref{theorem:Moore-Aronszajn}) shows
that there exists an RKHS $\H_\beta$ whose kernel is $k_\beta$. 


Proposition~\ref{proposition: Hilbert-beta} builds connections between $\H_\beta$ and
and $\H = \H_{\mathbf{1}}$. 

\begin{proposition}
\label{proposition: Hilbert-beta}
We have the following properties. 
\begin{enumerate}[(a)]
\item For any $\beta \ge 0$, the space $\H_\beta$ has the representation: $\H_\beta = \left\{f(\beta^{1/q} \odot x): f \in \H\right\}$.
\item For any $\beta > 0$, we have the identity: $\norm{f(\beta^{1/q} \odot \cdot)}_{\H_\beta} = \norm{f(\cdot)}_{\H}$ for any $f \in \H$.
\end{enumerate}
\end{proposition}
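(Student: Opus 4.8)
The plan is to identify $\H_\beta$ with a ``pullback'' of $\H$ along the scaling map $T_\beta\colon\R^p\to\R^p$, $T_\beta(x)=\beta^{1/q}\odot x$, and then appeal to the uniqueness of the RKHS attached to a positive definite kernel (Theorem~\ref{theorem:Moore-Aronszajn}). Since $k$ is a kernel on $\R^p$ (Proposition~\ref{proposition:k-pos-definite-representation}) and $k_\beta(x,x')=k(T_\beta x,T_\beta x')$ is positive definite, $\H_\beta$ is well defined and is \emph{the} RKHS with kernel $k_\beta$; so it suffices to exhibit one Hilbert space of functions with the correct reproducing property and recognize it as $\{f\circ T_\beta:f\in\H\}$.

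First I would treat the case $\beta>0$, which yields both (a) and (b) at once. Here $T_\beta$ is a linear bijection of $\R^p$ with inverse $y\mapsto\beta^{-1/q}\odot y$, so precomposition $\iota_\beta\colon f\mapsto f\circ T_\beta$ is injective on $\H$. Define $\mathcal{G}_\beta=\iota_\beta(\H)=\{f\circ T_\beta:f\in\H\}$ with $\langle\iota_\beta f,\iota_\beta g\rangle_{\mathcal{G}_\beta}:=\langle f,g\rangle_\H$, which is well defined by injectivity; then $\iota_\beta$ is an isometric isomorphism $\H\to\mathcal{G}_\beta$, so $\mathcal{G}_\beta$ is complete. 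For the reproducing property: for fixed $x$ one has $k_\beta(x,\cdot)=\iota_\beta\bigl(k(T_\beta x,\cdot)\bigr)\in\mathcal{G}_\beta$, and for $g=\iota_\beta f$, $\langle g,k_\beta(x,\cdot)\rangle_{\mathcal{G}_\beta}=\langle f,k(T_\beta x,\cdot)\rangle_\H=f(T_\beta x)=g(x)$. By uniqueness of the RKHS with a given kernel, $\H_\beta=\mathcal{G}_\beta$ with identical norms, which is exactly (a) and $\|f(\beta^{1/q}\odot\cdot)\|_{\H_\beta}=\|f\|_\H$.

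For general $\beta\ge0$ (only (a) is claimed), $T_\beta$ is no longer injective and $\iota_\beta$ has kernel $\mathcal{N}:=\{f\in\H:f\equiv0\text{ on }\Range(T_\beta)\}$, which is closed because evaluation functionals on an RKHS are continuous. The plan is to transport the Hilbert structure through $\mathcal{N}^\perp$: set $\mathcal{G}_\beta=\iota_\beta(\H)=\iota_\beta(\mathcal{N}^\perp)$ with the inner product pushed forward along the now-injective restriction $\iota_\beta|_{\mathcal{N}^\perp}$; equivalently $\|g\|_{\mathcal{G}_\beta}=\inf\{\|f\|_\H:\iota_\beta f=g\}$, the infimum being attained at $Pf$, where $P$ is the orthogonal projection onto $\mathcal{N}^\perp$. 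The reproducing-property computation carries over once one notes $k_\beta(x,\cdot)=\iota_\beta(k(T_\beta x,\cdot))=\iota_\beta(P\,k(T_\beta x,\cdot))$ and that $k(T_\beta x,\cdot)-P\,k(T_\beta x,\cdot)\in\mathcal{N}$ is orthogonal to every $f\in\mathcal{N}^\perp$, so that $\langle g,k_\beta(x,\cdot)\rangle_{\mathcal{G}_\beta}=\langle f,k(T_\beta x,\cdot)\rangle_\H=f(T_\beta x)=g(x)$. Hence $\mathcal{G}_\beta$ is the RKHS of $k_\beta$, so $\H_\beta=\mathcal{G}_\beta=\{f\circ T_\beta:f\in\H\}$ as sets, which is (a).

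The only real content is the pullback argument in the degenerate case, and the main point needing care there is the bookkeeping with $\mathcal{N}^\perp$: verifying that $\mathcal{N}$ is closed, that $\iota_\beta(\mathcal{N}^\perp)$ is complete in the transported norm (so that the Moore--Aronszajn uniqueness statement applies), and that the minimal-norm representative is consistent with the reproducing element $k_\beta(x,\cdot)$. Everything else reduces to the bijective case, where it is routine. I would also remark, for completeness, that in the degenerate case the transported norm is $\inf\{\|f\|_\H:f\circ T_\beta=g\}$, which may be strictly smaller than $\|f\|_\H$ --- consistent with the proposition claiming the isometry (b) only for $\beta>0$.
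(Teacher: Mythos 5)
Your proposal is correct and follows essentially the same route as the paper: identify $\H_\beta$ with the pullback of $\H$ along $T_\beta$, verify the reproducing property, and invoke uniqueness from Moore--Aronszajn; part (b) is the same isometry the paper checks on kernel sections $k(x,\cdot)\mapsto k(x,\beta^{1/q}\odot\cdot)$. You simply fill in the details the paper waves off as ``immediate,'' and in particular your treatment of the degenerate case ($\beta_i=0$ for some $i$) via the closed kernel $\mathcal{N}$ of $\iota_\beta$ and the minimal-norm representative in $\mathcal{N}^\perp$ is a genuinely useful elaboration that the paper's one-line proof does not spell out; your closing remark on why the isometry (b) can fail in that case is also accurate.
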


\begin{proof}
Part (a) is immediate from the characterization of the Hilbert space due to Moore-Aronszajn (Theorem~\ref{theorem:Moore-Aronszajn}). 
Part (b) follows from the definition $k_\beta(x, x') = k(\beta^{1/q} \odot x, \beta^{1/q} \odot x')$ and the reproducing property of the kernel $k_\beta(x, x')$
and $k(x, x')$ with respect to $\H_\beta$ and $\H$: 
\begin{equation*}
	\langle k(x, \beta^{1/q} \odot \cdot), k(x', \beta^{1/q} \odot \cdot)\rangle_{\H_\beta} = k(x, x') = \langle k(x, \cdot), k(x', \cdot)\rangle_{\H}.
\end{equation*}
Notice that the first identity uses the assumption that $\beta > 0$.
\end{proof}

\begin{corollary}
\label{corrolary:norm-of-H}
For any $\beta > 0$, the inner product $\langle f, g\rangle_{\H_\beta}$ 
has the explicit characterization
\begin{equation}
\label{eqn:norm-of-H-beta}
		\langle f, g\rangle_{\H_\beta} = \frac{1}{(2\pi)^p} \cdot \int_{\R^p} \frac{\F(f)(\omega)\wbar{\F(g)(\omega)}}{Q_{\beta}(\omega)} d\omega
		~~\text{where}~~Q_{\beta}(\omega) = \int_0^\infty q_{\beta, t}(\omega) \mu(dt).
\end{equation}
Above, 
$	q_{\beta, t}(\omega) = \prod_{i \in [p]} \psi_{\beta_i^{1/q} t}(\omega_i)$
where 
$\psi_s(\omega) = \frac{1}{s} \cdot \psi\left(\frac{\omega}{s}\right)$ for any $s > 0$.
\end{corollary}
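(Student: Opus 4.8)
The plan is to reduce the claim to the already-established $\beta=\mathbf 1$ case, Proposition~\ref{proposition:norm-of-H}, using the change-of-variables isometry recorded in Proposition~\ref{proposition: Hilbert-beta} and then tracking how an anisotropic dilation acts on the Fourier transform and on the weight $Q$. Fix $\beta > 0$ and take $f,g \in \H_\beta$. By Proposition~\ref{proposition: Hilbert-beta}(a) there are $\wtilde f,\wtilde g\in\H$ with $f(x)=\wtilde f(\beta^{1/q}\odot x)$ and $g(x)=\wtilde g(\beta^{1/q}\odot x)$, and by part (b) the linear surjection $\wtilde f\mapsto \wtilde f(\beta^{1/q}\odot\cdot)$ is norm-preserving, hence a unitary $\H\to\H_\beta$, so by the polarization identity $\langle f,g\rangle_{\H_\beta}=\langle\wtilde f,\wtilde g\rangle_{\H}$. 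Proposition~\ref{proposition:norm-of-H} then gives $\langle\wtilde f,\wtilde g\rangle_{\H}=(2\pi)^{-p}\int \F(\wtilde f)(\omega)\wbar{\F(\wtilde g)(\omega)}/Q(\omega)\,d\omega$, so everything comes down to rewriting this integral in terms of $f$, $g$ and $Q_\beta$.

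Next I would record the two scaling identities that do the work. First, the dilation rule for the Fourier transform, with the paper's normalization carrying a $(2\pi)^{-p/2}$: from $f=\wtilde f(\beta^{1/q}\odot\cdot)$ one gets $\F(f)(\omega)=\big(\prod_{i}\beta_i^{-1/q}\big)\F(\wtilde f)(\beta^{-1/q}\odot\omega)$, equivalently $\F(\wtilde f)(\beta^{-1/q}\odot\omega)=\big(\prod_i\beta_i^{1/q}\big)\F(f)(\omega)$, and the same for $g$. Second, from the definitions $\psi_s(\omega)=s^{-1}\psi(\omega/s)$, $q_t(\omega)=\prod_i\psi_t(\omega_i)$ and $q_{\beta,t}(\omega)=\prod_i\psi_{\beta_i^{1/q}t}(\omega_i)$, a one-line computation gives $q_{\beta,t}(\omega)=\big(\prod_i\beta_i^{-1/q}\big)q_t(\beta^{-1/q}\odot\omega)$; integrating against $\mu$ yields $Q_\beta(\omega)=\big(\prod_i\beta_i^{-1/q}\big)Q(\beta^{-1/q}\odot\omega)$, i.e.\ $Q(\beta^{-1/q}\odot\omega)=\big(\prod_i\beta_i^{1/q}\big)Q_\beta(\omega)$.

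Finally I would make the substitution $\omega=\beta^{-1/q}\odot\eta$, with Jacobian $d\omega=\big(\prod_i\beta_i^{-1/q}\big)d\eta$, in the integral for $\langle\wtilde f,\wtilde g\rangle_\H$. Plugging in the two identities produces an overall prefactor $\big(\prod_i\beta_i^{1/q}\big)^2\big(\prod_i\beta_i^{-1/q}\big)^2=1$: one power $\prod_i\beta_i^{1/q}$ from each of $\F(\wtilde f)$ and $\F(\wtilde g)$, one power $\prod_i\beta_i^{-1/q}$ from $1/Q$, and one more from the Jacobian. What is left is exactly $(2\pi)^{-p}\int \F(f)(\eta)\wbar{\F(g)(\eta)}/Q_\beta(\eta)\,d\eta$, which is the asserted formula~\eqref{eqn:norm-of-H-beta}. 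Legitimacy of the substitution and finiteness of all the integrals follow from $\wtilde f,\wtilde g\in\H$ via Proposition~\ref{proposition:norm-of-H}, together with positivity and continuity of $Q$ and $Q_\beta$.

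The only genuinely delicate part is the bookkeeping: keeping the $(2\pi)$-normalization of $\F$ straight, getting the direction of the dilation right, and confirming that the definition of $q_{\beta,t}$ given in the statement is precisely the dilate of $q_t$ for which the powers of $\beta_i$ cancel. There is no real analytic obstacle, since Proposition~\ref{proposition: Hilbert-beta} already supplies the isometry $\H\cong\H_\beta$ and the remainder is the change-of-variables computation just outlined.
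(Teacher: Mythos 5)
Your proposal is correct and follows the paper's own route: the paper's proof of this corollary is literally the one-liner ``immediate consequence of Propositions~\ref{proposition: Hilbert-beta} and~\ref{proposition:norm-of-H},'' and you have simply filled in the change-of-variables bookkeeping (the dilation law for $\F$, the scaling identity $Q_\beta(\omega)=\big(\prod_i\beta_i^{-1/q}\big)Q(\beta^{-1/q}\odot\omega)$, and the Jacobian) that makes the prefactors cancel to $1$. The scalings and cancellations you record are accurate.
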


\begin{proof}
This is an immediate consequence of 
Proposition~\ref{proposition: Hilbert-beta} and Proposition~\ref{proposition:norm-of-H}.
\end{proof}

\section{Lipschitzness and Boundedness of the Gradient $\grad \obj(\beta)$}
\label{sec:Lipschitzness-boundedness-of-gradient}
\subsection{Lipschitzness.} 
Proposition~\ref{prop:grad-obj-lipschitz-beta} shows that $\beta \mapsto \grad \obj(\beta)$ is Lipschitz. 
\begin{proposition}
\label{prop:grad-obj-lipschitz-beta}
Assume Assumption~\ref{assumption:mu-compact}---\ref{assumption:X-Y-bound}. 
The mapping $\beta \mapsto \grad \obj(\beta)$ is Lipschitz: 
	\begin{equation}
	\label{eqn:grad-obj-beta-Lipschitz-beta}
		\norm{\grad \obj(\beta) - \grad \obj(\beta')}_2 \le \frac{Cp}{\lambda^2}\cdot M_Y^2 \cdot\norm{\beta - \beta'}_2
	\end{equation}
	where the constant $C > 0$ depends only on $M_X$ and $|h'(0)|$.
\end{proposition}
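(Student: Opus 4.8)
The plan is to differentiate the explicit gradient formula of Proposition~\ref{proposition:compute-grad-obj} and to reduce the whole statement to an $L_2(\P)$-stability bound for the family of KRR residuals. Starting from
\[
	(\grad\obj(\beta))_l = -\frac1\lambda\,\E\!\left[r_\beta(\beta^{1/q}\odot X;Y)\,r_\beta(\beta^{1/q}\odot X';Y')\,h'(\normsmall{X-X'}^q_{q,\beta})\,|X_l-X_l'|^q\right],
\]
I would write the coordinatewise difference $(\grad\obj(\beta))_l-(\grad\obj(\beta'))_l$ as a telescoping sum of three expectations: two in which one residual factor $r_\beta(\beta^{1/q}\odot\cdot\,;\cdot)$ is replaced by $r_{\beta'}(\beta'^{1/q}\odot\cdot\,;\cdot)$ with the remaining factors frozen, and one in which $h'(\normsmall{X-X'}^q_{q,\beta})$ is replaced by $h'(\normsmall{X-X'}^q_{q,\beta'})$. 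The kernel-weight term is elementary: with $\normsmall{X-X'}^q_{q,\beta}-\normsmall{X-X'}^q_{q,\beta'}=\langle\beta-\beta',|X-X'|^q\rangle$, $|h'(a)-h'(b)|\le\sup_{s\ge0}|h''(s)|\,|a-b|$ (finite by Assumption~\ref{assumption:mu-compact}), Cauchy--Schwarz against $\ltwo{\beta-\beta'}$, and $\E[X_l^4]\le M_X^4$, it is bounded, after summing over coordinates, by $Cp\,M_Y^2\lambda^{-1}\ltwo{\beta-\beta'}$, which is absorbed into the asserted $\lambda^{-2}$ rate in the relevant regime $\lambda\le1$. For the two residual-swap terms, $\normbigP{r_\beta(\beta^{1/q}\odot X;Y)}\le M_Y$ (Proposition~\ref{prop:second-moment-bound}), $|h'|\le|h'(0)|$, and $\E|X_l-X_l'|^q\le CM_X^q$ bound each by $C|h'(0)|M_X^q M_Y\cdot\normbigP{r_\beta(\beta^{1/q}\odot X;Y)-r_{\beta'}(\beta'^{1/q}\odot X;Y)}$. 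Thus the proposition reduces to the stability estimate
\[
	\normbigP{r_\beta(\beta^{1/q}\odot X;Y)-r_{\beta'}(\beta'^{1/q}\odot X;Y)}\;\le\;\frac{C\sqrt p\,M_Y}{\lambda}\,\ltwo{\beta-\beta'},
\]
which, combined with the preceding bounds and then summed over $l\in[p]$ in $\ell_2$, produces a second factor $\sqrt p$ and gives \eqref{eqn:grad-obj-beta-Lipschitz-beta}; since $\grad\obj_\gamma=\grad\obj+\gamma\mathbf{1}$ differs from $\grad\obj$ by a constant, it has the same Lipschitz constant (which is what later sections, e.g.\ the proof of Theorem~\ref{thm:add-main-effect}, actually invoke).

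For the stability estimate, the left side equals $\normbigP{\wtilde{f}_\beta(X)-\wtilde{f}_{\beta'}(X)}$, where $\wtilde{f}_\beta\in\H_\beta$ is the minimizer of $\wtilde{\energy}(\beta,\cdot)$ (so that $\wtilde{f}_\beta(X)=f_\beta(\beta^{1/q}\odot X)$); the key advantage of this formulation is that, after the change of variables built into $\wtilde{\energy}$, the parameter $\beta$ enters only through the spectral weight $Q_\beta$, not through a warping of the data. I would differentiate $\beta\mapsto\wtilde{f}_\beta$: via the fixed-point identity $\tfrac1{(2\pi)^{p/2}}\F(\wtilde{f}_\beta)(\omega)=\tfrac1\lambda\E[\wtilde{r}_\beta(X;Y)e^{i\langle\omega,X\rangle}]Q_\beta(\omega)$ (Lemma~\ref{lemma:KKT-condition}), the function $\partial_{\beta_l}\wtilde{f}_\beta$ satisfies a KRR-type linear equation in $\H_\beta$ whose right-hand side is governed by $\F(\wtilde{f}_\beta)\cdot\partial_{\beta_l}Q_\beta/Q_\beta$. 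Hence its $L_2(\P)$-size enjoys the same resolvent control as the KRR solution itself, with the role of the response played by a function whose norm is bounded by a $\beta$-uniform estimate on $\partial_{\beta_l}Q_\beta/Q_\beta$ (exactly the type of bound established in Lemma~\ref{lemma:uniform-integrability-of-Q-beta}) together with $\norm{f_\beta}_\H\le M_Y/\sqrt\lambda$ (from $\lambda\norm{f_\beta}_\H^2=\E[r_\beta(\beta^{1/q}\odot X;Y)f_\beta(\beta^{1/q}\odot X)]\le M_Y^2$, Propositions~\ref{proposition:KKT-condition} and~\ref{prop:second-moment-bound}). This gives $\normbigP{\partial_{\beta_l}\wtilde{f}_\beta}\le C\,M_Y\lambda^{-1}$ uniformly in $\beta$; integrating along the segment from $\beta'$ to $\beta$ and passing from $\ell_1$ to $\ell_2$ (the source of the $\sqrt p$) yields the displayed stability bound.

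The main obstacle is precisely this stability estimate, and two crude approaches do not suffice. First, bounding $\normbigP{\wtilde{f}_\beta-\wtilde{f}_{\beta'}}$ by $\norm{f_\beta-f_{\beta'}}_\H$ loses at least a factor $\lambda^{-1/2}$, and computing $\opnorm{\Sigma_\beta-\Sigma_{\beta'}}$ through the Hilbert--Schmidt identity used in the proof of Proposition~\ref{prop:continuity-f-beta} is only H\"older-$\tfrac12$ in $\beta$; obtaining genuine Lipschitz dependence at the $\lambda^{-1}$ rate requires the differentiated equation above, with the resolvent estimates routed through the smoothing operator $\Sigma_\beta^{1/2}(\Sigma_\beta+\lambda I)^{-1}$ (so that one gains $\lambda^{-1/2}$ rather than $\lambda^{-1}$). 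Second, a point relevant only when $q=2$: the map $x\mapsto\beta^{1/q}\odot x$ is merely H\"older-$\tfrac1q$ in $\beta$ near $\beta_i=0$, so one must keep the $\beta$-dependence inside $Q_\beta$ (equivalently inside the $\ell_q$-kernel $k_\beta(x,x')=h(\langle\beta,|x-x'|^q\rangle)$, which is genuinely Lipschitz in $\beta$ with constant $|h'(0)|\sum_i|x_i-x_i'|^q$) and control $\partial_\beta Q_\beta$ uniformly as in Lemma~\ref{lemma:uniform-integrability-of-Q-beta}, rather than differentiating the change-of-variables map. For $q=1$, the case used throughout the rest of the paper, the warping is linear and this last complication disappears.
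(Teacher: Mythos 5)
Your decomposition of $\grad\obj(\beta)-\grad\obj(\beta')$ into three telescoped terms, and the bounds on the $h'$-difference term and the two residual-swap terms via Cauchy--Schwarz and the second-moment bound of Proposition~\ref{prop:second-moment-bound}, are the same as what the paper does; both of you reduce the proposition to the residual-stability estimate in Lemma~\ref{lemma:residual-Lipschitz}. The divergence is in how that stability estimate is established, and here your route has a gap.

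The paper never differentiates anything in $\beta$. It subtracts the two Euler--Lagrange equations $(\Sigma_\beta+\lambda I)f_\beta = h_\beta$ and $(\Sigma_{\beta'}+\lambda I)f_{\beta'} = h_{\beta'}$ written in integral form, pairs the resulting identity against $\Delta_{\beta,\beta'}(X')=f_{\beta'}(\beta'^{1/q}\odot X')-f_\beta(\beta^{1/q}\odot X')$, takes expectation over $X'$, and then exploits two facts: positive-definiteness of $k_{\beta'}$ makes the quadratic-form term nonnegative (so the left side is bounded below by $\lambda\normP{\Delta_{\beta,\beta'}}^2$), and $\beta\mapsto k_\beta(x,x')=h(\langle\beta,|x-x'|^q\rangle)$ is globally Lipschitz in $\beta$ with constant $|h'(0)|\,|x_l-x_l'|^q$ coordinatewise. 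The $\beta$-dependence is thus controlled exclusively through the kernel, which is smooth in $\beta$ even at the boundary $\beta_l=0$, and the argument is uniform on $\R_+^p$.

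Your proposed route---differentiating $\wtilde f_\beta$ via the fixed-point identity of Lemma~\ref{lemma:KKT-condition} and bounding $\normP{\partial_{\beta_l}\wtilde f_\beta}$ by ``a $\beta$-uniform estimate on $\partial_{\beta_l}Q_\beta/Q_\beta$''---runs into exactly the singularity you dismiss. For $q=1$ the bound you would invoke is $|\partial_{\beta_l}Q_\beta(\omega)|\le\frac1{\beta_l}|Q_\beta(\omega)|$, and Lemma~\ref{lemma:uniform-integrability-of-Q-beta} only controls $\grad_\beta Q_\beta$ on a compact box bounded away from $\beta=0$; there is no $\beta$-uniform bound on $\partial_{\beta_l}Q_\beta/Q_\beta$ as $\beta_l\to0^+$. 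The uniform bound $\normP{\partial_{\beta_l}\wtilde f_\beta}\lesssim M_Y/\lambda$ is \emph{true} (it is forced by Lemma~\ref{lemma:residual-Lipschitz}), but it rests on cancellation between the singularity of $\partial_{\beta_l}Q_\beta/Q_\beta$ and decay of $\F(\wtilde f_\beta)$ in the same region, which your sketch does not establish; as written the integral along the segment from $\beta'$ to $\beta$ is not justified when the segment touches the boundary. You also invoke $\sup_{s\ge0}|h''(s)|$ for the kernel-weight term, which gives a constant depending on $|h''(0)|$ rather than only $|h'(0)|$ (the paper's own intermediate bound has the same feature, so this is shared, but it is worth noting that neither derivation literally matches the constant description in the statement).

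You correctly diagnose that routing through $\opnorm{\Sigma_\beta-\Sigma_{\beta'}}$ via the Hilbert--Schmidt identity in Proposition~\ref{prop:continuity-f-beta} would only give H\"older-$\tfrac12$ dependence, and you correctly see that the change of variables built into $\wtilde{\energy}$ cleans up the warping. But the resolution is not to move the $\beta$-dependence onto the spectral weight $Q_\beta$ (which is singular at $\partial\R_+^p$); it is to stay with the kernel $k_\beta(x,x')$ and pair the subtracted first-order conditions against the solution difference. That duality pairing is what converts the quadratic estimate $\E[(\delta K)_{\beta,\beta'}^2]^{1/2}\le|h'(0)|(2M_X)^q\norm{\beta-\beta'}_1$ into the Lipschitz bound on $\normP{\Delta_{\beta,\beta'}}$ in one step, with no differentiation and no boundary issue.
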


\begin{proof}
The key is Lemma~\ref{lemma:residual-Lipschitz}, whose proof is 
deferred to Section~\ref{sec:proof-lemma-residual-Lipschitz}. 
\begin{lemma}
\label{lemma:residual-Lipschitz}
Assume $\E[Y^2] \le M_Y^2$ and $\max_l \E[X_l^4] \le M_X^4$. Then for all values of $\beta, \beta'$
\begin{equation}
	\normbig{r_\beta(\beta^{1/q}\odot X; Y) - r_\beta'(\beta'^{1/q}\odot X; Y)}_{\mathcal{L}_2(\P)} \le 
		\frac{1}{\lambda} \cdot |h^\prime(0)| \cdot (2M_X)^q \cdot M_Y \cdot \norm{\beta- \beta'}_1.
\end{equation}
\end{lemma}
\noindent\noindent
Back to the proof of Proposition~\ref{prop:grad-obj-lipschitz-beta}.
By Proposition~\ref{proposition:compute-grad-obj}, we have for any $l \in [p]$
\begin{equation*}
	(\grad \obj(\beta))_l = -\frac{1}{\lambda} \cdot \E\left[r_\beta(\beta^{1/q}\odot X; Y)r_\beta(\beta^{1/q}\odot X'; Y') h'(\normsmall{X-X'}_{q,\beta}^q)|X_l- X_l'|^q\right].
\end{equation*}
By triangle inequality, we obtain for any values of $\beta, \beta'$: 
\begin{equation}
\label{eqn:decomposition-into-three-errors}
	|(\grad \obj(\beta))_l  - (\grad \obj(\beta'))_l | \le \frac{1}{\lambda} \left(|\error_1| + |\error_2| + |\error_3|\right),
\end{equation}
where 
\begin{equation*}
\begin{split}
	\error_1 &= \E[\Delta_{\beta, \beta'}(X) r_\beta(\beta^{1/q}\odot X'; Y') h'(\normsmall{X-X'}_{q,\beta}^q)|X_l- X_l'|^q] \\
	\error_2 &= \E[\Delta_{\beta, \beta'}(X') r_{\beta'}(\beta'^{1/q}\odot X; Y) h'(\normsmall{X-X'}_{q,\beta}^q)|X_l- X_l'|^q] \\
	\error_3 &= \E[ r_{\beta}(\beta^{1/q}\odot X; Y) r_{\beta'}(\beta'^{1/q}\odot X'; Y')(\delta h')_{\beta, \beta'}(X, X')|X_l- X_l'|^q]
\end{split}
\end{equation*}
and the terms $\Delta_{\beta, \beta'}$ and $(\delta h')_{\beta, \beta'}$ are defined by 
\begin{equation*}
\begin{split}
	\Delta_{\beta, \beta'}(x) &= r_\beta(\beta^{1/q}\odot x; y) - r_{\beta'}(\beta'^{1/q}\odot x; y), \\
	(\delta h')_{\beta, \beta'}(x, x') &= h'(\normsmall{x-x'}_{q,\beta}^q) - h'(\normsmall{x-x'}_{q,\beta'}^q).
\end{split}
\end{equation*}
Below we estimate the three error terms. The following facts are useful towards this end. 
\begin{itemize}
\item As $h$ is completely monotone, $h^\prime$ is $|h^{\prime\prime}(0)|$ Lipschitz. Hence, 
we obtain that 
\begin{equation*}
		\normP{(\delta h')_{\beta, \beta'}(X, X')|X_l - X_l'|^q}
			\le |h^{\prime\prime}(0)| \cdot (2M_X)^{2q} \cdot \norm{\beta- \beta'}_1.
\end{equation*}
	Furthermore, $h^\prime(\norm{X-X'}_{q, \beta}^q) \le |h^\prime(0)|$ as $h$ is completely monotone. 
\item By Lemma~\ref{lemma:residual-Lipschitz}, we have $\beta \mapsto r_\beta(\beta^{1/q}\odot X; Y)$ is Lipschitz, and hence,
	\begin{equation*}
		\normP{\Delta_{\beta, \beta'}(X)} \le \frac{1}{\lambda} \cdot 
			|h'(0)| \cdot (2M_X)^{q} \cdot M_Y \cdot \norm{\beta-\beta'}_1.
	\end{equation*}
	Furthermore, we have the bound $\normP{r_\beta(X; Y)} \le M_Y$ by Proposition~\ref{prop:second-moment-bound}. 
\end{itemize}
Now is an opportune time to establish error bounds on $\error_1, \error_2, \error_3$. 
\begin{itemize}
\item For the first error term $\error_1$, we note that by Cauchy-Schwartz, 
\begin{equation*}
|\error_1| \le \normP{\Delta_{\beta, \beta'}(X) r_\beta(\beta^{1/q}\odot X'; Y')} \cdot \normbig{h'(\normsmall{X-X'}_{q,\beta}^q)|X_l- X_l'|^q}_{\mathcal{L}_2(\P)}.
\end{equation*}
The independence between $(X, Y)$ and $(X', Y')$ and the above facts imply that 
\begin{equation*}
	|\error_1| \le \frac{1}{\lambda} \cdot |h'(0)|^2 \cdot (2M_X)^{2q} \cdot M_Y^2 \cdot \norm{\beta-\beta'}_1.
\end{equation*}
\item For the second error term $\error_2$, an analysis parallel to that of the first error term $\error_1$ yields 
\begin{equation*}
	|\error_2| \le \frac{1}{\lambda} \cdot |h'(0)|^2 \cdot (2M_X)^{2q} \cdot M_Y^2 \cdot \norm{\beta-\beta'}_1.
\end{equation*}
\item For the last error term $\error_3$, we note by Cauchy Schwartz's inequality 
\begin{equation*}
|\error_3| \le \normP{r_{\beta}(\beta^{1/q}\odot X; Y) r_{\beta'}(\beta'^{1/q}\odot X'; Y')}\normP{(\delta h')_{\beta, \beta'}(X, X')|X_l- X_l'|^q}
\end{equation*}
The independence between $(X, Y)$ and $(X', Y')$ and the above facts gives 
\begin{equation*}
|\error_3| \le  |h''(0)| \cdot (2M_X)^{2q} \cdot M_Y^2 \cdot \norm{\beta-\beta'}_1.
\end{equation*}
\end{itemize}
Substitute the above error bounds into equation~\eqref{eqn:decomposition-into-three-errors}. We obtain 
\begin{equation}
\label{eqn:grad-obj-beta-infty-Lipschitz-beta}
	\norm{\grad \obj(\beta) - \grad \obj(\beta')}_\infty \le \frac{1}{\lambda^2}\cdot (\lambda |h^{\prime\prime}(0)|+|h^\prime(0)|^2) 
		\cdot (2M_X)^{2q} \cdot M_Y^2 \cdot\norm{\beta - \beta'}_1
\end{equation}
Proposition~\ref{prop:grad-obj-lipschitz-beta} follows by applying H\"{o}lder's inequality
 to equation~\eqref{eqn:grad-obj-beta-infty-Lipschitz-beta}.

\end{proof}



\subsubsection{Proof of Lemma~\ref{lemma:residual-Lipschitz}.}
\label{sec:proof-lemma-residual-Lipschitz}
%
%
%
By Proposition~\ref{prop:kernel-beta-fix},
$(\Sigma_{\beta} + \lambda I) f_\beta = h_\beta$,
$(\Sigma_{\beta'} + \lambda I) f_{\beta'} = h_{\beta'}$. 
Let $X' \sim \P$ be an independent copy of $X$. We can rewrite them into the identities: 
\begin{equation*}
\begin{split}
	f_\beta (\beta^{1/q} \odot X') + \E[f_\beta(\beta^{1/q} \odot X)k(\beta^{1/q} \odot X, \beta \odot X')\mid X'] 
		&= \E[Y k(\beta^{1/q} \odot X, \beta^{1/q} \odot X')\mid X'] \\
	f_{\beta'} (\beta'^{1/q} \odot X') + \E[f_{\beta'}(\beta^{1/q} \odot X)k(\beta'^{1/q} \odot X, \beta' \odot X') \mid X'] &= \E[Y k(\beta'^{1/q} \odot X, \beta'^{1/q} \odot X')\mid X' ]
\end{split}
\end{equation*}
Subtract the first from the second of the equations. Recall $r_\beta(x, y) = y - f_\beta(x)$. We obtain
\begin{equation}
\label{eqn:key-grad-beta-Lip-sim}
\lambda \Delta_{\beta, \beta'} (X') + \E[k(\beta'^{1/q} \odot X, \beta'^{1/q} \odot X')\Delta_{\beta, \beta'} (X) \mid X'] = 
	\E[(\delta K)_{\beta, \beta'}(X, X') r_\beta(\beta^{1/q}\odot X; Y)\mid X']
\end{equation}
where $\Delta_{\beta, \beta'}$, $(\delta K)_{\beta, \beta'}$ are defined by
\begin{equation*}
\begin{split}
\Delta_{\beta, \beta'} (x) &\defeq 
	f_{\beta'}(\beta'^{1/q} \odot x) - f_{\beta}(\beta^{1/q} \odot x) = r_{\beta}(\beta^{1/q} \odot x; y) - r_{\beta'}(\beta'^{1/q} \odot x; y). \\
(\delta K)_{\beta, \beta'}(x, x') &\defeq 
	k(\beta^{1/q} \odot x, \beta^{1/q} \odot x') - k(\beta'^{1/q} \odot x, \beta'^{1/q} \odot x').
\end{split}
\end{equation*}
Multiply both sides of~\eqref{eqn:key-grad-beta-Lip-sim} by $\Delta_{\beta, \beta'} (X')$, and take 
expectation over $X' \sim \P$. This gives
\begin{equation}
\label{eqn:key-grad-multiple-sim}
\begin{split}
	&\lambda \E[\Delta_{\beta, \beta'}^2(X)] + \E[k(\beta'^{1/q} \odot X, \beta'^{1/q} \odot X')\Delta_{\beta, \beta'} (X)\Delta_{\beta, \beta'} (X')]  \\
	&~~~= \E[(\delta K)_{\beta, \beta'}(X, X') r_\beta(\beta^{1/q}\odot X)\Delta_{\beta, \beta'}(X')] 
\end{split}
\end{equation}
We analyze both the LHS and the RHS of equation~\eqref{eqn:key-grad-multiple-sim}. 
\begin{itemize}
\item First, the LHS of equation~\eqref{eqn:key-grad-multiple-sim} is lower bounded by $\lambda \E[\Delta_{\beta, \beta'}^2(X)]$. 
	The reason is that the second term on the LHS is non-negative since 
	$(x, x') \mapsto k(x, x')$ is positive definite. 
\item Second, the RHS of equation~\eqref{eqn:key-grad-multiple-sim} has the upper bound 
	\begin{equation*}
	\begin{split}
		&\E[(\delta K)_{\beta, \beta'}(X, X') r_\beta(\beta^{1/q}\odot X)\Delta_{\beta, \beta'}(X')] \\
			&\le \E[(\delta K)^2_{\beta, \beta'}(X, X')]^{1/2} \cdot \E[r_\beta^2(\beta^{1/q}\odot X; Y)\Delta^2_{\beta, \beta'}(X')]]^{1/2} \\
			&= \E[(\delta K)^2_{\beta, \beta'}(X, X')]^{1/2} \cdot \E[r_\beta^2(\beta^{1/q}\odot X; Y)]^{1/2} \cdot \E[\Delta^2_{\beta, \beta'}(X')]^{1/2}
	\end{split}
	\end{equation*}
	where the inequality is due to Cauchy-Schwartz and the equality is due to the independence between $X, X'$. 
	Note that $\E[r_\beta^2(\beta^{1/q}\odot X; Y)] \le \E[Y^2]$ by Proposition~\ref{prop:second-moment-bound}.
\end{itemize}
Plugging these lower and upper bounds into equation~\eqref{eqn:key-grad-multiple-sim}, we obtain the inequality 
\begin{equation*}
	\lambda \E[\Delta_{\beta, \beta'}^2(X)] \le \E[Y^2]^{1/2}\cdot \E[(\delta K)^2_{\beta, \beta'}(X, X')]^{1/2} \E[\Delta_{\beta, \beta'}^2(X)]^{1/2}.
\end{equation*}
Cancelling $\E[\Delta_{\beta, \beta'}^2(X)]^{1/2}$ once on both sides. Recall the definition of $\Delta_{\beta, \beta'}$. We obtain
\begin{equation}
\label{eqn:bound-on-diff-beta-r-beta}
	\normbigP{r_{\beta}(\beta^{1/q} \odot X; Y) - r_{\beta'}(\beta'^{1/q} \odot X; Y)}
		\le \frac{1}{\lambda} \normP{Y} \cdot \E[(\delta K)^2_{\beta, \beta'}(X, X')]^{1/2}.
\end{equation} 
Since $h$ is $|h'(0)|$ Lipschitz as $h$ is completely monotone, we have the estimate 
\begin{equation*}
	\E[(\delta K)_{\beta, \beta'}^2(X, X')] \le |h'(0)| \cdot \norm{\beta-\beta'}_1 \cdot (2M_X)^{2q}.
\end{equation*}
Plugging the estimate into equation~\eqref{eqn:bound-on-diff-beta-r-beta} completes the proof.

\subsection{Uniform Boundedness.} 
Proposition~\ref{prop:grad-obj-bound-beta} says that $\beta\mapsto \grad \obj(\beta)$ is uniformly bounded.
\begin{proposition}
\label{prop:grad-obj-bound-beta}
Assume Assumption~\ref{assumption:mu-compact}--\ref{assumption:X-Y-bound}. For 
$C = |h^\prime(0)| \cdot (2M_X)^q$, we have for $\beta \ge 0$
\begin{equation}
	\norm{\grad \obj(\beta)}_\infty \le \frac{C}{\lambda}\cdot M_Y^2.
\end{equation}
\end{proposition}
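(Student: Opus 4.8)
The plan is to read off the explicit gradient formula supplied by Proposition~\ref{proposition:compute-grad-obj}, namely
\[
(\grad \obj(\beta))_l = -\frac{1}{\lambda}\, \E\!\left[r_\beta(\beta^{1/q}\odot X; Y)\,r_\beta(\beta^{1/q}\odot X'; Y')\, h'(\norm{X-X'}_{q,\beta}^q)\,|X_l- X_l'|^q\right],
\]
valid for every $\beta \ge 0$ and $l \in [p]$, and to bound the three factors inside the expectation one at a time. For the kernel-derivative factor I would invoke complete monotonicity of $h$: from the representation $h(x) = \int_0^\infty e^{-tx}\mu(dt)$ in Proposition~\ref{proposition:k-pos-definite-representation} we get $h'(x) = -\int_0^\infty t e^{-tx}\mu(dt)$, whence $|h'(\norm{X-X'}_{q,\beta}^q)| \le \int_0^\infty t\,\mu(dt) = |h'(0)|$, a finite constant by Assumption~\ref{assumption:mu-compact} and independent of $\beta$.

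Next I would treat the factor $|X_l - X_l'|^q$. Since Assumption~\ref{assumption:X-Y-bound} only provides a fourth-moment control on $X_l$, and not almost-sure boundedness, a crude pointwise estimate is not available; instead I would use the elementary inequality $|X_l - X_l'|^q \le 2^{q-1}\bigl(|X_l|^q + |X_l'|^q\bigr)$, valid for $q \in \{1,2\}$. Combining this with the independence of $(X,Y)$ and $(X',Y')$ and the fact that $(X',Y')$ is an identical copy, the expectation above is at most $2^{q}\,\E\bigl[\,|r_\beta(\beta^{1/q}\odot X; Y)|\,|X_l|^q\,\bigr]\cdot \E\bigl[\,|r_\beta(\beta^{1/q}\odot X'; Y')|\,\bigr]$. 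I would then apply Cauchy--Schwarz twice: $\E[\,|r_\beta|\,|X_l|^q\,] \le \normP{r_\beta}\,(\E[X_l^{2q}])^{1/2}$ and $\E[\,|r_\beta|\,] \le \normP{r_\beta}$. The residual moment is bounded by Proposition~\ref{prop:second-moment-bound}, $\normP{r_\beta(\beta^{1/q}\odot X;Y)} \le \normP{Y} \le M_Y$, while $(\E[X_l^{2q}])^{1/2} \le M_X^q$ for $q = 1, 2$ (for $q=1$ one uses $\E[X_l^2] \le (\E[X_l^4])^{1/2} \le M_X^2$). Assembling the three bounds gives $|(\grad\obj(\beta))_l| \le \tfrac{1}{\lambda}\,|h'(0)|\,(2M_X)^q\,M_Y^2$ uniformly in $l$ and in $\beta \ge 0$, and taking the supremum over $l$ yields the claimed $\ell_\infty$ bound with $C = |h'(0)|(2M_X)^q$.

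I do not expect a genuine obstacle: the argument is a direct application of Cauchy--Schwarz together with the already-established residual bound $\normP{r_\beta}\le M_Y$ and the complete monotonicity of $h$. The only point requiring slight care is handling the $|X_l - X_l'|^q$ term under the weak moment hypothesis on $X$, where the naive bound $|X_l-X_l'|^q \le (2M_X)^q$ must be replaced by the split-then-Cauchy--Schwarz estimate described above; everything else is routine.
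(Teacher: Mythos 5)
Your proof is correct and lands at exactly the intended constant $C = |h'(0)|(2M_X)^q$. The paper's own proof uses essentially the same ingredients (the gradient formula from Proposition~\ref{proposition:compute-grad-obj}, the residual bound $\normP{r_\beta(\beta^{1/q}\odot X;Y)}\le M_Y$ from Proposition~\ref{prop:second-moment-bound}, and $\|h'\|_\infty = |h'(0)|$ from complete monotonicity) but organizes the Cauchy--Schwarz step differently: it groups the two residuals together as $r_\beta(\beta^{1/q}\odot X;Y)\,r_\beta(\beta^{1/q}\odot X';Y')$ and the remaining kernel factor $h'(\cdot)\,|X_l-X_l'|^q$ together, applies a single Cauchy--Schwarz, and then factorizes $\normP{r_\beta r_\beta'} = \normP{r_\beta}^2$ by independence and bounds $\normP{|X_l-X_l'|^q}\le(2M_X)^q$ directly using the fourth-moment hypothesis. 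Your route instead splits $|X_l-X_l'|^q \le 2^{q-1}(|X_l|^q+|X_l'|^q)$ first and then applies Cauchy--Schwarz twice; this is equally valid and arrives at the same constant.

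One small correction to your framing: you describe the estimate $\normP{|X_l-X_l'|^q}\le(2M_X)^q$ as unavailable under the moment hypothesis and present your split-then-Cauchy--Schwarz as a necessary workaround. In fact that bound is an $L^2(\P)$-norm bound, not an almost-sure one, and holds directly: for $q=2$, the pointwise inequality $|X_l-X_l'|^4 \le 8(X_l^4+X_l'^4)$ gives $\E[|X_l-X_l'|^4]\le 16M_X^4$, hence $\normP{|X_l-X_l'|^2}\le(2M_X)^2$, and for $q=1$ it is even easier. So the paper's single Cauchy--Schwarz route works under the same Assumption~\ref{assumption:X-Y-bound}; your version simply trades one Cauchy--Schwarz for a convexity split followed by two. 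Both proofs are equally elementary and neither is more general than the other.
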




\begin{proof}
By Proposition~\ref{proposition:compute-grad-obj}, we have for any $l \in [p]$, 
\begin{equation*}
	(\grad \obj(\beta))_l = -\frac{1}{\lambda} \cdot \E\left[r_\beta(\beta^{1/q}\odot X; Y)r_\beta(\beta^{1/q}\odot X'; Y') h'(\norm{X-X'}_{q, \beta}^q)|X_l- X_l'|^q\right].
\end{equation*}
Cauchy-Schwartz inequality and independence between $(X, Y)$ and $(X', Y')$ yield the bound 
\begin{equation*}
\begin{split}
	|(\grad \obj(\beta))_l| &\le \frac{1}{\lambda} \cdot \normbigP{r_\beta(\beta^{1/q}\odot X; Y)r_\beta(\beta^{1/q}\odot X'; Y')} 
		\cdot \normbig{h'(\norm{X-X'}_{q, \beta}^q)|X_l- X_l'|^q}_{\mathcal{L}_2(\P)} \\
	&=  \frac{1}{\lambda} \cdot \normbigP{r_\beta(\beta^{1/q}\odot X; Y)}^2 \cdot 
		\normbig{h'(\norm{X-X'}_{q, \beta}^q)|X_l- X_l'|^q}_{\mathcal{L}_2(\P)}
\end{split}
\end{equation*}
Now that $\normP{r_{\beta}(\beta \odot X; Y)} \le \normP{Y}$
by Proposition~\ref{prop:second-moment-bound}. Additionally, 
$\norm{h^\prime}_\infty = |h^\prime(0)|$, and $\normbigP{|X_l- X_l'|^q} \le (2M_X)^q$.
This proves Proposition~\ref{prop:grad-obj-bound-beta}.
\end{proof}

\section{Proof of Deferred Lemma in Section~\ref{sec:distinction-l-1-l-2}}
\label{sec:landscape-result}
\subsection{Proof of Lemma~\ref{lemma:new-representation-of-grad-obj}.}
\label{sec:proof-lemma-new-representation-of-grad-obj}
By Proposition~\ref{proposition:compute-grad-obj}, we have  
\begin{equation}
\label{eqn:start-grad-representation}
	\partial_{\beta_l} \obj(\beta) = -\frac{1}{\lambda} \cdot \E[r_\beta(\beta^{1/q} \odot X; Y) r_\beta(\beta^{1/q} \odot X'; Y') h'(\norm{X-X'}_{q, \beta}^q) |X_l - X_l'|^q].
\end{equation}
Note that $(x, x') \mapsto h'(\norm{x-x'}_{q, \beta}^q)$ is a negative kernel with the Bochner representation: 
\begin{equation*}
	h'(\norm{x-x'}_{q, \beta}^q) = - \int e^{i \langle \omega, \beta^{1/q} \odot (x-x') \rangle} \wtilde{Q}(\omega) d\omega~~\text{where}~~
		\wtilde{Q}(\omega) = \int_0^\infty tq_t(\omega)\mu(dt)
\end{equation*}
Substitute this representation into equation~\eqref{eqn:start-grad-representation}, we obtain that 
\begin{equation*}
\begin{split}
	\partial_{\beta_l} \obj(\beta) &= \frac{1}{\lambda} \cdot  \int
		\E[ e^{i \langle \omega, \beta^{1/q} \odot X\rangle}  r_\beta(\beta^{1/q} \odot X; Y) 
			e^{-i \langle \omega, \beta^{1/q} \odot X'\rangle}  r_\beta(\beta^{1/q} \odot X'; Y') |X_l - X_l'|^q] \wtilde{Q}(\omega) d\omega \\
		&= \frac{1}{\lambda} \cdot  \int
		\E[ R_{\beta, \omega}(\beta^{1/q} \odot X; Y) 
			\wbar{R_{\beta, \omega}(\beta^{1/q} \odot X'; Y')} |X_l - X_l'|^q] \cdot \wtilde{Q}(\omega) d\omega 
\end{split}
\end{equation*}
This completes the proof of Lemma~\ref{lemma:new-representation-of-grad-obj}.

\subsection{Proof of Lemma~\ref{lemma:approximate-gradient-bound}.}
Fix $\beta \ge 0$ and $l \in [p]$. 
Write $\Delta_{\beta, \omega} = \E[R_{\beta, \omega}(\beta^{1/q} \odot X; Y)]$
so that 
$R_{\beta, \omega}(\beta^{1/q} \odot X; Y) = \wbar{R}_{\beta, \omega}(\beta^{1/q} \odot X; Y) + \Delta_{\beta, \omega}$.
Algebraic manipulation yields
$
\partial_{\beta_l} \obj(\beta) = \wtilde{\partial_{\beta_l} \obj}(\beta) + \error_{1, l}(\beta) + \error_{2, l}(\beta)$
where the error terms are defined by 
\begin{equation*}
\begin{split}
\error_{1, l}(\beta) &=  \frac{2}{\lambda} \cdot 
	\int  \Re \left\{\wbar{\Delta_{\beta, \omega}} \cdot \E\left[\wbar{R}_{\beta, \omega}(\beta^{1/q} \odot X'; Y') |X_l - X_l'|^q\right] \right\} 
		\cdot \wtilde{Q}(\omega) d\omega, \\
\error_{2, l}(\beta) &= \frac{1}{\lambda} \cdot  \int \left|\Delta_{\beta, \omega}\right|^2 \E[|X_l - X_l'|^q] \cdot \wtilde{Q}(\omega) d\omega.
\end{split}
\end{equation*}
By the triangle inequality, we immediately arrive at the bound
\begin{equation}
\label{eqn:deviance-approx-obj-partial-obj}
	|\partial_{\beta_l} \obj(\beta) -  \wtilde{\partial_{\beta_l} \obj}(\beta)| \le  \left|\error_{1, l}(\beta)\right| + \left|\error_{2, l}(\beta)\right|.
\end{equation}
It remains to bound $\left|\error_{1, l}(\beta)\right|$ and $ \left|\error_{2, l}(\beta)\right|$. 

Assume for now $\beta > 0$. By Lemma~\ref{lemma:KKT-condition}, we have for almost all $\omega$ (w.r.t Lebesgue measure)
\begin{equation*}
	\Delta_{\beta, \omega} = \E\left[r_{\beta}(\beta^{1/q} \odot X; Y)e^{i \langle \omega, \beta^{1/q} \odot X \rangle}\right] 
		=  \frac{\lambda}{(2\pi)^{p/2}} \cdot \frac{\F(f_\beta)(\omega)}{Q(\omega)}.
\end{equation*}
Using Proposition~\ref{prop:second-moment-bound}, $\E[|\wbar{R}_{\beta, \omega}(\beta^{1/q} \odot X'; Y')|^2] 
\le \E[Y^2]$. With Cauchy-Schwartz, we obtain 
\begin{equation}
\label{eqn:error-bounds}
\begin{split}
	|\error_{1, l}(\beta)| &\le \frac{1}{(2\pi)^{p/2}} \cdot 8
		 \cdot\E[X^{2q}]^{1/2} \cdot \E[Y^2]^{1/2} \cdot \int  |\F(f_\beta)(\omega)| \cdot \frac{\wtilde{Q}(\omega)}{Q(\omega)} d\omega. \\
	|\error_{2, l}(\beta)| &\le \frac{1}{(2\pi)^{p}} 
		\cdot 4\lambda \cdot\E[X^{2q}]^{1/2} \cdot \int  \frac{|\F(f_\beta)(\omega)|^2}{Q(\omega)} \cdot \frac{\wtilde{Q}(\omega)}{Q(\omega)} d\omega.
\end{split}
\end{equation}
To further bound the RHS of equation~\eqref{eqn:error-bounds}, we notice the following facts. 
\begin{itemize}
\item By assumption, $\supp(\mu) \subseteq [0, M_\mu]$. As a result, we have the following bound 
	\begin{equation*}
		\sup_{\omega}\left|\frac{\wtilde{Q}(\omega)}{Q(\omega)}\right| = \sup_{\omega}\left|\frac{\int tq_t(\omega) \mu(dt)}{\int q_t(\omega)\mu(dt)}\right| \le 
			M_\mu.
	\end{equation*}
\item By Cauchy Schwartz inequality, we have 
	\begin{equation*}
		\left(\int |\F(f_\beta)(\omega)| d\omega\right)^2 \le \int \frac{|\F(f_\beta)(\omega)|^2}{Q(\omega)} d\omega \cdot \int Q(\omega) d\omega
	\end{equation*}
Note then 
	\begin{equation*}
		\int \frac{|\F(f_\beta)(\omega)|^2}{Q(\omega)} d\omega = (2\pi)^p \norm{f_\beta}_{\H}^2~~\text{and}~~\int Q(\omega) d\omega = |h(0)|.
	\end{equation*}
\item The bound $\norm{f_\beta}_{\H}^2 \le \frac{1}{\lambda}\E[Y^2]$ holds. Indeed, 
	$\lambda \norm{f_\beta}_{\H}^2\le \energy(\beta, f_\beta) \le \energy(\beta, 0) = \E[Y^2]$.
\end{itemize}
Substitute the above bound into equation~\eqref{eqn:error-bounds}. This shows for $C = 4 M_\mu \cdot \E[X^{2q}]^{1/2} \cdot \E[Y^2]$
\begin{equation}
\label{eqn:error-bounds-two}
	|\error_{1, l}(\beta)| \le |h(0)|^{1/2} \cdot \frac{2C}{\sqrt{\lambda}}, ~~\text{and}~~
	|\error_{2, l}(\beta)| \le  C, 
\end{equation}
We have shown this bound holds for all $\beta > 0$. Note that the same bound holds on 
$\beta \ge 0$ since both 
$\beta \mapsto \error_{1, l}(\beta)$ and $\beta \mapsto \error_{2, l}(\beta)$ are continuous on $\beta \ge 0$
by Proposition~\ref{prop:continuity-f-beta-beta-X}.

Finally, plugging the error bounds~\eqref{eqn:error-bounds-two} into equation~\eqref{eqn:deviance-approx-obj-partial-obj}
yields Lemma~\ref{lemma:approximate-gradient-bound} as desired.

\subsection{Proof of Landscape Result---Proposition~\ref{sec:proof-landscape-toy}.}
\label{sec:proof-proposition-landscape-analysis-main-effect}

This section gives a complete proof of Proposition~\ref{sec:proof-landscape-toy}, which is divided into three parts. 
Throughout the proof, we denote $M_X^4 = \max_{l \in [p]}  \E[X_l^4]$ and $M_Y^2 = \E[Y^2]$. 
Let $M_\mu$ be such that $\supp(\mu) \subseteq [0, M_\mu]$.

\subsubsection{Proof of Part $(i)$ of Proposition~\ref{sec:proof-landscape-toy} (Global minimum).}
We prove that the global minimum satisfies $\supp(\beta) = [p]$ for both $q = 1$ and $q = 2$. 
The proof contains two steps. 

\begin{itemize}
\item In the first step, we prove that for any $\beta$ which does not have full support, i.e., 
$\supp(\beta) \neq [p]$, the objective value $\obj(\beta)$ at $\beta$ satisfies the following lower bound: 
\begin{equation}
\label{eqn:step-one-obj-beta}
	\obj(\beta) \ge \half \cdot \min_{j\in [p]} \E[f_j^*(X_j)^2] > 0.
\end{equation}
To see this, pick any $j \not\in \supp(\beta)$. The key point is that 
$f(\beta^{1/q} \odot X)$ does not depend on $X_j$ and thus has no power on explaining the main effect $f_j^*(X_j)$. 
Formally, using the mutual independence $X_1 \perp X_2 \perp \ldots \perp X_p$, we obtain that for all function $f$,
\begin{equation*}
\begin{split}
	\E[(Y - f(\beta^{1/q}\odot X))]^2
		\ge \E[f_j^*(X_j)^2].
\end{split}
\end{equation*}
Recall $J(\beta) = \min_f \energy(\beta, f)$ where 
$\energy(\beta, f) = \half \E[(Y - f(\beta^{1/q}\odot X))]^2 + \frac{\lambda}{2} \norm{f}_{\H}^2$. 
This proves that the desired bound~\eqref{eqn:step-one-obj-beta} holds for all $\beta$ that 
does not have full support. 
\item 
In the second step, we fix a feasible $\beta^0$ that has full support. 
We prove that
\begin{equation}
\label{eqn:limit-kernel-ridge-regression}
	\lim_{\lambda \to 0^+} \mathcal{J}(\beta^0) = 0. 
\end{equation}
To prove equation~\eqref{eqn:limit-kernel-ridge-regression}, the key observation is to notice that 
the kernel $(x, x') \mapsto k(x, x')$ is universal~\cite{MicchelliYuZh06}. To see this, if we express the 
translation invariant kernel $k(x,x')$ as $k(x, x') = g(x-x')$ where $g(z) = h(\norm{z}_q^q)$, then 
$g$ has the property that its Fourier transform has full support on the entire space $\R^p$, which implies 
that the kernel is universal~\cite[Proposition 15]{MicchelliYuZh06}. To see the property, note that 
$\F(g) (\omega) = \int_0^\infty q_t(\omega) \mu(dt)$ and therefore $\F(g) (\omega)$ is of full support 
since $\omega \mapsto q_t(\omega)$ is of full support for all $t > 0$.
As a consequence of the fact that $k(x, x')$ is universal, using the assumption that 
$\supp(\P_X)$ is compact, and the fact $\beta^0$ is of full support, it implies 
that~\cite{MicchelliYuZh06}
\begin{equation}
	\inf_{f\in \H} \E[(f^*(X) - f((\beta^0)^{1/q} \odot X))^2] = \inf_{f \in \H} \sup_{x \in \supp(P_X)} 
		|f^*(x) -f((\beta^0)^{1/q} \odot x) |= 0. 
\end{equation}
Hence, $\lim_{\lambda \to 0^+} \mathcal{J}(\beta^0) 
	\le \inf_{f\in \H} ~\half \E[(f^*(X) - f((\beta^0)^{1/q} \odot X))^2] = 0$.
As a consequence, 
this would imply for some $\lambda^* > 0$, we have for all $\lambda\le \lambda^*$, the objective at $\beta^0$ satisfies
\begin{equation}
\label{eqn:step-two-obj-beta}
	\obj(\beta^0) < \half \cdot \min_{j\in [p]} \E[f_j^*(X_j)^2].
\end{equation}
\end{itemize}
To summarize, we can combine the claims at equation~\eqref{eqn:step-one-obj-beta} and 
equation~\eqref{eqn:step-two-obj-beta} to conclude that the global minimum must be of full 
support whenever $\lambda \le \lambda^*$ for some $\lambda^* < \infty$.


\subsubsection{Proof of Part $(ii)$ of Proposition~\ref{sec:proof-landscape-toy} (Stationary points for $q=1$).}
Let $q = 1$. The key to the proof is to show that for all variables $l \in [p]$ and all $\beta$ such that $\beta_l = 0$, 
\begin{equation}
\label{eqn:landscape-result}
\begin{split}
	&\partial_{\beta_l} \obj(\beta) \le - \frac{1}{\lambda} \left( |h^\prime(2MM_X)| \cdot S_l + O(\sqrt{\lambda})\right) 
	~~\text{where}~~S_l = \int \left|\E[f_l^*(X_l) e^{i\omega X_l}]\right|^2 \frac{d\omega}{\pi\omega^2} . 
\end{split}
\end{equation}
where $O(\sqrt{\lambda})$ denotes a remainder term whose absolute value is upper bounded by 
$C\sqrt{\lambda}$ where $C > 0$ depends only on $M_X, M_Y$ and $M_\mu$. Given
equation~\eqref{eqn:landscape-result}, we show that Proposition~\ref{sec:proof-landscape-toy}
holds. Indeed, $S_l > 0$ since $f_l^*(X_l) \neq 0$  by assumption, and $|h^\prime(2MM_X)| > 0$
since $h$ is strictly completely monotone. Hence, there exists some $\lambda^* > 0$ such that for all $\lambda \le \lambda^*$, 
we have $\partial_{\beta_l} \obj(\beta) < 0$ for all $\beta$ such that $\beta_l = 0$. This means 
that any $\beta$ such that 
$\beta_l = 0$ can't be a stationary point of $\obj(\beta)$. 

It remains to prove inequality~\eqref{eqn:landscape-result}. By Proposition~\ref{prop:statistical-insights}, we have the identity
\begin{equation*}
	\partial_{\beta_l} \obj(\beta) = -\frac{1}{\lambda} 
		\left(\iint \Cov^2 \left({R}_{\beta, \omega}(\beta \odot X; Y), e^{i \zeta_l X_l }\right)
			\cdot \frac{d\zeta_l}{\pi\zeta_l^2} \cdot \wtilde{Q}(\omega) d\omega. + O(\sqrt{\lambda})\right).
\end{equation*}
Hence, it suffices to show the following lower bound 
\begin{equation}
\label{eqn:landscape-result-two}
	\iint \Cov^2 \left({R}_{\beta, \omega}(\beta \odot X; Y), e^{i \zeta_l X_l }\right)
			\cdot \frac{d\zeta_l}{\pi\zeta_l^2} \cdot \wtilde{Q}(\omega) d\omega \ge |h^\prime(2MM_X)| \cdot S_l.
\end{equation}
To prove this, we first evaluate the covariance inside the integral. 
By definition, we have 
\begin{equation*}
\begin{split}
	R_{\beta, \omega}(\beta \odot X; Y)
		&= e^{i \langle \omega, \beta \odot X\rangle} \Big(f_l^*(X_l) + \sum_{j \in S\backslash l} f_j^*(X_j)  - f_\beta(\beta\odot X)\Big).
\end{split}
\end{equation*}
At $\beta$ where $\beta_l = 0$, the random variables $e^{i \langle \omega, \beta\odot X\rangle}$ and 
$f_\beta(\beta\odot X)$ depend only on $X_{[p]\backslash \{l\}}$, and are thus independent of $X_l$
by assumption. As a result, we obtain 
\begin{equation*}
\begin{split}
\E[\wbar{R}_{\beta, \omega}(\beta \odot X; Y) \mid X_l]
		= 
			\E[R_{\beta, \omega}(\beta \odot X; Y) \mid X_l]  - \E[R_{\beta, \omega}(\beta \odot X; Y)] 
		= \E[e^{i \langle \omega, \beta \odot X\rangle}] \cdot f_l^*(X_l).
\end{split}
\end{equation*}
Consequently, we can obtain the following identity on the covariance
\begin{equation*}
	\Cov \left({R}_{\beta, \omega}(\beta \odot X; Y), e^{i \zeta_l X_l }\right) = \E[e^{i \langle \omega, \beta \odot X\rangle}] \cdot 
		\E\left[f_l^*(X_l) e^{i \zeta_l X_l}\right].
\end{equation*}
Substitute this back into the integral on the LHS of equation~\eqref{eqn:landscape-result-two}. We obtain the identity 
\begin{equation}
\label{eqn:landscape-result-three}
\begin{split}
&\iint \Cov^2 \left({R}_{\beta, \omega}(\beta \odot X; Y), e^{i \zeta_l X_l }\right)
	\cdot \frac{d\zeta_l}{\pi\zeta_l^2} \cdot \wtilde{Q}(\omega) d\omega \\
		= & \int \left|\E[f_l^*(X_l) e^{i\omega X_l}]\right|^2 \frac{d\omega}{\pi\omega^2} \cdot 
			\int \left| \E[e^{i \langle \omega, \beta \odot X\rangle}]\right|^2 \wtilde{Q}(\omega)d\omega 
		= S_l \cdot \E[h'(\norm{X-X'}_{1, \beta})],
\end{split}
\end{equation}
where we use the integral formula in equation~\eqref{eqn:h-prime-integral-formula} to derive the last identity.

Note that $\norm{\beta}_1 \le M$. Hence, $\E[\norm{X-X'}_{1, \beta}] \le 2M M_X$. Consequently, Jenson's inequality implies
that $\E[h'(\norm{X-X'}_{1, \beta})] \le h^\prime(\E[\norm{X-X'}_{1, \beta}]) \le h^\prime(2MM_X) \le 0$ since $h$ is completely monotone
($h^\prime \le 0$ and $h^{\prime}$ is concave). This proves equation~\eqref{eqn:landscape-result-two} as desired. 

\subsubsection{Proof of Part $(iii)$ of Proposition~\ref{sec:proof-landscape-toy} (Stationary points for $q=2$).}
Let $q = 2$. Note then $\Cov(Y, X_l) = 0$ for all $l$ since $\Cov(f_l^*(X_l), X_l) = 0$ by assumption. 
Using the representation of  
$\grad \obj(\beta)$ in Proposition~\ref{proposition:compute-grad-obj}, we obtain that at $\beta = 0$,
\begin{equation*}
\partial_{\beta_l} \obj(\beta)\mid_{\beta = 0} = 
	-\frac{1}{\lambda} \cdot h'(0) \cdot \E\left[YY' |X_l- X_l'|^2\right] = 
		 -\frac{1}{\lambda} \cdot h'(0) \cdot \Cov(Y, X_l)^2 = 0.
\end{equation*}
Accordingly, $0$ is a stationary point of $\obj(\beta)$ under the assumption. 

\subsubsection{Proof of Part $(iiv)$ of Proposition~\ref{sec:proof-landscape-toy} (Stationary points for $q=2$).}
Let $q = 2$. We prove the following key result on the gradient that holds for all $\beta$ with $\beta_l = 0$: 
	\begin{equation}
	\label{eqn:gradient-nonnegative-at-l-0}
		\partial_{\beta_l} \obj(\beta) \ge 0.
	\end{equation}
Given this result, the (restricted) minimum $\beta^{l,*}$ of $\obj(\beta)$ over 
	$\mathcal{B}_M^l = \mathcal{B}_M \cap \{\beta: \beta_l = 0\}$ is a stationary point of $\obj(\beta)$ w.r.t the original 
	feasible set $\mathcal{B}_M$. To see this, we only need to show that $\langle \grad \obj(\beta^{l, *}), \beta' - \beta^{l,*} \rangle \ge 0$
	holds for any $\beta' \in \mathcal{B}_M$. This is true because (i) $\partial_{\beta_l} \obj (\beta^{l, *}) \ge 0$ by 
	equation~\eqref{eqn:gradient-nonnegative-at-l-0} and (ii) $\langle\partial_{\beta_{[p] \backslash l}} \obj(\beta^{l, *}), (\beta' -  \beta^{l, *})_{[p] \backslash l}\rangle \ge 0$ for all $\beta' \in \mathcal{B}_M$.
		
	Now we prove the deferred equation~\eqref{eqn:gradient-nonnegative-at-l-0} that holds for all $\beta$ with $\beta_l = 0$.
	Fix a $\beta$ such that $\beta_l = 0$. By Proposition~\ref{proposition:compute-grad-obj}, the gradient admits the 
	representation
	\begin{equation}
	\label{eqn:stationary-point-one-q-2}
	\begin{split}
		\partial_{\beta_l} \obj(\beta) &= -\frac{1}{\lambda} \cdot   
			\E\left[r_\beta(\beta^{1/2}\odot X; Y)r_\beta(\beta^{1/2}\odot X'; Y') h'(\normsmall{X-X'}_{2, \beta}^2)|X_l- X_l'|^2\right].	
	\end{split}	
	\end{equation}
	Since $\beta_l = 0$, we can decompose
		$r_\beta(\beta^{1/2}\odot X; Y) = f_l^*(X_l) + \error_l(X_{[p]\backslash l})$
	where $\error_l(X_{[p]\backslash l}) = \sum_{j \neq l} f_j^*(X_j) - f_\beta(\beta^{1/2} \odot X)$ depends only on 
	$X_{[p] \backslash l}$. Hence, we obtain 
	\begin{equation*}
		\partial_{\beta_l} \obj(\beta) = \frac{1}{\lambda} \cdot \left(\wtilde{\mathcal{E}}_1 + 2\wtilde{\mathcal{E}}_2 + \wtilde{\mathcal{E}}_3 \right)
	\end{equation*}
	where the error terms are defined by 
	\begin{equation*}
	\begin{split}
		\wtilde{\mathcal{E}}_1 &= -\E[f_l^*(X_l) f_l^*(X_l') h'(\normsmall{X-X'}_{2, \beta}^2)|X_l- X_l'|^2].\\
		\wtilde{\mathcal{E}}_2 &= -\E[f_l^*(X_l) \error_l(X'_{[p] \backslash l}) h'(\normsmall{X-X'}_{2, \beta}^2)|X_l- X_l'|^2]\\
		\wtilde{\mathcal{E}}_3 &= -\E[\error_l(X_{[p] \backslash l}) \error_l(X'_{[p] \backslash l}) h'(\normsmall{X-X'}_{2, \beta}^2)|X_l- X_l'|^2]
	\end{split}
	\end{equation*}
	Now we show $\wtilde{\error}_1 = \wtilde{\error}_2 = 0$ and $\wtilde{\error}_3 \ge 0$. To do so, 
	we exploit the facts: (i) $\norm{X-X'}_{2, \beta} \perp X_l$ since $X_l \perp X_{[p] \backslash l}$ and $\beta_l = 0$ 
	and 
	(ii) $\E[f_l^*(X_l)] = \E[f_l^*(X_l) X_l] = \E[f_l^*(X_l) X_l^2] = 0$. Consequently, we obtain the desired result as follows: 
	\begin{itemize}
	\item $\wtilde{\mathcal{E}}_1= -\E[f_l^*(X_l) f_l^*(X_l') |X_l - X_l'|^2] \cdot \E[h'(\normsmall{X-X'}_{2, \beta}^2)] = 0$. 
	\item $\wtilde{\mathcal{E}}_2= -\E[f_l^*(X_l) |X_l - X_l'|^2] \cdot \E[\error_l(X'_{[p] \backslash l}) h'(\normsmall{X-X'}_{2, \beta}^2)] = 0$.
	\item $\wtilde{\mathcal{E}}_3= -\E[|X_l - X_l'|^2] \cdot \E[\error_l(X_{[p] \backslash l})\error_l(X'_{[p] \backslash l}) h'(\normsmall{X-X'}_{2, \beta}^2)]$. To show $\wtilde{\mathcal{E}}_3 \ge 0$, note then (a) 
		$\E[|X_l - X_l'|^2] = 2\Var(X_l) \ge 0$ and (b)
		$(x, x') \mapsto h'(\norm{x-x'}_{q, \beta}^q)$ is a negative kernel. 
%
%
	\end{itemize}


\section{Population-level Guarantees}
\label{sec:proof-main-theorems}

\subsection{Definition of the signal set $S$.}
\label{sec:definition-of-the-signal-set}
Proposition~\ref{proposition:unique-define-S} shows that Definition~\ref{definition:signal-set} is proper. 
\begin{proposition}
\label{proposition:unique-define-S}
There exists a unique subset $S \subseteq [p]$ with the following three properties: 
\begin{enumerate}[(i)]
\item $\E[Y \mid X] = \E[Y \mid X_S]$
\item $X_S \perp X_{S^c}$
\item There is no strict subset $A \subsetneq S$ which satisfies (i) and (ii). 
\end{enumerate}
\end{proposition}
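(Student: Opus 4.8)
The plan is to work with the family $\mathcal{S}$ of all subsets $T \subseteq [p]$ that satisfy both $\E[Y\mid X] = \E[Y\mid X_T]$ and $X_T \perp X_{T^c}$, and to establish two facts: $\mathcal{S}$ is nonempty, and $\mathcal{S}$ is closed under intersection. Granting these, $\mathcal{S}$ is a finite family closed under pairwise (hence all finite) intersections, so it contains its total intersection $S := \bigcap_{T\in\mathcal{S}}T$; by construction $S$ is contained in every member of $\mathcal{S}$, which immediately gives property (iii), while any set $S'$ satisfying (i)--(iii) lies in $\mathcal{S}$, hence contains $S$, hence equals $S$ by its own minimality. Nonemptiness is trivial: $[p]\in\mathcal{S}$, since $X_\emptyset$ is degenerate, so (i) and (ii) hold automatically.

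The substance is closure under intersection, so I would first fix $S_1, S_2 \in \mathcal{S}$ and aim to show $A := S_1\cap S_2 \in \mathcal{S}$. The natural move is to partition $[p]$ into the four disjoint blocks $A$, $B := S_1\setminus S_2$, $C := S_2\setminus S_1$, $D := [p]\setminus(S_1\cup S_2)$, so that $S_1 = A\cup B$ and $S_2 = A\cup C$. For the independence condition at $A$, I would read condition (ii) for $S_1$ and for $S_2$ as the statements that the joint law of $(X_A, X_B, X_C, X_D)$ equals both $\mu_{AB}\otimes\mu_{CD}$ and $\mu_{AC}\otimes\mu_{BD}$, writing $\mu_\bullet$ for the indicated marginal law. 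Marginalizing out $X_C$ shows that the law of $(X_A, X_B, X_D)$ equals both $\mu_{AB}\otimes\mu_D$ and $\mu_A\otimes\mu_{BD}$; taking the $(X_A, X_B)$-marginal of that identity forces $\mu_{AB} = \mu_A\otimes\mu_B$, i.e.\ $X_A\perp X_B$. Combining this with $X_B\perp X_{C\cup D}$ (itself a marginal of (ii) for $S_1$) lets me rewrite $\mu_{AB}\otimes\mu_{CD}$ as $\mu_A\otimes\mu_{BCD}$, which is exactly $X_A\perp X_{A^c}$.

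For the conditional-expectation condition at $A$, condition (i) for $S_1$ and $S_2$ gives measurable functions $g_1, g_2$ with $W := \E[Y\mid X] = g_1(X_A, X_B) = g_2(X_A, X_C)$ almost surely. From the independences just derived one obtains $(X_A,X_B)\perp X_C$, hence $X_B\perp X_C\mid X_A$, so $\E[g_1(X_A,X_B)\mid X_A, X_C] = \E[g_1(X_A, X_B)\mid X_A]$; but the left-hand side also equals $\E[W\mid X_A, X_C] = \E[g_2(X_A, X_C)\mid X_A, X_C] = g_2(X_A, X_C) = W$ a.s. Hence $W$ is a.s.\ $\sigma(X_A)$-measurable, and then $\E[Y\mid X_A] = \E[W\mid X_A] = W = \E[Y\mid X]$, which is (i) for $A$; so $A\in\mathcal{S}$, completing the proof of closure under intersection.

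I expect the main obstacle to be the bookkeeping in the last two steps: isolating exactly the conditional independences needed among $X_A, X_B, X_C, X_D$ from the two given independences, and carrying out the ``a function of $X_{S_1}$ that is simultaneously a function of $X_{S_2}$ must be a function of $X_{S_1\cap S_2}$'' reduction \emph{without} assuming that the law of $X$ has a positive density. The device that avoids the density is to phrase everything via marginals of product measures and via towers of conditional expectations, rather than via conditional densities. Once closure under intersection is in place, the existence and uniqueness of $S$ reduce to the elementary lattice argument in the first paragraph.
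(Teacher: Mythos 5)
Your proposal is correct and matches the paper's argument in substance: both hinge on showing that the family of subsets satisfying (i) and (ii) is closed under pairwise intersection, from which a unique minimal element follows. You package existence and uniqueness together as a single lattice argument (take the total intersection of a nonempty, finite, intersection-closed family), whereas the paper obtains existence by iteratively shrinking from $[p]$ and uniqueness by a separate minimality contradiction; and you are a bit more careful in the independence step, working with marginal product laws rather than writing density factorizations $P(X)=P(X_B)P(X_{B^c})$ as the paper does, which sidesteps any implicit absolute-continuity assumption.
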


\begin{proof}
First we prove existence. Start with $S=\left\{ 1,\ldots,p\right\} $
and note that it trivially satisfies (i) and (ii). If no strict subset
of $\left\{ 1,\ldots,p\right\} $ satisfies (i) and (ii), then $S$
satisfies (iii) also and we are done. Otherwise if a strict subset
$A\subsetneq S$ satisfies (i) and (ii), set $S$ equal to $A$. Repeat
this process until we arrive at a set $S$ for which there is no strict subset
that satisfies (i) and (ii). This process terminates in at most $p$ steps
and the $S$ returned by the process satisfies (i), (ii), (iii).

Next, we prove uniqueness. Suppose there exist subsets $A,B$ satisfying
(i), (ii) and (iii). By (i), $\E[Y|X_{A}] = \E[Y|X_{B}]$.
Taking the conditional expectation w.r.t $X_A$ yields
\begin{equation*}
\E[Y | X_A] = \E[\E[Y | X_B]| X_A] 
	=\E [\E[Y | X_B]| X_{A \cap B}] = \E[Y | X_{A \cap B}]
\end{equation*}
where the second equality comes from the fact that $X_{A \backslash B}\perp X_{B}$ since 
$B$ satisfies (ii) and the third equality comes from the tower property of conditional 
expectation. Thus, we have
\begin{equation*}
 \E[Y | X_{A \cap B}] = \E[Y | X_A] = \E[Y|X].
\end{equation*}
Moreover, denoting $P(X_T)$ to be the density of $X_T$, we have 
\begin{equation*}
P(X) = P(X_B)P(X_{B^c}) = P(X_{A\cap B}) P(X_{B \backslash A}) P(X_{B^c})
\end{equation*}
where the first equality is from $X_{B}\perp X_{B^{c}}$ and the second
equality is from $X_{A}\perp X_{A^{c}}.$ Thus $X_{A \cap B} \perp X_{(A \cap B)^c}$. 
Hence, we have shown $A\cap B$ is a subset that satisfies (i) and (ii). 
Since $A, B$ satisfy (iii), it implies $A = A\cap B = B$. This proves the uniqueness.
\end{proof}

\subsection{Proof of Theorem~\ref{thm:add-main-mix-effect}.}
\label{sec:proof-of-theorem-add-main-mix-effect}
The proof proceeds in a similar way to that of Theorem~\ref{thm:add-main-effect}. 

Here is the starting point: using the fact that $\grad \obj_\gamma(\beta)$ is smooth in 
$\beta$ (Proposition~\ref{prop:grad-obj-lipschitz-beta}), any accumulation point $\beta^*$ 
of the projected gradient descent algorithm must be stationary when the stepsize is 
small (Theorem~\ref{lemma:gradient-ascent-increases-objective}). 
By Theorem~\ref{thm:no-false-positive}, $\beta^*$ must exclude 
noise variables, i.e., $\beta^*_{S^c} = 0$.
Hence, it suffices to show that any $\beta$ with $\beta_l = 0$ and $\beta_{S^c} = 0$ 
can't be stationary. To see this, pick $\beta$ such that $\beta_l = 0$ and $\beta_{S^c} = 0$.
To show that it is non-stationary, it suffices to show that the gradient w.r.t $\beta_l$ is strictly negative, i.e.
\begin{equation}
\label{eqn:desired-goal-on-obj-main-add-mix}
	\partial_{\beta_l} \regobj(\beta) = \partial_{\beta_l} \obj(\beta) + \gamma < 0. 
\end{equation}
To show equation~\eqref{eqn:desired-goal-on-obj-main-add-mix}, 
Lemma~\ref{lemma:key-to-thm-add-main-mix-effect} is the key, whose proof is deferred to
Section~\ref{sec:proof-lemma-key-to-thm-add-main-mix-effect}. 
\begin{lemma}
\label{lemma:key-to-thm-add-main-mix-effect}
The following inequality holds for all $\beta$ such that $\beta_l = 0$ and $\beta_{S^c} = 0$: 
\begin{equation}
\label{eqn:grad-main-add-mix-effect}
	\partial_{\beta_l} \obj_\gamma(\beta) \le \frac{1}{\lambda} \cdot  \left(-c \cdot \effect_l + C\lambda^{1/2}(1+\lambda^{1/2})+\lambda \gamma\right).
\end{equation}
\end{lemma}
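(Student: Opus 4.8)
The plan is to follow the ``general recipe'' from the proof of Theorem~\ref{thm:add-main-effect}: bound the surrogate gradient $\wtilde{\partial_{\beta_l}\obj}(\beta)$, convert the bound to one on the true gradient $\partial_{\beta_l}\obj(\beta)$ via Lemma~\ref{lemma:approximate-gradient-bound} (whose error is $\le C(1+\lambda^{-1/2})$), and then add the penalty term $\gamma$. Fix $\beta$ with $\beta_l=0$ and $\beta_{S^c}=0$, and set $U=\supp(\beta)\subseteq S\setminus\{l\}$ and $T=\{l\}\cup U$.

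First I would compute $\E[\wbar{R}_{\beta,\omega}(\beta\odot X;Y)\mid X_T]$. Since $\beta_l=0$ and $\beta_{S^c}=0$, both $e^{i\langle\omega,\beta\odot X\rangle}$ and $f_\beta(\beta\odot X)$ are functions of $X_U$ only; using the functional-ANOVA orthogonality to write $\E[Y\mid X_T]=\E[Y\mid X_U]+F_l(X_T)$ and the fact that $\E[F_l(X_T)\mid X_U]=0$ (every $A$ with $l\in A\subseteq T$ satisfies $A\not\subseteq U$), one gets
\begin{equation*}
\E\big[\wbar{R}_{\beta,\omega}(\beta\odot X;Y)\mid X_T\big]=\E\big[\wbar{R}_{\beta,\omega}(\beta\odot X;Y)\mid X_U\big]+e^{i\langle\omega,\beta\odot X\rangle}F_l(X_T),
\end{equation*}
where the first summand is $\sigma(X_U)$-measurable with mean zero. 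Plugging this into the definition~\eqref{eqn:def-wtilde-partial-beta-j} of $\wtilde{\partial_{\beta_l}\obj}(\beta)$ --- first conditioning on $(X_T,X_T')$, which is permissible because $|X_l-X_l'|$ is $\sigma(X_T,X_T')$-measurable --- the two cross terms vanish ($X_l\perp X_U$ and the $X_U$-part is centered), and applying the conditional-negative-definite identity~\eqref{eqn:absolute-cond-psd} in the variable $X_l$, conditionally on $(X_U,X_U')$, produces
\begin{equation*}
\wtilde{\partial_{\beta_l}\obj}(\beta)=-\frac{1}{\lambda}\int\left(\int\Big|\E\big[F_l(X_T)\,e^{i(\langle\omega,\beta\odot X\rangle+\zeta X_l)}\big]\Big|^2\,\frac{d\zeta}{\pi\zeta^2}\right)\wtilde{Q}(\omega)\,d\omega\;\le\;0.
\end{equation*}
When there are no interactions this collapses to $F_l(X_T)=f_l^*(X_l)$ and recovers exactly the expression used for Theorem~\ref{thm:add-main-effect}.

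The decisive step is the lower bound $\big|\wtilde{\partial_{\beta_l}\obj}(\beta)\big|\ge c\,\effect_l/\lambda$ for a constant $c>0$ depending only on $|S|,M,M_X,M_\mu$, which I would isolate as a separate lemma (this is Lemma~\ref{lemma:upper-bound-tilde-J}). Integrating out the inactive frequencies and substituting $\nu_j=\beta_j\omega_j$ for $j\in U$ and $\nu_l=\zeta$, the weight multiplying $|\E[F_l(X_T)e^{i\langle\nu_T,X_T\rangle}]|^2$ becomes a mixture over $t\in\supp(\mu)\subseteq[0,M_\mu]$ of products of Cauchy densities of scale $t\beta_j$ ($j\in U$) times $1/(\pi\nu_l^2)$, which must be compared with the weight appearing in the Bochner representation of $\effect_l(X_T)=\E[F_l(X_T)F_l(X_T')h(\norm{X_T-X_T'}_1)]$. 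The obstacle --- and the reason for the chain structure in $\effect_l$ --- is that the scales $t\beta_j$ may be arbitrarily small, so the comparison fails globally: as $\beta_j\downarrow0$ the $\nu_j$-marginal of the weight concentrates at $0$, which effectively deletes coordinate $j$ and, by orthogonality, replaces $F_l(X_T)$ by $F_l(X_{T'})$ for the smaller set $T'$ of not-too-small coordinates. Ordering the coordinates of $\beta$ by size, one argues coordinate by coordinate that the integral dominates $c\prod_k\min\{\effect_l(X_{T_k}),1\}$ for a suitable chain $(T_1,\ldots,T_{|S|})\in\mathcal{G}_l$ --- at each stage either the current scale is bounded below (so its Cauchy factor dominates the matching factor of the $\effect_l(X_{T_k})$-weight up to a constant) or it is negligible (so one passes to the conditional effective signal via orthogonality). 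Since $\effect_l$ is by definition the infimum over $\mathcal{G}_l$ of such products, this gives $\wtilde{\partial_{\beta_l}\obj}(\beta)\le-c\,\effect_l/\lambda$.

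Finally, combining this with Lemma~\ref{lemma:approximate-gradient-bound} and with $\partial_{\beta_l}\obj_\gamma(\beta)=\partial_{\beta_l}\obj(\beta)+\gamma$ yields
\begin{equation*}
\partial_{\beta_l}\obj_\gamma(\beta)\;\le\;\frac{1}{\lambda}\Big(-c\,\effect_l+C\lambda^{1/2}(1+\lambda^{1/2})+\lambda\gamma\Big),
\end{equation*}
as claimed. The hard part is the frequency-domain comparison of the previous paragraph: showing that the oscillatory factor $e^{i\langle\omega,\beta\odot X\rangle}$, whose frequency content degenerates as individual weights $\beta_j$ shrink, nevertheless captures a constant fraction of the effective main-effect signal of $X_l$, uniformly over $\beta\in\mathcal{B}_M$ --- which is precisely what the minimum over orderings in the definition of $\effect_l$ is designed to ensure.
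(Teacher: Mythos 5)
Your proof follows the same route as the paper's: reduce to the surrogate gradient via the general recipe, use the ANOVA orthogonality to isolate $F_l$ in a squared-Fourier-covariance expression, compare Cauchy weights after the change of variables $\omega_j \mapsto \beta_j \omega_j$ (defending against small coordinates via the chain structure underlying $\effect_l$), and transfer back to the true gradient through Lemma~\ref{lemma:approximate-gradient-bound}. The only superficial difference is that you condition on $X_T$ with $T = \supp(\beta)\cup\{l\}$ from the start (so the correction $C\sum_{l'\notin T}\beta_{l'}$ is zero), whereas Lemma~\ref{lemma:eqn-crucial-identity-technical} conditions on $X_l$ and keeps $F_l(X_S)$, deferring the $T$-dependent decomposition to Lemma~\ref{lemma:upper-bound-tilde-J}; under $\beta_{S\backslash T}=0$ the two are equivalent, and you correctly identify the weight-comparison lemma as the crux.
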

By
Lemma~\ref{lemma:key-to-thm-add-main-mix-effect}, equation~\eqref{eqn:desired-goal-on-obj-main-add-mix} 
holds for all $\beta$ with $\beta_l = 0$ and $\beta_{S^c} = 0$, provided the constant $\wbar{C} > 0$ in 
equation~\eqref{eqn:add-main-mix-effect} is sufficiently large. This concludes the proof of  
Theorem~\ref{thm:add-main-mix-effect}.

\subsection{Deferred proof of Lemma~\ref{lemma:key-to-thm-add-main-mix-effect}.}
\label{sec:proof-lemma-key-to-thm-add-main-mix-effect}
The key to the proof is to derive a tight bound on the surrogate gradient $\wtilde{\partial_{\beta_l} \obj}(\beta)$. 
Write $q_0(\zeta) = \frac{1}{\zeta^2}$. By equation~\eqref{eqn:wtilde-partial-beta-j-rep}, $\wtilde{\partial_{\beta_l} \obj}(\beta)$
satisfies
\begin{equation}
\label{eqn:wtilde-partial-beta-j-rep-mix}
\begin{split}
\wtilde{\partial_{\beta_l} \obj}(\beta) &=
		- \frac{1}{\lambda} \cdot  \int \left(\int \Big|\E[\wbar{R}_{\beta, \omega}(\beta \odot X; Y)e^{i \zeta_l X_l} ]\Big|^2 
			\cdot q_0(\zeta_l) d\zeta_l \right) \cdot \wtilde{Q}(\omega) d\omega. 
\end{split}
\end{equation}
Lemma~\ref{lemma:eqn-crucial-identity-technical} evaluates the RHS of 
equation~\eqref{eqn:wtilde-partial-beta-j-rep-mix} and  provides
a more explicit expression of $\wtilde{\partial_{\beta_l} \obj}(\beta)$ under the functional 
ANOVA model. The proof is given in Appendix~\ref{sec:proof-lemma-eqn-crucial-identity-technical}.

\begin{lemma}
\label{lemma:eqn-crucial-identity-technical}
Assume the functional ANOVA model. Then $\wtilde{\partial_{\beta_l} \obj}(\beta) = U(\beta)$ 
holds at any $\beta$ with $\beta_l = 0$ and $\beta_{S^c} = 0$:
(recall the definition of $F_l(X_S)$ in Definition~\ref{definition:l-signal-main-mix})
\begin{equation}
\label{eqn:def-U-beta}
\begin{split}
	U(\beta) &\defeq- \frac{1}{\lambda} \cdot 
		\iint \left|\E\Big[e^{i\zeta_l X_l + i \langle \omega_{S\backslash l}, \beta_{S\backslash l} \odot X_{S\backslash l}\rangle}
		 	\cdot  F_l(X_S)\Big]\right|^2 q_0(\zeta_l) 
		\wtilde{Q}(\omega_{S \backslash l}) d\zeta_l d\omega_{S \backslash l}.
\end{split}
\end{equation}
\end{lemma}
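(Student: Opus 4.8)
The plan is to substitute an explicit evaluation of the inner Fourier coefficient $\E[\wbar R_{\beta,\omega}(\beta\odot X;Y)e^{i\zeta_l X_l}]$ into the representation~\eqref{eqn:wtilde-partial-beta-j-rep-mix} of $\wtilde{\partial_{\beta_l}\obj}(\beta)$ and then to integrate out the frequency coordinates that do not appear in it. Fix $\beta$ with $\beta_l=0$ and $\beta_{S^c}=0$. The first point is that at such a $\beta$ the weighted vector $\beta\odot X$ is a measurable function of $X_{S\backslash l}$ alone, so that both $e^{i\langle\omega,\beta\odot X\rangle}=e^{i\langle\omega_{S\backslash l},\,\beta_{S\backslash l}\odot X_{S\backslash l}\rangle}$ and the kernel-ridge solution $f_\beta(\beta\odot X)$ are functions of $X_{S\backslash l}$ only. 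Writing $Y=\xi+\sum_{A\subseteq S}f_A^*(X_A)$ under the functional ANOVA model and splitting the sum into the terms containing $l$ and the terms not containing $l$, we obtain $r_\beta(\beta\odot X;Y)=\xi+F_l(X_S)+G(X_{S\backslash l})$, where $F_l(X_S)=\sum_{A\subseteq S:\,l\in A}f_A^*(X_A)$ (as in Definition~\ref{definition:l-signal-main-mix}) and $G(X_{S\backslash l}):=\sum_{A\subseteq S:\,l\notin A}f_A^*(X_A)-f_\beta(\beta\odot X)$ depends only on $X_{S\backslash l}$.

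The computation then uses three inputs: (i) $\E[\xi\mid X]=0$; (ii) the independence $X_l\perp X_{S\backslash l}$ built into the functional ANOVA model; and (iii) the ANOVA orthogonality condition $\E[f_A^*(X_A)\mid X_{A'}]=0$ whenever $A\not\subseteq A'$, which with $l\in A$ and $A'=S\backslash l$ gives $\E[F_l(X_S)\mid X_{S\backslash l}]=0$. By (i) the $\xi$-contribution to both $\E[R_{\beta,\omega}e^{i\zeta_l X_l}]$ and $\E[R_{\beta,\omega}]$ vanishes; by (ii) the $G$-contribution to the first factorizes as $\E[e^{i\langle\omega_{S\backslash l},\,\beta_{S\backslash l}\odot X_{S\backslash l}\rangle}G(X_{S\backslash l})]\cdot\E[e^{i\zeta_l X_l}]$ and to the second equals $\E[e^{i\langle\omega_{S\backslash l},\,\beta_{S\backslash l}\odot X_{S\backslash l}\rangle}G(X_{S\backslash l})]$; and by (ii) together with (iii) the $F_l$-contribution to $\E[R_{\beta,\omega}]$ vanishes. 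Since $\wbar R_{\beta,\omega}=R_{\beta,\omega}-\E[R_{\beta,\omega}]$, the two $G$-terms cancel against each other, leaving exactly
\[
	\E\big[\wbar R_{\beta,\omega}(\beta\odot X;Y)\,e^{i\zeta_l X_l}\big]
		=\E\big[e^{i\zeta_l X_l+i\langle\omega_{S\backslash l},\,\beta_{S\backslash l}\odot X_{S\backslash l}\rangle}\,F_l(X_S)\big].
\]

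Plugging this identity into~\eqref{eqn:wtilde-partial-beta-j-rep-mix}, the integrand depends on $\omega\in\R^p$ only through $\omega_{S\backslash l}$, so it remains to integrate $\wtilde Q(\omega)=\int_0^\infty t\,q_t(\omega)\,\mu(dt)$ over $\omega_l$ and $\omega_{S^c}$. Using Tonelli's theorem (all factors being nonnegative) and the normalization $\int_\R\psi_t(\omega)\,d\omega=1$ valid for the Cauchy density, this integral collapses to $\wtilde Q(\omega_{S\backslash l})$, and~\eqref{eqn:wtilde-partial-beta-j-rep-mix} becomes precisely $U(\beta)$. I expect the main obstacle to be the bookkeeping in the middle step---isolating, via the orthogonality conditions, the portion of $Y-f_\beta$ that survives the mean correction and checking that the two mean-correction contributions cancel outright rather than merely being of lower order in $\lambda$; the frequency-integration step is then routine once the $\psi_t$-normalization is invoked.
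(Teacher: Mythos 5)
Your proof is correct and follows essentially the same route as the paper's: you decompose $Y - f_\beta(\beta\odot X)$ into $\xi$, $F_l(X_S)$, and a part depending only on $X_{S\backslash l}$, then use $\E[\xi\mid X]=0$, the independence $X_l\perp X_{S\backslash l}$, and the ANOVA orthogonality $\E[F_l(X_S)\mid X_{S\backslash l}]=0$ to isolate the $F_l$-term after mean correction. The paper passes through $\E[\wbar R_{\beta,\omega}\mid X_l]$ and the tower property rather than computing $\E[\wbar R_{\beta,\omega}e^{i\zeta_l X_l}]$ directly, but this is cosmetic; you are a bit more explicit than the paper about the final Tonelli step that integrates $\omega_l,\omega_{S^c}$ out of $\wtilde Q(\omega)$ via $\int\psi_t=1$, which is a small gap the paper leaves to the reader.
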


Lemma~\ref{lemma:upper-bound-tilde-J} analyzes and gives tight upper bounds on $U(\beta)$---this is perhaps the 
more technical part of the entire proof of Lemma~\ref{lemma:key-to-thm-add-main-mix-effect}.  The proof 
is given in Appendix~\ref{sec:proof-lemma-upper-bound-tilde-J}.

\begin{lemma}
\label{lemma:upper-bound-tilde-J}
Assume Assumptions~\ref{assumption:mu-compact} and~\ref{assumption:X-Y-bound}. Assume the functional ANOVA model. 
There exist some constants $\tilde{c}, C > 0$ that depends only on $M, M_X, M_Y, \mu$ such that 
\begin{equation}
\label{eqn:upper-bound-tilde-J}
U(\beta) \le - \frac{\tilde{c}}{\lambda} \cdot U_{T; C}(\beta)
\end{equation}
holds for any subset $T \subseteq S$ such that $l \in T$. In above, the quantity $U_{T, C}(\beta)$ is defined by 
\begin{equation}
\label{eqn:def-U-T-C}
	U_{T; C}(\beta) = \bigg(\effect_l(X_T) - C\cdot \sum_{l' \in S\backslash T} \beta_{l'}\bigg)_+ \cdot
	\prod_{\bar{l} \in T \backslash \{l\}} \beta_{\bar{l}}.
\end{equation}
\end{lemma}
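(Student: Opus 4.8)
The plan is to start from the explicit expression for $U(\beta)$ in equation~\eqref{eqn:def-U-beta} and peel off, one at a time, the frequency variables $\omega_{l'}$ for $l' \in S\backslash T$. The key observation is that $F_l(X_S)$ decomposes (using the functional ANOVA structure and $l \in T$) into a part $F_l(X_T)$ depending only on $X_T$ plus a remainder involving variables outside $T$; because the covariates are independent, the expectation inside $|\cdot|^2$ factorizes into a product over coordinates. I would first argue that when we restrict the integral in $U(\beta)$ to a small neighborhood of $\omega_{l'} = 0$ for each $l' \in S\backslash T$, the exponential factor $e^{i\omega_{l'} \beta_{l'} X_{l'}}$ is close to $1$: a Taylor expansion gives $|e^{i\omega_{l'}\beta_{l'}X_{l'}} - 1| \le |\omega_{l'}|\beta_{l'}|X_{l'}|$, which under Assumption~\ref{assumption:X-Y-bound} (and $\beta_{l'} \le M$) is controlled by $C\beta_{l'}$ after integrating against the finite-mass portion of $\wtilde{Q}$ near $\omega_{l'}=0$. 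This produces the $\bigl(\effect_l(X_T) - C\sum_{l'\in S\backslash T}\beta_{l'}\bigr)_+$ term: the leading term is $\effect_l(X_T)$ (obtained by setting those exponentials to $1$, which collapses the corresponding frequency integrals to the kernel evaluated via the integral formula~\eqref{eqn:h-prime-integral-formula}), and the error in doing so is linear in the $\beta_{l'}$.

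Next, for the coordinates $\bar l \in T\backslash\{l\}$, I would lower-bound the contribution of $|\E[e^{i\omega_{\bar l}\beta_{\bar l} X_{\bar l}}\cdot(\text{stuff})]|^2$ by restricting again to $\omega_{\bar l}$ in a fixed bounded window where $\wtilde Q$ has positive mass, and using that on that window $|\E[e^{i\omega_{\bar l}\beta_{\bar l}X_{\bar l}}g(X_{\bar l})]|^2$ is bounded below by a constant multiple of $\beta_{\bar l}$ times a signal quantity (this is the same mechanism as in the proof of Theorem~\ref{thm:add-main-effect}, where $\E[e^{i\langle\omega,\beta\odot X\rangle}]$ contributes $\E[h'(\|X-X'\|_{1,\beta})]$; here one tracks the linear-in-$\beta_{\bar l}$ lower bound rather than the crude upper bound). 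Iterating over all $\bar l \in T\backslash\{l\}$ yields the product $\prod_{\bar l\in T\backslash\{l\}}\beta_{\bar l}$. Combining the two parts, together with the $\zeta_l$-integral which reproduces exactly $\effect_l(X_T)$ when the other exponentials are trivialized, gives $U(\beta) \le -\tilde c\lambda^{-1} U_{T;C}(\beta)$.

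The main obstacle I anticipate is making the ``peel off one frequency at a time'' argument rigorous while keeping the constants uniform over \emph{all} $\beta$ in the feasible set and over the choice of $T$: the exponential-to-$1$ replacement is clean pointwise, but one must control the interaction between the replacement errors across the different coordinates $l'\in S\backslash T$ and ensure the cross terms in expanding the modulus squared do not destroy the sign or the linear-in-$\beta_{l'}$ scaling. A careful bookkeeping — expanding $|a+e|^2 \le |a|^2 + 2|a||e| + |e|^2$ and absorbing $|e| \le C\sum_{l'}\beta_{l'}$ — should handle this, but it is where the technical weight of the lemma lies, and it is presumably why the authors defer it to Appendix~\ref{sec:proof-lemma-upper-bound-tilde-J}. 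A secondary point requiring care is that $\effect_l(X_T) = \E[F_l(X_T)F_l(X_T')h(\|X_T-X_T'\|_1)]$ is genuinely nonnegative (it is the signal quantity from Definition~\ref{definition:l-signal-main-mix}), which follows because $h$ is completely monotone hence positive definite, so the truncation $(\cdot)_+$ is meaningful and the lower bound is not vacuous.
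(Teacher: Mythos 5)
Your skeleton matches the paper's: decompose $F_l(X_S) = F_l(X_T) + (\text{remainder})$, recover $\effect_l(X_T)$ from the leading term, bound the remainder linearly in $\sum_{l'\in S\backslash T}\beta_{l'}$, and peel off one factor of $\beta_{\bar l}$ per coordinate $\bar l\in T\backslash\{l\}$. You are also right that $\effect_l(X_T)\ge 0$ follows from positive definiteness of $h$. But both of the nontrivial scaling claims rest on mechanisms that do not deliver the advertised linear dependence on $\beta$, and these are exactly where the $\ell_1$/Cauchy structure is doing the work.

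For the $\prod_{\bar l\in T\backslash\{l\}}\beta_{\bar l}$ factor: restricting $\omega_{\bar l}$ to a fixed bounded window cannot give a linear-in-$\beta_{\bar l}$ lower bound. With $\E[g(X_{\bar l})]=0$ (the ANOVA case), the inner expectation $\E[e^{i\omega_{\bar l}\beta_{\bar l}X_{\bar l}}g(X_{\bar l})]$ is $i\omega_{\bar l}\beta_{\bar l}\E[X_{\bar l}g(X_{\bar l})] + O(\beta_{\bar l}^2)$ on a bounded window, so its modulus squared is $O(\beta_{\bar l}^2)$ there — quadratic, not linear. The linear factor is a heavy-tail phenomenon that requires integrating over \emph{all} of $\omega_{\bar l}$: after the change of variable $\omega_{\bar l}\mapsto\omega_{\bar l}/\beta_{\bar l}$, the Cauchy weight $\psi_t(\omega_{\bar l})\,d\omega_{\bar l}$ becomes $\psi_{\beta_{\bar l}t}(\omega_{\bar l})\,d\omega_{\bar l}$, and the paper applies the pointwise inequality $\psi_{\beta t}(\omega)\ge\beta\,\psi_t(\omega)$ for $\beta\le 1$, which is valid precisely because $\psi$ has $1/\omega^2$ tails. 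The dominant contribution lives at $|\omega_{\bar l}|\sim 1/\beta_{\bar l}$, which any bounded window discards; a Gaussian $\psi$ (the $q=2$ case) would not admit this inequality, which is why the mechanism is genuinely $\ell_1$-specific and not merely "bookkeeping." For the remainder term: the Taylor bound $|e^{i\omega_{l'}\beta_{l'}X_{l'}}-1|\le|\omega_{l'}\beta_{l'}X_{l'}|$ integrated against $\psi_t(\omega_{l'})\,d\omega_{l'}$ over $\R$ diverges, and restricting to a bounded window leaves the large-$\omega_{l'}$ contribution, which is $O(1)$ rather than $O(\beta_{l'})$, uncontrolled. The paper instead uses $|e^{it}-1|\le 2\min\{|t|,1\}$, then observes that $\langle\omega_{T^c},\beta_{T^c}\odot X_{T^c}\rangle$ with independent Cauchy$(t)$ coordinates $\omega_{l'}$ is itself Cauchy with scale $t\langle\beta_{T^c},|X_{T^c}|\rangle$ (stability of the Cauchy family under independent linear combinations), and evaluates $\E[\min\{\alpha|Z|,1\}^2]\le\tfrac{4}{\pi}\alpha$ for standard Cauchy $Z$; this yields the linear-in-$\sum_{l'}\beta_{l'}$ bound cleanly. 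Also note the algebraic identity you quote, $|a+e|^2\le|a|^2+2|a||e|+|e|^2$, points the wrong way — what you need is a lower bound such as $|a+e|^2\ge\tfrac12|a|^2-2|e|^2$.
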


By Lemma~\ref{lemma:eqn-crucial-identity-technical} and~\ref{lemma:upper-bound-tilde-J}, 
we have shown for some $C, c, c' > 0$ depending only on $M, M_X, M_Y, \mu, |S|$
\begin{equation}
\label{eqn:tilde-grad-main-add-mix-effect}
	\wtilde{\partial_{\beta_l} \obj}(\beta) = U(\beta) \le - \frac{c}{\lambda} \cdot \max_{l \in T: T \subseteq S} U_{T; C}(\beta)
		\le -\frac{c'}{\lambda} \effect_l.
\end{equation}
since $\max_{l \in T: T \subseteq S} U_{T; C}(\beta) \ge c \cdot \effect_l$ for some constant $c > 0$ depending 
only on $|S|$. 
To transfer the bound of $\wtilde{\partial_{\beta_l} \obj}(\beta)$ to $\partial_{\beta_l} \obj(\beta)$, we use
Lemma~\ref{lemma:approximate-gradient-bound} to obtain
\begin{equation}
\label{eqn:grad-main-add-mix-effect}
	\partial_{\beta_l} \obj(\beta) \le \frac{1}{\lambda} \cdot \left(-c \cdot \effect_l + C\lambda^{1/2}(1+\lambda^{1/2})\right),
\end{equation}
where $c, C > 0$ depend on $M, M_X, M_Y, \mu, |S|$. 
Lemma~\ref{lemma:key-to-thm-add-main-mix-effect} follows 
as $\partial_{\beta_l} \obj_\gamma(\beta) = \partial_{\beta_l} \obj(\beta) + \gamma$.

\subsection{Proof of Theorem~\ref{thm:hier-effect}.}
\label{sec:proof-thm-hier-effect}
For notation simplicity, throughout the proof, we use double index to index the coordinates in $S$. For instance, 
$\beta_{i, j}$ represents the coordinate that corresponds to the feature $X_{i, j}$. Also, the set 
$S = \cup\left\{(i, j) | 1\le i \le K, 1\le j \le N_i\right\}$.

Fix $k, l$. Let $\what{S}_l$ denote the variables selected by the $l$-th round of the algorithm. It suffices to prove 
the following: $S_{k, m} \subseteq \what{S}_m$ for all $0\le m \le l$. The proof is based on induction on $m$. 

Consider the $m$-th round: the algorithm runs projected gradient descent to solve 
\begin{equation*}
(O_m):~~~~
\begin{aligned}
	\minimize_{\beta} ~~&\mathcal{J}_\gamma(\beta)  \\
	\subjectto ~~& \beta \ge 0 ~\text{and}~\beta_{\what{S}_{m-1}} = \tau  \mathbf{1}_{\what{S}_{m-1}}.
\end{aligned}
\end{equation*}
Since $S_{k, m-1} \subseteq \what{S}_{m-1}$ by induction hypothesis, in order to prove that 
$S_{k, m} \subseteq \what{S}_m$, it suffices to prove that 
$(k, m) \in \what{S}_{m-1} \cup \supp(\beta^*)$. 
To do so, we can W.L.O.G. assume that $(k, m) \not\in \what{S}_{m-1}$. Now we show that $(k, m) \in \supp(\beta^*)$. 
Note the following two facts. 
\begin{itemize}
\item A simple adaptation of the proof of Theorem~\ref{thm:no-false-positive} shows that $\beta^*$ 
must satisfy $\beta^*_{S^c} = 0$.
\item $\beta^*$ must be a stationary point of the problem $(O_m)$.
	Indeed, the objective $\beta \mapsto \obj_\gamma (\beta)$ is smooth (Proposition~\ref{prop:grad-obj-lipschitz-beta}) 
	and thus $\beta^*$ be stationary
	(Lemma~\ref{lemma:gradient-ascent-increases-objective}).
\end{itemize}
As a result, it suffices to prove that any stationary point $\beta$ of the problem $O_m$ 
with $\beta_{S^c} = 0$ must satisfy $\beta_{k, m} > 0$, or equivalently, any $\beta$ with 
$\beta_{k, m} = 0$ and $\beta_{S^c} = 0$ can't be stationary. 

Fix a feasible $\beta$ of the problem $O_m$ with $\beta_{k, m} = 0$ and $\beta_{S^c} = 0$. To show 
 it is non-stationary, it suffices to show that the gradient w.r.t $\beta_{k, m}$ is strictly negative: 
\begin{equation}
\label{eqn:desired-goal-on-obj-hier}
	\partial_{\beta_{k, m}} \regobj(\beta) = \partial_{\beta_{k, m}} \obj(\beta) + \gamma < 0.
\end{equation}
To show equation~\eqref{eqn:desired-goal-on-obj-hier}, Lemma~\ref{lemma:key-to-thm-hier-effect} 
is the key, whose proof is deferred to Section~\ref{sec:proof-lemma-key-to-thm-hier-effect}. 

\begin{lemma}
\label{lemma:key-to-thm-hier-effect}
The following holds for all $\beta$ such that $\beta_{S_{k, m-1}} = \tau \mathbf{1}_{S_{k, m-1}}$, $\beta_{k, m} = 0$, $\beta_{S^c} = 0$: 
\begin{equation}
\label{eqn:grad-hier-hard}
	\partial_{\beta_{k, m}} \obj_\gamma(\beta) \le \frac{1}{\lambda} \cdot  \left(-c \cdot \min\{\tau^m, 1\} 
		\cdot \effect_{k, l} + C\lambda^{1/2}(1+\lambda^{1/2})+\lambda \gamma\right).
\end{equation}
\end{lemma}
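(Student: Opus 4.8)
The plan is to follow the two-step template already used to prove Lemma~\ref{lemma:key-to-thm-add-main-mix-effect}. First I would bound the surrogate gradient $\wtilde{\partial_{\beta_{k,m}}\obj}(\beta)$ under the hierarchical ANOVA model, and then transfer the bound to the true gradient $\partial_{\beta_{k,m}}\obj(\beta)$ via Lemma~\ref{lemma:approximate-gradient-bound}, which costs only an additive error of order $\lambda^{1/2}(1+\lambda^{1/2})$. Since $\partial_{\beta_{k,m}}\obj_\gamma(\beta) = \partial_{\beta_{k,m}}\obj(\beta)+\gamma$, it suffices to establish a bound of the form $\wtilde{\partial_{\beta_{k,m}}\obj}(\beta)\le -\tfrac{c}{\lambda}\min\{\tau^m,1\}\,\effect_{k,l}$ for all feasible $\beta$ with $\beta_{S_{k,m-1}}=\tau\mathbf{1}_{S_{k,m-1}}$, $\beta_{k,m}=0$, $\beta_{S^c}=0$; equation~\eqref{eqn:grad-hier-hard} then follows immediately.

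For the first step I would specialize the $q=1$ surrogate formula~\eqref{eqn:wtilde-partial-beta-j-rep} and compute the conditional mean $\E[\wbar R_{\beta,\omega}(\beta\odot X;Y)\mid X_{k,m}]$, exactly as in Lemma~\ref{lemma:eqn-crucial-identity-technical}. The key structural point is that, at such a $\beta$, both $e^{i\langle\omega,\beta\odot X\rangle}$ and $f_\beta(\beta\odot X)$ depend only on $X_{S\setminus\{(k,m)\}}$ and are thus independent of $X_{k,m}$; hence the mean-correction annihilates every ANOVA term in $f^*$ not involving $X_{k,m}$, and by the disjointness of the components and the rigid hierarchy~\eqref{eqn:hierarchical-ANOVA} the surviving terms are exactly $f_{S_{k,j}}^*$ with $j\ge m$, i.e.\ $F_{k,m}$ of Definition~\ref{definition:hier-effect}. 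This produces the hierarchical analogue of $U(\beta)$,
\[
	\wtilde{\partial_{\beta_{k,m}}\obj}(\beta) = -\frac{1}{\lambda}\iint \Bigl|\E\bigl[e^{i\zeta X_{k,m} + i\langle\omega,\beta\odot X_{S\setminus\{(k,m)\}}\rangle}\,F_{k,m}(X_{S_k})\bigr]\Bigr|^2\,\frac{d\zeta}{\pi\zeta^2}\cdot\wtilde Q(\omega)\,d\omega,
\]
after the double Fourier expansion of the conditionally negative definite kernel $(x,x')\mapsto|x-x'|$.

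The main obstacle is the second step: the hierarchical counterpart of Lemma~\ref{lemma:upper-bound-tilde-J}, namely lower bounding $|\wtilde{\partial_{\beta_{k,m}}\obj}(\beta)|$ by a constant times $\min\{\tau^m,1\}\,\effect_{k,l}$. I would mimic the treatment of $U_{T;C}(\beta)$: restrict the $\omega$-integral to the coordinates of a group $T=S_{k,j}$ for each $j$ with $m\le j\le N_k$ (so $(k,m)\in T$). The coordinates of $S_{k,m-1}\subseteq T$ are pinned at $\tau$, contributing a uniform Fourier weight that is a fixed power of $\tau$; the $X_{S_{k,j}}$-marginal of $F_{k,m}$ produces $\effect_{k,m}(X_{S_{k,j}})$ up to a penalty $-C\sum_{l'\in S\setminus T}\beta_{l'}$ for active coordinates outside $T$ that interfere. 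Telescoping over $j=m,\dots,N_k$ and using $\norm{\beta}_1\le M$ to absorb the penalties yields $|\wtilde{\partial_{\beta_{k,m}}\obj}(\beta)|\ge \tfrac{c}{\lambda}\min\{\tau^m,1\}\prod_{m\le j\le N_k}\min\{\effect_{k,m}(X_{S_{k,j}}),1\}$, and this product (the $m'=m$ term in Definition~\ref{definition:hier-effect}) dominates $\effect_{k,l}$ since $m\le l$. The delicate points, precisely as in Lemma~\ref{lemma:upper-bound-tilde-J}, are making the penalty constant $C$ uniform over all feasible $\beta$ and tracking the dependence on $|S|,\tau,M,M_X,M_Y,\mu$; these become routine once the telescoping chain $S_{k,m}\subsetneq\cdots\subsetneq S_{k,N_k}$ and the role of the $\tau$-pinned coordinates are isolated. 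Combining with Lemma~\ref{lemma:approximate-gradient-bound} and adding $\gamma$ gives equation~\eqref{eqn:grad-hier-hard}.
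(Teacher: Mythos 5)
Your proposal matches the paper's proof essentially verbatim: the two-step template (bound the surrogate gradient, then transfer via Lemma~\ref{lemma:approximate-gradient-bound}), the use of Lemma~\ref{lemma:eqn-crucial-identity-technical} to compute the conditional mean and obtain the hierarchical analogue of $U(\beta)$ (denoted $V(\beta)$ in the paper), the factoring out of $\tau^{m-1}$ from the pinned coordinates $\beta_{S_{k,m-1}}$ after the change of variables, the telescoping over nested sets $S_{k,m}\subsetneq\cdots\subsetneq S_{k,N_k}$ to produce $\prod_{m\le j\le N_k}\min\{\effect_{k,m}(X_{S_{k,j}}),1\}$, and the observation that the $m'=m$ term in the minimum defining $\effect_{k,l}$ dominates $\effect_{k,l}$. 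This is precisely the content of Lemma~\ref{lemma:upper-bound-tilde-J-hier} and the surrounding argument in Section~\ref{sec:proof-lemma-key-to-thm-hier-effect}, so there is nothing substantively different to compare.
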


By Lemma~\ref{lemma:key-to-thm-hier-effect}, any feasible $\beta$ with
$\beta_{k, m} = 0$ and $\beta_{S^c} = 0$ can't be stationary if the constant $\wbar{C} > 0$ in 
equation~\eqref{eqn:hier-effect} is sufficiently large. This completes the proof of Theorem~\ref{thm:hier-effect}.

\subsubsection{Deferred proof of Lemma~\ref{lemma:key-to-thm-hier-effect}.}
\label{sec:proof-lemma-key-to-thm-hier-effect}
The key is to derive a tight bound on the surrogate gradient $\wtilde{\partial_{\beta_{(k, m)}} \obj}(\beta)$.
Write $q_0(\zeta) = \frac{1}{\zeta^2}$. By Lemma~\ref{lemma:eqn-crucial-identity-technical}, 
$\wtilde{\partial_{\beta_{(k, m)}} \obj}(\beta) = V(\beta)$ where\footnote{We can 
apply Lemma~\ref{lemma:eqn-crucial-identity-technical} since (i) hierarchical interaction model is a special 
instance of the functional ANOVA and (ii) $\beta_{k, m} = 0$ and $\beta_{S^c} = 0$)}
\begin{equation}
\begin{split}
\label{eqn:crucial-identity-technical-hier}
V(\beta)  &= 
	- \frac{1}{\lambda} \cdot 
		\iint \left|\E\Big[e^{i\zeta_m X_{k, m} + i \langle \omega_{S\backslash (k, m)},\beta_{S\backslash (k, m)} \odot X_{S\backslash (k, m)}\rangle}
		 	\cdot  F_{k, m}(X_{S_{k, N_k}})\Big]\right|^2 \\
	&\,\,\,\,\,\,\,\,\,\,\,\,\,\,\,\,\,\,\,\,\,\,\,\,\,\,\,\,\,\,\,\,\,\,\,\,
		\cdot q_0(\omega_{(k, m)}) \wtilde{Q}(\omega_{S \backslash (k, m)}) d\zeta_{(k,m)} d\omega_{S \backslash (k,m)}.
\end{split}
\end{equation}

Lemma~\ref{lemma:upper-bound-tilde-J-hier} analzyes and gives tight upper bounds on $V(\beta)$---this is the 
core technical argument in the proof of Lemma~\ref{lemma:key-to-thm-hier-effect}.
The proof is deferred to Appendix~\ref{sec:proof-upper-bound-tilde-J-hier}.

\renewcommand\thelemma{\thesection.\arabic{lemma}'}
\setcounter{lemma}{2}
\begin{lemma}
\label{lemma:upper-bound-tilde-J-hier}
Assume Assumptions~\ref{assumption:mu-compact} and \ref{assumption:X-Y-bound} and the hierarchical model.
Assume  $\beta_{S_{k, m-1}} = \tau \mathbf{1}_{S_{k, m-1}}$.
There exist constants $\tilde{c}, C > 0$ that depends only on $M, M_X, M_Y, \mu$ such that 
\begin{equation}
\label{eqn:upper-bound-tilde-J}
V(\beta) \le - \frac{\tilde{c}}{\lambda} \cdot \tau^{m-1} \cdot V_{T; C}(\beta)
\end{equation}
holds for any subset $T$ such that $[m] \subseteq T \subseteq [N_k]$ . Above,
$V_{T; C}(\beta)$ is defined by 
\begin{equation}
\label{eqn:def-V-j-C}
	V_{T; C}(\beta) = \Bigg(\effect_{k, m}(X_{k, j(T)}) - C\cdot \sum_{w \in [N_k] \backslash T} \beta_{k, w}\Bigg)_+ \cdot
	\prod_{w \in T \backslash [m]} \beta_{k, w}.
\end{equation}
In above, the index $j(T) \defeq \argmax\{j: [j] \in T\}$.
\end{lemma}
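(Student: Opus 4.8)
The plan is to mirror the argument for Lemma~\ref{lemma:upper-bound-tilde-J} (the main-effect case), but now tracking the additional factor $\tau^{m-1}$ coming from the variables in $S_{k,m-1}$ that are pinned at level $\tau$. Starting from the expression for $V(\beta)$ in equation~\eqref{eqn:crucial-identity-technical-hier}, I would first restrict the domain of $\omega_{S\backslash(k,m)}$ to a fixed compact box $K_0 = [-B, B]^{|S|-1}$ chosen large enough (depending on $M, M_X, \mu$) that $\wtilde{Q}$ has at least a constant fraction of its mass there; since $\wtilde{Q}(\omega_{S\backslash(k,m)})\,d\omega_{S\backslash(k,m)}$ is a fixed finite measure, this loses only a constant factor. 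On that box, $\tilde{c}$ can absorb $\inf_{K_0}\wtilde{Q}$, so it suffices to lower bound $\iint_{\zeta_m, \,\omega_{K_0}} |\E[\,\cdot\,]|^2\, q_0(\zeta_m)\, d\zeta_m\, d\omega$.

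Next I would split the conditional expectation. Because the covariates in $S$ are mutually independent (functional ANOVA assumption) and $\beta_{k,m}=0$, $\beta_{S^c}=0$, the random variable $e^{i\langle\omega_{S\backslash(k,m)},\,\beta_{S\backslash(k,m)}\odot X_{S\backslash(k,m)}\rangle}$ factors over the independent blocks, and conditioning on $X_{k,m}$ isolates $F_{k,m}(X_{S_{k,N_k}})$. Using the hierarchical structure $F_{k,m}=\sum_{m\le w\le N_k} f^*_{S_{k,w}}$, I would project onto the subspace indexed by $T$: for the chosen $T$ with $[m]\subseteq T\subseteq[N_k]$ and $j(T)=\max\{j:[j]\subseteq T\}$, the relevant piece of the signal is $F_{k,m}(X_{S_{k,j(T)}})$, and the variables $X_{k,w}$ for $w\in T\backslash[m]$ contribute factors of $\beta_{k,w}$ through the small-$\omega$ expansion $\E[e^{i\omega_w X_{k,w}} g(X_{k,w})] = \Cov(\cdot)\,\omega_w\beta_{k,w}(1+o(1))$ after integrating against $q_0(\omega_w)\,d\omega_w$ — exactly the mechanism that produced the product $\prod_{\bar l\in T\backslash\{l\}}\beta_{\bar l}$ in Lemma~\ref{lemma:upper-bound-tilde-J}. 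The variables in $S_{k,m-1}$ are pinned at $\tau$, so the same expansion applied to them produces the clean factor $\tau^{m-1}$ (there are $m-1$ of them, namely $S_{k,1},\dots,S_{k,m-1}$). The variables in $[N_k]\backslash T$ are the "leakage" terms: their $\beta_{k,w}$'s can perturb the inner product, and a second-order Taylor estimate bounds this perturbation by $C\sum_{w\in[N_k]\backslash T}\beta_{k,w}$ times the leading term, which is how the $(\,\cdot\,-C\sum_{w\in[N_k]\backslash T}\beta_{k,w})_+$ truncation in $V_{T;C}(\beta)$ arises. Collecting the pieces and recognizing $\E[F_{k,m}(X_{S_{k,j(T)}})F_{k,m}(X_{S_{k,j(T)}}')\,h(\|\cdot\|_1)]$ as $\effect_{k,m}(X_{k,j(T)})$ via the identity~\eqref{eqn:h-prime-integral-formula} (used in reverse, as in the main-effect proof) gives~\eqref{eqn:upper-bound-tilde-J}.

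I expect the main obstacle to be the bookkeeping of the leakage term uniformly over $\beta$ in the feasible set: one must show that when some $\beta_{k,w}$ with $w\notin T$ is large, the cross terms it introduces into $|\E[\cdots]|^2$ really are dominated by $C\sum_{w}\beta_{k,w}$ times the "clean" product, and that this bound survives integration against the (unbounded in $\zeta_m$ but $q_0$-weighted) measure. This is the same difficulty handled in the proof of Lemma~\ref{lemma:upper-bound-tilde-J} in Appendix~\ref{sec:proof-lemma-upper-bound-tilde-J}; I would adapt that argument essentially verbatim, the only genuinely new ingredient being the extraction of the $\tau^{m-1}$ prefactor from the pinned coordinates, which is routine once the small-$\omega$ expansion is in place. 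A minor secondary point is ensuring the constants $\tilde c, C$ depend only on $M, M_X, M_Y, \mu$ and not on $\tau$ or $|S|$ — this follows because $\tau$ is extracted explicitly and $|S|$-dependence only enters through the number of summands, which can be deferred to the outer proof of Theorem~\ref{thm:hier-effect}.
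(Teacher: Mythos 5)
Your overall strategy --- reduce to the argument for Lemma~\ref{lemma:upper-bound-tilde-J} and pull out the factor $\tau^{m-1}$ from the coordinates pinned at $\tau$ --- is indeed what the paper does, and the preliminary step of factoring off the other hierarchical components via independence (the paper carries this out with Jensen's inequality on the characteristic function, giving $V(\beta)\le e^{-2M_X M_\mu}\wtilde V(\beta)$) is also the right move. However, you have mischaracterized the mechanism in Lemma~\ref{lemma:upper-bound-tilde-J} that produces the factor $\prod_{w\in T\setminus[m]}\beta_{k,w}$, and the ``small-$\omega$ expansion'' you describe would fail if carried out. The expansion $\E[e^{i\omega_w\beta_{k,w}X_{k,w}}g(X_{k,w})]\approx i\,\omega_w\beta_{k,w}\Cov(X_{k,w},g)$ gives, after taking $|\cdot|^2$, a contribution of order $\omega_w^2\beta_{k,w}^2$: this is quadratic in $\beta_{k,w}$ rather than the linear $\beta_{k,w}$ the lemma requires, and its integral against the Cauchy weight $\psi_t(\omega_w)\,d\omega_w$ diverges because the Cauchy density has no second moment --- which is presumably why you reached for a compact-box truncation in the first place. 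The actual mechanism in the paper is the change of variables $\omega_w\mapsto\omega_w/\beta_{k,w}$ followed by the exact pointwise Cauchy scaling inequality $\psi_{\beta t}(\omega)\ge\beta\,\psi_t(\omega)$ for $\beta\le 1$, which pulls out $\beta_{k,w}$ to the first power over all $\omega$, with no truncation and no error term. The $\tau^{m-1}$ factor is precisely this same step applied to the $m-1$ pinned coordinates.

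Two secondary points. The compact-box restriction is not only unnecessary but counterproductive: after truncating $\omega$ to a box you can no longer Fourier-invert back to the quantity $\effect_{k,m}(X_{S_{k,j(T)}})$, which is defined via the Bochner representation of $h(\|\cdot\|_1)$ as an integral over all of $\R^{|S_{k,j(T)}|}$; the paper avoids any truncation precisely by using the pointwise Cauchy inequality instead. And the leakage bound $C\sum_{w\notin T}\beta_{k,w}$ in Lemma~\ref{lemma:really-technical}~(2) is not obtained by a second-order Taylor estimate either --- it uses $|e^{it}-1|\le 2\min\{|t|,1\}$ together with Cauchy stability (a linear combination of independent standard Cauchy variables is Cauchy with scale $t\langle\beta_{T^c},|X_{T^c}|\rangle$) and the exact bound $\E[\min\{\alpha|Z|,1\}^2]\le\frac{4}{\pi}\alpha$. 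A genuine Taylor expansion there again runs into nonexistent Cauchy moments.
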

\renewcommand\thelemma{\thesection.\arabic{lemma}}

By Lemma~\ref{lemma:upper-bound-tilde-J}, and noticing that 
$\max_{[m] \subseteq T \subseteq S} V_{T; C}(\beta) \ge C \cdot \prod_{m \le j \le N_k}\min\{\effect_{k, j}(X_{S_{k, j}}), 1\}$
where $C > 0$ depends only on $|S|$, we have shown that 
\begin{equation}
\label{eqn:goal-upper-bound-tilde-J-hier}
	\wtilde{\partial_{\beta_{k, m}} \obj}(\beta) \le -\frac{c}{\lambda} \cdot \tau^{m-1} 
		\cdot \prod_{m \le j \le N_k}\min\{\effect_{k, j}(X_{S_{k, j}}), 1\} \le -\frac{c'}{\lambda} \cdot \min\{\tau, 1\}^l 
			\cdot \effect_{k, l}.
\end{equation}
where $c, c' > 0$ depend only on $M, M_X, M_Y, \mu, |S|$.  To transfer the bound from 
$\wtilde{\partial_{\beta_{k, m}} \obj}(\beta)$ to $\partial_{\beta_{k, m}} \obj(\beta)$, we use
Lemma~\ref{lemma:approximate-gradient-bound} to obtain 
\begin{equation}
\label{eqn:grad-hier}
	\partial_{\beta_{k, m}} \obj(\beta) \le \frac{1}{\lambda} \cdot \left(-c \cdot \min\{\tau^l, 1\} \cdot  \effect_{k, l} + C\lambda^{1/2}(1+\lambda^{1/2})\right),
\end{equation}
where $c, C > 0$ depend on $M, M_X, M_Y, \mu, |S|$. Lemma~\ref{lemma:key-to-thm-hier-effect} 
follows as $\partial_{\beta_l} \obj_\gamma(\beta) = \partial_{\beta_l} \obj(\beta) + \gamma$.

\subsection{Proof of Technical Lemma}
\subsubsection{Proof of Lemma~\ref{lemma:eqn-crucial-identity-technical}.}
\label{sec:proof-lemma-eqn-crucial-identity-technical}
Let $\beta$ be such that $\beta_l = 0$ and $\beta_{S^c} = 0$. It suffices to prove 
\begin{equation}
\label{eqn:small-identity}
	\E\Big[\wbar{R}_{\beta, \omega}(\beta \odot X; Y)  e^{i \omega_l X_l}\Big] = 
		\E\Big[e^{i\omega_l X_l + i \langle \omega_{\backslash l}, \beta_{\backslash l} \odot X_{\backslash l}\rangle}
		 	\cdot  F_l(X_S)\Big].
\end{equation}
To see this, we evaluate $\E\left[\wbar{R}_{\beta, \omega}(\beta \odot X; Y) \mid X_l\right]$. By definition, we have 
\begin{equation*}
\begin{split}
R_{\beta, \omega}(\beta \odot X; Y) 
&= e^{i \langle \omega, \beta \odot X \rangle} \Big(F_l(X_S) + \sum_{A: l \not\in A} f_A^*(X_A) + \xi - f_\beta(\beta \odot X)\Big).
\end{split}
\end{equation*} 
Since the random variables $e^{i \langle \omega, \beta\odot X\rangle}$, 
$\sum_{A: l \not\in A} f_A^*(X_A)$ and $f_\beta(\beta\odot X)$ depend only on the 
random variables $X_{S \backslash l}$, they are independent of $X_l$ by assumption. Hence we obtain 
\begin{equation*}
\begin{split}
	\E\Big[\wbar{R}_{\beta, \omega}(\beta \odot X; Y) \mid X_l\Big] 
		&= \E\Big[R_{\beta, \omega}(\beta \odot X; Y) \mid X_l\Big] -\E\Big[\wbar{R}_{\beta, \omega}(\beta \odot X; Y)\Big] \\
		&= \E\Big[e^{i \langle \omega, \beta \odot X \rangle}\cdot F_l(X_S) \mid X_l\Big]
		= \E\Big[e^{i \langle \omega_{S\backslash l}, \beta_{S\backslash l} \odot 
			X_{S\backslash l} \rangle}\cdot F_l(X_S) \mid X_l\Big] .
\end{split}
\end{equation*}
By the law of iterated conditional expectation, we obtain equation~\eqref{eqn:small-identity} as desired. 

\subsubsection{Proof of Lemma~\ref{lemma:upper-bound-tilde-J}.}
\label{sec:proof-lemma-upper-bound-tilde-J}
Fix $T \subseteq S$ where $l \in T$. Write $T^c = S \backslash T$. 
Throughout the proof, we can W.L.O.G assume that $M = 1$. Note $q= 1$. We start by proving the following: 
\begin{equation}
\label{eqn:upper-bound-one-tilde-J}
\begin{split}
U(\beta) &\le 
	- \frac{1}{\lambda} \cdot \bigg(\prod_{i \in T \backslash l} \beta_i\bigg)  \cdot \zeta_l(\beta_{T^c})  \\
\text{where}~~\zeta_l(\beta_{T^c}) &=
	\iiint \left|\E\Big[e^{i \langle \omega_T, X_T \rangle + i \langle \omega_{T^c}, \beta_{T^c} \odot X_{T^c}\rangle}
		 	\cdot  F_l(X_S)\Big]\right|^2 q_0(\omega_l) 
		\wtilde{Q}(\omega_{S \backslash l})  d\omega_l d\omega_{S \backslash l}.
\end{split}
\end{equation}
To see this, recall the definition of $\wtilde{Q}(\omega_{S \backslash l})$ and $U(\beta)$. This gives the expression
\begin{equation*}
\begin{split}
	U(\beta)= - \frac{(2\pi)^p}{\lambda} \cdot 
		\iiint \left|\E\Big[e^{i\zeta_l X_l + i \langle \omega_{S\backslash l}, \beta_{S\backslash l} 
			\odot X_{S\backslash l}\rangle}
		 	\cdot  F_l(X_S)\Big]\right|^2 
	q_0(\zeta_l) 
		 t \prod_{i \in S \backslash \{l\}} \psi_t(\omega_i) \mu(dt) d\zeta_l d\omega_{S \backslash l}.
\end{split}
\end{equation*}
By performing a change of variables $\omega_i \mapsto \beta_i \omega_i$ for $i \in T\backslash l$, we obtain:
\begin{equation*}
\begin{split}
U(\beta) = - \frac{(2\pi)^p}{\lambda} \cdot 
		\iiint &\left|\E\Big[e^{i\zeta_l X_l + i \langle \beta_{T\backslash l} \odot \zeta_{T \backslash l}, X_{T\backslash l}\rangle
			+ i \langle \beta_{T^c} \odot \zeta_{T^c}, X_{T^c}\rangle}
		 	\cdot  F_l(X_S)\Big]\right|^2   \\
	&\times 
			q_0(\zeta_l) \cdot t \cdot \prod_{i \in T\backslash l} \psi_{\beta_i t}(\omega_i) 
				\cdot \prod_{i \in S\backslash T} 
			\psi_{t}(\omega_i)\mu(dt) d\zeta_l d\omega_{S \backslash T}d\omega_{T \backslash l}.
\end{split}
\end{equation*}
Here is the crucial observation: for any $\beta \le 1$, 
$\psi_\beta(\omega) = \frac{\beta \omega}{\beta^2 + \omega^2} \ge \beta \psi(\omega)$ for $\omega \in \R$. Hence, 
\begin{equation*}
\begin{split}
U(\beta) = - \frac{(2\pi)^p}{\lambda} \cdot \bigg(\prod_{i \in T \backslash l} \beta_i\bigg) \cdot 
		\iiint &\left|\E\Big[e^{i\zeta_l X_l + i \langle \zeta_{T \backslash l}, X_{T\backslash l}\rangle
			+ i \langle \beta_{T^c} \odot \zeta_{T^c}, X_{T^c}\rangle}
		 	\cdot  F_l(X_S)\Big]\right|^2   \\
	&\times 
			q_0(\zeta_l) \cdot t \cdot \prod_{i \in T\backslash l} \psi_{t}(\omega_i) 
				\cdot \prod_{i \in S\backslash T} 
			\psi_{t}(\omega_i)\mu(dt) d\zeta_l d\omega_{S \backslash T}d\omega_{T \backslash l}.
\end{split}
\end{equation*}
This is exactly the same as the desired bound~\eqref{eqn:upper-bound-one-tilde-J}, after we substitute 
$\wtilde{Q}(\omega_{S \backslash l})$. 
Below we lower bound $\zeta_l(\beta_{T^c})$: for some constant $c, C > 0$ depending only on $M_X, M_Y, \mu$, 
\begin{equation}
\label{eqn:upper-bound-two-tilde-J}
\zeta_l(\beta_{T^c}) \ge 
	\Big(c \cdot \effect_l(X_T) - C \cdot \sum_{l' \in S\backslash T} \beta_{l'}\Big)_+.
\end{equation}
To simplify notation, we introduce 
$
R_{l, T}(\beta_{T^c}) = \E[e^{i \langle \omega_T, X_T \rangle + i \langle \omega_{T^c}, \beta_{T^c} \odot X_{T^c}\rangle}
		 	\cdot  F_l(X_S)]
$. Hence,
\begin{equation*}
\zeta_l(\beta_{T^c}) = \iint \left|R_{l, T}(\beta_{T^c})\right|^2 q_0(\omega_l) 
		\wtilde{Q}(\omega_{S \backslash l}) d\omega_l d\omega_{S \backslash l}.
\end{equation*}
To analyze $\zeta_l(\beta_{T^c})$, we decompose $R_{l, T}$ into two terms $R_{l, T} = R_{l, T, 1} + R_{l, T, 2}$, where 
\begin{equation*}
\begin{split}
R_{l, T, 1}(\beta_{T^c}) &=  \E\Big[e^{i \langle \omega_T, X_T \rangle + i \langle \omega_{T^c}, \beta_{T^c} \odot X_{T^c}\rangle}
		 	\cdot  F_l(X_T)\Big] \\
R_{l, T, 2}(\beta_{T^c}) &= \E\Big[e^{i \langle \omega_T, X_T \rangle + i \langle \omega_{T^c}, \beta_{T^c} \odot X_{T^c}\rangle}
		 	\cdot  (F_l(X_S) - F_l(X_T))\Big]
\end{split}.
\end{equation*}
As $|z_1 + z_2|^2 \ge \half |z_1|^2 - 2|z_2|^2$ for $z_1, z_2$, we obtain 
$\zeta_l(\beta_{T^c}) \ge \half \zeta_{l, 1}(\beta_{T^c}) - 2 \zeta_{l, 2}(\beta_{T^c})$ where 
\begin{equation*}
\zeta_{l, j}(\beta_{T^c}) = \iint \left|R_{l, T, j}(\beta_{T^c})\right|^2 q_0(\omega_l) 
		\wtilde{Q}(\omega_{S \backslash l}) d\omega_l d\omega_{S \backslash l}.
\end{equation*}
Lemma~\ref{lemma:really-technical} lower bounds $\sigma_{l, 1}(\beta_{T^c})$ and 
upper bounds $\zeta_{l, 2}(\beta_{T^c})$. The proof 
is in Section~\ref{sec:proof-really-technical}. 

\begin{lemma}
\label{lemma:really-technical}
The following bound holds for constants $c, C > 0$ depending only on $M_X, M_Y, \mu$:
\begin{equation*}
\begin{split}
	\zeta_{l, 1}(\beta_{T^c}) \ge c \cdot  \effect_l(X_T) ~~\text{and}~~
	\zeta_{l, 2}(\beta_{T^c}) \le  C \cdot \sum_{l' \in S\backslash T} \beta_{l'}.
\end{split}
\end{equation*}
\end{lemma}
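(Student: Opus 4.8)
The plan is to prove the two displayed inequalities separately, in both cases reducing the frequency-domain integral to the quantity $\effect_l(X_T)$ from Definition~\ref{definition:l-signal-main-mix} by way of the Bochner representation \eqref{eqn:l_1_kernel}. Throughout I take $M=1$ without loss of generality, as in the surrounding proof. The starting point is the identity obtained by Fourier-expanding $h(\norm{X_T-X_T'}_1) = \int_0^\infty \prod_{i\in T}e^{-t|X_i-X_i'|}\mu(dt)$ and using $e^{-t|z|} = \int e^{i\omega z}\psi_t(\omega)\,d\omega$:
\begin{equation*}
\effect_l(X_T) = \int_0^\infty\!\!\int \big|\E[F_l(X_T)e^{i\langle\omega_T,X_T\rangle}]\big|^2\prod_{i\in T}\psi_t(\omega_i)\,d\omega_T\,\mu(dt),
\end{equation*}
which is the target of both halves of the argument.

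For the lower bound on $\zeta_{l,1}$, I would first use that $F_l(X_T)$ depends only on $X_T$ and that the coordinates of $X$ indexed by $S$ are independent, so that $R_{l,T,1}(\beta_{T^c})$ factors as $\E[F_l(X_T)e^{i\langle\omega_T,X_T\rangle}]\cdot\E[e^{i\langle\omega_{T^c},\beta_{T^c}\odot X_{T^c}\rangle}]$. Writing $\wtilde{Q}(\omega_{S\setminus l}) = \int_0^\infty t\prod_{i\in T\setminus l}\psi_t(\omega_i)\prod_{i\in T^c}\psi_t(\omega_i)\,\mu(dt)$ and integrating out $\omega_{T^c}$ first, the $T^c$-block produces the factor $\E[e^{-t\sum_{i\in T^c}\beta_i|X_i-X_i'|}]$, which is bounded below by $e^{-2M_X M_\mu}$ since $\sum_{i\in T^c}\beta_i\le\norm{\beta}_1\le 1$, $|X_i|\le M_X$, and $\supp(\mu)\subseteq[0,M_\mu]$. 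For the $l$-coordinate I would use the elementary inequality $t\,q_0(\omega_l) = t/\omega_l^2 \ge t/(t^2+\omega_l^2) = \pi\psi_t(\omega_l)$ (recall $\psi$ is the Cauchy density), which converts the remaining $\omega_T$-integral into $\pi\int|\E[F_l(X_T)e^{i\langle\omega_T,X_T\rangle}]|^2\prod_{i\in T}\psi_t(\omega_i)\,d\omega_T$; integrating against $\mu$ recovers exactly $\pi\,\effect_l(X_T)$. This gives $\zeta_{l,1}(\beta_{T^c})\ge \pi e^{-2M_X M_\mu}\effect_l(X_T)$.

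For the upper bound on $\zeta_{l,2}$, I would expand $F_l(X_S)-F_l(X_T) = \sum_{A}f_A^*(X_A)$ over the sets $A$ with $l\in A\subseteq S$ and $A\cap T^c\neq\emptyset$, so that $|R_{l,T,2}|^2\le 2^{|S|}\sum_A|\mathrm{term}_A|^2$ where $\mathrm{term}_A = \E[e^{i\langle\omega_T,X_T\rangle+i\langle\omega_{T^c},\beta_{T^c}\odot X_{T^c}\rangle}f_A^*(X_A)]$. Fixing such an $A$ and any $l'\in A\cap T^c$, independence of coordinates lets me write $\mathrm{term}_A$ as a product of characteristic functions of the coordinates outside $A$ (each of modulus $\le 1$) times $\Psi_A := \E[e^{i\langle\omega_{A\cap T},X_{A\cap T}\rangle+i\langle\beta\odot\omega_{A\cap T^c},X_{A\cap T^c}\rangle}f_A^*(X_A)]$. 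Expanding $e^{i\omega_l X_l}e^{i\beta_{l'}\omega_{l'}X_{l'}}$ about $(1)(1)$ and invoking the ANOVA orthogonality relations $\E[f_A^*(X_A)\mid X_{A\setminus l}]=\E[f_A^*(X_A)\mid X_{A\setminus l'}]=0$ (which kill all three cross terms), one is left with $\Psi_A = \E[(e^{i\omega_l X_l}-1)(e^{i\beta_{l'}\omega_{l'}X_{l'}}-1)\,E\,f_A^*(X_A)]$ with $|E|\equiv 1$, hence $|\Psi_A|^2\le\E[|f_A^*|]\cdot\E[\min(|\omega_l X_l|,2)^2\min(\beta_{l'}|\omega_{l'}X_{l'}|,2)^2|f_A^*|]$ by $|e^{iz}-1|\le\min(|z|,2)$ and Cauchy--Schwarz. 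The one-dimensional frequency integrals are then handled by the elementary estimates $\int_{\R}\min(\omega^2 x^2,4)\,\omega^{-2}\,d\omega = 8|x|$ and $\int_{\R}\min(\beta^2\omega^2 x^2,4)\,\psi_t(\omega)\,d\omega \le \tfrac{8}{\pi}\,t\beta|x|$; the remaining $\omega_i$-integrals contribute $\int\psi_t = 1$, and $\int_0^\infty t^2\,\mu(dt)<\infty$ by compactness of $\supp(\mu)$. Combining, $\int|\mathrm{term}_A|^2 q_0(\omega_l)\wtilde{Q}(\omega_{S\setminus l})\,d\omega_l d\omega_{S\setminus l}\le C\,\E[|f_A^*|]^2\,\beta_{l'}$; summing over the at most $2^{|S|}$ admissible $A$, using $\sum_A\E[|f_A^*|]^2\le\E[Y^2]\le M_Y^2$ and $\beta_{l'}\le\sum_{l''\in S\setminus T}\beta_{l''}$, gives $\zeta_{l,2}(\beta_{T^c})\le C'\sum_{l'\in S\setminus T}\beta_{l'}$ with $C'$ depending only on $M_X,M_Y,\mu$ (and $|S|$).

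The main obstacle is the $\zeta_{l,2}$ bound: the weight $\psi_t$ is a Cauchy density with no finite first or second moment, so the naive Lipschitz estimates $|e^{iz}-1|\le|z|$ in the $\omega_l$ and $\beta_{l'}$ directions both lead to divergent integrals. The resolution is to exploit the two simultaneous vanishings of $\mathrm{term}_A$ --- as $\omega_l\to 0$ (since $l\in A$) and as $\beta_{l'}\to 0$ (since $l'\in A$) --- and to replace the linear bounds by the truncated increment $|e^{iz}-1|\le\min(|z|,2)$, whose square integrates, against both $\omega_l^{-2}$ and the heavy-tailed $\psi_t$, to a finite quantity that is linear in $\beta_{l'}$. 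A secondary point that requires attention is the bookkeeping of which frequency coordinate carries the $q_0$ weight versus a $\psi_t$ weight after the change of variables in \eqref{eqn:upper-bound-one-tilde-J}: here one uses that $l\in T$, so $l$ sits in the $q_0$-block, while every chosen $l'\in A\cap T^c$ sits in the $\psi_t$-block.
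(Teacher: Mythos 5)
Your treatment of the $\zeta_{l,1}$ lower bound follows the paper's own argument step for step: factor $R_{l,T,1}$ using the independence of $X_T$ and $X_{T^c}$, absorb the $q_0(\omega_l)$ weight into a $\psi_t(\omega_l)$ factor via $t\,q_0(\omega_l)\ge\psi_t(\omega_l)$, integrate out the $\omega_{T^c}$-block and bound it below by $e^{-2M_\mu M_X}$ through Jensen's inequality, and recover $\effect_l(X_T)$ from the remaining $\omega_T$-integral. No substantive difference there.

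For $\zeta_{l,2}$ you have in fact put your finger on a genuine gap in the paper's proof, not merely proposed an alternative. The paper applies Cauchy--Schwarz so as to separate $e^{i\langle\omega_{T^c},\beta_{T^c}\odot X_{T^c}\rangle}-1$ from $e^{i\langle\omega_T,X_T\rangle}(F_l(X_S)-F_l(X_T))$; the resulting pointwise bound~\eqref{eqn:upper-bound-R-l-T-2} on $|R_{l,T,2}|^2$ is constant in $\omega_T$, and substituting it into $\zeta_{l,2}=\iint |R_{l,T,2}|^2\,q_0(\omega_l)\,\wtilde{Q}(\omega_{S\setminus l})\,d\omega_l\,d\omega_{S\setminus l}$ produces an unabsorbed factor $\int q_0(\omega_l)\,d\omega_l=\int\omega_l^{-2}\,d\omega_l=+\infty$. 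Equation~\eqref{eqn:zeta-l-2-first-bound} therefore does not follow from~\eqref{eqn:upper-bound-R-l-T-2} as written. Your diagnosis is exactly right: the bound on $|R_{l,T,2}|^2$ must retain decay near $\omega_l=0$, and this decay comes from the additional orthogonality $\E[F_l(X_S)-F_l(X_T)\mid X_{S\setminus l}]=0$, available because every $f_A^*$ entering the difference satisfies $l\in A$.

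Your fix---decomposing $G:=F_l(X_S)-F_l(X_T)=\sum_A f_A^*$, fixing $l'\in A\cap T^c$ for each $A$, and using $\E[f_A^*\mid X_{A\setminus l}]=\E[f_A^*\mid X_{A\setminus l'}]=0$ to extract the double increment $(e^{i\omega_l X_l}-1)(e^{i\beta_{l'}\omega_{l'}X_{l'}}-1)$---is correct. Its only drawback is the $2^{|S|}$ factor from summing over $A$, so your constant ends up depending on $|S|$, which does not match the lemma's claim that $c,C$ depend only on $M_X,M_Y,\mu$ (this is harmless downstream, since $|S|$-dependence is introduced in any case by the time Lemma~\ref{lemma:upper-bound-tilde-J} is applied, but it is a real discrepancy with the statement). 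A more economical fix works at the level of $G$ itself without decomposing. Since $\E[G\mid X_{S\setminus l}]=\E[G\mid X_T]=0$, a double subtraction gives
\begin{equation*}
R_{l,T,2}=\E\!\left[\bigl(e^{i\omega_l X_l}-1\bigr)\bigl(e^{i\langle\omega_{T^c},\beta_{T^c}\odot X_{T^c}\rangle}-1\bigr)e^{i\langle\omega_{T\setminus l},X_{T\setminus l}\rangle}\,G\right],
\end{equation*}
and then Cauchy--Schwarz together with the independence $X_l\perp X_{T^c}$ yields
\begin{equation*}
|R_{l,T,2}|^2\le\E\!\left[\bigl|e^{i\omega_l X_l}-1\bigr|^2\right]\cdot\E\!\left[\bigl|e^{i\langle\omega_{T^c},\beta_{T^c}\odot X_{T^c}\rangle}-1\bigr|^2\right]\cdot M_Y.
\end{equation*}
The $\omega_l$-integral against $q_0$ is now finite by your elementary estimate $\int\min(|\omega x|,1)^2\,\omega^{-2}d\omega=4|x|$, and the $\omega_{T^c}$-integral can be dispatched either by the paper's Cauchy-stability trick or by $\psi_t(\omega)\le t/(\pi\omega^2)$ coordinate by coordinate; either way the constant depends only on $M_X,M_Y,\mu$, recovering the lemma as stated.
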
 \noindent\noindent
As $\zeta_l(\beta_{T^c}) \ge 0$, the desired equation~\eqref{eqn:upper-bound-two-tilde-J} follows 
from Lemma~\ref{lemma:really-technical}. With equations~\eqref{eqn:upper-bound-one-tilde-J} 
and~\eqref{eqn:upper-bound-two-tilde-J} at hand, we get
$U(\beta) \le -\frac{c}{\lambda} \cdot U_{T; C} (\beta)$ where
$c, C > 0$ depend only on $M_X, M_Y, \mu$. This finishes the proof of 
Lemma~\ref{lemma:upper-bound-tilde-J}. 

\subsubsection{Proof of Lemma~\ref{lemma:really-technical}.}
\label{sec:proof-really-technical}
Lemma~\ref{lemma:really-technical} consists of two parts. 

\begin{enumerate}
\item We lower bound $\zeta_{l, 1}(\beta_{T^c})$. By the independence between $X_T$ and $X_{T^c}$, 
\begin{equation*}
\begin{split}
	|R_{l, T, 1}(\beta_{T^c})|^2 &=  \left|\E[e^{i \langle \omega_T, X_T\rangle} F_l(X_T)]\right|^2 \cdot 
		\left| \E[e^{i \langle \omega_{T^c}, \beta_{T^c} \odot X_{T^c}\rangle}]\right|^2. 
\end{split}
\end{equation*}
Next, note that $t q_o(\omega_l) \ge \psi_t(\omega_l)$. As a result, we obtain 
\begin{equation*}
	q_0(\omega_l) \wtilde{Q}(\omega_{S \backslash l}) = q_0(\omega_l) \cdot \int_0^\infty t \prod_{i \in S \backslash l} \psi_t(\omega_i) \mu(dt) 
		\ge \int_0^\infty \prod_{i \in S} \psi_t(\omega_i) \mu(dt).
\end{equation*}
Now, using the above identity and inequality, we obtain the following lower bound: 
\begin{equation}
\label{eqn:two-step-zeta-l-1}
\begin{split}
	\zeta_{l, 1}(\beta_{T^c})
		&\ge \iiint \left|\E[e^{i \langle \omega_T, X_T\rangle} F_l(X_T)]\right|^2 \cdot 
		\left| \E[e^{i \langle \omega_{T^c}, \beta_{T^c} \odot X_{T^c}\rangle}]\right|^2
				 \prod_{i \in S} \psi_t(\omega_i) d\omega_S \mu(dt) \\
		&= \int \bigg(\int \left|\E[e^{i \langle \omega_T, X_T\rangle} F_l(X_T)]\right|^2 \prod_{i\in T} \psi_t(\omega_i) d\omega_T\bigg) \\
		&\,\,\,\,\,\,\,\,\,\,\,\,\,\,\,\,\,\,\,\,\,\,\,\,\,\,\,\,\,\,\,\,\,\,\,\,\,\,
			\times \bigg(\int \left| \E[e^{i \langle \omega_{T^c}, \beta_{T^c} \odot X_{T^c}\rangle}]\right|^2
				\prod_{i\in T^c} \psi_t(\omega_i) d\omega_{T^c}\bigg) \mu(dt)
\end{split}
\end{equation}
Below we lower bound the two integrals in the brackets. Let $X'$ denote an independent copy of $X$. 
Since the Fourier transform of the Cauchy density is Laplace, we obtain
\begin{equation*}
\begin{split}
	\int \left| \E[e^{i \langle \omega_{T^c}, \beta_{T^c} \odot X_{T^c}\rangle}]\right|^2\prod_{i\in T^c} \psi_t(\omega_i) d\omega_{T^c}
		&= \E\Big[ \int e^{i \langle \omega_{T^c}, \beta_{T^c} \odot (X_{T^c} - X'_{T^c}\rangle}\prod_{i\in T^c} \psi_t(\omega_i) d\omega_{T^c} \Big] \\
		&= \E \left[e^{- t \cdot \norm{X_{T^c} - X'_{T^c}}_{1, \beta_{T^c}}}\right] \ge e^{-2tM_X}
\end{split}
\end{equation*} 
where the last step is due to Jensen's inequality and the fact that 
$\E[\norm{X_{T^c}-X_{T^c}'}_{1, \beta_{T^c}}] \le 2M_X$ as
$\norm{\beta_{T^c}}_1 \le 1$. 
Substitute it into equation~\eqref{eqn:two-step-zeta-l-1}. Since $\supp(\mu) \subseteq [0, M_\mu]$, we get
\begin{equation*}
\begin{split}
	\zeta_{l, 1}(\beta_{T^c}) 
		&\ge e^{-2 M_\mu M_X} \cdot  
			\int \bigg(\int \left|\E[e^{i \langle \omega_T, X_T\rangle} F_l(X_T)]\right|^2 \prod_{i\in T} \psi_t(\omega_i) d\omega_T\bigg)\mu(dt) \\
		&=e^{-2 M_\mu M_X} \cdot \E \left[\iint e^{i \langle \omega_T, X_T - X_T'\rangle} \prod_{i\in T} \psi_t(\omega_i) d\omega_T \mu(dt) \cdot F_l(X_T) F_l(X_T')\right].
\end{split}
\end{equation*}
Recall that $h(\norm{z_T}_1) = \int e^{i \langle \omega_T, z_T\rangle}Q(\omega_T) d\omega$. Hence, we obtain that
\begin{equation*}
\begin{split}
	\zeta_{l, 1}(\beta_{T^c})  &\ge e^{-2 M_\mu M_X} \cdot
			\E\left[h (\norm{X_T - X_T'}_1) F_l(X_T) F_l(X_T')\right] = e^{-2 M_\mu M_X} \cdot \effect_l(X_T).
\end{split}
\end{equation*}

\item We upper bound $\zeta_{l, 2}(\beta_{T^c})$. As $\E[F_l(X_S) - F_l(X_T) \mid X_T] = 0$, we obtain that 
\begin{equation*}
R_{l, T, 2}(\beta_{T^c}) = \E\Big[e^{i \langle \omega_T, X_T \rangle} \cdot 
	(e^{i \langle \omega_{T^c}, \beta_{T^c} \odot X_{T^c}\rangle} - 1)\cdot \left(F_l(X_S) - F_l(X_T)\right)\Big]
\end{equation*}
After applying Cauchy Schwartz inequality to $R_{l, T, 2}(\beta_{T^c})$, we obtain that
\begin{equation*}
\left|R_{l, T, 2}(\beta_{T^c})\right|^2 \le \E\left[|e^{i \langle \omega_{T^c}, \beta_{T^c} \odot X_{T^c}\rangle} - 1|^2\right] 
	\cdot  \E\left[(F_l(X_S) - F_l(X_T))^2\right].
\end{equation*}
Note (i) $|e^{it}-1| \le 2 \cdot \min\{|t|, 1\}$ for any $t \in \R$ and (ii) 
$ \E\left[(F_l(X_S) - F_l(X_T))^2\right] \le \E[Y^2]$ by ANOVA analysis. Consequently, 
this yields the bound 
\begin{equation}
\label{eqn:upper-bound-R-l-T-2}
|R_{l, T, 2}(\beta_{T^c})|^2 \le 4 \cdot \E\left[\min\{\left|\langle \omega_{T^c}, \beta_{T^c} \odot X_{T^c}\rangle\right|, 1\}^2\right] \cdot M_Y.
\end{equation} 
By substituting it into the definition of $\zeta_{l, 2}(\beta_{T^c})$, we obtain that 
\begin{equation}
\label{eqn:zeta-l-2-first-bound}
\begin{split}
\zeta_{l, 2}(\beta_{T^c}) &\le 4M_Y \cdot 
		\E\left[\int \min\{\left|\langle \omega_{T^c}, \beta_{T^c} \odot X_{T^c}\rangle\right|, 1\}^2 \wtilde{Q}(\omega_{T^c}) d\omega_{T^c}\right] \\
	& = 4M_Y \cdot \int_0^\infty 
		\E\left[\int  \min\{\left|\langle \omega_{T^c}, \beta_{T^c} \odot X_{T^c}\rangle\right|, 1\}^2 \prod_{l' \in T^c} \psi_t(\omega_{l'}) d\omega_{T^c}\right] \cdot t\mu(dt) .
\end{split}
\end{equation}
Now we bound the integral in the bracket. 
Recall that the $\psi_t$ are Cauchy density with parameter $t$ (since $q=1$). Let $W \in \R^p$ be a random 
vector whose coordinates are independent standard Cauchy random variables (with parameter $1$). By
introducing this standard Cauchy vector $W$, we can rewrite the integral into expectation: 
\begin{equation*}
	\int  \min\{\left|\langle \omega_{T^c}, \beta_{T^c} \odot X_{T^c}\rangle\right|, 1\}^2 \prod_{l' \in T^c} \psi_t(\omega_{l'}) d\omega_{T^c}
		= \E[\min\{\left|\langle t \cdot W_{T^c}, \beta_{T^c} \odot X_{T^c}\rangle\right|, 1\}^2 | X].
\end{equation*}
Here comes the crucial observation: any linear combination of 
independent Cauchy variables is Cauchy. In particular, the random variable 
$t \cdot \langle W_{T^c}, \beta_{T^c} \odot X_{T^c}\rangle$ 
(conditional on $X$) is Cauchy distributed with scale parameter $\alpha_t(X) = t \langle\beta_{T^c}, |X_{T^c}| \rangle \ge 0$.
Hence
\begin{equation*}
	\int  \min\{\left|\langle \omega_{T^c}, \beta_{T^c} \odot X_{T^c}\rangle\right|, 1\}^2 \prod_{l' \in T^c} \psi_t(\omega_{l'}) d\omega_{T^c}
		= \E[\min\{\alpha_t(X) \cdot Z, 1\}^2 | X]
\end{equation*}
where $Z$ is a standard Cauchy random variable. A simple calculation shows that 
\begin{equation*}
\begin{split}
	\E[\min\{\alpha |Z|, 1\}^2] &= \frac{2}{\pi}\int_0^\infty \min\{\alpha z, 1\}^2 \cdot \frac{1}{z^2 + 1} dz \le \frac{4}{\pi} \alpha~~\text{for all $\alpha \ge 0$}.
\end{split}
\end{equation*}
As a result, this yields the following upper bound
\begin{equation*}
	\int  \min\{\left|\langle \omega_{T^c}, \beta_{T^c} \odot X_{T^c}\rangle\right|, 1\}^2 \prod_{l' \in T^c} \psi_t(\omega_{l'}) d\omega_{T^c}
		\le \frac{4}{\pi} \alpha_t(X) = \frac{4}{\pi} t \langle\beta_{T^c}, |X_{T^c}| \rangle.
\end{equation*}
Substitute it back into equation~\eqref{eqn:zeta-l-2-first-bound}. This proves that 
(for $\wtilde{C} = \frac{16}{\pi} \cdot M_X M_Y \cdot |h''(0)|$):
\begin{equation*}
\begin{split}
\zeta_{l, 2}(\beta_{T^c}) &\le \frac{16}{\pi} \cdot M_Y \cdot \E[\langle\beta_{T^c}, |X_{T^c}| \rangle] 
	\cdot \int_0^\infty  t^2 \mu(dt) \le \wtilde{C} \cdot \norm{\beta_{T^c}}_1.
\end{split}
\end{equation*}
\end{enumerate}

\subsubsection{Proof of Lemma~\ref{lemma:upper-bound-tilde-J-hier}.}
\label{sec:proof-upper-bound-tilde-J-hier}

The proof of Lemma~\ref{lemma:upper-bound-tilde-J-hier} largely follows that of 
Lemma~\ref{lemma:upper-bound-tilde-J}. Throughout the proof, we can W.L.O.G. 
assume that $M = 1$. Fix  $T$ where $[m] \subseteq T \subseteq [N_k]$.
Introduce notation: $S_{k, T} = \{(k, l) \mid l \in T\}$, and $S_{k, T}^c = S_{k, N_k} \backslash S_{k, T}$.


We start by proving $V(\beta) \le e^{-2M_XM_\mu} \cdot \wtilde{V}(\beta)$
where $\wtilde{V}(\beta)$ is defined by 
\begin{equation*}
\begin{split}
\wtilde{V}(\beta) &= 
\frac{1}{\lambda} \cdot 
	\iint \left|\E\Big[e^{i \langle \zeta_{k, m}, X_{k, m} \rangle + i \langle \omega_{\wbar{S}_k \backslash (k, m)}, 
		\beta_{\wbar{S}_k \backslash (k, m)} \odot X_{\wbar{S}_k \backslash (k, m)}\rangle}
		 	\cdot  F_{k, m}(X_{\wbar{S}_k})\Big]\right|^2  \\
		&\,\,\,\,\,\,\,\,\,\,\,\,\,\,\,\,\,\,\,\,\,\,\,\,\,\,\,\,\,\,\,\,\,\,\,\,\,\,\,\,\,\,\,\,\,\,\,\,\,\,\,\,\,\,\,\,
				\cdot q_0(\omega_{(k, m)}) \wtilde{Q}(\omega_{\wbar{S}_k \backslash (k, m)}) d\zeta_{(k,m)} d\omega_{\wbar{S}_k\backslash (k,m)}.
\end{split}
\end{equation*}
The proof is based on straightforward computation. By definition, 
\begin{equation*}
\begin{split}
V(\beta) 
	&=- \frac{1}{\lambda} \cdot 
		\iiint \left|\E\Big[e^{i\zeta_m X_{k, m} + i \langle \omega_{S\backslash (k, m)},\beta_{S\backslash (k, m)} \odot X_{S\backslash (k, m)}\rangle}
		 	\cdot  F_{k, m}(X_{\wbar{S}_k})\Big]\right|^2 \\
	&\,\,\,\,\,\,\,\,\,\,\,\,\,\,\,\,\,\,\,\,\,\,\,\,\,\,\,\,\,\,\,\,\,\,\,\,\,\,\,\,\,\,\,\,\,\,\,\,\,\,\,\,\,\,\,\,
		\cdot q_0(\omega_{(k, m)}) \cdot \prod_{i \in S \backslash (k, m)} \psi_t(\omega_i) \cdot d\zeta_{(k,m)} d\omega_{S \backslash (k,m)} t\mu(dt) \\
	&= - \frac{1}{\lambda} \cdot 
		\iiint \left|\E\Big[e^{i\zeta_m X_{k, m} + i \langle \omega_{\wbar{S}_k\backslash (k, m)},
			\beta_{\wbar{S}_k \backslash (k, m)} 
			\odot X_{S\backslash (k, m)}\rangle}\cdot  F_{k, m}(X_{\wbar{S}_k})\Big]\right|^2
				 \cdot \left|\E \Big[e^{i \langle \omega_{S \backslash \wbar{S}_k}, 
				\beta_{S \backslash \wbar{S}_k} \odot X_{S \backslash \wbar{S}_k}\rangle}\Big]\right|^2 \\
	&\,\,\,\,\,\,\,\,\,\,\,\,\,\,\,\,\,\,\,\,\,\,\,\,\,\,\,\,\,\,\,\,\,\,\,\,\,\,\,\,\,\,\,\,\,\,\,\,\,\,\,\,\,\,\,\,
		\cdot q_0(\omega_{(k, m)}) \cdot \prod_{i \in S \backslash (k, m)} \psi_t(\omega_i) \cdot d\zeta_{(k,m)} d\omega_{S \backslash (k,m)} t\mu(dt) \\
	&=-  \frac{1}{\lambda} \cdot 
		\iiint \left|\E\Big[e^{i\zeta_m X_{k, m} + i \langle \omega_{\wbar{S}_k\backslash (k, m)},
			\beta_{\wbar{S}_k \backslash (k, m)} 
			\odot X_{S\backslash (k, m)}\rangle}\cdot  F_{k, m}(X_{\wbar{S}_k})\Big]\right|^2
				 \\ 
	&\,\,\,\,\,\,\,\,\,
		\times \E \left[e^{-t \normsmall{X_{S\backslash \wbar{S}_k} - 
			X_{S\backslash \wbar{S}_k}}_{1, \beta_{S \backslash \wbar{S}_k}}}\right]
				\cdot q_0(\omega_{(k, m)}) \cdot \prod_{i \in \wbar{S}_k \backslash (k, m)} 
					\psi_t(\omega_i) \cdot d\zeta_{(k,m)} d\omega_{\wbar{S}_k\backslash (k,m)} t\mu(dt)  \\
\end{split}
\end{equation*}
where the second line uses the independence between $X_{\wbar{S}_k}$
and $X_{S\backslash \wbar{S}_k}$, and the third line uses the fact that 
$\psi_t(\omega_i)$ is Cauchy whose Fourier transform is Laplace. For $t \in \supp(\mu)$,
\begin{equation*}
	\E \left[e^{-t \normsmall{X_{S\backslash \wbar{S}_k} - 
			X_{S\backslash \wbar{S}_k}}_{1, \beta_{S \backslash \wbar{S}_k}}}\right]
	\ge e^{-M_\mu \cdot \E\normsmall{X_{S\backslash \wbar{S}_k} - 
			X_{S\backslash \wbar{S}_k}}_{1, \beta_{S \backslash \wbar{S}_k}}}
	\ge e^{-2M_\mu M_X}.
\end{equation*}
where we have used Jenson's inequality. 
This shows that $V(\beta) \le e^{-2M_XM_\mu} \cdot \wtilde{V}(\beta)$ as desired. 

Now, we upper bound on $V(\beta)$. Following the proof of~\eqref{eqn:upper-bound-one-tilde-J}, we derive similarly
\begin{equation}
\label{eqn:upper-bound-one-tilde-J-V}
\wtilde{V}(\beta) \le 
	- \frac{1}{\lambda} \cdot \bigg(\prod_{i \in T \backslash [m]} \beta_{k, i}\bigg)  \cdot \zeta_l(\beta_{S^c_{k, T}})  
\end{equation}
where $\zeta_l(\beta_{S^c_{k, T}})$ is defined by 
\begin{equation*}
\begin{split}
\zeta_l(\beta_{S^c_{k, T}}) &= \frac{1}{\lambda} \cdot 
	\iint \left|\E\Big[e^{i \langle \omega_{S_{k, T}}, X_{S_{k, T}} \rangle + i \langle \omega_{{S^c_{k, T}}}, \beta_{{S^c_{k, T}}} \odot X_{S^c_{k, T}}\rangle}
		 	\cdot  F_{k, m}(X_{\wbar{S}_k})\Big]\right|^2  \\
		&\,\,\,\,\,\,\,\,\,\,\,\,\,\,\,\,\,\,\,\,\,\,\,\,\,\,\,\,\,\,\,\,\,\,\,\,\,\,\,\,\,\,\,\,\,\,\,\,\,\,\,\,\,\,\,\,
				\cdot q_0(\omega_{(k, m)}) \wtilde{Q}(\omega_{\wbar{S}_k \backslash (k, m)}) d\zeta_{(k,m)} d\omega_{\wbar{S}_k\backslash (k,m)}.
\end{split}
\end{equation*}
Note $\prod_{i \in T \backslash [m]} \beta_{k, i} = \tau^{m-1} \cdot \prod_{i \in T \backslash [m]} \beta_{k, i}$ 
since $[m] \subseteq T$ and $\beta_{S_{k, m-1}} = \tau\mathbf{1}_{S_{k, m-1}}$. Hence, 
\begin{equation}
\label{eqn:upper-bound-one-tilde-J-V-'}
\wtilde{V}(\beta) \le 
	- \frac{1}{\lambda} \cdot \tau^{m-1}  \cdot \zeta_l(\beta_{S^c_{k, T}})  \cdot \prod_{i \in T \backslash [m]} \beta_{k, i}.
\end{equation}
Now, following the proof of  equation~\eqref{eqn:upper-bound-two-tilde-J}, we can derive analogously, 
\begin{equation}
\label{eqn:upper-bound-two-tilde-J-V}
\zeta_l(\beta_{S^c_{k, T}})  \ge 
	\Big(c \cdot \effect_{k, m}(X_{S_{k, j(T)}}) - C \cdot \sum_{w \in [N_k] \backslash T} \beta_{k, w}\Big)_+.
\end{equation} 
where $c, C > 0$ depend only on $M_X, M_Y, \mu$. This completes the proof of 
Lemma~\ref{lemma:upper-bound-tilde-J-hier}.

\subsection{Proof of Propositions}

\subsubsection{Proof of Proposition~\ref{prop:main-effect-positive}.}
\label{sec:proof-proposition-main-effect-positive}
Using the Fourier representation of the kernel, we obtain 
\begin{equation}
\label{eqn:effect-l-X-T-Fourier-representation}
\begin{split}
	\effect_l(X_T) &=  \iint \E \left[F_l(X_T) F_l(X_T') e^{i \langle \omega_T, X_T - X_T' \rangle}\right]  \prod_{i\in T} \psi_t(\omega_i) d\omega_T \mu (dt) \\
		&= \iint \left|\E\big[F_l(X_T) e^{i \langle \omega_T, X_T\rangle}\big]\right|^2 \prod_{i\in T} \psi_t(\omega_i) d\omega_T \mu (dt).
\end{split}
\end{equation}
Hence, $\effect_l(X_T) \ge 0$. Moreover $\effect_l(X_T) > 0$ whenever $F_l(X_T) \neq 0$.

Now, suppose that $f_l^*(X_l) \neq 0$. Then $F_l(X_T) \neq 0$ whenever $l \in T$. Hence,
$\effect_l > 0$. 

\subsubsection{Proof of Proposition~\ref{prop:hier-positive}.}
\label{sec:proof-proposition-hier-positive}
Following the proof of Proposition~\ref{prop:main-effect-positive}, we derive
\begin{equation*}
\begin{split}
	\effect_{k, m}(X_{S_{k, j}}) 
		&=\iint \left|\E\big[F_{k, m}(X_{S_{k, j}}) e^{i \langle \omega_{S_{k, j}}, X_{S_{k, j}}\rangle}\big]\right|^2 
			 \prod_{i \in S_{k, j}} \psi_t(\omega_i) \mu(dt).
\end{split}
\end{equation*}
Hence, $\effect_{k, m}(X_{S_{k, j}}) \ge 0$. Moreover $\effect_{k, m}(X_{S_{k, j}}) > 0$ as long 
as $F_{k, m}(X_{S_{k, j}}) \neq 0$.

Now, suppose for some variable $X_l$, $f_{S_{k, j}}^*(X_{S_{k, j}}) \neq 0$ for all $1\le j\le l$. Then, it implies 
$F_{k, m}(X_{S_{k, j}}) \neq 0$ for any $1\le m \le j \le l$ . This implies $\effect_{k, m}(X_{S_{k, j}}) > 0$ and
hence $\effect_l > 0$.

\section{Proof of Concentration Results: Theorem~\ref{thm:concentration-of-gradients}}
\label{sec:proof-of-concentration-results-details}

\subsection{Introduction of Notation.}
We use $\{X^{(i)}, Y^{(i)}\}_{i=1}^n$ to denote the i.i.d original data. The notation 
$\what{\P}_n, \what{\E}_n$ denote the probability and expectation w.r.t the 
empirical distribution of the original data. As a shorthand, 
$(X^{(1:n)}, Y^{(1:n)})$ denotes the original data $\{X^{(i)}, Y^{(i)}\}_{i=1}^n$.


Draw independently another group of data $\{\wbar{X}^{(i)}, \wbar{Y}^{(i)}\}_{i=1}^n$ i.i.d from 
distribution $\P$. 
The reason to introduce $\{\wbar{X}^{(i)}, \wbar{Y}^{(i)}\}_{i=1}^n$ is to decouple the statistical dependencies 
(see equation~\eqref{eqn:empirical-gradient}) so as to facilitate the proof of the concentration results of the gradients.  

Let $\wbar{\P}_n, \wbar{\E}_n$ denote the probability and 
expectation w.r.t the empirical distribution of $\{\wbar{X}^{(i)}, \wbar{Y}^{(i)}\}_{i=1}^n$.
As a shorthand, 
$(\wbar{X}^{(1:n)}, \wbar{Y}^{(1:n)})$ denotes the generated data $\{\wbar{X}^{(i)}, \wbar{Y}^{(i)}\}_{i=1}^n$.
Let $\wbar{f}_\beta(x)$, $\wbar{r}_\beta(x, y)$ denote the solution and residual of kernel ridge regression
under $\wbar{\P}_n$:
\begin{equation*}
\begin{split}
	\wbar{f}_\beta(x) = \argmin_{f\in \H}  \half ~\wbar{\E}_n \left[(Y - f(\beta^{1/q} \odot X))^2\right] + \frac{\lambda}{2} \norm{f}_{\H}^2,~~
	\wbar{r}_\beta(x, y) = y - \wbar{f}_\beta(x).
\end{split}
\end{equation*}
Introduce the covariance operator and covariance function $\wbar{\Sigma}_\beta$ and $\wbar{h}_\beta$ 
under $\wbar{\P}_n$:  for $f\in \H$
\begin{equation*}
\begin{split}
\wbar{\Sigma}_\beta f &= \wbar{\E}_n\left[k(\beta^{1/q}\odot X, \cdot)f(\beta^{1/q} \odot X)\right]~~\text{and}~~
\wbar{h}_\beta = \wbar{\E}_n[k(\beta^{1/q}\odot X, \cdot)Y].
\end{split}
\end{equation*}


\subsection{Roadmap of the Proof. (Heuristics and Main Ideas)}
\label{sec:tackle-statistical-dependency}
Recall the representation of the empirical and population gradients (equations~\eqref{eqn:population-gradient}-\eqref{eqn:empirical-gradient})
\begin{equation*}
\begin{split}
(\grad \obj(\beta))_l = -\frac{1}{\lambda} \cdot \E\left[r_\beta(\beta\odot X; Y)r_\beta(\beta\odot X'; Y') 
	h'(\langle \beta, |X- X'| \rangle)|X_l- X_l'|\right] \\
(\grad \obj_n(\beta))_l = -\frac{1}{\lambda} \cdot \what{\E}_n
	\left[\what{r}_\beta(\beta\odot X; Y) \what{r}_\beta(\beta\odot X'; Y') h'(\langle \beta, |X- X'| \rangle)|X_l- X_l'|\right]
\end{split}
\end{equation*}
As mentioned in the main text, complicated statistical dependencies appear on the RHS of the empirical gradient 
$\grad \obj_n(\beta)$ since the RHS is averaging over, under the empirical distribution of the original data 
$(X^{(1:n)}, Y^{(1:n)})$, quantities that involve $\what{r}_\beta$ which is dependent of $(X^{(1:n)}, Y^{(1:n)})$. 
This statistical dependence makes it hard to establish concentration. 

To alleviate this technical challenge, our idea is to replace $\what{r}_\beta$ by $\wbar{r}_\beta$, which is 
independent of the original data $(X^{(1:n)}, Y^{(1:n)})$. Formally, we construct for each $\beta \ge 0$ and $l \in [p]$, 
\begin{equation*}
\begin{split}
(\wtilde{\grad \obj(\beta)})_l &= -\frac{1}{\lambda} \cdot \E\left[\wbar{r}_\beta(\beta^{1/q}\odot X; Y) \wbar{r}_\beta(\beta^{1/q}\odot X'; Y') 
	h'(\normsmall{X-X'}_{q, \beta}^q)|X_l- X_l'|^q | \wbar{X}^{(1:n)}, \wbar{Y}^{(1:n)}\right] \\
(\wtilde{\grad \obj_n(\beta)})_l &= -\frac{1}{\lambda} \cdot \what{\E}_n
	\left[\wbar{r}_\beta(\beta^{1/q}\odot X; Y) \wbar{r}_\beta(\beta^{1/q}\odot X'; Y') h'(\normsmall{X-X'}_{q, \beta}^q)|X_l- X_l'|^q
		| \wbar{X}^{(1:n)}, \wbar{Y}^{(1:n)}\right]
\end{split}.
\end{equation*}
Below we show how the introduction of the auxiliary quantities make it easy to establish concentration. Indeed, 
recall that our goal is to show that $\grad \obj_n(\beta) \approx \grad \obj(\beta)$ are uniformly close over 
$\beta \in \mathcal{B}_M$ with high probability. Now we can divide our proof into two steps. 

\begin{itemize}
\item In the first step, we show that the auxiliary quantities are uniformly close to the original ones with 
high probability. This means that we show uniformly 
\begin{equation}
\label{eqn:concentration-first-step}
 \wtilde{\grad \obj_n(\beta)} \approx \grad \obj_n(\beta)~~\text{and}~~
 \wtilde{\grad \obj(\beta)} \approx \grad \obj(\beta).
\end{equation}
The key to prove this is to show the uniform closeness: 
$\wbar{r}_\beta \approx r_\beta$ and $\wbar{r}_\beta \approx \what{r}_\beta$. 

\item In the second step, we show that the empirical version and the population version of the auxiliary 
quantities are close to each other. This means that we show uniformly 
 \begin{equation}
 \label{eqn:concentration-second-step}
 	\wtilde{\grad \obj_n(\beta)} \approx \wtilde{\grad\obj(\beta)}.
\end{equation}
This is easy to achieve. We can use standard concentration results from the empirical process theory 
to prove this since $\wbar{r}_\beta$ is \emph{independent} of the empirical measure $\what{\P}_n$. 
\end{itemize}

We hope that the above explanations help clarify the main idea behind the proof of the concentration result. 
Below we will formalize the two steps in equations~\eqref{eqn:concentration-first-step} and~\eqref{eqn:concentration-second-step}.

\subsection{Proof of Theorem~\ref{thm:concentration-of-gradients}.}
The proof contains two steps. 

In the first step, we establish Proposition~\ref{proposition:auxiliary-close-to-original-ones}
(which formalizes equation~\eqref{eqn:concentration-first-step}). The proof of 
Proposition~\ref{proposition:auxiliary-close-to-original-ones} 
is deferred in Section~\ref{sec:proof-proposition-auxiliary-close-to-original-ones}.
\begin{proposition}
\label{proposition:auxiliary-close-to-original-ones}
Let $M, t > 0$. There exists constants $c, C > 0$ depending only on $M, \sigma_X, \sigma_Y, \mu$ such that the following hold. 
Then we have with probability at least $1-e^{-cn} - n^{-10} - e^{-t}$
\begin{equation*}
\begin{split}
	\sup_{\beta \in \mathcal{B}_M}\normbig{\wtilde{\grad \obj_n(\beta)} - \grad \obj_n(\beta)}_\infty &\le
		 \frac{C}{\lambda^2} \cdot \left(\sqrt[4]{\frac{\log n\log p}{n}} + \sqrt{\frac{t}{n}}\right) \\
	\sup_{\beta \in \mathcal{B}_M}\normbig{\wtilde{\grad \obj(\beta)} - \grad \obj(\beta)}_\infty &\le
		 \frac{C}{\lambda^2} \cdot \left(\sqrt[4]{\frac{\log n\log p}{n}} + \sqrt{\frac{t}{n}}\right),
\end{split}
\end{equation*}
whenever the condition $\lambda \ge C \sqrt[4]{\log n\log p/n}$ holds. 
\end{proposition}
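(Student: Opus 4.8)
The plan is to reduce the two bounds, in two reductions, to a single uniform concentration statement for the covariance operators and functions of Definition~\ref{definition:cross-covariance-operator}; that statement is the real work. \emph{Step 1 (reduce to closeness of residuals).} Abbreviate $g_{q,\beta}(x,x') = h'(\norm{x-x'}_{q,\beta}^q)\,|x_l-x_l'|^q$, so that Proposition~\ref{proposition:compute-grad-obj} gives
\begin{equation*}
	(\wtilde{\grad \obj(\beta)})_l - (\grad \obj(\beta))_l = -\tfrac1\lambda\,\E\big[(\wbar r_\beta\,\wbar r_\beta' - r_\beta\,r_\beta')\,g_{q,\beta}(X,X')\big],
\end{equation*}
with $r_\beta = r_\beta(\beta^{1/q}\odot X;Y)$, $r_\beta' = r_\beta(\beta^{1/q}\odot X';Y')$, and the analogous identity with $\E$ replaced by $\what{\E}_n$ and $r_\beta$ by $\what r_\beta$. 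Writing $\wbar r\,\wbar r' - r\,r' = (\wbar r - r)\,\wbar r' + r\,(\wbar r' - r')$, bounding $|g_{q,\beta}| \le |h'(0)|\,(2\sigma_X)^q$ pointwise, and applying Cauchy--Schwarz together with the second-moment bounds $\normbigP{r_\beta(\beta^{1/q}\odot X;Y)} \le \normP Y$ of Proposition~\ref{prop:second-moment-bound} (and its $\what\P_n$-analogue, proved by the same first-order argument) plus a one-line bootstrap showing $\normbigP{\wbar f_\beta(\beta^{1/q}\odot X)}$ and $\normbigPn{\wbar f_\beta(\beta^{1/q}\odot X)}$ are $O(1)$ on the event $\{\what{\E}_n[Y^2]\vee\wbar{\E}_n[Y^2]\le 2\sigma_Y^2\}$ (probability $\ge 1-e^{-cn}$ by Bernstein), one obtains
\begin{equation*}
	\sup_{\beta\in\mathcal{B}_M}\normbig{\wtilde{\grad\obj(\beta)} - \grad\obj(\beta)}_\infty \le \frac C\lambda\,\sup_{\beta\in\mathcal{B}_M}\normbigP{\wbar f_\beta(\beta^{1/q}\odot X) - f_\beta(\beta^{1/q}\odot X)},
\end{equation*}
and similarly $\sup_\beta\norm{\wtilde{\grad\obj_n(\beta)} - \grad\obj_n(\beta)}_\infty \le \tfrac C\lambda\,\sup_\beta\normbigPn{\wbar f_\beta(\beta^{1/q}\odot X) - \what f_\beta(\beta^{1/q}\odot X)}$.

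\emph{Step 2 (reduce to operator/function concentration).} By Proposition~\ref{prop:kernel-beta-fix} and equation~\eqref{eqn:f_beta-from-Sigma_beta-h_beta}, $f_\beta = (\Sigma_\beta+\lambda I)^{-1}h_\beta$, $\wbar f_\beta = (\wbar\Sigma_\beta+\lambda I)^{-1}\wbar h_\beta$, $\what f_\beta = (\what\Sigma_\beta+\lambda I)^{-1}\what h_\beta$, and the resolvent identity gives
\begin{equation*}
	f_\beta - \wbar f_\beta = (\Sigma_\beta+\lambda I)^{-1}(h_\beta-\wbar h_\beta) + (\Sigma_\beta+\lambda I)^{-1}(\wbar\Sigma_\beta-\Sigma_\beta)\,\wbar f_\beta,
\end{equation*}
and its hatted counterpart with $\wbar\Sigma_\beta-\what\Sigma_\beta = (\wbar\Sigma_\beta-\Sigma_\beta)-(\what\Sigma_\beta-\Sigma_\beta)$. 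Since $\Sigma_\beta$ and $\what\Sigma_\beta$ are non-negative operators the resolvents have operator norm $\le\lambda^{-1}$; since $\norm{g}_{\mathcal{L}_2(\P)}\vee\norm{g}_{\mathcal{L}_2(\what{\P}_n)}\le |h(0)|^{1/2}\norm{g}_\H$ and $\norm{\wbar f_\beta}_\H^2\le\lambda^{-1}\wbar{\E}_n[Y^2]$, Step~1 reduces everything to controlling, uniformly over $\beta\in\mathcal{B}_M$ and with high probability, the four quantities $\opnorm{\what\Sigma_\beta-\Sigma_\beta}$, $\opnorm{\wbar\Sigma_\beta-\Sigma_\beta}$, $\norm{\what h_\beta-h_\beta}_\H$, $\norm{\wbar h_\beta-h_\beta}_\H$; each resolvent contributes one more factor $\lambda^{-1}$, which is the source of the $\lambda^{-2}$ in the statement.

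\emph{Step 3 (the uniform concentration).} Here the plan is: (i) truncate $Y$ at level $O(\sigma_Y\sqrt{\log n})$, paying $n^{-10}$ in probability via Assumption~\ref{assumption:sub-gaussian-tail-Y} and a union bound over the samples; (ii) for fixed $\beta$, use the reproducing property to write $\norm{\what h_\beta-h_\beta}_\H^2$ and $\norm{\what\Sigma_\beta-\Sigma_\beta}_{\HS}^2$ as degree-$\le2$ $U$-/$V$-statistics of bounded-kernel evaluations times (truncated) responses, to which the Hanson--Wright inequality (quadratic part) and Bernstein's inequality (linear part) give sub-exponential tails with variance proxy of order $1/n$; (iii) upgrade to a supremum over $\mathcal{B}_M$ by a net argument: the identities in the proof of Proposition~\ref{prop:continuity-f-beta} yield $\norm{h_{\beta'}-h_\beta}_\H^2 + \norm{\Sigma_{\beta'}-\Sigma_\beta}_{\HS}^2 \lesssim \norm{\beta-\beta'}_1$ by Lipschitzness of $h$, so $\beta\mapsto h_\beta,\Sigma_\beta$ are H\"older-$\tfrac12$, and Maurey's empirical method (writing each point of the $\ell_1$ ball $\mathcal{B}_M$ as an average of scaled coordinate vectors) bounds the $\varepsilon$-covering number of $\mathcal{B}_M$ in the induced pseudometric by $\exp(O(M^2\log p/\varepsilon^2))$; (iv) combine (ii) and (iii) through a chaining/peeling bound for suprema of sub-exponential processes, giving
\begin{equation*}
	\sup_{\beta\in\mathcal{B}_M}\opnorm{\what\Sigma_\beta-\Sigma_\beta} + \sup_{\beta\in\mathcal{B}_M}\norm{\what h_\beta-h_\beta}_\H \le C\log n\Big(\sqrt[4]{\tfrac{\log p}{n}} + \sqrt{\tfrac tn}\Big)
\end{equation*}
with probability $\ge 1-e^{-cn}-n^{-10}-e^{-t}$, and the same for the $\wbar{}$-versions; the H\"older-$\tfrac12$ dependence of $\beta\mapsto\Sigma_\beta$ is exactly what degrades the fixed-$\beta$ rate $\sqrt{\log p/n}$ to the fourth root. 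Feeding this back through Steps~2 and~1, and using $\lambda\ge C\sqrt[4]{\log n\log p/n}$ to guarantee $\lambda^{-1}\opnorm{\wbar\Sigma_\beta-\Sigma_\beta}\le\tfrac12$ (so the resolvent perturbation is stable and the Step-1 bootstrap closes), yields the two claimed bounds.

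\emph{Main obstacle.} I expect Step~3(iii)--(iv) to be the crux: one needs \emph{operator-norm}, not merely Hilbert--Schmidt, control of an infinite-dimensional random operator, uniformly over the continuum $\mathcal{B}_M$, with the ambient dimension $p$ entering only through $\log p$---this forces the marriage of Maurey's covering argument with a sub-exponential chaining bound, and the merely H\"older-$\tfrac12$ parametrization $\beta\mapsto\Sigma_\beta$ is what pins the rate at $(\log p/n)^{1/4}$. A secondary but unavoidable nuisance is the careful tracking of powers of $\lambda$ (the $\lambda^{-1}$ gradient prefactor, the $\lambda^{-1}$ resolvent factor, and the $\lambda^{-1/2}$ from $\norm{\wbar f_\beta}_\H$) so as to land on the exponents stated here and, downstream in Theorem~\ref{thm:concentration-of-gradients}, on $\lambda^{-7/2}$.
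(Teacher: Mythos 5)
Your overall architecture matches the paper exactly: (i) peel off the difference $\tilde{\grad}\mathcal{J}-\grad\mathcal{J}$ into terms controlled by $\|\wbar f_\beta-f_\beta\|_{\mathcal{L}_2}$ (this is the content of the paper's Lemma~\ref{lemma:residual-close-high-probability} together with the error decomposition in the body of the proof), (ii) reduce that to operator/function concentration via $f_\beta=(\Sigma_\beta+\lambda I)^{-1}h_\beta$ (Lemma~\ref{lemma:deterministic-bound-high-prob-bound-f-r-beta}), and (iii) obtain uniform-in-$\beta$ tail bounds for $\opnorm{\what\Sigma_\beta-\Sigma_\beta}$, $\|\what h_\beta-h_\beta\|_{\mathcal{H}}$ and their $\wbar{\phantom{\Sigma}}$-counterparts by symmetrization, Hanson--Wright, Maurey covering of $\mathcal{B}_M$, and sub-exponential chaining (Lemma~\ref{lemma:high-prob-bound-h-Sigma-beta}). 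Step~3(i)--(iv) is in all essentials the paper's proof.

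However, there is a genuine gap in your Step~2, and you have in fact flagged it yourself without resolving it. Your resolvent decomposition
\begin{equation*}
	f_\beta - \wbar f_\beta = (\Sigma_\beta+\lambda I)^{-1}(h_\beta-\wbar h_\beta) + (\Sigma_\beta+\lambda I)^{-1}(\wbar\Sigma_\beta-\Sigma_\beta)\,\wbar f_\beta
\end{equation*}
combined with $\opnorm{(\Sigma_\beta+\lambda I)^{-1}}\le\lambda^{-1}$, $\|g\|_{\mathcal{L}_2}\le|h(0)|^{1/2}\|g\|_{\mathcal{H}}$, and $\|\wbar f_\beta\|_{\mathcal{H}}\lesssim\lambda^{-1/2}$ yields
\begin{equation*}
	\|f_\beta-\wbar f_\beta\|_{\mathcal{L}_2}\lesssim \lambda^{-1}\|h_\beta-\wbar h_\beta\|_{\mathcal{H}}+\lambda^{-3/2}\opnorm{\wbar\Sigma_\beta-\Sigma_\beta},
\end{equation*}
and multiplying by the $\lambda^{-1}$ gradient prefactor from Step~1 lands you at $\lambda^{-5/2}$, not the claimed $\lambda^{-2}$. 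You acknowledge the three factors $\lambda^{-1}\cdot\lambda^{-1}\cdot\lambda^{-1/2}$ in your closing paragraph, but their product is $\lambda^{-5/2}$ and there is no way to ``track'' it down to $\lambda^{-2}$ with this crude bound.

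The paper's Lemma~\ref{lemma:deterministic-bound-high-prob-bound-f-r-beta} avoids the extra $\lambda^{-1/2}$ by never invoking $\|\wbar f_\beta\|_{\mathcal{H}}$ at all. Instead it directly controls the $\mathcal{L}_2$ quantity $\|\what\Sigma_\beta^{1/2}(f_\beta-\wbar f_\beta)\|_{\mathcal{H}}$ and writes the operator-difference error term in the \emph{symmetrically conjugated} form
\begin{equation*}
	\err_1 = \bigl(\what\Sigma_\beta^{1/2}(\Sigma_\beta+\lambda I)^{-1/2}\bigr)\cdot\bigl(I-(\Sigma_\beta+\lambda I)^{1/2}(\wbar\Sigma_\beta+\lambda I)^{-1}(\Sigma_\beta+\lambda I)^{1/2}\bigr)\cdot\bigl((\Sigma_\beta+\lambda I)^{-1/2}h_\beta\bigr).
\end{equation*}
The left factor has operator norm $\le1$ on the event $\opnorm{\what\Sigma_\beta-\Sigma_\beta}\le\lambda$ (which is where the hypothesis $\lambda\gtrsim(\log n\log p/n)^{1/4}$ enters); the middle factor has operator norm $\le\lambda^{-1}\opnorm{\wbar\Sigma_\beta-\Sigma_\beta}$ via the $I-A^*A\sim I-AA^*$ spectral identity; and, crucially, $\|(\Sigma_\beta+\lambda I)^{-1/2}h_\beta\|_{\mathcal{H}}\le\|Y\|_{\mathcal{L}_2(\P)}$ is $O(1)$ with \emph{no} $\lambda$-dependence --- this follows from the Cauchy--Schwarz estimate $\langle h_\beta,f\rangle_{\mathcal{H}}=\E[f(\beta^{1/q}\odot X)Y]\le\langle f,\Sigma_\beta f\rangle_{\mathcal{H}}^{1/2}\|Y\|_{\mathcal{L}_2}$. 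The whole product is therefore $\lambda^{-1}\opnorm{\wbar\Sigma_\beta-\Sigma_\beta}\|Y\|_{\mathcal{L}_2}$, and the analogous $\err_2$ term gives $\lambda^{-1}|h(0)|^{1/2}\|h_\beta-\wbar h_\beta\|_{\mathcal{H}}$, so the residual difference is $\lambda^{-1}$ times the concentration quantities. That is the missing half-power you need. Without it, your proof establishes the Proposition only with $\lambda^{-5/2}$ in place of $\lambda^{-2}$, which propagates to a strictly worse sample-complexity exponent in Theorem~\ref{thm:concentration-of-gradients} and its corollaries.

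One small additional remark: the $n^{-10}$ in the statement is naturally produced by your truncate-and-union-bound over the $n$ samples, and that is a legitimate (if slightly different) route from the paper; no issue there.
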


In the second step, we establish Proposition~\ref{proposition:auxiliary-empirical-close-to-population}
(which formalizes equation~\eqref{eqn:concentration-second-step}). The proof of 
Proposition~\eqref{proposition:auxiliary-empirical-close-to-population} 
is deferred in Section~\ref{sec:proof-proposition-auxiliary-empirical-close-to-population}.
\begin{proposition}
\label{proposition:auxiliary-empirical-close-to-population}
Let $M, t > 0$. There exists constants $c, C > 0$ depending only on $M, \sigma_X, \sigma_Y, \mu$ such that the following hold. 
Then we have with probability at least $1-e^{-cn} - e^{-t} - n^{-3}$
\begin{equation*}
\begin{split}
	\sup_{\beta \in \mathcal{B}_M}\normbig{\wtilde{\grad \obj_n(\beta)} - \wtilde{\grad \obj(\beta)}}_\infty &\le
		 \frac{C\log^2(n)}{\min\{\lambda, 1\}^{7/2}}  \cdot \left(\sqrt{\frac{\log p}{n}} + \sqrt{\frac{t}{n}}\right).
\end{split}
\end{equation*}
whenever the condition $\lambda \ge C \sqrt[4]{\log n\log p/n}$ holds. 
\end{proposition}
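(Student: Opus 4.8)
The plan is to argue \emph{conditionally on the auxiliary sample} $(\wbar{X}^{(1:n)},\wbar{Y}^{(1:n)})$: once this sample is fixed, $\wbar{r}_\beta$ is a deterministic family of functions, and $\wtilde{\grad\obj_n(\beta)}-\wtilde{\grad\obj(\beta)}$ becomes the deviation between the empirical and population double-averages of a fixed kernel, taken over the original sample $(X^{(1:n)},Y^{(1:n)})$, which is independent of the conditioning. First I would isolate a high-probability event $\Lambda$ controlling the auxiliary quantities uniformly in $\beta$. Since $\wbar{f}_\beta$ minimizes the empirical KRR energy, the bounds $\norm{\wbar{f}_\beta}_\H^2\le\wbar{\E}_n[Y^2]/\lambda$ and $\norm{\wbar{f}_\beta}_\infty\le|h(0)|^{1/2}\norm{\wbar{f}_\beta}_\H$ hold deterministically, and the finite-sample analogue of Lemma~\ref{lemma:residual-Lipschitz} gives $\normbigP{\wbar{r}_\beta(\beta^{1/q}\odot X;Y)-\wbar{r}_{\beta'}(\beta'^{1/q}\odot X;Y)}\le \tfrac{C}{\min\{\lambda,1\}}\norm{\beta-\beta'}_1$; on $\{\wbar{\E}_n[Y^2]\le C\sigma_Y^2\}$, which holds with probability $1-e^{-cn}$ by Bernstein's inequality for the sub-exponential variables $Y^2$, these become uniform bounds of order $\min\{\lambda,1\}^{-1/2}$ and $\min\{\lambda,1\}^{-1}$. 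Finally I would truncate $Y$ at level $\sigma_Y\sqrt{C\log n}$ in both samples, which alters every quantity by at most $n^{-3}$ in probability and inflates the sup-norm bounds only by a factor $\sqrt{\log n}$.

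On $\Lambda$ and conditionally on the auxiliary sample, write
\[
\big(\wtilde{\grad\obj_n(\beta)}-\wtilde{\grad\obj(\beta)}\big)_l=-\tfrac1\lambda\,\big(\what{\E}_n\otimes\what{\E}_n-\E\otimes\E\big)\big[g_{\beta,l}\big],
\]
where $g_{\beta,l}(x,y;x',y')=\wbar{r}_\beta(\beta^{1/q}\odot x;y)\,\wbar{r}_\beta(\beta^{1/q}\odot x';y')\,h'(\norm{x-x'}_{q,\beta}^q)\,|x_l-x_l'|^q$ is a fixed symmetric kernel, bounded in sup-norm by $C\log n/\min\{\lambda,1\}$ and Lipschitz in $\beta$ with constant $C\log n/\min\{\lambda,1\}^{3/2}$. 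The diagonal terms vanish because of the factor $|x_l-x_l'|^q$, so this is --- up to the harmless $n^2$ versus $n(n-1)$ normalization --- a genuine second-order $U$-statistic, and I would use its Hájek/Hoeffding decomposition. The non-degenerate linear part $\tfrac1n\sum_i\bar g_{\beta,l}(X^{(i)},Y^{(i)})$, with $\bar g_{\beta,l}(x,y)=\E[g_{\beta,l}(x,y;X',Y')]$, is a bounded empirical process indexed by $(\beta,l)\in\mathcal{B}_M\times[p]$; its supremum is controlled by Talagrand's inequality together with a metric-entropy estimate for $\mathcal{B}_M$ from Maurey's empirical method ($\log N(\epsilon,\mathcal{B}_M)\lesssim M^2\epsilon^{-2}\log p$), the $\beta$-Lipschitz bound above, and a union bound over $l$, which produces the $\sqrt{\log p/n}$ and $\sqrt{t/n}$ terms. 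The completely degenerate quadratic part is handled, for each fixed $(\beta,l)$, by decoupling and the Hanson--Wright inequality, and then made uniform over $(\beta,l)$ by a covering argument; since a degenerate order-two $U$-statistic fluctuates at rate $1/n$ rather than $1/\sqrt n$, this contribution is of strictly lower order. Collecting the prefactor $1/\lambda$, the two factors $\min\{\lambda,1\}^{-1/2}$ from the two residuals, and the further $\min\{\lambda,1\}^{-1}$-type loss produced by the $\beta$-Lipschitz constant inside the chaining integral --- together with the $\log^2 n$ from the squared truncation level --- yields the stated bound with exponent $\min\{\lambda,1\}^{-7/2}$; the hypothesis $\lambda\ge C\sqrt[4]{\log n\log p/n}$ is used only to absorb lower-order remainders.

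I expect the main obstacle to be the \emph{uniform-in-$\beta$} control. Because $\wbar{r}_\beta$ is defined implicitly as the residual of an infinite-dimensional linear system, the chaining argument over $\mathcal{B}_M$ requires both the quantitative finite-sample Lipschitz estimate for $\beta\mapsto\wbar{r}_\beta(\beta^{1/q}\odot\cdot\,;\cdot)$ (valid on $\Lambda$) and Maurey's construction to make the entropy of the simplex-type set $\mathcal{B}_M$ affordable. Keeping the $\lambda$-dependence sharp while simultaneously (i) splitting the $U$-process into its non-degenerate and degenerate components, (ii) invoking Hanson--Wright for the degenerate part whose entries are only sub-Gaussian after truncation, and (iii) taking the supremum over the infinite index set $\mathcal{B}_M\times[p]$, is the delicate core of the argument; the remaining steps are routine once these ingredients are assembled.
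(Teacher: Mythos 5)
Your high-level architecture matches the paper's: condition on the auxiliary sample, truncate $Y$ at a $\sqrt{\log n}$ level (accounting for the $n^{-3}$ in the probability), introduce a ``good'' event $\Lambda$ on which the auxiliary quantities are bounded and Lipschitz in $\beta$, then push the resulting sup over $\beta\in\mathcal{B}_M$ through Maurey entropy and chaining. But the two proofs differ in the mechanism for handling the dependency in the double average. You propose the classical Hájek/Hoeffding decomposition of a second-order $U$-process into its non-degenerate linear projection (handled by Talagrand/entropy) plus the degenerate remainder (handled by decoupling and Hanson--Wright, then a covering argument), exploiting that the degenerate part lives at rate $1/n$. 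The paper instead uses Hoeffding's grouping: decompose the off-diagonal index set $\mathcal{I}$ into at most $n$ subcollections $\mathcal{I}_j$ of pairwise-disjoint pairs, so that each subcollection is an i.i.d.\ sum and can be symmetrized directly, with the diagonal $\what{\E}_{n,0}$ handled trivially; and it replaces Talagrand by McDiarmid for the $e^{-t}$ tail, since $G_{\beta,l}$ is bounded on $\Lambda$. Both decompositions are standard and both can be made to work; the paper's grouping keeps every piece a bounded i.i.d.\ empirical process and avoids a separate quadratic-form analysis, while your decomposition is the more conventional $U$-statistics route.

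There is, however, a concrete issue in your sketch that you should repair. For the chaining step you need a $\beta$-Lipschitz control on the \emph{realized} kernel $g_{\beta,l}(X^{(i)},Y^{(i)};X^{(j)},Y^{(j)})$, i.e.\ a pointwise (conditional on the data) Lipschitz bound, not merely the $\mathcal{L}_2(\P)$-Lipschitz bound $\normP{\wbar r_\beta-\wbar r_{\beta'}}\le C\lambda^{-1}\norm{\beta-\beta'}_1$ you quote from the analogue of Lemma~\ref{lemma:residual-Lipschitz}. The paper establishes exactly the needed pointwise estimate (Lemma~\ref{lemma:event-n-good-things-happen-details}), and it is weaker in $\lambda$: the pointwise Lipschitz constant of $\beta\mapsto\wbar r_\beta(\beta^{1/q}\odot\cdot;\cdot)$ on $\Lambda$ is $C/\lambda^2$, not $C/\lambda$. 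This is also why your exponent tally is internally inconsistent: you list $1/\lambda$ (gradient prefactor), two factors of $\min\{\lambda,1\}^{-1/2}$ (the two residual sup-norm bounds), and $\min\{\lambda,1\}^{-1}$ (Lipschitz), which adds to $\min\{\lambda,1\}^{-3}$, not the $\min\{\lambda,1\}^{-7/2}$ you then assert. The correct accounting is: prefactor $\lambda^{-1}$, times the Lipschitz constant of $G_{\beta,l}$ in $\beta$, which is ``one residual pointwise-Lipschitz ($\lambda^{-2}$) times the other residual bounded in sup-norm ($\lambda^{-1/2}$),'' i.e.\ $\lambda^{-5/2}$; the product is $\lambda^{-7/2}$. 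You state the right final exponent, but the argument in your draft does not actually produce it because it uses the wrong (too optimistic, and in any case $L_2$-only) Lipschitz estimate inside the entropy integral.
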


Theorem~\ref{thm:concentration-of-gradients} now follows from Proposition~\ref{proposition:auxiliary-close-to-original-ones} and 
Proposition~\ref{proposition:auxiliary-empirical-close-to-population}.

\subsection{Proof of Proposition~\ref{proposition:auxiliary-close-to-original-ones}.}
\label{sec:proof-proposition-auxiliary-close-to-original-ones}

The key to the proof is to show that 
$\wbar{r}_\beta \approx r_\beta$ and $\wbar{r}_\beta \approx \what{r}_\beta$. This is given in 
Lemma~\ref{lemma:residual-close-high-probability} below. The proof of 
Lemma~\ref{lemma:residual-close-high-probability} is deferred to Section~\ref{sec:proof-lemma-residual-close-high-probability}. 

\begin{lemma}
\label{lemma:residual-close-high-probability}
There exists constant $c, C > 0$ that depends only on $\sigma_X, \sigma_Y, M, |h(0)|, |h^\prime(0)|$ such that 
the following bound holds with probability at least $1-n^{-10}- e^{-cn}- e^{-t}$:
\begin{equation*}
\begin{split}
\E_{\Q} \left[(r_\beta(\beta^{1/q}\odot X; Y) - \wbar{r}_\beta(\beta^{1/q} \odot X; Y))^2 \mid \wbar{X}^{(1:n)}, \wbar{Y}^{(1:n)}\right] 
	\le \frac{C}{\lambda} \cdot \left(\sqrt[4]{\frac{\log n\log p}{n}} + \sqrt{\frac{t}{n}}\right) \\
\E_{\Q} \left[(\what{r}_\beta(\beta^{1/q}\odot X; Y) - \wbar{r}_\beta(\beta^{1/q} \odot X; Y))^2 \mid \wbar{X}^{(1:n)}, \wbar{Y}^{(1:n)}\right] 
	\le \frac{C}{\lambda} \cdot \left(\sqrt[4]{\frac{\log n\log p}{n}} + \sqrt{\frac{t}{n}}\right)
\end{split}
\end{equation*}
whenever the condition $\lambda \ge C\sqrt[4]{\log n\log p/n}$ holds.
\end{lemma}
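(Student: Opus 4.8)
\emph{Proof strategy.} The plan rests on the observation that each quantity to be bounded is the squared $\mathcal{L}_2(\P)$-distance between two fitted functions. Since $r_\beta - \wbar{r}_\beta = \wbar{f}_\beta - f_\beta$ and $\what{r}_\beta - \wbar{r}_\beta = \wbar{f}_\beta - \what{f}_\beta$, conditioning on the auxiliary sample $(\wbar{X}^{(1:n)},\wbar{Y}^{(1:n)})$ makes $\wbar{f}_\beta$ deterministic and gives
\[
\E_{\Q}\!\left[(r_\beta(\beta^{1/q}\odot X;Y) - \wbar{r}_\beta(\beta^{1/q}\odot X;Y))^2 \mid \wbar{X}^{(1:n)},\wbar{Y}^{(1:n)}\right] = \normP{(f_\beta - \wbar{f}_\beta)(\beta^{1/q}\odot X)}^2,
\]
and likewise with $\what{f}_\beta$ in place of $f_\beta$. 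So it suffices to prove one statement: for an i.i.d.\ sample of size $n$ from $\P$, the associated empirical KRR fit is, uniformly over $\beta\in\mathcal{B}_M$ and with high probability, within $\mathcal{L}_2(\P)$-distance-squared $O(\lambda^{-1}(\sqrt[4]{\log n\log p/n} + \sqrt{t/n}))$ of $f_\beta$; applying it to both $\wbar{\P}_n$ and $\what{\P}_n$, a union bound and the triangle inequality then yield the lemma.

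For this single statement I would work from the closed forms $f_\beta = (\Sigma_\beta+\lambda I)^{-1}h_\beta$ and $\wbar{f}_\beta = (\wbar{\Sigma}_\beta+\lambda I)^{-1}\wbar{h}_\beta$ of Proposition~\ref{prop:kernel-beta-fix}. Subtracting the two normal equations, pairing with $\Delta_\beta \defeq f_\beta - \wbar{f}_\beta$, and using $\langle\Sigma_\beta \Delta_\beta,\Delta_\beta\rangle_\H = \normP{\Delta_\beta(\beta^{1/q}\odot X)}^2 \ge 0$ exactly as in the proof of Lemma~\ref{lemma:residual-Lipschitz}, one gets $\normP{\Delta_\beta(\cdot)}^2 \le \tfrac{C}{\lambda}\big(\norm{h_\beta-\wbar{h}_\beta}_\H^2 + \opnorm{\wbar{\Sigma}_\beta - \Sigma_\beta}^2 \norm{\wbar{f}_\beta}_\H^2\big)$. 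Here $\norm{\wbar{f}_\beta}_\H \le \lambda^{-1/2}(\wbar{\E}_n[Y^2])^{1/2} \le C\lambda^{-1/2}$ uniformly in $\beta$ on the event that $\wbar{\E}_n[Y^2]$ is within a constant factor of $\E[Y^2]$ — a one-line concentration for $\sigma_Y$-subgaussian $Y$, and the source of the $e^{-cn}$ term. Thus everything reduces to two uniform concentration statements, $\sup_{\beta\in\mathcal{B}_M}\norm{h_\beta-\wbar{h}_\beta}_\H$ and $\sup_{\beta\in\mathcal{B}_M}\opnorm{\wbar{\Sigma}_\beta - \Sigma_\beta}$ (the latter handled through the Hilbert--Schmidt norm, since $\opnorm{\cdot}\le\norm{\cdot}_{\HS}$), for which I would aim for the rate $\epsilon_n + \sqrt{t/n}$ with $\epsilon_n \defeq \sqrt[4]{\log n\log p/n}$. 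Plugging back, the $h$-term contributes $\tfrac{C}{\lambda}(\epsilon_n^2 + t/n) \le \tfrac{C}{\lambda}(\epsilon_n+\sqrt{t/n})$ (using $\epsilon_n\le 1$), while the $\Sigma$-term contributes $\tfrac{C}{\lambda^2}(\epsilon_n^2 + t/n)$, which is $\le \tfrac{C'}{\lambda}(\epsilon_n+\sqrt{t/n})$ precisely because the hypothesis $\lambda\ge C\epsilon_n$ cancels one factor of $\epsilon_n/\lambda$; this is where the standing lower bound on $\lambda$ is used.

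The heart of the proof is therefore the uniform-in-$\beta$ law of large numbers for $\wbar{h}_\beta$ and $\wbar{\Sigma}_\beta$ with explicit rate. The plan is: (a) bound the metric entropy of $\{k(\beta^{1/q}\odot\cdot,\cdot): \beta\in\mathcal{B}_M\}$, equivalently of $\beta\mapsto h_\beta$ and $\beta\mapsto\Sigma_\beta$, via the quantitative Lipschitz estimates for $\beta\mapsto k_\beta$ that follow from $h$ being $|h'(0)|$-Lipschitz (the same mechanism as in Propositions~\ref{prop:continuity-f-beta} and~\ref{prop:grad-obj-lipschitz-beta}), which brings in the $\log p$; (b) for fixed $\beta$, write $\wbar{h}_\beta-h_\beta$ and $\wbar{\Sigma}_\beta-\Sigma_\beta$ as i.i.d.\ averages of mean-zero summands in $\H$ and the Hilbert--Schmidt class respectively (the kernel section has $\H$-norm $\le |h(0)|^{1/2}$, $X$ is a.s.\ bounded, and $Y$ is subgaussian, so the $h$-summands have subgaussian $\H$-norm and the $\Sigma$-summands are bounded), and control the infinite-dimensional deviations by Maurey's empirical method — approximating each RKHS element by an $N$-term average at resolution $\sim N^{-1/2}$ and balancing $N$ against $n$ is exactly what converts the naive $\sqrt{(\log\cdot)/n}$ rate into $n^{-1/4}$; (c) upgrade the pointwise tail bounds to the supremum over $\mathcal{B}_M$ by a chaining/union argument against the entropy from (a), using a large-deviation inequality for the supremum of the resulting sub-exponential process, with the quadratic-form terms appearing in $\norm{\wbar{\Sigma}_\beta-\Sigma_\beta}_{\HS}$ controlled by the Hanson--Wright inequality. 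A final union bound over the $O(1)$ events (the $\wbar{\E}_n[Y^2]$ event, the entropy/chaining event, and a polynomial tail event accounting for the $n^{-10}$) gives probability at least $1-n^{-10}-e^{-cn}-e^{-t}$, with all constants depending only on $\sigma_X,\sigma_Y,M,|h(0)|,|h'(0)|$.

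The main obstacle is steps (b)--(c): a quantitative, uniform RKHS law of large numbers. Three features make it delicate. The random elements $\wbar{h}_\beta$ and $\wbar{\Sigma}_\beta$ are infinite-dimensional, so Maurey-type dimension reduction is unavoidable, and the cost of balancing its truncation scale against $n$ is what forces the fourth-root exponent in the final rate — and, consequently, the hypothesis $\lambda\gtrsim\sqrt[4]{\log n\log p/n}$. The supremum runs over a continuum whose covering numbers grow with the ambient dimension $p$, so the Lipschitz dependence of $k_\beta$ on $\beta$ must be tracked with explicit constants to keep only a $\log p$ rather than a polynomial-in-$p$ factor. And because the summands defining $\wbar{h}_\beta$ pair a bounded kernel section with a subgaussian response, only sub-exponential tail control is available at the level of norms and quadratic forms, so Bernstein/Hanson--Wright estimates and sub-exponential maximal inequalities must replace the cleaner bounded/subgaussian toolkit.
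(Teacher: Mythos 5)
Your overall strategy is the paper's: reduce both inequalities to a deterministic stability estimate for the KRR solution in terms of $\norm{h_\beta-\wbar{h}_\beta}_\H$ and $\opnorm{\Sigma_\beta-\wbar{\Sigma}_\beta}$, then establish the uniform-over-$\mathcal{B}_M$ concentration of those two quantities at rate $\epsilon_n + \sqrt{t/n}$ with $\epsilon_n = \sqrt[4]{\log n\log p/n}$. The concentration machinery you describe — symmetrization and Panchenko reduction, Hanson--Wright for the resulting quadratic forms, chaining for the sub-exponential increments, Maurey's sparsification to keep the entropy at $\log n\log p/\eps^2$ — is precisely Lemma~\ref{lemma:high-prob-bound-h-Sigma-beta}. (One nit: the $n^{-1/4}$ does not come from applying Maurey to the $\H$-valued summands at fixed $\beta$; it comes from applying Maurey to the metric entropy of $\mathcal{B}_M$ under the data-dependent seminorm, chaining for $W_\beta = \normsmall{\what{h}_\beta(\eps)}_\H^2$, and then taking $\sqrt{W}$.)

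Where you genuinely deviate is the deterministic step, Lemma~\ref{lemma:deterministic-bound-high-prob-bound-f-r-beta}. You subtract the two normal equations, pair with $\Delta_\beta=f_\beta-\wbar{f}_\beta$, and bound $\norm{\wbar{f}_\beta}_\H$ by the crude $\lambda^{-1/2}\norm{Y}_{\wbar{\P}_n}$, which leads to
$\normQ{\Delta_\beta}^2 \lesssim \lambda^{-1}\norm{h_\beta-\wbar{h}_\beta}_\H^2 + \lambda^{-2}\opnorm{\Sigma_\beta-\wbar{\Sigma}_\beta}^2$.
The paper instead factors $\what{\Sigma}_\beta^{1/2}\Delta_\beta$ through $(\Sigma_\beta+\lambda I)^{\pm1/2}$ and bounds the three pieces by $1$ (needing the side condition $\opnorm{\what{\Sigma}_\beta-\Sigma_\beta}\le\lambda$, which is why the proof chooses a $10\log n$ level for that event and picks up the $n^{-10}$), by $\lambda^{-1}\opnorm{\wbar{\Sigma}_\beta-\Sigma_\beta}$, and by $\norm{(\Sigma_\beta+\lambda I)^{-1/2}h_\beta}_\H\le \normP{Y}$. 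That last, $\lambda$-free estimate is the content your $\lambda^{-1/2}$ bound misses; the payoff is a conclusion that is \emph{linear}, not quadratic, in the deviations, so the lemma drops out with no further bookkeeping.

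The cost of your quadratic bound shows up exactly where you invoke the standing hypothesis on $\lambda$. You correctly observe that $\lambda\gtrsim\epsilon_n$ turns $\epsilon_n^2/\lambda^2$ into $\lesssim\epsilon_n/\lambda$. But the companion term is $(t/n)/\lambda^2$, and $\lambda\gtrsim\epsilon_n$ says nothing about $\sqrt{t/n}/\lambda$; in the range $n\lambda^2\ll t\ll n$ your bound exceeds the target $\lambda^{-1}(\epsilon_n+\sqrt{t/n})$. This is repairable with a case split — for $t\gtrsim n$ the $e^{-cn}$ term already dominates, and in the intermediate range one can fall back on the a.s.\ bound $\normQ{\Delta_\beta}^2\lesssim 1+\lambda^{-1}$, which makes the target inequality vacuous — but you should say so; as written, ``the hypothesis cancels one factor of $\epsilon_n/\lambda$'' is not a justification for the $t/n$ piece. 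Alternatively, just adopt the paper's operator decomposition, which removes the extra $\lambda^{-1}$ and renders the absorption moot. (It is worth noting that the paper's own text conflates $\E_\Q[\cdot]$ with its square root in Lemma~\ref{lemma:deterministic-bound-high-prob-bound-f-r-beta}: what is actually proved in equation~\eqref{eqn:main-deterministic-bound-two} is a bound on $\normbig{\what{\Sigma}_\beta^{1/2}\Delta_\beta}_\H$, and it is that unsquared quantity that is used downstream in equation~\eqref{eqn:wbar-bound}. Your derivation matches that intended reading.)
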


Given Lemma~\ref{lemma:residual-close-high-probability}, we are ready to prove 
Proposition~\ref{proposition:auxiliary-close-to-original-ones}. We shall only detail the proof 
for $\sup_{\beta \in \mathcal{B}_M}\normsmall{\wtilde{\grad \obj}_n(\beta) - \grad \obj_n(\beta)}_\infty$.
The proof for $\sup_{\beta \in \mathcal{B}_M}\normsmall{\wtilde{\grad \obj}(\beta) - \grad \obj(\beta)}_\infty$
is similar.

To start with, we pick any $l \in [p]$. Write $\wbar{\Delta}_\beta = r_\beta - \wbar{r}_\beta = \wbar{f}_\beta - f_\beta$. 
Note the decomposition:
$(\wtilde{\grad \obj_n(\beta)}_l - ({\grad \obj_n(\beta)})_l
	= \err_{1, l}(\beta) + \err_{2, l}(\beta) + \err_{3, l}(\beta)$ where
\begin{equation*}
\begin{split}
	\err_{1, l}(\beta) &= - \frac{1}{\lambda} \cdot \what{\E}_n\left[\wbar{\Delta}_\beta(\beta^{1/q} \odot X; Y) \wbar{r}_\beta(\beta^{1/q} \odot X'; Y') 
		h'(\normsmall{X-X'}_{q, \beta}^q)|X_l- X_l'|^q\right] \\
	\err_{2, l}(\beta) &= - \frac{1}{\lambda} \cdot \what{\E}_n\left[\wbar{r}_\beta(\beta^{1/q} \odot X; Y) \wbar{\Delta}_\beta(\beta^{1/q} \odot X'; Y') 
		h'(\normsmall{X-X'}_{q, \beta}^q)|X_l- X_l'|^q\right] \\
	\err_{3, l}(\beta) &= +\frac{1}{\lambda} \cdot \what{\E}_n\left[\wbar{\Delta}_\beta(\beta^{1/q} \odot X; Y) \wbar{\Delta}_\beta(\beta^{1/q} \odot X'; Y') 
		h'(\normsmall{X-X'}_{q, \beta}^q)|X_l- X_l'|^q\right]
\end{split}.
\end{equation*}
By triangle inequality, we have the following bound
\begin{equation}
\label{eqn:basic-bound-after-decomposition}
	\sup_{\beta \in \mathcal{B}_M}\normbig{\wtilde{\grad \obj_n(\beta)} - \grad \obj_n(\beta)}_\infty
		\le \sum_{1\le i \le 3} \sup_{\beta \in \mathcal{B}_M} \max_{l \in [p]}|\err_{1, l}(\beta)|.
\end{equation}
Below we will show with probability at least $1-2e^{-cn}$, the following bound holds 
\begin{equation}
\label{eqn:upper-bound-on-err-i}
\begin{split}
	\sup_{\beta \in \mathcal{B}_M} \max_{l \in [p]}| \max_{j = 1,2 ,3}\err_{j, l}(\beta)|  &\le  \frac{C}{\lambda} \cdot
		\normbigPn{\wbar{\Delta}_\beta(\beta^{1/q}\odot X; Y)} \\
\end{split}
\end{equation}
where $C > 0$ depends only on $M, \sigma_X, \sigma_Y, \mu$. To avoid interruption of the 
flow, we defer the proof of equation~\eqref{eqn:upper-bound-on-err-i} to the end. 
Since Lemma~\ref{lemma:residual-close-high-probability} implies that with high probability
\begin{equation}
\label{eqn:wbar-bound}
	\sup_{\beta \in \mathcal{B}_M}
		\normbigPn{\wbar{\Delta}_\beta(\beta^{1/q}\odot X; Y)} \le \frac{C}{\lambda} \cdot \left(\sqrt[4]{\frac{\log n\log p}{n}} + \sqrt{\frac{t}{n}}\right).
\end{equation}
Proposition~\ref{proposition:auxiliary-close-to-original-ones} now follows from 
equations~\eqref{eqn:basic-bound-after-decomposition}--\eqref{eqn:wbar-bound} and the union bound. 

It remains to prove the deferred high probability bound~\eqref{eqn:upper-bound-on-err-i}.
The proofs of these bounds follow the same theme, and indeed from the facts below. 
\begin{itemize}
\item $|\what{\E}_n[{Z_1 Z_2 Z_3 Z_4}]| \le \normPn{Z_1} \normPn{Z_2} \normPninfty{Z_3} \normPninfty{Z_4}$ by H\"{o}lder's inequality. 
\item Almost surely, $|h'(\normsmall{X-X'}_{q, \beta}^q)| \le \sup_x|h^\prime(x)| \le |h^\prime(0)|$ and 
	$|X_l - X_l'|^q \le (2\sigma_X)^q$. 
\item By Proposition~\ref{prop:kernel-beta-fix}, $\normPn{\what{r}_\beta(\beta^{1/q} \odot X; Y)} 
	= \normPn{\what{r}_\beta(\beta^{1/q} \odot X'; Y')} \le \normPn{Y}$. 
	As $Y$ is $\sigma_Y$ subgaussian, $\normPn{Y} \le 2\normP{Y}$ with probability at least $1-e^{-cn}$.
\item By Lemma~\ref{lemma:deterministic-bound-high-prob-bound-f-r-beta} and Lemma~\ref{lemma:high-prob-bound-h-Sigma-beta}, we have 
	$\normbigPn{\what{r}_\beta(\beta^{1/q}\odot X; Y) - \wbar{r}_\beta(\beta^{1/q} \odot X; Y)} \le C$ with probability at least $1-e^{-cn}$ where 
	$C > 0$ is a constant depends on $\sigma_X, \sigma_Y, \mu$. 
\end{itemize}

\subsection{Proof of Lemma~\ref{lemma:residual-close-high-probability}.}
\label{sec:proof-lemma-residual-close-high-probability}
The proof of Lemma~\ref{lemma:residual-close-high-probability} contains two steps. 
\begin{itemize}
\item In the first step, Lemma~\ref{lemma:deterministic-bound-high-prob-bound-f-r-beta} shows that it suffices to prove 
that the difference between the covariance operators and covariance functions are small, i.e., 
$\Sigma_\beta \approx \wbar{\Sigma}_\beta$, $\what{\Sigma}_\beta \approx \wbar{\Sigma}_\beta$ (measured by the 
norm $\opnorm{\cdot}$) and $h_\beta \approx \wbar{h}_\beta$, $\what{h}_\beta \approx \wbar{h}_\beta$
(measured by the norm $\norm{\cdot}_{\H}$).
\item In the second step, Lemma~\ref{lemma:high-prob-bound-h-Sigma-beta} shows that uniformly over $\beta \in\mathcal{B}_M$, 
we have with high probability $\Sigma_\beta \approx \wbar{\Sigma}_\beta$, $\what{\Sigma}_\beta \approx \wbar{\Sigma}_\beta$ 
and $h_\beta \approx \wbar{h}_\beta$, $\what{h}_\beta \approx \wbar{h}_\beta$. The proof uses advanced tools from 
convex geometry and high dimensional  probability theory.  
\end{itemize}

The proof of Lemma~\ref{lemma:deterministic-bound-high-prob-bound-f-r-beta} 
and~\ref{lemma:high-prob-bound-h-Sigma-beta} are given in 
Section~\ref{sec:proof-deterministic-bound-high-prob-bound-f-r-beta} 
and~\ref{sec:proof-lemma-high-prob-bound-h-Sigma-beta} respectively. 

\begin{lemma}
\label{lemma:deterministic-bound-high-prob-bound-f-r-beta}
Assume $\opnormbig{\what{\Sigma}_\beta - \Sigma_\beta} \le \lambda$ at some $\beta \in \mathcal{B}_M$. 
Then we have for $\Q \in \{\P_n, \P\}$: 
\begin{equation}
\label{eqn:main-deterministic-bound}
\begin{split}
	&\E_{\Q} \left[(r_\beta(\beta^{1/q}\odot X; Y) - \wbar{r}_\beta(\beta^{1/q} \odot X; Y))^2 \mid \wbar{X}^{(1:n)}, \wbar{Y}^{(1:n)}\right]  \\
		&\le \frac{1}{\lambda} \cdot \left(\opnormbig{\Sigma_\beta - \wbar{\Sigma}_\beta} \cdot \norm{Y}_{\mathcal{L}_2(\Q)}+ |h(0)|^{1/2} \cdot 
			\normbig{h_\beta - \wbar{h}_\beta}_{\H}\right).
\end{split}
\end{equation}
and 
\begin{equation}
\label{eqn:main-deterministic-bound-'}
\begin{split}
	&\E_{\Q} \left[(\what{r}_\beta(\beta^{1/q}\odot X; Y) - \wbar{r}_\beta(\beta^{1/q} \odot X; Y))^2 \mid \wbar{X}^{(1:n)}, \wbar{Y}^{(1:n)}\right]   \\
		&\le \frac{1}{\lambda} \cdot \left(\opnormbig{\what{\Sigma}_\beta - \wbar{\Sigma}_\beta} \cdot \norm{Y}_{\mathcal{L}_2(\Q)}+ |h(0)|^{1/2} \cdot 
			\normbig{\what{h}_\beta - \wbar{h}_\beta}_{\H}\right).
\end{split}
\end{equation}
\end{lemma}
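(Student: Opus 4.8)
\textbf{Proof plan for Lemma~\ref{lemma:deterministic-bound-high-prob-bound-f-r-beta}.}

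The plan is to exploit the linear-equation characterization of the kernel ridge regression solutions from equation~\eqref{eqn:f_beta-from-Sigma_beta-h_beta} (Proposition~\ref{prop:kernel-beta-fix}), namely $f_\beta = (\Sigma_\beta + \lambda I)^{-1} h_\beta$, $\what{f}_\beta = (\what{\Sigma}_\beta + \lambda I)^{-1} \what{h}_\beta$, and $\wbar{f}_\beta = (\wbar{\Sigma}_\beta + \lambda I)^{-1} \wbar{h}_\beta$. The key quantity to control is $\wbar{\Delta}_\beta = f_\beta - \wbar{f}_\beta$ (or $\what{f}_\beta - \wbar{f}_\beta$), and the target is an $\mathcal{L}_2(\Q)$ bound on the associated residual difference. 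Since $r_\beta - \wbar{r}_\beta = \wbar{f}_\beta - f_\beta = -\wbar{\Delta}_\beta$, and since by the reproducing property $\E_\Q[g(\beta^{1/q}\odot X)^2] = \langle \Sigma^\Q_\beta g, g\rangle_\H$ where $\Sigma^\Q_\beta$ is the covariance operator computed under $\Q$, I first need to relate $\E_\Q[\wbar\Delta_\beta(\beta^{1/q}\odot X)^2]$ to a Hilbert-space quantity. The cleanest route: when $\Q = \wbar\P_n$, $\E_{\wbar\P_n}[\wbar\Delta_\beta^2] = \langle \wbar\Sigma_\beta \wbar\Delta_\beta, \wbar\Delta_\beta\rangle_\H$, but we want a bound valid for $\Q \in \{\P_n, \P\}$. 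The trick (following~\cite{FukumizuBaJo09}) is to write, using $(\Sigma_\beta + \lambda I) f_\beta = h_\beta$ and $(\wbar\Sigma_\beta + \lambda I)\wbar f_\beta = \wbar h_\beta$, the residual identity
\begin{equation*}
	(\wbar\Sigma_\beta + \lambda I)\wbar\Delta_\beta = (\wbar\Sigma_\beta - \Sigma_\beta) f_\beta + (\wbar h_\beta - h_\beta),
\end{equation*}
so that $\wbar\Delta_\beta = (\wbar\Sigma_\beta + \lambda I)^{-1}\big[(\wbar\Sigma_\beta - \Sigma_\beta)f_\beta + (\wbar h_\beta - h_\beta)\big]$.

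From here I would take the $\H$-inner product of this identity with $\wbar\Delta_\beta$ itself and use $\langle (\wbar\Sigma_\beta + \lambda I)\wbar\Delta_\beta, \wbar\Delta_\beta\rangle_\H \ge \lambda\norm{\wbar\Delta_\beta}_\H^2 + \E_{\wbar\P_n}[\wbar\Delta_\beta^2] \ge \E_{\wbar\P_n}[\wbar\Delta_\beta^2]$ (since $\wbar\Sigma_\beta \succeq 0$). Actually, for a bound under general $\Q$ I would instead bound $\E_\Q[\wbar\Delta_\beta^2] = \langle \Sigma^\Q_\beta \wbar\Delta_\beta, \wbar\Delta_\beta\rangle_\H \le \opnorm{\Sigma^\Q_\beta}\norm{\wbar\Delta_\beta}_\H^2 \le |h(0)|\norm{\wbar\Delta_\beta}_\H^2$ and then control $\norm{\wbar\Delta_\beta}_\H$; but the sharper approach giving the stated $1/\lambda$ factor (not $1/\lambda^2$) is to test against $\wbar\Delta_\beta$ under $\wbar\P_n$ and separately bound the $\mathcal{L}_2(\Q)$ norm. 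The concrete steps: (i) from $\lambda\norm{\wbar\Delta_\beta}_\H^2 \le \langle (\wbar\Sigma_\beta - \Sigma_\beta)f_\beta, \wbar\Delta_\beta\rangle_\H + \langle \wbar h_\beta - h_\beta, \wbar\Delta_\beta\rangle_\H$, apply Cauchy--Schwarz to get $\lambda\norm{\wbar\Delta_\beta}_\H^2 \le \big(\opnorm{\wbar\Sigma_\beta - \Sigma_\beta}\norm{f_\beta}_\H + \norm{\wbar h_\beta - h_\beta}_\H\big)\norm{\wbar\Delta_\beta}_\H$, so $\norm{\wbar\Delta_\beta}_\H \le \frac{1}{\lambda}\big(\opnorm{\wbar\Sigma_\beta - \Sigma_\beta}\norm{f_\beta}_\H + \norm{\wbar h_\beta - h_\beta}_\H\big)$; (ii) use $\norm{f_\beta}_\H \le \frac{1}{\sqrt\lambda}\norm{Y}_{\mathcal{L}_2(\Q)}$-type bounds or rather the sharper duality; (iii) to pass from $\norm{\cdot}_\H$ to $\mathcal{L}_2(\Q)$, use $\norm{g}_\infty \le |h(0)|^{1/2}\norm{g}_\H$ (reproducing property, as in Proposition~\ref{prop:continuity-f-beta}(b)) to get $\E_\Q[\wbar\Delta_\beta^2] \le |h(0)|\norm{\wbar\Delta_\beta}_\H^2$.

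Matching the \emph{exact} form of the RHS in~\eqref{eqn:main-deterministic-bound}---a single power of $1/\lambda$ multiplying $\opnorm{\Sigma_\beta - \wbar\Sigma_\beta}\norm{Y}_{\mathcal{L}_2(\Q)} + |h(0)|^{1/2}\norm{h_\beta - \wbar h_\beta}_\H$---requires being careful: the naive chain above produces $\frac{1}{\lambda^2}(\cdots)^2$ after squaring. The resolution is to test the residual identity against $\wbar\Delta_\beta$ measured in $\mathcal{L}_2(\wbar\P_n)$ directly: write $\E_{\wbar\P_n}[\wbar r_\beta(\beta^{1/q}\odot X;Y)\,g(\beta^{1/q}\odot X)] = \lambda\langle\wbar f_\beta, g\rangle_\H$ (the variational characterization, Proposition~\ref{proposition:KKT-condition} under $\wbar\P_n$) and the analogous identity for $f_\beta$ under $\Q$; subtracting and choosing $g = \wbar\Delta_\beta$, the cross terms telescope so that $\E_\Q[\wbar\Delta_\beta^2] \le \E_\Q[(Y - f_\beta)\wbar\Delta_\beta] - \E_{\wbar\P_n}[(Y-\wbar f_\beta)\wbar\Delta_\beta] + (\text{operator-difference terms})$, and the leftover is exactly $\langle(\Sigma_\beta - \wbar\Sigma_\beta)\wbar f_\beta, \wbar\Delta_\beta\rangle + \langle \wbar h_\beta - h_\beta, \wbar\Delta_\beta\rangle$ bounded via $\norm{\wbar\Delta_\beta}_\infty \le |h(0)|^{1/2}\norm{\wbar\Delta_\beta}_\H$ and $\norm{\wbar\Delta_\beta}_\H \le \frac1\lambda \E_{\wbar\P_n}[\wbar\Delta_\beta^2]^{1/2}\cdot(\cdots)$---here the hypothesis $\opnorm{\what\Sigma_\beta - \Sigma_\beta}\le\lambda$ is what lets one absorb one operator-perturbation term into the left-hand side (coercivity of $\wbar\Sigma_\beta + \lambda I$ transferred to $\Sigma_\beta + \lambda I$ up to the perturbation), collapsing a square root and yielding a single $1/\lambda$. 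The second inequality~\eqref{eqn:main-deterministic-bound-'} is proved identically with $(f_\beta, \Sigma_\beta, h_\beta, \Q)$ replaced by $(\what f_\beta, \what\Sigma_\beta, \what h_\beta, \P_n)$, noting the variational identity holds under $\what\P_n$ as well. \textbf{The main obstacle} I anticipate is the bookkeeping needed to get the sharp linear-in-$(1/\lambda)$ dependence with the precise combination $\opnorm{\Sigma_\beta - \wbar\Sigma_\beta}\norm{Y}_{\mathcal{L}_2(\Q)} + |h(0)|^{1/2}\norm{h_\beta - \wbar h_\beta}_\H$: one must route the estimate through the coercivity hypothesis $\opnorm{\what\Sigma_\beta - \Sigma_\beta}\le\lambda$ and the variational (first-order optimality) identities rather than through the crude resolvent bound $\opnorm{(\wbar\Sigma_\beta+\lambda I)^{-1}}\le 1/\lambda$, since the latter loses a factor of $\lambda$ and gives the wrong exponent.
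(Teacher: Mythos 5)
Your plan correctly identifies the starting point ($r_\beta - \wbar r_\beta = -(f_\beta - \wbar f_\beta)$, the resolvent characterizations $f_\beta = (\Sigma_\beta + \lambda I)^{-1}h_\beta$ and $\wbar f_\beta = (\wbar\Sigma_\beta + \lambda I)^{-1}\wbar h_\beta$, and the need to pass from an $\H$-norm estimate to an $\mathcal{L}_2(\Q)$ estimate), and you correctly flag the danger of losing a power of $\lambda$. However, the concrete estimate you propose has a real gap that your vague "route through the coercivity hypothesis" sketch does not close.

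Your residual identity $(\wbar\Sigma_\beta + \lambda I)\wbar\Delta_\beta = (\wbar\Sigma_\beta - \Sigma_\beta)f_\beta + (\wbar h_\beta - h_\beta)$, followed by testing against $\wbar\Delta_\beta$ and Cauchy--Schwarz, yields
\begin{equation*}
\norm{\wbar\Delta_\beta}_\H \le \frac{1}{\lambda}\left(\opnormbig{\wbar\Sigma_\beta - \Sigma_\beta}\,\norm{f_\beta}_\H + \normbig{\wbar h_\beta - h_\beta}_\H\right).
\end{equation*}
The obstruction is that $\norm{f_\beta}_\H$ is only controlled by $\lambda^{-1/2}\norm{Y}_{\mathcal{L}_2(\P)}$ (from $\lambda\norm{f_\beta}_\H^2 \le \energy(\beta, f_\beta) \le \E[Y^2]$), so this route produces an extra $\lambda^{-1/2}$ that no amount of testing against $\wbar\Delta_\beta$ in $\mathcal{L}_2(\wbar\P_n)$ will absorb, because the offending term $\opnorm{\wbar\Sigma_\beta - \Sigma_\beta}\norm{f_\beta}_\H$ carries $f_\beta$ alone rather than a $\lambda$-normalized version of $h_\beta$.

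The step you are missing is the \emph{symmetric resolvent-sandwich factorization} of the operator difference applied to $h_\beta$ rather than to $f_\beta$. The paper writes $\what\Sigma_\beta^{1/2}(f_\beta - \wbar f_\beta) = \err_1 + \err_2$ with
$\err_1 = \what\Sigma_\beta^{1/2}\big[(\Sigma_\beta + \lambda I)^{-1} - (\wbar\Sigma_\beta + \lambda I)^{-1}\big]h_\beta$
and $\err_2 = \what\Sigma_\beta^{1/2}(\wbar\Sigma_\beta + \lambda I)^{-1}(h_\beta - \wbar h_\beta)$, then factors
\begin{equation*}
\err_1 = \underbrace{\what\Sigma_\beta^{1/2}(\Sigma_\beta + \lambda I)^{-1/2}}_{\opnorm{\cdot}\le 1}\,
\underbrace{\big[I - (\Sigma_\beta + \lambda I)^{1/2}(\wbar\Sigma_\beta + \lambda I)^{-1}(\Sigma_\beta + \lambda I)^{1/2}\big]}_{\opnorm{\cdot}\le \lambda^{-1}\opnorm{\wbar\Sigma_\beta - \Sigma_\beta}}\,
\underbrace{(\Sigma_\beta + \lambda I)^{-1/2}h_\beta}_{\norm{\cdot}_\H \le \normP{Y}}.
\end{equation*}
The first factor's contraction is exactly where the hypothesis $\opnorm{\what\Sigma_\beta - \Sigma_\beta}\le\lambda$ enters (since then $\what\Sigma_\beta \preceq \Sigma_\beta + \lambda I$); the middle factor is bounded by the spectral identity $\sigma(I - A^*A) = \sigma(I - AA^*)$ applied to $A = (\Sigma_\beta + \lambda I)^{1/2}(\wbar\Sigma_\beta + \lambda I)^{-1/2}$; and the last factor is bounded by a separate duality argument ($\langle h_\beta, f\rangle_\H = \E[f(\beta^{1/q}\odot X)Y] \le \langle\Sigma_\beta f, f\rangle_\H^{1/2}\normP{Y}$), which is what lets you avoid the lossy $\norm{f_\beta}_\H \le \lambda^{-1/2}\normP{Y}$ bound. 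Only with this factorization---distributing $(\Sigma_\beta + \lambda I)^{-1/2}$ symmetrically around the perturbation so that one half is absorbed by $\what\Sigma_\beta^{1/2}$ and the other by $h_\beta$---do you get the single power of $\lambda^{-1}$ that the lemma claims. Your plan, as written, would not reach the stated bound.
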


\begin{lemma}
\label{lemma:high-prob-bound-h-Sigma-beta}
Let $M, t > 0$. 
\begin{enumerate}[(a)]
\item The following bound holds with probability at least $1-e^{-t} -e^{-cn}$, 
\begin{equation*}
\sup_{\beta \in \mathcal{B}_M}\normbig{h_\beta - \what{h}_\beta}_\H \le 
	C \cdot \left(\sqrt[4]{\frac{\log n\log p}{n}} + \sqrt{\frac{t}{n}}\right).
\end{equation*}
Above, the constant $c > 0$ is absolute, and the constant $C > 0$ depends on the parameters 
$M, |h(0)|, |h^\prime(0)|, \sigma_X, \sigma_Y$. The same high probability holds for 
$\sup_{\beta \in \mathcal{B}_M}\normbig{h_\beta - \wbar{h}_\beta}_\H$.
\item The following bound holds with probability at least $1-e^{-t} - e^{-cn}$, 
\begin{equation*}
\sup_{\beta \in \mathcal{B}_M} \opnormbig{\Sigma_\beta - \what{\Sigma}_\beta} \le 
	C \cdot \left(\sqrt[4]{\frac{\log n\log p}{n}} + \sqrt{\frac{t}{n}}\right).
\end{equation*}
Above, the constant $c > 0$ is absolute, and the constant $C > 0$ depends on the parameters 
$M, |h(0)|, |h^\prime(0)|, \sigma_X, \sigma_Y$. The same high probability holds for 
$\sup_{\beta \in \mathcal{B}_M}\opnormbig{\Sigma_\beta - \wbar{\Sigma}_\beta} $.
\end{enumerate}
\end{lemma}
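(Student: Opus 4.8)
The plan is to treat parts (a) and (b) in parallel; both assert uniform concentration over $\beta\in\mathcal B_M$ of an empirical average of $\H$-valued (resp.\ operator-valued) random variables, and both reduce to an empirical-process statement. By the reproducing property, $\norm{h_\beta-\what h_\beta}_\H=\sup_{\norm{f}_\H\le 1}(\E-\what\E)[f(\beta^{1/q}\odot X)Y]$, while, using that $\Sigma_\beta,\what\Sigma_\beta$ are self-adjoint, $\opnorm{\Sigma_\beta-\what\Sigma_\beta}=\sup_{\norm{f}_\H\le1}\lvert(\E-\what\E)[f(\beta^{1/q}\odot X)^2]\rvert$. I would note at the outset that the ghost sample of Section~\ref{sec:tackle-statistical-dependency} plays no role here — $\what h_\beta$ and $\what\Sigma_\beta$ are built from a single i.i.d.\ sample — so the statements for $\wbar h_\beta,\wbar\Sigma_\beta$ are the same assertions in different notation, by exchangeability.

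The core step is to discretize $\mathcal B_M$ without incurring a factor of $p$. The constraint $\norm{\beta}_1\le M$ is exactly what makes this possible: $\mathcal B_M$ is the convex hull of the $p+1$ points $\{0,Me_1,\dots,Me_p\}$, so a sparse (Maurey-type, i.e.\ keep only the large coordinates and round them) covering produces, for each resolution $\epsilon$, a net $\mathcal N_\epsilon\subseteq\mathcal B_M$ whose log-cardinality depends on $p$ only through $\log p$ (and on $n$ only through logarithms), and which is $\epsilon$-dense in the data-dependent pseudometric that controls the variation of $\what h_\beta,\what\Sigma_\beta$ (namely the $\ell_\infty$-distance over the $\sim n^2$ evaluations $k_\beta(X^{(i)},X^{(j)})$). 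The oscillation of $\what h_\beta$ and $h_\beta$ over an $\epsilon$-cell is then bounded using the explicit identities recalled in the proof of Proposition~\ref{prop:continuity-f-beta} — e.g.\ $\norm{\what h_\beta-\what h_{\beta'}}_\H^2=\what\E^{\otimes2}\big[(k_\beta+k_{\beta'}-2k(\beta^{1/q}\odot\cdot,(\beta')^{1/q}\odot\cdot))YY'\big]$ and its analogue for $\Sigma$ — which by complete monotonicity of $h$ are $\lesssim\lvert h'(0)\rvert\cdot(\text{pseudometric})\cdot\what\E[Y^2]$, together with $\what\E[Y^2]\le 2\E[Y^2]$ off a set of probability $e^{-cn}$ (subgaussianity of $Y$).

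For a fixed net point $\beta_0$ I would establish the pointwise deviation. Truncating $Y$ at level $\sigma_Y\sqrt{C\log n}$ (the discarded mass sits on an $n^{-10}$ event) turns $\norm{h_{\beta_0}-\what h_{\beta_0}}_\H^2$ into a quadratic form in bounded variables with a positive-definite, uniformly bounded ($\lvert k_{\beta_0}\rvert\le h(0)$) kernel, to which the Hanson--Wright inequality applies and yields a deviation of order $\sqrt{(\log n+t)/n}$ with probability $1-e^{-t}$; for $\Sigma_{\beta_0}$ the same follows from an operator Bernstein inequality, or from Talagrand's inequality together with $\E\sup_{\norm{f}_\H\le1}(\E-\what\E)[f(\beta_0^{1/q}\odot X)^2]\lesssim h(0)/\sqrt n$ (Ledoux--Talagrand contraction plus the RKHS Rademacher complexity $\sqrt{h(0)/n}$). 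A union bound over $\mathcal N_\epsilon$ replaces $t$ by $t+\log\lvert\mathcal N_\epsilon\rvert$, and a suitable choice of the resolution $\epsilon$ balances the $\epsilon$-oscillation against the union-bound cost $\log\lvert\mathcal N_\epsilon\rvert/n$; this produces the advertised rate $\sqrt[4]{\log n\log p/n}+\sqrt{t/n}$, and the argument for (b) is identical. The fourth root is no accident: it is the square root of the H\"older-$\tfrac12$ continuity modulus of $\beta\mapsto h_\beta$ traded off against the (Maurey-type, $\log p$) entropy of $\mathcal B_M$.

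The main obstacle is precisely this trade-off — securing a $\log p$ rather than a $p$ (or $\sqrt p$) dependence forces one off volumetric $\ell_1$-nets of $\mathcal B_M$ and onto its polytope structure, and one must then match the covering pseudometric to a functional that simultaneously dominates the oscillation of $\what h_\beta,\what\Sigma_\beta$ and retains the right entropy scaling, all while absorbing the sub-Gaussian tail of $Y$ into only logarithmic losses via truncation. The remaining ingredients — the explicit kernel identities behind the continuity modulus, and the precise constants in Hanson--Wright and operator Bernstein — are routine.
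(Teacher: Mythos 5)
Your outline has the right ingredients — the dual representation over the RKHS unit ball, Maurey's empirical method for the entropy of $\mathcal B_M$, Hanson--Wright for the pointwise tail, a continuity modulus in the data-dependent pseudometric $\norm{\beta-\beta'}_X=\max_{i,j}\lvert\langle\beta-\beta',|X^{(i)}-X^{(j)}|^q\rangle\rvert$, and the observation that the ghost sample plays no role in this lemma — but the uniformization step is too weak to produce the advertised rate. You propose a single-scale net plus union bound: cover $\mathcal B_M$ at one resolution $\epsilon$, apply the pointwise bound at each net point, and pay for the union bound by replacing $t$ with $t+\log\lvert\mathcal N_\epsilon\rvert$. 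This loses polynomially, and the heuristic closing your proposal (``square root of the H\"older-$1/2$ modulus traded off against the entropy'') does not in fact yield a fourth root.

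The problem is that the quadratic forms $W_\beta=\frac{1}{n^2}Z^TA_\beta Z$ (with $Z_i=\eps^{(i)}Y^{(i)}$) to which Hanson--Wright is applied have \emph{sub-exponential}, not sub-gaussian, tails: both $\opnorm{A_\beta-A_{\beta'}}$ and $\matrixnorm{A_\beta-A_{\beta'}}_F$ are bounded only by $n|h'(0)|\norm{\beta-\beta'}_X$, so there is no sub-gaussian regime to exploit. A one-shot union bound over $\lvert\mathcal N_\epsilon\rvert$ sub-exponential variables of worst-case width $\sigma_{\max}\asymp|h'(0)|\sigma_Y^2M\sigma_X^q/n$ pays $\sigma_{\max}\cdot\log\lvert\mathcal N_\epsilon\rvert$, and Maurey gives $\log\lvert\mathcal N_\epsilon\rvert\asymp M^2\sigma_X^{2q}\log n\log p/\epsilon^2$. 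Balancing this against the oscillation of the centered process $\wbar W_\beta$, which is Lipschitz of order $\epsilon\sigma_Y^2|h'(0)|$, forces $\epsilon^3\asymp M^3\sigma_X^{3q}\log n\log p/n$ and hence $\sup_\beta\wbar W_\beta\asymp(\log n\log p/n)^{1/3}$, i.e.\ $(\log n\log p/n)^{1/6}$ after the square root — not $(\log n\log p/n)^{1/4}$. You get the same $1/6$ if you work directly with the H\"older-$1/2$ quantity $\norm{h_\beta-\what h_\beta}_\H$ and balance $\sqrt{\epsilon}$ against $\epsilon^{-1}\sqrt{\log n\log p/n}$, since $\epsilon^{3/2}\asymp\sqrt{\log n\log p/n}$. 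The paper instead symmetrizes, reduces via Panchenko's lemma to a tail bound on $\sup_\beta\wbar W_\beta$, shows via Hanson--Wright that $\{\wbar W_\beta\}$ is a sub-exponential \emph{process} with increment width $\asymp\sigma_Y^2|h'(0)|\norm{\beta-\beta'}_X/n$, and invokes chaining for sub-exponential processes (Theorem~\ref{theorem:chaining-sub-exponential}); the entropy integral $\int\log N\,d\eps\asymp L_0/\delta$ balances a $\delta$-oscillation at $\delta\asymp\sqrt{L_0/n}$, giving $\sup_\beta\wbar W_\beta\asymp\sqrt{\log n\log p/n}$ and hence the fourth root. Chaining pays $\log N_{2^{-k}}$ at scale $k$ times the \emph{local} width $2^{-k}$, rather than $\log\lvert\mathcal N_\epsilon\rvert$ once times the diameter, and for sub-exponential tails this is a genuine polynomial improvement, not a log factor. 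To repair your proposal, replace the one-scale net with a dyadic sum of Maurey nets — i.e., invoke the chaining theorem; the remaining ingredients are aligned with the paper up to the cosmetic choice of truncation versus symmetrization-plus-Panchenko.
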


\subsection{Proof of Lemma~\ref{lemma:deterministic-bound-high-prob-bound-f-r-beta}.}
\label{sec:proof-deterministic-bound-high-prob-bound-f-r-beta}
Below we only prove equation~\eqref{eqn:main-deterministic-bound}. The proof of 
equation~\eqref{eqn:main-deterministic-bound-'} is similar.  
Note that $r_\beta - \wbar{r}_\beta = -(f_\beta - \bar{f}_\beta)$. Moreover, for any function 
$f \in \H$,  $\normPn{f(\beta^{1/q}\odot X)} = \normbig{\what{\Sigma}_\beta^{1/2} f}_{\H}$
and $\normP{f(\beta^{1/q}\odot X)} = \normbig{{\Sigma}_\beta^{1/2} f}_{\H}$. Hence, for $\beta \ge 0$,
\begin{equation*}
\begin{split}
\what{\E}_n\left[(r_\beta(\beta^{1/q}\odot X; Y) - \wbar{r}_\beta(\beta^{1/q} \odot X; Y))^2 | \wbar{X}^{(1:n)}, \wbar{Y}^{(1:n)}\right]
	&= \normbig{\what{\Sigma}_\beta^{1/2}(f_\beta - \bar{f}_\beta)}_\H \\
\E\left[(r_\beta(\beta^{1/q}\odot X; Y) - \wbar{r}_\beta(\beta^{1/q} \odot X; Y))^2 | \wbar{X}^{(1:n)}, \wbar{Y}^{(1:n)}\right]
	&= \normbig{{\Sigma}_\beta^{1/2}(f_\beta - \bar{f}_\beta)}_\H 
\end{split}
\end{equation*}
Assume $\opnormbig{\what{\Sigma}_\beta-  \Sigma_\beta} \le \lambda$. Now it suffices to prove the following deterministic bound
\begin{equation}
\begin{split}
\label{eqn:main-deterministic-bound-two}
	\normbig{\what{\Sigma}_\beta^{1/2}(f_\beta - \bar{f}_\beta)}_\H &\le 
	\frac{1}{\lambda} \cdot \left(\opnormbig{\Sigma_\beta - \wbar{\Sigma}_\beta} \cdot \normP{Y} + |h(0)|^{1/2} \cdot \normbig{h_\beta - \wbar{h}_\beta}_{\H}\right)  \\
	\normbig{\Sigma_\beta^{1/2}(f_\beta - \bar{f}_\beta)}_\H &\le 
	\frac{1}{\lambda} \cdot \left(\opnormbig{\Sigma_\beta - \wbar{\Sigma}_\beta} \cdot \normP{Y} + |h(0)|^{1/2} \cdot \normbig{h_\beta - \wbar{h}_\beta}_{\H}\right) 
\end{split}.
\end{equation}
The proof of the two inequalities in equation~\eqref{eqn:main-deterministic-bound-two} are essentially the same; 
below we only detail the proof for the first one (which is also the technically slightly harder one).

To see this, recall that $f_\beta = (\Sigma_\beta + \lambda I)^{-1} h_\beta$ and 
$\bar{f}_\beta = (\wbar{\Sigma}_\beta + \lambda I)^{-1} \wbar{h}_\beta$ by 
Proposition~\ref{prop:kernel-beta-fix}. Algebraic manipulation yields the decomposition
$\what{\Sigma}_\beta^{1/2}(f_\beta - \bar{f}_\beta) =  \err_1 + \err_2$ where 
\begin{equation*}
\begin{split}
	\err_1 = \what{\Sigma}_\beta^{1/2}  \left(({\Sigma}_\beta + \lambda I)^{-1} - (\wbar{\Sigma}_\beta + \lambda I)^{-1}\right) {h}_\beta, ~~\text{and}~~
	\err_2 = \what{\Sigma}_\beta^{1/2}  (\wbar{\Sigma}_\beta + \lambda I)^{-1} (h_\beta - \wbar{h}_\beta).
\end{split}
\end{equation*}
Hence, $\normsmall{\what{\Sigma}_\beta^{1/2}(f_\beta - \bar{f}_\beta)}_\H \le \norm{\err_1}_\H + \norm{\err_2}_\H$.
It remains to bound $\norm{\err_1}_\H$ and $\norm{\err_2}_\H$.

\noindent\noindent
(a) Bound on $\norm{\err_1}_{\H}$. The following representation of $\err_1$ is particularly useful: 
	\begin{equation*}
		\err_1 = \left(\what{\Sigma}_\beta^{1/2}(\Sigma_\beta + \lambda I)^{-1/2} \right) 
			\left(I - ({\Sigma}_\beta + \lambda I)^{1/2} (\wbar{\Sigma}_\beta + \lambda I)^{-1} ({\Sigma}_\beta + \lambda I)^{1/2}\right) 
				\left(({\Sigma}_\beta + \lambda I)^{-1/2} h_\beta\right).  
	\end{equation*}
	Now we bound each of the three terms on the RHS. 
	\begin{itemize}
	\item $\Sigma_\beta$ is a positive operator. Hence, $\opnormbig{\what{\Sigma}_\beta^{1/2}({\Sigma}_\beta + \lambda I)^{-1/2}} \le 1$ 
		when $\opnormbig{\what{\Sigma}_\beta - \Sigma_\beta} \le \lambda$. 
	\item We use the following fundamental fact in functional analysis. For any linear operator $A: \H \to \H$, denoting $A^*$
		to be the adjoint operator of $A$, then $I - A^*A$ has the same spectrum as $I - AA^*$. Applying this fact to 
		$A = (\Sigma_\beta + \lambda I)^{1/2} (\wbar{\Sigma}_\beta + \lambda I)^{-1/2}$, we obtain 
		\begin{equation}
		\begin{split}
			&\opnormbig{I - ({\Sigma}_\beta + \lambda I)^{1/2} (\wbar{\Sigma}_\beta + \lambda I)^{-1} ({\Sigma}_\beta + \lambda I)^{1/2}} \\
			&= \opnormbig{I - (\wbar{\Sigma}_\beta + \lambda I)^{-1/2} ({\Sigma}_\beta + \lambda I) (\wbar{\Sigma}_\beta + \lambda I)^{-1/2}} \\
			&= \opnormbig{(\wbar{\Sigma}_\beta + \lambda I)^{-1/2} (\wbar{\Sigma}_\beta - {\Sigma}_\beta) (\wbar{\Sigma}_\beta + \lambda I)^{-1/2}}
				\le \frac{1}{\lambda} \opnormbig{ \wbar{\Sigma}_\beta - {\Sigma}_\beta}. 
		\end{split}
		\end{equation}
	\item Finally, $\norm{({\Sigma}_\beta + \lambda I)^{-1/2} {h}_\beta}_{\H} \le \normP{Y}$. To see this, 
	let $g_\beta = ({\Sigma}_\beta + \lambda I)^{-1/2} {h}_\beta$. By Cauchy-Schwartz, 
	$\langle h_\beta, f\rangle = \E[f(\beta^{1/q}\odot X)Y] \le \langle f, \Sigma_\beta f\rangle_\H^{1/2}\normP{Y}$
	holds for all $f \in \H$. Hence, $\langle g_\beta, f\rangle \le \langle f, f\rangle^{1/2}_\H \normP{Y}$ for all $f \in \H$.
	Substituting $f = g_\beta$  yields the result. 
	\end{itemize}
	Summarizing the above bounds, we have derived that
	$\norm{\err_1}_{\H} \le \frac{1}{\lambda} \cdot \opnormbig{\Sigma_\beta - \what{\Sigma}_\beta} \cdot \normP{Y}$.

\noindent\noindent
(b) Bound on $\norm{\err_2}_{\H}$. Recall that 
	$\err_2 = \what{\Sigma}_\beta^{1/2}  (\wbar{\Sigma}_\beta + \lambda I)^{-1} (h_\beta - \wbar{h}_\beta)$.
	Note the following.  
	\begin{itemize}
	\item By definition, $\normbig{\what{\Sigma}_\beta^{1/2}f}_{\H} = \what{\E}_n \big[f(\beta^{1/q}\odot X)^2\big] 
				\le \norm{f}_\infty \le |h(0)|^{1/2}\norm{f}_{\H}$ for any $f \in \H$. As a result, 
				we obtain $\opnormbig{\what{\Sigma}_\beta^{1/2}} \le |h(0)|^{1/2}$. 
	\item $\opnormbig{(\wbar{\Sigma}_\beta + \lambda I)^{-1}} \le \frac{1}{\lambda}$ since 
		$\wbar{\Sigma}_\beta$ is a positive operator. 
	\end{itemize}
	Summarizing the above bounds, we have derived that
	$\norm{\err_2}_{\H} \le \frac{1}{\lambda} \cdot |h(0)|^{1/2} \cdot \normbig{h_\beta - \what{h}_\beta}_{\H}$.

\subsection{Proof of Lemma~\ref{lemma:high-prob-bound-h-Sigma-beta}.}
\label{sec:proof-lemma-high-prob-bound-h-Sigma-beta}

The proof for Part (a) and (b) are similar. To save space, we only detail the proof of 
Part (a) and sketch the proof of Part (b). 

\subsubsection{Proof of Part (a). }
For clarity of exposition, we divide the proof into several steps. 

\paragraph{Step 1: Symmetrization and Reduction.}
Let $\{\eps^{(i)}\}_{i=1}^n$ be i.i.d Rademacher random variables. Define 
$\what{h}_\beta(\eps) = \what{\E}_n[\eps k(\beta^{1/q} \odot X, \cdot) Y] = \frac{1}{n} \sum_{i=1}^n \eps^{(i)} k(\beta^{1/q} \odot X^{(i)}, \cdot )Y^{(i)}$.
The standard symmetrization argument implies that for any convex and increasing mapping $\Phi: \R_+ \to \R_+$: 
\begin{equation*}
	\E\bigg[\Phi \Big(\sup_{\beta \in \mathcal{B}_M}\normbig{h_\beta - \what{h}_\beta}_\H\Big)\bigg]
		\le 
	\E\bigg[\Phi \Big(2 \cdot \sup_{\beta \in \mathcal{B}_M}\normbig{\what{h}_\beta(\eps)}_\H\Big)\bigg].
\end{equation*}
Armed with this, a classical reduction argument due to Panchenko (Lemma~\ref{lemma:Panchenko}) shows that it 
suffices to prove an exponential tail bound on the random variable 
$\sup_{\beta \in \mathcal{B}_M}\normbig{\what{h}_\beta(\eps)}_\H$.

\paragraph{Step 2: Evaluation and Simplification.}
We evaluate $\normbig{\what{h}_\beta(\eps)}_\H^2$ and leverage the reproducing 
property of RKHS to simplify the expression. Indeed, 
\begin{equation*}
	\normsmall{\what{h}_\beta(\eps)}_\H^2 = W_\beta~~\text{where}~~
	W_\beta \defeq \frac{1}{n^2} \sum_{i, j} \eps^{(i)} \eps^{(j)} k(\beta^{1/q} \odot X^{(i)}, \beta^{1/q} \odot X^{(j)}) Y^{(i)}Y^{(j)}.  
\end{equation*}
It suffices to prove an exponential tail bound for $W = \sup_{\beta \in \mathcal{B}_M} W_\beta = 
\sup_{\beta \in \mathcal{B}_M} \normsmall{\what{h}_\beta(\eps)}_\H^2$.

\paragraph{Step 3: Centering---from $W_\beta$ to $\wbar{W}_\beta$.}
Let $\wbar{W}_\beta = W_\beta - \E[W_\beta]$. Note then 
\begin{equation}
\label{eqn:centering-step-concentration}
	\sup_{\beta \in \mathcal{B}_M}  \left|W_\beta - \wbar{W}_\beta\right| 
		=  \sup_{\beta \in \mathcal{B}_M}  \left| \E[W_\beta] \right| = \frac{1}{n} \cdot |h(0)| \cdot \E[Y^2]. 
\end{equation}
Write $\wbar{W} = \sup_{\beta \in \mathcal{B}_M} \wbar{W}_\beta$. Below we prove high probability 
bounds on $\wbar{W}$. 

\paragraph{Step 4: $\{\wbar{W}_\beta\}_{\beta \in \mathcal{B}_M}$ is a Sub-exponential Process.}
We prove that $\{\wbar{W}_\beta\}_{\beta\in \mathcal{B}_M}$ is a sub-exponential process 
(see Definition~\ref{definition:sub-exponential-process}). More precisely, introduce the 
semi-norm $\norm{\cdot}_X$:
\begin{equation*}
	\norm{\cdot}_X = \max_{1\le i, j\le n} |\langle \cdot,T^{(ij)} \rangle|~~\text{where}~~T^{(ij)} = |X^{(i)}-  X^{(j)}|^q
\end{equation*}
We shall show that $ 
\wbar{W}_\beta - \wbar{W}_{\beta'}$ is $\sigma_{\beta, \beta'}$ sub-exponential 
where $\sigma_{\beta, \beta'}  = (2 |h'(0)| \sigma_Y^2  \norm{\beta - \beta'}_X)/n$.

To prove this, the core technique is the Hanson-Wright's inequality. Introduce notation.  
\begin{itemize}
\item Let $\Delta_{\beta, \beta'} = \wbar{W}_\beta - \wbar{W}_{\beta'}$.
\item Let $Z \in \R^n$ be such that $Z_i = \eps^{(i)} Y^{(i)}$. 
\item Let $A_\beta \in \R^{n \times n}$ be the matrix where its $(i, j)$-th entry is defined by 
	\begin{equation*}
		(A_\beta)_{i, j} = k(\beta^{1/q} \odot X^{(i)}, \beta^{1/q} \odot X^{(j)}) - 
	\E[k(\beta^{1/q} \odot X^{(i)}, \beta^{1/q} \odot X^{(j)})].
	\end{equation*}
	By definition, $(A_\beta)_{i, j} = h(\normsmall{X^{(i)} - X^{(j)}}_{q, \beta}^q) - \E[ h(\normsmall{X^{(i)} - X^{(j)}}_{q, \beta}^q)]$.
\item Let $\Delta^A_{\beta, \beta'}$ be the matrix with $\Delta_{\beta, \beta'}^A = A_\beta - A_{\beta'}$.
\end{itemize}
Note then $\wbar{W}_\beta = \frac{1}{n^2} Z^T A_\beta Z$ by definition. Note the following observations. 
\begin{itemize}
\item First, $Z_i$ is $\sigma_Y$-subgaussian since 
			$\E[e^{tZ_i}] = \half \left(\E[e^{tY_i}] + \E[e^{-tY_i}]\right) \le e^{\half \sigma_Y^2 t^2}$ for $t \in \R$. 
\item Next, since $Z$ has i.i.d $\sigma_Y$-subgaussian coordinates, Hanson-Wright's inequality 
	(see~\cite{RudelsonVe13}) 
	implies the following inequality: for some absolute constant $c > 0$
	\begin{equation*}
		\P\left(\Delta_{\beta, \beta'} \ge t\mid X\right) \le 2\exp\left(-c \cdot \min\left\{-n^2t/(\sigma_Y^2\opnorm{\Delta_{\beta, \beta'}^A}), 
			n^4 t^2/(\sigma_Y^4\matrixnorm{\Delta_{\beta, \beta'}^A}_F^2)\right\}\right).
	\end{equation*}
\item Third, we bound $\Delta^A_{\beta, \beta'}$. Since $h$ is strictly 
	completely monotone, each entry of $\Delta^A_{\beta, \beta'}$ is bounded by 
	$|h^\prime(0)| \cdot \norm{\beta- \beta'}_X$. Hence, 
	$\matrixnorms{\Delta_{\beta, \beta'}^A}_F, \opnorms{\Delta_{\beta, \beta'}^A}
	\le n|h^\prime(0)| \cdot \norm{\beta- \beta'}_X$. 
\end{itemize}
Summarizing the above results, we have shown 
	$\Delta_{\beta, \beta'} = 
	\wbar{W}_\beta - \wbar{W}_{\beta'}$ is $\sigma_{\beta, \beta'}$ sub-exponential.

\newcommand{\diam}{{\rm diam}}
\newcommand{\Quantile}{Q}

\paragraph{Step 5: Chaining.}
Since $\wbar{W}_\beta$ is a (centered) sub-exponential process, we can use standard
chaining argument (Theorem~\ref{theorem:chaining-sub-exponential}) to derive a high 
probability upper bound onto the supremum $\wbar{W} = \sup_{\beta \in \mathcal{B}_M} \wbar{W}_\beta$.  
Introduce notation below. 
\begin{enumerate}[-]
\item We use $\diam(\mathcal{B}_M, \norm{\cdot}_X)$ to denote the diameter of the set $\mathcal{B}_M$ under the 	
	norm $\norm{\cdot}_X$. 
\item We use $S(\mathcal{B}_M, \norm{\cdot}_X, \eps)$ to denote the set of $\eps$-covering (using $\norm{\cdot}_X$ ball) 
	of $\mathcal{B}_M$. 
\item We use $N(\mathcal{B}_M, \norm{\cdot}_X, \eps)$ to denote the cardinality
	$N(\mathcal{B}_M, \norm{\cdot}_X, \eps) = |S(\mathcal{B}_M, \norm{\cdot}_X, \eps)|$.
\end{enumerate}
Let $\beta_0 = 0$. Hence $\wbar{W}_{\beta_0} = 0$. For any $\delta > 0$, 
Theorem~\ref{theorem:chaining-sub-exponential} shows that 
with probability $1- e^{-t}$: 
\begin{equation}
\label{eqn:chaining-result}
\begin{split}
	\wbar{W} &\le 
		\sup_{\stackrel{\beta_1, \beta_2\in \mathcal{B}_M}{\norm{\beta_1-\beta_2}_X \le \delta}} |\wbar{W}_{\beta_1} - \wbar{W}_{\beta_2}| \\
			&~~~~~~~~~~~
				+ C |h^\prime(0)| \cdot \left(\frac{\sigma_Y^2}{n}  \cdot \int_\delta^{\diam(\mathcal{B}_M; \norm{\cdot}_X)}
					\log N(\mathcal{B}_M, \norm{\cdot}_X, \eps) d\eps + \frac{\sigma_Y^2}{n}\cdot\diam(\mathcal{B}_M; \norm{\cdot}_X) t\right)
\end{split}
\end{equation}
where $C > 0$ is a numerical constant. Now we simplify the RHS. First, 
 $\diam(\mathcal{B}_M; \norm{\cdot}_X) \le M(2\sigma_X)^q$ since $\norm{X}_\infty \le \sigma_X$ by assumption.
Next, we give a high probability bound on the first term on the RHS 
of equation~\eqref{eqn:chaining-result}. Note the following deterministic bound
\begin{equation*}
	|\wbar{W}_{\beta_1} - \wbar{W}_{\beta_2}| = \frac{1}{n} \norm{Z}_2^2 \cdot \opnorm{\Delta_{\beta, \beta'}}
		\le \norm{Z}_2^2 \cdot |h^\prime(0)| \cdot \norm{\beta- \beta'}_X
\end{equation*}
where we use the fact that each entry of $\Delta_{\beta, \beta'}$ is bounded by $|h^\prime(0)| \cdot \norm{\beta- \beta'}_X$. 
As $Z$ has i.i.d $\sigma_Y$ sub-gaussian entries, $\norm{Z}_2^2 \le 2\sigma_Y^2$
with probability at least $1-e^{-cn}$ for some constant $c > 0$. Consequently, it means that with probability at least $1-e^{-cn}$, 
\begin{equation*}
	\sup_{\beta_1, \beta_2\in \mathcal{B}_M, \norm{\beta_1-\beta_2}_X \le \delta} |\wbar{W}_{\beta_1} - \wbar{W}_{\beta_2}|\le 2|h^\prime(0)|
		 \delta \sigma_Y^2.
\end{equation*}
Using equation~\eqref{eqn:chaining-result} and union bound, we know with probability at least $1- e^{-t} -e^{-cn}$:
\begin{equation}
\label{eqn:chaining-result-final}
	\wbar{W} \le C |h^\prime(0)|\cdot \left(\delta\sigma_Y^2 + \frac{\sigma_Y^2}{n}  \cdot \int_\delta^{M(2\sigma_X)^q}
				\log N(\mathcal{B}_M, \norm{\cdot}_X, \eps) d\eps + \frac{\sigma_Y^2}{n} \cdot M \sigma_X^q t\right).
\end{equation}

\paragraph{Step 6: Metric Entropy Bound.}
This step bounds the metric entropy $\log N(\mathcal{B}_M, \norm{\cdot}_X, \eps)$. We invoke a 
classical argument due to Maurey (Proposition~\ref{proposition:maurey-argument}).

Indeed, note that $\max_{i, j} \norm{T^{(ij)}}_\infty \le (2\sigma_X)^q$ by assumption. 
As a result, Maurey's argument (Proposition~\ref{proposition:maurey-argument}) 
implies the following upper bound on the metric entropy: 
\begin{equation*}
	\log N(\mathcal{B}_M, \norm{\cdot}_X,\sigma_X^q \cdot \eps) \le CM^2  \frac{\log n\log p}{\eps^2}. 
\end{equation*}
where $C > 0$ is an absolute constant. Consequently, we obtain the bound
\begin{equation*}
\int_\delta^{M(2\sigma_X)^q}
				\log N(\mathcal{B}_M, \norm{\cdot}_X, \eps) d\eps  \le C (M \sigma_X^{q})^2 \cdot \frac{\log n\log p}{\delta}. 
\end{equation*}
Back to equation~\eqref{eqn:chaining-result-final}. We obtain for all $\delta > 0$, 
with probability at least $1- e^{-t} -e^{-cn}$
\begin{equation*}
	\wbar{W} \le C |h^\prime(0)|\cdot  \left(\delta\sigma_Y^2 + \frac{\sigma_Y^2}{n}  \cdot (M\sigma_X^{q})^2 \cdot \frac{\log n\log p}{\delta} + M\sigma_X^q t\right).
\end{equation*}
Take $\delta = (M \sigma_X^q) \cdot \sqrt{\log n\log p/n}$. This shows with probability at least $1- e^{-t} -e^{-cn}$,
\begin{equation}
\label{eqn:metric-entropy-final-result}
	\wbar{W} \le C |h^\prime(0)| M \sigma_X^q \sigma_Y^2 \cdot \left(\sqrt{\frac{\log n\log p}{n}} + \frac{t}{n}\right).
\end{equation}


\paragraph{Step 7: Finalizing Argument.}
Summarizing, we get with probability at least $1- e^{-t} -e^{-cn}$
\begin{equation*}
	\sup_{\beta \in \mathcal{B}_M} \normbig{\hat{h}_\beta(\eps)}_{\H} = W^{1/2} \le C \cdot \left(\sqrt[4]{\frac{\log n\log p}{n}}+ \sqrt{\frac{t}{n}}\right).
\end{equation*}
Above $C > 0$ is a constant that depends on $M, \sigma_X, \sigma_Y, |h^\prime(0)|$. 
As discussed in Step 1, this can be translated to a high probability bound on 
$\sup_{\beta \in \mathcal{B}_M}\normbig{\what{h}_\beta - h_\beta}_{\H}$.

\subsubsection{Proof of Part (b).}
The proof is essentially the same as that of Part (a). Below we highlight difference. Introduce some notation. 
Let $\H \otimes \H$ be the tensor product of the space $\H$ and $\H$. Any element $h_1 \otimes h_2 \in \H \otimes \H$ can 
be viewed as a linear operator that maps $\H$ to $\H$ as follows: $(h_1 \otimes h_2) h = \langle h_2, h\rangle_{\H} \cdot h_1$.

\paragraph{Step 1: Symmetrization and Reduction.}
Let $\{\eps_i\}_{i=1}^n$ be i.i.d Rademacher random variables. Define
$\what{\Sigma}_\beta(\eps) = \what{\E}_n[\eps k(\beta^{1/q}\odot X, \cdot) \otimes k(\beta^{1/q}\odot X, \cdot)] = 
	\frac{1}{n} \sum_{i=1}^n \eps_i k(\beta^{1/q} \odot X_i, \cdot ) \otimes k(\beta^{1/q} \odot X_i, \cdot )$.
Symmetrization implies that for any convex and increasing mapping $\Phi: \R_+ \to \R_+$: 
\begin{equation*}
	\E\bigg[\Phi \Big(\sup_{\beta \in \mathcal{B}_M}\opnormbig{\Sigma_\beta - \what{\Sigma}_\beta}\Big)\bigg]
		\le 
	\E\bigg[\Phi \Big(2 \cdot \sup_{\beta \in \mathcal{B}_M}\opnormbig{\what{\Sigma}_\beta(\eps)}\Big)\bigg]
\end{equation*}
Lemma~\ref{lemma:Panchenko} shows that it suffices to prove 
an exponential tail bound onto  $\sup_{\beta \in \mathcal{B}_M}\opnormbig{\what{\Sigma}_\beta(\eps)}$.

\paragraph{Step 2: Evaluation and Simplification.}
Note that $\opnorms{\what{\Sigma}_\beta(\eps)} \le \matrixnorms{\what{\Sigma}_\beta(\eps)}_{\HS}$. Moreover, 
we can leverage the reproducing property of RKHS to obtain the identity
\begin{equation*}
\begin{split}
	\matrixnormbig{\what{\Sigma}_\beta(\eps)}_{\HS}^2 = U_\beta
	~~\text{where}~~U_\beta = \frac{1}{n^2} \sum_{i, j} \eps_i \eps_j k(\beta^{1/q} \odot X_i, \beta^{1/q} \odot X_j)^2.  
\end{split}
\end{equation*}
It suffices to prove an exponential tail bound for the supremum $U = \sup_{\beta \in \mathcal{B}_M} U_\beta$. 

\paragraph{Step 3: Centering---from $U_\beta$ to $\wbar{U}_\beta$.}
Introduce $\wbar{U}_\beta = U_\beta - \E[U_\beta]$. Then 
\begin{equation*}
	\sup_{\beta \in \mathcal{B}_M}  \left|U_\beta - \wbar{U}_\beta\right| 
		=  \sup_{\beta \in \mathcal{B}_M}  \left| \E[U_\beta] \right| = \frac{1}{n} \cdot |h(0)|^2. 
\end{equation*}
Write $\wbar{U} = \sup_{\beta \in \mathcal{B}_M} \wbar{U}_\beta$. Below we prove high probability 
bounds on $\wbar{U}$. 

\paragraph{Step 4: $\{\wbar{U}_\beta\}_{\beta \in \mathcal{B}_M}$ is a Sub-exponential Process.}
One can prove that $\{\wbar{U}_\beta\}_{\beta\in \mathcal{B}_M}$ is a sub-exponential process 
(see Definition~\ref{definition:sub-exponential-process}) w.r.t 
the semi-norm $\norm{\cdot}_X$ on the space of $\mathcal{B}_M$. The proof follows the same 
argument as appears in Step 4 of Part (a). 

\paragraph{Step 5: Chaining.}
One can use chaining (Theorem~\ref{theorem:chaining-sub-exponential}) to derive a high 
probability upper bound onto the quantity $\wbar{U}$: for any $\delta, t > 0$, 
we have with probability at least $1- e^{-t} -e^{-cn}$:
\begin{equation}
\label{eqn:chaining-result-final-U}
	\wbar{W} \le C |h(0)||h^\prime(0)|\cdot \left(\delta\sigma_Y^2 + \frac{\sigma_Y^2}{n}  \cdot \int_\delta^{M(2\sigma_X)^q}
				\log N(\mathcal{B}_M, \norm{\cdot}_X, \eps) d\eps + \frac{\sigma_Y^2}{n} \cdot M \sigma_X^q t\right).
\end{equation}
The proof follows exactly the same argument as appears in Step 5 of Part (a). 

\paragraph{Step 6: Metric Entropy Bound.}
Perform exactly the same as appears in Step 6 of Part (a). We can analogously show that with probability at least $1- e^{-t} -e^{-cn}$,
\begin{equation}
\label{eqn:metric-entropy-final-result}
	\wbar{W} \le C |h^\prime(0)| |h(0)| M \sigma_X^q \sigma_Y^2 \cdot \left(\sqrt{\frac{\log n\log p}{n}} + \frac{t}{n}\right).
\end{equation}

\paragraph{Step 7: Finalizing Argument.}
Summarizing, we get with probability at least $1- e^{-t} -e^{-cn}$
\begin{equation*}
	\sup_{\beta \in \mathcal{B}_M} \normbig{\hat{h}_\beta(\eps)}_{\H} = U^{1/2} \le C  \cdot \left(\sqrt[4]{\frac{\log n\log p}{n}}+ \sqrt{\frac{t}{n}}\right).
\end{equation*}
where $C > 0$ is a constant that depends only on $M, \sigma_X, \sigma_Y, |h^\prime(0)|, |h(0)|$.
As discussed in Step 1, this can be translated to a high probability bound on 
$\sup_{\beta \in \mathcal{B}_M}\opnormbig{\Sigma_\beta - \what{\Sigma}_\beta}$.

\subsection{Proof of Proposition~\ref{proposition:auxiliary-empirical-close-to-population}.}
\label{sec:proof-proposition-auxiliary-empirical-close-to-population}

The core technique of the proof is empirical process theory. To facilitate readers' 
understanding, we divide the proof into several pieces, where each piece 
demonstrates one independent technical idea in the proof. Introduce the notation
\begin{equation*}
	G_{\beta, l}(x, y, x', y') = \wbar{r}_\beta(\beta^{1/q}\odot x; y) \wbar{r}_\beta(\beta^{1/q}\odot x'; y') 
	h'(\norm{x-x'}_{q, \beta}^q)|x_l- x_l'|^q.
\end{equation*}
This notation help simplify the expression of the gradients:  
\begin{equation}
\label{eqn:auxiliary-gradient-new-notation}
\begin{split}
(\wtilde{\grad \obj(\beta)})_l &= -\frac{1}{\lambda} \cdot \E\left[G_{\beta, l}(X, Y, X', Y') | \wbar{X}^{(1:n)}, \wbar{Y}^{(1:n)}\right] \\
(\wtilde{\grad \obj_n(\beta)})_l &= -\frac{1}{\lambda} \cdot \what{\E}_n
	\left[G_{\beta, l}(X, Y, X', Y') | \wbar{X}^{(1:n)}, \wbar{Y}^{(1:n)}\right]
\end{split}
\end{equation}
Introduce the quantity 
$\wtilde{\Delta}_n = 
	\sup_{l \in [p]}\sup_{\beta \in \mathcal{B}_M} |(\wtilde{\grad \obj_n(\beta)})_l - (\wtilde{\grad \obj(\beta)})_l|$.
Our goal is to provide high probability upper bound on the target quantity $\wtilde{\Delta}_n$.

\paragraph{Step 1: Reduction to Bounded $Y$.}
Write $\wtilde{\sigma_Y} = 3\sigma_Y \sqrt{\log n}$ and $\wtilde{Y} = Y \indic{|Y| \le \wtilde{\sigma_Y}}$. 
Consider
\begin{equation}
\label{eqn:auxiliary-gradient-truncation}
\begin{split}
(\wtilde{\grad \obj'(\beta)})_l &= -\frac{1}{\lambda} \cdot \E\left[G_{\beta, l}(X, \wtilde{Y}, X', \wtilde{Y'}) | \wbar{X}^{(1:n)}, \wbar{Y}^{(1:n)}\right] \\
(\wtilde{\grad \obj'_n(\beta)})_l &= -\frac{1}{\lambda} \cdot \what{\E}_n
	\left[G_{\beta, l}(X, \wtilde{Y}, X', \wtilde{Y}') | \wbar{X}^{(1:n)}, \wbar{Y}^{(1:n)}\right]
\end{split}
\end{equation}
Note the difference between the RHS of equation~\eqref{eqn:auxiliary-gradient-new-notation} and that of 
equation~\eqref{eqn:auxiliary-gradient-truncation}: we replace $Y$ by its truncated version 
$\wtilde{Y}$ which is almost surely bounded. Using the fact that $Y$ is $\sigma_Y$ sub-gaussian,
Lemma~\ref{lemma:truncation-small-effect} bounds the effect of such truncation.

\begin{lemma}
\label{lemma:truncation-small-effect}
We have the following results. 
\begin{itemize}
\item With probability at least $1-n^{-3}$, $(\wtilde{\grad \obj'_n(\beta)})_l = (\wtilde{\grad \obj_n(\beta)})_l$ for all 
	$\beta \in \mathcal{B}_M$ and $l \in [p]$.
\item With probability at least $1-n^{-3}$, the following bound $|(\wtilde{\grad \obj'(\beta)})_l - (\wtilde{\grad \obj(\beta)})_l| \le \frac{C}{n}$ 
	holds for all $\beta \in \mathcal{B}_M$ and $l \in [p]$. Here the constant $C$ depends only on $\sigma_Y$ and $|h(0)|$.
\end{itemize}
\end{lemma}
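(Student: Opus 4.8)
The argument splits cleanly into the two bullets, and both are elementary consequences of the subgaussianity of $Y$ once one observes that the response enters $G_{\beta,l}$ only linearly, through $\wbar{r}_\beta(\beta^{1/q}\odot x;y)=y-\wbar{f}_\beta(\beta^{1/q}\odot x)$, and that the auxiliary ridge solution $\wbar{f}_\beta$ is built from $(\wbar X^{(1:n)},\wbar Y^{(1:n)})$ only, hence is a fixed function once we condition on the auxiliary data.

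\emph{First bullet.} Since $\what{\E}_n$ only ever evaluates $G_{\beta,l}$ at the original sample points $(X^{(i)},Y^{(i)})$, the two empirical gradients $\wtilde{\grad\obj_n(\beta)}$ and $\wtilde{\grad\obj'_n(\beta)}$ agree --- for every $\beta\in\mathcal{B}_M$ and every $l\in[p]$ at once --- on the event $E_1=\{\,|Y^{(i)}|\le\wtilde{\sigma_Y}\ \text{for all}\ i\in[n]\,\}$, on which $\wtilde Y^{(i)}=Y^{(i)}$ for each $i$. Because $Y$ is $\sigma_Y$-subgaussian and mean zero, $\P(|Y^{(i)}|>\wtilde{\sigma_Y})\le 2\exp(-\wtilde{\sigma_Y}^2/(2\sigma_Y^2))=2n^{-9/2}$, so a union bound gives $\P(E_1^c)\le 2n^{-7/2}\le n^{-3}$ for $n$ large, which is the first claim.

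\emph{Second bullet.} Abbreviate $a=\wbar{r}_\beta(\beta^{1/q}\odot X;Y)$, $\wtilde a=\wbar{r}_\beta(\beta^{1/q}\odot X;\wtilde Y)$, define $b,\wtilde b$ analogously from the independent copy $(X',Y')$, and put $K=h'(\normsmall{X-X'}_{q,\beta}^q)|X_l-X_l'|^q$, so that
\[
(\wtilde{\grad\obj(\beta)})_l-(\wtilde{\grad\obj'(\beta)})_l=-\frac{1}{\lambda}\,\E\!\left[(ab-\wtilde a\wtilde b)K\mid \wbar X^{(1:n)},\wbar Y^{(1:n)}\right].
\]
The plan is to bound $|K|\le|h'(0)|(2\sigma_X)^q$ almost surely, split $ab-\wtilde a\wtilde b=(a-\wtilde a)b+\wtilde a(b-\wtilde b)$ with $a-\wtilde a=Y\indic{|Y|>\wtilde{\sigma_Y}}$ and $b-\wtilde b=Y'\indic{|Y'|>\wtilde{\sigma_Y}}$, and use $|b|\le|Y'|+\norm{\wbar f_\beta}_\infty$ and $|\wtilde a|\le\wtilde{\sigma_Y}+\norm{\wbar f_\beta}_\infty$. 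Taking the conditional expectation and using that $(X,Y)$ and $(X',Y')$ are independent of each other reduces the whole bound to the scalar quantity $\E[|Y|\indic{|Y|>\wtilde{\sigma_Y}}]\le\sqrt{\E[Y^2]}\,\P(|Y|>\wtilde{\sigma_Y})^{1/2}\le \sqrt2\,\sigma_Y n^{-9/4}$, together with a uniform bound on $\norm{\wbar f_\beta}_\infty$. For the latter, on $E_2=\{\max_{i}|\wbar Y^{(i)}|\le 3\sigma_Y\sqrt{\log n}\}$ (which has probability $\ge1-n^{-3}$ by the same union bound) one has $\wbar{\E}_n[Y^2]\le 9\sigma_Y^2\log n$, so from $\frac{\lambda}{2}\norm{\wbar f_\beta}_\H^2\le\half\,\wbar{\E}_n[Y^2]$ and the reproducing-kernel bound $\norm{g}_\infty\le|h(0)|^{1/2}\norm{g}_\H$ we get $\norm{\wbar f_\beta}_\infty\le C|h(0)|^{1/2}\sigma_Y\sqrt{(\log n)/\lambda}$, uniformly over $\beta\in\mathcal{B}_M$. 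Combining the estimates yields, uniformly in $\beta$ and $l$,
\[
\big|(\wtilde{\grad\obj(\beta)})_l-(\wtilde{\grad\obj'(\beta)})_l\big|\le \frac{C}{\lambda}\cdot n^{-9/4}\sqrt{(\log n)/\lambda}=C\,n^{-9/4}\sqrt{\log n}\,\lambda^{-3/2}.
\]
Finally the standing hypothesis $\lambda\ge C\sqrt[4]{\log n\log p/n}$ forces $\lambda\ge C'n^{-1/4}$, whence $\lambda^{-3/2}\le C''n^{3/8}$ and the right-hand side is at most $C\sqrt{\log n}\,n^{-15/8}\le C/n$ for all $n$; this gives the second claim.

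\emph{Main obstacle.} The one delicate point is the uniform-over-$\mathcal{B}_M$ control of $\norm{\wbar f_\beta}_\infty$ without picking up a $\lambda$-dependence that would overwhelm the target rate $1/n$. This is handled by the observation that truncating the response at level $3\sigma_Y\sqrt{\log n}$ discards a mass of order $n^{-9/4}$, a polynomial decay which dominates any $\mathrm{poly}(n)$ blow-up coming from the $1/\lambda$ factors under the assumed lower bound on $\lambda$; no sharp constants are needed anywhere. Note also that all the quantities used --- the bound on $|K|$, the tail term, and the displayed bound on $\norm{\wbar f_\beta}_\H$ --- are independent of $\beta$, so the passage to the supremum over $\mathcal{B}_M$ is immediate.
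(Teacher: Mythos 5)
Your proof is correct and follows the paper's approach closely. The first bullet is the same union-bound-over-samples argument (the paper's exponent is slightly looser at $n^{-4}$ but that changes nothing). For the second bullet both proofs use the same decomposition $ab - \wtilde a\wtilde b = (a-\wtilde a)b + \wtilde a(b-\wtilde b)$, the $\norm{K}_\infty \le |h'(0)|(2\sigma_X)^q$ bound, and the smallness of $\E[|Y - \wtilde Y|]$. The one place you diverge is in controlling the residual factor: you bound $\norm{\wbar f_\beta}_\infty$ directly via $\norm{\wbar f_\beta}_\infty \le |h(0)|^{1/2}\norm{\wbar f_\beta}_\H \le |h(0)|^{1/2}\lambda^{-1/2}\norm{\wbar Y}_{\mathcal L_2(\wbar\P_n)}$, which costs a $\lambda^{-1/2}$ factor that you then kill with the standing hypothesis $\lambda \gtrsim (\log n\log p/n)^{1/4}$; the paper instead invokes its Lemmas~\ref{lemma:deterministic-bound-high-prob-bound-f-r-beta} and~\ref{lemma:high-prob-bound-h-Sigma-beta} to obtain a $\lambda$-free constant bound $\E[|\wbar r_\beta|\mid\wbar X^{(1:n)},\wbar Y^{(1:n)}]\le 2\sigma_Y + |h(0)|^{1/2}$ directly. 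Your route is more elementary and self-contained (no reliance on the concentration machinery for $\Sigma_\beta, h_\beta$), at the cost of an extra passage through the $\lambda$ lower bound; the paper's route keeps the $1/\lambda$ accounting cleaner. Both yield the desired $C/n$ (the paper actually drops the $1/\lambda$ prefactor in its displayed estimate, which you handle explicitly), and both in fact produce constants depending also on $|h'(0)|$ and $\sigma_X$, not merely $\sigma_Y$ and $|h(0)|$ as the lemma statement claims---so your proof is, if anything, more careful on that point.
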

\noindent\noindent
As an immediate consequence, if we define 
$\wtilde{\Delta}_n' = \sup_{l \in [p]}\sup_{\beta \in \mathcal{B}_M} 
 \left|(\wtilde{\grad \obj'_n(\beta)})_l - (\wtilde{\grad \obj'(\beta)})_l \right|$, then
Lemma~\ref{lemma:truncation-small-effect} implies that 
$|\wtilde{\Delta}_n' - \wtilde{\Delta}_n | \le  \frac{C}{n}$ holds with probability at least $1-n^{-3}$. 
Below we shift our focus to high probability bounds on $\wtilde{\Delta}_n'$.

\paragraph{Step 2: A ``Good'' Event $\event_n$. }
Introduce the ``good'' event: 
\begin{equation*}
	\event_n = \left\{\normsmall{\wbar{Y}}_{\mathcal{L}_2(\wbar{P}_n)} \le 2\sigma_Y \right\}.
\end{equation*}
Note that $\event_n$ happens with high probability since $Y$ is $\sigma_Y$ subgaussian~\cite{Vershynin18}. 
\begin{lemma}
\label{lemma:event-n-high-prob}
$\event_n$ happens with probability at least $1-e^{-cn}$ where $c$ is an absolute constant.  
\end{lemma}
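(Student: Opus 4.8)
By definition $\normsmall{\wbar{Y}}_{\mathcal{L}_2(\wbar{P}_n)}^2 = \frac{1}{n}\sum_{i=1}^n (\wbar{Y}^{(i)})^2$, so the event $\event_n$ is precisely $\big\{\frac{1}{n}\sum_{i=1}^n (\wbar{Y}^{(i)})^2 \le 4\sigma_Y^2\big\}$. The plan is to show that the empirical second moment of the auxiliary responses $\{\wbar Y^{(i)}\}_{i=1}^n$ concentrates around its mean via a standard Bernstein-type bound for sums of i.i.d.\ sub-exponential random variables, and that the resulting exponential rate has an \emph{absolute} constant.

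First I would record two elementary consequences of the sub-gaussian assumption $\E[e^{tY}] \le e^{\frac12\sigma_Y^2 t^2}$: (i) a Taylor expansion of this bound at $t=0$ (using $\E[Y]=0$) gives $\E[Y^2] \le \sigma_Y^2$; and (ii) $Y^2$, hence each $(\wbar Y^{(i)})^2$, is sub-exponential with sub-exponential norm bounded by an absolute multiple of $\sigma_Y^2$ (see~\cite{Vershynin18}). Consequently the centered variables $(\wbar Y^{(i)})^2 - \E[Y^2]$ are i.i.d.\ mean-zero sub-exponential with parameter $C\sigma_Y^2$ for an absolute constant $C$.

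Next I would invoke Bernstein's inequality for sums of i.i.d.\ sub-exponential random variables~\cite{Vershynin18}: for every $u \ge 0$,
\begin{equation*}
	\P\!\left(\frac{1}{n}\sum_{i=1}^n \left((\wbar Y^{(i)})^2 - \E[Y^2]\right) \ge u\right)
		\le \exp\!\left(-c\,n \min\!\left\{\frac{u^2}{C^2\sigma_Y^4},\ \frac{u}{C\sigma_Y^2}\right\}\right),
\end{equation*}
with $c>0$ an absolute constant. Taking $u = 3\sigma_Y^2$ and using $\E[Y^2] \le \sigma_Y^2$ yields
\begin{equation*}
	\P\!\left(\frac{1}{n}\sum_{i=1}^n (\wbar Y^{(i)})^2 > 4\sigma_Y^2\right)
		\le \exp\!\left(-c\,n \min\{9/C^2,\ 3/C\}\right) = e^{-c'n},
\end{equation*}
where $c' > 0$ is an absolute constant. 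Since the left-hand event is exactly $\event_n^c$, this gives $\P(\event_n) \ge 1 - e^{-c'n}$, completing the proof.

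\textbf{Main obstacle.} There is essentially no technical difficulty here; the only point meriting care is the claim that $c'$ is genuinely absolute, i.e.\ independent of $\sigma_Y$. This holds because both the sub-exponential parameter of $(\wbar Y^{(i)})^2$ and the chosen deviation threshold $u$ scale proportionally to $\sigma_Y^2$, so the argument of the exponential — a ratio of these quantities — is scale-free.
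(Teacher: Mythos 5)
Your proposal is correct and is exactly the standard sub-exponential Bernstein argument that the paper delegates to by citing~\cite{Vershynin18} without writing it out; the chain $Y$ sub-gaussian $\Rightarrow$ $Y^2$ sub-exponential with parameter $\asymp \sigma_Y^2$ $\Rightarrow$ Bernstein with the scale-free ratio is precisely what makes the tail constant absolute. Your closing remark on why $c$ is independent of $\sigma_Y$ is the one point worth making explicit, and you make it correctly.
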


We show that $G_{\beta, l}(X, \wtilde{Y}, X', \wtilde{Y}')$ is bounded and Lipschitz on the event $\event_n$.  
Write $z_{\lambda, n} = \frac{1}{\sqrt{\lambda}} + \sqrt{\log n}$. The proof of Lemma~\ref{lemma:event-n-good-things-happen} is 
deferred to Section~\ref{sec:proof-lemma-event-n-good-things-happen}.

\begin{lemma}
\label{lemma:event-n-good-things-happen}
On the event $\event_n$, we have the following results. 
\begin{itemize}
\item There exists $C > 0$ depending only on $|h^\prime(0)|$, $\sigma_X$, $\sigma_Y$ so that 
	for all $\beta \in \mathcal{B}_M$ and $l \in [p]$:
\begin{equation*}
	\left|G_{\beta, l}(X, \wtilde{Y}, X', \wtilde{Y}')\right| \le C z_{\lambda, n}^2.
\end{equation*}
\item There exists $C > 0$ depending only on $|h^\prime(0)|$, $\sigma_X$, $\sigma_Y$ so that 
	for all $\beta, \beta' \in \mathcal{B}_M$, $l \in [p]$: 
\begin{equation*}
\begin{split}
	\left|G_{\beta, l}(X, \wtilde{Y}, X', \wtilde{Y}') - G_{\beta', l}(X, \wtilde{Y}, X', \wtilde{Y}') \right| 
	\le \frac{C}{\lambda^2} \cdot z_{\lambda, n} \cdot \max_{T \in \mathcal{T}(X, X')} |\langle \beta - \beta', T\rangle|
	%
\end{split}
\end{equation*}
where the set $\mathcal{T}(X, X') = \left\{|X-X^{(i)}|^q\right\}_{i=1}^n \cup \left\{|X'-X^{(i)}|^q\right\}_{i=1}^n \cup \left\{|X^{(i)}-X^{(j)}|^q\right\}_{i, j=1}^n$
\end{itemize}
\end{lemma}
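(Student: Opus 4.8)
The plan is to reduce both bullets to two uniform deterministic estimates that hold on the event $\event_n$: (a) a sup-norm bound on the truncated residual, $\sup_{\beta \in \mathcal{B}_M}\sup_{x}|\wbar{r}_\beta(\beta^{1/q}\odot x;\wtilde{y})| \le C z_{\lambda,n}$ whenever $|\wtilde{y}| \le \wtilde{\sigma_Y}$; and (b) a sup-norm Lipschitz bound for the rescaled empirical fit,
\[
\sup_{x}\big|\wbar{f}_\beta(\beta^{1/q}\odot x) - \wbar{f}_{\beta'}(\beta'^{1/q}\odot x)\big| \le \frac{C}{\lambda^2}\,\norm{\wbar{Y}}_{\mathcal{L}_2(\wbar{\P}_n)}\,\max_{T\in\mathcal{T}(X,X')}\big|\langle\beta-\beta',T\rangle\big|.
\]
Granting these, the two assertions follow by combining with the regularity of $h$ already recorded in the paper (complete monotonicity gives $h'\le 0$, $\norm{h'}_\infty = |h'(0)|$, and $h'$ is $|h''(0)|$-Lipschitz), the a.s.\ bound $|x_l-x_l'|^q \le (2\sigma_X)^q$, and the identity $\norm{x-x'}_{q,\beta}^q - \norm{x-x'}_{q,\beta'}^q = \langle\beta-\beta',|x-x'|^q\rangle$.

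For the boundedness bullet: on $\event_n$ one has $\norm{\wbar{Y}}_{\mathcal{L}_2(\wbar{\P}_n)} \le 2\sigma_Y$, so—exactly as in the proof of Lemma~\ref{lemma:approximate-gradient-bound}—optimality of $\wbar{f}_\beta$ gives $\lambda\norm{\wbar{f}_\beta}_\H^2 \le \wbar{\E}_n[Y^2] \le 4\sigma_Y^2$; with the reproducing-kernel inequality $\norm{g}_\infty \le |h(0)|^{1/2}\norm{g}_\H$ from Proposition~\ref{prop:continuity-f-beta} this yields $\norm{\wbar{f}_\beta}_\infty \le 2|h(0)|^{1/2}\sigma_Y/\sqrt\lambda$ for every $\beta\in\mathcal{B}_M$. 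Since $|\wtilde{Y}| \le \wtilde{\sigma_Y} = 3\sigma_Y\sqrt{\log n}$ a.s., $|\wbar{r}_\beta(\beta^{1/q}\odot x;\wtilde{y})| \le |\wtilde{y}| + \norm{\wbar{f}_\beta}_\infty \le C z_{\lambda,n}$, which is (a); multiplying the two residual factors and using $|h'(\cdot)|\le|h'(0)|$ and $|x_l-x_l'|^q\le(2\sigma_X)^q$ gives $|G_{\beta,l}(X,\wtilde{Y},X',\wtilde{Y}')| \le C z_{\lambda,n}^2$.

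For the Lipschitz bullet: write $G_{\beta,l}-G_{\beta',l}$ as a three-term telescoping sum in the two residual factors and the factor $h'(\norm{x-x'}_{q,\beta}^q)|x_l-x_l'|^q$; the latter is controlled by the $|h''(0)|$-Lipschitzness of $h'$ and the identity above, with increment $|\langle\beta-\beta',|x-x'|^q\rangle| \le \max_{T\in\mathcal{T}(X,X')}|\langle\beta-\beta',T\rangle|$. For the residual terms I will prove (b) through the representer form $\wbar{f}_\beta(\beta^{1/q}\odot x) = \langle\wbar{\alpha}^{(\beta)},u_\beta(x)\rangle$, where $\wbar{\alpha}^{(\beta)} = (\wbar{K}_\beta + n\lambda I)^{-1}\wbar{Y}$, $(\wbar{K}_\beta)_{ij} = h(\norm{\wbar{X}^{(i)}-\wbar{X}^{(j)}}_{q,\beta}^q)$, $(u_\beta(x))_i = h(\norm{\wbar{X}^{(i)}-x}_{q,\beta}^q)$: using $\|(\wbar{K}_\beta+n\lambda I)^{-1}\|_{\mathrm{op}}\le(n\lambda)^{-1}$, $\norm{\wbar{Y}}_2 = \sqrt{n}\,\norm{\wbar{Y}}_{\mathcal{L}_2(\wbar{\P}_n)}$, $\norm{u_\beta(x)}_2\le\sqrt{n}\,|h(0)|$, and the entrywise bounds $|(\wbar{K}_\beta-\wbar{K}_{\beta'})_{ij}|, |(u_\beta(x)-u_{\beta'}(x))_i| \le |h'(0)|\max_{T\in\mathcal{T}}|\langle\beta-\beta',T\rangle|$ (by Lipschitzness of $h$ and the identity above), one splits $\langle\wbar{\alpha}^{(\beta)},u_\beta(x)\rangle - \langle\wbar{\alpha}^{(\beta')},u_{\beta'}(x)\rangle$ into a resolvent-perturbation part—two powers of $1/\lambda$, from $(\wbar{K}_\beta+n\lambda I)^{-1}(\wbar{K}_{\beta'}-\wbar{K}_\beta)(\wbar{K}_{\beta'}+n\lambda I)^{-1}$—and a part from $u_\beta(x)-u_{\beta'}(x)$, one power of $1/\lambda$, obtaining (b) after cancellation of the $n$-dependence. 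Since each residual is $\le Cz_{\lambda,n}$ on $\event_n$, a residual difference (bounded via (b) by $\tfrac{C}{\lambda^2}\max_T|\langle\beta-\beta',T\rangle|$ on $\event_n$) times a residual times $|h'(0)|(2\sigma_X)^q$ contributes $\tfrac{C}{\lambda^2}z_{\lambda,n}\max_T|\langle\beta-\beta',T\rangle|$; collecting the three terms (the $h'$-term being of the same or lower order) gives the claimed Lipschitz bound.

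The main obstacle is establishing (b): the $\mathcal{L}_2$-analogue for the population kernel ridge solution is Lemma~\ref{lemma:residual-Lipschitz}, but here a \emph{pointwise} bound is needed whose modulus is the data-dependent semi-norm $\max_{T\in\mathcal{T}(X,X')}|\langle\beta-\beta',T\rangle|$ rather than $\norm{\beta-\beta'}_1$, so the argument cannot be routed through the abstract covariance operators and must instead track how a $\beta$-increment enters the finite-dimensional kernel system entrywise—only through inner products $\langle\beta-\beta',|u-v|^q\rangle$ for $u,v$ ranging over $x,x'$ and the sample points—while simultaneously verifying that the prefactor is exactly $\lambda^{-2}$ and that every factor of $n$ cancels.
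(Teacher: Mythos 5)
Your approach matches the paper's: the paper reduces the statement to Lemma~\ref{lemma:event-n-good-things-happen-details}, which records exactly your estimates (a) and (b), proves (a) from $\lambda\|\wbar{f}_\beta\|_\H^2 \le \|Y\|_{\mathcal{L}_2(\wbar{\P}_n)}^2$ on $\Lambda_n$ together with the embedding $\|\cdot\|_\infty \le |h(0)|^{1/2}\|\cdot\|_\H$, and proves (b) via the same finite-dimensional representer form $\wbar{f}_\beta(\beta^{1/q}\odot X) = \tfrac{1}{n}\wtilde{k}_\beta(X)^T(\tfrac{1}{n}K_\beta+\lambda I)^{-1}\wbar{y}$, a resolvent-perturbation split, and entrywise Lipschitz bounds on the kernel matrix and vector with the $n$-dependence cancelling exactly as you compute. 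Your observation that one must track the $\beta$-increment entrywise through the finite-dimensional kernel system (rather than routing through $\Sigma_\beta$ as in Lemma~\ref{lemma:residual-Lipschitz}) in order to land on the data-dependent semi-norm is precisely the reason the paper switches to the representer form here; the two proofs are essentially identical.
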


\paragraph{Step 3: Concentration of $\wtilde{\Delta}_n'$ Conditional on the Event $\Lambda_n$.}
We show that $\wtilde{\Delta}_n'$ is concentrated around its mean conditional on the event $\Lambda_n$. 
Introduce the notation
\begin{equation*}
	\wbar{G}_{\beta, l}(X, \wtilde{Y}, X', \wtilde{Y}') = G_{\beta, l}(X, \wtilde{Y}, X', \wtilde{Y}') 
		- \E[G_{\beta, l}(X, \wtilde{Y}, X', \wtilde{Y}') \mid \wbar{X}^{(1:n)}, \wbar{Y}^{(1:n)}]
\end{equation*}
By definition, it is easy to see that 
\begin{equation}
\label{eqn:Delta-n-represent}
\begin{split}
	&\wtilde{\Delta}_n' = \frac{1}{\lambda} \cdot \max_{l \in [p]} U_{n, l} 
	~~\text{where}~~
	U_{n, l} = \sup_{\beta\in \mathcal{B}_M} 
		\left|\what{\E}_n[\wbar{G}_{\beta, l}(X, \wtilde{Y}, X', \wtilde{Y}')]\right|
\end{split}.
\end{equation}
Lemma~\ref{lemma:concentration-of-tilde-delta} shows that $U_{n, l}$ is concentrated. The proof is 
given in Section~\ref{sec:lemma-concentration-of-tilde-delta}.
\begin{lemma}
\label{lemma:concentration-of-tilde-delta}
There exists a constant $C > 0$ depending only on $|h(0)|, |h^\prime(0)|, \sigma_X, \sigma_Y$ such that conditional 
on the event $\Lambda_n$, the following happens with probability at least $1-e^{-t}$: 
\begin{equation*}
	\max_{l \in [p]} \left|U_{n, l} - \E[U_{n, l} \mid \wbar{X}^{(1:n)}, \wbar{Y}^{(1:n)}]\right| \le 
		C z_{\lambda, n}^2 \cdot \left(\sqrt{\frac{\log p}{n}} + \sqrt{\frac{t}{n}}\right).
\end{equation*}
\end{lemma}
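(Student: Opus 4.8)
The plan is to work conditionally on the auxiliary sample $(\wbar{X}^{(1:n)}, \wbar{Y}^{(1:n)})$ and on the event $\Lambda_n$, and then apply a bounded-differences (McDiarmid) argument to $U_{n,l}$ viewed as a function of the original i.i.d.\ data $Z_i := (X^{(i)}, \wtilde{Y}^{(i)})$, $i = 1, \dots, n$. The key structural observation is that $\wbar{r}_\beta$, and hence the centering $\E[G_{\beta, l}(X, \wtilde{Y}, X', \wtilde{Y}') \mid \wbar{X}^{(1:n)}, \wbar{Y}^{(1:n)}]$, is measurable with respect to the auxiliary data alone; so after conditioning on that data the $Z_i$ remain i.i.d., $\wbar{G}_{\beta,l}$ is a fixed measurable kernel, and
\[
	\what{\E}_n[\wbar{G}_{\beta, l}(X, \wtilde{Y}, X', \wtilde{Y}')] = \frac{1}{n^2}\sum_{i, j} \wbar{G}_{\beta, l}(Z_i, Z_j),
\]
so that $U_{n,l}$ is the supremum over $\beta \in \mathcal{B}_M$ of the absolute value of a centered V-statistic of order two.

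First I would record the only deterministic input needed: by Lemma~\ref{lemma:event-n-good-things-happen}, on $\Lambda_n$ we have the almost-sure bound $|G_{\beta, l}(X, \wtilde{Y}, X', \wtilde{Y}')| \le C_0 z_{\lambda, n}^2$, uniformly over $\beta \in \mathcal{B}_M$ and $l \in [p]$, and hence $|\wbar{G}_{\beta, l}| \le 2 C_0 z_{\lambda, n}^2$. Since each index $i$ occurs in at most $2n - 1$ of the $n^2$ summands, replacing a single $Z_i$ changes $\what{\E}_n[\wbar{G}_{\beta, l}]$ — the centering constant being unaffected — and therefore $|\what{\E}_n[\wbar{G}_{\beta, l}]|$, by at most $(2n-1) \cdot 4 C_0 z_{\lambda, n}^2 / n^2 \le 8 C_0 z_{\lambda, n}^2 / n$; because this bound is uniform in $\beta$, the supremum $U_{n, l}$ has bounded differences with constant $c_n := 8 C_0 z_{\lambda, n}^2 / n$.

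McDiarmid's inequality, applied conditionally on the auxiliary data on $\Lambda_n$, then gives for each fixed $l$
\[
	\P\Big(\big|U_{n, l} - \E[U_{n, l} \mid \wbar{X}^{(1:n)}, \wbar{Y}^{(1:n)}]\big| \ge s \ \Big|\ \wbar{X}^{(1:n)}, \wbar{Y}^{(1:n)}\Big)
		\le 2 \exp\Big(-\frac{2 s^2}{n c_n^2}\Big) = 2\exp\Big(-\frac{s^2 n}{32 C_0^2 z_{\lambda, n}^4}\Big).
\]
Taking $s = C z_{\lambda, n}^2 \big(\sqrt{\log p / n} + \sqrt{t/n}\big)$ with $C$ a large enough multiple of $C_0$, the right-hand side is at most $2 p^{-2} e^{-2t}$; a union bound over $l \in [p]$ then yields the claimed estimate with conditional probability at least $1 - 2 p^{-1} e^{-2t} \ge 1 - e^{-t}$. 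Because this bound is uniform over the realizations of the auxiliary data lying inside $\Lambda_n$, it also holds conditionally on $\Lambda_n$ alone, which is the assertion.

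There is no genuine difficulty here; the argument is routine bounded-differences concentration. The points that require care are: (i) that the centering term is unchanged when a single original data point is swapped, which is what allows $U_{n,l}$ to be treated as a bounded-differences functional with constant $O(z_{\lambda, n}^2 / n)$ rather than $O(z_{\lambda,n}^2)$; (ii) the V-statistic bookkeeping of how many summands each coordinate touches; and (iii) checking that the boundedness in Lemma~\ref{lemma:event-n-good-things-happen} is genuinely an almost-sure (pointwise) bound on $\Lambda_n$, so that McDiarmid applies regardless of which sample values are realized. The Lipschitz estimate of Lemma~\ref{lemma:event-n-good-things-happen} is not used here — it is needed only to control the conditional mean $\E[U_{n,l} \mid \wbar{X}^{(1:n)}, \wbar{Y}^{(1:n)}]$ by a chaining argument elsewhere — so the proof of this lemma rests solely on the uniform boundedness.
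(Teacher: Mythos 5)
Your argument matches the paper's proof exactly: treat $U_{n,l}$ as a bounded-differences functional of the i.i.d.\ original data, conditionally on the auxiliary sample and on $\Lambda_n$, invoke the uniform bound $|G_{\beta,l}|\le Cz_{\lambda,n}^2$ from Lemma~\ref{lemma:event-n-good-things-happen}, apply McDiarmid for each fixed $l$, and finish with a union bound over $l\in[p]$. The only difference is that you spell out the V-statistic bookkeeping giving the $O(z_{\lambda,n}^2/n)$ bounded-differences constant and the union-bound arithmetic, which the paper leaves implicit.
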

As a result of
Lemma~\ref{lemma:concentration-of-tilde-delta}, we have conditional on $\event_n$, with probability at least  $1-e^{-t}$:
\begin{equation}
\label{eqn:concentration-final-result}
	\left|\wtilde{\Delta}_n' - \frac{1}{\lambda} \max_{l \in [p]} \E[U_{n, l} \mid \wbar{X}^{(1:n)}, \wbar{Y}^{(1:n)}]\right|
		\le C \cdot \frac{1}{\lambda} \cdot z_{\lambda, n}^2 \cdot \left(\sqrt{\frac{\log p}{n}} + \sqrt{\frac{t}{n}}\right).
\end{equation}
Below we seek bounds on $\E[U_{n, l} \mid \wbar{X}^{(1:n)}, \wbar{Y}^{(1:n)}]$ for each $l \in [p]$.

\paragraph{Step 4: Symmetrization.}
We wish to invoke standard symmetrization argument to $\E[U_{n, l} | \wbar{X}^{(1:n)}, \wbar{Y}^{(1:n)}]$. 
However, there is a technical issue we need to take care of---
$U_{n, l} = \sup_{\beta\in \mathcal{B}_M} |\what{\E}_n[\wbar{G}_{\beta, l}(X, \wtilde{Y}, X', \wtilde{Y}')]|$
where $\what{\E}_n$ averages over \emph{dependent} random variables. Consequently, to 
overcome this technical issue, we need to
apply a decoupling argument~\cite{Hoeffding94}. 
\begin{itemize}
\item Let $\sigma_{i,i'}$ be independent Rademacher random variables. 
\item Let $\mathcal{I} = \left\{(i, i') \mid i \neq i', 1\le i, i' \le n\right\}$ be the set of distinct indices. 
	Note that we can decompose $\mathcal{I} = \cup_{j=1}^I \mathcal{I}_j$ where 
	$I \le n$, $|\mathcal{I}_j| \ge \floor{\frac{n}{2}}$,  
	so that any two different tuples $(i_1, i_2), (i_3, i_4) \in \mathcal{I}_j$ for some $j \in [I]$ must satisfy 
	$i_k \neq i_l$ for $1\le k < l \le 4$. Let $\what{\E}_{n, j}$ denote the empirical average over the distinct tuples 
	$(i_1, i_2) \in \mathcal{I}_j$ for any $j \in [I]$. 
\item Let $\what{\E}_{n, 0}$ denote the empirical average over the indices $\{(i, i)\}_{i=1}^n$. 
\end{itemize}
As a result, we obtain, using triangle inequality, that 
\small
\begin{equation}
\label{eqn:symmetrization-one}
\begin{split}
	& \E[U_{n, l} \mid \wbar{X}^{(1:n)}, \wbar{Y}^{(1:n)}]  \\
	 &=  \E \left[\sup_{\beta\in \mathcal{B}_M} 
		\left|\what{\E}_n[\wbar{G}_{\beta, l}(X, \wtilde{Y}, X', \wtilde{Y}')]\right| \mid \wbar{X}^{(1:n)}, \wbar{Y}^{(1:n)}\right]\\
	 &\le \frac{1}{I} \sum_{j=1}^I 
			\E \left[\sup_{\beta\in \mathcal{B}_M} 
				\left|\what{\E}_{n, j} [\wbar{G}_{\beta, l}(X, \wtilde{Y}, X', \wtilde{Y}')]\right|\mid \wbar{X}^{(1:n)}, \wbar{Y}^{(1:n)}\right]
			+ \frac{1}{n} \E \left[\sup_{\beta\in \mathcal{B}_M} 
				\left|\what{\E}_{n, 0} [\wbar{G}_{\beta, l}(X, \wtilde{Y}, X', \wtilde{Y}')]\right|\mid \wbar{X}^{(1:n)}, \wbar{Y}^{(1:n)}\right]\\
\end{split}
\end{equation}
\normalsize
Now for each $j \in [I]$, the random variable $\what{\E}_{n, j} [\wbar{G}_{\beta, l}(X, \wtilde{Y}, X', \wtilde{Y}')]$ is the empirical average of 
independent random variables. The standard symmetrization argument gives for $j \in [I]$
\begin{equation}
\label{eqn:symmetrization-two}
\begin{split}
&\E \left[\sup_{\beta\in \mathcal{B}_M} 
				\left|\what{\E}_{n, j} [\wbar{G}_{\beta, l}(X, \wtilde{Y}, X', \wtilde{Y}')]\right| \mid \wbar{X}^{(1:n)}, \wbar{Y}^{(1:n)}\right] 
\le 2 \cdot \E \left[\sup_{\beta\in \mathcal{B}_M} 
				\left|H_{\beta, l, j}\right|\mid \wbar{X}^{(1:n)}, \wbar{Y}^{(1:n)}\right].
\end{split}
\end{equation}
where $
	H_{\beta,l, j} = \frac{1}{|\mathcal{I}_j|} \cdot \sum_{(i, i') \in \mathcal{I}_j} \sigma_{i, i'} \cdot 
					G_{\beta, l}(X^{(i)}, \wtilde{Y}^{(i)}, X^{(i')}, \wtilde{Y}^{(i')} )$.
By Lemma~\ref{lemma:event-n-good-things-happen}, 
\begin{equation}
\label{eqn:symmetrization-three}
	\frac{1}{n} \E \left[\sup_{\beta\in \mathcal{B}_M} 
				\left|\what{\E}_{n, 0} [\wbar{G}_{\beta, l}(X, \wtilde{Y}, X', \wtilde{Y}')]\right|\mid \wbar{X}^{(1:n)}, \wbar{Y}^{(1:n)}\right] 
		\le \frac{1}{n} \cdot C z_{n, \lambda}^2
\end{equation}
holds on the event $\event_n$.
As a consequence, we obtain that 
on the event $\event_n$: 
\begin{equation}
\label{eqn:symmetrization-four}
 \E[U_{n, l} \mid \wbar{X}^{(1:n)}, \wbar{Y}^{(1:n)}] \le  \frac{1}{n} \cdot C z_{n, \lambda}^2 + 
 	2 \cdot \max_{j \in [p]}\E \left[\sup_{\beta\in \mathcal{B}_M} 
				\left|H_{\beta, l, j}\right|\mid \wbar{X}^{(1:n)}, \wbar{Y}^{(1:n)}\right].
\end{equation}
Now we shift our focus to bound the conditional expectation on the RHS of equation~\eqref{eqn:symmetrization-four}.

\paragraph{Step 5: Chaining.}
Fix $l \in [p]$ and $j \in [I]$. This step uses the standard chaining argument to bound
$U_{n, l, j}$ on the event $\Lambda_n$. 
By the law of iterated expectations, we have 
\begin{equation}
\label{eqn:lie-U}
U_{n, l, j} =\E \left[\E \Big[\sup_{\beta\in \mathcal{B}_M}  |H_{\beta, l, j}| \mid X^{(1:n)}, Y^{(1:n)}, \wbar{X}^{(1:n)}, \wbar{Y}^{(1:n)}\Big]
	\mid \wbar{X}^{(1:n)}, \wbar{Y}^{(1:n)} \right].
\end{equation}
The key to bound $U_{n, l, j}$ is: the process $\beta \mapsto H_{\beta, l, j}$ is a sub-gaussian process conditional on 
$\{X^{(1:n)}, Y^{(1:n)}, \wbar{X}^{(1:n)}, \wbar{Y}^{(1:n)}\}$ and on the event $\Lambda_n$. Introduce the semi-norm:
\begin{equation*}
	\norm{\cdot}_X = \max\left\{ \max_{1\le i, j \le n} |\langle \cdot, T^{(ij)}\rangle|,  \max_{1\le i, j \le n} |\langle \cdot, \wbar{T}^{(ij)}\rangle| \right\}
\end{equation*}
where $T^{(ij)} = |X^{(i)} - X^{(j)}|^q$ and $\wbar{T}^{(ij)} = |X^{(i)} - \wbar{X}^{(j)}|^q$. We have the following result. 

\begin{lemma}
\label{lemma:sub-gaussian-of-H}
Condition on $\{X^{(i)}, Y^{(i)}, \wbar{X}^{(i)}, \wbar{Y}^{(i)}\}_{i=1}^n$ and on the event $\Lambda_n$. We have  
\begin{itemize}
\item The random variable $H_{\beta, l, j}$ is $\sigma_{\beta, l, j}$ sub-gaussian where $\sigma_{\beta, l, j} 
	= \frac{C}{\sqrt{n}}z_{\lambda, n}^2$.
\item The difference $H_{\beta, l, j} - H_{\beta', l, j}$ is $\sigma_{\beta, \beta', l, j}$ sub-gaussian where 
	$\sigma_{\beta, \beta', l, j} = \frac{C}{\lambda^2 \sqrt{n}} z_{\lambda, n} \norm{\beta- \beta'}_X$. 
\end{itemize}
In above, the constant $C$ depends only on $|h^\prime(0)|$, $\sigma_X$, $\sigma_Y$. 
\end{lemma}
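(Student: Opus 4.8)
The plan is to work conditionally on the full sample $\{X^{(i)},Y^{(i)},\wbar{X}^{(i)},\wbar{Y}^{(i)}\}_{i=1}^n$ and on the event $\Lambda_n$; once these are fixed, the function $\wbar{r}_\beta$ and all the data points are frozen, so the only randomness remaining in $H_{\beta,l,j}$ is the family of independent Rademacher signs $\{\sigma_{i,i'}\}_{(i,i')\in\mathcal{I}_j}$. Writing $c_{i,i'}(\beta)=G_{\beta,l}(X^{(i)},\wtilde{Y}^{(i)},X^{(i')},\wtilde{Y}^{(i')})$, we have $H_{\beta,l,j}=\frac{1}{|\mathcal{I}_j|}\sum_{(i,i')\in\mathcal{I}_j}\sigma_{i,i'}\,c_{i,i'}(\beta)$, a weighted sum of independent Rademacher variables with deterministic coefficients. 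By Hoeffding's lemma, any such sum $\sum_k a_k\sigma_k$ satisfies $\E[e^{t\sum_k a_k\sigma_k}]\le e^{t^2\sum_k a_k^2/2}$, i.e.\ is $(\sum_k a_k^2)^{1/2}$-subgaussian. Hence $H_{\beta,l,j}$ is $\sigma_{\beta,l,j}$-subgaussian with $\sigma_{\beta,l,j}^2=|\mathcal{I}_j|^{-2}\sum_{(i,i')\in\mathcal{I}_j}c_{i,i'}(\beta)^2$, and $H_{\beta,l,j}-H_{\beta',l,j}=\frac{1}{|\mathcal{I}_j|}\sum_{(i,i')\in\mathcal{I}_j}\sigma_{i,i'}(c_{i,i'}(\beta)-c_{i,i'}(\beta'))$ is $\sigma_{\beta,\beta',l,j}$-subgaussian with $\sigma_{\beta,\beta',l,j}^2=|\mathcal{I}_j|^{-2}\sum_{(i,i')\in\mathcal{I}_j}(c_{i,i'}(\beta)-c_{i,i'}(\beta'))^2$.

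It then remains to feed in the pointwise estimates of Lemma~\ref{lemma:event-n-good-things-happen}, which are exactly tailored to this purpose. For the first assertion, the uniform bound $|c_{i,i'}(\beta)|\le Cz_{\lambda,n}^2$ valid on $\Lambda_n$ gives $\sigma_{\beta,l,j}^2\le |\mathcal{I}_j|^{-1}C^2z_{\lambda,n}^4\le \frac{C^2}{\floor{n/2}}z_{\lambda,n}^4$, using $|\mathcal{I}_j|\ge\floor{n/2}$; after absorbing the numerical factor into $C$ this is the claimed $\sigma_{\beta,l,j}=\frac{C}{\sqrt n}z_{\lambda,n}^2$. For the second assertion, the Lipschitz bound of Lemma~\ref{lemma:event-n-good-things-happen} gives $|c_{i,i'}(\beta)-c_{i,i'}(\beta')|\le \frac{C}{\lambda^2}z_{\lambda,n}\max_{T\in\mathcal{T}(X^{(i)},X^{(i')})}|\langle\beta-\beta',T\rangle|$, and, since every vector occurring in $\mathcal{T}(X^{(i)},X^{(i')})$ is among the vectors defining the semi-norm $\norm{\cdot}_X$, this is at most $\frac{C}{\lambda^2}z_{\lambda,n}\norm{\beta-\beta'}_X$; summing over the $|\mathcal{I}_j|\ge\floor{n/2}$ terms yields $\sigma_{\beta,\beta',l,j}^2\le \frac{C^2}{\lambda^4\floor{n/2}}z_{\lambda,n}^2\norm{\beta-\beta'}_X^2$, i.e.\ $\sigma_{\beta,\beta',l,j}=\frac{C}{\lambda^2\sqrt n}z_{\lambda,n}\norm{\beta-\beta'}_X$.

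The only step requiring genuine care is this last identification: because $\wbar{r}_\beta$ is built from the auxiliary sample, the collection $\mathcal{T}(X^{(i)},X^{(i')})$ contains not only differences of original observations (coming from the factor $h'(\norm{X^{(i)}-X^{(i')}}_{q,\beta}^q)|X^{(i)}_l-X^{(i')}_l|^q$) but also cross-differences with the $\wbar{X}$-points (coming from the $\beta$-dependence of $\wbar{r}_\beta$), and one must verify that both families are captured by the families $T^{(ij)}=|X^{(i)}-X^{(j)}|^q$ and $\wbar{T}^{(ij)}=|X^{(i)}-\wbar{X}^{(j)}|^q$ appearing in the definition of $\norm{\cdot}_X$, so that $\max_{T\in\mathcal{T}(X^{(i)},X^{(i')})}|\langle\beta-\beta',T\rangle|\le\norm{\beta-\beta'}_X$ holds for every $(i,i')\in\mathcal{I}_j$. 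Everything else is the elementary Hoeffding estimate for Rademacher sums; note in particular that conditioning on $\Lambda_n$ is harmless, since $\Lambda_n$ is measurable with respect to the sample alone, so the signs $\{\sigma_{i,i'}\}$ remain independent under the conditional law.
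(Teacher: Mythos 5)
Your proof is correct and follows the same route the paper takes, just spelling out the Hoeffding-for-Rademacher-sums step that the paper leaves implicit: conditionally on the sample and $\Lambda_n$, the coefficients $G_{\beta,l}(X^{(i)},\wtilde Y^{(i)},X^{(i')},\wtilde Y^{(i')})$ are deterministic, so the uniform bound and Lipschitz bound of Lemma~\ref{lemma:event-n-good-things-happen}, together with $|\mathcal I_j|\ge\lfloor n/2\rfloor$, give the two subgaussian parameters directly. You are also right to flag the identification $\max_{T\in\mathcal T(X^{(i)},X^{(i')})}|\langle\beta-\beta',T\rangle|\le\|\beta-\beta'\|_X$ as the step that requires care, since the vectors in $\mathcal T$ must be reconciled with the families $T^{(ij)}$ and $\wbar T^{(ij)}$ defining the seminorm.
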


Lemma~\ref{lemma:sub-gaussian-of-H} enables us to use chaining to upper bound $U_{n, l, j}$. 
Introduce notation. 
\begin{enumerate}[-]
\item Let $\diam(\mathcal{B}_M, \norm{\cdot}_X)$ denote the diameter of the set $\mathcal{B}_M$ under the 	
	norm $\norm{\cdot}_X$. 
\item Let $S(\mathcal{B}_M, \norm{\cdot}_X, \eps)$ denote the set of $\eps$-covering (using $\norm{\cdot}_X$ ball) 
	of $\mathcal{B}_M$. 
\item Let $N(\mathcal{B}_M, \norm{\cdot}_X, \eps)$ denote the cardinality
	$N(\mathcal{B}_M, \norm{\cdot}_X, \eps) = |S(\mathcal{B}_M, \norm{\cdot}_X, \eps)|$.
\end{enumerate}
Applying chaining argument~\cite[Chapter 5]{Wainwright19} yields the following: there exists 
$C > 0$ depending only on $|h^\prime(0)|, \sigma_X, \sigma_Y$ such that on $\Lambda_n$, 
the following holds for any $\delta > 0$, 
\begin{equation}
\label{eqn:chaining-result-final-expectation}
\begin{split}
& \E \Big[\sup_{\beta\in \mathcal{B}_M}  |H_{\beta, l, j}| \mid X^{(1:n)}, Y^{(1:n)}, \wbar{X}^{(1:n)}, \wbar{Y}^{(1:n)}\Big] \\
&~~~~\le \sup_{\beta \in \mathcal{B}_M} 
	\E \left[ |H_{\beta, l, j} | 
	\mid X^{(1:n)}, Y^{(1:n)}, \wbar{X}^{(1:n)}, \wbar{Y}^{(1:n)}\right] \\
&~~~~~~~~+ \E \left[\sup_{\beta, \beta' \in \mathcal{B}_M, \norm{\beta-\beta'}_X \le \delta} |H_{\beta, l, j} - H_{\beta', l, j}| 
	\mid X^{(1:n)}, Y^{(1:n)}, \wbar{X}^{(1:n)}, \wbar{Y}^{(1:n)}\right] \\
&~~~~~~~~+ C \cdot \frac{z_{\lambda, n}}{\lambda^2 \sqrt{n}}  \cdot \int_\delta^{\diam(\mathcal{B}_M; \norm{\cdot}_X)}
					\sqrt{\log N(\mathcal{B}_M, \norm{\cdot}_X, \eps)}d\eps
\end{split}
\end{equation}

\paragraph{Step 6: Metric Entropy Bound.}
We upper bound the RHS of equation~\eqref{eqn:chaining-result-final-expectation}. 
Note that the RHS of equation~\eqref{eqn:chaining-result-final-expectation} involves three terms. It turns out that 
boundiing the last term is most challenging since we need to carefully upper bound the metric entropy 
$\log N(\mathcal{B}_M, \norm{\cdot}_X, \eps)$, where we invoke a classical geometric 
argument due to Maurey~\cite{Pisier81}. In the below discussion, to simplify the reasoning, we assume we are 
always on the event $\Lambda_n$. 

\begin{itemize}
\item To bound the first term on the RHS of equation~\eqref{eqn:chaining-result-final-expectation}, 
	we use Lemma~\ref{lemma:sub-gaussian-of-H} which shows that 
	$H_{\beta, l, j}$ is $\frac{C}{\sqrt{n}}z_{\lambda, n}^2$ sub-gaussian for any $\beta \in \mathcal{B}_M$ 
	As a result, we obtain that 
	\begin{equation*}
		\sup_{\beta \in \mathcal{B}_M} 
			\E \left[ |H_{\beta, l, j} | \mid X^{(1:n)}, Y^{(1:n)}, \wbar{X}^{(1:n)}, \wbar{Y}^{(1:n)}\right]
				\le \frac{C}{\sqrt{n}} z_{\lambda, n}^2.
	\end{equation*}
	Above, the constant $C > 0$ depends only on $\sigma_X, \sigma_Y, |h^\prime(0)|$.
\item To bound the second term on the RHS of equation~\eqref{eqn:chaining-result-final-expectation},
	we use Lemma~\ref{lemma:event-n-good-things-happen} to obtain that 
	$
		|H_{\beta, l, j} - H_{\beta', l, j}| \le \frac{C}{\lambda^2} \cdot z_{\lambda, n} \cdot \norm{\beta- \beta'}_X
	$
	As a result, we obtain
	\begin{equation*}
		\E \left[\sup_{\beta, \beta' \in \mathcal{B}_M, \norm{\beta-\beta'}_X \le \delta} |H_{\beta, l, j} - H_{\beta', l, j}| 
			\mid X^{(1:n)}, Y^{(1:n)}, \wbar{X}^{(1:n)}, \wbar{Y}^{(1:n)}\right] \le \frac{C}{\lambda^2} \cdot z_{\lambda, n} \cdot \delta.
	\end{equation*}
	Above, the constant $C > 0$ depends only on $\sigma_X, \sigma_Y, |h^\prime(0)|$.
\item To bound the last term on the RHS of equation~\eqref{eqn:chaining-result-final-expectation}, 
	note
	$\max_{i, j} \normsmall{T^{(ij)}}_\infty \le (2\sigma_X)^q$, 
	$\max_{i, j} \normsmall{\wbar{T}^{(ij)}}_\infty \le (2\sigma_X)^q$ by assumption.
	Hence, $\diam(\mathcal{B}_M; \norm{\cdot}_X) \le M(2\sigma_X)^q$.
	Maurey's argument (Proposition~\ref{proposition:maurey-argument}) yields for some numerical constant $C > 0$
	\begin{equation*}
		\sqrt{\log N(\mathcal{B}_M, \norm{\cdot}_X,\sigma_X^q \cdot \eps)} \le C\cdot \frac{1}{\eps} \cdot M\sqrt{\log n\log p}.
	\end{equation*}
	As a result, we obtain that 
	\begin{equation*}
	\int_\delta^{\diam(\mathcal{B}_M; \norm{\cdot}_X)}
					\sqrt{\log N(\mathcal{B}_M, \norm{\cdot}_X, \eps)}d\eps
		  \le C (M \sigma_X^{q})^2 \cdot \sqrt{\log p\log n} \cdot \log \frac{M(2\sigma_X)^q}{\delta}. 
	\end{equation*}
\end{itemize}
Substitute above bounds into equation~\eqref{eqn:chaining-result-final-expectation}. We obtain for
some constant $C > 0$ depending only on $|h^\prime(0)|, \sigma_X, \sigma_Y$, conditioning on the event 
$\Lambda_n$, the following bound holds for any $\delta > 0$, 
\begin{equation*}
\begin{split}
&\E \Big[\sup_{\beta\in \mathcal{B}_M}  |H_{\beta, l, j}| \mid X^{(1:n)}, Y^{(1:n)}, \wbar{X}^{(1:n)}, \wbar{Y}^{(1:n)}\Big]  \\
&\le  C \cdot \left(\frac{1}{\sqrt{n}} z_{\lambda, n}^2 + \frac{z_{\lambda, n}}{\lambda^2} \cdot \delta + \frac{z_{\lambda, n}}{\lambda^2 \sqrt{n}}
	\cdot\sqrt{\log p\log n} \cdot \log \frac{1}{\delta}\right).
\end{split}
\end{equation*}
Pick $\delta = 1/\sqrt{n}$. This gives us that, conditional on the event $\Lambda_n$, the following bound holds: 
\begin{equation*}
	\E \Big[\sup_{\beta\in \mathcal{B}_M}  |H_{\beta, l, j}| \mid X^{(1:n)}, Y^{(1:n)}, \wbar{X}^{(1:n)}, \wbar{Y}^{(1:n)}\Big] \le 
		\frac{C\log^2(n)}{\min\{\lambda, 1\}^{5/2} \sqrt{n}}  \cdot \sqrt{\log p}.
\end{equation*}
Equation~\eqref{eqn:lie-U} immediate yields the following bound that holds on the event $\Lambda_n$: 
\begin{equation}
\label{eqn:metric-entropy-final-result-expectation}
	U_{l, n, j} \le\frac{C\log^2(n)}{\min\{\lambda, 1\}^{5/2} \sqrt{n}}  \cdot \sqrt{\log p}.
\end{equation}
Again, the constant $C > 0$ depends only on $|h^\prime(0)|, \sigma_X, \sigma_Y$.

\paragraph{Step 7: Finalizing Argument.}
Substituting equation~\eqref{eqn:metric-entropy-final-result-expectation}
into equation~\eqref{eqn:symmetrization-four}, we obtain that the following holds on the event $\Lambda_n$:
\begin{equation}
\E \left[U_{l, n, j} \mid \wbar{X}^{(1:n)}, \wbar{Y}^{(1:n)} \right] \le \frac{C\log^2(n)}{\min\{\lambda, 1\}^{5/2} \sqrt{n}}  \cdot \sqrt{\log p}.
\end{equation}
Recall equation~\eqref{eqn:concentration-final-result} in Step 3. As its consequence, we obtain the following high probability bound: 
conditional on the event $\Lambda_n$, with probability at least $1-e^{-t}$
\begin{equation}
	\wtilde{\Delta}_n' \le C \cdot \frac{C\log^2(n)}{\min\{\lambda, 1\}^{7/2} \sqrt{n}}  \cdot (\sqrt{\log p} + \sqrt{t}).
\end{equation}
Recall that high probability, $\Lambda_n$ happens and $\Delta_n$ is close to $\wtilde{\Delta}_n'$ up to $C/n$. 
Hence, we can translate the above bound to $\Delta_n$. This completes the proof of 
Proposition~\ref{proposition:auxiliary-empirical-close-to-population}.


\subsection{Proof of Lemma~\ref{lemma:truncation-small-effect}.}
\begin{itemize}
\item 
Since $Y$ is $\sigma_Y$ subgaussian,  $\P(Y \ge \wtilde{\sigma}_Y) \le \exp(- \wtilde{\sigma}_Y^2/2\sigma_Y^2) \le n^{-4}$. 
A union bound gives
\begin{equation*}
\P\left(\exists 1\le i\le n~\text{such that}~Y^{(i)} \neq \wtilde{Y}^{(i)}\right) \le 
	\sum_{i=1}^n \P\left(Y^{(i)} \ge \wtilde{\sigma}_Y\right) \le n^{-3}. 
\end{equation*}
Hence with probability at least $1-n^{-3}$ truncation has no effect: 
$Y^{(i)} = \wtilde{Y}^{(i)}$ for all $i \in [n]$. Note that on this event, 
$(\wtilde{\grad \obj'_n(\beta)})_l= (\wtilde{\grad \obj_n(\beta)})_l$ holds for all $\beta \in \mathcal{B}_M$ and $l \in [p]$.
\item Elementary algebraic manipulation yields the following inequality: 
\begin{equation*}
\begin{split}
	& |G_{\beta, l}(x, \wtilde{y}, x', \wtilde{y'}) - G_{\beta, l}(x, y, x', y')| \\
	&\le |y - \wtilde{y}| \cdot |\wbar{r}_\beta(\beta^{1/q}\odot x'; y') | \cdot
			h'(\normsmall{x-x'}_{q, \beta}^q) \cdot |x_l- x_l'|^q \\
	& ~~~+  |\wbar{r}_\beta(\beta^{1/q}\odot x; y) | \cdot |y' - \wtilde{y'}|
			\cdot h'(\normsmall{x-x'}_{q, \beta}^q) \cdot |x_l- x_l'|^q.
\end{split}
\end{equation*}
Note (i) $\sup_x |h^\prime(x)| \le |h^\prime(0)|$ and (ii) $\norm{X}_\infty \le \sigma_X$. Hence, 
for all $l \in [p]$ and $\beta \in \mathcal{B}_M$
\begin{equation*}
	|(\wtilde{\grad \obj'_n(\beta)})_l - (\wtilde{\grad \obj(\beta)})_l|
		\le C\cdot \E[|Y- \tilde{Y}|] \cdot \E[|\wbar{r}_\beta(\beta^{1/q}\odot X; Y) | \mid \wbar{X}^{(1:n)}, \wbar{Y}^{(1:n)}].
\end{equation*}
where $C =  2^{q+1} |h^\prime(0)| \sigma_X^q$. Note $\E[|Y - \tilde{Y}|] 
= \E[|Y| \indic{|Y| \ge \wbar{\sigma}_Y}] \le \frac{1}{n}$.
Recall the assumption on $\lambda$. By Lemma~\ref{lemma:deterministic-bound-high-prob-bound-f-r-beta} and 
Lemma~\ref{lemma:high-prob-bound-h-Sigma-beta}, one gets with probability
$1-n^{-3}$: 
\begin{equation*}
\begin{split}
	\E[|\wbar{r}_\beta(\beta^{1/q}\odot X; Y) | \mid \wbar{X}^{(1:n)}, \wbar{Y}^{(1:n)}]
	 \le 2\sigma_Y + |h(0)|^{1/2}.
\end{split}
\end{equation*}
\end{itemize}

\subsection{Proof of Lemma~\ref{lemma:event-n-good-things-happen}.}
\label{sec:proof-lemma-event-n-good-things-happen}
By definition, 
\begin{equation*}
G_{\beta, l}(X, \wtilde{Y}, X', \wtilde{Y}') = \wbar{r}_\beta(\beta^{1/q}\odot X; \wtilde{Y})\wbar{r}_\beta(\beta^{1/q}\odot X'; \wtilde{Y}')
	h^\prime(\normsmall{X-X'}_{q, \beta}^q) |X_l - X_l'|^q.
\end{equation*}
The key to the proof of Lemma~\ref{lemma:event-n-good-things-happen} is to show that $\beta \mapsto \wbar{r}_\beta(\beta^{1/q}\odot X; \wtilde{Y})$
is uniformly bounded and Lipschitz. This is formally stated in Lemma~\ref{lemma:event-n-good-things-happen-details}, 
whose proof is given in Section~\ref{sec:proof-lemma-event-n-good-things-happen-details}.

\begin{lemma}
\label{lemma:event-n-good-things-happen-details}
On the event $\Lambda_n$, the following things happen. 
\begin{itemize}
\item The family of functions $\{\wbar{r}_\beta\}_{\beta \in \mathcal{B}_M}$ is uniformly bounded: with probability one,
	\begin{equation}
	\label{eqn:uniform-bound-on-r}
		\sup_{\beta \in \mathcal{B}_M} |\wbar{r}_\beta(\beta^{1/q} \odot X; \wtilde{Y})| \le Cz_{\lambda, n}. 
	\end{equation}
	Here $C > 0$ is a constant that depends only on $|h(0)|$, $\sigma_Y$. 
\item The family of functions $\{\wbar{r}_\beta\}_{\beta \in \mathcal{B}_M}$ is Lipschitz: 
	there exists a constant $C > 0$ that depends only on $|h^\prime(0)|$ such that the following holds
	for any $\beta, \beta' \in \mathcal{B}_M$
	\begin{equation}
	\label{eqn:Lipschitzness-on-r}
		\left|\wbar{r}_\beta(\beta^{1/q}\odot X, Y) - \wbar{r}_{\beta'}({\beta'}^{1/q}\odot X, Y)\right| \le 
			\frac{C}{\lambda^2} \cdot \max_{T \in \wbar{\mathcal{T}}(X)} |\langle \beta -\beta', T\rangle|. 
	\end{equation}
	Here $\wbar{\mathcal{T}}(X) = \{|X-X^{(i)}|^q\}_{i=1}^n \cup \{|X^{(i)} - X^{(j)}|^q\}_{i, j=1}^n$
\end{itemize}
\end{lemma}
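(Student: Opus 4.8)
Both claims reduce to elementary estimates on the empirical kernel ridge solution $\wbar{f}_\beta$ and the way it moves with $\beta$. The tools I would use are: the fixed‑point representation $\wbar{f}_\beta=(\wbar{\Sigma}_\beta+\lambda I)^{-1}\wbar{h}_\beta$ (the empirical analogue of Proposition~\ref{prop:kernel-beta-fix}), evaluated pointwise as in equation~\eqref{eqn:key-grad-beta-Lip-sim}; the reproducing‑property bound $\norm{g}_\infty\le|h(0)|^{1/2}\norm{g}_\H$ for $g\in\H$; the empirical analogue of Proposition~\ref{prop:second-moment-bound}, $\norm{\wbar{r}_\beta(\beta^{1/q}\odot X;\wbar{Y})}_{\mathcal{L}_2(\wbar{\P}_n)}\le\norm{\wbar{Y}}_{\mathcal{L}_2(\wbar{\P}_n)}$; and complete monotonicity of $h$, so that $0\le h\le h(0)$ and $h$ is $|h'(0)|$‑Lipschitz. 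On $\Lambda_n$ we additionally have $\norm{\wbar{Y}}_{\mathcal{L}_2(\wbar{\P}_n)}\le 2\sigma_Y$, while $|\wtilde{Y}|\le\wtilde{\sigma}_Y=3\sigma_Y\sqrt{\log n}$ always.

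\emph{Boundedness~\eqref{eqn:uniform-bound-on-r}.} First I would write $|\wbar{r}_\beta(\beta^{1/q}\odot X;\wtilde{Y})|\le|\wtilde{Y}|+\norm{\wbar{f}_\beta}_\infty\le 3\sigma_Y\sqrt{\log n}+|h(0)|^{1/2}\norm{\wbar{f}_\beta}_\H$. Comparing $\wbar{f}_\beta$ with the zero function in the empirical objective gives $\tfrac{\lambda}{2}\norm{\wbar{f}_\beta}_\H^2\le\tfrac12\wbar{\E}_n[\wbar{Y}^2]\le 2\sigma_Y^2$ on $\Lambda_n$, hence $\norm{\wbar{f}_\beta}_\H\le 2\sigma_Y/\sqrt\lambda$. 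Combining the two bounds yields $|\wbar{r}_\beta(\beta^{1/q}\odot X;\wtilde{Y})|\le C(\sqrt{\log n}+1/\sqrt\lambda)=Cz_{\lambda,n}$ uniformly in $\beta\in\mathcal{B}_M$ and in $X$, with $C$ depending only on $\sigma_Y,|h(0)|$.

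\emph{Lipschitzness~\eqref{eqn:Lipschitzness-on-r}.} The $Y$ terms cancel, so the left side equals $|D(X)|$ with $D(x):=\wbar{f}_{\beta'}({\beta'}^{1/q}\odot x)-\wbar{f}_\beta(\beta^{1/q}\odot x)$. Evaluating $(\wbar{\Sigma}_\beta+\lambda I)\wbar{f}_\beta=\wbar{h}_\beta$ at a point of the form $\beta^{1/q}\odot x$ gives, for every $x$,
\[
\lambda\,\wbar{f}_\beta(\beta^{1/q}\odot x)+\wbar{\E}_n\!\left[h(\norm{\wbar{X}-x}_{q,\beta}^q)\,\wbar{f}_\beta(\beta^{1/q}\odot\wbar{X})\right]=\wbar{\E}_n\!\left[h(\norm{\wbar{X}-x}_{q,\beta}^q)\,\wbar{Y}\right],
\]
and likewise with $\beta'$. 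Subtracting and writing $(\delta h)(z,x)=h(\norm{z-x}_{q,\beta'}^q)-h(\norm{z-x}_{q,\beta}^q)$, the regression terms recombine into
\[
\lambda D(x)+\wbar{\E}_n\!\left[h(\norm{\wbar{X}-x}_{q,\beta'}^q)\,D(\wbar{X})\right]=\wbar{\E}_n\!\left[(\delta h)(\wbar{X},x)\,\wbar{r}_\beta(\beta^{1/q}\odot\wbar{X};\wbar{Y})\right].
\]
Specializing $x$ to the design points $\wbar{X}^{(1)},\dots,\wbar{X}^{(n)}$ turns this into the finite system $(\lambda I+K_{\beta'})\bm{D}=\bm{v}$, where $\bm{D}\in\R^n$ collects $D(\wbar{X}^{(j)})$, $K_{\beta'}\succeq0$ has entries $\tfrac1n h(\norm{\wbar{X}^{(i)}-\wbar{X}^{(j)}}_{q,\beta'}^q)$, and $\bm{v}_j=\wbar{\E}_n[(\delta h)(\wbar{X},\wbar{X}^{(j)})\,\wbar{r}_\beta(\beta^{1/q}\odot\wbar{X};\wbar{Y})]$; hence $\norm{\bm{D}}_2\le\lambda^{-1}\norm{\bm{v}}_2$. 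By $|h'(0)|$‑Lipschitzness, $|(\delta h)(z,w)|\le|h'(0)|\,|\langle\beta'-\beta,|z-w|^q\rangle|\le|h'(0)|\max_{T\in\wbar{\mathcal{T}}(X)}|\langle\beta-\beta',T\rangle|$, so Cauchy--Schwarz and $\norm{\wbar{r}_\beta}_{\mathcal{L}_2(\wbar{\P}_n)}\le 2\sigma_Y$ give $\max_j|\bm{v}_j|\le 2\sigma_Y|h'(0)|\max_T|\langle\beta-\beta',T\rangle|$, whence $\norm{\bm{D}}_2\le\lambda^{-1}\sqrt{n}\cdot 2\sigma_Y|h'(0)|\max_T|\langle\beta-\beta',T\rangle|$. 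Finally, putting $x=X$ back into the last display and bounding the right side by $2\sigma_Y|h'(0)|\max_T$ and the second left term by $h(0)\,\wbar{\E}_n[|D(\wbar{X})|]\le h(0)\,n^{-1/2}\norm{\bm{D}}_2$, I get $|D(X)|\le\lambda^{-1}\bigl(2\sigma_Y|h'(0)|+\lambda^{-1}h(0)\,2\sigma_Y|h'(0)|\bigr)\max_T\le C\lambda^{-2}\max_{T\in\wbar{\mathcal{T}}(X)}|\langle\beta-\beta',T\rangle|$, absorbing the lower‑order $\lambda^{-1}$ term.

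\textbf{Main obstacle.} The one genuinely delicate point is getting a \emph{pointwise} control of $D$ at the fresh argument $X$, rather than the $\mathcal{L}_2(\wbar{\P}_n)$‑bound that the operator manipulation behind Lemma~\ref{lemma:residual-Lipschitz} supplies directly. This is exactly what forces the detour through the finite system $(\lambda I+K_{\beta'})\bm{D}=\bm{v}$ to first control $D$ on the design points and then re‑insert into the reproducing identity at $x=X$, and it is the origin of the extra $\lambda^{-1}$ (hence the $\lambda^{-2}$ rate). A secondary bookkeeping issue is keeping track of which increments $|z-w|^q$ actually appear — query‑versus‑design and design‑versus‑design — which is precisely why the bound is phrased through the index set $\wbar{\mathcal{T}}(X)$.
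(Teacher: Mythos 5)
Your argument for the boundedness bound~\eqref{eqn:uniform-bound-on-r} is essentially the paper's own: compare $\wbar{f}_\beta$ with the zero function to get $\|\wbar{f}_\beta\|_\H\le 2\sigma_Y/\sqrt\lambda$ on $\Lambda_n$, then apply $\|\cdot\|_\infty\le|h(0)|^{1/2}\|\cdot\|_\H$, and add the truncation bound $|\wtilde Y|\le 3\sigma_Y\sqrt{\log n}$. No difference here.

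For the Lipschitz bound~\eqref{eqn:Lipschitzness-on-r} you take a genuinely different route from the paper. The paper writes the fitted value via the finite-dimensional representer formula $\wbar{f}_\beta(\beta^{1/q}\odot X)=\tfrac1n\wtilde{k}_\beta(X)^{T}\bigl(\tfrac1n K_\beta+\lambda I\bigr)^{-1}\wbar y$ and combines three ingredients: Lipschitzness of $\beta\mapsto\tfrac{1}{\sqrt n}\wtilde k_\beta(X)$, Lipschitzness of $\beta\mapsto\tfrac1n K_\beta$ in operator norm, and $\tfrac{1}{\sqrt n}\|\wbar y\|_2\le 2\sigma_Y$ on $\Lambda_n$; under the hood this uses the resolvent perturbation identity $(A+\lambda I)^{-1}-(B+\lambda I)^{-1}=-(A+\lambda I)^{-1}(A-B)(B+\lambda I)^{-1}$ and $\|(A+\lambda I)^{-1}\|\le1/\lambda$ for $A\succeq0$. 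You instead evaluate the operator fixed-point identity $(\wbar\Sigma_\beta+\lambda I)\wbar f_\beta=\wbar h_\beta$ at the scaled point $\beta^{1/q}\odot x$, subtract the two equations to get a closed relation in $D=\wbar f_{\beta'}({\beta'}^{1/q}\odot\cdot)-\wbar f_\beta(\beta^{1/q}\odot\cdot)$ driven by $(\delta h)\,\wbar r_\beta$, restrict to the design points to solve the resulting linear system $(\lambda I+K_{\beta'})\bm D=\bm v$, and finally re-insert at $x=X$. This is correct and gives the same $\lambda^{-2}$ rate. Your route is longer but makes explicit that one $\lambda^{-1}$ comes from bounding $D$ on the design grid and the other from re-inserting into the reproducing relation, whereas the paper's matrix argument absorbs both factors into a single resolvent perturbation. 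One cosmetic point worth noting (inherited from the paper, not created by you): both proofs actually produce a constant depending on $|h'(0)|$, $|h(0)|$, and $\sigma_Y$, not only on $|h'(0)|$ as the lemma statement says; and the index set $\wbar{\mathcal T}(X)$ in the statement should read $\wbar X^{(i)}$ (auxiliary data) rather than $X^{(i)}$ to match what the argument actually uses, as in your $K_{\beta'}$ entries.
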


Note that $\sup_x |h^\prime(x)| \le |h^\prime(0)|$ (since $h$ is completely monotone) and $|X-X'|^q \le (2\sigma_X)^q$ by assumption. 
Now Lemma~\ref{lemma:event-n-good-things-happen} follows easily from Lemma~\ref{lemma:event-n-good-things-happen-details} 
and the triangle inequality.

\subsection{Proof of Lemma~\ref{lemma:concentration-of-tilde-delta}.}
\label{sec:lemma-concentration-of-tilde-delta}
Fix $l \in [p]$. The key is to show that, conditional on the event $\Lambda_n$, $U_{n, l}$ 
with probability at least $1-e^{-t}$ satisfies the bound 
\begin{equation*}
	\left|U_{n, l} - \E[U_{n, l} \mid \wbar{X}^{(1:n)}, \wbar{Y}^{(1:n)}]\right| \le C z_{\lambda, n}^2 \cdot \sqrt{\frac{t}{n}}. 
\end{equation*}
Above, the constant $C$ depends only on $|h(0)|, |h^\prime(0)|, \sigma_X, \sigma_Y$, and does not depend on $l \in [p]$. 
With this bound at hand, Lemma~\ref{lemma:concentration-of-tilde-delta} follows immediately from the union bound. 

To prove the aforementioned concentration, recall
Lemma~\ref{lemma:event-n-good-things-happen}, which shows that, on the event $\Lambda_n$, 
$G_{\beta, l}(X, \wtilde{Y}, X', \wtilde{Y}') \le Cz_{\lambda, n}^2$
where $C$ does not depend on $\beta \in \mathcal{B}_M$ and $l \in [p]$. This shows that, the 
random variable $U_{n, l}$, as a function of the i.i.d pair $Z_i = (X_i, \wtilde{Y}_i)$, is of bounded 
difference conditional on the event $\Lambda_n$.
The desired concentration now follows from the McDiarmid bounded difference concentration.

\subsection{Proof of Lemma~\ref{lemma:sub-gaussian-of-H}.}
Recall our definition of the random variable $H_{\beta, l, j}$: 
\begin{equation*}
H_{\beta,l, j} = \frac{1}{|\mathcal{I}_j|} \cdot \sum_{(i, i') \in \mathcal{I}_j} \sigma_{i, i'} \cdot 
					G_{\beta, l}(X^{(i)}, \wtilde{Y}^{(i)}, X^{(i')}, \wtilde{Y}^{(i')} ),
\end{equation*}
where $\sigma_{i, i'}$ is independent Radamacher random variable which is $1$-subgaussian. The result follows immediately 
from the fact that (i) $|\mathcal{I}_j| \ge \floor{\half n}$ and (ii) by Lemma~\ref{lemma:event-n-good-things-happen}, 
the random variable $G_{\beta, l}$ is bounded by $C z_{\lambda, n}^2$ on the event $\Lambda_n$ and (iii) 
by Lemma~\ref{lemma:event-n-good-things-happen}, on the event $\Lambda_n$, 
the mapping $\beta \mapsto G_{\beta, l}(X^{(i)}, \wtilde{Y}^{(i)}, X^{(i')}, \wtilde{Y}^{(i')} )$ is Lipschitz for any $1\le i, i'\le n$.

\subsection{Proof of Lemma~\ref{lemma:event-n-good-things-happen-details}.}
\label{sec:proof-lemma-event-n-good-things-happen-details}
Recall $\wbar{r}_\beta(\beta^{1/q}\odot X; \wtilde{Y}) = \wtilde{Y} - \wbar{f}_\beta(\beta^{1/q} \odot X)$.

\begin{itemize}
\item Proof of Part (i) of Lemma~\ref{lemma:event-n-good-things-happen-details}.
	Note $\wtilde{Y} \le 3\sigma_Y\sqrt{\log n}$ by construction. It suffices to prove that 
	$\sup_x |\wbar{f}_\beta(x)| \le C/{\sqrt{\lambda}}$
	holds on $\Lambda_n$ where $C$ depends only on $|h(0)|$, $\sigma_Y$. 
	This is implied by (i) $\sup_x |\wbar{f}_\beta(x)| \le |h(0)|^{1/2}\norm{\wbar{f}_\beta}_{\H}$
	and (ii) $\lambda \norm{\wbar{f}_\beta}_\H^2 \le \norm{Y}_{\mathcal{L}_2(\wbar{\P}_n)}^2$ since 
	$\wbar{f}_\beta$ is the minimum of the kernel ridge regression w.r.t $\wbar{\P}_n$. Now that 
	$\norm{Y}_{\mathcal{L}_2(\wbar{\P}_n)} \le 2\sigma_Y$ on the event $\Lambda_n$. Hence, 
	$\sup_x |\wbar{f}_\beta(x)| \le \frac{C}{\sqrt{\lambda}}$ on $\Lambda_n$ where $C = 2 |h(0)|^{1/2} \sigma_Y$. 
\item Proof of Part (ii) of Lemma~\ref{lemma:event-n-good-things-happen-details}. It suffices 
	to prove for some $C$ depending only on $|h^\prime(0)|$
	\begin{equation}
	\label{eqn:f-beta-Lipschitz}
		\left|\wbar{f}_\beta(\beta^{1/q} \odot X) - \wbar{f}_\beta'(\beta'^{1/q} \odot X)\right| \le 
			\frac{C}{\lambda^2} \cdot \max_{T \in \wbar{\mathcal{T}}(X)} |\langle \beta -\beta', T\rangle|. 
	\end{equation}
	Let $K_\beta \in \R^{n \times n}$ be the matrix where 
	$(K_\beta)_{ij} = k(\beta^{1/q} \odot \wbar{X}^{(i)}, \beta^{1/q} \odot \wbar{X}^{(j)})$, 
	$\wtilde{k}_\beta(X) \in \R^n$ be the vector where 
	$(\wtilde{k}_\beta)_i = k(\beta^{1/q} \odot \wbar{X}^{(i)}, \beta^{1/q} \odot X)$, and 
	$\wbar{y} \in \R^n$ be the vector $(\wbar{y})_i = \wbar{Y}^{(i)}$. Note then $\wbar{f}_\beta(\beta^{1/q} \odot X) = 
	\frac{1}{n}  (\wtilde{k}_\beta(X))^T \big(\frac{1}{n} K_\beta + \lambda I\big)^{-1} \wbar{y}$, 
	and the desired equation~\eqref{eqn:f-beta-Lipschitz} follows from the following facts
	(i) $\beta \mapsto \frac{1}{\sqrt{n}}\wtilde{k}_\beta(X)$ is Lipschitz: 
	$\frac{1}{\sqrt{n}}  \ltwo{\wtilde{k}_\beta(X) - \wtilde{k}_\beta'(X)} \le |h^\prime(0)| 
			\cdot \max_{T \in \wbar{\mathcal{T}}(X)} |\langle \beta -\beta', T\rangle|$
	(ii) $\beta \mapsto \frac{1}{n} K_\beta$ is Lipschitz: 
	$\frac{1}{n} \opnorm{K_\beta - K_{\beta'}} \le  
		|h^\prime(0)| \cdot \max_{T \in \wbar{\mathcal{T}}(X)} |\langle \beta -\beta', T\rangle|$
	and (iii) $\frac{1}{\sqrt{n}} \ltwo{\wbar{y}} \le 2\sigma_Y$ is bounded on the event $\Lambda_n$.
\end{itemize}

\section{Finite Sample Guarantees}
\label{sec:proof-main-corollaries}

\subsection{Proof of Corollary~\ref{corollary:no-false-positive-finite-sample}.}
\label{sec:proof-corollary-no-false-positive-finite-sample}
The proof is pretty much the same as that of Theorem~\ref{thm:no-false-positive}. The key to the proof is  
Lemma~\ref{lemma:grad-noise-variable-beta-noise-0-finite-sample}, which follows from
Lemma~\ref{lemma:grad-noise-variable-beta-noise-0} and Theorem~\ref{thm:concentration-of-gradients}.
\begin{lemma}
\label{lemma:grad-noise-variable-beta-noise-0-finite-sample}
Assume the assumptions in Corollary~\ref{corollary:no-false-positive-finite-sample}. 
Then the following holds with probability at least $1-e^{-cn}-e^{-t}$: for all $\beta$ such that 
$\beta_{S^c} = 0$, $\partial_{\beta_l} \obj_{n, \gamma}(\beta) \ge 0$ for $l \not\in S$.
\end{lemma}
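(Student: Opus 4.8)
The plan is to deduce this finite-sample sign condition from its population counterpart by absorbing the empirical-to-population discrepancy into the $\ell_1$ penalty $\gamma$, in exact parallel with how Corollary~\ref{corollary:no-false-positive-finite-sample} is to be obtained from Theorem~\ref{thm:no-false-positive}.

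First I would record the trivial decomposition $\partial_{\beta_l}\obj_{n,\gamma}(\beta) = \partial_{\beta_l}\obj_n(\beta) + \gamma$, valid coordinatewise since on $\R_+^p$ the term $\gamma\norm{\beta}_1$ has gradient $\gamma\mathbf{1}$ (interpreting $\partial_{\beta_l}$ as a right derivative when $\beta_l = 0$). Next, for any feasible $\beta$ with $\beta_{S^c} = 0$ and any $l \notin S$, I would invoke the population bound $\partial_{\beta_l}\obj(\beta) \ge 0$ already established in the proof of Lemma~\ref{lemma:grad-noise-variable-beta-noise-0} (see equation~\eqref{eqn:no-false-positive-key}); this is where the structural assumptions of Definition~\ref{definition:signal-set}---$X_S \perp X_{S^c}$ and $\E[Y\mid X] = f^*(X_S)$---together with the negative-definiteness of $(x,x')\mapsto h'(\norm{x-x'}_{q,\beta}^q)$ do their work.

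Then I would transfer this to the empirical gradient using Theorem~\ref{thm:concentration-of-gradients}. Under the hypothesis $\lambda \ge C\sqrt[4]{\log n\log p/n}$ of equation~\eqref{eqn:lower-bound-on-ell-1}, that theorem guarantees, with probability at least $1-e^{-cn}-e^{-t}$, the uniform bound $\dev_n \le \frac{C\log^2(n)}{\min\{\lambda,1\}^{7/2}}\big(\sqrt[4]{\log p/n} + \sqrt{t/n}\big)$, hence $\partial_{\beta_l}\obj_n(\beta) \ge \partial_{\beta_l}\obj(\beta) - \dev_n$ for every $\beta \in \mathcal{B}_M$. Combining the three facts, on this event
\[
	\partial_{\beta_l}\obj_{n,\gamma}(\beta) = \partial_{\beta_l}\obj_n(\beta) + \gamma \ge \partial_{\beta_l}\obj(\beta) - \dev_n + \gamma \ge \gamma - \dev_n \ge 0,
\]
where the final inequality is exactly the lower bound on $\gamma$ imposed in equation~\eqref{eqn:lower-bound-on-ell-1}, which was chosen precisely to dominate the right-hand side of the concentration bound.

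I do not expect a genuine obstacle inside this lemma: it is a two-line combination of results proved earlier. The only point requiring care is that both ingredients---the population sign condition from Lemma~\ref{lemma:grad-noise-variable-beta-noise-0} and the deviation bound from Theorem~\ref{thm:concentration-of-gradients}---hold \emph{uniformly} over feasible $\beta$ with $\beta_{S^c} = 0$ rather than pointwise, since the conclusion is subsequently fed into an induction over the entire trajectory $\{\beta^{(k)}\}_{k\in\N}$ of projected gradient descent; this uniformity is already built into both statements. The real difficulty lies entirely upstream, in the proof of Theorem~\ref{thm:concentration-of-gradients}.
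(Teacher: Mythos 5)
Your proof is correct and follows exactly the route the paper indicates: it is the intended two-line combination of Lemma~\ref{lemma:grad-noise-variable-beta-noise-0} (population sign condition) and Theorem~\ref{thm:concentration-of-gradients} (uniform deviation bound), with the $\ell_1$ penalty $\gamma$ chosen large enough, via equation~\eqref{eqn:lower-bound-on-ell-1}, to absorb $\dev_n$.
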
 \noindent\noindent
With Lemma~\ref{lemma:grad-noise-variable-beta-noise-0-finite-sample} at hand, we can prove 
$\beta^{(k)}_{S^c} = 0$ for all $k \in \N$. The proof is via induction.  
\begin{itemize}
\item The base case $\beta^{(0)}_{S^c} = 0$ (this is the only part where we use the assumption $\beta^{(0)} = 0$). 
\item Suppose $\beta^{(k)}_{S^c} = 0$. Fix $l \in S^c$. Then 
	$\partial_{\beta_l}  \obj_{n, \gamma}(\beta^{(k)})\ge 0$ by
	Lemma~\ref{lemma:grad-noise-variable-beta-noise-0}. This yields
	\begin{equation*}
		\beta^{(k+\half)}_l \equiv \beta^{(k)}_l - \alpha \cdot \partial_{\beta_l}  \obj_{n, \gamma}(\beta^{(k)}) \le 0.
	\end{equation*}
	Hence $\beta_l^{(k+1)} = (\proj_{\B_M}(\beta^{(k+\half)}))_l = 0$ thanks to 
	Lemma~\ref{lemma:projection-onto-ell-one-ball}. This proves that $\beta^{(k+1)}_{S^c} = 0$.
\end{itemize}

%
%

\subsection{Proof of Corollary~\ref{corollary:add-main-mix-effect}.}
\label{sec:proof-of-corollary-add-main-mix-effect}
The proof is similar to that of Theorem~\ref{thm:add-main-mix-effect}. The key
Lemma~\ref{lemma:core-corollary-add-main-mix-effect}  follows from 
Lemma~\ref{lemma:key-to-thm-add-main-mix-effect}, Theorem~\ref{thm:concentration-of-gradients}, and 
the fact that $\gamma$ upper bounds with high probability the deviation between the empirical and true gradient
(as $\gamma$ satisfies~\eqref{eqn:lower-bound-on-ell-1}). 

\begin{lemma}
\label{lemma:core-corollary-add-main-mix-effect}
Assume the assumptions in Corollary~\ref{corollary:add-main-mix-effect}. Then the following holds 
with probability at least $1-e^{-cn}-e^{-t}$: for all $\beta$ such that $\beta_l = 0$ and $\beta_{S^c} = 0$: 
\begin{equation}
\label{eqn:grad-main-add-mix-effect-high-prob}
	\partial_{\beta_l} \obj_{n, \gamma}(\beta) \le \frac{1}{\lambda} \cdot  \left(-c \cdot \effect_l + C\lambda^{1/2}(1+\lambda^{1/2})
		+ 2\lambda \gamma\right).
\end{equation}
Above, the constant $c, C > 0$ depend only on $M, \mu_X, \mu_Y, \mu$.
\end{lemma}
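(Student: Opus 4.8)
The plan is to combine the population-level bound from Lemma~\ref{lemma:key-to-thm-add-main-mix-effect} with the uniform concentration of gradients from Theorem~\ref{thm:concentration-of-gradients}, exactly in the spirit of how the population Theorem~\ref{thm:add-main-mix-effect} was proved but now transporting the estimate to the empirical objective. First I would recall that Lemma~\ref{lemma:key-to-thm-add-main-mix-effect} gives, for \emph{every} $\beta$ with $\beta_l = 0$ and $\beta_{S^c} = 0$,
\begin{equation*}
	\partial_{\beta_l}\obj_\gamma(\beta) \le \frac{1}{\lambda}\cdot\left(-c\cdot\effect_l + C\lambda^{1/2}(1+\lambda^{1/2}) + \lambda\gamma\right),
\end{equation*}
with $c, C > 0$ depending only on $|S|, M, M_X, M_Y, M_\mu$. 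Since $\partial_{\beta_l}\obj_\gamma(\beta) = \partial_{\beta_l}\obj(\beta) + \gamma$ and likewise $\partial_{\beta_l}\obj_{n,\gamma}(\beta) = \partial_{\beta_l}\obj_n(\beta) + \gamma$, it is enough to control $\partial_{\beta_l}\obj_n(\beta) - \partial_{\beta_l}\obj(\beta)$ uniformly over the relevant $\beta$.

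Next I would invoke Theorem~\ref{thm:concentration-of-gradients}: under Assumptions~\ref{assumption:mu-compact} and~\ref{assumption:sub-gaussian-tail-Y}, and because $\lambda$ satisfies the hypothesis $\lambda \ge C\sqrt[4]{\log n\log p/n}$ from~\eqref{eqn:lower-bound-on-ell-1}, with probability at least $1-e^{-cn}-e^{-t}$ we have
\begin{equation*}
	\dev_n = \sup_{\beta\in\mathcal{B}_M}\norm{\grad\obj_n(\beta) - \grad\obj(\beta)}_\infty \le
		\frac{C\log^2(n)}{\min\{\lambda,1\}^{7/2}}\cdot\left(\sqrt[4]{\frac{\log p}{n}} + \sqrt{\frac{t}{n}}\right) \le \gamma,
\end{equation*}
where the final inequality is precisely the second half of condition~\eqref{eqn:lower-bound-on-ell-1}. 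On this high-probability event, for any $\beta$ with $\beta_l = 0$ and $\beta_{S^c} = 0$ (such $\beta$ lie in $\mathcal{B}_M$),
\begin{equation*}
	\partial_{\beta_l}\obj_n(\beta) \le \partial_{\beta_l}\obj(\beta) + \dev_n \le \partial_{\beta_l}\obj(\beta) + \gamma.
\end{equation*}
Adding $\gamma$ to both sides and plugging in the population bound yields
\begin{equation*}
	\partial_{\beta_l}\obj_{n,\gamma}(\beta) = \partial_{\beta_l}\obj_n(\beta) + \gamma \le \partial_{\beta_l}\obj(\beta) + 2\gamma
		\le \frac{1}{\lambda}\left(-c\cdot\effect_l + C\lambda^{1/2}(1+\lambda^{1/2}) + \lambda\gamma\right) + 2\gamma,
\end{equation*}
and since $2\gamma = \frac{1}{\lambda}\cdot\lambda\gamma\cdot 2 \le \frac{1}{\lambda}\cdot 2\lambda\gamma$, this gives the claimed bound~\eqref{eqn:grad-main-add-mix-effect-high-prob} after renaming constants (the factor $2$ in front of $\lambda\gamma$ is exactly what appears in the statement). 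The constants depend only on $|S|, M, M_X, M_Y, M_\mu$ (from Lemma~\ref{lemma:key-to-thm-add-main-mix-effect}) together with $M, \sigma_X, \sigma_Y, \mu$ (from Theorem~\ref{thm:concentration-of-gradients}), which under the boundedness Assumption~\ref{assumption:sub-gaussian-tail-Y} can be absorbed into a dependence on $M, M_X, M_Y, \mu$ since $M_X \lesssim \sigma_X$ and $M_Y \lesssim \sigma_Y$.

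The argument is essentially bookkeeping; the only thing requiring a little care is the chain of inequalities matching the exact form of the probabilistic event and the numerical constants in~\eqref{eqn:lower-bound-on-ell-1}, namely verifying that the choice of $\gamma$ dominates $\dev_n$ on the stated event so that the deterministic population estimate transfers without loss. I do not anticipate any genuine obstacle here, since all the heavy lifting—the Fourier-analytic gradient estimate and the high-dimensional concentration—has already been done in Lemma~\ref{lemma:key-to-thm-add-main-mix-effect} and Theorem~\ref{thm:concentration-of-gradients} respectively; the present lemma is their straightforward conjunction.
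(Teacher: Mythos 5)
Your approach is exactly the paper's: combine the population gradient bound from Lemma~\ref{lemma:key-to-thm-add-main-mix-effect} with Theorem~\ref{thm:concentration-of-gradients} and the requirement~\eqref{eqn:lower-bound-on-ell-1} that $\gamma$ dominates the deviation $\dev_n$, then unwind $\partial_{\beta_l}\obj_{n,\gamma} = \partial_{\beta_l}\obj_n + \gamma$. The reasoning is sound, and the observation that Assumption~\ref{assumption:sub-gaussian-tail-Y} implies Assumption~\ref{assumption:X-Y-bound} (with $M_X \le \sigma_X$ and $M_Y \le \sigma_Y$) is the right way to reconcile the hypotheses of the two ingredients.

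There is one small arithmetic slip in your final display: you bound $\partial_{\beta_l}\obj(\beta)$ by the right-hand side of Lemma~\ref{lemma:key-to-thm-add-main-mix-effect}, but that quantity is a bound on $\partial_{\beta_l}\obj_\gamma(\beta) = \partial_{\beta_l}\obj(\beta) + \gamma$, not on $\partial_{\beta_l}\obj(\beta)$ itself. Using it as a (valid but wasteful) upper bound for the unpenalized gradient costs you an extra $\gamma$, so your chain actually produces $\frac{1}{\lambda}\bigl(-c\effect_l + C\lambda^{1/2}(1+\lambda^{1/2}) + 3\lambda\gamma\bigr)$, not the claimed $2\lambda\gamma$. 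The fix is to invoke the unpenalized bound $\partial_{\beta_l}\obj(\beta) \le \frac{1}{\lambda}\bigl(-c\effect_l + C\lambda^{1/2}(1+\lambda^{1/2})\bigr)$ (this is equation~\eqref{eqn:grad-main-add-mix-effect} in the proof of that lemma), so that adding $2\gamma$ gives exactly the stated $2\lambda\gamma$. The discrepancy is immaterial for Corollary~\ref{corollary:add-main-mix-effect}, since $\wbar{C}$ can absorb the factor of $3$, but as written your claim ``the factor $2$ is exactly what appears in the statement'' is not quite justified by the preceding display.
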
\noindent\noindent
Now we are ready to prove Corollary~\ref{corollary:add-main-mix-effect}.
By Proposition~\ref{prop:grad-obj-lipschitz-beta} and Lemma~\ref{lemma:gradient-ascent-increases-objective}, 
the algorithm must reach a stationary point. It suffices to show that with the desired high probability any stationary point 
$\beta^*$ reached by the algorithm must have $\beta_l^* > 0$. 

To see this, Corollary~\ref{corollary:no-false-positive-finite-sample} already shows that with high probability 
any such stationary point $\beta^*$ must exclude noise variables, i.e., $\beta^*_{S^c} = 0$ holds. 
Lemma~\ref{lemma:core-corollary-add-main-mix-effect} shows that with high probability any $\beta$ with 
$\beta_l = 0$ and $\beta_{S^c} = 0$ can't be stationary as long as the constant $\wbar{C} > 0$ in 
equation~\eqref{eqn:effect-signal-add-main-mix-finite-sample} is sufficiently large. This completes the proof.

\subsection{Proof of Corollary~\ref{corollary:hier-effect}.}
\label{sec:proof-of-corollary-hier-effect}
The proof is similar to that of Theorem~\ref{thm:hier-effect}. 
The key Lemma~\ref{lemma:core-corollary-hier-effect} below follows from Lemma~\ref{lemma:key-to-thm-hier-effect}, 
Theorem~\ref{thm:concentration-of-gradients}, and the fact that $\gamma$ upper bounds with high probability 
the deviation between the empirical and the true gradient.

\begin{lemma}
\label{lemma:core-corollary-hier-effect}
Assume the assumptions in Corollary~\ref{corollary:hier-effect}. With probability 
at least $1-e^{-cn}-e^{-t}$, we have for all $k \le K, m \le N_k$, and 
$\beta$ with $\beta_{S_{k, m-1}} = \tau \mathbf{1}_{S_{k, m-1}}$, 
$\beta_{k, m} = 0$, $\beta_{S^c} = 0$: 
\begin{equation}
\label{eqn:grad-hier-finite-sample}
	\partial_{\beta_{k, m}} \obj_{n, \gamma}(\beta) \le \frac{1}{\lambda} \cdot  \left(-c \cdot \min\{\tau^m, 1\} 
		\cdot \effect_{k, l} + C\lambda^{1/2}(1+\lambda^{1/2})+2\lambda \gamma\right).
\end{equation}
Above, the constant $c, C > 0$ depend only on $M, \mu_X, \mu_Y, \mu$.
\end{lemma}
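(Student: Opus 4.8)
The plan is to obtain Lemma~\ref{lemma:core-corollary-hier-effect} by stitching together the deterministic population gradient bound of Lemma~\ref{lemma:key-to-thm-hier-effect} with the uniform concentration of the empirical gradient around the population gradient from Theorem~\ref{thm:concentration-of-gradients}. First I would record the elementary identities (reading $\partial_{\beta_{k,m}}$ as the right derivative when $\beta_{k,m}=0$)
\[
\partial_{\beta_{k,m}}\obj_{n,\gamma}(\beta) = \partial_{\beta_{k,m}}\obj_n(\beta) + \gamma,
\qquad
\partial_{\beta_{k,m}}\obj_\gamma(\beta) = \partial_{\beta_{k,m}}\obj(\beta) + \gamma,
\]
so that the two one-sided derivatives differ precisely by the gradient error $\partial_{\beta_{k,m}}\obj_n(\beta) - \partial_{\beta_{k,m}}\obj(\beta)$.

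Next I would control this error. Algorithm~\ref{alg:kernel-feature-selection-hier-empirical} runs over constraint sets $\mathcal{B}_{M,\hat{S}}$, and since $\beta_{\hat S}=\tau\mathbf{1}_{\hat S}$ with $\norm{\beta_{\hat S^c}}_1\le M$ and $\hat S\subseteq S$ throughout, one has $\mathcal{B}_{M,\hat S}\subseteq \mathcal{B}_{M'}$ for $M' = M+\tau|S|$. Applying Theorem~\ref{thm:concentration-of-gradients} with $M'$ in place of $M$ gives that, on an event of probability at least $1-e^{-cn}-e^{-t}$,
\[
\sup_{\beta\in\mathcal{B}_{M'}}\bigl|\partial_{\beta_{k,m}}\obj_n(\beta) - \partial_{\beta_{k,m}}\obj(\beta)\bigr|
\;\le\; \frac{C\log^2(n)}{\min\{\lambda,1\}^{7/2}}\Bigl(\sqrt[4]{\tfrac{\log p}{n}} + \sqrt{\tfrac{t}{n}}\Bigr),
\]
and the hypothesis~\eqref{eqn:lower-bound-on-ell-1} on $\gamma$ (which is part of the assumptions of Corollary~\ref{corollary:hier-effect}, together with the required lower bound on $\lambda$) forces the right-hand side to be at most $\gamma$. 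Hence $|\partial_{\beta_{k,m}}\obj_n(\beta) - \partial_{\beta_{k,m}}\obj(\beta)|\le\gamma$ uniformly over $\beta\in\mathcal{B}_{M'}$ on this event.

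Then I would invoke Lemma~\ref{lemma:key-to-thm-hier-effect}: for every $\beta$ with $\beta_{S_{k,m-1}}=\tau\mathbf{1}_{S_{k,m-1}}$, $\beta_{k,m}=0$ and $\beta_{S^c}=0$ — exactly the hypotheses of Lemma~\ref{lemma:core-corollary-hier-effect} — equation~\eqref{eqn:grad-hier-hard} yields the deterministic bound
\[
\partial_{\beta_{k,m}}\obj_\gamma(\beta)\le \frac{1}{\lambda}\Bigl(-c\cdot\min\{\tau^m,1\}\cdot\effect_{k,l} + C\lambda^{1/2}(1+\lambda^{1/2}) + \lambda\gamma\Bigr).
\]
Adding the uniform error bound from the previous step and writing $\gamma = \tfrac{1}{\lambda}\cdot\lambda\gamma$ gives, on the same high-probability event, simultaneously for all admissible $\beta$,
\[
\partial_{\beta_{k,m}}\obj_{n,\gamma}(\beta)\le \frac{1}{\lambda}\Bigl(-c\cdot\min\{\tau^m,1\}\cdot\effect_{k,l} + C\lambda^{1/2}(1+\lambda^{1/2}) + 2\lambda\gamma\Bigr),
\]
which is~\eqref{eqn:grad-hier-finite-sample}. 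Since there are at most $|S|$ pairs $(k,m)$, a union bound keeps the probability at $1-e^{-cn}-e^{-t}$ (folding the finitely many pairs into $c$), and all constants stay functions of $M,M_X,M_Y,\mu$ and $\tau,|S|$, as claimed.

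The argument is essentially bookkeeping, so the only points needing care are (i) matching the hierarchical algorithm's constraint set $\mathcal{B}_{M,\hat S}$ to the ball $\mathcal{B}_{M'}$ on which Theorem~\ref{thm:concentration-of-gradients} is stated, and confirming the constants there depend only on $M',\sigma_X,\sigma_Y,\mu$ hence ultimately on the advertised parameters; and (ii) verifying that the $\gamma$-threshold in~\eqref{eqn:lower-bound-on-ell-1} genuinely dominates the concentration rate $\dev_n$ over the prescribed range of $\lambda$, so that the extra gradient error is absorbed into a single additional $\gamma$ term. Neither is substantive — the real work was already carried out in Lemma~\ref{lemma:key-to-thm-hier-effect} and Theorem~\ref{thm:concentration-of-gradients}.
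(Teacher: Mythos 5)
Your proof is correct and follows essentially the same route as the paper: combine the deterministic population bound from Lemma~\ref{lemma:key-to-thm-hier-effect} with the uniform concentration from Theorem~\ref{thm:concentration-of-gradients}, and use that the lower bound on $\gamma$ in~\eqref{eqn:lower-bound-on-ell-1} absorbs the deviation $\dev_n$, turning the $\lambda\gamma$ in the population bound into $2\lambda\gamma$. Your extra bookkeeping — enlarging the constraint ball to $\mathcal{B}_{M'}$ with $M'=M+\tau|S|$ so that Theorem~\ref{thm:concentration-of-gradients} covers the hierarchical algorithm's feasible set — is a worthwhile detail the paper leaves implicit, and the union bound over $(k,m)$ pairs is actually unnecessary since the concentration event is already uniform in $\beta$, but it costs nothing.
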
\noindent\noindent

Now we prove Corollary~\ref{corollary:hier-effect}. Corollary~\ref{corollary:no-false-positive-finite-sample} already 
shows that with high probability any such stationary point $\beta^*$ must exclude noise variables, i.e., 
$\beta^*_{S^c} = 0$ holds. Lemma~\ref{lemma:core-corollary-hier-effect} also shows that with high probability 
any $\beta$ such that $\beta_{S_{k, m-1}} = \tau \mathbf{1}_{S_{k, m-1}}$, $\beta_{k, m} = 0$, $\beta_{S^c} = 0$
must satisfy $\partial_{\beta_{k, m}} \obj_{n, \gamma}(\beta) < 0$
provided that the constant $\wbar{C} > 0$ in equation~\eqref{eqn:effect-signal-hier-finite-sample} is sufficiently large. 
Condition on these high probability events, one can proceed with the same logic in the proof of Theorem~\ref{thm:hier-effect}
to show that $S_{k, m} \subseteq \what{S}_m$ for all $0\le m \le l$. This proves Corollary~\ref{corollary:hier-effect}.

\newcommand{\circup}[1]{\accentset{\circ}{#1}}
\section{Recovery Guarantees Without Independence Assumption}
\label{sec:proof-under-dependent-covariates}

This section studies kernel feature selection under the general setting where we allow the 
covariates within the signal set to be dependent. In particular, we show that kernel feature 
section would recover signal variables that have nontrivial explanatory powers 
conditional on all the rest signal variables. 

To set the stage, we introduce assumptions and notation. A subset $T$ is called \emph{sufficient} if 
$\E[Y | X] = \E[Y | X_T]$. Otherwise the subset $T$ is called \emph{insufficient}. Sufficient subsets are also 
called \emph{Markov Blanket} in the literature on graphical models~\cite{Pearl14}, and are also closely related 
to \emph{sufficient dimension reduction} in the literature of statistics~\cite{Li91, Cook07}.

We make the following assumption on any insufficient set $T \subsetneq S$. 
The assumption says that if a set $T \subsetneq S$ is insufficient, then there exists some index 
$j \in S$ such that appending the variable $X_j$ to $X_T$ strictly increases the explanative power of $Y$.

\setcounter{assumption}{2}
\renewcommand\theassumption{\arabic{assumption}}
\begin{assumption}
\label{assumption:insufficient-set-T}
For any \emph{insufficient} subset $T \subsetneq S$, there exists an index $j \in S$ such that 
\begin{equation*}
	\E[Y| X_T] \neq \E[Y | X_{T \cup \{j\}}].
\end{equation*}
\end{assumption}

Now we show that on population the subset $\what{S}$ returned by the algorithm is sufficient, i.e., $\E[Y|X_{\what{S}}] = \E[Y|X_S]$,
provided that the signal size is beyond a certain threshold. To formally describe the signal size, we need the following definition 
of $\effect_T$. 

\begin{definition}
\label{eqn:effect_T}
Let $T \subsetneq S$ be any insufficient set $T$. Define $\effect_T = \max_{j \in S\backslash T} \effect_{j; T}$ where
\begin{equation*}
	\effect_{j; T} = \min_{g: \E[g^2(X_T)] \le \E[Y^2]} \E\left[(Y-g(X_T)) (Y'-g(X_T')) f(\normsmall{X_{T \cup \{j\}} - X'_{T \cup \{j\}} })\right]
\end{equation*}
\end{definition}
\begin{remark}
For any insufficient set $T$, $\effect_{j;T}$ quantifies the added 
explanatory power when one append $X_j$ to the 
variables $X_T$. As a result, $\effect_T$ quantifies the maximal 
increase of explanatory power over all possible $X_j$ that 
could be appended to $X_T$.
\end{remark}

\begin{proposition}
\label{proposition:why-effect-T-nonzero}
Let $T \subsetneq S, j\in S$ be such that $\E[Y| X_T] \neq \E[Y | X_{T \cup \{j\}}]$. Then $\effect_{j; T} > 0$.
\end{proposition}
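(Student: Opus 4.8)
The plan is to reduce the claim $\effect_{j;T} > 0$ to a statement about the nonvanishing of a certain Fourier-type integral, exactly as was done for the main-effect positivity results (Proposition~\ref{prop:main-effect-positive} and Proposition~\ref{prop:hier-positive}). First I would unpack the definition of $\effect_{j;T}$ as a minimization over functions $g$ with $\E[g^2(X_T)] \le \E[Y^2]$. The key preliminary observation is that the unconstrained minimizer of $g \mapsto \E[(Y - g(X_T))(Y' - g(X_T'))\, h(\|X_{T\cup\{j\}} - X'_{T\cup\{j\}}\|_1)]$ — or more precisely the relevant quadratic form — is essentially the conditional expectation $g^\star(X_T) = \E[Y \mid X_T]$, and since $\E[g^{\star 2}(X_T)] = \E[\E[Y\mid X_T]^2] \le \E[Y^2]$, this minimizer is feasible. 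Substituting $g^\star$ and writing $r(X_{T\cup\{j\}}) \defeq \E[Y \mid X_{T\cup\{j\}}] - \E[Y\mid X_T]$ for the ``added signal,'' one gets, using the Fourier representation of the kernel $h(\|\cdot\|_1) = \int\int e^{i\langle\omega,\cdot\rangle} \prod_i \psi_t(\omega_i)\,d\omega\,\mu(dt)$ (as in equation~\eqref{eqn:effect-l-X-T-Fourier-representation}), that
\begin{equation*}
\effect_{j;T} \;\ge\; \iint \big|\E\big[r(X_{T\cup\{j\}})\, e^{i\langle\omega_{T\cup\{j\}}, X_{T\cup\{j\}}\rangle}\big]\big|^2 \prod_{i \in T\cup\{j\}} \psi_t(\omega_i)\, d\omega_{T\cup\{j\}}\,\mu(dt) \;\ge\; 0,
\end{equation*}
with equality to zero only if $r(X_{T\cup\{j\}}) = 0$ almost surely. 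The hypothesis $\E[Y\mid X_T] \neq \E[Y\mid X_{T\cup\{j\}}]$ says precisely that $r \not\equiv 0$, which forces the integral to be strictly positive because $\omega \mapsto \psi_t(\omega)$ has full support and the Fourier transform of a nonzero integrable (or finite-measure) object cannot vanish identically.

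The main technical point to get right is the first inequality above — namely that plugging the \emph{conditional expectation} $g^\star$ into the quadratic form both (i) is feasible and (ii) actually lower-bounds $\effect_{j;T}$ by the clean nonnegative Fourier expression, rather than the reverse. Since $\effect_{j;T}$ is a minimum over $g$, substituting any feasible $g$ gives an \emph{upper} bound, not a lower bound; so I need to argue the other direction. The correct route is to observe that the form $Q(g) \defeq \E[(Y-g(X_T))(Y'-g(X_T'))\,h(\|X_{T\cup\{j\}}-X'_{T\cup\{j\}}\|_1)]$ is itself nonnegative for every $g$ — this is because $h(\|\cdot\|_1)$ is a positive definite kernel (Proposition~\ref{proposition:k-pos-definite-representation}), so $Q(g) = \int |\E[(Y-g(X_T))e^{i\langle\omega,X_{T\cup\{j\}}\rangle}]|^2 Q(\omega)\,d\omega \ge 0$. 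Hence $\effect_{j;T} = \min_g Q(g) \ge 0$ trivially, and the real content is showing $\effect_{j;T} > 0$, i.e., that no feasible $g$ drives $Q(g)$ to zero. If $Q(g) = 0$ then $\E[(Y - g(X_T))e^{i\langle\omega,X_{T\cup\{j\}}\rangle}] = 0$ for a.e.\ $\omega$, and since the exponentials $\{e^{i\langle\omega,\cdot\rangle}\}$ are complete, this forces $\E[Y - g(X_T) \mid X_{T\cup\{j\}}] = 0$ a.s., i.e.\ $\E[Y\mid X_{T\cup\{j\}}] = g(X_T)$. Taking conditional expectation on $X_T$ gives $\E[Y\mid X_T] = g(X_T)$ as well, whence $\E[Y\mid X_{T\cup\{j\}}] = \E[Y\mid X_T]$, contradicting the hypothesis. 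Therefore $Q(g) > 0$ for every feasible $g$.

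To finish, I need the minimum to be \emph{attained} (or at least bounded away from zero over the feasible set), so that $\effect_{j;T} = \inf_g Q(g) > 0$ rather than merely $\ge 0$ with the infimum possibly approached by a minimizing sequence. This follows from a compactness/lower-semicontinuity argument: the feasible set $\{g : \E[g^2(X_T)] \le \E[Y^2]\}$ is a ball in $L^2(\P_{X_T})$, weakly compact; $g \mapsto Q(g)$ is a continuous quadratic functional (its ``kernel part'' is bounded since $|h| \le h(0) < \infty$) and is weakly lower semicontinuous because it is convex and continuous in the $L^2$ norm — convexity holding since $Q(g) = \int |\E[(Y-g(X_T))e^{i\langle\omega,X_{T\cup\{j\}}\rangle}]|^2 Q(\omega)d\omega$ is a supremum-free integral of squared affine-in-$g$ quantities. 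Hence the minimum is attained at some $g^\star$ in the ball, and by the previous paragraph $Q(g^\star) > 0$, giving $\effect_{j;T} > 0$. The anticipated main obstacle is precisely this attainment-plus-strict-positivity step: one must be careful that the minimizing $g$ stays in the $L^2$ ball and that the completeness-of-exponentials argument is applied to the correct conditional law (the joint law of $X_{T\cup\{j\}}$, whose marginals need no independence assumption here). Everything else is a direct transcription of the Fourier-analytic machinery already developed in Appendix~\ref{sec:proof-proposition-main-effect-positive}.
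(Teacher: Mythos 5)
Your final argument is correct and essentially identical to the paper's: both establish $\effect_{j;T}(g) > 0$ pointwise for each feasible $g$ via the Fourier representation of $h(\|\cdot\|_1)$ and the completeness of complex exponentials (any $g$ with $\effect_{j;T}(g)=0$ would force $\E[Y\mid X_{T\cup\{j\}}]=g(X_T)=\E[Y\mid X_T]$, contradicting the hypothesis), and then conclude $\inf_{g\in\mathcal{G}}\effect_{j;T}(g)>0$ from weak compactness of the $L^2(\P_{X_T})$ ball (Banach--Alaoglu) together with weak lower semicontinuity of the convex, strongly continuous quadratic functional $g\mapsto\effect_{j;T}(g)$. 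The one blemish is the first displayed inequality $\effect_{j;T}\ge\iint|\E[r\,e^{i\langle\omega,\cdot\rangle}]|^2\cdots$, which is the wrong direction since substituting a particular feasible $g^\star$ can only upper-bound the infimum, but you explicitly flag and repair this in your second paragraph, so the argument as a whole is sound and matches the paper.
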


Proposition~\ref{proposition:why-effect-T-nonzero} shows that $\effect_T > 0$ whenever
$T \subsetneq S$ is insufficient. Theorem~\ref{thm:dependent-covariates}---the main result 
of the section---shows that, on population, the output $\what{S}$ Algorithm~\ref{alg:kernel-feature-selection-hier} is 
sufficient as long as $\effect_T$---for any insufficient set $T$---is large enough. 
The proof is given in Appendix~\ref{sec:proof-of-thm-dependent-covariates}.

\begin{theorem}
\label{thm:dependent-covariates}
Assume Assumptions~\ref{assumption:mu-compact}, ~\ref{assumption:X-Y-bound} and~\ref{assumption:insufficient-set-T}. There exists a 
$\wbar{C} > 0$ depending only on 
$\tau, |S|, M, M_X, M_Y, M_\mu$ such that the following holds. Suppose the following condition holds: 
\begin{equation}
\label{eqn:condition-dependent-covariates}
	\min_{T\subsetneq S: T~\text{is insufficient}} \effect_T \ge \wbar{C} \cdot (\lambda^{1/2}(1+\lambda^{1/2}) + \lambda \gamma).
\end{equation}
Consider Algorithm~\ref{alg:kernel-feature-selection-hier} with the initializers $\{\beta^{(0; T)}\}_{T \in 2^{[p]}}$ where 
$\beta^{(0; T)}_T = \tau \mathbf{1}_T$ and $\beta^{(0; T)}_{T^c} = 0$ and with the stepsize 
$\alpha \le \frac{\lambda^2}{\wbar{C} p}$. 
Then the output  $\what{S}$ of Algorithm~\ref{alg:kernel-feature-selection-hier} is sufficient. 
\end{theorem}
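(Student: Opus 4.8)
\textbf{Proof proposal for Theorem~\ref{thm:dependent-covariates}.}

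The plan is to mimic the structure of the proof of Theorem~\ref{thm:hier-effect}, replacing the hierarchical bookkeeping with an argument driven purely by sufficiency. Recall Algorithm~\ref{alg:kernel-feature-selection-hier} runs rounds of projected gradient descent, each time freezing the already-selected set $\hat S$ at level $\tau$ and optimizing the remaining coordinates, terminating when $\hat S$ stabilizes. First I would argue that at termination $\hat S$ must be sufficient. Suppose not; then $\hat S \subsetneq S$ is insufficient (the output is always contained in $S$, since the no-false-positive argument of Theorem~\ref{thm:no-false-positive} applies verbatim to each round, giving $\beta^*_{S^c}=0$ for every accumulation point). Since the objective $\obj_\gamma(\beta)$ is Lipschitz-smooth (Proposition~\ref{prop:grad-obj-lipschitz-beta}) and the stepsize satisfies $\alpha \le \lambda^2/(\wbar C p)$, the accumulation point $\beta^*$ of the last round is a stationary point of the constrained problem with $\beta^*_{\hat S} = \tau \mathbf 1_{\hat S}$. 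Stationarity plus $\supp(\beta^*)=\hat S$ means that for every $j \in S\setminus \hat S$, $\partial_{\beta_j}\obj_\gamma(\beta)\ge 0$ at the point $\beta$ with $\beta_{\hat S} = \tau\mathbf 1_{\hat S}$, $\beta_{(\hat S)^c}=0$. So it suffices to show this gradient is \emph{strictly negative} for at least one such $j$, which contradicts termination and forces another round to add a variable; since $S$ is finite, the process eventually reaches a sufficient set.

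The core estimate, then, is the analogue of Lemma~\ref{lemma:key-to-thm-hier-effect}: for the index $j \in S\setminus T$ achieving the max in the definition of $\effect_T$ (with $T = \hat S$), I would show
\begin{equation*}
	\partial_{\beta_j}\obj_\gamma(\beta) \le \frac{1}{\lambda}\cdot\left(-c\cdot \min\{\tau^{|T|},1\}\cdot \effect_T + C\lambda^{1/2}(1+\lambda^{1/2}) + \lambda\gamma\right)
\end{equation*}
at any feasible $\beta$ with $\beta_T = \tau\mathbf 1_T$, $\beta_{(\hat S)^c}=0$. To get this I follow the general recipe of Section~\ref{sec:proof-thm-add-main-effect}: bound the surrogate gradient $\wtilde{\partial_{\beta_j}\obj}(\beta)$ via the Fourier representation~\eqref{eqn:wtilde-partial-beta-j-rep}, then transfer to the true gradient using Lemma~\ref{lemma:approximate-gradient-bound}. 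For the surrogate, the key is the identity from Lemma~\ref{lemma:eqn-crucial-identity-technical}-style manipulations: at $\beta_j=0$, the conditional mean $\E[\wbar R_{\beta,\omega}(\beta\odot X;Y)\mid X_j, X_T]$ picks out exactly $\E[Y\mid X_{T\cup\{j\}}] - \E[Y\mid X_T]$ after the appropriate projection, and the residual $r_\beta$ restricted to $X_T$ behaves like $Y - g(X_T)$ for some function $g$ with $\E[g^2]\le \E[Y^2]$ (since $f_\beta(\beta\odot X)$ depends only on $X_T$ and has controlled norm). This is precisely what $\effect_{j;T}$ measures — the minimization over such $g$ in Definition~\ref{eqn:effect_T} is what makes the bound robust to which particular $f_\beta$ arises. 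The factor $\min\{\tau^{|T|},1\}$ comes from the $\prod_{\bar l\in T}\beta_{\bar l} = \tau^{|T|}$ appearing after the change of variables $\omega_i\mapsto\beta_i\omega_i$ and the inequality $\psi_{\tau}(\omega)\ge \tau\psi(\omega)$ for $\tau\le 1$, exactly as in the proof of Lemma~\ref{lemma:upper-bound-tilde-J}. Proposition~\ref{proposition:why-effect-T-nonzero} (whose proof parallels Proposition~\ref{prop:main-effect-positive} via the Fourier representation of the kernel) guarantees $\effect_T>0$, so under~\eqref{eqn:condition-dependent-covariates} with $\wbar C$ large enough the leading term dominates and the gradient is strictly negative.

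The main obstacle, I expect, is the bound
\begin{equation*}
	\zeta_j(\beta_{T^c})\ge \Big(c\cdot \effect_{j;T} - C\sum_{l'\in S\setminus(T\cup\{j\})}\beta_{l'}\Big)_+,
\end{equation*}
the analogue of equation~\eqref{eqn:upper-bound-two-tilde-J}, because here $T$ is an arbitrary (dependent) subset rather than a prefix in a hierarchy, so the decomposition $R_{j,T} = R_{j,T,1}+R_{j,T,2}$ into "signal on $X_{T\cup\{j\}}$" plus "remainder from $X_{S\setminus(T\cup\{j\})}$" must be set up carefully: the remainder term is controlled by $\E[(F(X_S)-F(X_{T\cup\{j\}}))^2]\le \E[Y^2]$ combined with $|e^{it}-1|\le 2\min\{|t|,1\}$ and the Cauchy-integral trick from Lemma~\ref{lemma:really-technical}, while the main term reproduces $\effect_{j;T}$ after recognizing the Fourier integral as $h(\|X_{T\cup\{j\}}-X'_{T\cup\{j\}}\|_1)$. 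A secondary subtlety is verifying that the residual's dependence on $X_T$ really is captured by an admissible $g$ in the minimization of Definition~\ref{eqn:effect_T}; this needs the uniform bound $\|f_\beta\|_\H^2\le \E[Y^2]/\lambda$ together with the fact that $f_\beta(\beta\odot X)$ is $\sigma(X_T)$-measurable when $\beta_{(\hat S)^c}=0$, so that $\E[f_\beta(\beta\odot X)^2]\le \E[Y^2]$ and $g$ lies in the feasible class. Once these two points are handled, the remaining bookkeeping — applying Lemma~\ref{lemma:approximate-gradient-bound}, choosing $\wbar C$, and running the round-by-round termination argument — is routine.
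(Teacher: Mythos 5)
Your high-level skeleton matches the paper's proof: round-by-round termination, stationarity of accumulation points, the observation that at $\beta^0_T=\tau\mathbf 1_T,\ \beta^0_{T^c}=0$ the residual collapses to $Y-g_{\tau,T}(X_T)$ with $g_{\tau,T}$ admissible (via $\E[g^2]\le\E[Y^2]$ from Proposition~\ref{prop:second-moment-bound}), the change of variables $\omega_T\mapsto\tau\omega_T$ producing the $\wbar\tau^{|T|}$ factor, and finally invoking $\effect_{j;T}$ being an infimum over admissible $g$. But the central estimate is where you lean on the wrong machinery, and there is a genuine gap.

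You propose to evaluate the surrogate gradient $\wtilde{\partial_{\beta_j}\obj}$ via ``Lemma~\ref{lemma:eqn-crucial-identity-technical}-style manipulations'' and the decomposition/Cauchy-integral tricks of Lemma~\ref{lemma:really-technical}. Both of these are built on the independence $X_j\perp X_T$ (or $X_T\perp X_{T^c}$), which is exactly what is being dropped in this theorem. In particular: (a) without independence, the covariance $\Cov\left(R_{\beta^0,\omega},\, e^{i\zeta_j X_j}\right)$ does not factor, and the conditional-mean computation $\E[\wbar R_{\beta^0,\omega}\mid X_j,X_T]$ does not ``pick out'' $\E[Y\mid X_{T\cup\{j\}}]-\E[Y\mid X_T]$; you get $e^{i\tau\langle\omega_T,X_T\rangle}\bigl(\E[Y\mid X_{T\cup\{j\}}]-g_{\tau,T}(X_T)\bigr)$ minus a constant, with no clean route to $\effect_{j;T}$ from there. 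The paper's Lemma~\ref{lemma:general-gradient-lower-bound} instead bypasses the centering entirely: it uses the elementary inequality $|\Cov(W_1,W_2)|^2\ge\tfrac12|\E[W_1 W_2]|^2-4|\E[W_1]|^2|\E[W_2]|^2$ to pass from the centered surrogate to an uncentered quantity $\circup{\partial_{\beta_l}}\obj(\beta)$, and controls the error $\tfrac4\lambda\int|\E[R_{\beta,\omega}]|^2 Q(\omega)d\omega\le 4\E[Y^2]$ via Lemma~\ref{lemma:KKT-condition} and the energy bound $\lambda\|f_\beta\|_\H^2\le\E[Y^2]$. (b) You also do not address the weight mismatch in the $\zeta_j$ integral: the representation~\eqref{eqn:wtilde-partial-beta-j-rep} carries $d\zeta_j/(\pi\zeta_j^2)$ on coordinate $j$, but to recognize the limit as $\E\bigl[(Y-g)(Y'-g')\,h(\|X_{T\cup\{j\}}-X'_{T\cup\{j\}}\|_1)\bigr]=\effect_{j;T}(g)$ you need a full Cauchy density $\psi_t(\zeta_j)$ on that coordinate, since $\effect_{j;T}$ is a kernel-$h$ integral in $|T|+1$ variables. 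The paper gets there with the pointwise bound $t\psi_0(\zeta_l)=\tfrac{t}{\pi\zeta_l^2}\ge\tfrac{t}{\pi(t^2+\zeta_l^2)}=\psi_t(\zeta_l)$, trading the $t$ in $\wtilde Q$ for a proper Cauchy factor. That step is the bridge from the Fourier representation to $\effect_{j;T}$ and it is missing from your argument.

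One further small remark: at $\beta^0$ we have $\beta^0_{T^c}=0$, so your proposed $R_{j,T}=R_{j,T,1}+R_{j,T,2}$ decomposition is trivial ($R_{j,T,2}\equiv 0$) and the ``$-C\sum_{l'}\beta_{l'}$'' penalty never arises. That whole layer of Lemma~\ref{lemma:really-technical} bookkeeping is unnecessary here; what you actually need are the two steps above.
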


\begin{remark}
We compare Theorem~\ref{thm:dependent-covariates} with Theorem~\ref{thm:add-main-mix-effect} 
and 
Theorem~\ref{thm:hier-effect}.
\begin{itemize}
\item Theorem~\ref{thm:dependent-covariates} does not require the independence assumption
	needed in Theorem~\ref{thm:add-main-mix-effect} and~\ref{thm:hier-effect}.
\item Theorem~\ref{thm:add-main-mix-effect} shows that one round of 
	Algorithm~\ref{alg:kernel-feature-selection-hier} (i.e., Algorithm~\ref{alg:kernel-feature-selection})
	recovers the main effects; Theorem~\ref{thm:hier-effect} shows that 
	Algorithm~\ref{alg:kernel-feature-selection-hier}, with $\max_{k\le K}N_k$ rounds 
	recovers the hierarchical interactions. These information on number of rounds are absent 
	in Theorem~\ref{thm:dependent-covariates}. 
\end{itemize}
\end{remark}

Corollary~\ref{corollary:dependent-covariates} is a finite sample version of Theorem~\ref{thm:dependent-covariates}. 
The proof is similar to that of Corollary~\ref{corollary:add-main-mix-effect} and Corollary~\ref{corollary:hier-effect}.
\begin{corollary}
\label{corollary:dependent-covariates}
Make Assumption~\ref{assumption:mu-compact}, ~\ref{assumption:sub-gaussian-tail-Y} and \ref{assumption:insufficient-set-T}. 
Let $t > 0$. Assume $\gamma$ satisfies equation~\eqref{eqn:lower-bound-on-ell-1}. 

Then for some $c, \wbar{C} > 0$ depending on $\tau, |S|, M, M_X, M_Y, M_\mu$, if the condition 
\begin{equation}
\label{eqn:effect-signal-hier-finite-sample}
	\min_{T\subsetneq S: T~\text{is insufficient}} \effect_T \ge 
		\wbar{C} \cdot (\lambda^{1/2}(1+\lambda^{1/2}) + \lambda \gamma) 
\end{equation}
holds, then Algorithm~\ref{alg:kernel-feature-selection-hier} with initializers $\{\beta^{(0; T)}\}_{T \in 2^{[p]}}$ where 
$\beta^{(0; T)}_T = \tau \mathbf{1}_T$ and $\beta^{(0; T)}_{T^c} = 0$
and with the stepsize $\alpha \le \frac{\lambda^2}{\wbar{C} p}$ outputs a set $\what{S}$ that is (i)
sufficient and (ii) $\what{S} \subseteq S$ with probability at least $1-e^{-cn}-e^{-t}$.
\end{corollary}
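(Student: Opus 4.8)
\textbf{Proof plan for Corollary~\ref{corollary:dependent-covariates}.}
The plan is to combine the population-level result Theorem~\ref{thm:dependent-covariates} with the uniform concentration of gradients in Theorem~\ref{thm:concentration-of-gradients}, following exactly the template already used for Corollary~\ref{corollary:add-main-mix-effect} and Corollary~\ref{corollary:hier-effect}. First I would observe that under Assumption~\ref{assumption:sub-gaussian-tail-Y}, Assumption~\ref{assumption:X-Y-bound} holds automatically (a bounded $X$ and a subgaussian $Y$ have the required fourth and second moments), so all the population machinery applies. Then, since $\gamma$ satisfies equation~\eqref{eqn:lower-bound-on-ell-1}, Theorem~\ref{thm:concentration-of-gradients} gives, on an event $\event$ of probability at least $1-e^{-cn}-e^{-t}$, the uniform bound $\dev_n = \sup_{\beta\in\mathcal{B}_M}\norm{\grad\obj_n(\beta)-\grad\obj(\beta)}_\infty \le \gamma$. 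Everything below is carried out on this event $\event$.

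The core step is to produce a finite-sample analogue of the two key gradient estimates that drive the proof of Theorem~\ref{thm:dependent-covariates}: (i) a ``no-false-positive'' estimate, namely that $\partial_{\beta_l}\obj_{n,\gamma}(\beta)\ge 0$ for every $l\in S^c$ whenever $\beta_{S^c}=0$ (this is Lemma~\ref{lemma:grad-noise-variable-beta-noise-0-finite-sample}, established by adding $\gamma$ to the population inequality $\partial_{\beta_l}\obj(\beta)\ge 0$ from Lemma~\ref{lemma:grad-noise-variable-beta-noise-0} and absorbing the $\pm\gamma$ deviation), and (ii) a ``power'' estimate asserting that whenever the algorithm has an insufficient selected set $T$ and $\beta_l=0$ for the index $l\in S\setminus T$ achieving $\effect_T=\effect_{l;T}$, the empirical gradient satisfies $\partial_{\beta_l}\obj_{n,\gamma}(\beta) \le \frac{1}{\lambda}\bigl(-c\,\effect_T + C\lambda^{1/2}(1+\lambda^{1/2}) + 2\lambda\gamma\bigr)$. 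Estimate (ii) follows by taking the population bound used inside the proof of Theorem~\ref{thm:dependent-covariates} (the insufficiency argument that produces a strictly negative surrogate gradient of magnitude $\sim\effect_T/\lambda$, transferred to the true gradient via Lemma~\ref{lemma:approximate-gradient-bound}) and then adding the additional $\gamma$ deviation coming from $\dev_n\le\gamma$, so the net extra term is $2\lambda\gamma$ rather than $\lambda\gamma$. Under the hypothesis~\eqref{eqn:effect-signal-hier-finite-sample} with $\wbar C$ large enough, the right side of (ii) is strictly negative.

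Given (i) and (ii), I would then replay, verbatim, the two structural arguments already in the paper. The no-false-positive argument (the induction in Section~\ref{sec:thm-proof-no-false-positive}, using $\beta^{(0;T)}_{T^c}=0$, Lemma~\ref{lemma:projection-onto-ell-one-ball}, and estimate (i)) shows every iterate, hence every accumulation point $\beta^*$, satisfies $\beta^*_{S^c}=0$, so $\what S\subseteq S$. For sufficiency: by Proposition~\ref{prop:grad-obj-lipschitz-beta} and Lemma~\ref{lemma:gradient-ascent-increases-objective}, with stepsize $\alpha\le\lambda^2/(\wbar C p)$ each round's projected gradient descent converges to a stationary point $\beta^*$ of the restricted problem; estimate (ii) shows that if the current selected set $\hat S$ were insufficient then $\beta^*$ could not be stationary with $\beta^*_l=0$ for the witnessing index $l$, so that index gets added. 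Hence $\hat S$ strictly grows until it becomes sufficient, and since $S$ is finite the outer loop of Algorithm~\ref{alg:kernel-feature-selection-hier} terminates with a sufficient $\what S$, all on the event $\event$ of probability at least $1-e^{-cn}-e^{-t}$. The main obstacle is packaging step (ii) cleanly: one must check that the population insufficiency estimate underlying Theorem~\ref{thm:dependent-covariates} is uniform over the relevant feasible $\beta$ (those with $\beta_{S^c}=0$ and $\beta_l=0$) so that the single uniform bound $\dev_n\le\gamma$ suffices to pass from population to empirical gradients; this is exactly parallel to Lemma~\ref{lemma:core-corollary-add-main-mix-effect} and Lemma~\ref{lemma:core-corollary-hier-effect}, so no genuinely new difficulty arises, only bookkeeping of constants depending on $\tau,|S|,M,M_X,M_Y,M_\mu$.
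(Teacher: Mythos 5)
Your proposal is correct and follows the same route the paper itself indicates (combining Lemma~\ref{lemma:general-gradient-lower-bound} with Theorem~\ref{thm:concentration-of-gradients}, mirroring Corollaries~\ref{corollary:add-main-mix-effect} and~\ref{corollary:hier-effect}). The only minor overcaution is your final worry about uniformity of the population insufficiency estimate: Lemma~\ref{lemma:general-gradient-lower-bound} is invoked only at the round's initialization $\beta^0$, and since the constraint $\beta_{\hat{S}_m}=\tau\mathbf{1}_{\hat{S}_m}$ pins down every coordinate in $\hat{S}_m$, ruling out $\beta^0$ as a stationary point of $\obj_{n,\gamma}$ already forces $\supp(\beta^*)\supsetneq\hat{S}_m$ for any accumulation point $\beta^*$, so the single uniform deviation bound $\dev_n\le\gamma$ evaluated at $\beta^0$ suffices and no additional uniformity of the population lemma is needed (nor is it the specific witnessing index $l$ that must be added—only some index in $S\setminus\hat{S}_m$).
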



\subsection{Proof of Theorem~\ref{thm:dependent-covariates}.}
\label{sec:proof-of-thm-dependent-covariates}

The proof idea of Theorem~\ref{thm:dependent-covariates} is similar to that of Theorem~\ref{thm:hier-effect}. 
Lemma~\ref{lemma:general-gradient-lower-bound} is the kety to the proof. 
Let $\what{S}_m$ denote the variables selected by the $m$-th iteration of the algorithm. 
It suffices to prove the following results. 
\begin{itemize}
\item No false positive: $\what{S}_m \subseteq S$ holds.
\item Recovery: the algorithm does not terminate at $m$-th round unless $\E[Y|\what{S}_m] = \E[Y|X_S]$.
\end{itemize}
It's easy to prove the first result; indeed it follows from a simple adaptation of the proof of 
Theorem~\ref{thm:no-false-positive}. Below we establish the second point. That completes 
the proof of Theorem~\ref{thm:dependent-covariates}.  

To see the second point, recall that in the m-th round, the algorithm aims to solve 
\begin{equation*}
(O_m):~~~~
\begin{aligned}
	\minimize_{\beta} ~~&\mathcal{J}_\gamma(\beta)  \\
	\subjectto ~~& \beta \ge 0 ~\text{and}~\beta_{\what{S}_{m}} = \tau  \mathbf{1}_{\what{S}_{m}}.
\end{aligned}
\end{equation*}
Let $\beta^*$ denote the solution returned by the gradient descent for $(O_m)$. We shall prove that 
$\what{S}_m \subsetneq \supp(\beta^*)$ unless $\what{S}_m$ is a sufficient set. Assume below W.L.O.G. 
that $\what{S}_m$ is insufficient. Since the solution $\beta^*$ is stationary w.r.t 
$(O_m)$ (followed by an identical argument in the proof of Theorem~\ref{thm:hier-effect}), 
it suffices to show that any stationary point $\beta$ must satisfy $\what{S}_m \subsetneq \supp(\beta^*)$. 
Towards this goal, Lemma~\ref{lemma:general-gradient-lower-bound} is the key, which basically 
says that, for any subset $T$ that's insufficient, and for any variable $l \not\in T$, the gradient w.r.t 
the variable $j$ is non-positive as long as $\effect_{j; T}$ is sufficiently large. 
The proof of Lemma~\ref{lemma:general-gradient-lower-bound} is in 
Section~\ref{sec:proof-theorem-general-gradient-lower-bound}. 

\begin{lemma}
\label{lemma:general-gradient-lower-bound}
Assume Assumptions~\ref{assumption:mu-compact}---\ref{assumption:X-Y-bound}. Let $T \subseteq S$ and 
$l \in S$ be any pair of subset and variable such that $\E[Y| X_T] \neq \E[Y | X_{T \cup \{l\}}]$. Then there exist constants 
$c, C > 0$ depending only on $\tau, |T|, M_X, M_Y, M_\mu$ such that at the point $\beta^0$ where $\beta_T^0 = \tau \mathbf{1}_T$ and $\beta_{T^c}^0 = 0$, 
\begin{equation*}
	\partial_{\beta_l}  \obj_\gamma(\beta)\mid_{\beta= \beta^0} 
		\le \frac{1}{\lambda} \cdot (-c \cdot \effect_{l; T} + C\lambda^{1/2}(1+\lambda^{1/2}) + \lambda \gamma).
\end{equation*}
\end{lemma}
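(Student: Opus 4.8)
The target is Lemma~\ref{lemma:general-gradient-lower-bound}: at the point $\beta^0$ with $\beta^0_T = \tau\mathbf{1}_T$ and $\beta^0_{T^c}=0$, for a variable $l\in S$ with $\E[Y|X_T]\neq\E[Y|X_{T\cup\{l\}}]$, we have $\partial_{\beta_l}\obj_\gamma(\beta^0)\le\frac{1}{\lambda}(-c\effect_{l;T}+C\lambda^{1/2}(1+\lambda^{1/2})+\lambda\gamma)$.

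Let me think about how this goes. This is the dependent-covariate analogue of Lemma~\ref{lemma:key-to-thm-add-main-mix-effect}, and the proof should be parallel: use the surrogate gradient machinery. Since $\partial_{\beta_l}\obj_\gamma = \partial_{\beta_l}\obj + \gamma$, we need to show $\partial_{\beta_l}\obj(\beta^0)\le\frac{1}{\lambda}(-c\effect_{l;T}+C\lambda^{1/2}(1+\lambda^{1/2}))$. By Lemma~\ref{lemma:approximate-gradient-bound}, it suffices to bound the surrogate $\wtilde{\partial_{\beta_l}\obj}(\beta^0)$ by $-\frac{c}{\lambda}\effect_{l;T}$. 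Now at $\beta^0$, the coordinate $\beta_l=0$, so $R_{\beta^0,\omega}(\beta^0\odot X;Y) = e^{i\langle\omega,\beta^0\odot X\rangle}r_{\beta^0}(\beta^0\odot X;Y)$ where $e^{i\langle\omega,\beta^0\odot X\rangle}$ depends only on $X_T$, and $f_{\beta^0}(\beta^0\odot X)$ also depends only on $X_T$. So the conditional expectation $\E[\wbar{R}_{\beta^0,\omega}\mid X_T]$ should come out to $\E[e^{i\langle\omega_T,\tau X_T\rangle}]\cdot(\text{something})$ — but wait, this is more subtle than the main-effect case because the residual $r_{\beta^0}$ is itself $\E[Y|X_T]$-adjacent but not exactly. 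The key: when we condition on $X_l$ (not $X_T$), since $l\notin T$ and we don't assume independence, we can't factor as cleanly.

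Here's the better route, following how $\effect_{j;T}$ is defined as an infimum over $g$ with $\E[g^2(X_T)]\le\E[Y^2]$. The surrogate gradient formula~\eqref{eqn:wtilde-partial-beta-j-rep} gives $\wtilde{\partial_{\beta_l}\obj}(\beta^0) = -\frac{1}{\lambda}\int(\int\Cov^2(R_{\beta^0,\omega}(\beta^0\odot X;Y),e^{i\zeta_l X_l})\frac{d\zeta_l}{\pi\zeta_l^2})\wtilde{Q}(\omega)d\omega$. We need to lower bound the double integral by $c\effect_{l;T}$. The plan is: recognize $r_{\beta^0}(\beta^0\odot X;Y) = Y - f_{\beta^0}(\beta^0\odot X)$ where $f_{\beta^0}(\beta^0\odot X) = g_0(X_T)$ for some function $g_0$ that, by Proposition~\ref{prop:second-moment-bound}, satisfies $\E[g_0(X_T)^2]\le\E[Y^2]=M_Y^2$. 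So $r_{\beta^0}$ has exactly the form $Y-g(X_T)$ appearing in Definition~\ref{eqn:effect_T}, with $g=g_0$ a valid competitor. Then the Fourier identity (as in the proof of Lemma~\ref{lemma:really-technical} / equation~\eqref{eqn:landscape-result-three}) converts $\int\int\Cov^2(\cdots)\frac{d\zeta_l}{\pi\zeta_l^2}\wtilde{Q}(\omega)d\omega$ into $\E[(Y-g_0(X_T))(Y'-g_0(X_T'))h'(\|X_{T\cup l}-X'_{T\cup l}\|_{q,\beta^0})\cdot(\text{weight})]$ — and with $\beta^0_T=\tau\mathbf{1}$, $\beta^0_l=0$, the weighted norm involves only the $T$-coordinates scaled by $\tau$ and the $l$-coordinate with weight... hmm, actually since $\beta^0_l=0$, the kernel $h'$ won't see $X_l$ at all, but the $|X_l-X_l'|^q$ term and the $e^{i\zeta_l X_l}$ basis does. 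Let me reorganize: the $\zeta_l$-integral of $\Cov^2$ against $\frac{d\zeta_l}{\pi\zeta_l^2}$ produces the $|X_l-X_l'|$ factor via~\eqref{eqn:absolute-cond-psd}, and the $\omega$-integral against $\wtilde{Q}$ produces the $h'$ of the $T$-scaled norm. The upshot is $\wtilde{\partial_{\beta_l}\obj}(\beta^0) = -\frac{1}{\lambda}\E[(Y-g_0(X_T))(Y'-g_0(X_T'))h'(\tau\|X_T-X'_T\|_q^q)|X_l-X_l'|]$ (for $q=1$). Comparing with the definition of $\effect_{l;T}$ — which uses $f(\|X_{T\cup l}-X'_{T\cup l}\|_1) = h(\|X_{T\cup l}-X'_{T\cup l}\|_1)$ — we need to relate $-h'(\tau\|X_T-X'_T\|_1)|X_l-X_l'|$ to $h(\|X_T-X'_T\|_1 + |X_l-X_l'|)$. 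This requires a bound of the form $-h'(\tau s)t \ge c\cdot h(s+t)$ for $s,t$ in bounded ranges (using $\supp(\mu)$ compact, $\beta\in\mathcal{B}_M$ so $s\le 2MM_X$, $t\le 2M_X$), plus the fact that $g_0$ is a feasible competitor in the infimum defining $\effect_{l;T}$, giving $\E[(Y-g_0(X_T))(Y'-g_0(X_T'))h(\|X_{T\cup l}-X'_{T\cup l}\|_1)]\ge\effect_{l;T}$.

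So the plan is: (i) apply Lemma~\ref{lemma:approximate-gradient-bound} to reduce to bounding the surrogate $\wtilde{\partial_{\beta_l}\obj}(\beta^0)$; (ii) use the Fourier representation~\eqref{eqn:wtilde-partial-beta-j-rep} and evaluate the conditional expectations at $\beta^0$ — since $\beta^0_l = 0$, write $r_{\beta^0}(\beta^0\odot X;Y) = Y - g_0(X_T)$ with $g_0$ depending only on $X_T$ and $\E[g_0^2(X_T)]\le M_Y^2$ by Proposition~\ref{prop:second-moment-bound}; (iii) carry out the $\zeta_l$ and $\omega$ integrals as in the proof of Part (ii) of Proposition~\ref{proposition:landscape-analysis-main-effect} and Lemma~\ref{lemma:really-technical}, arriving at an expression $-\frac{1}{\lambda}\E[(Y-g_0(X_T))(Y'-g_0(X_T')) w(X,X')]$ for an explicit nonnegative-ish weight $w$; (iv) establish the pointwise comparison $w(X,X')\ge c\, h(\|X_{T\cup l}-X'_{T\cup l}\|_1)$ on the (bounded) support, using $\supp(\mu)$ compact and $M_X,M$ finite, so that the expectation is $\le -\frac{c}{\lambda}\E[(Y-g_0)(Y'-g_0)h(\|X_{T\cup l}-X'_{T\cup l}\|_1)]\le -\frac{c}{\lambda}\effect_{l;T}$ since $g_0$ is feasible in the infimum; (v) combine with the $O(\sqrt\lambda)$ error from step (i) and add $\gamma$.

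I expect the main obstacle to be step (iv): the pointwise kernel comparison $-h'(\tau s)\,t\gtrsim h(s+t)$ for $s\in[0,2MM_X]$, $t\in[0,2M_X]$ — one must use strict complete monotonicity ($-h'>0$ everywhere, and $-h'$ bounded below by a positive constant on compacts since $\supp(\mu)$ is compact and away from issues), and handle the factor $t=|X_l-X_l'|$ which can be zero (but then $h(s+t)=h(s)$ and the LHS is zero too, so one actually needs the comparison with the full $|X_l-X_l'|$-weighted structure, which is why the $\Cov^2$/$\frac{d\zeta_l}{\pi\zeta_l^2}$ representation is essential — it's not a pointwise bound in $t$ but an integrated one producing precisely $|X_l - X_l'|$, and then $\effect_{l;T}$ itself is defined with the full $h(\|\cdot\|)$ which when $q=1$ and one does the analogous Fourier expansion contains exactly such a structure). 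A clean way around this: instead of comparing weights pointwise, redo the Fourier expansion on the $\effect_{l;T}$ side too — expand $h(\|X_{T\cup l}-X'_{T\cup l}\|_1)$ into $\int e^{i\langle\omega, X_{T\cup l}-X'_{T\cup l}\rangle}Q(\omega)d\omega$ and match term by term with the surrogate gradient's Fourier expansion, picking up only a constant factor $e^{-2M_\mu M_X}$ loss from restricting to the $T$-block scaled by $\tau$ versus scaled by $1$ — exactly the maneuver in the proof of Lemma~\ref{lemma:really-technical} (the inequality $\psi_\beta(\omega)\ge\beta\psi(\omega)$ for $\beta\le1$ and Jensen's inequality $\E[e^{-t\|X_{T^c}-X'_{T^c}\|_1}]\ge e^{-2tM_X}$). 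I would follow that template, which also handles the infimum cleanly since $g_0$ need not be the minimizer — any feasible $g$ gives a lower bound on $\effect_{l;T}$, and $g_0$ is feasible. The remaining routine work is bookkeeping of constants depending on $\tau,|T|,M_X,M_Y,M_\mu$, which I would not grind through here.
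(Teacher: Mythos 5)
Your sketch has the right pieces in the periphery---recognizing $r_{\beta^0}(\beta^0\odot X;Y) = Y-g_0(X_T)$ with $g_0$ a feasible competitor in Definition~\ref{eqn:effect_T} via Proposition~\ref{prop:second-moment-bound}, integrating out $\omega_{T^c}$ freely since $\beta^0_{T^c}=0$, a change of variable plus $\psi_{\tau t}\ge\tfrac{\tau}{1+\tau^2}\psi_t$ to absorb the $\tau$-scaling, and Lemma~\ref{lemma:approximate-gradient-bound} at the end. But there is a genuine gap at the center. You work throughout with the standard surrogate $\wtilde{\partial_{\beta_l}\obj}(\beta)$ from~\eqref{eqn:wtilde-partial-beta-j-rep}, whose Fourier expansion involves $\Cov^2\big(R_{\beta,\omega},e^{i\zeta_l X_l}\big)$, and you propose to ``match term by term'' against the expansion of $\effect_{l;T}$, which involves the \emph{uncorrected} squared Fourier coefficient $\big|\E[(Y-g)\,e^{i\langle\omega_{T\cup\{l\}},X_{T\cup\{l\}}\rangle}]\big|^2$. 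These do not match: $\Cov(R_{\beta,\omega},e^{i\zeta_l X_l})=\E[R_{\beta,\omega}e^{i\zeta_l X_l}]-\E[R_{\beta,\omega}]\E[e^{i\zeta_l X_l}]$, and in the independent-covariate proofs (Lemma~\ref{lemma:eqn-crucial-identity-technical}, Lemma~\ref{lemma:really-technical}) the mean correction is handled only because $\E[\wbar{R}_{\beta,\omega}\mid X_l]$ factorizes via independence and the ANOVA mean-zero identities; none of that is available here, so ``carry out the integrals as in Lemma~\ref{lemma:really-technical}'' is not a step you can take. (Your noted alternative, the pointwise comparison $-h'(\tau s)\,t\gtrsim h(s+t)$, fails exactly as you suspected---the left side vanishes at $t=0$---so the Fourier route is mandatory, which is precisely why this mismatch must be resolved.)

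The paper resolves it by introducing a \emph{different} surrogate
\begin{equation*}
\circup{\partial_{\beta_l}}\,\obj(\beta)\;\defeq\;-\frac{1}{2\lambda}\iiint\big|\E[R_{\beta,\omega}(\beta\odot X;Y)e^{i\zeta_l X_l}]\big|^2\,\psi_t(\zeta_l)\prod_i\psi_t(\omega_i)\,d\zeta_l\,d\omega\,\mu(dt),
\end{equation*}
with no mean correction and no $1/\zeta_l^2$ factor, and showing $\wtilde{\partial_{\beta_l}\obj}(\beta)\le\circup{\partial_{\beta_l}}\,\obj(\beta)+4\E[Y^2]$. Two specific moves are needed, both absent from your sketch: (a) the elementary inequality $|\Cov(W_1,W_2)|^2\ge\tfrac12|\E[W_1W_2]|^2-4|\E[W_1]|^2|\E[W_2]|^2$; and (b) the pointwise kernel bound $t\psi_0(\zeta_l)=\tfrac{t}{\pi\zeta_l^2}\ge\psi_t(\zeta_l)$, which is not just cosmetic: without replacing $t/(\pi\zeta_l^2)$ by the Cauchy density $\psi_t(\zeta_l)$, the correction term $\int|\E[R_{\beta,\omega}]|^2|\E[e^{i\zeta_l X_l}]|^2\,\tfrac{t\,d\zeta_l}{\pi\zeta_l^2}\cdots$ diverges near $\zeta_l=0$, whereas with $\psi_t(\zeta_l)$ it integrates to $\lambda\|f_\beta\|_\H^2\le\E[Y^2]$ via Lemma~\ref{lemma:KKT-condition}. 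Only after this replacement does $\E[R_{\beta^0,\omega}e^{i\zeta_l X_l}]=\E[(Y-g_0(X_T))e^{i\tau\langle\omega_T,X_T\rangle+i\zeta_l X_l}]$ match the Fourier representation of $\effect_{l;T}$, and the rest of your plan (change of variable, $\psi_{\tau t}\ge\wbar\tau\psi_t$, feasibility of $g_0$) goes through to give $\circup{\partial_{\beta_l}}\,\obj(\beta^0)\le-\wbar\tau^{|T|}\effect_{l;T}/\lambda$. One minor correction: the $e^{-2M_\mu M_X}$ factor you invoke comes from integrating a $T^c$-block with $\beta_{T^c}\neq0$ (the setting of Lemma~\ref{lemma:really-technical}); here $\beta^0_{T^c}=0$, so the $\omega_{T^c}$-integral is exactly $1$ and no such factor appears.
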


By Lemma~\ref{lemma:general-gradient-lower-bound} and Assumption~\ref{assumption:insufficient-set-T}, 
and that $\effect_{\what{S}_m} = \max_{l \not\in \what{S}_m}\effect_{l; \what{S}_m}$, we can find 
some $l \not\in \what{S}_m$
\begin{equation}
\label{eqn:desired-goal-on-obj-dependence}
	\partial_{\beta_{l}} \regobj(\beta)\mid_{\beta = \beta^0} \le 
		\frac{1}{\lambda} \cdot (-c \cdot \effect_{\what{S}_m} + C\lambda^{1/2}(1+\lambda^{1/2}) + \lambda \gamma).
\end{equation}
Above, $c, C > 0$ depend only on $\tau, |S|, M, M_X, M_Y, M_\mu$. 
Note $\effect_{\what{S}_m} \ge \min_{T\subsetneq S: T~\text{is insufficient}} \effect_T$.
Hence, if the constant 
$\wbar{C} > 0$ in the condition~\eqref{eqn:condition-dependent-covariates} is sufficiently large, then 
equation~\eqref{eqn:desired-goal-on-obj-dependence} implies that 
$\beta^0$ can not be stationary. This concludes the proof of  Theorem~\ref{thm:dependent-covariates}.

\subsection{Proof of Lemma~\ref{lemma:general-gradient-lower-bound}.}
\label{sec:proof-theorem-general-gradient-lower-bound}
The key is to bound the surrogate gradient $\wtilde{\partial_{\beta_l} \obj(\beta^0)}$. Recall 
\begin{equation}
\label{eqn:rewrite-definition-of-surrogate-gradient}
\wtilde{\partial_{\beta_l} \obj(\beta)} = 
	-\frac{1}{\lambda} \cdot \iiint \left|\Cov \left({R}_{\beta, \omega}(\beta \odot X; Y), e^{i \zeta_l X_l }\right)\right|^2
				\cdot \psi_0(\zeta_l)  t \cdot \prod_i \psi_t(\omega_i) \cdot d\zeta_l d\omega\mu(dt)
\end{equation}
where $\psi_0(\zeta) = \frac{1}{\zeta^2}$. 
By definition $t\psi_0(\zeta_l) =  \frac{t}{\pi \zeta_l^2} \ge \frac{t}{\pi(t^2+\zeta_l^2)} = \psi_t(\zeta_l)$. 
Furthermore, we have 
\begin{equation*}
\begin{split}
	&\left|\Cov \big({R}_{\beta, \omega}(\beta \odot X; Y), e^{i \zeta_l X_l }\big)\right|^2  
	\ge \half \left|\E[{R}_{\beta, \omega}(\beta \odot X; Y)e^{i\zeta_l X_l}]\right|^2 - 4|\E[{R}_{\beta, \omega}(\beta \odot X; Y)]|^2 
\end{split}
\end{equation*}
which follows from the basic inequality $|\Cov(W_1, W_2)|^2 \ge \half |\E[W_1W_2]|^2 - 4 |\E[W_1]|^2 |\E[W_2]|^2$
that holds for any (complex-valued) random variables $W_1, W_2$. As a result, we obtain that 
\begin{equation}
\label{eqn:upper-bound-surrogate-gradient-one}
\begin{split}
	\wtilde{\partial_{\beta_l} \obj(\beta)}& \le  \circup{\partial_{\beta_l}} \obj(\beta) 
		+ \frac{4}{\lambda} \cdot \int  |\E[{R}_{\beta, \omega}(\beta \odot X; Y)]|^2 \cdot Q(\omega)  d\omega.  \\
	\text{where}~~
	\circup{\partial_{\beta_l}} \obj(\beta) &= -\frac{1}{2\lambda} \cdot
		\iiint \left |\E\left[{R}_{\beta, \omega}(\beta \odot X; Y)e^{i \zeta_l X_l }\right]\right|^2
				\cdot \psi_t(\zeta_l) \cdot \prod_i \psi_t(\omega_i)d\zeta_l d\omega \mu(dt) 
\end{split}
\end{equation}
Using Lemma~\ref{lemma:KKT-condition}, Proposition~\ref{proposition:norm-of-H} and 
$\lambda \norm{f_\beta}_{\H}^2\le \energy(\beta, f_\beta) \le \energy(\beta, 0) \nolinebreak= \E[Y^2]$, we get
\begin{equation*}
\frac{1}{\lambda} \cdot\int  |\E[{R}_{\beta, \omega}(\beta \odot X; Y)]|^2 \cdot  Q(\omega) d\omega
=  \frac{\lambda}{(2\pi)^p} \cdot \int  \frac{|\F(f_\beta)(\omega)|^2}{Q(\omega)}d\omega = \lambda \norm{f_\beta}_{\H}^2
\le  \E[Y^2].
\end{equation*}
All of the above bounds, in conjunction with Lemma~\ref{lemma:approximate-gradient-bound}, establish the 
following fact: 
\begin{equation*}
	 \partial_{\beta_l} \obj(\beta) \le \circup{\partial_{\beta_l}} \obj(\beta)
	 	+ C \cdot (1+\frac{1}{\sqrt{\lambda}})
\end{equation*}
holds for any $\beta \in \R_+^p$ and $l \in [p]$, where $C > 0$ depends only on $M_X, M_Y, M_\mu$. 
Using the fact that $\partial_{\beta_l}  \obj_\gamma(\beta) = \partial_{\beta_j}  \obj(\beta) + \gamma$, it 
remains to prove that at $\beta = \beta^0$, $\circup{\partial_{\beta_l}} \obj(\beta)\mid_{\beta= \beta^0}  \le -c \cdot \effect_{l; T}$
where $c > 0$ depending only on $\tau, |T|$.

Below we prove this. Note $g_{\tau, T}(X_T) = f_{\beta^0}(\beta^0 \odot X)$ depends only on $\tau, T, X_T$. Hence,
\begin{equation*}
	\E \left[{R}_{\beta, \omega}(\beta \odot X; Y) 
		e^{i \langle \beta \odot \omega, X\rangle}e^{i \zeta_l X_l }\right] \mid_{\beta = \beta^0}
		= \E \left[(Y-g_{\tau, T}(X_T)) e^{i \cdot \tau \langle \omega_T, X_T\rangle}e^{i \zeta_l X_l }\right]
\end{equation*}
does not depend on $\omega_{T^c}$. Since $\int \psi_t(\omega_i) d\omega_i = 1$ for $i \not\in T$, we obtain 
\begin{equation*}
\begin{split}
\accentset{\circ}{\partial_{\beta_l}}  \obj(\beta) \mid_{\beta = \beta^0} 
	&= - \frac{1}{\lambda} \cdot  \iiint \left| \E \left[(Y-g_{\tau, T}(X_T)) e^{i \cdot \tau \langle \omega_T, X_T\rangle}e^{i \zeta_l X_l }\right]\right|^2
		\cdot \psi_t(\zeta_l) \cdot \prod_{i \in T} \psi_t(\omega_i) d\zeta_l  d\omega_T \mu(dt) \\
	&=- \frac{1}{\lambda} \cdot  \iiint \left| \E \left[(Y-g_{\tau, T}(X_T)) e^{i \cdot \langle \omega_T, X_T\rangle}e^{i \zeta_l X_l }\right]\right|^2
		\cdot \psi_t(\zeta_l) \cdot \prod_{i \in T} \psi_{\tau t}(\omega_i) d\zeta_l  d\omega_T \mu(dt)
\end{split}
\end{equation*}
where the second line follows from a change of variable ($\omega_T \mapsto \tau \omega_T$). 
Since we have 
$\psi_{\tau t}(\omega) = \frac{\tau t}{\tau^2 t^2 + \omega^2} \ge \frac{\tau}{(1+\tau^2)} \cdot \frac{t}{t^2+\omega^2} 
		= \frac{\tau}{(1+\tau^2)} \cdot \psi_t(\omega)$, this immediately implies 
(write $\wbar{\tau} = \tau/(1+\tau^2)$)
\begin{equation*}
\begin{split}
\accentset{\circ}{\partial_{\beta_l}}  \obj(\beta) \mid_{\beta = \beta^0}  
&\le -\frac{\wbar{\tau}^{|T|}}{\lambda} 
	\cdot \iiint  \left| \E \left[(Y-g_{\tau, T}(X_T)) e^{i \cdot \langle \omega_T, X_T\rangle}e^{i \zeta_l X_l }\right]\right|^2
		\cdot \psi_t(\zeta_l) \prod_{i \in T} \psi_t(\omega_i) \cdot d\zeta_l  d\omega_T \mu(dt) \\
&= -\frac{\wbar{\tau}^{|T|}}{\lambda}
	\cdot \E \left[(Y-g_{\tau, T}(X_T))(Y'-g_{\tau, T}(X_T')) h\Big(\normsmall{X_{T\cup\{l\}}- X_{T\cup\{l\}}'}_1\Big)\right],
\end{split}
\end{equation*}
where the last line uses the Fourier representation of the kernel $(x, x') \mapsto h(\norm{x-x'}_1)$.
By Proposition~\ref{prop:second-moment-bound}, $\E[g_{\tau, T}(X_T)^2] \le \E[Y^2]$. Hence
$\accentset{\circ}{\partial_{\beta_l}}  \obj(\beta) \mid_{\beta = \beta^0}  
		\le -\frac{1}{\lambda} \cdot \wbar{c}^{|T|} \cdot \effect_{l; T}$ as desired.

\subsection{Proof of Proposition~\ref{proposition:why-effect-T-nonzero}.}
\label{sec:proof-proposition-why-effect-T-nonzero}

Let $\wbar{\G} = \{g: \E[g^2(X_T)]\}$ denote the Hilbert space 
with the inner product $\langle g_1, g_2\rangle = \E[g_1(X_{T})g_2(X_{T})]$.
and $\mathcal{G} = \left\{g: \E[g^2(X_T)] \le \E[Y^2]\right\}$.  Introduce
\begin{equation*}
	\effect_{j; T}(g) = \E\left[(Y-g(X_{T}))(Y'-g(X'_{T}))
		h\big(\normsmall{X_{T \cup \{j\}} - X_{T \cup \{j\}}'}_{1}\big) \right].
\end{equation*}
Note then $\effect_{j; T}  =  \inf_{g \in \G} \effect_{j; T}(g)$. 
Below we prove $\effect_{j; T} > 0$ when $\E[Y | X_{T \cup \{j\}}] \neq \E[Y | X_T]$.

To see this, we first prove that $\effect_{j; T}(g) > 0$ for any fixed $g\in \G$. Indeed, using the Fourier representation
of the kernel $(x, x') \mapsto h(\norm{x-x'}_1)$, we obtain the expression 
\begin{equation*}
\begin{split}
	\effect_{j; T}(g) = \iint \left|\E[(Y - g(X_{T})) e^{i \langle \omega_{T\cup\{j\}}, X_{T\cup\{j\}}\rangle} ]\right|^2 
		\cdot \prod_{i \in T\cup\{j\}}\psi_t(\omega_i) d\omega_{T\cup\{j\}} \mu(dt) \\
\end{split}
\end{equation*}
Now suppose on the contrary that $\effect_{j; T}(g) = 0$ for some $g \in \G$. Then it implies for all $\omega$, 
\begin{equation*}
	\E[(Y - g(X_{T})) e^{i \langle \omega_{T\cup\{j\}}, X_{T\cup\{j\}}\rangle} ]= 0,
\end{equation*}
This shows that $\E[Y| X_{T \cup \{j\}}] = \E[Y|X_T]$. Contradiction! Hence, $\effect_{j; T}(g) > 0$.

Next, we prove that $\effect_{j; T}  =  \inf_{g \in \G} \effect_{j; T}(g) > 0$. This follows the facts that 
(i) the mapping $g \mapsto \effect_{j; T}(g)$ is lower-semicontinuous w.r.t the weak-* topology of 
$\wbar{\G}$, and (ii) the set $\G$ is compact w.r.t the weak-* topology of the Hilbert space $\wbar{\G}$
(Banach-Alaoglu Theorem~\cite{Conway19}).

\section{Deferred Proof of Results in Section~\ref{sec:preliminary-main-text}}
\label{sec:proof-of-section-of-preliminary-results}

\subsection{Proof of Proposition~\ref{proposition:k-pos-definite-representation}.}
\label{sec:proof-k-pos-definite-representation}
As explained in the main text, when $q= 2$, Proposition~\ref{proposition:k-pos-definite-representation} 
is known as Schoenberg's theorem in the literature~\cite[Theorem 7.13]{Wendland04}. Despite the 
wealth of information available, we are not able to find a good reference of 
Proposition~\ref{proposition:k-pos-definite-representation} for $q=1$ in the literature. For convenience of 
the readers, we provide a self-contained proof. The proof below for $q=1$ mimics that for $q=2$ in the literature 
\cite[Theorem 7.13]{Wendland04}.

\subsubsection{Sufficiency.}
Assume that the function $h$ satisfies equation~\eqref{eqn:l_1_kernel-prop}. Then
\begin{equation*}
	k(x, x') = \int_0^\infty e^{-t \normsmall{x-x'}_q^q} \mu(dt). 
\end{equation*}
It is well-known that $(x, x') \mapsto e^{-t \normsmall{x-x'}_q^q}$ is the positive definite Laplace kernel when $q=1$ and Gaussian 
kernel when $q = 2$. As a weighted average of the 
Laplace and Gaussian kernel over different scale $t > 0$, the function $(x, x') \mapsto k(x, x')$ must also be positive definite. 

\subsubsection{Necessity.}
Assume that the function $k(x, x') = h(\norm{x-x'}_q^q)$ is positive definite. We show that $h$ admits 
the integral representation in equation~\eqref{eqn:l_1_kernel-prop}. 
Recall the following definition of finite-difference operator $\Delta_s^{k}$ in approximation theory~\cite{Wendland04, SchillingSoVo12}. 
\begin{definition}
For any function $\phi: \R_+ \to \R$, $s \ge 0$, we define its $k$th-order difference by 
\begin{equation*}
	[\Delta_s^{k} \phi] (x) \defeq \sum_{j=0}^k (-1)^{k-j}  \choose{k}{j} \phi(x + js).
\end{equation*}
\end{definition}
We argue that it suffices to show that $(-1)^k\Delta_s^{k}[h] \ge 0$ for all $s \ge 0$ and $k \in \N$. Indeed, 
the desired result would follow from the following theorem on completely monotone functions.
\begin{theorem}[Hausdorff--Bernstein--Widder]
A function $\phi \in \mathcal{C}^\infty[0, \infty)$ satisfies $[\Delta_s^{k} \phi] (x) \ge 0$ for all $s \ge 0, x\ge 0, k\in \N$ if 
and only if it is the Laplace transform of a nonnegative finite Borel measure $\mu$, i.e., 
\begin{equation*}
	\phi(x) = \int_0^\infty e^{-t x} \mu(dt).
\end{equation*}
\end{theorem}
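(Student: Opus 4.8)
The plan is to prove this statement (the Hausdorff--Bernstein--Widder theorem) along the classical route: the ``Laplace transform $\Rightarrow$ complete monotonicity'' direction by differentiating under the integral sign, and the converse by reducing to the discrete Hausdorff moment problem on an arithmetic grid and then passing to a limit. Throughout I read the hypothesis in the sign convention under which it expresses complete monotonicity of $\phi$, i.e.\ $(-1)^k[\Delta_s^k\phi](x)\ge 0$ for all $s,x\ge 0$ and $k\in\N$; for $\phi\in\mathcal{C}^\infty[0,\infty)$ this is equivalent to $(-1)^k\phi^{(k)}\ge 0$ on $(0,\infty)$, as one sees from the identity $[\Delta_s^k\phi](x)=\int_{[0,s]^k}\phi^{(k)}(x+t_1+\dots+t_k)\,dt_1\cdots dt_k$ together with $s^{-k}[\Delta_s^k\phi](x)\to\phi^{(k)}(x)$ as $s\to 0^+$. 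For sufficiency: if $\phi(x)=\int_0^\infty e^{-tx}\,\mu(dt)$ with $\mu$ a finite nonnegative Borel measure, then (since $\mu$ is finite and $t\mapsto t^k e^{-tx}$ is bounded on $(0,\infty)$ for each fixed $x>0$) dominated convergence legitimizes differentiation under the integral, giving $(-1)^k\phi^{(k)}(x)=\int_0^\infty t^k e^{-tx}\,\mu(dt)\ge 0$, equivalently $(-1)^k[\Delta_s^k\phi](x)=\int_0^\infty e^{-tx}(1-e^{-ts})^k\,\mu(dt)\ge 0$; and $\phi\in\mathcal{C}^\infty[0,\infty)$ with right-continuous derivatives at $0$ by monotone convergence as $x\downarrow 0$.

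For necessity I would first fix $h>0$ and set $a_n:=\phi(nh)$ for $n\ge 0$. Since $(\Delta^k a)_n=[\Delta_h^k\phi](nh)$, the sign hypothesis says exactly that $(a_n)$ is a completely monotone sequence, so by the Hausdorff moment theorem --- which I would cite from the standard references already in use (\cite{Wendland04, SchillingSoVo12}) or, if a self-contained account is desired, obtain from the monotonicity and uniform convergence of the Bernstein-polynomial approximants $\sum_k \binom{n}{k}u^k(1-u)^{n-k}a_k$ --- there is a finite nonnegative Borel measure $\rho_h$ on $[0,1]$ with $a_n=\int_0^1 u^n\,\rho_h(du)$ and $\rho_h([0,1])=a_0=\phi(0)$. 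Reparametrizing by $u=e^{-hs}$, and discarding the possible atom of $\rho_h$ at $u=0$ (which affects only $a_0$), yields a nonnegative Borel measure $\nu_h$ on $[0,\infty)$ of total mass $\le\phi(0)$ such that
\[
  \phi(nh)=\int_{[0,\infty)}e^{-snh}\,\nu_h(ds)\qquad\text{for all }n\ge 1 .
\]

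Next I would take $h=h_j:=2^{-j}$, so the grids $h_j\N$ are nested. Viewing each $\nu_{h_j}$ as a uniformly bounded measure on the compact space $[0,\infty]$, weak-$*$ compactness (Helly's selection theorem) furnishes a subsequence with $\nu_{h_j}\rightharpoonup\nu$ on $[0,\infty]$. For any dyadic rational $t>0$ we have $t\in h_j\N$ for all large $j$, and since $s\mapsto e^{-st}$ is continuous on $[0,\infty]$ and vanishes at $s=\infty$, weak convergence gives $\phi(t)=\lim_j\int e^{-st}\,\nu_{h_j}(ds)=\int_{[0,\infty)}e^{-st}\,\nu(ds)$. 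Both sides are continuous in $t$ on $(0,\infty)$ --- the right-hand side by dominated convergence since $\nu$ is finite --- and dyadic rationals are dense, so the identity holds for all $t>0$; letting $t\downarrow 0$ and using monotone convergence extends it to $t=0$ with $\phi(0)=\nu([0,\infty))$. Taking $\mu:=\nu$ completes the proof.

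The one genuinely substantive input is the Hausdorff moment theorem itself, and I expect the only real care to lie in the endpoint bookkeeping of the last step: the constant $\lim_{x\to\infty}\phi(x)$ is automatically produced as an atom of $\nu$ at $s=0$ (coming from $\rho_h(\{1\})$), while no mass escapes to $s=+\infty$ precisely because we compactify to $[0,\infty]$ and $e^{-st}$ vanishes there, so an atom $\nu(\{+\infty\})$ contributes nothing and may be dropped. Everything else is routine; and since the surrounding references already cover these classical facts, I would keep the write-up short by invoking the Hausdorff moment theorem rather than reproving it.
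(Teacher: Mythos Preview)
The paper does not actually prove this theorem: it merely states the Hausdorff--Bernstein--Widder theorem as a known classical result and invokes it to complete the proof of Proposition~\ref{proposition:k-pos-definite-representation}. Your proposal therefore supplies a proof where the paper supplies none.

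Your argument is correct and follows the standard classical route (see, e.g., \cite{SchillingSoVo12}): the Laplace-transform direction by differentiating under the integral, and the converse by discretizing on grids $h\N$, invoking the Hausdorff moment theorem to represent $\phi(nh)$ as moments on $[0,1]$, reparametrizing to $[0,\infty)$, and passing to the limit via Helly selection on $[0,\infty]$. You also correctly flag that the theorem statement as written should be read with the sign $(-1)^k[\Delta_s^k\phi](x)\ge 0$; indeed, the paper's own surrounding text works with precisely this sign (``we prove that $(-1)^k\Delta_s^{k}[h] \ge 0$''), so the statement as printed contains a typo. The endpoint bookkeeping you describe (the atom at $s=0$ capturing $\lim_{x\to\infty}\phi(x)$, and discarding any mass at $s=+\infty$ since $e^{-st}$ vanishes there) is exactly the right way to handle the compactification, and the only substantive external input is the discrete Hausdorff moment theorem, which the cited references cover.
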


Now we prove that $(-1)^k\Delta_s^{k}[h] \ge 0$ for all $s \ge 0$ and $k \in \N$. The proof is based on induction. 
To see why $k = 0$ holds, set $x_j = r^{1/q} \cdot e_j /2$ where $\{e_j\}_{j \in [d]}$ is the standard basis 
in $\R^d$. Since $(x, x') \mapsto h(\norm{x-x'}_q^q)$ is positive definite, we obtain that 
$0 \le \sum_{j, k=1}^N h(\norm{x_j - x_k}_q^q) = N\phi(0) + N(N-1) \phi(r)$.
Letting $N$ tend to infinity shows that $\phi(r) \ge 0$ for all $r  \ge 0$.

For the induction step, as $\Delta_s^{k} = \Delta_s^{k-1} \circ \Delta_s^1$, it suffices to show that 
$(x, x') \mapsto -\Delta_s^1[h](\norm{x-x'}_q^q)$ is positive definite. To do this, suppose 
$x_1, x_2, \ldots, x_N \in \R^d$ and $\alpha \in \R^N$ are given. Treat $x_j$ as elements of 
$\R^{d+1}$ (by concatenating $0$ to the $d+1$th coordinate of $x$) and define 
\begin{equation*}
	y_j = \begin{cases}
			x_j ~& 1\le j\le N \\
			x_{j-N}+s^{1/q}\cdot e_{d+1}~&N+1 \le j \le 2N
		\end{cases},
~~~~~~~
	\beta_j = \begin{cases}
			\alpha_j ~& 1\le j\le N \\
			-\alpha_{j-N}~& N+1 \le j \le 2N
		\end{cases}.
\end{equation*}
Since $h$ is positive definite on $\R^{d+1}$, we get 
$
	0 \le \sum_{j, k = 1}^{2N} \beta_j \beta_k h(\norm{y_j - y_k}_q^q) 
		= - 2\sum_{j, k =1}^N \alpha_j \alpha_k \cdot \Delta_{s}^1[h](\norm{x_j - x_k}_q^q). 
$
Hence $(x, x') \mapsto -\Delta_s^1[h](\norm{x-x'}_q^q)$ is positive definite as desired. 

\subsection{Proof of Proposition~\ref{proposition:norm-of-H}.}
\label{sec:proof-norm-of-H}
The proof uses standard argument in RKHS theory~\cite{BerlinetTh11, Paulsen16}. Consider the following Hilbert space
\begin{equation*}
	\H' = \left\{f\in \mathcal{C}(\R^p) \cap \mathcal{L}_2(\R^p): \int \frac{|\F(f)(\omega)|^2}{Q(\omega)} d\omega < \infty\right\},
\end{equation*}
and define its inner product as 
\begin{equation*}
	\langle f, g\rangle_{\H'} = \frac{1}{(2\pi)^p} \int \frac{\F(f)(\omega) \wbar{\F(g)(\omega)}}{Q(\omega)} d\omega
\end{equation*}
Now it suffices to show that $\H' = \H$. By the uniqueness part of Moore-Aronszajn's theorem, 
it suffices to check that $f(x) = \langle f, k(\cdot, x)\rangle_{\H'}$ holds for any function $f \in \H'$ and any $x \in \R^p$.

To see this, note the Fourier transform of Laplace (Gaussian) is Cauchy (Gaussian resp.): 
\begin{equation*}
\begin{split}
	\exp(-|z|) &= \int e^{i \omega z} \frac{1}{\pi(1+\omega^2)}d \omega \\
	\exp(-|z|^2) &= \int e^{i \omega z} \frac{1}{2\sqrt{\pi}}e^{-\omega^2/4}d \omega
\end{split}
\end{equation*}
Consequently, we obtain the integral formula that holds for both $q=1$ and $q=2$: 
\begin{equation*}
k(x', x) = \int_0^\infty e^{-t\norm{x'-x}_q^q} \mu(dt) = \int_0^\infty \int_{\R^p} e^{i \langle \omega, x'- x\rangle}q_t(\omega)\mu(dt)
	= \int_{\R^p} e^{i \langle \omega, x'- x\rangle} Q(\omega) d\omega.
\end{equation*}
Note that (i) $k(\cdot, x) \in \mathcal{L}_1(\R^p)$ since $h \in \mathcal{L}_1(\R^p)$ by assumption, and (ii)
$Q(\cdot) \in \mathcal{L}^1(\R^p)$ since $\int Q(\omega) d\omega = \int_0^\infty \mu(dt) = h(0) < \infty$.
Hence, Fourier's inversion theorem implies that 
\begin{equation*}
	\F(k(\cdot, x))(\omega) = (2\pi)^{p/2} Q(\omega)e^{-i\langle \omega, x\rangle},
\end{equation*}
where the equality holds a.e. under the Lebesgue measure. 
Now suppose  $\norm{f}_{\H'} < \infty$. 
This would imply that $\omega \mapsto \F(f)(\omega)$ is integrable since by Cauchy-Schwartz inequality, 
\begin{equation*}
	\left(\int |\F(f)(\omega)| d\omega\right)^2 \le \int \frac{|\F(f)(\omega)|^2}{Q(\omega)}d\omega
			\cdot \int Q(\omega)d\omega = (2\pi)^{p} |h(0)| \norm{f}_{\H'}^2 <\infty.
\end{equation*}
As $f \in \mathcal{C}(\R^p)$, Fourier's inversion theorem implies that the following holds for all $x$: 
\begin{equation*}
	f(x) = \frac{1}{(2\pi)^{p/2}}\int_{\R^p} F(f)(\omega) e^{i \langle \omega, x\rangle}  d\omega =\frac{1}{(2\pi)^p} \int_{\R^p} 
		\frac{F(f)(\omega) \wbar{\F(k(\cdot, x))(\omega)}}{Q(\omega)} d\omega. 
\end{equation*}
This shows that $f(x) = \langle f, k(\cdot, x)\rangle_{\H'}$ holds for all $x$ and $f \in \H'$. The proof is complete.

\section{Basics}
\label{sec:basics}

\subsection{RKHS}
Moore-Aronszajn Theorem is foundational to the RKHS theory~\cite{Aronszajn50}. 

\begin{theorem}[Moore-Aronszajn]
\label{theorem:Moore-Aronszajn}
Let $k(x, x')$ be a positive definite kernel on $X \times X$. Then there exists one unique 
reproducing kernel Hilbert space $\H$ associated with the kernel $k(x, x')$. It can be identified as follows.
First, let $\H_0$ be the space spanned by the functions 
$\{k(x, \cdot)\}_{x\in X}$. Then, $\H$ is the completion of the pre-Hilbert space 
$\H_0$ with respect to the inner product
\begin{equation*}
\langle f, g \rangle_{\H_0} = \sum_{i=1}^n \sum_{j=1}^m \alpha_{f, i} \alpha_{g, j} k(x_i, y_j).
\end{equation*}
where $f(\cdot) = \sum_{i=1}^n \alpha_{f, i} k(x_i, \cdot)$ and $g(\cdot) = \sum_{j=1}^m \alpha_{g, j} k(y_j, \cdot)$.
\end{theorem}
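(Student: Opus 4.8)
The plan is to follow the classical construction of the minimal RKHS and then prove minimality forces uniqueness. First I would verify that the bilinear form $\langle\cdot,\cdot\rangle_{\H_0}$ is well defined on $\H_0 = \operatorname{span}\{k(x,\cdot): x\in X\}$. If $f = \sum_i \alpha_{f,i}\,k(x_i,\cdot)$ admits two representations, then for every $g = \sum_j \alpha_{g,j}\,k(y_j,\cdot)$ one has $\sum_{i,j}\alpha_{f,i}\alpha_{g,j}\,k(x_i,y_j) = \sum_j \alpha_{g,j}\,f(y_j)$, which depends on $f$ only through its values on $X$; symmetry and bilinearity then follow immediately, and symmetry lets one also read the form off the values of $g$. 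Positive semidefiniteness of $\langle f,f\rangle_{\H_0}$ is exactly the hypothesis that $k$ is positive definite. To upgrade the semi-inner product to an inner product I would invoke the reproducing identity $\langle f, k(x,\cdot)\rangle_{\H_0} = f(x)$ (immediate from the definition) together with Cauchy--Schwarz for semi-inner products, which gives $|f(x)|^2 \le \langle f,f\rangle_{\H_0}\,k(x,x)$; hence $\langle f,f\rangle_{\H_0} = 0$ forces $f\equiv 0$, so $(\H_0,\langle\cdot,\cdot\rangle_{\H_0})$ is a genuine pre-Hilbert space of functions on $X$.

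Next I would let $\H$ be the abstract metric completion of $\H_0$ and realize it concretely as a space of functions. The key observation is that, by the Cauchy--Schwarz bound above, the evaluation functional $f\mapsto f(x)$ is Lipschitz on $\H_0$ with constant $k(x,x)^{1/2}$, so it extends uniquely to a bounded linear functional on $\H$; consequently every Cauchy sequence $(f_n)\subset \H_0$ converges pointwise on $X$. I would then define the identification map sending an abstract limit $\xi$ to the function $x\mapsto \lim_n f_n(x)$ for any $(f_n)\subset\H_0$ with $f_n\to\xi$ in $\H$, checking it is well defined (if $f_n-g_n\to 0$ in $\H$ then $(f_n-g_n)(x)\to 0$ for each $x$) and injective (if the associated function is $0$ then $\langle \xi, k(x,\cdot)\rangle_\H = \lim f_n(x) = 0$ for all $x$, and $\H_0$ is dense in $\H$, so $\xi=0$). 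Under this identification $\H$ is a Hilbert space of functions on $X$ containing $\H_0$ densely, and passing to the limit in the reproducing identity yields $\langle f, k(x,\cdot)\rangle_\H = f(x)$ for all $f\in\H$; in particular point evaluations are continuous, so $\H$ is an RKHS with kernel $k$.

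For uniqueness I would take any RKHS $\H'$ on $X$ with reproducing kernel $k$. Then $k(x,\cdot)\in\H'$ for every $x$, so $\H_0\subseteq\H'$, and for $f,g\in\H_0$ the reproducing property in $\H'$ gives $\langle f,g\rangle_{\H'} = \sum_{i,j}\alpha_{f,i}\alpha_{g,j}\,k(x_i,y_j) = \langle f,g\rangle_{\H_0}$, so the two inner products agree on $\H_0$. Moreover $\H_0$ is dense in $\H'$: if $f\in\H'$ is orthogonal to every $k(x,\cdot)$ then $f(x) = \langle f, k(x,\cdot)\rangle_{\H'} = 0$ for all $x$, hence $f = 0$. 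Therefore $\H'$ is a completion of $\H_0$ under the same inner product, and by uniqueness of completions $\H' = \H$ isometrically.

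I expect the main obstacle to be the second paragraph: carefully realizing the abstract completion as an honest space of functions on $X$, and verifying that the pointwise-limit identification is simultaneously well defined, injective, and compatible with the extended inner product. Everything else — well-definedness and bilinearity of the form, the passage from semi-inner product to inner product, and the uniqueness argument — is routine bookkeeping with the reproducing identity and Cauchy--Schwarz.
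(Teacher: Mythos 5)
Your proof is correct and is the standard Aronszajn construction. The paper itself does not prove this theorem---it is stated in Section~\ref{sec:basics} as a classical result and attributed to~\cite{Aronszajn50}---so there is no ``paper's proof'' to compare against; your argument matches the canonical one that the citation points to. Two small remarks, neither of which is a gap: (i) the well-definedness and symmetry of the bilinear form, and the passage $\langle f,f\rangle_{\H_0}=0\Rightarrow f\equiv 0$, both tacitly use that a positive definite kernel is symmetric ($k(x,y)=k(y,x)$), which is part of the standard definition but worth stating; and (ii) in the uniqueness step, ``uniqueness of completions'' should be understood concretely: any $f\in\H'$ is an $\H'$-limit of a sequence $f_n\in\H_0$, that sequence is $\H_0$-Cauchy, hence converges pointwise by your Cauchy--Schwarz bound, and the pointwise limit equals $f$ in $\H'$ and also the corresponding element of $\H$; this identifies $\H'$ and $\H$ as the \emph{same} space of functions with the same norm, not merely isometrically isomorphic abstract Hilbert spaces. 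You already have all the ingredients for this; it just deserves one explicit sentence.
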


\subsection{Functional Analysis}
Let $\H_1, \H_2$ be Hilbert spaces. 
\begin{definition}
The Hilbert-Schmidt norm for an operator $A:\H_1 \to \H_2$ is defined by:
\begin{equation*}
	\normbig{A}_{\HS}^2 =\sum_{i=1}^\infty \sum_{j=1}^\infty \langle \phi_i, A\psi_j\rangle_{\H_2}^2.
\end{equation*} 
Here $\{\psi_i\}_{i \in \N}$, $\{\phi_i\}_{i\in \N}$ are complete orthonormal system of 
$\H_1$ and of $\H_2$ respectively. The Hilbert-Schmidt inner-product 
for any two operators $A_1, A_2:\H_1 \to \H_2$ is defined by 
\begin{equation*}
	\langle A_1, A_2 \rangle_{\HS} =\sum_{i=1}^\infty \sum_{j=1}^\infty  \langle \phi_i, A_1\psi_j\rangle_{\H_2} \langle \phi_i, A_2\psi_j\rangle_{\H_2}.
\end{equation*} 
\end{definition}

\begin{definition}
The operator norm for an operator $A:\H_1 \to \H_2$ is defined by:
\begin{equation*}
	\opnorm{A} = \sup_{\psi: \normsmall{\psi}_{\H_1} = 1} \normbig{ A\psi }_{\H_2}
		= \sup_{\phi: \normsmall{\phi}_{\H_2} \le 1, \normsmall{\psi}_{\H_1} \le 1} \langle \phi, A\psi\rangle_{\H_2}.
\end{equation*} 
\end{definition}

\subsection{Concentration}
\label{sec:tool-concentrations}

\subsubsection{Definitions}

\begin{definition}
\label{definition: sub-gaussian-exponential}
A random variable $Z$ is sub-gaussian with parameter $\sigma$ if 
$\E[e^{tZ}] \le e^{\frac{\sigma^2 t^2}{2}}$ for all $t \in \R$.
A random variable $Z$ is sub-exponential with parameter $\sigma$ if and only if 
\begin{equation*}
	\E[e^{tZ}] \le e^{\frac{\sigma^2 t^2}{2}}~~\text{for all $|t| \le \frac{1}{\sigma}$}.
\end{equation*}
\end{definition}

\begin{definition}
\label{definition:sub-exponential-process}
Let $d$ be a semi-norm on a set $\mathcal{S}$. We call a random process 
$\{X_s\}_{s\in \mathcal{S}}$ on the space $(\mathcal{S}, d)$ sub-exponential if $\E[X_s] = 0$ and 
for all $s_1, s_2 \in S$, $X_{s_1} - X_{s_2}$ is sub-exponential with parameter $d(s_1, s_2)$.
\end{definition}

\subsubsection{Panchenko's lemma.}
Lemma~\ref{lemma:Panchenko} is due to Panchenko~\cite[Lemma 1]{Panchenko03}. 
\begin{lemma}
\label{lemma:Panchenko}
Let $Z_1, Z_2$ be two random variables such that 
\begin{equation*}
\E[\Phi(Z_1)] \le \E[\Phi(Z_2)]
\end{equation*}
for all convex and increasing function $\Phi: \R_+ \to \R_+$. If for some $a, c_1, \alpha \ge 1, c_2 > 0$, 
\begin{equation*}
\P(Z_2 \ge a + t) \le c_1 e^{-c_2 t^{\alpha}}~~\text{for all $t \ge 0$}.
\end{equation*}
then we have 
\begin{equation*}
\P(Z_1 \ge a + t) \le c_1 e^{1-c_2 t^{\alpha}}~~\text{for all $t \ge 0$}.
\end{equation*}
\end{lemma}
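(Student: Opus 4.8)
The plan is to run the classical truncated-first-moment argument (this is essentially Panchenko's own proof). Fix $t\ge 0$ and set $u=a+t$; the goal is to bound $\P(Z_1\ge u)$. First I would dispose of a trivial regime: if $c_2 t^\alpha<1$ then $c_1 e^{1-c_2 t^\alpha}>c_1\ge 1\ge \P(Z_1\ge u)$ and there is nothing to prove, so from now on assume $c_2 t^\alpha\ge 1$ (in particular $t>0$).

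Next I would build the following chain of inequalities. For any $s<u$ the elementary Markov-type bound
\[
\P(Z_1\ge u)\;\le\;\frac{\E[(Z_1-s)_+]}{u-s}
\]
holds, since $(Z_1-s)_+\ge (u-s)\mathbf{1}\{Z_1\ge u\}$. Because $x\mapsto (x-s)_+$ is convex and nondecreasing, the hypothesis yields $\E[(Z_1-s)_+]\le\E[(Z_2-s)_+]$ for every admissible $s$. Then, writing $\E[(Z_2-s)_+]=\int_s^\infty \P(Z_2>w)\,dw$ and invoking the tail bound $\P(Z_2\ge w)\le c_1 e^{-c_2(w-a)^\alpha}$ (valid for $w\ge a$), I get for any $s\ge a$, setting $r:=s-a\ge 0$,
\[
\E[(Z_2-s)_+]\;\le\;c_1\int_r^\infty e^{-c_2 y^\alpha}\,dy .
\]
To control this integral I would use convexity of $y\mapsto y^\alpha$ on $[0,\infty)$ — this is exactly where $\alpha\ge 1$ is used — but taking the tangent line at the endpoint $y=t$ rather than at $y=r$: $y^\alpha\ge t^\alpha+\alpha t^{\alpha-1}(y-t)$ for all $y\ge 0$, whence
\[
\int_r^\infty e^{-c_2 y^\alpha}\,dy\;\le\;\frac{1}{c_2\alpha t^{\alpha-1}}\,e^{-c_2 t^\alpha+c_2\alpha t^{\alpha-1}(t-r)} .
\]
Combining the three displays (and noting $u-s=t-r$) gives, for every $r$ with $0\le r<t$,
\[
\P(Z_1\ge a+t)\;\le\;\frac{c_1}{(t-r)\,c_2\alpha t^{\alpha-1}}\;e^{-c_2 t^\alpha+c_2\alpha t^{\alpha-1}(t-r)} .
\]

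The final step is the optimization: choose $t-r=\dfrac{1}{c_2\alpha t^{\alpha-1}}$, i.e.\ $r=t-\dfrac{1}{c_2\alpha t^{\alpha-1}}$. Admissibility ($0\le r<t$) follows because $c_2 t^\alpha\ge 1$ and $\alpha\ge 1$ force $c_2\alpha t^\alpha\ge 1$. With this choice the prefactor $\frac{1}{(t-r)c_2\alpha t^{\alpha-1}}$ equals $1$ and the extra exponential factor $e^{c_2\alpha t^{\alpha-1}(t-r)}$ equals $e^1$, so the bound collapses to exactly $\P(Z_1\ge a+t)\le c_1 e^{1-c_2 t^\alpha}$. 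I expect no genuine obstacle here: the only real idea is anchoring the tangent-line estimate of $\int_r^\infty e^{-c_2y^\alpha}\,dy$ at $t$ and then choosing $r$ so that the constants align to produce precisely the factor $e^1$; the rest is bookkeeping. The only point needing a word of care is the domain of $\Phi$: since $s=a+r\ge a\ge 1>0$, the test function $(x-s)_+$ is indeed a convex nondecreasing map on $\R_+$, so it is covered by the hypothesis (and if "increasing" is meant strictly, one applies the hypothesis to $(x-s)_++\varepsilon x$ and lets $\varepsilon\downarrow 0$).
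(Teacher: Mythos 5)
The paper does not prove this lemma itself; it simply cites Panchenko~\cite[Lemma 1]{Panchenko03}, and your argument is precisely the truncated-first-moment proof given there. Your write-up is correct in all details: the trivial regime $c_2 t^\alpha<1$ is handled, the test function $(x-s)_+$ with $s\ge a\ge 1$ is admissible, the tangent-line bound $y^\alpha\ge t^\alpha+\alpha t^{\alpha-1}(y-t)$ uses $\alpha\ge 1$ exactly where needed, and the choice $t-r=1/(c_2\alpha t^{\alpha-1})$ is admissible because $c_2\alpha t^\alpha\ge c_2 t^\alpha\ge 1$, yielding exactly the claimed bound.
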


\subsubsection{Maurey's Sparsification Lemma.}
Let $z_1, z_2, \ldots, z_n$ be in $\R^p$ such that $\norm{z_i}_\infty \le M_Z$. Introduce
the seminorm $\norm{\cdot}_{@} \defeq \max_{i=1}^n |\langle \cdot, z_i\rangle|$. 
Let $\mathcal{B}_M = \left\{\beta: \norm{\beta}_1 \le M\right\}$. Proposition~\ref{proposition:maurey-argument} 
bounds $N(\mathcal{B}_M, \norm{\cdot}_{@}, \eps)$, the minimal covering number of 
$\mathcal{B}_M$ using $\eps$-$\norm{\cdot}_{@}$ ball. 
\begin{proposition}[Maurey's Sparsification Lemma~\cite{Pisier81, Jones92, Barron93}]
\label{proposition:maurey-argument}
There exists an absolute constant $C > 0$ such that 
\begin{equation*}
\log N(\mathcal{B}_M, \norm{\cdot}_{@}, M_Z \cdot \eps) \le CM^2 \cdot \frac{\log n \log p}{\eps^2}. 
\end{equation*}
\end{proposition}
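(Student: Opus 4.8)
The plan is to use the classical probabilistic/empirical-method argument of Maurey. Fix $\beta \in \mathcal{B}_M$; without loss of generality assume $\norm{\beta}_1 = M$ exactly (points with smaller $\ell_1$ norm only help, and one can always rescale). Write $\beta = M \sum_{j=1}^p \pi_j \sigma_j e_j$ where $\pi_j = |\beta_j|/M \ge 0$ sum to one, $\sigma_j = \mathrm{sign}(\beta_j)$, and $e_j$ is the $j$th standard basis vector. Thus $\beta/M$ is a convex combination of the $2p$ points $\{\pm e_j\}_{j \in [p]}$. First I would introduce i.i.d.\ random vectors $V_1, \ldots, V_k$ taking the value $M\sigma_j e_j$ with probability $\pi_j$, so that $\E[V_1] = \beta$, and set $\wbar{\beta} = \frac1k \sum_{m=1}^k V_m$. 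Each candidate $\wbar\beta$ lies in the finite set of averages of $k$ points drawn from $\{\pm M e_j\}$, a set of cardinality at most $(2p)^k$ (before accounting for multiplicity); this will give the $\log p$ factor in the entropy bound, while $k$ will be chosen of order $\eps^{-2}\log n$, producing the $\log n$ factor.

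The key step is to bound $\E\big[\norm{\wbar\beta - \beta}_{@}^2\big]$ and then, more importantly, $\E\big[\max_{1\le i\le n}|\langle \wbar\beta - \beta, z_i\rangle|^2\big]$, since $\norm{\cdot}_{@}$ is a maximum over $n$ linear functionals. For each fixed $i$, the scalar $\langle V_m, z_i\rangle$ is a bounded random variable: $|\langle V_m, z_i \rangle| \le M \norm{z_i}_\infty \le M M_Z$, and $\langle \wbar\beta - \beta, z_i\rangle = \frac1k\sum_m (\langle V_m, z_i\rangle - \langle\beta, z_i\rangle)$ is an average of $k$ i.i.d.\ centered bounded terms. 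A sub-Gaussian tail bound (Hoeffding) gives $\P(|\langle \wbar\beta - \beta, z_i\rangle| > t) \le 2\exp(-ckt^2/(M M_Z)^2)$; a union bound over the $n$ functionals then yields $\P(\norm{\wbar\beta - \beta}_{@} > t) \le 2n\exp(-ckt^2/(MM_Z)^2)$. Choosing $t = M_Z \eps$ and $k = \lceil C M^2 \eps^{-2}\log n\rceil$ for a suitable absolute constant $C$ makes this probability strictly less than $1$, so there exists at least one realization $\wbar\beta$ with $\norm{\wbar\beta - \beta}_{@} \le M_Z\eps$. Hence every $\beta \in \mathcal{B}_M$ is within $M_Z\eps$ (in $\norm{\cdot}_{@}$) of some point in the deterministic finite set $\mathcal{N}$ of all $k$-fold averages of elements of $\{\pm M e_j\}_{j\in[p]}$.

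It remains to count $|\mathcal{N}|$: an average of $k$ ordered draws is determined by the multiset of indices-with-signs chosen, so $|\mathcal{N}| \le (2p)^k \le (2p)^{CM^2\eps^{-2}\log n + 1}$, giving $\log N(\mathcal{B}_M, \norm{\cdot}_{@}, M_Z\eps) \le \log|\mathcal{N}| \le C' M^2 \frac{\log n \log p}{\eps^2}$ after absorbing the additive constant and the $\log(2p)$ vs.\ $\log p$ discrepancy into the absolute constant (valid for $p \ge 2$; the case $p = 1$ is trivial). I expect the main obstacle to be purely bookkeeping: making the choice of $k$ as a function of $\eps$ and $n$ uniform over all $\beta\in\mathcal{B}_M$ simultaneously (the covering set $\mathcal{N}$ must not depend on $\beta$), and tracking constants so that the final bound has exactly the stated form $CM^2\eps^{-2}\log n\log p$ with $C$ absolute. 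One should also double-check that the regime of interest ($\eps$ not too small relative to $M$, so that $k \ge 1$ and the covering is nontrivial) is handled, and that for $\eps \gtrsim M$ the trivial single-point cover suffices, which is consistent with the claimed bound.
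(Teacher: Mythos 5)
The paper does not actually prove this proposition; it is stated as a standard result and delegated to the cited references on Maurey's empirical method. Your proposal reconstructs precisely that classical argument (randomized rounding of a convex combination, Hoeffding plus a union bound over the $n$ linear functionals, then counting $k$-fold averages from a dictionary of size $O(p)$), and the overall structure and choice of $k \asymp M^2\eps^{-2}\log n$ are correct.

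One step needs to be patched: your ``without loss of generality assume $\norm{\beta}_1 = M$ exactly, one can always rescale'' does not work here, because the covering set $\mathcal{N}$ must be a single fixed finite set covering all of $\mathcal{B}_M$ simultaneously, and the rescaling factor $\norm{\beta}_1/M$ would have to vary with $\beta$. If you set $\pi_j = |\beta_j|/M$ for $\beta$ with $\norm{\beta}_1 < M$, the $\pi_j$ do not sum to $1$, so they are not yet a probability distribution; and if you instead normalize by $\norm{\beta}_1$, the atoms $\pm\norm{\beta}_1 e_j$ depend on $\beta$ and ruin the finiteness of the net. The standard fix is to enlarge the dictionary to $\{0\}\cup\{\pm M e_j\}_{j\in[p]}$ and let $V_m = 0$ with probability $1 - \norm{\beta}_1/M$, so that $\E[V_m]=\beta$ for every $\beta\in\mathcal{B}_M$. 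The rest of your argument goes through unchanged with $|\mathcal{N}| \le (2p+1)^k$ rather than $(2p)^k$, which only changes the absolute constant. Your treatment of the edge cases ($k<1$, large $\eps$, small $p$ and $n$) is otherwise fine.
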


\subsubsection{Chaining theorem for sub-exponential processes}

\begin{theorem}
\label{theorem:chaining-sub-exponential}
Let $d$ be a semi-norm on a set $\mathcal{S}$. 
Assume that the random process $\{X_s\}_{s\in \mathcal{S}}$ on the space $(\mathcal{S}, d)$ is sub-exponential 
(see Definition~\ref{definition:sub-exponential-process}). Then for any $\delta > 0$ and $t > 0$, and 
any choice of $s_0 \in \mathcal{S}$, we have with probability at least $1-e^{-t}$
\begin{equation*}
\sup_{s \in \mathcal{S}} X_s \le X_{s_0} + 
		\sup_{\stackrel{s_1, s_2 \in S}{d(s_1, s_2) \le \delta}} |X_{s_1} - X_{s_2}| 
			+  C \cdot \left( \int_{\delta}^{\diam(\mathcal{S})} \log(N(\mathcal{S}, d, \eps)) d\eps + \diam(\mathcal{S}) t\right),
\end{equation*}
where in above, $\diam(\mathcal{S}) = \sup \{d(s_1, s_2) \mid s_1, s_2 \in S\}$ denotes the diameter of the set 
$S$ w.r.t the seminorm $d$, $N(\mathcal{S}, d, \eps)$ denotes the cardinality of smallest $\eps$-covering set.
\end{theorem}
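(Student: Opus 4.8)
The plan is to prove Theorem~\ref{theorem:chaining-sub-exponential} by the classical multi-scale chaining argument, i.e.\ the sub-exponential analogue of Dudley's entropy bound. First I would reduce to $\mathcal{S}$ finite: replace $\mathcal{S}$ by an arbitrary finite subset $F \ni s_0$, prove the bound with constants independent of $F$, and then pass to the supremum over all finite subsets. Since the right-hand side never depends on $F$, this also takes care of the measurability of $\sup_{s}X_s$ at no extra cost.

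Next, fix scales $\eps_0 = \diam(\mathcal{S})$ and $\eps_j = 2^{-j}\eps_0$, and let $J$ be the first index with $\eps_J \le \delta$. For each $0 \le j \le J$ pick a minimal $\eps_j$-net $S_j$ of $F$ with $S_0 = \{s_0\}$, and let $\pi_j : F \to S_j$ send each point to a nearest net point, so $d(s,\pi_j(s)) \le \eps_j$. For any $s \in F$ I would telescope
$$X_s - X_{s_0} = \bigl(X_s - X_{\pi_J(s)}\bigr) + \sum_{j=1}^{J}\bigl(X_{\pi_j(s)} - X_{\pi_{j-1}(s)}\bigr),$$
where the \emph{fine} term obeys $|X_s - X_{\pi_J(s)}| \le \sup_{d(s_1,s_2)\le\delta}|X_{s_1}-X_{s_2}|$ because $d(s,\pi_J(s)) \le \eps_J \le \delta$, and each \emph{link} increment at level $j$ is sub-exponential with parameter $d(\pi_j(s),\pi_{j-1}(s)) \le \eps_j + \eps_{j-1} \le 3\eps_j$.

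The core estimate is a union bound over links. At level $j$ there are at most $N_j := N(\mathcal{S},d,\eps_j)$ points in $S_j$, hence at most $N_j N_{j-1} \le N_j^2$ distinct link increments. Using the two-regime Chernoff tail $\P(Z \ge u) \le \exp\!\bigl(-\tfrac12\min\{u^2/\sigma^2,\,u/\sigma\}\bigr)$ for a sub-exponential $Z$ with parameter $\sigma \asymp \eps_j$, I would allocate a deviation $u_j = C\eps_j\bigl(\log N_j + j + t\bigr)$ at level $j$; since $u_j \gtrsim \eps_j$ this sits in the \emph{linear} tail regime, which is precisely why the final bound involves $\int \log N\,d\eps$ rather than $\int\sqrt{\log N}\,d\eps$. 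With this choice $N_j^2\exp(-c\,u_j/\eps_j) \le 2^{-j}e^{-t}$, so the total failure probability summed over $j$ is at most $e^{-t}$, and on the complement $\sum_{j=1}^J |X_{\pi_j(s)}-X_{\pi_{j-1}(s)}| \le \sum_{j=1}^J u_j$ for all $s \in F$ simultaneously. It remains to convert $\sum_j u_j$ to the stated form: since $\eps \mapsto \log N(\mathcal{S},d,\eps)$ is nonincreasing and $\eps_j - \eps_{j+1} = \eps_j/2$, we get $\eps_j\log N_j \lesssim \int_{\eps_{j+1}}^{\eps_j}\log N(\mathcal{S},d,\eps)\,d\eps$, hence $\sum_{j\le J}\eps_j\log N_j \lesssim \int_\delta^{\diam(\mathcal{S})}\log N(\mathcal{S},d,\eps)\,d\eps$; the geometric sum $\sum_j \eps_j \le 2\diam(\mathcal{S})$ turns the $j$- and $t$-allocations into $O(\diam(\mathcal{S})\,(1+t))$, absorbed into the $\diam(\mathcal{S})\,t$ term after adjusting constants. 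Combining the three pieces of the telescoping decomposition and taking the supremum over $s$ yields the inequality.

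I expect the main technical care to be in the union-bound bookkeeping across scales: one must (i) check that the chosen $u_j$ always lies in the linear tail regime, enforcing $u_j \ge c\eps_j$ so the degenerate cases $N_j = 1$ or small $j,t$ cause no trouble; (ii) arrange the level-$j$ failure probabilities into a summable geometric series totalling $e^{-t}$; and (iii) correctly trace the additive $\diam(\mathcal{S})\,t$ term back to the geometric sum $\sum_j\eps_j$ against the $t$-allocation. None of this is deep, but pinning down the constants and the quadratic/linear crossover is where attention is needed; the telescoping identity, the sub-exponential Chernoff bound, and the dyadic-sum-to-integral comparison are otherwise routine.
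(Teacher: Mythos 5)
Your proposal is correct and follows essentially the same dyadic-chaining scheme as the paper's own proof: telescoping against a hierarchy of $\eps_j$-nets rooted at $\{s_0\}$, using the linear tail regime of the sub-exponential Chernoff bound to get the $\log N$ (rather than $\sqrt{\log N}$) integrand, and converting the dyadic sum to the Dudley-type integral. The only cosmetic difference is the per-level allocation (you use $j+t$, the paper uses $\sqrt{k-k_0}+t$); both give summable failure probabilities and the same final estimate up to constants.
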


\begin{proof}
The proof is based on standard chaining argument~\cite{Ramon14}. 

Let $k_0$ be the largest integer such that $2^{-k_0} \ge \diam(\mathcal{S})$. 
Let $k_1$ be the smallest integer such that $\delta \ge 2^{-k_1}$. For each $k \ge 0$, let $N_k$ 
be a $2^{-k}$ net, and choose $\pi_k(s) \in N_k$ such that $d(s, \pi_k(s)) \le 2^{-k}$. 
Fix $s_0$ and let $N_{k_0} = \{s_0\}$. Then by the telescoping of the sum, we have 
\begin{equation*}
	X_s - X_{s_0} = X_s - X_{\pi_{k_1}(s)} + \sum_{k_0 < k \le k_1} X_{\pi_k(s)} - X_{\pi_{k-1}(s)}.
\end{equation*}
Consequently, we obtain that 
\begin{equation*}
	\sup_{s \in \mathcal{S}} X_s \le X_{s_0} + 	
		\sup_{s \in \mathcal{S}} (X_s - X_{\pi_{k_1}(s)}) + \sum_{k > k_0} \sup_{s \in \mathcal{S}} \left\{X_{\pi_k(s)} - X_{\pi_{k-1}(s)}\right\}. 
\end{equation*}
By the sub-exponential property of the process $\{X_s\}_{s \in \mathcal{S}}$ and union bound, we get for $t > 0$:
\begin{equation*}
	\P\left(\sup_{s \in \mathcal{S}} \left\{X_{\pi_k(s)} - X_{\pi_{k-1}(s)}\right\} \ge 2^{-k}(\log (|N_k|) + t)\right) \le e^{-t}.
\end{equation*}
Thus, with high probability, every link $X_{\pi_k(s)} - X_{\pi_{k-1}(s)}$ at the scale $k$ is small. Now we 
show that all links are small simultaneously. To do so, we fix a sequence of $t_k$. Then 
\begin{equation*}
\begin{split}
\P\left(\Omega\right) &\defeq 
	\P \left(\exists k > k_0 ~~\text{s.t.}. ~\sup_{s \in \mathcal{S}} \{X_{\pi_k(s)} - X_{\pi_{k-1}(s)}\}
		\ge 2^{-k}(\log (|N_k|+3) + t) \right) \\
		&\le \sum_{k > k_0} \P \left(\exists k > k_0 ~~\text{s.t.}. ~\sup_{s \in \mathcal{S}} \{X_{\pi_k(s)} - X_{\pi_{k-1}(s)}\} 
		\ge 2^{-k}(\log (|N_k|+3) + t) \right)  \\
		&\le \sum_{k > k_0} \exp(-t_k).
\end{split}
\end{equation*}
A simple choice of $t_k = t + \sqrt{k-k_0}$ gives that $\P(\Omega) \le e^{-t} \cdot \sum_{k > 0}  \exp(- \sqrt{k}) \le Ce^{-t}$ 
for some absolute constant $C > 0$. Now note that on the event $\Omega^c$, we have 
\begin{equation*}
\begin{split}
	\sup_{s \in \mathcal{S}} X_s &\le X_{s_0} + \sup_{s \in \mathcal{S}} (X_s - X_{\pi_{k_1}(s)}) + \sum_{k_0 < k\le k_1} 2^{-k}(\log (|N_k|+3) + t)  \\
	&\le X_{s_0} + \sup_{s \in \mathcal{S}} (X_s - X_{\pi_{k_1}(s)}) + 
		\wbar{C} \cdot  \int_{\delta}^{\diam(\mathcal{S})} \log(N(\mathcal{S}, d, \eps)) d\eps + \diam(\mathcal{S}) t
\end{split}
\end{equation*}
where in the last inequality, we have used the fact that $\sum_{k > k_0} 2^{-k} \le 2^{-(k_0-1)} \le 4\diam(\mathcal{S})$. 
This completes the proof. 
\end{proof}

\subsection{Optimization}
Lemma~\ref{lemma:gradient-ascent-increases-objective} is standard in nonlinear optimization~\cite[Prop 2.3.2]{Bertsekas97}.
\begin{lemma}
\label{lemma:gradient-ascent-increases-objective}
Consider the minimization problem 
\begin{equation*}
\begin{split}
\minimize_\beta ~J(\beta)~~ 
\subjectto ~\beta \in \mathcal{C}.
\end{split}
\end{equation*}
Assume that (i) the gradient $\beta \mapsto \grad J(\beta)$ is L-Lipschitz, i.e., 
$\ltwo{\grad J(\beta) - \grad J(\beta^\prime)} \le L \ltwo{\beta-\beta^\prime}$
for any $\beta, \beta^\prime \in \mathcal{C}$ and (ii) the constraint set 
$\mathcal{C}$ is convex. Consider projected gradient descent 
\begin{equation*}
\beta^{(k+1)} = \proj_{\mathcal{C}} \left(\beta^{(k)} - \alpha \grad J(\beta^{(k)})\right),
\end{equation*}
where the stepsize $\alpha \le 1/L$. Then (i)  $k \mapsto J(\beta^{(k)})$ is 
monotonically decreasing and (ii) any accumulation point $\beta^{\infty}$ of 
$\{\beta^{(k)}\}_{k \in \N}$ is stationary, i.e., $\langle \grad J(\beta^{\infty}), \beta' - \beta^\infty\rangle \ge 0$
for $\beta' \in \mathcal{C}$.
\end{lemma}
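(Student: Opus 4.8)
The plan is to give the textbook three-ingredient argument: (1) the descent inequality that comes from $L$-Lipschitzness of $\grad J$, (2) the obtuse-angle (variational) characterization of the Euclidean projection onto the convex set $\mathcal{C}$, and (3) a compactness–continuity argument at an accumulation point. We may assume $L>0$, since otherwise we replace $L$ by any positive upper bound on its Lipschitz constant, which only shrinks the admissible stepsize range and leaves $\alpha\le 1/L$ in force.

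First I would establish the per-iteration decrease. Abbreviating $\beta=\beta^{(k)}$ and $\beta^+=\beta^{(k+1)}=\proj_{\mathcal{C}}(\beta-\alpha\grad J(\beta))$, the Lipschitz bound on $\grad J$ (applied along the segment $[\beta,\beta^+]\subseteq\mathcal{C}$) gives
\[
J(\beta^+)\le J(\beta)+\langle\grad J(\beta),\beta^+-\beta\rangle+\tfrac{L}{2}\ltwo{\beta^+-\beta}^2 ,
\]
while the defining variational inequality of the projection, tested at the feasible point $\beta$, reads $\langle \beta-\alpha\grad J(\beta)-\beta^+,\ \beta-\beta^+\rangle\le 0$, i.e. $\ltwo{\beta-\beta^+}^2\le\alpha\langle\grad J(\beta),\beta-\beta^+\rangle$. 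Substituting the second into the first and using $\alpha\le 1/L$ yields
\[
J(\beta^{(k+1)})\le J(\beta^{(k)})-\Bigl(\tfrac1\alpha-\tfrac{L}{2}\Bigr)\ltwo{\beta^{(k+1)}-\beta^{(k)}}^2\le J(\beta^{(k)})-\tfrac{1}{2\alpha}\ltwo{\beta^{(k+1)}-\beta^{(k)}}^2 ,
\]
which already proves that $k\mapsto J(\beta^{(k)})$ is monotonically nonincreasing, the first conclusion.

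Next I would upgrade this to summability of the increments and then to stationarity. Let $\beta^\infty$ be an accumulation point, $\beta^{(k_j)}\to\beta^\infty$; since $J$ is $C^1$ hence continuous, $J(\beta^{(k_j)})\to J(\beta^\infty)$, and because the full sequence $J(\beta^{(k)})$ is nonincreasing it converges to $J(\beta^\infty)$. Telescoping the displayed inequality over $k=0,1,2,\dots$ gives $\sum_{k\ge 0}\ltwo{\beta^{(k+1)}-\beta^{(k)}}^2\le 2\alpha\bigl(J(\beta^{(0)})-J(\beta^\infty)\bigr)<\infty$, so $\ltwo{\beta^{(k+1)}-\beta^{(k)}}\to 0$. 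Now the map $\beta\mapsto\proj_{\mathcal{C}}(\beta-\alpha\grad J(\beta))$ is continuous (the $1$-Lipschitz projection composed with a continuous map), so passing to the limit along $\{k_j\}$ in $\beta^{(k_j+1)}=\proj_{\mathcal{C}}(\beta^{(k_j)}-\alpha\grad J(\beta^{(k_j)}))$, and using $\beta^{(k_j+1)}=\beta^{(k_j)}+(\beta^{(k_j+1)}-\beta^{(k_j)})\to\beta^\infty$, gives the fixed-point identity $\beta^\infty=\proj_{\mathcal{C}}(\beta^\infty-\alpha\grad J(\beta^\infty))$. Applying the projection variational inequality once more at this fixed point yields $\langle-\alpha\grad J(\beta^\infty),\ \beta'-\beta^\infty\rangle\le 0$ for all $\beta'\in\mathcal{C}$, i.e. $\langle\grad J(\beta^\infty),\beta'-\beta^\infty\rangle\ge 0$, the desired stationarity.

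The only mildly delicate point — and the one I would be careful about — is the passage from ``an accumulation point exists'' to ``$J(\beta^{(k)})$ converges,'' which is needed because the hypotheses do not assume $J$ is bounded below; it is resolved by combining monotonicity of $J(\beta^{(k)})$ with continuity of $J$ along the convergent subsequence. Everything else is a routine invocation of the descent lemma and of the elementary properties of Euclidean projection onto a convex set, so no further obstacle is anticipated.
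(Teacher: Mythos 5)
Your proof is correct, and it is the standard textbook argument (descent lemma from $L$-Lipschitzness, projection variational inequality, summability of increments, continuity of the projected-gradient map); the paper itself does not supply a proof but instead delegates to Bertsekas [Prop.~2.3.2], which uses essentially this same chain of reasoning. The one point worth emphasizing in your write-up, which you do handle correctly, is that boundedness of $J$ below is not assumed, so convergence of $J(\beta^{(k)})$ must be deduced from monotonicity together with continuity of $J$ along the convergent subsequence before telescoping.
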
\noindent\noindent
Lemma~\ref{lemma:projection-onto-ell-one-ball} characterizes the projection onto an $\ell_1$ ball. 
The proof can be found in~\cite{LiuRu20}.
\begin{lemma}
\label{lemma:projection-onto-ell-one-ball}
Let $\beta \in \R^p$ and $\mathcal{B} = \{\beta: \beta \ge 0, \norm{\beta}_1 \le b\}$. 
The $\ell_2$ projection $\tilde{\beta} = \proj_{\mathcal{B}}(\beta)$ satisfies 
$\tilde{\beta} = (\beta - \gamma)_+$ where $\gamma \ge 0$ is defined by 
$\gamma = \inf\{\gamma \ge 0: \sum_{i \in [p]}(\beta_i - \gamma)_+ \le b\}$.
\end{lemma}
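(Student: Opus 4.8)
The plan is to compute the projection $\tilde{\beta} = \proj_{\mathcal{B}}(\beta)$ directly from the Karush--Kuhn--Tucker (KKT) conditions of the defining optimization problem $\minimize_{\tilde{\beta} \in \mathcal{B}} \frac{1}{2}\norm{\tilde{\beta} - \beta}_2^2$. Since $\mathcal{B} = \{\beta : \beta \ge 0, \norm{\beta}_1 \le b\}$ is closed and convex and the objective is strongly convex, the minimizer exists and is unique, and the KKT conditions are both necessary and sufficient for optimality. First I would attach a multiplier $\mu \ge 0$ to the constraint $\sum_i \tilde{\beta}_i \le b$ (which is equivalent to $\norm{\tilde{\beta}}_1 \le b$ on the orthant $\tilde{\beta} \ge 0$) and multipliers $\nu_i \ge 0$ to the constraints $-\tilde{\beta}_i \le 0$. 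Stationarity of the Lagrangian then gives $\tilde{\beta}_i - \beta_i + \mu - \nu_i = 0$ for each $i \in [p]$.

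Next I would read off the coordinatewise structure from complementary slackness. If $\tilde{\beta}_i > 0$ then $\nu_i = 0$, so $\tilde{\beta}_i = \beta_i - \mu$ and necessarily $\beta_i > \mu$; if $\tilde{\beta}_i = 0$ then $\nu_i = \mu - \beta_i \ge 0$, i.e. $\beta_i \le \mu$. In either case $\tilde{\beta}_i = (\beta_i - \mu)_+$, which already establishes the soft-thresholding form of the projection. What remains is to identify the threshold $\mu$ with the quantity $\gamma$ in the statement.

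To do so, I would study the function $g(s) = \sum_{i \in [p]} (\beta_i - s)_+$ on $[0,\infty)$, which is continuous, nonincreasing, and tends to $0$ as $s \to \infty$. If $g(0) = \sum_i (\beta_i)_+ \le b$, then $\mu = 0$ satisfies the complementary slackness condition $\mu(\sum_i \tilde{\beta}_i - b) = 0$, and the infimum defining $\gamma$ is likewise $0$. Otherwise the $\ell_1$ constraint must be active, forcing $g(\mu) = b$; by the intermediate value theorem together with monotonicity of $g$, this equation has a solution, and that solution equals $\gamma = \inf\{s \ge 0 : g(s) \le b\}$. Substituting $\mu = \gamma$ into $\tilde{\beta}_i = (\beta_i - \mu)_+$ yields the claimed formula, and because the KKT system is sufficient for this convex program, the point so constructed is exactly the projection. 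I do not anticipate a genuine obstacle here: the only point requiring a little care is verifying feasibility and nonnegativity of all multipliers at the constructed candidate, both of which are immediate from the monotonicity of $g$. The argument is the standard ``water-filling'' computation for projection onto the simplex and the $\ell_1$ ball.
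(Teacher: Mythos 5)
Your proof is correct and is the standard KKT/water-filling argument for projection onto the nonnegative $\ell_1$ ball. Note that the paper itself does not supply a proof here; it only defers to the cited reference \cite{LiuRu20}, so there is no in-paper argument to compare against, but your derivation is exactly what such references do. Two small points worth noting explicitly when you write it out: (i) the threshold $\mu$ solving $g(\mu)=b$ in the active case need not be unique when $g$ is flat at level $b$, but the resulting vector $(\beta-\mu)_+$ is the same for any such $\mu$ (each nonincreasing summand must be individually constant there), and it agrees with the vector produced by $\gamma=\inf\{s\ge 0: g(s)\le b\}$ by continuity of $g$; and (ii) uniqueness of the projection follows from strong convexity of the objective, so once the KKT system is verified for the candidate $(\beta-\gamma)_+$, the identification is complete.
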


\bibliographystyle{amsalpha}
\bibliography{bib}

\newcommand{\etalchar}[1]{$^{#1}$}
\providecommand{\bysame}{\leavevmode\hbox to3em{\hrulefill}\thinspace}
\providecommand{\MR}{\relax\ifhmode\unskip\space\fi MR }
\providecommand{\MRhref}[2]{%
  \href{http://www.ams.org/mathscinet-getitem?mr=#1}{#2}
}
\providecommand{\href}[2]{#2}
\begin{thebibliography}{WMC{\etalchar{+}}00}

\bibitem[AHM{\etalchar{+}}17]{Arras17}
Leila Arras, Franziska Horn, Gr{\'e}goire Montavon, Klaus-Robert M{\"u}ller,
  and Wojciech Samek, \emph{" what is relevant in a text document?": An
  interpretable machine learning approach}, PloS one \textbf{12} (2017), no.~8,
  e0181142.

\bibitem[All13]{Allen13}
Genevera~I Allen, \emph{Automatic feature selection via weighted kernels and
  regularization}, Journal of Computational and Graphical Statistics
  \textbf{22} (2013), no.~2, 284--299.

\bibitem[Aro50]{Aronszajn50}
Nachman Aronszajn, \emph{Theory of reproducing kernels}, Transactions of the
  American Mathematical Society \textbf{68} (1950), no.~3, 337--404.

\bibitem[Bak73]{Baker73}
Charles~R Baker, \emph{Joint measures and cross-covariance operators},
  Transactions of the American Mathematical Society \textbf{186} (1973),
  273--289.

\bibitem[Bar93]{Barron93}
Andrew~R Barron, \emph{Universal approximation bounds for superpositions of a
  sigmoidal function}, IEEE Transactions on Information Theory \textbf{39}
  (1993), no.~3, 930--945.

\bibitem[Ber97]{Bertsekas97}
Dimitri~P Bertsekas, \emph{Nonlinear programming}, Journal of the Operational
  Research Society \textbf{48} (1997), no.~3, 334--334.

\bibitem[BTA11]{BerlinetTh11}
Alain Berlinet and Christine Thomas-Agnan, \emph{Reproducing kernel hilbert
  spaces in probability and statistics}, Springer Science \& Business Media,
  2011.

\bibitem[CD12]{CommingesDa12}
La{\"e}titia Comminges and Arnak~S Dalalyan, \emph{Tight conditions for
  consistency of variable selection in the context of high dimensionality},
  Annals of Statistics \textbf{40} (2012), no.~5, 2667--2696.

\bibitem[CFJL16]{CandesFaJaLv16}
Emmanuel Candes, Yingying Fan, Lucas Janson, and Jinchi Lv, \emph{Panning for
  gold: Model-x knockoffs for high-dimensional controlled variable selection},
  arXiv preprint arXiv:1610.02351 (2016).

\bibitem[CLWY18]{CaiLuWaYa18}
Jie Cai, Jiawei Luo, Shulin Wang, and Sheng Yang, \emph{Feature selection in
  machine learning: A new perspective}, Neurocomputing \textbf{300} (2018),
  70--79.

\bibitem[Con19]{Conway19}
John~B Conway, \emph{A course in functional analysis}, vol.~96, Springer, 2019.

\bibitem[Coo07]{Cook07}
R~Dennis Cook, \emph{Fisher lecture: Dimension reduction in regression},
  Statistical Science \textbf{22} (2007), no.~1, 1--26.

\bibitem[CS02]{CuckerSm02}
Felipe Cucker and Steve Smale, \emph{On the mathematical foundations of
  learning}, American Mathematical Society \textbf{39} (2002), no.~1, 1--49.

\bibitem[CSS{\etalchar{+}}07]{CaoShSuYaCh07}
Bin Cao, Dou Shen, Jian-Tao Sun, Qiang Yang, and Zheng Chen, \emph{Feature
  selection in a kernel space}, Proceedings of the 24th International
  Conference on Machine learning, 2007, pp.~121--128.

\bibitem[CSWJ17]{ChenStWaJo17}
Jianbo Chen, Mitchell Stern, Martin~J Wainwright, and Michael~I Jordan,
  \emph{Kernel feature selection via conditional covariance minimization},
  Advances in Neural Information Processing (NIPS), vol.~31, 2017.

\bibitem[DR20]{DasRa20}
Samarendra Das and Shesh~N Rai, \emph{Statistical approach for biologically
  relevant gene selection from high-throughput gene expression data}, Entropy
  \textbf{22} (2020), no.~11, 1205.

\bibitem[FBJ04]{FukumizuBaJo04}
Kenji Fukumizu, Francis~R Bach, and Michael~I Jordan, \emph{Dimensionality
  reduction for supervised learning with reproducing kernel {H}ilbert spaces},
  Journal of Machine Learning Research \textbf{5} (2004), no.~Jan, 73--99.

\bibitem[FBJ09]{FukumizuBaJo09}
\bysame, \emph{Kernel dimension reduction in regression}, Annals of Statistics
  \textbf{37} (2009), no.~4, 1871--1905.

\bibitem[FHT01]{FriedmanHaTi01}
Jerome Friedman, Trevor Hastie, and Robert Tibshirani, \emph{The elements of
  statistical learning}, Springer, New York, 2001.

\bibitem[FM23]{FisherMa23}
Ronald~Aylmer Fisher and Winifred~A Mackenzie, \emph{Studies in crop variation
  {II}. the manurial response of different potato varieties}, The Journal of
  Agricultural Science \textbf{13} (1923), 311--320.

\bibitem[GBSS05]{GrettonBoSmSc05}
Arthur Gretton, Olivier Bousquet, Alex Smola, and Bernhard Sch{\"o}lkopf,
  \emph{Measuring statistical dependence with {H}ilbert-{S}chmidt norms},
  International Conference on Algorithmic Learning Theory, Springer, 2005,
  pp.~63--77.

\bibitem[GC02]{GrandvaletCa02}
Yves Grandvalet and St{\'e}phane Canu, \emph{Adaptive scaling for feature
  selection in {SVM}s}, Advances in Neural Information Processing, vol.~15,
  2002.

\bibitem[GMR{\etalchar{+}}18]{GuidottiMoRuTuGiPe18}
Riccardo Guidotti, Anna Monreale, Salvatore Ruggieri, Franco Turini, Fosca
  Giannotti, and Dino Pedreschi, \emph{A survey of methods for explaining black
  box models}, ACM computing surveys (CSUR) \textbf{51} (2018), no.~5, 1--42.

\bibitem[Gra08]{Grafakos08}
Loukas Grafakos, \emph{Classical fourier analysis}, Springer, 2008.

\bibitem[Hoe94]{Hoeffding94}
Wassily Hoeffding, \emph{Probability inequalities for sums of bounded random
  variables}, The Collected Works of Wassily Hoeffding, Springer, 1994,
  pp.~409--426.

\bibitem[H{\"o}r07]{Hormander07}
Lars H{\"o}rmander, \emph{The analysis of linear partial differential operators
  iii: Pseudo-differential operators}, Springer Science \& Business Media,
  2007.

\bibitem[Jon92]{Jones92}
Lee~K Jones, \emph{A simple lemma on greedy approximation in {H}ilbert space
  and convergence rates for projection pursuit regression and neural network
  training}, Annals of Statistics \textbf{20} (1992), no.~1, 608--613.

\bibitem[LCW{\etalchar{+}}17]{LiChWaMoTrRoTaLi17}
Jundong Li, Kewei Cheng, Suhang Wang, Fred Morstatter, Robert~P Trevino,
  Jiliang Tang, and Huan Liu, \emph{Feature selection: A data perspective}, ACM
  Computing Surveys \textbf{50} (2017), no.~6, 1--45.

\bibitem[Li91]{Li91}
Ker-Chau Li, \emph{Sliced inverse regression for dimension reduction}, Journal
  of the American Statistical Association \textbf{86} (1991), no.~414,
  316--327.

\bibitem[LR20]{LiuRu20}
Keli Liu and Feng Ruan, \emph{A self-penalizing objective function for scalable
  interaction detection}, arXiv preprint arXiv:2011.12215 (2020).

\bibitem[MFD10]{MasaeliFuDy10}
Mahdokht Masaeli, Glenn Fung, and Jennifer~G Dy, \emph{From
  transformation-based dimensionality reduction to feature selection},
  International Conference on Machine Learning, 2010.

\bibitem[Mil19]{Miller19}
Tim Miller, \emph{Explanation in artificial intelligence: Insights from the
  social sciences}, Artificial intelligence \textbf{267} (2019), 1--38.

\bibitem[MSK{\etalchar{+}}19]{MurdochSiKuAbYu19}
W~James Murdoch, Chandan Singh, Karl Kumbier, Reza Abbasi-Asl, and Bin Yu,
  \emph{Interpretable machine learning: definitions, methods, and
  applications}, arXiv preprint arXiv:1901.04592 (2019).

\bibitem[MXZ06]{MicchelliYuZh06}
Charles~A Micchelli, Yuesheng Xu, and Haizhang Zhang, \emph{Universal kernels},
  Journal of Machine Learning Research \textbf{7} (2006), no.~12.

\bibitem[Pan03]{Panchenko03}
Dmitry Panchenko, \emph{Symmetrization approach to concentration inequalities
  for empirical processes}, Annals of Probability (2003), 2068--2081.

\bibitem[Pea14]{Pearl14}
Judea Pearl, \emph{Probabilistic reasoning in intelligent systems: Networks of
  plausible inference}, Elsevier, 2014.

\bibitem[Pis81]{Pisier81}
Gilles Pisier, \emph{Remarques sur un r{\'e}sultat non publi{\'e} de {B}.
  {M}aurey}, S{\'e}minaire Analyse Fonctionnelle (1981), 1--12.

\bibitem[PR16]{Paulsen16}
Vern~I Paulsen and Mrinal Raghupathi, \emph{An introduction to the theory of
  reproducing kernel hilbert spaces}, Cambridge university press, 2016.

\bibitem[Rud19]{Rudin19}
Cynthia Rudin, \emph{Stop explaining black box machine learning models for high
  stakes decisions and use interpretable models instead}, Nature Machine
  Intelligence \textbf{1} (2019), no.~5, 206--215.

\bibitem[RV13]{RudelsonVe13}
Mark Rudelson and Roman Vershynin, \emph{Hanson-wright inequality and
  sub-gaussian concentration}, Electronic Communications in Probability
  \textbf{18} (2013).

\bibitem[SSG{\etalchar{+}}07]{SongSmGrBoBe07}
Le~Song, Alex Smola, Arthur Gretton, Karsten~M Borgwardt, and Justin Bedo,
  \emph{Supervised feature selection via dependence estimation}, Proceedings of
  the 24th International Conference on Machine Learning, 2007, pp.~823--830.

\bibitem[SSG{\etalchar{+}}12]{SongSmGrBeBo12}
Le~Song, Alex Smola, Arthur Gretton, Justin Bedo, and Karsten Borgwardt,
  \emph{Feature selection via dependence maximization}, Journal of Machine
  Learning Research \textbf{13} (2012), no.~5.

\bibitem[SSV12]{SchillingSoVo12}
Ren{\'e}~L Schilling, Renming Song, and Zoran Vondracek, \emph{Bernstein
  functions: Theory and applications}, Walter de Gruyter, 2012.

\bibitem[Ste87]{Stein87}
Michael Stein, \emph{Large sample properties of simulations using latin
  hypercube sampling}, Technometrics \textbf{29} (1987), no.~2, 143--151.

\bibitem[Ver18]{Vershynin18}
Roman Vershynin, \emph{High-dimensional probability: An introduction with
  applications in data science}, Cambridge University Press, 2018.

\bibitem[vH14]{Ramon14}
Ramon van Handel, \emph{Probability in high dimension}, Tech. report, PRINCETON
  UNIV NJ, 2014.

\bibitem[Wai19]{Wainwright19}
Martin~J Wainwright, \emph{High-dimensional statistics: A non-asymptotic
  viewpoint}, Cambridge University Press, 2019.

\bibitem[Wen04]{Wendland04}
Holger Wendland, \emph{Scattered data approximation}, Cambridge university
  press, 2004.

\bibitem[WMC{\etalchar{+}}00]{WestonMuChPoPoVa00}
Jason Weston, Sayan Mukherjee, Olivier Chapelle, Massimiliano Pontil, Tomaso
  Poggio, and Vladimir Vapnik, \emph{Feature selection for {SVM}s}, Advances in
  Neural Information Processing Systems, 2000.

\bibitem[YJS{\etalchar{+}}14]{YamadaJiSiXi14}
Makoto Yamada, Wittawat Jitkrittum, Leonid Sigal, Eric~P Xing, and Masashi
  Sugiyama, \emph{High-dimensional feature selection by feature-wise kernelized
  lasso}, Neural Computation \textbf{26} (2014), no.~1, 185--207.

\end{thebibliography}





\end{document}